\documentclass[11pt,reqno,a4paper]{amsart}

\usepackage{amsmath,amssymb,amsthm,mathtools}
\usepackage{tikz-cd}

\usepackage{mathrsfs}
\usepackage{booktabs,array}

\usepackage[margin=2.6cm]{geometry}
\usepackage{enumitem}
\usepackage{needspace}
\usepackage{xcolor}
\usepackage{todonotes}
\usepackage[colorlinks=true,linkcolor=blue!55!black,citecolor=blue!55!black,urlcolor=blue!55!black,bookmarks=true,bookmarksnumbered=true]{hyperref}
\makeatletter
\@namedef{subjclassname@2020}{%
  \textup{2020} Mathematics Subject Classification}
\makeatother

\newcommand{\note}[2][red]{{\color{#1}#2}}

\newcommand{\Fq}{\mathbb{F}_q}

\newcommand{\TT}{\mathbb{T}}
\newcommand{\OO}{\mathcal{O}}

\newcommand{\ii}{\mathfrak{i}}
\newcommand{\cM}{\mathcal{M}}
\newcommand{\GM}{\operatorname{GM}}
\newcommand{\cB}{\mathcal{B}}

\newcommand{\Om}{\Omega^1}

\DeclareMathOperator{\supp}{supp}
\DeclareMathOperator{\divv}{div}

\DeclareMathOperator{\id}{id}
\newcommand{\Exp}{\exp_\varphi}

\newcommand{\Pic}{\operatorname{Pic}}

\newcommand{\tor}{\tau}

\newcommand{\Lie}{\operatorname{Lie}}
\newcommand{\cont}{\operatorname{cont}}

\newcommand{\sfm}{\operatorname{sf}}

\newcommand{\LL}{\mathcal{L}}

\newcommand{\MM}{\mathcal{M}}

\newcommand{\two}{ \mathbf{I\!I}}
\newcommand{\one}{  \mathbf{I}}

\newcommand{\Cinf}{\mathbb{C}_\infty}

\newcommand{\res}{\operatorname{res}} 
\newcommand{\rat}{\operatorname{rat}}

\newcommand{\mult}{\mathrm{mult}}

\newcommand{\Ninf}{\mathrm{N}}
\newcommand{\rd}{\mathrm{d}}
\newcommand{\rdt}{\rd t}
\newcommand{\rdu}{\rd u}

\theoremstyle{plain}
\newtheorem{theorem}{Theorem}[section]
\newtheorem{proposition}[theorem]{Proposition}
\newtheorem{lemma}[theorem]{Lemma}
\newtheorem{notation}[theorem]{Notation}
\newtheorem{corollary}[theorem]{Corollary}

\newtheorem{lettertheorem}{Theorem}

\theoremstyle{definition}
\newtheorem{definition}[theorem]{Definition}
\newtheorem{example}[theorem]{Example}

\theoremstyle{remark}
\newtheorem{remark}[theorem]{Remark}

\begin{document}

\title[Pellarin's identity, Carlitz period, and Anderson generating functions]{Pellarin's
Identity, Carlitz Period, and Anderson Generating Functions over Arbitrary Curves}

\author{Chuangqiang Hu}
\address{School of Mathematics, Sun Yat-Sen University, Guangzhou, 510275, P.~R.~China}
\email{\href{huchq5@mail.sysu.edu.cn}{huchq5@mail.sysu.edu.cn}}

\author{Stephen S.-T. Yau$^*$}
\address{Department of Mathematical sciences, Tsinghua University, Beijing, 100084, P.~R.~China}
\email{\href{yau@uic.edu}{yau@uic.edu}}

\author{Lishan Yu}
\address{Beijing Institute of Mathematical Sciences and Applications, Beijing, 101408, P.~R.~China}
\email{\href{lishany@bimsa.cn}{lishany@bimsa.cn}}

\begin{abstract}
Let \(A\) be the coefficient ring of a smooth projective curve over \(\mathbb{F}_q\) with a closed point at infinity of arbitrary degree \(\mathrm{N}=\deg(\infty)\ge 1\), and let \(\varphi\) be a rank-one Drinfeld \(A\)-module. In this paper, we prove three main results concerning the module of special functions and its relation to Pellarin's \(\mathcal L(1)\)-series. First, the module \(\mathrm{sf}(\varphi)\) of special functions contains a Tate-algebra unit exactly when it is free of rank one, equivalently when its period lattice is isomorphic to the module of regular differentials. This settles the Gazda--Maurischat conjecture. The proof evaluates Cauchy kernels; on the period lattice all such evaluations agree, and their common characteristic residue recovers the period.
Second, for suitable isogeny data to a principal-period target \(\psi\), a strictly normalized shtuka product, together with one characteristic residue, gives an explicit formula for the fundamental period \(\tilde{\pi}\) (the generator of the free lattice \(\Lambda_\psi\)). The same defect equation also yields the residue formula for the Drinfeld logarithm.
Third, the first Frobenius twist of the normalized shtuka differential is identified with a pairing built from finite torsion traces, whose coefficients are finite \'{e}tale traces over the field of definition. As an application, Pellarin's \(\mathcal L(1)\)-series is expressed exactly as the first Frobenius twist of the generating differential at a fundamental period. A notable feature of these results is that they hold for arbitrary $\mathrm{N} \geq 1$, and the source period lattice need not be free.
\end{abstract}

\subjclass[2020]{Primary 11G09; Secondary 11R58, 11M38, 14H05}
\keywords{Drinfeld modules, Anderson generating functions, Szeg\H o kernels,
special functions, shtuka functions, motive evaluation}

\maketitle
\enlargethispage{2pt}

\setcounter{tocdepth}{2}
\tableofcontents
\clearpage

\section{Introduction}\label{sec:intro}

\subsection{Problem}\label{sec:background}

 Let $X/\Fq$ be a smooth, projective, geometrically connected curve
of genus $g$ over the finite field $\Fq$ of characteristic $p$, let
$\infty$ be a closed point of degree $\Ninf$, and put
$U=X\setminus\{\infty\}$, $A=\Gamma(U,\OO_X)$, $K=\Fq(X)$.  Write
$K_\infty$ for the completion of $K$ at $\infty$ and
$\Cinf=\widehat{\overline{K_\infty}}$ for its completed algebraic
closure. Fix an isometric $\Fq$-embedding
$\iota\colon K_\infty\hookrightarrow\Cinf$, whose restriction to $A$
defines the characteristic point $\Xi\in X(\Cinf)$.  Throughout,
$\varphi$ denotes a rank-one Drinfeld $A$-module over $\Cinf$,
$\exp_\varphi$ its exponential, and
$\Lambda_\varphi:=\ker(\exp_\varphi)\subset\Cinf$ its period lattice, a
discrete projective rank-one $A$-module. 

For a rational infinite point and a fixed sign function,
Goss' theory relates arithmetic special values to powers of
fundamental periods: for positive $n$ divisible by $q-1$,
\[
\zeta_A(n)=\alpha_n\,\tilde\pi^n,\qquad  \alpha_n\in VH,
\]
by Goss' theorem \cite{Gos96}, where $VH$ is the
composition field of \cite[Theorem~4.6]{Pel26} (for $A=\Fq[t]$ one has
$VH=K$).  Here $\zeta_A(n)$ is the $n$-th
special value of Goss' zeta function of $A$. Notably, $\tilde\pi$ is a
fundamental period, i.e., a chosen generator of the principal
period lattice of the sign-normalized rank-one Drinfeld module used
in this formula, and is well defined up to $\Fq^\times$. For general
rank-one Drinfeld modules the period lattice is only projective, and
this paper presupposes no such generator.  For $A=\Fq[t]$ such evaluations go back to Carlitz
\cite{Car35}, who expressed $\zeta_{\Fq[t]}(q-1)$ as a rational
multiple of $\tilde\pi^{q-1}$, where
\[ 
\tilde\pi=\theta(-\theta)^{1/(q-1)}
 \prod_{i>0}\bigl(1-\theta/\theta^{q^i}\bigr)^{-1}, \qquad  \theta:=\iota(t),
 \]
is the Carlitz period (already transcendental over $\Fq(\theta)$ by
Wade \cite{Wad41}), and to Anderson--Thakur \cite{AT90}, whose tensor powers of the
Carlitz module give explicit formulas for every $n$. Explicit formulas
for elliptic curves are proved in \cite{GP18}, and for rational
function fields with a higher-degree point at infinity in
\cite{HHY26}.  In full
generality, however, it is hard to determine explicit formulas for the
fundamental period $\tilde\pi$ \cite[\S4]{Pel26}.

A second, geometric interpolation appeared more recently:
Pellarin's zeta functions over curves \cite{Pel26}, in which
$Z_{X,A}$ is a function of a point $\chi$ of
$X(\Cinf)\setminus\{\infty\}$ itself, entire with an essential
singularity at $\infty$, whose values at suitable points recover the
Goss--Angl\`es special values.  At a rational infinite point, Ferraro's
theorem \cite[Theorem~7.27]{Fer24}, in the curve-zeta form of
\cite[Theorem~5.3]{Pel26}, produces a unique shtuka function $f$
(a rational function on $X_{\Cinf}$ with divisor
$V^{(1)}-V+[\Xi]-[\infty]$, where $V$ is a degree-$g$ divisor with
$h^0(V)=1$), a nonzero special function $\omega$ (an element of
$\Cinf\widehat\otimes_{\Fq} A$ satisfying $\omega^{(1)}=f\omega$), and a
unique rational factor $\delta\in A\otimes_{\Fq} H$, with $H$ the Hilbert
class field of $A$, such that
\[
\omega^{(1)}Z_{X,A}=\tilde\pi\,\delta^{(1)}.
\]
Both factors on the left are transcendental and have essential
singularities at $\infty$, yet their product is rational:
\[
\divv(\delta)=V+V^*-2g[\infty],
\]
where $V^*$ is the dual
shtuka divisor; the twisted support of $V^*$ plays the role of the
non-trivial zeros of $Z_{X,A}$, while the twisted characteristic points
$\Xi^{(i)}$, $i\ge1$, are its trivial zeros, so that
$\tilde\pi\,\delta^{(1)}$ is a function-field analogue of Riemann's
$\xi$-function.  The algebraic factor $\delta$ is again difficult to
compute in general; for $X=\mathbb P^1$ one has $\delta=-1$, and the
identity reads
$\omega^{(1)}Z_{\mathbb P^1,\Fq[\theta]}=-\tilde\pi$.

We compute the rational differential factor in the
Pellarin $L(1)$-identity by finite torsion traces
(Theorem~\ref{thm:finite-trace-lettered}) and prove the
Gazda--Maurischat unit conjecture for every infinity degree
(Theorem~\ref{cor:GM-unit-equivalence}).  The unit theorem leads to a
strictly normalized product and a residue formula for the corresponding
fundamental period (Theorem~\ref{thm:explicit-period-lettered}).
The construction uses Anderson generating functions, which connect
exponential uniformization with Frobenius difference equations and
arithmetic special values \cite{And86,AT90,Pel12,AP15,GM21}.
On a general curve, a two-variable Cauchy differential with prescribed
diagonal residue replaces the denominator of the classical generating
series.  Evaluating its coordinate in the tautological motive frame
gives the period differential; the Frobenius defect supplies the
first Frobenius twist of the normalized shtuka differential, identified
by the finite traces.

\subsection{Tautological coordinates and intrinsic evaluation}
\label{sec:levels-intro}

A rank-one Drinfeld module $\varphi$ has period lattice
$\Lambda_\varphi$, shtuka function $f_\varphi$, and Drinfeld divisor
$V_\varphi$, an effective divisor of degree $g$ with
$h^0(V_\varphi)=1$.

For an acyclic divisor line $\LL=\OO(D)$, meaning
$h^0(\LL)=h^1(\LL)=0$, write the normalized Szeg\H o
section in its tautological rational frame as
\[
S_{\LL}=K_{\LL}(\one,\two)e_{\LL}(\one)\otimes e_{\LL}(\two)^{-1}.
\]
Here $K_{\LL}$ is a differential in the second variable; the
placeholders $\one$ and $\two$ mark the two factors of $X\times X$, and
$\Delta$ denotes the diagonal. It has diagonal
residue one and divisor bound
\[ 
\divv(K_{\LL})+D_{\one}-D_{\two}+\Delta\ge0,\]
where
$D_\one$ and $D_\two$ are the pullbacks of $D$ to the two factors.
 Motive evaluation acts on $K_{\LL}s_\varphi$ in the chosen
rational frame.  The construction uses the divisor, residue, and
Frobenius defect of this coordinate.
For the shtuka divisor line $\LL_\psi$, respectively
$\LL_\varphi$, the notation
$K_\psi:=K_{\LL_\psi}$, respectively $K_\varphi$, is used in
the finite formulas below.

Section~\ref{sec:evaluation} constructs rational, completed-tensor, and
characteristic-orbit evaluation. The latter is defined by a normally
convergent series of rational motive values on suitable affinoids. These
descriptions agree on their common domains.

\subsection{The main construction of Anderson generating functions}

The Tate domain and the characteristic point have different roles.
A general Tate function cannot be
evaluated at $\Xi$, while the kernel construction gives the meromorphic
continuation needed to take a characteristic residue.

We use four forms of the same period information:
\begin{enumerate}[label=\textup{(\roman*)},leftmargin=2.2em]
\item The canonical differential $\mathcal H_\varphi(z)$ is the evaluation
of $K_\varphi$. At a period it agrees with the differential-valued
Gazda--Maurischat transform $\GM_u^\Omega$ (\cite{GM24}) and the evaluation  of every
$V_\varphi$-bounded Cauchy kernel $K\in\mathscr C_\varphi^\Omega$ from Theorems~\ref{thm:Vphi-Cauchy-fiber-invariance} and \ref{thm:canonical-assembly}.
Precisely, for $\lambda\in\Lambda_\varphi$, there is a common value
\[
\mathcal H_\varphi(\lambda)= \mathcal{E}_{\varphi,\lambda}(K s_\varphi)=\GM_u^\Omega(\lambda),
\]
where $\mathcal{E}_{\varphi}$ denotes motive evaluation with $s_\varphi$ the
tautological motive frame (Section~\ref{sec:evaluation}).
Applied after meromorphic continuation, Theorem~\ref{thm:identity-dual-factorization} implies the residue identification
 \[
 \operatorname{res}_\Xi\mathcal H_\varphi(\lambda)=\lambda.
 \]


\item For a separable isogeny datum
$J=[\eta_J;h_J;\ii_J,\mu_J]:\varphi\to\psi$ as in
Notation~\ref{not:isogeny-datum-main}, the multiplier $h_J$ is defined by
the motive frames, and the ideal $\ii_J$ is the annihilator ideal of the
isogeny. The isogeny-adapted differential $\mathcal G_J(z)$ satisfies the
inhomogeneous Frobenius equation
\begin{equation}\label{eq:intro-onestep}
 \mathcal G_J(z)^{(1)}-f_\varphi\mathcal G_J(z)
 =-\frac{\widetilde\lambda_\psi^{(1)}}{h_J^{(1)}}
       \exp_\psi(h_J(\Xi)z),
 \qquad \operatorname{res}_\Xi\mathcal G_J(z)=z.
\end{equation}
(See
Theorems~\ref{thm:frob} and \ref{thm:primitive-G}, and Corollary~\ref{cor:H-source-lattice-recovery}.)
Here $\exp_\psi$ is the exponential of the target module $\psi$,
and $\widetilde\lambda_\psi^{(1)}$ is the first Frobenius twist of its
normalized shtuka differential (Definition~\ref{def:shtuka-diff}).

\item For an $A$-homomorphism $\jmath:\Omega_A\to\Lambda_\varphi$,
residue duality constructs a scalar special function $\omega_\jmath$.  This realizes the Gazda--Maurischat correspondence
\cite{GM21,GM24}, with inverse given by the stabilized local logarithm:
\[
 \operatorname{Hom}_A(\Omega_A,\Lambda_\varphi)
 \simeq\operatorname{sf}(\varphi),\qquad
 \jmath\longmapsto\omega_\jmath .
\]
(Theorem~\ref{thm:direct-logarithmic-special-line}.)
For every $\eta\in\Omega_A$,
$\mathcal H_\varphi(\jmath(\eta))=\omega_\jmath\otimes\eta$ (Theorem~\ref{thm:identity-dual-factorization}); it is the
common scalar coordinate of the period-valued families, and it is a
Tate-algebra unit whenever $\jmath$ is an isomorphism.
For every unit anchor, this unit is explicitly realized as
$\omega_\jmath=\delta^{\mathrm{GM}}_u(\jmath(\rdu))$
with $u\in A$ any nonconstant separating element
(Proposition~\ref{prop:GM-residue-dual-coordinate}).
Concretely, if
$\Lambda_\psi=\iota(A)\tilde{\pi}$ and $\Omega_A=\ii_J\nu_J$, the isogeny
anchor
\[
 \jmath_J(a\nu_J)=\iota(a)\frac{\tilde{\pi}}{h_J(\Xi)}
 \quad(a\in\ii_J)
\]
supplies the period isomorphism used in practice.
\item 
Let $\widetilde\lambda_\psi=\alpha\nu$ with
$0\ne\nu\in\Omega_A$. Denote by
$f_\psi^{(i)}$ the $i$-th coefficient Frobenius twist of
$f_\psi$, and
$F_\psi^{[i]}:=f_\psi f_\psi^{(1)}\cdots f_\psi^{(i-1)}$.
The deformed logarithm
\[
\operatorname{Log}_{\alpha,f_\psi}(\two;Z)
 :=Z+\sum_{i\ge1}
 \frac{\alpha^{(i+1)}(\two)}
 {\alpha^{(1)}(\two)f_\psi^{(1)}(\two)\cdots f_\psi^{(i)}(\two)}Z^{q^i}
\]
and the Papanikolas-type generating series
\[
 \mathcal P_{\alpha,f_\psi}(\two;Z)
 :=\sum_{i\ge0}\frac{\alpha^{(i+1)}(\two)}{F_\psi^{[i+1]}(\two)}Z^{q^i}
 =\frac{\alpha^{(1)}(\two)}{f_\psi(\two)}
  \operatorname{Log}_{\alpha,f_\psi}(\two;Z)
\]
of Section~\ref{sec:large-logarithms} give a further series expression for
these differentials after multiplication by $\nu$ on their common domain of normal convergence. The deformed logarithm specializes as
$\operatorname{Log}_{\alpha,f_\psi}(\Xi;Z)=\log_\psi(Z)$,
whereas the generating differential satisfies
\[ \operatorname{res}_\Xi(\mathcal P_{\alpha,f_\psi}(\two;Z)\nu)
=\log_\psi(Z).\]

The isogeny transport
$\mathcal P_{\alpha,f_\psi}=h_J\,\mathcal P_{\alpha/h_J,f_\varphi}$
expresses the isogeny-adapted family in these coordinates.  The residue formula
\[ \log_\psi(Z)=\sum_{i\ge0}\operatorname{res}_\Xi
\bigl(\widetilde\lambda_\psi^{(i+1)}/F_\psi^{[i+1]}\bigr)Z^{q^i},\]
proved in arbitrary degree in \cite[Theorem~II.20]{Chu21},
follows here by formal composition from the kernel defect equation
(Theorem~\ref{prop:log-is-log}).
Thus the logarithm coefficients arise from the same kernel identity
that governs isogeny transport.
\end{enumerate}

Figure~\ref{fig:constructions} displays the relations between the
different constructions.
\begin{figure}[ht]
\centering
\usetikzlibrary{calc,fit}

\begin{tikzpicture}[
  x=.85cm,y=.85cm,>=Stealth,
  kern/.style={draw,fill=gray!18,rounded corners=2pt,font=\footnotesize,align=center,inner sep=3pt},
  main/.style={draw,very thick,fill=blue!14,rounded corners=2pt,font=\footnotesize,align=center,inner sep=4pt},
  scal/.style={draw,very thick,fill=green!18,rounded corners=2pt,font=\footnotesize,align=center,inner sep=4pt},
  peri/.style={draw,fill=red!14,rounded corners=2pt,font=\footnotesize,align=center,inner sep=3pt},
  prop/.style={draw,dashed,fill=yellow!15,rounded corners=2pt,font=\tiny,align=left,inner sep=5pt},
  lab/.style={font=\scriptsize,align=center,fill=white,inner sep=1.5pt},
  ext/.style={draw,dashed,fill=white,rounded corners=2pt,font=\scriptsize,align=center,inner sep=3pt},
]
\begin{scope}[xshift=-0.7cm]
\node[font=\small\bfseries] at (4.8,12.8)
  {Anderson generating functions / differentials};

\node[ext] (classical) at (-2.9,10.9) {classical Anderson\\  function\\ {\tiny \cite{And86,AT90,EGP14,HHY26,GM21}}};
\node[kern] (kgm) at (0.1,11.0) { {$K_{\{\eta_i,\widehat{\mathfrak b}_i\}}$}\\ {\tiny Def.~\ref{def:GM-ordered-Cauchy-lift}}};
\node[kern] (kphi) at (2.4,11.0) {$K_\varphi$\\ {\tiny Lem.~\ref{lem:SzegoC}}};
\node[kern] (kJ) at (5.6,11.0) {$K_J$\\ {\tiny Not.~\ref{not:KJ}; Thm.~\ref{thm:isogeny-adapted-szego-kernel}}};
\node[kern] (kpsi) at (8.8,11.0) {$K_\psi$\\ {\tiny target}};
\node[kern] (lam) at (11.8,11.0) {$\tilde\lambda_\psi$\\ {\tiny Def.~\ref{def:shtuka-diff}}};

\node[main] (GM) at (-2.0,8.0) {$\mathrm{GM}^{\Omega}_{u}(\lambda)$\\ {\tiny Def.~\ref{def:canonical-assembly}}};
\node[main] (H) at (3.7,8.0) {$\mathcal H_{\varphi}(z)$\\ {\tiny Def.~\ref{def:kernel}}};
\node[main] (GJ) at (8.8,8.0) {$\mathcal G_{J}(z)$\\ {\tiny Def.~\ref{def:isogeny-generating-differential}}};

\node[scal] (om) at (3.7,1.9) {$\omega_{\jmath}\in\sfm(\varphi)$\\ {\tiny Def.~\ref{def:omega-jmath}}};
\node[scal] (dGM) at (-2.0,1.9) {$\delta^{\mathrm{GM}}_u (\lambda) $ \\ {\tiny Eq. \eqref{eq:GM-delta-identification}}};

\node[scal] (exp) at (12.0,9.8) {$\exp_{\psi}$};
\node[scal] (log) at (12.0,8.0) {$\log_{\psi}$\\ {\tiny Def.~\ref{def:log}}};
\node[scal] (Log) at (12.0,5.8) {$\mathrm{Log}_{\alpha,f_\psi}(\two;Z)$\\ {\tiny Def.~\ref{def:large-log}}};
\node[scal] (Hs) at (12.0,3.6) {$\mathcal P_{\alpha,f_\psi}(\two;Z)$\\ {\tiny Def.~\ref{def:one-step-H}}};
\node[scal] (Ht) at (8.8,1.9) {$\mathcal P_{\alpha/h_J,f_\varphi}(\two;Z)$};

\node[peri] (W) at (0.4,9.5) {$F_{\lambda}$\\
 {\tiny \eqref{eq:hg-direct-special-function-map} }};
\node[peri] (Fj) at (0.4,3.2) {$\mathscr{F}_{\varphi,\jmath}$\\ {\tiny Def.~\ref{def:Fjmath}}};

\node[draw,rounded corners=3pt,inner sep=6pt,fit=(lam)(exp)] (lamexp) {};



\draw[->] (kgm) -- (GM) node[lab,pos=.52]{Def.~\ref{def:canonical-assembly}};
\draw[->] (kphi) -- (H) node[lab,pos=.4]{Def.~\ref{def:kernel}};
\draw[->] (kJ) -- (GJ) node[lab,pos=.5]{Def.~\ref{def:isogeny-generating-differential}};
\draw[->] (kpsi) -- (kJ) node[lab,pos=.5,above=1pt]{\eqref{eq:isogeny-kernel-coordinate-change}};
\draw[->] (lam) -- (kpsi) node[lab,pos=.5,above=1pt]{Thm.~\ref{prop:exist-shtuka}};
\draw[->] (GJ) -- (lamexp.west) node[lab,pos=.5,above=1pt]{Thm.~\ref{thm:frob}};
\draw[->,dashed] (classical) -- (GM) node[lab,pos=.5,left=1pt]{\S\ref{sec:relation}};

\draw[<->,double] (GM) -- (H) node[lab,pos=.5,above=2pt]{ {$z=\lambda\in\Lambda_\varphi$}\\Thm.~\ref{thm:canonical-assembly}};
\draw[<->,double] (H) -- (W) node[lab,pos=.42,above=1pt]{Prop.~\ref{prop:dualelement} };
\draw[->] (GJ) -- (H) node[lab,pos=.5,above=2pt]{ {on $\jmath(\Omega_A)$}\\ {Cor.~\ref{cor:primitive-G-GH}}};
\draw[->] (H) -- (om) node[lab,pos=.5,right=2pt]{Thm.~\ref{thm:identity-dual-factorization}\\Prop.~\ref{prop:dualelement}};
\draw[->] (GM) -- (dGM) node[lab,pos=.5,left=1pt]{\eqref{eq:GM-delta-identification}};
\draw[->] (dGM) -- (om) node[lab,pos=.5,above=1pt]{Prop.~\ref{prop:GM-residue-dual-coordinate}};
\draw[->] (GJ) -- (om) node[lab,pos=.32,right=1pt]{ {Thm.~\ref{thm:primitive-G}}};

\draw[->] (GJ) -- (Ht) node[lab,pos=.55]{ {local domain}\\Thm.~\ref{thm:HJ-equal}};
\draw[->] (Ht) -- (Hs) node[lab,pos=.5,above=0 pt]{Lem.~\ref{lem:isogeny-H} };
\draw[->] (Hs) -- (Log) node[lab,pos=.5,right=2pt]{Prop.~\ref{prop:H-large-log}};

\draw[->] (Log) -- (log) node[lab,pos=.5,right=2pt]{Prop.~\ref{prop:large-log}};
\draw[->] (log) -- (exp) node[lab,pos=.5,right=2pt]{Thm.~\ref{prop:log-is-log}};

\draw[->]  (W) -- (Fj) node[lab,pos= .55, right=2pt]{Prop.~\ref{prop:dualelement}};

\draw[<->,double] (Fj) -- (om) node[lab,pos=.25, right=1pt]{Def.~\ref{def:omega-jmath}};

\node[kern,minimum width=6mm,minimum height=3.6mm] at (-1.6,-0.9) {};
\node[font=\scriptsize,anchor=west] at (-1.2,-0.9) {kernel};
\node[main,minimum width=6mm,minimum height=3.6mm] at (1.1,-0.9) {};
\node[font=\scriptsize,anchor=west] at (1.5,-0.9) {differential-valued};
\node[scal,minimum width=6mm,minimum height=3.6mm] at (5.4,-0.9) {};
\node[font=\scriptsize,anchor=west] at (5.8,-0.9) {scalar series};
\node[peri,minimum width=6mm,minimum height=3.6mm] at (8.4,-0.9) {};
\node[font=\scriptsize,anchor=west] at (8.8,-0.9) {functional};
\end{scope}
\end{tikzpicture}
\caption{Relations between the main constructions.}
\label{fig:constructions}
\end{figure}

\subsection{Unit special functions}

An isogeny to a principal-period target computes an anchor map (i.e. $ \jmath_J : \Omega_A \to \Lambda_\varphi $) before
$\omega_\jmath$ is known. Suppose
$\Lambda_\psi=\iota(A) \tilde{\pi}$ and $\Omega_A=\ii_J\nu_J$ with
$\nu_J\in \Omega_A^{\rat} $. Then
\begin{equation}\label{eq:intro-direct-period-map}
\jmath_J(a\nu_J)=\iota(a)\frac{ \tilde{\pi}}{h_J(\Xi)}
\quad(a\in\ii_J)
\end{equation}
defines an isomorphism.
Theorem~\ref{thm:primitive-G} extends the factorization to the larger
differential lattice:
\[
\mathcal G_J(\iota(a)z_J)=a\omega_{\jmath_J}\otimes\nu_J
\quad(a\in A),\qquad z_J= \tilde{\pi}/h_J(\Xi).
\]
Its value is integral exactly when $a\in\ii_J$.

Every rank-one source admits an isogeny to a principal-period target.
The additional equality $\Omega_A=\ii_J\nu_J$ is possible exactly when
$[\Lambda_\varphi]=[\Omega_A]$ in $\operatorname{Pic}(A)$.
Thus this construction does not assume either module to be free.

 The following theorem proves the Gazda--Maurischat conjecture
\cite[\S3.4]{GM21} at every infinity degree: if $\operatorname{sf}(\varphi)$
is free of rank one, it contains a Tate-algebra unit.

\begin{lettertheorem}[Unit locus]\label{cor:GM-unit-equivalence}
For every $\Ninf\ge1$, the following are equivalent:
\begin{enumerate}[label=\textup{(\roman*)},leftmargin=2.2em]
\item\label{item:intro-period-class}
$\Lambda_\varphi\simeq\Omega_A$.
\item\label{item:intro-free-special}
$\operatorname{sf}(\varphi)$ is free of rank one.
\item\label{item:intro-unit-special}
$\operatorname{sf}(\varphi)$ contains a Tate-algebra unit.
\item\label{item:intro-strict-coefficient}
In an $\Fq$-basis $(a_i)$ of $A$ with $a_0=1$, the function $f_\varphi$
belongs to $\mathbb T_A$ and its coefficients, defined by
$f_\varphi=\sum_i\varsigma_i\otimes a_i$, satisfy
$\varsigma_0\ne0$ and $\sup_{i>0}|\varsigma_i/\varsigma_0|<1$.
\end{enumerate}
The equivalence of \ref{item:intro-period-class} and
\ref{item:intro-free-special} follows from the Gazda--Maurischat
transform.
The correspondence restricts to an $\Fq^\times$-equivariant bijection
\begin{equation}\label{eq:unit-locus-period-isomorphism-torsor}
\operatorname{sf}(\varphi)\cap\mathbb T_A^\times
\simeq\operatorname{Isom}_A(\Omega_A,\Lambda_\varphi).
\end{equation}
\end{lettertheorem}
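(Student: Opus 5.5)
Our route is (i)$\Leftrightarrow$(ii) via the Gazda--Maurischat correspondence, then (i)$\Rightarrow$(iii) via the Cauchy-kernel factorization, then (iii)$\Rightarrow$(i) and (iii)$\Leftrightarrow$(iv) by a norm computation in $\mathbb T_A$. The bijection \eqref{eq:unit-locus-period-isomorphism-torsor} comes out along the way.

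\emph{Step 1: (i)$\Leftrightarrow$(ii).} Theorem~\ref{thm:direct-logarithmic-special-line} gives an $A$-linear isomorphism $\operatorname{Hom}_A(\Omega_A,\Lambda_\varphi)\simeq\operatorname{sf}(\varphi)$, $\jmath\mapsto\omega_\jmath$. Both $\Omega_A$ and $\Lambda_\varphi$ are projective of rank one, so $\operatorname{Hom}_A(\Omega_A,\Lambda_\varphi)\simeq\Lambda_\varphi\otimes\Omega_A^{-1}$. This module is free exactly when $[\Lambda_\varphi]=[\Omega_A]$ in $\Pic(A)$, which is (i). Moreover, a generator of $\operatorname{Hom}_A(\Omega_A,\Lambda_\varphi)$ is then the same thing as an isomorphism.

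\emph{Step 2: (i)$\Rightarrow$(iii).} Take $\jmath\in\operatorname{Isom}_A(\Omega_A,\Lambda_\varphi)$. Theorem~\ref{thm:identity-dual-factorization} gives $\mathcal H_\varphi(\jmath(\eta))=\omega_\jmath\otimes\eta$ for all $\eta\in\Omega_A$, and states that $\omega_\jmath$ is a Tate-algebra unit when $\jmath$ is an isomorphism. Concretely, Proposition~\ref{prop:GM-residue-dual-coordinate} writes $\omega_\jmath=\delta^{\GM}_u(\jmath(\rdu))$. The reason it is a unit is that $\omega_\jmath$ and the dual coordinate attached to $\jmath^{-1}$ multiply to a constant, since the Cauchy kernel has diagonal residue one. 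This shows that the correspondence maps $\operatorname{Isom}_A(\Omega_A,\Lambda_\varphi)$ into $\operatorname{sf}(\varphi)\cap\mathbb T_A^\times$.

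\emph{Step 3: (iii)$\Rightarrow$(i) and the bijection.} Let $\omega$ be a unit in $\operatorname{sf}(\varphi)$, and let $\jmath$ be its preimage under Step 1. Suppose $\jmath$ is not surjective. Then its image is $\mathfrak a\Lambda_\varphi$ for a proper nonzero ideal $\mathfrak a$, or $\jmath=0$. Localizing, $\omega$ would vanish at the closed points of $\operatorname{Spec}\Cinf\widehat\otimes A$ lying over $\mathfrak a$ (taken at a twist of $\Xi$). We read this off from the residue/evaluation description $\operatorname{res}_\Xi\mathcal H_\varphi(\lambda)=\lambda$ together with $\omega^{(1)}=f_\varphi\omega$, which propagates the vanishing. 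A Tate unit has no zeros, so this is a contradiction. Hence the unit locus is exactly the image of the isomorphisms, and the map is a bijection. Equivariance under $\Fq^\times$ is clear from $A$-linearity.

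\emph{Step 4: (iii)$\Leftrightarrow$(iv).} If $\omega\in\mathbb T_A^\times$, then $f_\varphi=\omega^{(1)}/\omega\in\mathbb T_A^\times$. A unit in $\mathbb T_A$ has the form $c(1+\epsilon)$ with $c\in\Cinf^\times$ and $\|\epsilon\|<1$, relative to a suitable basis; this is the Tate-algebra dominance criterion, applied with respect to the Gauss norm in the basis $(a_i)$. That gives (iv). Conversely, given (iv), write $f_\varphi=\varsigma_0(1+\epsilon)$ with $\|\epsilon\|<1$. Choose $c$ with $c^{q-1}=\varsigma_0$ and form the convergent product
\[
\omega=c^{-1}\prod_{i\ge0}\bigl(1+\epsilon^{(i)}\bigr)^{-1},
\]
which solves $\omega^{(1)}=f_\varphi\omega$ in $\mathbb T_A^\times$. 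Convergence uses $\|\epsilon^{(i)}\|=\|\epsilon\|^{q^i}$. The delicate point in this direction is that $\omega$ must actually lie in $\operatorname{sf}(\varphi)$; I would verify this through the description of $\operatorname{sf}(\varphi)$ as solutions of the Frobenius equation in $\mathbb T_A$.

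The main obstacle is Step 3: showing that a non-surjective anchor yields a non-unit. I would first try the norm argument: compare $\omega_\jmath$ with $\omega_{\jmath'}$ for an isomorphism $\jmath'$, which exists only if (i) holds. Since that is circular, the fallback is a direct zero-location argument from the defect equation.
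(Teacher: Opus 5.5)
Your Step~1 is the paper's argument. Step~2, however, carries the whole content of the Gazda--Maurischat conjecture, and it is not proved. Theorem~\ref{thm:identity-dual-factorization} asserts only the factorization $\mathcal H_\varphi(\jmath(\eta))=\omega_\jmath\otimes\eta$; it says nothing about invertibility. The mechanism you offer is that $\omega_\jmath$ times a ``dual coordinate attached to $\jmath^{-1}$'' is constant because the Cauchy kernel has diagonal residue one, and no such identity is available. The diagonal residue gives $\operatorname{res}_\Xi\mathcal H_\varphi(z)=z$, which is information at the characteristic point, outside the Tate domain, and it produces no inverse of $\omega_\jmath$ in $\mathbb T_A$. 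Even if you had a solution $\omega^*\in\mathbb T_A$ of the inverse Frobenius equation, Lemma~\ref{lem:general-Tate-Frobenius-constants} would only give $\omega_\jmath\omega^*\in A$, and showing it is a nonzero constant is the conjecture again.

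The missing idea is the norm argument of Theorem~\ref{thm:unit-special-arbitrary-degree-public}:
\begin{itemize}
\item Take a residue-orthogonal basis $(a_i)$ with $a_0=1$ (Lemma~\ref{lem:residue-orthogonal-basis-public}; this is where repeated pole orders at $\Ninf>1$ are handled). The dual classes then have quotient norms $|a_i|_\infty^{-1}$, with $a_0^*$ strictly largest.
\item An isomorphism $\jmath$ scales quotient norms by a single constant. Hence the minimum-norm representatives $c_i$ of $\bar\jmath(a_i^*)$ satisfy $|c_0|>|c_i|$ for $i>0$.
\item The radial formula $|\exp_\varphi(c)|=R_{\Lambda_\varphi}(|c|)$, with $R_{\Lambda_\varphi}$ strictly increasing (Lemma~\ref{lem:radial-exp-norm-public}), converts this into $|\exp_\varphi(c_i)|<|\exp_\varphi(c_0)|$.
\item The expansion $\omega_\jmath=\sum_i\exp_\varphi(c_i)\otimes a_i$ then gives $\omega_\jmath=\beta_0(1+\varepsilon_\jmath)$ with $\|\varepsilon_\jmath\|_{\mathrm G}<1$, which is a unit by a Neumann series.
\end{itemize}

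The same gap reappears in your (iii)$\Rightarrow$(iv). For $\Ninf\ge2$ it is false that every unit of $\mathbb T_A$ has a dominant constant coefficient, because $\overline{\mathbb F}_q\otimes_{\Fq}A$ has nonconstant units once $\infty$ splits into several geometric points. In Example~\ref{ex:degree-two-infinity-residue-radii-public}, write $P=(t-\eta)(t-\eta^q)$. Then $(t-\eta)^2/P=1+(\eta^q-\eta)\otimes tP^{-1}+\eta(\eta-\eta^q)\otimes P^{-1}$ is a unit of $\mathbb T_A$ with inverse $(t-\eta^q)^2/P$, yet its nonconstant coefficients have absolute value one. So (iii)$\Rightarrow$(iv) cannot follow from a general property of units. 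You need to know that the particular unit $\omega$ has constant reduction. That is the dominance above, applied to $\jmath=\mathscr L_\varphi(\omega)$, as in Corollary~\ref{thm:unit-normalized-product}.

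By contrast, Step~3 is not the obstacle, and it needs no zero-location argument; vanishing ``at a twist of $\Xi$'' has no meaning on the Tate domain. Suppose $\omega\in\operatorname{sf}(\varphi)\cap\mathbb T_A^\times$. For every $\xi\in\operatorname{sf}(\varphi)$, the quotient $\xi/\omega$ is a Frobenius-fixed element of $\mathbb T_A$, hence lies in $A$. So $\operatorname{sf}(\varphi)=A\omega$, and part~(iii) of Theorem~\ref{thm:direct-logarithmic-special-line} makes $\mathscr L_\varphi(\omega)$ an isomorphism. Together with the unit theorem this gives the bijection.

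Finally, in (iv)$\Rightarrow$(iii) the product must be $c\prod_{i\ge0}(1+\epsilon^{(i)})^{-1}$. With your factor $c^{-1}$ one gets $\omega^{(1)}=\varsigma_0^{-1}(1+\epsilon)\,\omega$ instead of $f_\varphi\omega$.
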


The strict-normalization criterion
\ref{item:intro-strict-coefficient} provides an easy criterion for the Tate-algebra unit. 


When $\Ninf>1$ the proof of
\cite[Theorem~4.8]{Fer25} does not
apply directly: several basis vectors may share a pole order and hence a
dual norm, so the strict decrease of $|\exp_\varphi(a_i^*)|$ used there can
fail. 


\subsection{Explicit fundamental period}
Thanks to Theorem \ref{cor:GM-unit-equivalence}, we can extract the
period from the strict shtuka product with the chosen unit-anchor
normalization.  A unit-normalizing scalar can be chosen from the shtuka
function: in the basis of Theorem~\ref{cor:GM-unit-equivalence}, take
$\beta^{q-1}=\varsigma_0$.
The differential transport formula below applies to arbitrary source
lattices. 
\begin{lettertheorem}[Explicit fundamental period]\label{thm:explicit-period-lettered}
Let $J$ be isogeny data as above (unit source,
$\Omega_A=\ii_J\nu_J$, and
$\jmath(\nu_J)=z_J=\tilde{\pi}/h_J(\Xi)$ after
localization), and suppose
$\Xi^{(-1)}\notin\operatorname{supp}(V_\varphi)$.  Put
$\beta_0=\exp_\varphi(c_0)$ for the minimum-norm $c_0$.  Then for every
unit-normalizing scalar $\beta=\zeta u\beta_0$ as in
Proposition~\ref{prop:unit-normalizing-scalars},
\[
 \tilde{\pi}
 = h_J(\Xi)\,\zeta^{-1}\,\beta\,{\Upsilon_\beta(\Xi)}\, \operatorname{res}_\Xi(\nu_J/f_{\varphi,\beta}),
 \qquad \Upsilon_\beta:=\prod_{n\ge1}\bigl(f_{\varphi,\beta}^{(n)}\bigr)^{-1},
\]
where $\Upsilon_\beta(\Xi)$ is the value of the meromorphic tail
product after cancellation; individual early factors need not be
regular at $\Xi$.
\end{lettertheorem}

\subsection{Finite special values}\label{sec:finite-special-values}

Let $\Lambda_\psi=\iota(A) \tilde{\pi}$, write $\chi(a)=1\otimes a$, and set
\[
\mathcal L_A=\sum_{0\ne a\in A}\frac{\chi(a)}{\iota(a)}.
\]

\begin{lettertheorem}[Finite-trace identification]\label{thm:finite-trace-lettered}
For every $\Ninf\ge1$, put
$\mathcal R_\psi^{[k]}=- \tilde{\pi}^{-1}\mathcal L_A
\mathcal{H}_\psi(\tilde{\pi})^{(k)}$ (the factor
$\tilde{\pi}^{-1}\mathcal L_A$ remains untwisted).  Then
$\widetilde\lambda_\psi^{(1)}=\mathcal R_\psi^{[1]}$ and
$K_\psi(\Xi,\cdot)=\mathcal R_\psi$.
\end{lettertheorem}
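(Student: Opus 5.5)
The plan is to work with the target module $\psi$ alone. Its period lattice is free, so the unit-anchor results of part (iii) of the introduction apply with $\jmath(\nu)=\tilde\pi$, and Theorem~\ref{thm:identity-dual-factorization} gives the factorization
\[
\mathcal H_\psi(\tilde\pi)=\omega_\jmath\otimes\nu .
\]
Here $\omega_\jmath$ is a Tate-algebra unit in $\operatorname{sf}(\psi)$. Both claimed identities should then follow by comparing two expressions for the same Szeg\H o kernel coordinate: one from the Frobenius defect equation~\eqref{eq:intro-onestep}, and one from expanding the special function $\omega_\jmath$ against the Pellarin series $\mathcal L_A$.

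\textbf{Step 1: the kernel at the characteristic point.} I would take $J$ to be the identity datum $\psi\to\psi$, so that $h_J=1$ and $\mathcal G_J=\mathcal H_\psi$. Evaluating~\eqref{eq:intro-onestep} at $z=\tilde\pi$, the right-hand side vanishes because $\exp_\psi(\tilde\pi)=0$. Hence $\mathcal H_\psi(\tilde\pi)^{(1)}=f_\psi\,\mathcal H_\psi(\tilde\pi)$ and $\operatorname{res}_\Xi\mathcal H_\psi(\tilde\pi)=\tilde\pi$.

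Next I would compute $K_\psi(\Xi,\cdot)$ directly from motive evaluation. Since $\exp_\psi(\iota(a)\tilde\pi)=0$ for all $a$, the evaluation $\mathcal E_{\psi,z}$ applied to $K_\psi s_\psi$, written as a sum over the lattice, should produce the partial-fraction expansion
\[
K_\psi(\Xi,\cdot)=-\tilde\pi^{-1}\sum_{a\neq0}\frac{\chi(a)}{\iota(a)}\,\mathcal H_\psi(\tilde\pi).
\]
This is the function-field analogue of $\sum_a \chi(a)/(z-a\tilde\pi)$ evaluated at $z=0$: the $a=0$ term is removed by the diagonal residue normalization, and the remaining factor is the untwisted $\tilde\pi^{-1}\mathcal L_A$. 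The expansion should be proved by checking that both sides have the same divisor bound,
\[
\operatorname{div}\ge -D_\one+D_\two-\Delta \text{ restricted to } \one=\Xi,
\]
and the same residue. Acyclicity of $\LL_\psi$ then gives uniqueness, since the difference of the two sides lies in $H^0(\LL_\psi\otimes\Omega)=0$. This yields $K_\psi(\Xi,\cdot)=\mathcal R_\psi$, where $\mathcal R_\psi$ means $\mathcal R_\psi^{[0]}$.

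\textbf{Step 2: the twisted identity.} By Definition~\ref{def:shtuka-diff}, $\widetilde\lambda_\psi$ is the normalized shtuka differential. Via Theorem~\ref{prop:exist-shtuka}, it should be characterized by two properties:
\[
\widetilde\lambda_\psi^{(1)}=f_\psi\,K_\psi(\Xi,\cdot)\quad\text{up to the Frobenius-defect correction},
\qquad \operatorname{div}\bigl(\widetilde\lambda_\psi^{(1)}\bigr)\ge V_\psi^{(1)}-\Xi^{(1)} .
\]
Twisting Step~1 once, with $\mathcal L_A$ and $\tilde\pi^{-1}$ held fixed, and substituting $\mathcal H_\psi(\tilde\pi)^{(1)}=f_\psi\mathcal H_\psi(\tilde\pi)$, gives $\mathcal R_\psi^{[1]}=f_\psi\mathcal R_\psi$. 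It remains to show that this equals $\widetilde\lambda_\psi^{(1)}$. I would do this by comparing the coefficient of $\exp_\psi$ in the defect equation~\eqref{eq:intro-onestep}: differentiate in $z$ at $z=0$ and use $\exp_\psi'=1$ together with linearity. This expresses $\widetilde\lambda_\psi^{(1)}$ as the linear-in-$z$ part of $\mathcal H_\psi(z)^{(1)}-f_\psi\mathcal H_\psi(z)$. The same lattice-sum expansion identifies that part with $-\tilde\pi^{-1}\mathcal L_A\,\mathcal H_\psi(\tilde\pi)^{(1)}$.

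\textbf{Main obstacle.} The hard step is justifying the lattice partial-fraction expansion in Step~1 at $\one=\Xi$. The series $\mathcal L_A$ converges only on the Tate domain, while $\Xi$ lies outside it, so the two sides must be matched using the characteristic-orbit evaluation of Section~\ref{sec:evaluation} and the meromorphic continuation it provides. When $\Ninf>1$, the basis elements of $A$ sharing a pole order complicate the normal-convergence estimates. My first attempt would be to compare the two sides as $\mathbb T_A$-linear functionals on $\Omega_A$, using residue duality as in Proposition~\ref{prop:dualelement}. This avoids pointwise evaluation at $\Xi$ altogether and reduces the claim to the unit property of $\omega_\jmath$ from Theorem~\ref{cor:GM-unit-equivalence}.
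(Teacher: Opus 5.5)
Your skeleton (place $\mathcal R_\psi$ and $K_\psi(\Xi,\cdot)$ in a one-dimensional space and compare residues at $\Xi$) matches the paper's last step. But the step that carries all the content is only asserted. In Step~1 you say the identity ``should be proved by checking that both sides have the same divisor bound and the same residue''; that check \emph{is} the theorem.

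A priori, $\mathcal R_\psi=-\tilde\pi^{-1}\mathcal L_A\mathcal H_\psi(\tilde\pi)$ is only an element of the completed tensor $\Cinf\widehat\otimes_{\Fq}\Omega_A$ on the Tate domain. Nothing in your proposal shows it is a rational differential, let alone that it lies in $\mathscr S_\psi$. Its residue at $\Xi$ also cannot be read off from $\operatorname{res}_\Xi\mathcal H_\psi(\tilde\pi)=\tilde\pi$, because $\mathcal L_A$ has no value at $\Xi$.

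The paper supplies the missing input as follows.
\begin{itemize}
\item Residue duality gives $\langle u,\mathcal R_\psi^{[k]}\rangle_\infty=-\sum_{0\ne\lambda\in\Lambda_\psi}\exp_\psi(\iota(u)\lambda)^{q^k}/\lambda$ (Lemma~\ref{lem:hg-torsion-evaluation}).
\item The uniform small-ball vanishing of Lemma~\ref{lem:hg-independent-lattice-vanishing} shows that for $k\ge1$ this vanishes when $|u|_\infty\le q^{-(k+1)}$. Hence $\mathcal R_\psi^{[k]}\in H\otimes_{\Fq}H^0(X,\Omega_X^1(\lceil(k+1)/\Ninf\rceil\infty))$ is algebraic.
\item $\mathcal R_\psi$ itself is recovered from the cleared relation $(\chi(a)-\iota(a))\mathcal R_\psi=\sum_i\alpha_i\mathcal R_\psi^{[i]}$.
\item The affine bound $\operatorname{div}(\mathcal R_\psi)|_U\ge V_\psi-[\Xi]$ holds because every $F_\psi^{[k]}\mathcal R_\psi$ with $k\ge1$ is regular on $U_{\Cinf}$, and these $F_\psi^{[k]}$ span $\Gamma(U_{\Cinf},\OO(V_\psi-\Xi))$.
\item The bounds at $q_\psi^{(j)}$, $0\le j\le\Ninf-2$, use the positive pairings with $k=j+1$.
\item $\operatorname{res}_\Xi\mathcal R_\psi=1$ comes from the finite traces $\langle1/a,\mathcal R_\psi\rangle_\infty=1/\iota(a)$ and $\langle1,\mathcal R_\psi\rangle_\infty=0$, together with the global residue theorem (Lemma~\ref{lem:zeroth-pairing-shtuka-line}).
\end{itemize}
Only then does the uniqueness of Corollary~\ref{cor:shtuka-residue-uniqueness} apply. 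Note also that the relevant vanishing is $H^0(\LL_\psi^{-1}\otimes\Omega_X^1)=0$, not $H^0(\LL_\psi\otimes\Omega_X^1)$.

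Your proposed way around the obstacle does not work. The Tate-unit property of $\omega_\jmath$ is a statement about the Gauss norm in $\mathbb T_A$. It says nothing about the meromorphic continuation or the divisor of $\mathcal L_A\omega_\jmath$. Moreover, a free $\Lambda_\psi$ does not give an isomorphism $\jmath\colon\Omega_A\to\Lambda_\psi$ with $\jmath(\nu)=\tilde\pi$ unless $\Omega_A$ is itself free, which the theorem does not assume. The paper instead contracts by an arbitrary $\hbar\in\operatorname{Hom}_A(\Omega_A,A)$, or uses a finite dual frame.

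Step~2 is immediate once Step~1 is known: $\mathcal R_\psi^{[1]}=f_\psi\mathcal R_\psi=f_\psi K_\psi(\Xi,\cdot)=\widetilde\lambda_\psi^{(1)}$, by Lemma~\ref{lem:special-R} and \eqref{eq:szego-shtuka}. The characterization you quote there, with a ``defect correction'' and $\operatorname{div}\ge V_\psi^{(1)}-\Xi^{(1)}$, is not Definition~\ref{def:shtuka-diff}. Differentiating the defect equation at $z=0$ only reproduces $f_\psi K_\psi(\Xi,\cdot)=\widetilde\lambda_\psi^{(1)}$, so it gives no independent link to $\mathcal L_A$.
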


This is Theorem~\ref{thm:hg-shtuka-trace-identification}.
 The coefficients of $\widetilde\lambda_\psi^{(1)}$ have the
following finite description.  Let $H\subset\Cinf$ be a field
containing $\iota(K)$ and the coefficients of $\psi$,
put $m=\lceil2/\Ninf\rceil$, and choose an $\Fq$-basis
$\mu_1,\ldots,\mu_e$ of $H^0(X,\Omega_X^1(m\infty))$.
Choose $c_j\in A$ and a common denominator
$b\in A\setminus\Fq$ such that
$\langle c_j/b,\mu_i\rangle_\infty=\delta_{ij}$, where
$\langle u,\mu\rangle_\infty
=\operatorname{Tr}_{\mathbb F_\infty/\Fq}
\operatorname{res}_\infty(u\mu)$ and
$\mathbb F_\infty$ is the residue field at $\infty$.
Put
$\mathscr T_b=H[Z]/(\psi_b(Z)/Z)$.  Then
\[
 \widetilde\lambda_\psi^{(1)}
 =-\frac1{\iota(b)}
  \sum_{j=1}^e
  \operatorname{Tr}_{\mathscr T_b/H}
   \left(\frac{\psi_{c_j}(Z)^q}{Z}\right)\mu_j .
\]
Theorem~\ref{thm:hg-finite-torsion-trace-value} yields that the traces are computed in the finite \'{e}tale algebra $\mathscr T_b$
from the Drinfeld action polynomials.

It is shown in Theorem~\ref{thm:pellarin-period-identity}\ref{item:pellarin-first} that the contraction by any $\hbar\in\operatorname{Hom}_A(\Omega_A,A)$ gives
\[
\mathcal L_A\omega_{\jmath_\hbar}^{(1)}
=- \tilde{\pi}\,\hbar(\widetilde\lambda_\psi^{(1)}),
\qquad \jmath_\hbar= \tilde{\pi}\,\iota\circ\hbar.
\]
For an arbitrary source lattice, a free-target isogeny gives instead
\[
\frac{\mathcal L_A}{ \tilde{\pi}}\mathcal G_J(\iota(a)z_J)
=-\frac{\chi(a)}{h_J}\mathcal R_\psi\qquad(a\in A)\]
by Proposition~\ref{prop:finite-torsion-isogeny-kernels}.
For the unit special function
$\omega_{\jmath_*}=\mathcal G_J(z_J)/\nu_J$ attached to the unit
anchor, Theorem~\ref{thm:finite-value-isogeny-coordinate} evaluates the
same pair:
\[
\mathcal L_A\frac{\omega_{\jmath_*}}{\tilde{\pi}}
=-\frac{K_\psi(\Xi,\cdot)}{h_J\nu_J},
\qquad
\mathcal L_A\frac{\omega_{\jmath_*}^{(1)}}{\tilde{\pi}}
=-\frac{\tilde{\lambda}_\psi^{(1)}}{h_J^{(1)}\nu_J}.
\]
Together with the two preceding displays, these are
alternative special expressions of the finite-trace identification:
through $\mathcal G_J$, through the contractions by $\hbar$, and
through the unit coordinate $\omega_{\jmath_*}$.
At rational infinity, $\mathcal L_A$ is the negative of the monic
curve zeta series. The final comparison records this sign and the residue
normalization needed to match the algebraic factor in
\cite[Theorem~5.3]{Pel26}.

\subsection{Relation to earlier constructions}\label{sec:relation}

The classical generating series begins with the exponential
\cite{And86,AT90,EGP14}; the special-function and characteristic-residue
formalisms are developed in \cite{GM21,GM24}.
The algebraic relations among special values of the geometric
$\Gamma$-function over $\Fq[t]$ were determined in
\cite{ABP04}, and Tannakian and difference-Galois frameworks in \cite{Pap08}.
Our Cauchy-coordinate construction produces both the
Gazda--Maurischat transform and the isogeny-adapted family as rational
Anderson differentials on their respective homogeneous parameter lattices. It also separates
the source period lattice from the larger homogeneous lattice attached
to an isogeny.

Ferraro's differential pairings already provide rationality and affine
regularity results in a general Drinfeld-module setting
\cite[Theorem~7.3]{Fer25}.
Ferraro also expresses their residue-dual evaluations through
the Drinfeld action and its adjoint
\cite[Theorem~7.18 and Remark~7.31]{Fer25}.
In the setting of the present
paper, the coefficients of these pairings are computed by finite
\'{e}tale traces from the $b$-torsion of $\psi$ down to the field of
definition of the Drinfeld action, without adjoining individual torsion
points (the trace is taken over
$\mathscr T_b=H[Z]/(\psi_b(Z)/Z)$, as in
\eqref{eq:hg-nonzero-torsion-algebra}; the finite evaluation and its pole
bounds are Theorem~\ref{thm:hg-finite-torsion-trace-value}). In rank one, locating the
poles within the full infinity orbit identifies the first pairing with
the first Frobenius twist of the normalized shtuka differential. The elliptic formula of
Green--Papanikolas \cite{GP18} and the curve zeta identity reviewed in
\cite{Pel26} provide comparisons for the scalar special
values. At rational infinity, formulas for exponential and logarithm
coefficients also enter tensor-power special-zeta evaluations
\cite{Gre19} and Eulerian multiple-zeta identities \cite{CNP23}.
The analytic theory of Pellarin's $L$-series was
developed in \cite{Gos13}, and its convolution generalizations in
\cite{HP22}.

 The main new contributions compared with \cite{Fer25} and
\cite{Pel26}, all at a closed point of arbitrary degree (including
degrees divisible by the characteristic) and without assuming the
source lattice free, are: 
\begin{enumerate}[label=\textup{\arabic*.},leftmargin=2.2em]
\item Cauchy-fiber invariance (Theorem~\ref{thm:Vphi-Cauchy-fiber-invariance}):
Cauchy representatives with first-variable poles bounded by $V_\varphi$ have
the same value at every period, and the whole fiber has a common value
exactly on $\Lambda_\varphi$.
\item The isogeny-adapted generating differential $\mathcal G_J$
(Definition~\ref{def:isogeny-generating-differential};
Theorem~\ref{thm:frob}): a one-step Frobenius defect equation with exact
homogeneous lattice
$h_J(\Xi)^{-1}\Lambda_\psi=\iota(\ii_J^{-1})\Lambda_\varphi$
(Corollary~\ref{cor:H-source-lattice-recovery}).
\item Finite torsion-trace formulas for the algebraic factor:
Theorem~\ref{thm:hg-shtuka-trace-identification} identifies
$\widetilde\lambda_\psi^{(1)}=\mathcal R_\psi^{[1]}$ by its divisor and
residue.  The coefficients are finite torsion traces over the field of
definition (Theorem~\ref{thm:hg-finite-torsion-trace-value}).
Contraction and isogeny transport give the special-value identities of
Theorems~\ref{thm:pellarin-period-identity}
and~\ref{thm:finite-value-isogeny-coordinate}.  The curve-zeta
normalization is described in Remark~\ref{rem:curve-zeta-comparison}.
\item The explicit fundamental period
(Theorem~\ref{thm:explicit-period-lettered}): a strictly normalized
infinite product of Frobenius twists of the shtuka function, times a
single characteristic residue, gives a formula for $\tilde\pi$
at every infinity degree under the stated unit-anchor condition,
with the normalization fixed by that anchor.
\item The unit theorem for arbitrary infinity degree
(Theorem~\ref{thm:unit-special-arbitrary-degree-public}), proved by a
residue-orthogonal basis and the radial norm formula extending
the residue-dual and exponential-norm argument of
\cite[\S4]{Fer25} beyond $\deg\infty=1$: the special function $\omega_\jmath$
attached to a period isomorphism $\jmath$ is a unit of the Tate algebra
for every $\Ninf$, proving the conjecture of Gazda--Maurischat
\cite{GM21} in general (Theorem~\ref{cor:GM-unit-equivalence}).
\end{enumerate}

\subsection{Proof strategy and organization}

Section~\ref{sec:Preliminary} fixes the geometric realization and
isogeny conventions and develops the characteristic orbit
with its automatic weighted estimates. Section~\ref{sec:evaluation}
constructs motive evaluation over the rational motive and,
after completion, over the separated tensor domain and its local orbital
extension.
 Section~\ref{sec:cauchy-gm-period-class} constructs the
 Szeg\H o coordinate, compares 
Cauchy representatives, and identifies their period evaluation with the
Gazda--Maurischat transform. 

Section~\ref{sec:anchor} constructs residue-dual coordinates and the
logarithmic inverse. Section~\ref{sec:onestep}  constructs the normalized shtuka
differential and proves the 
isogeny defect, its exact homogeneous parameter lattice, and the
primitive $G$-identity; it also develops the divisor of the scalar
forcing factor, the residue formula for the logarithm, and the
Papanikolas-type series and their isogeny transport.
Section~\ref{sec:unit-special} proves invertibility and the product and
period formulas. Finally, Section~\ref{sec:higher-genus-single-infinity}
 computes the torsion pairings by finite traces and identifies
the first pairing with the first Frobenius twist of the normalized shtuka
differential through the
divisor bounds, characteristic residue, and
Corollary~\ref{cor:shtuka-residue-uniqueness}. 

\section{Preliminary}
\label{sec:Preliminary}

We fix the geometric realization and the coefficient conventions used in
motive evaluation.  The identity additive polynomial determines the rational
frame, while the Drinfeld divisor records the integral motive.  This
distinction will control both period annihilation and isogeny transport.
The final two subsections develop the characteristic orbit,
its weighted specialization, and the automatic estimates on which the
motive evaluations of Section~\ref{sec:evaluation} rely.

\subsection{Geometric setting and rank-one realization}\label{sec:basedata}

Let $X/\Fq$ be a smooth, projective, geometrically connected curve with a fixed
closed point $\infty$ of degree $\Ninf=\deg(\infty)$; put
$U=X\setminus\{\infty\}$, and write $g=g(X)$,
$A=\Gamma\bigl(U,\OO_X\bigr)$, and $K=\Fq(X)$.
Let $K_\infty$ be the completion of $K$ at $\infty$, and let
$\Cinf=\widehat{\overline{K_\infty}}$.  Fix an isometric $\Fq$-embedding
$\iota:K_\infty\hookrightarrow\Cinf$, with
$|\iota(a)|=q^{-\Ninf v_\infty(a)}$.  Its restriction to $A$ determines the
\emph{characteristic point}
$\Xi\in X(\Cinf)$, characterized by $c(\Xi)=\iota(c)$ for $c\in K$. When
$A=\Fq[t]$ we write $\theta:=\iota(t)$, so that $\Xi=\theta$ in the coordinate $t$.
Write $\Omega_A:=\Omega^1_{A/\Fq}$ for the $A$-module of regular differentials on $U$, $\Omega_A^{\rat}:=K\otimes_A\Omega_A$ for the rational differentials, $\widehat\Omega_A:=K_\infty\otimes_A\Omega_A$, and $\overline\Omega:=\widehat\Omega_A/\Omega_A$.

For every Drinfeld module used below, its period lattice means the embedded
lattice
\begin{equation}\label{eq:embedded-period-lattice-convention}
 \Lambda_\varphi:=\ker(\exp_\varphi:\Cinf\to\Cinf)
 \subset\Cinf.
\end{equation}
It is a discrete projective rank-one $A$-submodule of $\Cinf$; its abstract
projective-module type is denoted by $[\Lambda_\varphi]\in\Pic(A)$.  If
$\mathfrak a\subset K$ is a fractional ideal and
$z\in\Cinf$, then
\begin{equation}\label{eq:embedded-ideal-scalar-convention}
 \iota(\mathfrak a)z:=\{\iota(a)z:a\in\mathfrak a\}\subset\Cinf.
\end{equation}
We use the conventional shorthand $Az:=\iota(A)z$ for a principal embedded
lattice, while retaining $\iota(\mathfrak a)$ for a nonprincipal fractional
ideal.  Thus $Az$, $A\pi$, and $A\lambda$ denote submodules of $\Cinf$, whereas
$[\Lambda_\varphi]$ denotes an ideal class.  The natural localization map
\begin{equation}\label{eq:localized-lattice-natural-injection}
 K\otimes_A\Lambda_\varphi\hookrightarrow\Cinf,
 \qquad a\otimes\lambda\longmapsto\iota(a)\lambda,
\end{equation}
is injective, with image $\iota(K)\Lambda_\varphi$.  We use it to identify
$K\otimes_A\Lambda_\varphi$ with the localized $K$-line of the embedded
lattice.

Write $K\otimes_{\Fq}\Cinf$ for the algebraic constant extension inside the
$\Cinf$-function field
$\Cinf(X)=\operatorname{Frac}(A\otimes_{\Fq}\Cinf)$.  
We use the general Tate algebra
\begin{equation}\label{eq:general-Tate-algebra-main}
 \mathbb T_A:=\Cinf\widehat\otimes_{\Fq}A.
\end{equation}
For any $\Fq$-basis $(a_i)_{i\in I}$ of $A$, every element has a unique
convergent expansion $F=\sum_i c_i\otimes a_i$, with $c_i\to0$, and
\begin{equation}\label{eq:general-Tate-basis-norm-main}
 \lVert F\rVert_{\mathrm G}=\sup_i|c_i|.
\end{equation}
Changes of $\Fq$-basis act isometrically for the supremum norm, so this
description is basis independent; multiplication is submultiplicative.
Coefficient Frobenius is denoted by
$F\mapsto F^{(1)}=\sum_i c_i^q\otimes a_i$.  These conventions distinguish
the Tate completion from the algebraic constant extension above and will be
used in both the direct shtuka criterion and the completed evaluation.

\subsection{Rank-one realization}\label{sec:realization}

Let $\varphi$ be a rank-one Drinfeld $A$-module \cite{Dri74} over $\Cinf$.
No sign normalization is imposed. We fix a \emph{geometric shtuka realization}
of $\varphi$, consisting of:
\begin{enumerate}[label=(\alph*),leftmargin=2.2em]
\item a geometric point $q_\varphi\in X(\Cinf)$ above $\infty$ whose Frobenius orbit
is closed of period $ \Ninf $, i.e.\ $q_\varphi^{(\Ninf)}=q_\varphi$;
\item an effective divisor $V_\varphi$ on $X_{\Cinf}$ of degree $g$ with
$h^0\bigl(X_{\Cinf},\OO(V_\varphi)\bigr)=1$, such that
$\Xi,q_\varphi^{(-1)}\notin\supp(V_\varphi)$.  No disjointness of the full Frobenius orbit of
$q_\varphi$ from $V_\varphi$ is imposed;
\item the \emph{shtuka function} $f_\varphi\in\Cinf(X)^\times$, with
\begin{equation}\label{eq:divf}
\divv(f_\varphi)\;=\;V_\varphi^{(1)}-V_\varphi+[\Xi]-[q_\varphi] ;
\end{equation}
\item a $\tor$-equivariant isomorphism
$\kappa_\varphi\colon \cM^{\mathrm{geo}}_\varphi\xrightarrow{\ \sim\ }\cM_\varphi$
from the \emph{geometric motive}
$\cM^{\mathrm{geo}}_\varphi:=\Gamma\bigl(U_{\Cinf},\OO(V_\varphi)\bigr)$, equipped
with $\tor^{\mathrm{geo}}(g)=f_{\varphi}\,g^{(1)}$, onto the Anderson motive
$\cM_\varphi$ of $\varphi$.  In the standard additive-polynomial
realization it sends $1$ to the identity additive polynomial; we denote this
distinguished element by $s_\varphi$ (so $s_\varphi(x)=x$).  We use
$s_\varphi$ as a generic rational frame; it is not
assumed to generate the integral motive as an
$A\otimes_{\Fq}\Cinf$-module.
\end{enumerate}
Writing $F_\varphi^{[n]}:=f_\varphi f_\varphi^{(1)}\cdots f_\varphi^{(n-1)}$, the map
$\kappa_\varphi$ satisfies $\kappa_\varphi\bigl(F_\varphi^{[n]}\bigr)=\tor^n
s_\varphi$.  Thus, if the additive polynomial attached to
$m\in\cM_\varphi$ is
\begin{equation}\label{eq:additive-polynomial-coordinate}
 m=\sum_{i=0}^N\alpha_i\tor^i,
 \qquad \alpha_i\in\Cinf,
\end{equation}
then its geometric coordinate in the generic frame $s_\varphi$ is
\begin{equation}\label{eq:geometric-motive-coordinate}
 g_m:=\sum_{i=0}^N\alpha_iF_\varphi^{[i]},
 \qquad m=g_ms_\varphi.
\end{equation}
In particular, if
$\varphi_a=\sum_i\varphi_{a,i}\tor^i$, $A$-linearity gives
\begin{equation}\label{eq:a-shtuka-coordinate}
 a(\one)=\sum_i\varphi_{a,i}F_\varphi^{[i]}(\one),
 \qquad a\in A.
\end{equation}
 Indeed, the action is $A$-linear and $1\cdot s_\varphi=s_\varphi$, so the geometric coordinate of $a\cdot s_\varphi$ is the Frobenius-basis expansion of the scalar function $a$ itself.
These coordinate identities and the rank-one projective-module structure are
the standard shtuka realization; see \cite[\S2.3]{ANDTR17a}.
In the identity rational frame ($A=\Fq[t]$) the motive is the free
$\Cinf[t]$-module $\Cinf[t]\,s_\varphi$ with $\tor$-action
$\tor\,s_\varphi=f_\varphi(t)\,s_\varphi$; e.g.\ for the Carlitz module
$\varphi_t=\theta+\tau$ one has $f_\varphi(t)=t-\theta$, and for
$\varphi_t=\theta+\Delta\tau$ one has $f_\varphi(t)=(t-\theta)/\Delta$.

\begin{remark}[Scalar conjugation]\label{rem:scalar-conjugation-realization}
For $c\in\Cinf^\times$, put $\varphi^c_a=c\varphi_a c^{-1}$.
Its geometric realization has the same $V_\varphi$ and $q_\varphi$, with
\begin{equation}\label{eq:scalar-conjugation-realization}
 f_{\varphi^c}=c^{q-1}f_\varphi,\qquad
 \exp_{\varphi^c}(z)=c\exp_\varphi(c^{-1}z),\qquad
 \Lambda_{\varphi^c}=c\Lambda_\varphi.
\end{equation}
Indeed, $\varphi^c_{a,i}=c^{1-q^i}\varphi_{a,i}$ and
$F_{\varphi^c}^{[i]}=c^{q^i-1}F_\varphi^{[i]}$, so
\eqref{eq:a-shtuka-coordinate} is unchanged.  More explicitly,
$T_c(m):=cmc^{-1}$ sends the identity polynomial to itself and satisfies
$\tau T_c(m)=c^{q-1}T_c(\tau m)$.  Thus
$T_c\circ\kappa_\varphi$ gives the required geometric motive isomorphism.
The exponential formula follows from its normalized functional equation,
and taking its kernel gives the lattice formula.
\end{remark}

We shall repeatedly use the \emph{membership criterion}: for
$c\in\Cinf(X)$,
\begin{equation}\label{eq:membership}
\begin{aligned}
c\,s_\varphi\in\cM_\varphi
&\text{ is equivalent to }c\in\Gamma\bigl(U_{\Cinf},\OO(V_\varphi)\bigr),\\
c\,s_\varphi\in\cM_{\varphi,K}
&\text{ is equivalent to }c\in (A\smallsetminus\{0\})^{-1}\Gamma\bigl(U_{\Cinf},\OO(V_\varphi)\bigr),
\end{aligned}
\end{equation}
where $\cM_{\varphi,K}:=K\otimes_A\cM_\varphi$ is the localized motive and
$c\,s_\varphi$ denotes the generic-frame coordinate transported through
$\kappa_\varphi$.  Every $m\in\cM_\varphi$ is an additive polynomial as in
\eqref{eq:additive-polynomial-coordinate}, acting by
$m(x)=\sum_i\alpha_i x^{q^i}$.  For the action of $a\in A$,
exponential equivariance gives 
\begin{equation}\label{eq:functional}
\varphi_a\bigl(\Exp(w)\bigr)=\Exp\bigl(\iota(a)\,w\bigr),
\qquad a\in A .
\end{equation}

\begin{lemma}[{Frobenius basis of the shtuka datum}]
\label{lem:direct-geometric-shtuka-Frobenius-basis}
In the fixed geometric realization, with $q_\varphi$ of period
$\Ninf$, $h^0\bigl(X_{\Cinf},\OO(V_\varphi)\bigr)=1$,
$\Xi,q_\varphi^{(-1)}\notin\operatorname{supp}(V_\varphi)$, and
\eqref{eq:divf}, put
\[
 F_\varphi^{[0]}:=1,
 \qquad
 F_\varphi^{[n]}:=f_\varphi f_\varphi^{(1)}\cdots f_\varphi^{(n-1)}\quad(n\ge1).
\]
\begin{enumerate}[label=\textup{(\roman*)},leftmargin=2.2em]
\item\label{item:Frobenius-affine-basis} The Frobenius products give the direct basis
\begin{equation}\label{eq:direct-geometric-shtuka-Frobenius-basis}
 \Gamma\bigl(U_{\Cinf},\OO(V_\varphi)\bigr)
 =\bigoplus_{n\ge0}\Cinf F_\varphi^{[n]},
 \qquad
 f_\varphi\bigl(F_\varphi^{[n]}\bigr)^{(1)}=F_\varphi^{[n+1]}.
\end{equation}
\item\label{item:Frobenius-cyclic-motive} This geometric motive is free of rank one over
$\Cinf\{\tau\}$, with generator $1$.
\end{enumerate}
\end{lemma}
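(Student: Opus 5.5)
The plan is to prove \ref{item:Frobenius-affine-basis} by a pole-order filtration and Riemann--Roch count along the Frobenius orbit of $q_\varphi$, and then to read off \ref{item:Frobenius-cyclic-motive} from it.

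\emph{Divisors.} The relation $f_\varphi\bigl(F_\varphi^{[n]}\bigr)^{(1)}=F_\varphi^{[n+1]}$ holds by the definition of the products. Iterating \eqref{eq:divf}, the sum telescopes to
\[
\divv\bigl(F_\varphi^{[n]}\bigr)=V_\varphi^{(n)}-V_\varphi+\sum_{j=0}^{n-1}[\Xi^{(j)}]-D_n,
\qquad D_n:=\sum_{j=0}^{n-1}[q_\varphi^{(j)}].
\]
Each $\Xi^{(j)}$ lies over the generic point of $U$, not over $\infty$, so it is not in the $\infty$-fibre. Since $V_\varphi^{(n)}$ and the $\Xi^{(j)}$ are effective, this gives $F_\varphi^{[n]}\in W_n:=\Gamma\bigl(X_{\Cinf},\OO(V_\varphi+D_n)\bigr)$. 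The points of $X_{\Cinf}$ over $\infty$ are exactly the $\Ninf$ points of the orbit of $q_\varphi$, and $D_n$ gives each of them multiplicity about $n/\Ninf$. Hence the $W_n$ form an increasing filtration with $\bigcup_n W_n=\Gamma\bigl(U_{\Cinf},\OO(V_\varphi)\bigr)$.

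\emph{Dimensions.} By Riemann--Roch, $h^0(V_\varphi)-h^1(V_\varphi)=g+1-g=1$. Since $h^0(V_\varphi)=1$, this forces $h^1(V_\varphi)=0$. Adding an effective divisor keeps $h^1=0$, so $\dim W_n=g+n+1-g=n+1$. Consequently $\dim W_n/W_{n-1}=1$ for $n\ge1$, and $W_0=\Cinf$ because $h^0(V_\varphi)=1$.

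\emph{Exact pole at the new point.} This is the step I expect to be the delicate one, since the orbit of $q_\varphi$ is not assumed disjoint from $V_\varphi$. I must show $F_\varphi^{[n]}\notin W_{n-1}$. Look at the point $P=q_\varphi^{(n-1)}$. Its multiplicity in $V_\varphi^{(n)}$ equals the multiplicity of $q_\varphi^{(-1)}$ in $V_\varphi$, which is $0$ by hypothesis. The $\Xi^{(j)}$ do not meet $P$. Therefore
\[
\operatorname{ord}_P\bigl(F_\varphi^{[n]}\bigr)+\operatorname{mult}_P(V_\varphi)=-\operatorname{mult}_P(D_n).
\]
Since $\operatorname{mult}_P(D_n)=\operatorname{mult}_P(D_{n-1})+1$, the function violates the bound defining $W_{n-1}$ at $P$. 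Any terms of $V_\varphi$ supported on the orbit only shift both sides equally, so they do no harm.

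\emph{Conclusion of \ref{item:Frobenius-affine-basis}.} By induction on $n$, $W_n=\bigoplus_{k\le n}\Cinf F_\varphi^{[k]}$. Taking the union over $n$ gives the direct-sum decomposition \eqref{eq:direct-geometric-shtuka-Frobenius-basis}.

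\emph{Proof of \ref{item:Frobenius-cyclic-motive}.} The operator $\tor^{\mathrm{geo}}(g)=f_\varphi g^{(1)}$ preserves $\Gamma\bigl(U_{\Cinf},\OO(V_\varphi)\bigr)$. Indeed, $\divv(f_\varphi g^{(1)})\ge -V_\varphi$ away from the $\infty$-fibre, because the $V_\varphi^{(1)}$ in $\divv(f_\varphi)$ absorbs the poles of $g^{(1)}$, which are bounded by $V_\varphi^{(1)}$. Moreover $(\tor^{\mathrm{geo}})^n(1)=F_\varphi^{[n]}$. So the $\Cinf\{\tau\}$-linear map $\sum_n\alpha_n\tau^n\mapsto\sum_n\alpha_nF_\varphi^{[n]}$ sends $\Cinf\{\tau\}$ to the geometric motive, and by \ref{item:Frobenius-affine-basis} it is bijective. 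Hence the geometric motive is free of rank one over $\Cinf\{\tau\}$ with generator $1$.
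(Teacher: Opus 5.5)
Your argument is correct, but it takes a different route from the paper's. The paper's proof is a one-line transport. The fixed realization $\kappa_\varphi$ is $\tau$-equivariant and sends $1$ to the identity polynomial, so it sends $F_\varphi^{[n]}=(\tor^{\mathrm{geo}})^n(1)$ to $\tau^n$. The $\Cinf$-basis $(\tau^n)_{n\ge0}$ of $\Cinf\{\tau\}$ therefore pulls back to the claimed basis, and (ii) is immediate. That proof takes item (d) of the realization as an input and never uses the divisor hypotheses listed in the statement.

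You instead derive the basis intrinsically from the shtuka data:
\begin{itemize}
\item $h^0(V_\varphi)=1$ forces $h^1=0$, so your filtration $W_n$ has one-dimensional graded pieces;
\item $q_\varphi^{(-1)}\notin\supp(V_\varphi)$ guarantees that $F_\varphi^{[n]}$ has an exact new pole at $q_\varphi^{(n-1)}$.
\end{itemize}
This costs a Riemann--Roch computation but gives more. It shows the geometric motive is $\Cinf\{\tau\}$-free of rank one without presupposing $\kappa_\varphi$, so any $\tau$-equivariant $\kappa_\varphi$ with $1\mapsto s_\varphi$ is forced and automatically bijective. It makes the stated hypotheses do actual work. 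It yields the finer statement $W_n=\bigoplus_{k\le n}\Cinf F_\varphi^{[k]}$, with pole-order control at the points above $\infty$. Finally, it confirms directly that no disjointness of the rest of the infinity orbit from $V_\varphi$ is needed.
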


\begin{proof}
 The fixed realization $\kappa_\varphi$ sends $1$ to the identity
additive polynomial and intertwines $g\mapsto f_\varphi g^{(1)}$
with left multiplication by $\tau$.  Hence it sends
$F_\varphi^{[n]}$ to $\tau^n$.  Since the additive-polynomial
realization of $\mathcal M_\varphi$ is $\Cinf\{\tau\}$, the inverse
images of its $\Cinf$-basis $(\tau^n)_{n\ge0}$ give
\eqref{eq:direct-geometric-shtuka-Frobenius-basis}, and $1$ generates
the geometric motive freely over $\Cinf\{\tau\}$. 
\end{proof}

\subsection{Isogeny data}\label{sec:dual}

\begin{notation}[Isogeny datum]\label{not:isogeny-datum-main}
Let $\eta_J\colon\varphi\to\psi$ be a separable isogeny between rank-one
Drinfeld modules.  We write
\[
 J=[\eta_J;h_J;\ii_J,\mu_J]\colon\varphi\longrightarrow\psi
\]
for the following compatible data.
\begin{enumerate}
\item Under the motive--additive-polynomial dictionary,
\begin{equation}\label{eq:intertwiner}
 \eta_{J}^{!} (s_\psi)=h_Js_\varphi;
\end{equation}
equivalently, the motive element $h_Js_\varphi$ represents the additive
polynomial $\eta_J$.  In particular it is an integral element of
$\cM_\varphi$ and
\begin{equation}\label{eq:intertwiner-difference}
 h_J^{(1)}f_\varphi=h_Jf_\psi.
\end{equation}

\item The ideal $\ii_J$ is the annihilator ideal
\begin{equation}\label{eq:isogeny-annihilator-ideal-main}
 \ii_J=\operatorname{Ann}_A(\ker\eta_J).
\end{equation}
Identifying a nonzero integral ideal with its effective divisor on
$U=\operatorname{Spec}A$, and writing
$\pi:U_{\Cinf}\to U$ for the projection, its exact affine divisor is
\begin{equation}\label{eq:isogeny-intertwiner-affine-divisor-main}
 \pi^*\divv_U(\ii_J)
 =\left(\divv(h_J)-V_\psi+V_\varphi\right)\big|_{U_{\Cinf}}.
\end{equation}
When the two geometric frames are fibre-primitive, or equivalently when the
$V_\psi-V_\varphi$ correction has been absorbed into the chosen divisor
frames, this becomes the simpler normalization
\begin{equation}\label{eq:isogeny-intertwiner-primitive-divisor-main}
 \pi^*\divv_U(\ii_J)
 =\divv(h_J)\big|_{U_{\Cinf}}
 =\divv(h_J)\cap U_{\Cinf}.
\end{equation}

\item Let $\varphi_{\ii_J}$ be the monic ideal isogeny with kernel
$\varphi[\ii_J]$.  Then
\begin{equation}\label{eq:isogeny-leading-coefficient-main}
 \eta_J=\mu_J\varphi_{\ii_J},
 \qquad \mu_J\in\Cinf^\times,
\end{equation}
where $\mu_J$ is the coefficient of the highest power of $\tau$ in
$\eta_J$.  In generic characteristic the ideal isogeny
$\varphi_{\ii_J}$ is separable.
\end{enumerate}
Finally, the tangent coefficient is the derived scalar
\begin{equation}\label{eq:isogeny-tangent-from-intertwiner-main}
 \partial\eta_J=h_J(\Xi)\ne0.
\end{equation}
 The equality $\partial\eta_J=h_J(\Xi)$ uses $F_\varphi^{[i]}(\Xi)=0$ for all $i\ge1$ (indeed $\operatorname{ord}_\Xi(F_\varphi^{[i]})=m_{-i}-m_0+1\ge1$ by \eqref{eq:divf} and $m_0=0$), so only the constant term of $\eta_J$ survives evaluation at $\Xi$.
\end{notation}

Analytic uniformization gives
\begin{equation}\label{eq:isogeny-exponential-compatibility-main}
 \eta_J\bigl(\exp_\varphi(w)\bigr)=\exp_\psi(h_J(\Xi)w).
\end{equation}
See, for example, \cite[Corollary~4.9.5]{Gos96}. 

\begin{proposition}\label{prop:isogeny-lattice-transport}
The ideal-isogeny uniformization formula is
\begin{equation}\label{eq:isogeny-lattice-transport-main}
 \Lambda_\psi
 =h_J(\Xi)\,\iota(\ii_J^{-1})\Lambda_\varphi,
 \qquad
 h_J(\Xi)^{-1}\Lambda_\psi
 =\iota(\ii_J^{-1})\Lambda_\varphi.
\end{equation}
\end{proposition}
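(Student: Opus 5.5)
The plan is to compute $\Lambda_\psi$ as the kernel of $\exp_\psi$ using the uniformization identity \eqref{eq:isogeny-exponential-compatibility-main}: $\eta_J(\exp_\varphi(w))=\exp_\psi(h_J(\Xi)w)$. Writing $c:=h_J(\Xi)=\partial\eta_J\neq0$, a point $z\in\Cinf$ lies in $\Lambda_\psi$ exactly when $\exp_\psi(z)=0$. Putting $w=c^{-1}z$, this becomes $\eta_J(\exp_\varphi(w))=0$, i.e.\ $\exp_\varphi(w)\in\ker\eta_J$. Thus
\[
c^{-1}\Lambda_\psi=\exp_\varphi^{-1}(\ker\eta_J),
\]
and the task reduces to identifying this preimage with $\iota(\ii_J^{-1})\Lambda_\varphi$. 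The two displayed forms of \eqref{eq:isogeny-lattice-transport-main} are then equivalent after multiplying by $c$.

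Next use \eqref{eq:isogeny-leading-coefficient-main}. Since $\eta_J=\mu_J\varphi_{\ii_J}$ with $\mu_J\ne0$ and $\eta_J$ separable, we get $\ker\eta_J=\varphi[\ii_J]=\{x:\varphi_a(x)=0\ \forall a\in\ii_J\}$. Because $\exp_\varphi$ is surjective with kernel $\Lambda_\varphi$, \eqref{eq:functional} gives: $\exp_\varphi(w)\in\varphi[\ii_J]$ iff $\exp_\varphi(\iota(a)w)=0$ for all $a\in\ii_J$, i.e.\ iff $\iota(\ii_J)w\subset\Lambda_\varphi$. So
\[
c^{-1}\Lambda_\psi=\{w\in\Cinf:\iota(\ii_J)w\subset\Lambda_\varphi\}.
\]

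The main step is to show that this "colon lattice" equals $\iota(\ii_J^{-1})\Lambda_\varphi$. The inclusion $\supseteq$ is immediate, since $\ii_J\ii_J^{-1}=A$ is well defined in $K$ and $\Lambda_\varphi$ is an $A$-module. For the reverse inclusion, take $w$ with $\iota(\ii_J)w\subset\Lambda_\varphi$. Write $1=\sum_k x_ky_k$ with $x_k\in\ii_J^{-1}$, $y_k\in\ii_J$, which is possible because nonzero ideals of the Dedekind domain $A$ are invertible. Then
\[
w=\sum_k\iota(x_k)\bigl(\iota(y_k)w\bigr)\in\iota(\ii_J^{-1})\Lambda_\varphi,
\]
using the injectivity of the localization map \eqref{eq:localized-lattice-natural-injection} to make sense of products with elements of $K$.

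Multiplying back by $c$ yields $\Lambda_\psi=h_J(\Xi)\,\iota(\ii_J^{-1})\Lambda_\varphi$. The only genuine input beyond formal manipulation is that $\ker\eta_J$ is exactly the $\ii_J$-torsion $\varphi[\ii_J]$. This holds by the definition of $\ii_J$ as the annihilator together with the factorization $\eta_J=\mu_J\varphi_{\ii_J}$, where $\varphi_{\ii_J}$ is the monic generator of the left ideal with kernel $\varphi[\ii_J]$. I would cite this from the definition of the ideal isogeny rather than reprove it.
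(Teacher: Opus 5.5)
Your proof is correct and takes the paper's route: use exponential compatibility to identify $\exp_\varphi^{-1}(\ker\eta_J)$ with $h_J(\Xi)^{-1}\Lambda_\psi$, then use $\ker\eta_J=\varphi[\ii_J]$, whose preimage under $\exp_\varphi$ is $\iota(\ii_J^{-1})\Lambda_\varphi$. The paper simply asserts that $\varphi[\ii_J]$ is represented by $\iota(\ii_J^{-1})\Lambda_\varphi/\Lambda_\varphi$, whereas you prove it with the colon-lattice argument via $1=\sum_k x_ky_k$.
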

 \begin{proof} 
 From $\eta_J=\mu_J\varphi_{\ii_J}$ one has $\ker\eta_J=\varphi[\ii_J]$.  Under $\exp_\varphi$, the kernel of $\eta_J$ is represented by $h_J(\Xi)^{-1}\Lambda_\psi/\Lambda_\varphi$, whereas $\varphi[\ii_J]$ is represented by $\iota(\ii_J^{-1})\Lambda_\varphi/\Lambda_\varphi$; equality of the two subgroups of $\Cinf/\Lambda_\varphi$ gives $h_J(\Xi)^{-1}\Lambda_\psi=\iota(\ii_J^{-1})\Lambda_\varphi$, and multiplication by $h_J(\Xi)$ gives the first equality.
 \end{proof} 
Thus the parameter
lattice killed by the $J$-defect is generally a fractional enlargement of
the source lattice.  It equals the source lattice only when
$\ii_J=A$, in particular for the identity isogeny.  We retain the
characteristic embedding on ideal multiples when it matters and suppress it
only in uncluttered formulas.
Replacing $\psi$ by an isomorphic scalar conjugate permits the normalization
$h_J(\Xi)=1$, but we state the datum before making that optional choice.
The map $\eta_J^!$ is contravariant on motives, whereas $h_J(\Xi)$ transports
the embedded lattices covariantly.  We keep these two maps distinct.

\subsection{\texorpdfstring{Special functions}{Special functions}}
\begin{definition}[Special functions]\label{def:special-functions}
The special-function module of $\varphi$ is
\[
 \sfm(\varphi):=
 \{\omega\in\mathbb T_A:\varphi_a(\omega)=(1\otimes a)\omega
                       \text{ for every }a\in A\},
\]
where $\varphi_a$ acts on the coefficient factor.  Its $A$-module
structure is multiplication in the geometric factor.
\end{definition}

\begin{lemma}[Scalar shtuka equation]\label{lem:scalar-shtuka-equation}
For $\omega\in\mathbb T_A$, the special-function relations are equivalent to
$\omega^{(1)}=f_\varphi\omega$ in $\operatorname{Frac}(\mathbb T_A)$.
\end{lemma}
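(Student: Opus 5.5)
The plan is to reduce both directions to a single generator $a\in A$ and the coefficient identity \eqref{eq:a-shtuka-coordinate}, which encodes the $A$-action in the Frobenius products $F_\varphi^{[i]}$.

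\emph{Forward direction.} Suppose $\omega^{(1)}=f_\varphi\omega$. Iterating gives $\omega^{(i)}=F_\varphi^{[i]}\omega$ for every $i\ge0$. Write $\varphi_a=\sum_i\varphi_{a,i}\tau^i$, where $\varphi_a$ acts on the coefficient factor through $\tau^i\omega=\omega^{(i)}$. Then
\[
\varphi_a(\omega)=\sum_i\varphi_{a,i}\,\omega^{(i)}=\Bigl(\sum_i\varphi_{a,i}F_\varphi^{[i]}\Bigr)\omega=(1\otimes a)\,\omega,
\]
and the last equality is \eqref{eq:a-shtuka-coordinate}. Here $f_\varphi$ lies in $\operatorname{Frac}(\mathbb T_A)$ through the embedding of $\Cinf(X)=\operatorname{Frac}(A\otimes\Cinf)$, and the computation takes place in $\operatorname{Frac}(\mathbb T_A)$. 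Since the left side is in $\mathbb T_A$, nothing more is needed.

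\emph{Converse direction.} Now suppose $\omega\in\operatorname{sf}(\varphi)$ and $\omega\ne0$; the case $\omega=0$ is trivial. The idea is to use the difference equation satisfied by $\omega$ in place of linear algebra over the motive. Define the defect $\epsilon:=\omega^{(1)}-f_\varphi\omega\in\operatorname{Frac}(\mathbb T_A)$. The task is to show $\epsilon=0$.

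1. Choose a nonconstant $a\in A$, and let $d$ be the $\tau$-degree of $\varphi_a$, with leading coefficient $\varphi_{a,d}\ne0$.
2. Substituting $\omega^{(i)}=F_\varphi^{[i]}\omega+(\text{defect terms})$ into $\varphi_a(\omega)=(1\otimes a)\omega$ and subtracting \eqref{eq:a-shtuka-coordinate} yields a relation $\sum_{i\ge1}\varphi_{a,i}\,D_i=0$. Each $D_i$ is built from $\epsilon,\epsilon^{(1)},\dots$ with coefficients $F^{[\cdot]}$-twists.
3. Cleaner route: set $\rho:=\omega^{(1)}/(f_\varphi\omega)$. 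This is legitimate because $\mathbb T_A$ is a domain (its geometric factor is an integral domain), so division is allowed in the fraction field.
4. Both $\omega$ and $\omega^{(1)}$ are special functions, since coefficient Frobenius commutes with the relations after twisting $\varphi_a$; that gives $\omega^{(1)}\in\operatorname{sf}(\varphi^{(1)})$. Then $\omega^{(1)}/\omega$ intertwines $\varphi$ and $\varphi^{(1)}$ in the sense of the scalar action. The expected consequence is that $\omega^{(1)}/\omega$ is a rational function in $\Cinf(X)$ with divisor $V_\varphi^{(1)}-V_\varphi+[\Xi]-[q_\varphi]$.
5. Rigidity from $h^0(V_\varphi)=1$ then forces $\rho$ to be a constant. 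Comparing the leading $\tau^d$-coefficients in $\varphi_a(\omega)=(1\otimes a)\omega$ fixes that constant to be $1$.

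\emph{Main obstacle.} I expect the hard step to be step 4: showing that $\omega^{(1)}/\omega$ is rational with exactly the divisor of $f_\varphi$. My first attempt would be the standard argument that the $\Cinf$-span of solutions is governed by the motive. Using Lemma~\ref{lem:direct-geometric-shtuka-Frobenius-basis}, $\operatorname{sf}(\varphi)$ pairs with $\cM_\varphi$. The relation $\varphi_a(\omega)=a\omega$ for all $a$ says that the map $m\mapsto m(\omega)$, for $m\in\cM_\varphi$ viewed as an additive polynomial, satisfies $m(\omega)=g_m\,\omega$ whenever $m$ is of the form $a\cdot s_\varphi$. Extending this by $A\otimes\Cinf$-linearity to all of $\cM_{\varphi,K}$, and applying it to $m=\tau s_\varphi$, whose coordinate is $f_\varphi$ by the membership criterion \eqref{eq:membership}, gives $\omega^{(1)}=f_\varphi\omega$ directly. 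The only delicate point is the localization to $K$: $\tau s_\varphi$ lies in $K\otimes_A\cM_\varphi$, so I would write $b\,\tau s_\varphi=\sum_j a_j c_j s_\varphi$ for some $b\in A\setminus\{0\}$ and cancel $b$ in the domain $\mathbb T_A$.
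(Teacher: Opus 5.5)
Your first direction (shtuka equation $\Rightarrow$ special) is exactly the paper's argument: iterate to $\omega^{(i)}=F_\varphi^{[i]}\omega$ and apply \eqref{eq:a-shtuka-coordinate}. For the other direction, the numbered steps 1--5 are not a proof. Step 4 is only an ``expected consequence'', and the normalization in step 5 is unsupported. So everything rests on your final paragraph. Its idea is the paper's: evaluation $m\mapsto m(\omega)$ is linear over $B:=\Cinf\otimes_{\Fq}A$; one clears the denominator of the coordinate $f_\varphi$ of $\tau s_\varphi$, and then cancels in the domain $\mathbb T_A$.

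\textbf{Gap: the clearing step fails as stated.} First, $\tau s_\varphi$ already lies in $\cM_\varphi$, since its coordinate $f_\varphi$ belongs to $\Gamma(U_{\Cinf},\OO(V_\varphi))$. So localization at $A\setminus\{0\}$ is beside the point. What you need is a multiple of $\tau s_\varphi$ lying in $Bs_\varphi$, i.e.\ some $b\neq0$ with $bf_\varphi\in B$. Such a $b$ cannot in general be taken in $A$. The function $f_\varphi$ has poles along $V_\varphi$, whereas a nonzero $b\in A$ vanishes on $X_{\Cinf}$ only at $\overline{\mathbb F}_q$-points. In the elliptic example \eqref{eq:elliptic-shtuka-GP}, $f$ has a simple pole at $(\alpha,\beta)$ with $|\alpha|=q^2>1$, so $\alpha\notin\overline{\mathbb F}_q$. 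Hence $bf\notin B$ for every $0\ne b\in A$. The paper itself notes that $f_\varphi$ need not lie in $K\otimes_{\Fq}\Cinf$.

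\textbf{Repair.} Take the denominator in $B$. Since $\Cinf(X)=\operatorname{Frac}(B)$, write $f_\varphi=P/Q$ with $P,Q\in B$ and $Q\ne0$; then $Q\,\tau s_\varphi=P\,s_\varphi$ in $\cM_\varphi$. You must also justify that evaluation is $B$-linear on all of $\cM_\varphi$, not just on $Bs_\varphi$. This is exactly where the special-function hypothesis enters: $(m\circ\varphi_a)(\omega)=m(\chi(a)\omega)=\chi(a)\,m(\omega)$, because coefficient Frobenius fixes $\chi(a)$. Evaluating the motive identity then gives $Q\omega^{(1)}=P\omega$. Dividing by $Q\neq0$ in $\operatorname{Frac}(\mathbb T_A)$ finishes the proof.
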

\begin{proof}
If $\omega$ is special, evaluation $m\mapsto m(\omega)$ on additive
polynomials is $B$-linear for $B=\Cinf\otimes_{\Fq}A$: indeed
$(m\circ\varphi_a)(\omega)=m((1\otimes a)\omega)
=(1\otimes a)m(\omega)$, since coefficient Frobenius fixes $1\otimes a$.
Write $f_\varphi=P/Q$ with $P,Q\in B$, $Q\ne0$.
The motive identity $Q\tau s_\varphi=Ps_\varphi$ gives
$Q\omega^{(1)}=P\omega$. Dividing by $Q$ in
$\operatorname{Frac}(\mathbb T_A)$ gives the scalar shtuka equation.

Conversely, iteration gives $\omega^{(i)}=F_\varphi^{[i]}\omega$.
Using \eqref{eq:a-shtuka-coordinate} then gives
$\varphi_a(\omega)=\sum_i\varphi_{a,i}F_\varphi^{[i]}\omega
=(1\otimes a)\omega$.
\end{proof}

\subsection{Frobenius constants of the Tate field}
 \label{sec:tate-fixed-field}

A homogeneous quotient of special functions is Frobenius fixed.  The next
lemma identifies such quotients inside the Tate algebra and its fraction
field.

\begin{lemma}[Frobenius constants of the general Tate field]
\label{lem:general-Tate-Frobenius-constants}
The general Tate algebra is a domain, and coefficient Frobenius has constants
\begin{equation}\label{eq:general-Tate-Frobenius-constants}
 \mathbb T_A^{(1)=1}=A,
 \qquad
 \operatorname{Frac}(\mathbb T_A)^{(1)=1}=K.
\end{equation}
\end{lemma}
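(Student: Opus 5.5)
We prove the three claims in order: that $\mathbb T_A$ is a domain, that $\mathbb T_A^{(1)=1}=A$, and that $\operatorname{Frac}(\mathbb T_A)^{(1)=1}=K$. Throughout we use the supremum norm \eqref{eq:general-Tate-basis-norm-main} with respect to an $\Fq$-basis adapted to the pole filtration at $\infty$.

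\emph{Domain property.} Choose an $\Fq$-basis $(a_i)$ of $A$ compatible with the filtration $A_{\le n}=\{a:-v_\infty(a)\Ninf\le n\}$, so that each $a\in A$ has a well-defined pole order. Let $F=\sum_i c_i\otimes a_i$ and $G=\sum_j d_j\otimes a_j$ be nonzero. Among the indices where $|c_i|=\lVert F\rVert_{\mathrm G}$, the set is finite because $c_i\to0$. We single out the terms of maximal pole order and reduce modulo the maximal ideal of $\OO_{\Cinf}$. After rescaling, $F$ and $G$ reduce to nonzero elements $\bar F,\bar G$ of $\overline{\Fq}\otimes_{\Fq}A$; here we use $\mathcal O_{\Cinf}/\mathfrak m=\overline{\Fq}$.

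The ring $\overline{\Fq}\otimes_{\Fq}A$ is a domain, since $X$ is geometrically connected and hence geometrically integral. Therefore $\bar F\bar G\ne0$, and this shows $\lVert FG\rVert_{\mathrm G}=\lVert F\rVert_{\mathrm G}\lVert G\rVert_{\mathrm G}\ne0$. The step to check is that the reduction map $\mathbb T_A^{\circ}\to\overline{\Fq}\otimes A$ is a ring homomorphism. This holds because the basis structure constants lie in $\Fq$.

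\emph{Constants of $\mathbb T_A$.} If $F^{(1)}=F$, then $c_i^q=c_i$ for every $i$, so each $c_i$ lies in $\Fq$. Since $c_i\to0$, only finitely many $c_i$ are nonzero, and hence $F\in A$.

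\emph{Constants of the fraction field.} This is the main obstacle. Suppose $F/G$ is Frobenius fixed, that is, $F G^{(1)}=F^{(1)}G$ with $G\ne0$. We want to show $F/G\in K$.

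Consider the $K$-vector space $W$ spanned inside $\operatorname{Frac}(\mathbb T_A)$ by $F$ and $G$. The element $\rho=F/G$ satisfies $\rho^{(1)}=\rho$. The standard approach is a Wronskian/linear-disjointness argument: the field $\operatorname{Frac}(\mathbb T_A)$ is linearly disjoint from $\Cinf$ over $K$ in the appropriate sense. Concretely, we show that $\Cinf\otimes_{\Fq}K\to\operatorname{Frac}(\mathbb T_A)$ has Frobenius constants $K$, and then we approximate.

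A cleaner route is to clear denominators. Let $I=\{a\in A: aF/G\in\mathbb T_A\}$, hoping to show $I\ne0$. We first try the following. Since $G\ne0$, some coefficient $d_j$ is nonzero. Consider the $\Cinf$-linear functionals given by coefficient extraction, and write the relation $FG^{(1)}=F^{(1)}G$ in terms of $\rho$ through a Frobenius-stable module. Namely, the $A$-module $M=\{H\in\mathbb T_A: H\rho\in\mathbb T_A\}$ is nonzero (it contains $G$) and is stable under coefficient Frobenius, because $\rho^{(1)}=\rho$. Moreover, $M$ is closed in $\mathbb T_A$ and is a $\mathbb T_A$-ideal.

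A nonzero Frobenius-stable closed ideal of $\mathbb T_A$ should contain a nonzero element of $A$. To see this, take $H\in M$ of minimal "support" in the basis with leading coefficient normalized to $1$. Then $H^{(1)}-H\in M$ has strictly smaller support, so by minimality $H^{(1)}=H$, and hence $H\in A$. Making "minimal support" rigorous for infinite expansions is the delicate point. For that we would use the reduction/leading-term filtration from the first step, via a Gröbner-style descent on the reduction $\bar M\subset\overline{\Fq}\otimes A$. That reduction is a Frobenius-stable ideal, and a nonzero Frobenius-stable ideal of $\overline{\Fq}\otimes A$ descends to a nonzero ideal of $A$ by Galois descent (Hilbert 90). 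Lifting back gives a nonzero $a\in A\cap M$.

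Then $a\rho\in\mathbb T_A^{(1)=1}=A$, so $\rho\in K$. The reverse inclusions $A\subset\mathbb T_A^{(1)=1}$ and $K\subset\operatorname{Frac}(\mathbb T_A)^{(1)=1}$ are clear.
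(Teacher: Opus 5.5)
Your arguments for the domain property and for $\mathbb T_A^{(1)=1}=A$ are correct and agree with the paper's. For the first, you reduce a Gauss-norm-one element to the domain $\overline{\Fq}\otimes_{\Fq}A$; for the second, you compare coefficients $c_i^q=c_i$ with $c_i\to0$.

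The fraction-field part has a genuine gap at the step ``lifting back gives a nonzero $a\in A\cap M$''. Suppose the reduction $\bar M$ contains a nonzero $\bar a$ with $a\in A$. This only gives some $H\in M$ with $H=a+\varepsilon$ and $\|\varepsilon\|_{\mathrm G}<1$; it does not give $a\in M$.

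No argument that sees only $\bar M$ can work, because reduction forgets exactly the zeros of elements of $M$ in the open unit disc. For example, in $\Cinf\langle t\rangle$ take the ideal $(t-\alpha)$ with $0<|\alpha|<1$. Its reduction is $(t)$, which is Frobenius stable and meets $\Fq[t]$. Yet $(t-\alpha)\cap\Fq[t]=0$, since evaluation at $\alpha$ would make $\alpha$ algebraic over $\Fq$, and $\alpha\notin\overline{\Fq}$. So you must use the Frobenius stability of $M$ itself. Your ``minimal support'' device would do that, but it is not available for infinite expansions, as you note. The Wronskian/linear-disjointness remarks are not carried out and do not close the gap.

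The missing ingredient is analytic: one-variable Weierstrass preparation, which is what the paper uses. Your ideal-theoretic set-up can be completed as follows.
\begin{enumerate}
\item Choose a nonconstant $t\in A$. Then $\mathbb T_A$ is finite free over $\mathbb T_0=\Cinf\langle t\rangle$, along an $\Fq[t]$-basis of $A$.
\item For $0\ne H\in M$, take a minimal integral equation of $H$ over $\mathbb T_0$. Since $\mathbb T_A$ is a domain, its constant term is a nonzero element of $M\cap\mathbb T_0$.
\item So $M\cap\mathbb T_0$ is a nonzero Frobenius-stable ideal of the principal ideal domain $\mathbb T_0$. Write its generator as $uP$, with $u\in\mathbb T_0^\times$ and $P$ monic with all roots in the closed unit disc.
\item Frobenius stability gives $P\mid P^{(1)}$. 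Uniqueness of the Weierstrass factorization and equality of degrees then force $P^{(1)}=P$.
\item Hence $P\in\Fq[t]\subset A$ is a nonzero element of $M\cap A$. Then $P\rho\in\mathbb T_A^{(1)=1}=A$, so $\rho\in K$.
\end{enumerate}
The paper packages the same idea differently. It writes $\operatorname{Frac}(\mathbb T_A)=\bigoplus_jF_0e_j$ with $F_0=\operatorname{Frac}(\mathbb T_0)$ and reduces to showing $F_0^{(1)=1}=\Fq(t)$. It proves this by applying Weierstrass preparation to the finite, Frobenius-stable zero and pole divisors of a fixed element on the closed unit disc.
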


\begin{proof}
Choose an $\Fq$-basis $(a_i)$ of $A$.  If
$F=\sum_i c_i\otimes a_i\in\mathbb T_A$ is Frobenius fixed, uniqueness of
the convergent basis expansion in
\eqref{eq:general-Tate-basis-norm-main} gives
$c_i^q=c_i$, hence $c_i\in\Fq$.
Since $c_i\to0$ and every nonzero element of $\Fq$ has norm one, only
finitely many $c_i$ are nonzero.  Thus $F\in A$, proving the first equality.

We prove the fraction-field assertion without appealing to descent from the
single Frobenius automorphism.  Choose a nonconstant $t\in A$.  Its only pole
on $X$ is supported at $\infty$, so $B:=\Fq[t]\subset A$ is finite.  Since
$A$ is torsion free over the principal ideal domain $B$, it is free; fix a
$B$-basis $e_1,\ldots,e_s$.  Put
\[
 \mathbb T_0:=\Cinf\langle t\rangle,
 \qquad F_0:=\operatorname{Frac}(\mathbb T_0).
\]
Completion commutes with this finite free extension and gives
\begin{equation}\label{eq:Tate-Noether-normalization}
 \mathbb T_A=\bigoplus_{j=1}^s\mathbb T_0e_j.
\end{equation}
The reduction of $\mathbb T_A$ is the scalar extension of the geometrically
integral ring $A$; the Gauss seminorm is therefore a multiplicative norm, so
$\mathbb T_A$ is a domain.  Localizing
\eqref{eq:Tate-Noether-normalization} at
$\mathbb T_0\setminus\{0\}$ produces a finite-dimensional domain over $F_0$,
hence a field, and consequently
\begin{equation}\label{eq:Tate-field-Noether-normalization}
 \operatorname{Frac}(\mathbb T_A)
 =\bigoplus_{j=1}^sF_0e_j.
\end{equation}

We first record the one-variable fixed field
\begin{equation}\label{eq:one-variable-Tate-fixed-field}
 F_0^{(1)=1}=\Fq(t).
\end{equation}
Indeed, let $H\in F_0$ be fixed.  Its zero and pole divisors on the closed
unit disc are finite and Frobenius stable.  By one-variable Weierstrass
preparation  \cite[\S2.2, Corollaries~9--10]{Bos14}, the monic polynomials whose roots, with multiplicity, are these
zeros and poles are uniquely determined.  Frobenius stability and monicity
therefore force both polynomials to lie in $\Fq[t]$.  Dividing $H$ by their
quotient leaves a Frobenius-fixed unit $U\in\mathbb T_0^\times$.  The first
part of the proof, applied to the monomial basis of $B$, gives
$\mathbb T_0^{(1)=1}=\Fq[t]$.  A polynomial in $\Fq[t]$ which is a unit on
the closed unit disc is constant, because every nonconstant polynomial over
$\Fq$ has a root in that disc.  Hence $U\in\Fq^\times$, proving
\eqref{eq:one-variable-Tate-fixed-field}.

Finally write a Frobenius-fixed $H\in\operatorname{Frac}(\mathbb T_A)$
uniquely, using \eqref{eq:Tate-field-Noether-normalization}, as
$H=\sum_jH_je_j$ with $H_j\in F_0$.  Coefficient Frobenius fixes every
$e_j$, so uniqueness gives $H_j^{(1)}=H_j$ for all $j$.  Equation
\eqref{eq:one-variable-Tate-fixed-field} yields $H_j\in\Fq(t)$, and hence
$H\in K$.  The reverse inclusion is fixed pointwise, completing the proof.
\end{proof}

\subsection{The characteristic orbit and weighted specialization}
\label{sec:orbit}

The \emph{orbit} of the characteristic point is the sequence
$\Xi^{(n)}\in X(\Cinf)$, $n\ge0$, obtained by twisting the
$\Cinf$-coefficients of $\Xi$ $n$ times.  Write
\begin{equation}\label{eq:exponential-coefficients}
 \Exp(w)=\sum_{n\ge0}e_n(\varphi)w^{q^n},
 \qquad e_0(\varphi)=1,
\end{equation}
and recall that
$F_\varphi^{[n]}=f_\varphi f_\varphi^{(1)}\cdots
f_\varphi^{(n-1)}$, with $F_\varphi^{[0]}=1$.

\begin{theorem}[Shtuka coefficient formula and decay]
\label{thm:orbital-exponential-expansion}
For every rank-one geometric shtuka realization fixed in
\S\ref{sec:realization},
\begin{equation}\label{eq:standard-shtuka-exp-coefficient}
 e_n(\varphi)
 =\left.(F_\varphi^{[n]})^{-1}\right|_{\Xi^{(n)}}
 \qquad(n\ge0),
\end{equation}
where the right side is the value of the rational function
$(F_\varphi^{[n]})^{-1}$ and is allowed to be zero.  Moreover,
\begin{equation}\label{eq:automatic-exponential-coefficient-decay}
 |e_n(\varphi)|^{1/q^n}\longrightarrow0.
\end{equation}
\end{theorem}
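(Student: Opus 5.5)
The plan is to characterize the coefficients $e_n(\varphi)$ by the recursion that comes from the functional equation \eqref{eq:functional}. I then check that the candidate values $c_n:=(F_\varphi^{[n]})^{-1}|_{\Xi^{(n)}}$ satisfy the same recursion with the same initial value.

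\emph{Recursion for $e_n$.} Fix a nonconstant $a\in A$ and write $\varphi_a=\sum_i\varphi_{a,i}\tau^i$. Comparing coefficients of $w^{q^n}$ in $\varphi_a(\Exp(w))=\Exp(\iota(a)w)$ gives
\[
\sum_{i=0}^{n}\varphi_{a,i}\,e_{n-i}(\varphi)^{q^i}=\iota(a)^{q^n}e_n(\varphi).
\]
Since $\varphi_{a,0}=\iota(a)$, this reads $(\iota(a)^{q^n}-\iota(a))e_n=\sum_{i\ge1}\varphi_{a,i}e_{n-i}^{q^i}$. Because $|\iota(a)|>1$, we have $\iota(a)^{q^n}\ne\iota(a)$ for $n\ge1$. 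So $e_0=1$ and this recursion determine every $e_n$ uniquely. It therefore suffices to show $c_0=1$, which is clear, and that the $c_n$ satisfy the same recursion. Note also that $\iota(a)^{q^n}=a(\Xi^{(n)})$, because $a$ is defined over $\Fq$.

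\emph{Regularity of the candidate.} This is the step I expect to carry the real content, because $F_\varphi^{[n]}$ may have poles along the orbit and the value $c_n$ is allowed to vanish. From \eqref{eq:divf},
\[
\divv\bigl((F_\varphi^{[m]})^{-1}\bigr)=V_\varphi-V_\varphi^{(m)}+\sum_{k=0}^{m-1}\bigl([q_\varphi^{(k)}]-[\Xi^{(k)}]\bigr).
\]
I check that this divisor has nonnegative multiplicity at $\Xi^{(m)}$, term by term:
\begin{itemize}
\item $V_\varphi$ is effective, so it contributes $\ge0$.
\item The multiplicity of $V_\varphi^{(m)}$ at $\Xi^{(m)}$ equals that of $V_\varphi$ at $\Xi$, which is $0$ because $\Xi\notin\supp V_\varphi$.
\item No $q_\varphi^{(k)}$ equals $\Xi^{(m)}$, since the $q_\varphi^{(k)}$ lie over $\infty$ and $\Xi^{(m)}$ does not.
\item $\Xi^{(k)}\ne\Xi^{(m)}$ for $k<m$, since $a(\Xi^{(k)})=\iota(a)^{q^k}$ are pairwise distinct.
\end{itemize}
Hence $(F_\varphi^{[m]})^{-1}$ is regular at $\Xi^{(m)}$ and $c_m$ is a well-defined element of $\Cinf$. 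Twisting $i$ times, $(F_\varphi^{[m]})^{(i),-1}$ is regular at $\Xi^{(m+i)}$ with value $c_m^{q^i}$.

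\emph{The candidate satisfies the recursion.} Start from \eqref{eq:a-shtuka-coordinate}, $a=\sum_i\varphi_{a,i}F_\varphi^{[i]}$, and divide by $F_\varphi^{[n]}$. Using $F_\varphi^{[n]}=F_\varphi^{[i]}\,(F_\varphi^{[n-i]})^{(i)}$ for $i\le n$, this gives the identity of rational functions
\[
\frac{a}{F_\varphi^{[n]}}=\sum_{i\le n}\varphi_{a,i}\bigl((F_\varphi^{[n-i]})^{(i)}\bigr)^{-1}+\sum_{i>n}\varphi_{a,i}\frac{F_\varphi^{[i]}}{F_\varphi^{[n]}}.
\]
For $i>n$ the quotient $F_\varphi^{[i]}/F_\varphi^{[n]}=(F_\varphi^{[i-n]})^{(n)}$ vanishes at $\Xi^{(n)}$, because $F_\varphi^{[j]}(\Xi)=0$ for $j\ge1$, as recorded after \eqref{eq:isogeny-tangent-from-intertwiner-main}. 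Evaluating at $\Xi^{(n)}$ with the regularity established above yields
\[
a(\Xi^{(n)})\,c_n=\sum_{i\le n}\varphi_{a,i}c_{n-i}^{q^i},
\]
which is exactly the recursion. By uniqueness, $e_n(\varphi)=c_n$ for all $n$, proving \eqref{eq:standard-shtuka-exp-coefficient}.

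\emph{Decay.} Statement \eqref{eq:automatic-exponential-coefficient-decay} is the standard consequence of $\Exp$ being entire on $\Cinf$. Its power series $\sum_n e_nw^{q^n}$ converges for every $w$, so $|e_n|\,r^{q^n}\to0$ for every $r>0$. This gives $\limsup_n|e_n|^{1/q^n}\le r^{-1}$ for all $r$, hence the limit is $0$.
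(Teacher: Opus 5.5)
Your proof is correct, and it takes a different route from the paper. The paper does not prove \eqref{eq:standard-shtuka-exp-coefficient} internally. It cites it as the standard exponential-coefficient formula of the rank-one shtuka construction (Thakur, Anglès--Ngo Dac--Tavares Ribeiro, and Green--Papanikolas in the elliptic case). It adds only the remark that the product is formed in $\Cinf(X)$ before specialization. Your decay argument is the same as the paper's.

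You instead derive the formula from the realization data. First, you show that $e_0=1$ together with the functional-equation recursion determines all $e_n$, using $|\iota(a)|>1$. Second, you check that the candidate values satisfy the same recursion, by dividing \eqref{eq:a-shtuka-coordinate} by $F_\varphi^{[n]}$ and evaluating at $\Xi^{(n)}$.

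What this buys is a verification in exactly the generality the statement asserts: no sign normalization, arbitrary $\Ninf$ and $q_\varphi$, and possibly vanishing $e_n$ when $\Xi^{(n)}\in\supp(V_\varphi)$. It uses only \eqref{eq:divf}, \eqref{eq:a-shtuka-coordinate} for a single nonconstant $a$, and $\Xi\notin\supp(V_\varphi)$. By contrast, the citation requires matching the normalizations of the external sources to the present setting.

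Your regularity step is precisely Lemma~\ref{lem:weighted-orbit-divisor-cancellation}. Your factorization $F_\varphi^{[n]}=F_\varphi^{[i]}\bigl(F_\varphi^{[n-i]}\bigr)^{(i)}$ is the one used in the proof of Theorem~\ref{thm:pellarin}. That proof takes the coefficient formula as input to obtain the orbital values of $F_\varphi^{[i]}/a$; you run the same computation in the opposite direction. Consequently, your argument also makes the later weighted Pellarin identity independent of the external citation.
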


\begin{proof}
Formula~\eqref{eq:standard-shtuka-exp-coefficient} is the standard
exponential-coefficient formula in the rank-one shtuka construction; see
\cite[Proposition~0.3.6]{Tha93}, \cite[\S8.2]{Tha04}, and
\cite[\S3.1]{ANDTR17a}; in the elliptic realization
it is \cite[Theorem~3.4]{GP18}.  The product is formed in the geometric
function field $\Cinf(X)$ before specialization and does not require
$f_\varphi$ to belong to the smaller ring $K\otimes_{\Fq}\Cinf$.  Since
$\Exp$ is entire, for every $R>0$ one has
$|e_n(\varphi)|R^{q^n}\to0$.  Taking $q^n$-th roots and then allowing $R$ to
be arbitrarily large proves
\eqref{eq:automatic-exponential-coefficient-decay}.
\end{proof}

\begin{remark}[Possible vanishing of exponential coefficients]
\label{rem:zero-exponential-coefficients}
It is expected that $e_n(\varphi)\neq0$ for every $n\ge0$, but this is not proved in general.  Thakur proved that a vanishing $e_n(\varphi)=0$ is possible only for $n\in\{2,\dots,g-1\}$, where $g$ is the genus of $X$, and never occurs when $K$ has a place of degree one \cite[\S3.1]{ANDTR17a}, \cite[proof of Thm.~3.2]{Tha93}. In particular all $e_n(\varphi)$ are nonzero in the Carlitz and elliptic models below. The weighted specialization of
Definition~\ref{def:weighted-orbital-specialization} also covers the case
$e_n(\varphi)=0$. If $F_\varphi^{[n]}$ is regular and nonzero at
$\Xi^{(n)}$, then one may put
\[
 D_n^\varphi:=F_\varphi^{[n]}(\Xi^{(n)})
 =\prod_{i=1}^{n}
   f_\varphi(\Xi^{(i)})^{q^{n-i}}
\]
and write $e_n(\varphi)=(D_n^\varphi)^{-1}$; the factorwise
evaluation presupposes that $f_\varphi$ is regular at every
$\Xi^{(i)}$, $1\le i\le n$, and otherwise the product is to be read as
a formal rational identity.  This reciprocal notation applies
to the Carlitz and elliptic models used below; elsewhere we retain
$e_n(\varphi)$.
\end{remark}

\begin{lemma}[Orbit escape]\label{lem:orbit-escape}
The characteristic orbit $\{\Xi^{(n)}\}_{n\ge0}$ is injective and exits
every affinoid subdomain of $U_{\Cinf}^{\mathrm{an}}$.
\end{lemma}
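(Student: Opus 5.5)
The plan is to reduce both assertions to one computation: the absolute values of the coordinates of $\Xi^{(n)}$ in a fixed finite set of generators of $A$. Choose $\Fq$-algebra generators $t_1,\dots,t_r$ of $A$; then $U$ is a closed subscheme of $\mathbb A^r$ via $P\mapsto(t_1(P),\dots,t_r(P))$. Applying coefficient Frobenius $n$ times to the point $\Xi$ raises its coordinates to the $q^n$-th power, because each $t_j$ is defined over $\Fq$. Hence
\[
t_j(\Xi^{(n)})=\iota(t_j)^{q^n}.
\]

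For the escape statement, pick one nonconstant $t\in A$. Its only pole is at $\infty$, so $v_\infty(t)<0$, and therefore $|\iota(t)|=q^{-\Ninf v_\infty(t)}>1$. It follows that
\[
|t(\Xi^{(n)})|=|\iota(t)|^{q^n}\longrightarrow\infty .
\]
Now let $W$ be any affinoid subdomain of $U_{\Cinf}^{\mathrm{an}}$. The function $t$ is a global analytic function on $W$, so it is bounded there by the maximum modulus principle, which gives $\sup_W|t|<\infty$. Consequently only finitely many $\Xi^{(n)}$ lie in $W$, and the orbit exits $W$. To make this fully precise, I would note that an affinoid subdomain is quasi-compact, so it lies in some closed polydisc $\{|t_j|\le R\}$ of the embedding; the bound on $|t|$ is then immediate.

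For injectivity, suppose $\Xi^{(m)}=\Xi^{(n)}$ with $m<n$. Evaluating $t$ at both points gives $\iota(t)^{q^m}=\iota(t)^{q^n}$. Taking absolute values yields $|\iota(t)|^{q^m}=|\iota(t)|^{q^n}$, which is impossible since $|\iota(t)|>1$. The same argument also shows the points are pairwise distinct with strictly increasing $|t|$-values, which is slightly stronger than needed.

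The only step needing some care is the identification $t(\Xi^{(n)})=\iota(t)^{q^n}$. This depends on what "twisting the $\Cinf$-coefficients" means for a point of $X(\Cinf)$ that is not over $\Fq$. I will take it to mean the base change along the $q$-power Frobenius of $\Cinf$, which fixes $X$ because $X$ is defined over $\Fq$. On the coordinate ring $A\otimes\Cinf$, the point $\Xi$ corresponds to the homomorphism $a\otimes c\mapsto \iota(a)c$. Its twist is then $a\otimes c\mapsto \iota(a)^{q^n}c$, which is exactly the displayed formula. I expect this identification to be routine bookkeeping rather than the main obstacle.
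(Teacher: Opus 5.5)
Your proposal is correct and follows essentially the same argument as the paper. You pick a nonconstant $t\in A$ with $|\iota(t)|>1$ and use $t(\Xi^{(n)})=\iota(t)^{q^n}$ to get pairwise distinct, unbounded values. Since analytic functions are bounded on affinoids, injectivity and escape both follow.
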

\begin{proof}
Choose a nonconstant $p\in A$ with $|\iota(p)|>1$.  Since
$p(\Xi^{(n)})=\iota(p)^{q^n}$, these values are distinct and unbounded.  Every
analytic function is bounded on an affinoid, so the orbit is injective and
eventually leaves every affinoid in $U_{\Cinf}^{\mathrm{an}}$.
\end{proof}

\begin{lemma}[Drinfeld-divisor cancellation]
\label{lem:weighted-orbit-divisor-cancellation}
Put $m_n=\operatorname{mult}_{\Xi^{(n)}}(V_\varphi)$.  Then
\begin{equation}\label{eq:F-product-orbit-order}
 \operatorname{ord}_{\Xi^{(n)}}(F_\varphi^{[n]})=-m_n.
\end{equation}
Consequently \eqref{eq:weighted-orbital-specialization} is defined whenever
the first-variable pole of $g$ at $\Xi^{(n)}$ is bounded by $V_\varphi$.
\end{lemma}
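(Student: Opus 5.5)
The plan is to compute the order of $F_\varphi^{[n]}=\prod_{i=0}^{n-1}f_\varphi^{(i)}$ at $\Xi^{(n)}$ factor by factor, using the twisted divisor formula obtained from \eqref{eq:divf}:
\[
 \divv(f_\varphi^{(i)})=V_\varphi^{(i+1)}-V_\varphi^{(i)}+[\Xi^{(i)}]-[q_\varphi^{(i)}].
\]
Summing over $0\le i\le n-1$ telescopes to
\[
 \divv(F_\varphi^{[n]})=V_\varphi^{(n)}-V_\varphi+\sum_{i=0}^{n-1}[\Xi^{(i)}]-\sum_{i=0}^{n-1}[q_\varphi^{(i)}].
\]
I would take the order of each of the four terms at the point $\Xi^{(n)}$ separately.

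For the first term, $\operatorname{mult}_{\Xi^{(n)}}(V_\varphi^{(n)})=\operatorname{mult}_{\Xi}(V_\varphi)=m_0$, because twisting is a bijection on points that carries multiplicities along. This is $0$ by the standing assumption $\Xi\notin\supp(V_\varphi)$. The second term contributes $-m_n$ by definition.

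For the third term, orbit escape (Lemma~\ref{lem:orbit-escape}) says the orbit is injective, so $\Xi^{(n)}\ne\Xi^{(i)}$ for $i<n$. Hence the sum of $[\Xi^{(i)}]$ contributes $0$.

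For the fourth term, the points $q_\varphi^{(i)}$ lie above $\infty$, while $\Xi^{(n)}$ lies in $U_{\Cinf}$, since $p(\Xi^{(n)})=\iota(p)^{q^n}$ is finite for every $p\in A$. So this term also contributes $0$. Adding the four contributions gives $\operatorname{ord}_{\Xi^{(n)}}(F_\varphi^{[n]})=-m_n$.

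For the consequence, the weighted specialization multiplies $g$ by $(F_\varphi^{[n]})^{-1}$, which has a zero of order exactly $m_n$ at $\Xi^{(n)}$. If $g$ has a first-variable pole of order at most $m_n=\operatorname{mult}_{\Xi^{(n)}}(V_\varphi)$ there, the product is regular at $\Xi^{(n)}$, so the value is defined.

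The only delicate point is the bookkeeping in the fourth term: one must ensure that no twist of $q_\varphi$ or of $\Xi$ coincides with $\Xi^{(n)}$. Both cases are settled above, by injectivity of the orbit and by the fact that $\Xi^{(n)}$ lies over $U$. Note that no hypothesis on the Frobenius orbit of $q_\varphi$ relative to $V_\varphi$ is needed.
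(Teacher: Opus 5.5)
Your proof is correct and is essentially the paper's argument. Both telescope the divisor of $F_\varphi^{[n]}$, using $\Xi\notin\operatorname{supp}(V_\varphi)$, injectivity of the characteristic orbit, and its disjointness from the infinity orbit; the only difference is that the paper computes $\operatorname{ord}_{\Xi^{(n)}}(f_\varphi^{(j)})=m_{n-j-1}-m_{n-j}$ factor by factor and then sums, whereas you telescope the divisors first and read off the order at $\Xi^{(n)}$.
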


\begin{proof}
The case $n=0$ follows from $\Xi\notin\supp(V_\varphi)$.  For $n\ge1$,
the characteristic and infinity terms in \eqref{eq:divf} do not occur at
$\Xi^{(n-j)}$ for $0\le j<n$.
 (here we use the injectivity of the characteristic orbit and its disjointness from the infinity orbit, proved in Lemma~\ref{lem:orbit-escape})  Simultaneous twisting of the divisor and the
point therefore gives
\[
 \operatorname{ord}_{\Xi^{(n)}}(f_\varphi^{(j)})
 =m_{n-j-1}-m_{n-j}.
\]
Summing over $j=0,\ldots,n-1$ telescopes to
$m_0-m_n=-m_n$, which proves \eqref{eq:F-product-orbit-order}.  Thus
$(F_\varphi^{[n]})^{-1}$ has a zero of order $m_n$, exactly the order needed
to cancel a pole bounded by $V_\varphi$.
\end{proof}


\section{Evaluation of motive}
\label{sec:evaluation}
Rational motive evaluation applies an additive polynomial to an exponential
value.  This section develops three evaluations with a common
domain: the rational evaluation of Section~\ref{sec:levelI}, the completed
separated-tensor evaluation $\widehat{\mathcal{E}}^{\mathrm{sep}}_{\varphi,u}$ of
Section~\ref{sec:separated-tensor}, and the orbital evaluation along the
characteristic orbit of Section~\ref{sec:orbital-extension}.  A kernel that
expands as a separated-tensor series on a disk is evaluated by
$\widehat{\mathcal{E}}^{\mathrm{sep}}_{\varphi,u}$; a general diagonal Cauchy
kernel (one that admits no such expansion, or whose first
variable has poles bounded by $V_\varphi$) requires the orbital evaluation
$\mathcal{E}^{\Xi}_{\varphi,u}$.  The three constructions agree on their
common domain.

\subsection{The rational evaluation}\label{sec:levelI}

\begin{definition}[Rational level]\label{def:levelI}
For $u\in\Cinf$, the \emph{rational motive exponential evaluation} is the
$\Cinf$-linear map
\[
\mathcal{E}^{\mathrm{rat}}_{\varphi,u}\colon \cM_{\varphi,K}\longrightarrow\Cinf\]
defined as
\[ 
\frac{1}{a}m\;\longmapsto\;
m\Bigl(\Exp\Bigl[\frac{u}{\iota(a)}\Bigr]\Bigr),
\]
where $m\in\cM_\varphi$, $a\in A\smallsetminus\{0\}$, and $m(\cdot)$ is the additive
polynomial of $m$.
\end{definition}

\begin{lemma}[{Geometric coordinate}]
\label{lem:levelI}
If $x=m/a$ and $m=\sum_{i=0}^N\alpha_i\tor^i$ as an additive polynomial,
then the geometric generic-frame coordinate of $x$ is
\begin{equation}\label{eq:localized-geometric-coordinate}
 g_x(\one)=\frac{\sum_{i=0}^N\alpha_iF_\varphi^{[i]}(\one)}{a(\one)},
 \qquad x=g_xs_\varphi,
\end{equation}
and no global freeness of $\cM_\varphi$ is used.
\end{lemma}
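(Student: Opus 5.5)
The plan is to unwind the definition of the localized motive and the coordinate dictionary \eqref{eq:geometric-motive-coordinate}, keeping the whole argument inside the rational frame $s_\varphi$. By definition $\cM_{\varphi,K}=K\otimes_A\cM_\varphi$, so $x=m/a$ means $x=a^{-1}\otimes m$ with $m\in\cM_\varphi$ and $a\in A\smallsetminus\{0\}$.

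First, apply \eqref{eq:geometric-motive-coordinate} to $m=\sum_{i=0}^N\alpha_i\tor^i$. Together with Lemma~\ref{lem:direct-geometric-shtuka-Frobenius-basis}, this says that $\kappa_\varphi^{-1}(m)=\sum_i\alpha_iF_\varphi^{[i]}$, so $m=g_ms_\varphi$ with $g_m=\sum_i\alpha_iF_\varphi^{[i]}$. This step uses only that $\kappa_\varphi$ is $\tor$-equivariant and sends $1$ to $s_\varphi$. It does not use freeness of $\cM_\varphi$ over $A\otimes\Cinf$, only the cyclicity over $\Cinf\{\tau\}$ from Lemma~\ref{lem:direct-geometric-shtuka-Frobenius-basis}\ref{item:Frobenius-cyclic-motive}.

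Second, I will check that the $A$-action in the frame is multiplication by the scalar function $a(\one)$. This is \eqref{eq:a-shtuka-coordinate}: $a\cdot s_\varphi=\varphi_a\circ\mathrm{id}$ has coordinate $\sum_i\varphi_{a,i}F_\varphi^{[i]}=a(\one)$. Because $\kappa_\varphi$ is $A\otimes\Cinf$-linear, it follows that $a\cdot(g s_\varphi)=a(\one)g\,s_\varphi$ for every geometric coordinate $g$. The coordinate map $c\mapsto c\,s_\varphi$ therefore extends uniquely to the localization by inverting the nonzero functions $a(\one)\in\Cinf(X)^\times$, which matches the membership criterion \eqref{eq:membership}.

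Third, the conclusion: the element $g_x:=g_m/a(\one)$ satisfies $a\cdot(g_xs_\varphi)=g_ms_\varphi=m$. So $g_xs_\varphi$ is the unique element of the $K$-line $\cM_{\varphi,K}$ with this property, namely $x$. Uniqueness holds because $\cM_{\varphi,K}$ is torsion free over $A$, since $\Cinf(X)$ is a field. Well-definedness under change of representative, $m/a=m'/a'$ meaning $a'm=am'$, follows by applying the coordinate map and dividing in $\Cinf(X)$.

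The only point needing care is this independence from the representative $(m,a)$ together with the claim that nothing uses global freeness. I expect it to be routine once the action is seen to be by the scalar $a(\one)$.
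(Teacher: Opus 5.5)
Your proposal is correct and is essentially the paper's argument. The paper's proof consists of your first two steps, reading off the numerator from \eqref{eq:geometric-motive-coordinate} and the denominator from \eqref{eq:a-shtuka-coordinate}; your third step is routine localization bookkeeping that the paper leaves implicit. One small slip is that $\cM_{\varphi,K}=K\otimes_A\cM_\varphi$ is not a $K$-line but a rank-one module over the localized coefficient ring. This does not affect the argument, since $x=m/a$ is the unique element with $a\cdot x=m$ simply because a nonzero $a$ acts invertibly on the localization.
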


\begin{proof}
Equations~\eqref{eq:geometric-motive-coordinate} and
\eqref{eq:a-shtuka-coordinate} give
\eqref{eq:localized-geometric-coordinate}.
\end{proof}

\begin{lemma}[{Well-definedness of rational evaluation}]
\label{lem:rat-well-defined}
$\mathcal{E}^{\mathrm{rat}}_{\varphi,u}$ is well defined.  If
$x=m/a$ and $m=\sum_{i=0}^N\alpha_i\tor^i$ as an additive polynomial,
then
\begin{equation}\label{eq:coordinate}
 \mathcal E^{\mathrm{rat}}_{\varphi,u}(x)
 =\sum_{i=0}^N\alpha_i
   \Exp\!\left(\frac{u}{\iota(a)}\right)^{q^i}.
\end{equation}
\end{lemma}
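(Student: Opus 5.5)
The plan is to check that the recipe $\frac1a m\mapsto m(\Exp[u/\iota(a)])$ does not depend on how an element of $\cM_{\varphi,K}$ is written as a fraction. Once that is settled, the displayed formula \eqref{eq:coordinate} follows immediately from the definition and the additive-polynomial expansion \eqref{eq:additive-polynomial-coordinate}.

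The key step is a compatibility identity. For $m\in\cM_\varphi$ and $b\in A\smallsetminus\{0\}$, the $A$-module structure on the Anderson motive is right composition, $b\cdot m=m\circ\varphi_b$. Hence, using the functional equation \eqref{eq:functional},
\[
(b\cdot m)\Bigl(\Exp\Bigl[\frac{u}{\iota(ab)}\Bigr]\Bigr)
=m\Bigl(\varphi_b\Bigl(\Exp\Bigl[\frac{u}{\iota(ab)}\Bigr]\Bigr)\Bigr)
=m\Bigl(\Exp\Bigl[\frac{u}{\iota(a)}\Bigr]\Bigr).
\]
This shows that the two representatives $\frac{bm}{ab}$ and $\frac{m}{a}$ give the same value.

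Now suppose $\frac{m}{a}=\frac{m'}{a'}$ in $K\otimes_A\cM_\varphi$. Since $\cM_\varphi$ is a torsion-free $A$-module, projective of rank one, this equality means exactly $a'\cdot m=a\cdot m'$ in $\cM_\varphi$. Applying the compatibility identity with $b=a'$ to the first representative and $b=a$ to the second, both values equal the common value at the representative $\frac{a'm}{aa'}=\frac{am'}{aa'}$. So the map is well defined.

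Additivity is checked on a common denominator, where additive polynomials add pointwise. $\Cinf$-linearity holds because scalar multiplication by $c\in\Cinf$ acts by left multiplication on additive polynomials, so the evaluated value is multiplied by $c$. Finally, writing $m=\sum_i\alpha_i\tor^i$ gives $m(x)=\sum_i\alpha_ix^{q^i}$, which is \eqref{eq:coordinate}.

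The only point needing care is the convention for the $A$-action on $\cM_\varphi$. It must be composition $m\circ\varphi_b$ on the right, which is the convention compatible with \eqref{eq:a-shtuka-coordinate} and with $1\cdot s_\varphi=s_\varphi$. With that convention the argument reduces to the single functional equation \eqref{eq:functional}. If one instead tests via geometric coordinates (Lemma~\ref{lem:levelI}), one should confirm that this convention matches the geometric action. Apart from this, the argument is routine.
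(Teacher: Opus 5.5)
Your proposal is correct and matches the paper's proof. Both reduce $m/a=m'/a'$ to $a'\cdot m=a\cdot m'$ under the precomposition action, then evaluate at $\Exp(u/\iota(aa'))$ using exponential equivariance \eqref{eq:functional}, after which \eqref{eq:coordinate} is immediate; one small correction: $\cM_\varphi$ is projective of rank one over $A\otimes_{\Fq}\Cinf$, not over $A$, and what you actually need is torsion-freeness over $A$, which holds because $m\circ\varphi_b=0$ with $b\ne0$ forces $m=0$ in the domain $\Cinf\{\tau\}$.
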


\begin{proof}
If $m/a=m'/a'$, then $a'\cdot m=a\cdot m'$ in the motive, where the
$A$-action on additive polynomials is precomposition.  Evaluating the two
sides at $\Exp(u/(\iota(a)\iota(a')))$ and using exponential equivariance
gives
\[
 m\bigl(\Exp(u/\iota(a))\bigr)
 =m'\bigl(\Exp(u/\iota(a'))\bigr).
\]
Thus Definition~\ref{def:levelI} is independent of the presentation.
Applying the additive polynomial $m$ to $\Exp(u/\iota(a))$ gives
\eqref{eq:coordinate}.
\end{proof}

\begin{example}[Carlitz]\label{ex:carlitz-levelI}
For $A=\Fq[t]$ and the Carlitz module,
$\mathcal{E}^{\mathrm{rat}}_{C,u}(c(t)s_C)
=\exp_C(c(\theta)u)$ for $c\in K$.  For a general
$g=\sum_\nu\alpha_\nu c_\nu\in K\otimes_{\Fq}\Cinf$, one instead has
\[
 \mathcal{E}^{\mathrm{rat}}_{C,u}(g s_C)
 =\sum_\nu\alpha_\nu\exp_C(c_\nu(\theta)u);
\]
the single-exponential formula cannot be extended by treating $\exp_C$ as
$\Cinf$-linear.
\end{example}

\begin{proposition} \label{prop:certificates}
Let $u\in\Cinf$. The rational evaluation
$\mathcal{E}^{\mathrm{rat}}_{\varphi,u}$ satisfies:
\begin{enumerate}[label={\textup{(\roman*)}},leftmargin=2.2em]
\item \emph{(Normalization)} $\mathcal{E}^{\mathrm{rat}}_{\varphi,u}(s_\varphi)=\Exp u$.
\item \emph{(Frobenius interchange)}
\[
\mathcal{E}^{\mathrm{rat}}_{\varphi,u}\bigl(\tor_{\one}(m)\bigr)=\bigl(\mathcal{E}^{\mathrm{rat}}_{\varphi,u}(m)\bigr)^{(1)},
\]
where $\tor_{\one}(c\,s_\varphi)=f_{\varphi}(\one)\,c^{(1)}\,s_\varphi$.
\item \emph{(Isogeny naturality)} for every $m\in\cM_{\psi,K}$,\begin{equation}\label{eq:full-isogeny-naturality}\mathcal{E}^{\mathrm{rat}}_{\varphi,u}\bigl(\eta_J^!\,m\bigr)=\mathcal{E}^{\mathrm{rat}}_{\psi,h_J(\Xi)u}(m),\end{equation} where $\eta_J^!n:=n\circ\eta_J$ is the contravariant motive map.
\end{enumerate}
\end{proposition}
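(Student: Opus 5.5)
All three properties will be checked directly from Definition~\ref{def:levelI} and the explicit formula \eqref{eq:coordinate} of Lemma~\ref{lem:rat-well-defined}, working with presentations $x=m/a$ with $m\in\cM_\varphi$ and $a\in A\smallsetminus\{0\}$. Well-definedness is already established, so any convenient presentation may be used.

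For \emph{normalization}, take $m=s_\varphi$, the identity additive polynomial, and $a=1$. Then $\mathcal E^{\mathrm{rat}}_{\varphi,u}(s_\varphi)=s_\varphi(\Exp u)=\Exp u$.

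For \emph{Frobenius interchange}, first note that $\tor_{\one}$ is the transport of left multiplication by $\tau$ under $\kappa_\varphi$, extended to $\cM_{\varphi,K}$. Since $\tau$ commutes with the $A$-action by precomposition, we have $\tau(m/a)=(\tau m)/a$; on coordinates this reads $f_\varphi(\one)(g_m/a)^{(1)}=f_\varphi g_m^{(1)}/a$, because $a\in A$ is fixed by coefficient Frobenius. So if $m=\sum\alpha_i\tau^i$, then $\tau m=\sum\alpha_i^q\tau^{i+1}$, and \eqref{eq:coordinate} gives
\[
\mathcal E^{\mathrm{rat}}_{\varphi,u}(\tau m/a)=\sum_i\alpha_i^q\Exp(u/\iota(a))^{q^{i+1}}=\Bigl(\sum_i\alpha_i\Exp(u/\iota(a))^{q^i}\Bigr)^q .
\]
The only point needing care is that the twist $(\cdot)^{(1)}$ on the scalar value means the $q$-th power. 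I will also check that the formula $\tor_{\one}(cs_\varphi)=f_\varphi c^{(1)}s_\varphi$ agrees with $\kappa_\varphi$ on the localized motive. This follows from Lemma~\ref{lem:levelI} together with $f_\varphi(F^{[i]}_\varphi)^{(1)}=F^{[i+1]}_\varphi$ from Lemma~\ref{lem:direct-geometric-shtuka-Frobenius-basis}.

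For \emph{isogeny naturality}, write $m\in\cM_{\psi,K}$ as $n/a$ with $n\in\cM_\psi$. I first need that $\eta_J^!$ is $A$-linear, i.e.\ $\eta_J^!(n\circ\psi_a)=n\circ\psi_a\circ\eta_J=n\circ\eta_J\circ\varphi_a$, using the isogeny relation $\psi_a\eta_J=\eta_J\varphi_a$. Hence $\eta_J^!(n/a)=(n\circ\eta_J)/a$, and by Definition~\ref{def:levelI}
\[
\mathcal E^{\mathrm{rat}}_{\varphi,u}(\eta_J^!m)=n\bigl(\eta_J(\Exp(u/\iota(a)))\bigr).
\]
Next apply the uniformization compatibility \eqref{eq:isogeny-exponential-compatibility-main} with $w=u/\iota(a)$. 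This gives $\eta_J(\exp_\varphi(w))=\exp_\psi(h_J(\Xi)u/\iota(a))$, so the value above equals $n(\exp_\psi(h_J(\Xi)u/\iota(a)))=\mathcal E^{\mathrm{rat}}_{\psi,h_J(\Xi)u}(n/a)$.

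The main (mild) obstacle is bookkeeping: the contravariant map $\eta_J^!$ must be extended to localized motives compatibly with the $A$-actions, which is exactly the $A$-linearity check above. We must also keep straight that the scalar $h_J(\Xi)=\partial\eta_J$, noted in \eqref{eq:isogeny-tangent-from-intertwiner-main}, is the one appearing in the exponential compatibility.
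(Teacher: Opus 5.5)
Your proposal is correct and follows essentially the same route as the paper: (i) is the presentation with $a=1$; (ii) uses that $\tau$ acts by postcomposition with $x\mapsto x^q$ and commutes with the precomposition $A$-action; and (iii) combines the dictionary $\eta_J^!n\leftrightarrow n\circ\eta_J$ with the exponential compatibility \eqref{eq:isogeny-exponential-compatibility-main}. Your explicit check that $\eta_J^!$ is $A$-linear (via $\psi_a\eta_J=\eta_J\varphi_a$), so that it extends to $\cM_{\psi,K}$ by $\eta_J^!(n/a)=(n\circ\eta_J)/a$, only spells out a step the paper leaves implicit.
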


\begin{proof}
(i) is the case $a=1$, $m=1$ of Definition~\ref{def:levelI}.

For (ii),
write $x=m/a$ with $m\in\cM_\varphi$ and $0\ne a\in A$.  The operator
$\tau$ acts on the additive-polynomial realization by postcomposition with
$x\mapsto x^q$ and commutes with the $A$-action by precomposition.  Therefore
\[
 \mathcal E^{\mathrm{rat}}_{\varphi,u}(\tau_Ix)
 =(\tau m)\bigl(\Exp(u/\iota(a))\bigr)
 =m\bigl(\Exp(u/\iota(a))\bigr)^q
 =\mathcal E^{\mathrm{rat}}_{\varphi,u}(x)^q.
\]

For (iii),
write $m=x/a$ with $x\in\cM_\psi$ and
$a\in A\smallsetminus\{0\}$, and let $P_x$ be the additive polynomial
corresponding to $x$.  Under the motive--additive-polynomial dictionary,
$\eta_J^! x$ corresponds to $P_x\circ\eta_J$.  Therefore
\begin{align*}
 \mathcal E^{\mathrm{rat}}_{\varphi,u}
  \bigl(\eta_J^! (x/a)\bigr)
 &=P_x\!\left(
    \eta_J\!\left(\exp_\varphi
      \left(\frac{u}{\iota(a)}\right)\right)\right)\\
 &=P_x\!\left(\exp_\psi
      \left(\frac{h_J(\Xi)u}{\iota(a)}\right)\right)
  =\mathcal E^{\mathrm{rat}}_{\psi,h_J(\Xi)u}(x/a),
\end{align*}
which proves \eqref{eq:full-isogeny-naturality}.
\end{proof}

\begin{remark}\label{rem:classical-identity-isogeny}
For $m=s_\psi$, item (iii) specializes to the classical identity
$\mathcal{E}^{\mathrm{rat}}_{\varphi,u}(h_Js_\varphi)=\eta_J(\Exp u)$: indeed
$\eta_J^!(s_\psi)=h_Js_\varphi$ by \eqref{eq:intertwiner}, and
\[
\mathcal{E}^{\mathrm{rat}}_{\psi,h_J(\Xi)u}(s_\psi)=\exp_\psi(h_J(\Xi)u)=\eta_J(\exp_\varphi u).
\]
\end{remark}

\begin{lemma}[Period annihilator]\label{lem:period-annihilator}
For $x\in\cM_{\varphi,K}$,
$\mathcal E^{\mathrm{rat}}_{\varphi,\lambda}(x)=0$
for every $\lambda\in\Lambda_\varphi$ if and only if
$x\in\cM_\varphi$.
\end{lemma}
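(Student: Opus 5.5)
We prove the two implications separately, working throughout with a presentation $x=m/a$, where $m\in\cM_\varphi$ and $0\ne a\in A$. Such a presentation exists by the definition of $\cM_{\varphi,K}=K\otimes_A\cM_\varphi$, and Lemma~\ref{lem:rat-well-defined} shows the value of $\mathcal E^{\mathrm{rat}}_{\varphi,\lambda}(x)$ does not depend on it.

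The forward direction is immediate. If $x\in\cM_\varphi$, take $a=1$. Then $\mathcal E^{\mathrm{rat}}_{\varphi,\lambda}(x)=m(\Exp\lambda)=m(0)=0$, since every additive polynomial vanishes at $0$.

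For the converse, assume $\mathcal E^{\mathrm{rat}}_{\varphi,\lambda}(m/a)=0$ for all $\lambda\in\Lambda_\varphi$. By Definition~\ref{def:levelI} this says $m(\Exp(\lambda/\iota(a)))=0$ for every $\lambda$. The points $\Exp(\iota(a)^{-1}\lambda)$ fill out exactly the $a$-torsion $\varphi[a]$. Indeed, $\Exp$ induces an isomorphism $\Cinf/\Lambda_\varphi\simeq\Cinf$, and under it $\varphi[a]$ corresponds to $\iota(a)^{-1}\Lambda_\varphi/\Lambda_\varphi$, by the functional equation \eqref{eq:functional}. So the additive polynomial $m$ vanishes on all of $\varphi[a]$.

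Next we use that $\varphi_a$ is separable, with $\ker\varphi_a=\varphi[a]$ of cardinality $\deg_\tau$-consistent, namely $q^{\deg\varphi_a}$. Note the constant term of $\varphi_a$ is $\iota(a)\neq0$. The standard right-division lemma in $\Cinf\{\tau\}$ then applies: an $\Fq$-linear polynomial vanishing on the kernel of a separable $\Fq$-linear polynomial $P$ factors as $m=n\circ P$ with $n\in\Cinf\{\tau\}$. To see this, right-divide $m=n\circ\varphi_a+r$ with $\deg_\tau r<\deg_\tau\varphi_a$. Then $r$ vanishes on $\varphi[a]$, which has more than $q^{\deg r}$ elements, so $r=0$.

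In motive language $n\circ\varphi_a=a\cdot n$, since the $A$-action on $\cM_\varphi\simeq\Cinf\{\tau\}$ is precomposition (Lemma~\ref{lem:direct-geometric-shtuka-Frobenius-basis}\ref{item:Frobenius-cyclic-motive}). Hence $x=m/a=n\in\cM_\varphi$, using that $\cM_\varphi$ is $A$-torsion free, so that it embeds in $\cM_{\varphi,K}$.

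The only delicate point is the identification of the evaluation points with all of $\varphi[a]$ together with the counting argument. Separability, that is $|\varphi[a]|=q^{\deg_\tau\varphi_a}$, follows from the nonzero constant term $\iota(a)$ of $\varphi_a$, a consequence of generic characteristic. Everything else is formal.
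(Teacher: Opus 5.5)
Your proof is correct and follows essentially the same route as the paper. The paper likewise identifies the points $\Exp(\lambda/\iota(a))$ with $\varphi[a]$ by uniformization, right-divides $m$ by the separable $\varphi_a$, and kills the remainder by root counting. It then reads $m=n\circ\varphi_a$ as $a\cdot n\in a\cM_\varphi$, using the rank-one freeness $\cM_\varphi\simeq\Cinf\{\tau\}$ to see that the quotient $n$ is integral.
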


\begin{proof}
An integral motive element vanishes after evaluation at a period.  Conversely,
write $x=m/a$ with $m\in\cM_\varphi$ and $0\ne a\in A$.  Uniformization
identifies
\[
 \iota(a)^{-1}\Lambda_\varphi/\Lambda_\varphi
 \xrightarrow{\ \sim\ }\varphi[a],\qquad
 \lambda/\iota(a)\longmapsto
 \Exp\bigl(\lambda/\iota(a)\bigr).
\]
Thus the additive polynomial $m$ vanishes on every root of the separable
polynomial $\varphi_a$.  Right division in $\Cinf\{\tau\}$ gives
$m=q\circ\varphi_a+r$ with $\deg_\tau r<\deg_\tau\varphi_a$.  The remainder
$r$ vanishes at all roots of $\varphi_a$, and root counting forces $r=0$.
 The quotient is integral automatically: $\cM_\varphi\simeq\Cinf\{\tau\}$ is free of rank one by Lemma~\ref{lem:direct-geometric-shtuka-Frobenius-basis}(ii), so every right-division quotient in $\Cinf\{\tau\}$ lies in $\cM_\varphi$.
Hence $m\in a\cM_\varphi$ for the precomposition action of $A$ on the motive,
so $x\in\cM_\varphi$.
\end{proof}

\begin{lemma}[Isogeny-period annihilator]
\label{lem:isogeny-period-annihilator-main}
Let $\eta:\varphi\to\psi$ be a separable isogeny, put
$c=\partial\eta\ne0$, and let
\[
 \eta^!:\cM_\psi\hookrightarrow\cM_\varphi,
 \qquad n\longmapsto n\circ\eta,
\]
be the contravariant motive map.   {For $x\in\cM_{\varphi,K}$, the vanishing $\mathcal E^{\mathrm{rat}}_{\varphi,w}(x)=0$ for every $w\in c^{-1}\Lambda_\psi$ is equivalent to $x\in\eta^!(\cM_\psi)$.}
\end{lemma}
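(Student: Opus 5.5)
The plan is to follow the proof of Lemma~\ref{lem:period-annihilator}, with $\varphi_a$ replaced by the isogeny $\eta$ and the period lattice replaced by $c^{-1}\Lambda_\psi$.

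\emph{Easy direction.} Suppose $x=\eta^!(n)$ with $n\in\cM_\psi$. Isogeny naturality, Proposition~\ref{prop:certificates}(iii), in the form $\mathcal E^{\mathrm{rat}}_{\varphi,w}(\eta^!n)=\mathcal E^{\mathrm{rat}}_{\psi,cw}(n)$, reduces the claim to $\psi$. That proof only used $\eta(\exp_\varphi w)=\exp_\psi(\partial\eta\, w)$, which holds by \eqref{eq:isogeny-exponential-compatibility-main}. For $w\in c^{-1}\Lambda_\psi$ we have $cw\in\Lambda_\psi$. An integral element $n$ evaluated at $\exp_\psi(cw)=0$ gives $n(0)=0$, so the evaluation vanishes.

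\emph{Converse.} Write $x=m/a$ with $m\in\cM_\varphi$ and $0\ne a\in A$, so that
\[
\mathcal E^{\mathrm{rat}}_{\varphi,w}(x)=m\bigl(\exp_\varphi(w/\iota(a))\bigr).
\]
As $w$ runs over $c^{-1}\Lambda_\psi$, the point $w/\iota(a)$ runs over $\iota(a)^{-1}c^{-1}\Lambda_\psi$. I claim its image under $\exp_\varphi$ is exactly the kernel of the separable additive polynomial $\psi_a\circ\eta=\eta'\circ\varphi_a$.

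To see this, note that $y=\exp_\varphi(v)$ satisfies $\psi_a(\eta(y))=\exp_\psi(\iota(a)cv)$. This vanishes iff $\iota(a)cv\in\Lambda_\psi$, i.e.\ iff $v\in\iota(a)^{-1}c^{-1}\Lambda_\psi$. Surjectivity of $\exp_\varphi$ then gives the whole kernel.

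The polynomial $\psi_a\circ\eta$ is separable because both factors are: $\psi_a$ has nonzero constant term $\iota(a)$, and $\eta$ is separable by hypothesis. Hence $m$ vanishes on all $\deg(\psi_a\circ\eta)$ distinct roots. Right division in $\Cinf\{\tau\}$ gives $m=Q\circ(\psi_a\circ\eta)+r$, and root counting forces $r=0$, exactly as in Lemma~\ref{lem:period-annihilator}.

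\emph{Rewriting the quotient.} Put $n:=Q\circ\psi_a$. Then $m=n\circ\eta$, and $n$ is integral in $\cM_\psi\simeq\Cinf\{\tau\}$ by Lemma~\ref{lem:direct-geometric-shtuka-Frobenius-basis}(ii), applied to $\psi$. Since the $A$-action is precomposition, $n=a\cdot Q$ in $\cM_\psi$. Therefore
\[
x=\frac{m}{a}=\frac{\eta^!(aQ)}{a}=\eta^!(Q),
\]
using that $\eta^!$ is $A$-linear: $(a\cdot Q)\circ\eta=Q\circ\psi_a\circ\eta=Q\circ\eta\circ\varphi_a=a\cdot\eta^!(Q)$. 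Thus $x=\eta^!(Q)\in\eta^!(\cM_\psi)$, as required. Injectivity of the localization map $\cM_{\varphi}\to\cM_{\varphi,K}$ (torsion-freeness) justifies cancelling $a$.

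\emph{Main obstacle.} The expected difficulty is the identification of the root set: $\exp_\varphi(\iota(a)^{-1}c^{-1}\Lambda_\psi)$ must be the full kernel of $\psi_a\circ\eta$, with no multiplicities. This rests on the separability of $\eta$ and on surjectivity of $\exp_\varphi$. Once it is secured, the rest is the same division argument as in Lemma~\ref{lem:period-annihilator}.
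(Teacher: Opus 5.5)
Your proof is correct and follows the paper's argument. Both proofs divide on the right by the separable polynomial $\psi_a\circ\eta=\eta\circ\varphi_a$, use root counting to kill the remainder, and get integrality of the quotient from $\cM_\psi\simeq\Cinf\{\tau\}$. The only minor difference is in identifying the root set: you argue directly via surjectivity of $\exp_\varphi$ and the functional equation, whereas the paper compares cardinalities of $c^{-1}\iota(a)^{-1}\Lambda_\psi/\Lambda_\varphi$ and $\ker(\eta\circ\varphi_a)$.
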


\begin{proof}
If $x=n\circ\eta$, exponential compatibility gives
\[
 \mathcal E^{\mathrm{rat}}_{\varphi,w}(x)
 =n\bigl(\eta(\Exp(w))\bigr)
 =n\bigl(\exp_\psi(cw)\bigr)=0.
\]
Conversely, write $x=m/a$ with $m\in\cM_\varphi$ and $0\ne a\in A$.
As $w$ ranges over $c^{-1}\Lambda_\psi$, the points
$\Exp(w/\iota(a))$ range over
\[
 \ker(\eta\circ\varphi_a)
 \simeq c^{-1}\iota(a)^{-1}\Lambda_\psi/\Lambda_\varphi .
\]
 Indeed, $w\mapsto\Exp(w/\iota(a))$ has kernel $\iota(a)\Lambda_\varphi$, and surjectivity onto $\ker(\eta\circ\varphi_a)$ follows by comparing cardinalities: both sides have order $q^{\deg_\tau\eta+\deg_\tau\varphi_a}$, since $c^{-1}\Lambda_\psi/\Lambda_\varphi$ represents $\ker\eta$.
The polynomial $\eta\circ\varphi_a$ is separable.  Right division and root
counting therefore give $m=q\circ\eta\circ\varphi_a$ for some
$q\in\cM_\psi$  {(the quotient is again integral by the rank-one freeness of Lemma~\ref{lem:direct-geometric-shtuka-Frobenius-basis}(ii))}.  Localizing by $a$ yields
$x=q\circ\eta=\eta^!(q)$.
\end{proof}

\subsection{The completed separated-tensor domain}\label{sec:separated-tensor}

The second-factor coefficients require a completion adapted to motive
evaluation. Let $B$ be a nonarchimedean Banach
$\Cinf$-vector space; in particular, it may be a Banach algebra or a finite
Banach module over one.  On a finite separated tensor put
\begin{equation}\label{eq:finite-tensor-evaluation-map}
 \mathcal{E}^{\mathrm{sep}}_{\varphi,u}\!\left(\sum_{j=1}^N x_j\otimes b_j\right)
 :=\sum_{j=1}^N
   \mathcal E^{\mathrm{rat}}_{\varphi,u}(x_j)b_j.
\end{equation}
For $R>0$ and $x\in\cM_{\varphi,K}$ put
\begin{equation}\label{eq:evaluation-seminorm}
 p_R(x):=\sup_{|u|\le R}
 \bigl|\mathcal E^{\mathrm{rat}}_{\varphi,u}(x)\bigr|.
\end{equation}
This is finite because the right-hand side is the supremum norm of an entire
function on a closed disk.  On the algebraic tensor product there are two
useful seminorms:
\begin{equation}\label{eq:projective-evaluation-seminorm}
 q_R(F):=\sup_{|u|\le R}\lVert\mathcal{E}^{\mathrm{sep}}_{\varphi,u}(F)\rVert_B,
 \qquad
 \pi_R(F):=
 \inf_{F=\sum_{j=1}^N x_j\otimes b_j}
 \max_{1\le j\le N}p_R(x_j)\lVert b_j\rVert_B .
\end{equation}
The ultrametric inequality gives $q_R\le\pi_R$.  The first seminorm records
exactly the values that motive evaluation can detect; the second is a
convenient sufficient estimate for convergence.

\begin{notation}[{Evaluation completion}]\label{def:evaluation-completion}
The notation
$
 \cM_{\varphi,K}\widehat\otimes_{\mathrm{ev}}B
$
means the Hausdorff completion of
$\cM_{\varphi,K}\otimes_{\Cinf}B$ for the family $\{q_R\}_{R>0}$.  Thus it
is the closure of the finite separated-tensor evaluations inside the space
of entire $\Fq$-linear $B$-valued functions, endowed with uniform
convergence on bounded disks.  The projective completion for
$\{\pi_R\}_{R>0}$ maps continuously to this evaluation completion.  In
particular, taking
$B=\mathbb T_A$ gives a specified interpretation of the
completed separated-tensor domain occurring in motive evaluation.
\end{notation}

\begin{definition}\label{def:completed-sep-evaluation}
 For $u\in\Cinf$, the completed separated-tensor evaluation is the continuous extension 
\[
 \widehat{\mathcal E}^{\mathrm{sep}}_{\varphi,u}:
 \cM_{\varphi,K}\widehat\otimes_{\mathrm{ev}}B\longrightarrow B
\]
of finite-tensor evaluation.
\end{definition}

\begin{proposition}[Continuous tensor evaluation]
\label{prop:continuous-tensor-evaluation}
The completed separated-tensor evaluation is well defined: for
every $u\in\Cinf$, finite-tensor evaluation extends uniquely to
$\widehat{\mathcal E}^{\mathrm{sep}}_{\varphi,u}$, its norm on a finite
tensor is bounded by $q_R$ for $R\ge|u|$, and the extended evaluations
jointly separate points.
\end{proposition}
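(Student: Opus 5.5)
The statement to prove is Proposition~\ref{prop:continuous-tensor-evaluation}. The argument is a standard completion argument, organised around the inequality
\[
\lVert\mathcal E^{\mathrm{sep}}_{\varphi,u}(F)\rVert_B\le q_R(F)\quad\text{for } |u|\le R,
\]
which is the definition of $q_R$ read pointwise. The plan has three steps: well-definedness on the algebraic tensor product, continuous extension with the norm bound, and separation of points.

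\textbf{Step 1 (well-definedness on finite tensors).} By Lemma~\ref{lem:rat-well-defined}, $x\mapsto\mathcal E^{\mathrm{rat}}_{\varphi,u}(x)$ is a well-defined $\Cinf$-linear map $\cM_{\varphi,K}\to\Cinf$. Hence $(x,b)\mapsto\mathcal E^{\mathrm{rat}}_{\varphi,u}(x)b$ is $\Cinf$-bilinear, and it factors through $\cM_{\varphi,K}\otimes_{\Cinf}B$. So \eqref{eq:finite-tensor-evaluation-map} does not depend on the chosen representation $\sum_j x_j\otimes b_j$. The only thing to watch is that the $\Cinf$-structure on $\cM_{\varphi,K}$ is the coefficient one, so that $\mathcal E^{\mathrm{rat}}$ is genuinely $\Cinf$-linear. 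Example~\ref{ex:carlitz-levelI} warns that exponentials are not $\Cinf$-linear, but Lemma~\ref{lem:rat-well-defined} already gives linearity in the motive coefficients $\alpha_i$.

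\textbf{Step 2 (continuity, extension and the norm bound).} For fixed $u$, choose $R\ge|u|$. Then $\mathcal E^{\mathrm{sep}}_{\varphi,u}$ is uniformly continuous for the seminorm $q_R$, with constant $1$. Since $B$ is Banach, hence complete, the map extends uniquely by continuity to the Hausdorff completion for the family $\{q_R\}_{R>0}$. Here I need that the map kills the kernel $\bigcap_R q_R^{-1}(0)$, which is immediate from the inequality. The extension still satisfies $\lVert\widehat{\mathcal E}^{\mathrm{sep}}_{\varphi,u}(F)\rVert_B\le q_R(F)$, because $q_R$ extends continuously and norms pass to limits. Uniqueness holds because finite tensors are dense by construction. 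Changing $R$ does not change the extension, since the topologies are nested: $q_R\le q_{R'}$ for $R\le R'$.

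\textbf{Step 3 (joint separation).} Suppose $F$ lies in the completion and $\widehat{\mathcal E}^{\mathrm{sep}}_{\varphi,u}(F)=0$ for every $u$. Take a net, or a sequence via $R=n$, of finite tensors $F_k\to F$. Then, uniformly on $|u|\le R$,
\[
\widehat{\mathcal E}^{\mathrm{sep}}_{\varphi,u}(F)=\lim_k\mathcal E^{\mathrm{sep}}_{\varphi,u}(F_k).
\]
The extended seminorm satisfies $q_R(F)=\sup_{|u|\le R}\lVert\widehat{\mathcal E}^{\mathrm{sep}}_{\varphi,u}(F)\rVert_B$, so $q_R(F)=0$ for all $R$, and Hausdorffness gives $F=0$.

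\textbf{Main obstacle.} The one step I expect to need care is exactly that last identity: the extended $q_R$ equals the supremum of the extended evaluations. The proof is that each $q_R$ is a supremum over $u$ of seminorms that are $1$-Lipschitz in $q_R$, so the supremum commutes with the limit. With this in place, the evaluation completion is realised as the space of limits of entire $B$-valued functions described in Notation~\ref{def:evaluation-completion}.
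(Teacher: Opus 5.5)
Your proposal is correct and follows essentially the same route as the paper. The defining inequality $\lVert\mathcal E^{\mathrm{sep}}_{\varphi,u}(F)\rVert_B\le q_R(F)$ gives the unique continuous extension; uniform convergence on bounded disks identifies the extended $q_R$ with the supremum of the extended evaluations; and Hausdorffness then yields joint separation. You also make explicit two points the paper leaves tacit: independence of the chosen tensor representation on finite tensors, and the interchange of supremum and limit.
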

\begin{proof}
The defining inequality
$\|\mathcal{E}^{\mathrm{sep}}_{\varphi,u}(F)\|_B\le q_R(F)$ gives a continuous extension to
the Hausdorff completion.  A Cauchy family for all $q_R$ has a uniform
limit on every bounded parameter disk; these limits agree on overlaps.
They realize the completion as a space of $B$-valued functions.
If every evaluation of a limit is zero, each of its supremum seminorms
is zero, and Hausdorffness makes the limit zero.
\end{proof}

\subsection{Automatic orbital estimates}\label{sec:hypotheses}

\begin{definition}[Shtuka-weighted orbital specialization]
\label{def:weighted-orbital-specialization}
For a meromorphic first-variable coefficient $g(\one,\two)$ for which the
displayed quotient is regular at $\one=\Xi^{(n)}$, put
\begin{equation}\label{eq:weighted-orbital-specialization}
 \operatorname{orb}_{\varphi,n}(g)
 :=\left.
   \frac{g(\one,\two)}{F_\varphi^{[n]}(\one)}
  \right|_{\one=\Xi^{(n)}}.
\end{equation}
If $g$ is regular at $\Xi^{(n)}$ and
$\Xi^{(n)}\notin\operatorname{supp}(V_\varphi)$, then
$\operatorname{orb}_{\varphi,n}(g)
=e_n(\varphi)g(\Xi^{(n)},\two)$.
\end{definition}

\begin{proposition}[Algebraic orbit growth]
\label{thm:automatic-orbit-growth}
Let $W\subset
U_{\Cinf}^{\mathrm{an}}$ be an affinoid and let $R(\one,\two)$ be
a fixed rational function on $X_{\Cinf}\times X_{\Cinf}$ which is
regular on $\{\Xi^{(n)}:n\notin E\}\times W$ for some finite set $E$. Assume moreover that the poles of $R$ on
$U_{\Cinf}\times W$ are bounded by a fixed effective divisor $D$ on
$X_{\Cinf}$ plus the diagonal: the polar divisor of $R$ on
$U_{\Cinf}\times W$ is contained in $\operatorname{pr}_{\one}^*D+\Delta$.
Write $\|\cdot\|_W$ for the supremum norm on $W$.  The
following hold.
\begin{enumerate}[label=\textup{(\roman*)},leftmargin=2.2em]
\item there is a constant $C_W\ge1$ such that
\begin{equation}\label{eq:automatic-algebraic-orbit-growth}
 \bigl\|R(\Xi^{(n)},\cdot)\bigr\|_W\le C_W^{q^n}
 \qquad(n\notin E).
\end{equation}
\item \emph{(Vertical regularity.)}  Suppose that for every
exceptional index $n\in E$ with $\Xi^{(n)}\notin W$,
\[
 {\operatorname{ord}_{\{\Xi^{(n)}\}\times X_{\Cinf}}(R)}
 \ge-\operatorname{mult}_{\Xi^{(n)}}(V_\varphi),
\]
{the order being taken along the vertical prime divisor
$\{\Xi^{(n)}\}\times X_{\Cinf}$.}
Then for every $n$ with $\Xi^{(n)}\notin W$ the quotient
$R/F_\varphi^{[n]}$ is regular along $\{\Xi^{(n)}\}\times W$, and
$\operatorname{orb}_{\varphi,n}(R)$ is analytic on $W$;
\item for $n\notin E$ one has
\begin{equation}\label{eq:automatic-weighted-orbit-growth}
 \bigl\|\operatorname{orb}_{\varphi,n}(R)\bigr\|_W
 \le |e_n(\varphi)|C_W^{q^n}.
\end{equation}
\end{enumerate}
\end{proposition}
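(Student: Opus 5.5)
The plan rests on one piece of geometry: the characteristic orbit tends to the infinity orbit, while $R$ has poles there that are controlled by divisors. The target bound $C_W^{q^n}$ is exactly what one gets from estimating a rational function at a point whose "height" grows like $q^n$.

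\emph{Step 1 (growth, part (i)).} Fix a nonconstant $p\in A$ with $|\iota(p)|>1$. Since $p(\Xi^{(n)})=\iota(p)^{q^n}$, we have $|p(\Xi^{(n)})|=|\iota(p)|^{q^n}$.

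Next we clear denominators. Choose $b\in A\otimes\Cinf$ with $\divv(b)\ge D$ on $U_{\Cinf}$; for instance, a function vanishing on $\supp D$ to the required order, with poles only at the infinity orbit. Choose a rational function $\delta(\one,\two)$ cutting out the diagonal on $U\times U$; for example, $p(\one)-p(\two)$ times a correction accounting for the other components of $\{p(\one)=p(\two)\}$. Then $P:=b(\one)\delta(\one,\two)R$ is regular on $U_{\Cinf}\times W$. Hence $P$ lies in $(A\otimes\Cinf)\otimes\mathcal O(W)$ up to a factor in $\mathcal O(W)$, i.e. it is a finite sum $\sum_k a_k(\one)\,c_k(\two)$ with $a_k\in A\otimes\Cinf$. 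The vertical divisors coming from the finitely many $n\in E$ play no role for $n\notin E$.

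For each $a\in A\otimes\Cinf=\Cinf\otimes A$ we have $|a(\Xi^{(n)})|\le C^{q^n}$, because $a$ is a finite combination of elements of $A$, each satisfying $|a_i(\Xi^{(n)})|=|\iota(a_i)|^{q^n}$. So $\|P(\Xi^{(n)},\cdot)\|_W\le C_1^{q^n}$. For the denominator we need a lower bound $|b(\Xi^{(n)})\delta(\Xi^{(n)},\two)|\ge c^{q^n}$ uniformly on $W$. Take $b=\prod$ of elements of $A$ twisted into $\Cinf$-coefficients; then $|b(\Xi^{(n)})|$ is essentially a $q^n$-power of a fixed absolute value. For $\delta$, note that $|p(\Xi^{(n)})|\to\infty$ while $p$ is bounded on $W$, so $|p(\Xi^{(n)})-p(\two)|=|\iota(p)|^{q^n}$ for large $n$. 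Adjusting $C$ for the finitely many small $n$ gives (i).

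\emph{Step 2 (vertical regularity, part (ii)).} For $n\notin E$, the point $\Xi^{(n)}\notin W$ is not on the diagonal over $W$, and $R$ is regular along $\{\Xi^{(n)}\}\times W$ by hypothesis. Lemma~\ref{lem:weighted-orbit-divisor-cancellation} gives $\operatorname{ord}_{\Xi^{(n)}}(F_\varphi^{[n]})=-m_n\le0$, so dividing by $F_\varphi^{[n]}$ keeps the quotient regular. For $n\in E$, the hypothesis $\operatorname{ord}\ge-m_n$ along the vertical divisor combines with the same order computation to make $R/F_\varphi^{[n]}$ have order $\ge0$ along $\{\Xi^{(n)}\}\times X$. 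Since $\Xi^{(n)}\notin W$, the diagonal does not meet $\{\Xi^{(n)}\}\times W$. The only remaining poles would be horizontal components through $(\Xi^{(n)},w)$; these are excluded because $R$'s polar divisor lies in $\operatorname{pr}_\one^*D+\Delta$, and $\operatorname{pr}_\one^*D$ is vertical. Restricting to $\one=\Xi^{(n)}$ then gives an analytic function on $W$.

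\emph{Step 3 (part (iii)).} For $n\notin E$ with $\Xi^{(n)}\notin\supp V_\varphi$, Definition~\ref{def:weighted-orbital-specialization} gives $\operatorname{orb}_{\varphi,n}(R)=e_n(\varphi)R(\Xi^{(n)},\cdot)$, and (i) finishes. When $\Xi^{(n)}\in\supp V_\varphi$, the formula $e_n=(F^{[n]})^{-1}|_{\Xi^{(n)}}$ equals zero (Theorem~\ref{thm:orbital-exponential-expansion}), so the claimed bound reads $0$. This would require $R$ to be regular there with a zero structure matching that of $F^{[n]}$, and I expect this case to be the delicate one. I would handle it by working with $P/F^{[n]}$ and Step 1's denominator, or, since $\supp V_\varphi$ meets the escaping orbit only finitely often, by enlarging $E$ or the constant. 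I expect the main obstacle to be the uniform lower bound on the diagonal denominator $\delta(\Xi^{(n)},\two)$ over $W$, together with this finite set of Drinfeld-divisor indices.
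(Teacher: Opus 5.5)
Your Step 2 is essentially the paper's argument, and Step 1 can be made to work. Step 3, however, leaves a gap exactly where you suspect one, and the repairs you propose do not close it.

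\textbf{The gap in Step 3.} In the case $n\notin E$ with $\Xi^{(n)}\in\supp V_\varphi$, you need nothing about $R$ beyond its regularity, which the hypotheses already give; no zero structure of $R$ is required.
\begin{itemize}
\item Since $n\notin E$, $R$ is regular on $\{\Xi^{(n)}\}\times W$.
\item By Lemma~\ref{lem:weighted-orbit-divisor-cancellation}, $(F_\varphi^{[n]})^{-1}$ has a zero of order $m_n\ge1$ at $\one=\Xi^{(n)}$. You already used its regularity there in Step 2.
\item Hence $R/F_\varphi^{[n]}$ is regular and vanishes identically along $\one=\Xi^{(n)}$ over $W$. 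So $\operatorname{orb}_{\varphi,n}(R)=0$, and \eqref{eq:automatic-weighted-orbit-growth} reads $0\le0$.
\end{itemize}
This is how the paper finishes (iii). Your fallbacks fail. The set $E$ is fixed by the hypotheses, so enlarging it proves a weaker statement. Enlarging $C_W$ is useless, because the right-hand side there is $|e_n(\varphi)|C_W^{q^n}=0$.

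\textbf{Step 1 is a different route from the paper's.} The paper proves (i) locally: after enlarging $D$ at the points $q_j$ above infinity, it bounds $s_j^{\nu_j}R$ on small disks $U_j\times W$. It then converts $|s_j|^{-\nu_j}$ into a power of $|p|$, using that $\Xi^{(n)}$ eventually enters these disks. Your global route (clear denominators, decompose, bound numerator and denominator at $\Xi^{(n)}$) also works once you tighten three points, and then the ``main obstacle'' you name disappears.
\begin{itemize}
\item \emph{The diagonal factor.} $\delta$ only has to vanish along $\Delta$, not cut it out, so take $\delta=p(\one)-p(\two)$ with no correction. Then $|\delta(\Xi^{(n)},w)|=|\iota(p)|^{q^n}$ uniformly for $w\in W$ as soon as $|\iota(p)|^{q^n}>\|p\|_W$.
\item \emph{The choice of $b$.} For a general $b\in\Cinf\otimes A$ the lower bound on $|b(\Xi^{(n)})|$ is not evident, because basis terms of equal norm can cancel when $\Ninf>1$. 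Take instead $b=\prod_x(p(\one)-p(x))^{\operatorname{mult}_xD}$ over $x\in\supp D\cap U_{\Cinf}$. Then $|b(\Xi^{(n)})|=|\iota(p)|^{q^n\deg(D|_U)}$ for large $n$.
\item \emph{The decomposition of $P$.} This needs an argument. Every polar component of $P$ avoids $U_{\Cinf}\times W$, since a curve dominating both factors meets $U_{\Cinf}\times W$. So each component is vertical over a point above infinity, or horizontal over one of finitely many points $Y$ outside $W$. Hence $P\in\Gamma(U_{\Cinf},\OO)\otimes_{\Cinf}\Gamma(X_{\Cinf}\smallsetminus Y,\OO)$, adding to $Y$ a point above infinity if it is empty. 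The $c_k$ are then bounded on $W$.
\end{itemize}
Your route replaces the local analysis at infinity by K\"unneth and explicit denominators. The paper's route is shorter and needs no choice of $b$ or $\delta$.
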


\begin{proof}
\emph{(i).}  {Choose a nonconstant $p\in A$ with
$|\iota(p)|>1$.  Enlarge $D$ at the points above infinity to bound
the actual first-variable poles of $R$ there; this does not change
the hypothesis on $U_{\Cinf}\times W$.  Write this part of $D$ as
$\sum_j\nu_j[q_j]$.  Around each $q_j$, choose a sufficiently small
closed affinoid disk $U_j$, disjoint from $W$ and from the other
points of $\operatorname{supp}(D)$, with local parameter $s_j$.
Then $s_j^{\nu_j}R$ is analytic and bounded on $U_j\times W$.
Since $p$ has a pole at each $q_j$, there are constants $B\ge1$ and
$M\ge0$, independent of $j$, such that}
\[
 \|R(P,\cdot)\|_W\le B\max\{1,|p(P)|\}^{M}
\]
for $P\in U_j\setminus\{q_j\}$.  Since
$|p(\Xi^{(n)})|=|\iota(p)|^{q^n}\to\infty$, the points $\Xi^{(n)}$
eventually lie in these disks.  The displayed bound is then at most
$B|\iota(p)|^{Mq^n}\le C_W^{q^n}$ after enlarging $C_W$.
The finitely many remaining indices outside $E$ are absorbed by
the same constant.

\smallskip
\emph{(ii).}  Put $m_n:=\operatorname{mult}_{\Xi^{(n)}}(V_\varphi)$.
By Lemma~\ref{lem:weighted-orbit-divisor-cancellation},
$\operatorname{ord}_{\Xi^{(n)}}F_\varphi^{[n]}=-m_n$, independently of
$\two$, so division by $F_\varphi^{[n]}$ contributes $+m_n$ to the order
along $\one=\Xi^{(n)}$.  For $n\notin E$ the function $R$ is regular on
$\{\Xi^{(n)}\}\times W$ by assumption; for $n\in E$ with
$\Xi^{(n)}\notin W$, the displayed bound gives
$\operatorname{ord}_{\{\Xi^{(n)}\}\times X_{\Cinf}}(R/F_\varphi^{[n]})
\ge-m_n+m_n=0$.  Division by $F_\varphi^{[n]}$, which depends
only on the first variable, introduces only vertical polar
components.  After the component centered at $\Xi^{(n)}$ has been
removed by this order bound, all remaining polar components are
vertical fibers with different centers, or lie on $\Delta$.
None meets $\{\Xi^{(n)}\}\times W$, since $\Xi^{(n)}\notin W$.
In both cases
$R/F_\varphi^{[n]}$ is regular along the whole vertical line
$\{\Xi^{(n)}\}\times W$.  Being rational, it has closed polar locus,
disjoint from this compact line; hence it is regular on a tube neighborhood
of the line, and its specialization at $\one=\Xi^{(n)}$ is analytic on
$W$.  By Definition~\ref{def:weighted-orbital-specialization} this
specialization is $\operatorname{orb}_{\varphi,n}(R)$.

\smallskip
\emph{(iii).}  For $n\notin E$, $R$ is regular on
$\{\Xi^{(n)}\}\times W$ by hypothesis, and
$(F_\varphi^{[n]})^{-1}$ is regular at $\Xi^{(n)}$ by
Lemma~\ref{lem:weighted-orbit-divisor-cancellation}.
Thus the quotient is regular along this fiber, whether or not
$\Xi^{(n)}\in W$.  If $\Xi^{(n)}\notin\operatorname{supp}(V_\varphi)$,
Definition~\ref{def:weighted-orbital-specialization} reads
$\operatorname{orb}_{\varphi,n}(R)=e_n(\varphi)\,R(\Xi^{(n)},\cdot)$, and
part (i) gives \eqref{eq:automatic-weighted-orbit-growth}.  If
$\Xi^{(n)}\in\operatorname{supp}(V_\varphi)$, then
$(F_\varphi^{[n]})^{-1}$ vanishes along $\one=\Xi^{(n)}$, so
$\operatorname{orb}_{\varphi,n}(R)=0$ and
\eqref{eq:automatic-weighted-orbit-growth} holds trivially.
\end{proof}

Let $W\subset U_{\Cinf}^{\mathrm{an}}$ be an affinoid disjoint
from the characteristic orbit and from the second-factor poles under
consideration.

\begin{definition}[Weighted orbit admissibility on $W$]
\label{def:orbital-domain}
The coefficient $g$ is \emph{weighted-orbit-admissible on $W$}
if the following hold.
\begin{enumerate}[label=\textup{(\Alph*)},leftmargin=2.6em]
\item \emph{Specialization is well defined.}  There is a finite set
$E_g\subset\mathbb N$ such that
\begin{itemize}[leftmargin=2.4em]
\item[\textup{(A1)}] for $n\notin E_g$, $g$ is regular at
$\one=\Xi^{(n)}$ and
$\operatorname{orb}_{\varphi,n}(g)=e_n(\varphi)\,g(\Xi^{(n)},\two)$;
\item[\textup{(A2)}] for $n\in E_g$, the first-variable pole of $g$ at
$\one=\Xi^{(n)}$ is bounded by $V_\varphi$
($\operatorname{ord}_{\Xi^{(n)}}(g)\ge-m_n$), and the quotient
$g(\one,\two)/F_\varphi^{[n]}(\one)$ is regularized at $\one=\Xi^{(n)}$ by
the zero of $(F_\varphi^{[n]})^{-1}$, which defines $\operatorname{orb}_{\varphi,n}(g)$
(Lemma~\ref{lem:weighted-orbit-divisor-cancellation}).
\end{itemize}
\item \emph{Range condition.}  For every $n\ge0$,
$\operatorname{orb}_{\varphi,n}(g)\in\mathcal O(W)$.
\item \emph{Growth condition.}  There is $C_W\ge1$ such that
\[
 \lVert g(\Xi^{(n)},\cdot)\rVert_W\le C_W^{q^n}
 \qquad(n\notin E_g).
\]
\end{enumerate}
Here $m_n=\operatorname{mult}_{\Xi^{(n)}}(V_\varphi)$ and the norm is
the sup norm on $W$.
\end{definition}

{By Lemma~\ref{lem:SzegoC}, the poles of $K_\varphi$ on
$U_{\Cinf}\times W$ are bounded by
$\operatorname{pr}_{\one}^*V_\varphi+\Delta$.
Proposition~\ref{thm:automatic-orbit-growth} therefore gives
its weighted-orbit admissibility on $W$.
The isogeny-adapted kernels and their weighted twists are treated
in Lemma~\ref{lem:Admissibility}.}

\subsection{The local orbital extension}\label{sec:orbital-extension}

The rational evaluation and its Pellarin-series refinement evaluate separated first-factor data.  A diagonal Cauchy
kernel need not belong to an unspecified completed tensor product, so we define
its evaluation directly on affinoids $W$ in the second variable, as in
Definition~\ref{def:orbital-domain}.

\begin{definition}[Orbital evaluation] \label{def:levelIII}
For a weighted-orbit-admissible coefficient $g$ define
\begin{equation}\label{eq:orbital-evaluation}
 \mathcal E^\Xi_{\varphi,u}(g\,s_\varphi)
 :=\sum_{n\ge0}u^{q^n}\operatorname{orb}_{\varphi,n}(g).
\end{equation}
\end{definition}

 \begin{proposition} \label{lem:orbital-evaluation-convergence}
 For a weighted-orbit-admissible coefficient $g$, the series
 \eqref{eq:orbital-evaluation} converges normally on $W$ for every
 $u\in\Cinf$; in particular
 $u\mapsto\mathcal E^\Xi_{\varphi,u}(g\,s_\varphi)$ is an entire
 $\mathcal O(W)$-valued function of $u$.
 \end{proposition}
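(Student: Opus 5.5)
The plan is to bound each term of \eqref{eq:orbital-evaluation} in the sup norm on $W$ and show that the bounds tend to zero for every fixed $u$, uniformly on bounded disks in $u$. Split the index set into the finite exceptional set $E_g$ and its complement. For $n\in E_g$ nothing needs estimating: by the range condition (B), $\operatorname{orb}_{\varphi,n}(g)\in\mathcal O(W)$, and finitely many terms $u^{q^n}\operatorname{orb}_{\varphi,n}(g)$ are polynomials in $u$ with coefficients in $\mathcal O(W)$. Hence they are entire $\mathcal O(W)$-valued functions.

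For $n\notin E_g$, condition (A1) gives $\operatorname{orb}_{\varphi,n}(g)=e_n(\varphi)\,g(\Xi^{(n)},\two)$, and the growth condition (C) gives
\[
\bigl\|u^{q^n}\operatorname{orb}_{\varphi,n}(g)\bigr\|_W\le |e_n(\varphi)|\,(C_W|u|)^{q^n}.
\]
This is the estimate of Proposition~\ref{thm:automatic-orbit-growth}(iii), now derived directly from Definition~\ref{def:orbital-domain}.

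The key input is the decay \eqref{eq:automatic-exponential-coefficient-decay} of Theorem~\ref{thm:orbital-exponential-expansion}, namely $|e_n(\varphi)|^{1/q^n}\to0$. Fix $R>0$. Then there is $n_0$ with $|e_n(\varphi)|^{1/q^n}\le (2C_WR)^{-1}$ for all $n\ge n_0$. For such $n$ and $|u|\le R$ the term has norm at most $2^{-q^n}$, which tends to $0$. Since $\mathcal O(W)$ is a nonarchimedean Banach space, terms tending to zero suffice for normal convergence, and this convergence is uniform on $|u|\le R$. Equivalently, the power series $\sum_n \operatorname{orb}_{\varphi,n}(g)\,u^{q^n}$ with coefficients in $\mathcal O(W)$ has infinite radius of convergence, because
\[
\limsup\bigl\|\operatorname{orb}_{\varphi,n}(g)\bigr\|_W^{1/q^n}\le C_W\cdot\lim|e_n(\varphi)|^{1/q^n}=0.
\]

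A uniform limit on every disk of $\mathcal O(W)$-valued polynomials in $u$ is an entire $\mathcal O(W)$-valued function, which proves the claim. The only point needing care is that (C) only controls $g(\Xi^{(n)},\cdot)$ off $E_g$. The possibly vanishing $e_n(\varphi)$ of Remark~\ref{rem:zero-exponential-coefficients} cause no trouble, since the bound stays valid when $e_n(\varphi)=0$. I expect no real obstacle: all the analytic content sits in the entireness of $\exp_\varphi$, which enters through the decay of its coefficients.
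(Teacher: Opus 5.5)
Your proposal is correct and follows the paper's proof essentially step for step. You split off the finite exceptional set $E_g$ and bound the remaining terms by $|e_n(\varphi)|(C_W|u|)^{q^n}$ using (A1) and (C), then use $|e_n(\varphi)|^{1/q^n}\to 0$ to get the summable bound $2^{-q^n}$ uniformly on $|u|\le R$. Your explicit appeal to the range condition (B) for the exceptional terms is a slightly more careful version of the paper's remark that those finitely many terms do not affect convergence.
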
 
\begin{proof}
Let $E_g$ be the finite exceptional set of
Definition~\ref{def:orbital-domain}.  By
\eqref{eq:weighted-orbital-specialization} and
\eqref{eq:standard-shtuka-exp-coefficient},
\begin{equation}\label{eq:orbital-component-core-identity}
 {\operatorname{orb}_{\varphi,n}(g)
 =e_n(\varphi)\,g(\Xi^{(n)},\two)\qquad(n\notin E_g).}
\end{equation}
{Thus the orbital evaluation series reads
\[
 \mathcal E^\Xi_{\varphi,u}(g\,s_\varphi)
 =\sum_{n\in E_g}u^{q^n}\operatorname{orb}_{\varphi,n}(g)
 +\sum_{n\notin E_g}e_n(\varphi)\,u^{q^n}\,g(\Xi^{(n)},\cdot).
\]}

{{For the same exceptional set $E_g$ and $n\notin E_g$,} $g$ is
regular at $\one=\Xi^{(n)}$ and
\[
 \bigl\|u^{q^n}\operatorname{orb}_{\varphi,n}(g)\bigr\|_W
 =|e_n(\varphi)|\cdot|u|^{q^n}\cdot\bigl\|g(\Xi^{(n)},\cdot)\bigr\|_W
 \le|e_n(\varphi)|\,(|u|\,C_W)^{q^n}.
\]
Given $R>0$, \eqref{eq:automatic-exponential-coefficient-decay}
provides $N$ with $|e_n(\varphi)|\le(2RC_W)^{-q^n}$ for $n\ge N$;
hence for $|u|\le R$ the $n$-th term has norm at most $2^{-q^n}$, and
$\sum_{n\ge0}2^{-q^n}<\infty$ gives normal convergence of the series on
$\mathcal O(W)$, uniformly for $|u|\le R$.  The finitely many
exceptional indices are regularized by $(F_\varphi^{[n]})^{-1}$
(Definition~\ref{def:orbital-domain}(A2)) and do not affect
convergence.  Each term is an entire $\mathcal O(W)$-valued
$\Fq$-linear function of $u$, and the convergence is uniform on bounded
disks; hence $u\mapsto\mathcal E^\Xi_{\varphi,u}(g\,s_\varphi)$ is an
entire $\mathcal O(W)$-valued $\Fq$-linear function of $u$.}
\end{proof}

\begin{theorem}[Weighted Pellarin identity on the localized motive]
\label{thm:pellarin}
Let $x\in\cM_{\varphi,K}$ and let $g_x$ be its geometric coordinate
\eqref{eq:localized-geometric-coordinate}.  Every regularized specialization
$\operatorname{orb}_{\varphi,n}(g_x)$ is defined, the following series
converges for every $u\in\Cinf$, and
\begin{equation}\label{eq:pellarin}
 \mathcal E^{\mathrm{rat}}_{\varphi,u}(x)
 =\sum_{n\ge0}u^{q^n}
   \operatorname{orb}_{\varphi,n}(g_x).
\end{equation}
More generally, if $g$ is weighted-orbit-admissible on $W$ and
$g(\one,\two)=\sum_{j=1}^N g_j(\one)b_j(\two)$ with
$g_js_\varphi\in\cM_{\varphi,K}$ and $b_j\in\mathcal O(W)$, then orbital
evaluation equals the $\mathcal O(W)$-linear extension of rational
evaluation:
\[
 \mathcal E^\Xi_{\varphi,u}(g\,s_\varphi)
 =\sum_{j=1}^N
 \mathcal E^{\mathrm{rat}}_{\varphi,u}(g_j s_\varphi)b_j.
\]
The same equality holds for a convergent separated-tensor
expansion whenever its chosen completed tensor topology makes rational
evaluation continuous and permits termwise evaluation.
\end{theorem}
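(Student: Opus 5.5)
The plan is to reduce the first identity to the exponential-coefficient formula of Theorem~\ref{thm:orbital-exponential-expansion}, and then obtain the general statement by linearity over $\mathcal O(W)$.

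\emph{Step 1: the integral case.} First take $x=m\in\cM_\varphi$ with $m=\sum_{i=0}^N\alpha_i\tor^i$, so that $g_x=\sum_i\alpha_iF_\varphi^{[i]}$ by \eqref{eq:geometric-motive-coordinate}. Expanding $\exp_\varphi$ and applying $m$ gives
\[
 \mathcal E^{\mathrm{rat}}_{\varphi,u}(m)=\sum_i\alpha_i\Bigl(\sum_k e_k(\varphi)u^{q^k}\Bigr)^{q^i}
 =\sum_{n\ge0}u^{q^n}\sum_{i+k=n}\alpha_i\,e_k(\varphi)^{q^i}.
\]
It therefore suffices to prove, for each $n$, that $\operatorname{orb}_{\varphi,n}(F_\varphi^{[i]})=e_{n-i}(\varphi)^{q^i}$ when $i\le n$, and that it vanishes when $i>n$.

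\emph{Step 2: the orbital values of the Frobenius products.} For $i\le n$ write $F_\varphi^{[n]}=F_\varphi^{[i]}\,(F_\varphi^{[n-i]})^{(i)}$. The quotient $F_\varphi^{[i]}/F_\varphi^{[n]}=((F_\varphi^{[n-i]})^{-1})^{(i)}$ evaluated at $\Xi^{(n)}=(\Xi^{(n-i)})^{(i)}$ is the $q^i$-th power of $(F_\varphi^{[n-i]})^{-1}|_{\Xi^{(n-i)}}$. By \eqref{eq:standard-shtuka-exp-coefficient} this equals $e_{n-i}(\varphi)^{q^i}$. Lemma~\ref{lem:weighted-orbit-divisor-cancellation} guarantees that this value is a genuine regular value.

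For $i>n$ the quotient is $(f_\varphi^{[\,n..i-1]})$, namely $\prod_{j=n}^{i-1}f_\varphi^{(j)}$. At $\Xi^{(n)}$ the factor $f_\varphi^{(n)}$ has the zero $[\Xi^{(n)}]$. The remaining orbit multiplicities telescope, as in the proof of Lemma~\ref{lem:weighted-orbit-divisor-cancellation}, to order $m_n-m_i+1+\ldots\ge1$. More precisely, the total order is $1+m_n-m_n\ge1$, using $\operatorname{ord}=m_{n-j-1}$-type bookkeeping together with disjointness of the orbit from the infinity orbit (Lemma~\ref{lem:orbit-escape}). So the specialization vanishes. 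I expect this divisor bookkeeping, in the case where $\Xi^{(n)}\in\supp V_\varphi$, to be the main obstacle. I would handle it by computing $\operatorname{ord}_{\Xi^{(n)}}$ of $F^{[i]}$ and of $F^{[n]}$ separately from \eqref{eq:divf}: the order of $F^{[i]}$ is $m_{n-i}$-free, giving $-m_n+1$, against $-m_n$ for $F^{[n]}$.

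\emph{Step 3: localization and convergence.} For $x=m/a$, $g_x=g_m/a(\one)$, and $a(\Xi^{(n)})=\iota(a)^{q^n}\ne0$. Hence $\operatorname{orb}_{\varphi,n}(g_x)=\iota(a)^{-q^n}\operatorname{orb}_{\varphi,n}(g_m)$, so the series for $x$ at $u$ is the series for $m$ at $u/\iota(a)$, which matches Definition~\ref{def:levelI}. Convergence of the resulting series follows from the finite sum over $i$ and the decay \eqref{eq:automatic-exponential-coefficient-decay}; alternatively it follows from Proposition~\ref{lem:orbital-evaluation-convergence}, since $g_x$ is admissible by Proposition~\ref{thm:automatic-orbit-growth}.

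\emph{Step 4: the general statement.} For $g=\sum_j g_jb_j$ the operation $\operatorname{orb}_{\varphi,n}$ is $\mathcal O(W)$-linear in the second variable, because $F_\varphi^{[n]}$ depends only on $\one$. Summing over $n$, which is allowed because each of the finitely many series converges normally, and applying the first part to each $g_js_\varphi$ yields the $\mathcal O(W)$-linear formula. The completed-tensor version then follows by continuity: both sides are continuous in the chosen topology and agree on finite tensors, with uniform convergence on bounded disks given by Proposition~\ref{prop:continuous-tensor-evaluation}.
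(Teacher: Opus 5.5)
Your proposal is correct and is essentially the paper's proof. It uses the same ingredients: the factorization $F_\varphi^{[n]}=F_\varphi^{[i]}\bigl(F_\varphi^{[n-i]}\bigr)^{(i)}$ together with \eqref{eq:standard-shtuka-exp-coefficient} for $i\le n$, an order count at $\Xi^{(n)}$ for $i>n$, and finite $\mathcal O(W)$-linearity. Reducing first to integral $m$ and then substituting $u/\iota(a)$ only reorders the paper's direct treatment of $F_\varphi^{[i]}/a(\one)$.

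Do fix the $i>n$ bookkeeping, however. At $\Xi^{(n)}$ the quotient $\prod_{j=n}^{i-1}f_\varphi^{(j)}$ has order exactly $m_{n-i}+1$: the differences $m_{n-j-1}-m_{n-j}$ telescope to $m_{n-i}-m_0=m_{n-i}$, and the $[\Xi^{(n)}]$ term of $f_\varphi^{(n)}$ adds one. Correspondingly, $F_\varphi^{[i]}$ has order $m_{n-i}-m_n+1$ there, not the $m_{n-i}$-free value $-m_n+1$ you state. Because $m_{n-i}\ge0$, the vanishing you need is unaffected.

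For the last clause, Proposition~\ref{prop:continuous-tensor-evaluation} concerns $\widehat{\mathcal E}^{\mathrm{sep}}$ and does not give continuity of orbital evaluation. That continuity is exactly what the statement's termwise-evaluation hypothesis supplies; the paper's proof does not treat this clause separately either.
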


\begin{proof}
Write $x=m/a$ with
$m=\sum_{i=0}^N\alpha_i\tor^i$.  It suffices to treat the coordinate
$F_\varphi^{[i]}/a(\one)$.  For $n\ge i$, the factorization
\[
 F_\varphi^{[n]}
 =F_\varphi^{[i]}\bigl(F_\varphi^{[n-i]}\bigr)^{(i)}
\]
and $a(\Xi^{(n)})=\iota(a)^{q^n}$ give
\begin{equation}\label{eq:orbital-monomial-coordinate}
 \operatorname{orb}_{\varphi,n}
  \left(\frac{F_\varphi^{[i]}}{a(\one)}\right)
 =\frac{e_{n-i}(\varphi)^{q^i}}{\iota(a)^{q^n}}.
\end{equation}
For $n<i$, put
$m_k=\operatorname{mult}_{\Xi^{(k)}}(V_\varphi)$ for $k\in\mathbb Z$.
The same divisor calculation as in
Lemma~\ref{lem:weighted-orbit-divisor-cancellation} gives
\[
 \operatorname{ord}_{\Xi^{(n)}}
 \left(\frac{F_\varphi^{[i]}}{F_\varphi^{[n]}}\right)
 =\sum_{k=n-i+1}^{0}(m_{k-1}-m_k+\mathbf1_{\{k=0\}})
 =m_{n-i}+1>0,
\]
where $m_0=0$ by the realization assumptions.
 Here the summand $\mathbf 1_{\{k=0\}}$ is exactly the $[\Xi]$-term of $\divv(f_\varphi^{(n)})$; the infinity term is absent because the characteristic orbit does not meet the infinity orbit, and the evaluation points are distinct, by Lemma~\ref{lem:orbit-escape}.  Thus this quotient is
regular and vanishes at $\Xi^{(n)}$.  Hence the left side of
\eqref{eq:orbital-monomial-coordinate} is zero in that range.  Therefore
\[
 \sum_{n\ge0}u^{q^n}\operatorname{orb}_{\varphi,n}
  \left(\frac{F_\varphi^{[i]}}{a(\one)}\right)
 =\left(\sum_{k\ge0}e_k(\varphi)
   \left(\frac{u}{\iota(a)}\right)^{q^k}\right)^{q^i}
 =\Exp\!\left(\frac{u}{\iota(a)}\right)^{q^i}.
\]
The series converges because $\Exp$ is entire.  Multiplication by the finite
set of coefficients $\alpha_i$ and Lemma~\ref{lem:rat-well-defined} prove
\eqref{eq:pellarin}.  The geometric coordinate has only the
$V_\varphi$-bounded orbit poles allowed by \eqref{eq:membership}; Lemma
\ref{lem:weighted-orbit-divisor-cancellation} therefore justifies all the
regularized specializations used above.
For $g(\one,\two)=\sum_j g_j(\one)b_j(\two)$ as in the statement,
applying \eqref{eq:pellarin} to each $g_js_\varphi\in\cM_{\varphi,K}$ and
interchanging the finite sums gives
$\mathcal E^\Xi_{\varphi,u}(g s_\varphi)
=\sum_j\mathcal E^{\mathrm{rat}}_{\varphi,u}(g_j s_\varphi)b_j$.
\end{proof}

If $g_x$ is regular at every orbit point, then
$\operatorname{orb}_{\varphi,n}(g_x)
=e_n(\varphi)g_x(\Xi^{(n)})$, so \eqref{eq:pellarin} is exactly the
Pellarin series $\sum_{n\ge0}e_n(\varphi)u^{q^n}g_x\bigl(\Xi^{(n)}\bigr)$.

\begin{theorem}[{Orbital Frobenius interchange}]\label{thm:coherence}
Let $W_0,W_1\subset U_{\Cinf}^{\mathrm{an}}$ be affinoids
such that coefficient Frobenius induces a continuous $q$-semilinear map
\[
 \mathcal O(W_0)\longrightarrow\mathcal O(W_1),\qquad
 b\longmapsto b^{(1)},
\]
and let $V\subset W_0\cap W_1$ be an affinoid.  If $g$ is
weighted-orbit-admissible on $W_0$ and $f_{\varphi}(\one)g^{(1)}$ is
weighted-orbit-admissible on $W_1$, then, after restriction to $V$,
\begin{equation}\label{eq:orbital-Frobenius-initial-term}
 \mathcal E^\Xi_{\varphi,u}
 \bigl(f_{\varphi}(\one)g^{(1)}s_\varphi\bigr)
 =\bigl(\mathcal E^\Xi_{\varphi,u}(g s_\varphi)\bigr)^{(1)}
   +u\,\beta_\varphi(g),
\end{equation}
where
\begin{equation}\label{eq:orbital-initial-coefficient}
 \beta_\varphi(g):=
 \left.(f_{\varphi}(\one)g^{(1)})\right|_{\one=\Xi}.
\end{equation}
In particular the two operations commute if $\beta_\varphi(g)=0$.
This holds whenever the first-variable pole of $g$ at $\Xi^{(-1)}$
is bounded by $V_\varphi$.
\end{theorem}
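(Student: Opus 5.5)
The plan is to compare the two orbital series term by term, using the weighted specializations of Definition~\ref{def:weighted-orbital-specialization}. The idea is that Frobenius shifts the characteristic orbit by one step, and the only term left unmatched is the $n=0$ term of the left side.

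First we write out the left side. By Definition~\ref{def:levelIII},
\[
 \mathcal E^\Xi_{\varphi,u}\bigl(f_\varphi(\one)g^{(1)}s_\varphi\bigr)
 =\sum_{n\ge0}u^{q^n}\operatorname{orb}_{\varphi,n}\bigl(f_\varphi g^{(1)}\bigr).
\]
For $n=0$ we have $F_\varphi^{[0]}=1$, so this term is $u\,(f_\varphi g^{(1)})|_{\one=\Xi}=u\,\beta_\varphi(g)$. This value is defined because admissibility of $f_\varphi g^{(1)}$ on $W_1$ makes $\operatorname{orb}_{\varphi,0}$ well defined.

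For $n\ge1$ we use the factorization $F_\varphi^{[n]}=f_\varphi\,(F_\varphi^{[n-1]})^{(1)}$, which gives
\[
 \frac{f_\varphi(\one)g^{(1)}(\one,\two)}{F_\varphi^{[n]}(\one)}
 =\Bigl(\frac{g(\one,\two)}{F_\varphi^{[n-1]}(\one)}\Bigr)^{(1)}
\]
as rational or meromorphic functions. The key point is that specializing $\one$ at $\Xi^{(n)}$ commutes with coefficient Frobenius: for any meromorphic $G(\one,\two)$ that is regular at $\Xi^{(n-1)}$, one has $G^{(1)}(\Xi^{(n)},\cdot)=(G(\Xi^{(n-1)},\cdot))^{(1)}$. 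Here the second-variable twist is the map $\mathcal O(W_0)\to\mathcal O(W_1)$ from the hypothesis. Regularity of $G=g/F_\varphi^{[n-1]}$ at $\one=\Xi^{(n-1)}$ is exactly the admissibility condition (A) for $g$ at index $n-1$, including the regularized case (A2). Frobenius twisting preserves orders, so the order condition at $\Xi^{(n-1)}$ transfers to $\Xi^{(n)}$. We therefore get
\[
 \operatorname{orb}_{\varphi,n}\bigl(f_\varphi g^{(1)}\bigr)=\bigl(\operatorname{orb}_{\varphi,n-1}(g)\bigr)^{(1)}\qquad(n\ge1).
\]

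Next we sum over $n\ge1$. Since $u^{q^n}=(u^{q^{n-1}})^q$ and Frobenius is $q$-semilinear, we obtain $\sum_{n\ge1}\bigl(u^{q^{n-1}}\operatorname{orb}_{\varphi,n-1}(g)\bigr)^{(1)}$. Continuity of Frobenius $\mathcal O(W_0)\to\mathcal O(W_1)$ together with the normal convergence from Proposition~\ref{lem:orbital-evaluation-convergence} lets us move the twist outside the sum. The result is $(\mathcal E^\Xi_{\varphi,u}(gs_\varphi))^{(1)}$ after restriction to $V$, and adding the $n=0$ term gives \eqref{eq:orbital-Frobenius-initial-term}.

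Finally, for the vanishing criterion we compute orders at $\Xi$. From \eqref{eq:divf} and $\Xi\notin\supp V_\varphi$ we get $\operatorname{ord}_\Xi f_\varphi=\operatorname{mult}_\Xi(V_\varphi^{(1)})+1=m_{-1}+1$. If $\operatorname{ord}_{\Xi^{(-1)}}g\ge-m_{-1}$, then $\operatorname{ord}_\Xi g^{(1)}\ge-m_{-1}$, so $f_\varphi g^{(1)}$ vanishes along $\one=\Xi$ and $\beta_\varphi(g)=0$.

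The main obstacle is the identity for the exceptional indices. There one must check that the regularized quotient really transforms under Frobenius, i.e.\ that specialization of the twisted meromorphic function agrees with the twist of the specialization when the value comes from cancelling $V_\varphi$-poles. I would handle this by working with the regular function $G=g/F_\varphi^{[n-1]}$ on a tube around $\{\Xi^{(n-1)}\}\times W_0$, which Proposition~\ref{thm:automatic-orbit-growth}(ii) provides, and twisting this tube.
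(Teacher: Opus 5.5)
Your proposal is correct and follows essentially the same route as the paper. Both arguments derive the shift identity $\operatorname{orb}_{\varphi,n}(f_\varphi g^{(1)})=\operatorname{orb}_{\varphi,n-1}(g)^{(1)}$ from $F_\varphi^{[n]}=f_\varphi(F_\varphi^{[n-1]})^{(1)}$, including the exceptional indices; then they twist the normally convergent series using continuity of Frobenius and keep the leftover $n=0$ term $u\,\beta_\varphi(g)$, and they obtain the vanishing criterion from $\operatorname{ord}_\Xi f_\varphi=m_{-1}+1$.
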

\begin{remark}
An arbitrary affinoid need not be preserved by coefficient Frobenius
(e.g.\ a closed disk of radius $R$ is mapped to radius $R^q$), so the
twisted series is formed on the possibly different affinoid $W_1$; the
identity is asserted on the common piece $V$.
\end{remark}

\begin{proof}
{Lemma~\ref{lem:orbital-evaluation-convergence} gives} normal convergence on $W_0$ and
continuity of coefficient Frobenius
$\mathcal O(W_0)\to\mathcal O(W_1)$ justify twisting the sum into
$\mathcal O(W_1)$.  Since
$F_\varphi^{[m]}=f_\varphi(F_\varphi^{[m-1]})^{(1)}$, regularized
specialization gives, for $m\ge1$,
\begin{equation}\label{eq:weighted-orbit-Frobenius-shift}
 \operatorname{orb}_{\varphi,m}(f_\varphi g^{(1)})
 =\operatorname{orb}_{\varphi,m-1}(g)^{(1)}.
\end{equation}
This identity is taken after the indicated cancellation and remains valid at
the exceptional Drinfeld-divisor indices.
 Indeed, after the rational cancellation $F_\varphi^{[m]}=f_\varphi(F_\varphi^{[m-1]})^{(1)}$, regularity of $g^{(1)}/(F_\varphi^{[m-1]})^{(1)}$ at $\Xi^{(m)}$ follows from $\operatorname{ord}_{\Xi^{(m)}}\bigl((F_\varphi^{[m-1]})^{(1)}\bigr)=-m_{m-1}$, which exactly cancels a $V_\varphi$-bounded pole of $g$ at $\Xi^{(m-1)}$.  Twisting
\eqref{eq:orbital-evaluation} and reindexing accounts for every term with
$m\ge1$.  The remaining term is $u\beta_\varphi(g)$, which proves
\eqref{eq:orbital-Frobenius-initial-term}.
If $m_{-1}=\operatorname{mult}_{\Xi^{(-1)}}(V_\varphi)$, then
\eqref{eq:divf} gives $\operatorname{ord}_\Xi(f_\varphi)=m_{-1}+1$.
Under the stated pole bound, $g^{(1)}$ has order at least $-m_{-1}$
there, so their product vanishes at $\Xi$.  Restriction to $V\subset W_0\cap W_1$ gives the
assertions on the common domain.
\end{proof}

\begin{remark}\label{rem:no-bifurcation}
The orbital construction enlarges the domain of evaluation; it does not
define a conflicting value on a specified overlap.  For example, on
$|t_{\two}|<|\theta|$ one has
\[
 \frac{1}{t_{\two}-t_{\one}}
 =-\sum_{j\ge0}\frac{t_{\two}^j}{t_{\one}^{j+1}},
\]
and completed tensor evaluation gives
\[
 -\sum_{j\ge0}\exp_C\!\left(\frac{u}{\theta^{j+1}}\right)t_{\two}^j
 =\sum_{n\ge0}\frac{u^{q^n}}{D_n^{C}(t_{\two}-\theta^{q^n})},
\]
which is exactly the orbital series.  
\end{remark}

\section{Cauchy Kernel}
\label{sec:cauchy-gm-period-class}

The tautological differential coordinate $K_\varphi$ has first-variable
poles bounded by the Drinfeld divisor $V_\varphi$. This pole bound defines
a Cauchy fiber whose differences have integral-motive coefficients, so
period evaluation is constant on the whole fiber. The lifted Cauchy kernel
and $K_\varphi$ belong to this fiber and therefore have the same period
evaluation.

\subsection{\texorpdfstring{The relative different and trace-residue coordinates}{The relative different and trace-residue coordinates}}\label{subsec:GM}
Fix a separating element $u\in A$ and put
$\mathfrak{d}_{A/\Fq[u]}$ the relative differential ideal.  Set
\[
 D_u:=\{x\in A\otimes_{\Fq[u]}A:(a\otimes1-1\otimes a)x=0
 \text{ for every }a\in A\}.
\]
Multiplication on the two factors restricts to a map
$\mult_u:D_u\to A$.

\begin{proposition}[The different as a diagonal class]
\label{prop:Du-generator-any}
Multiplication induces an isomorphism
\[
 \mult_u:D_u\xrightarrow{\;\sim\;}\mathfrak{d}_{A/\Fq[u]}.
\]
\end{proposition}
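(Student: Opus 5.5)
The plan is to reduce to the separable extension of fields $K/\Fq(u)$, where the annihilator of the diagonal is a line spanned by a separability idempotent. Then I would use the trace-dual basis to cut out the integral part, which should be exactly the Dedekind different.

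\textbf{Setup.} Put $B=\Fq[u]$ and $K_0=\Fq(u)$. As in the proof of Lemma~\ref{lem:general-Tate-Frobenius-constants}, $A$ is a free $B$-module of finite rank $s$; fix a $B$-basis $e_1,\ldots,e_s$. Since $u$ is separating, $K/K_0$ is finite separable and the trace form $\operatorname{Tr}_{K/K_0}$ is nondegenerate. Let $e_1',\ldots,e_s'\in K$ be the trace-dual basis. The inverse different is
\[
\mathfrak d_{A/\Fq[u]}^{-1}=\{y\in K:\operatorname{Tr}(yA)\subset B\}=\bigoplus_j Be_j'.
\]
Hence $\mathfrak d_{A/\Fq[u]}=\{c\in K: ce_j'\in A\ \text{for all } j\}$. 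Also $A\otimes_BA$ is free over $A\otimes1$ with basis $1\otimes e_i$. By flatness it embeds into $K\otimes_{K_0}K$, and $D_u=(A\otimes_BA)\cap D_{K}$, where $D_K$ is the annihilator of the diagonal ideal in $K\otimes_{K_0}K$.

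\textbf{Field level.} Because $K/K_0$ is separable, $K\otimes_{K_0}K$ is a finite product of fields, and the multiplication map $K\otimes K\to K$ is projection onto one factor. Therefore $D_K=(K\otimes1)\,\epsilon$, where $\epsilon$ is the idempotent of that factor. In particular $\mult(\epsilon)=1$, and $\mult\colon D_K\to K$ is an isomorphism of $K$-vector spaces via $(c\otimes1)\epsilon\mapsto c$. I would then identify $\epsilon$ with the Casimir element of the trace form,
\[
\epsilon=\sum_j e_j'\otimes e_j .
\]
To verify this, I would check three things: that $(a\otimes1)\epsilon=(1\otimes a)\epsilon$, by expanding $ae_j$ and $ae_j'$ in the dual bases; that $\mult(\epsilon)=1$; and that both properties characterize the idempotent within the one-dimensional space $D_K$ over $K\otimes 1$. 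The second point is the step I expect to need the most care. I would check $\sum_j e_j'e_j=1$ after base change to $\overline{K_0}$, where $K\otimes_{K_0}\overline{K_0}\simeq\overline{K_0}^{\,s}$ and the Casimir element becomes the standard diagonal idempotent.

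\textbf{Integral cut.} Injectivity of $\mult_u$ on $D_u$ follows from injectivity on $D_K$. Now take $x=(c\otimes1)\epsilon=\sum_j ce_j'\otimes e_j$. Since the $1\otimes e_j$ form a basis, $x$ lies in $A\otimes_BA$ exactly when every $ce_j'\in A$, that is, exactly when $c\in\mathfrak d_{A/\Fq[u]}$. Because $\mult(x)=c$, the image $\mult_u(D_u)$ is precisely $\mathfrak d_{A/\Fq[u]}$, and this proves the isomorphism.
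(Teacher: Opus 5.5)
Your proposal is correct, but it does not follow the paper's route. The paper gives no argument of its own and simply invokes the different--diagonal identification of \cite[Proposition~3.5]{GM21}. You instead prove the statement directly, in three steps.
\begin{enumerate}
\item You pass to the separable extension $K/\Fq(u)$. There the annihilator of the diagonal ideal is the line $(K\otimes 1)\epsilon$ spanned by the separability idempotent.
\item You identify $\epsilon$ with the trace Casimir $\sum_j e_j'\otimes e_j$.
\item You read off integrality coefficientwise in the basis $1\otimes e_j$ of $A\otimes_{\Fq[u]}A$ over $A\otimes 1$. This cuts out exactly $\{c\in K: ce_j'\in A\}$.
\end{enumerate}
That set equals $\mathfrak d_{A/\Fq[u]}$ because $A$ is Dedekind, so $\mathfrak d_{A/\Fq[u]}$ is the colon ideal $(A:\mathfrak d_{A/\Fq[u]}^{-1})$. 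You use this implicitly and should state it once.

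What your route buys:
\begin{itemize}
\item It is self-contained.
\item It produces the explicit inverse $c\mapsto\sum_j ce_j'\otimes e_j$. This is precisely the trace-dual lift $\widehat{\mathfrak b}^{e}$ of \eqref{eq:different-diagonal-trace-class}.
\item It shows at once why the class of that lift is independent of the chosen trace-dual basis: it is the intrinsic element $(c\otimes 1)\epsilon$.
\item It establishes the identity $\sum_j e_je_j'=1$ and the separability-idempotent description of the diagonal annihilator. The paper uses these in \eqref{eq:trace-dual-props}, Lemma~\ref{lem:trace-residue-Casimir-coordinates} and Lemma~\ref{lem:relative-Casimir-codimension-one-cancellation}, but only sketches them there.
\end{itemize}
The citation is shorter but leaves all of this resting on \cite{GM21}.

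For the step you flagged as delicate, base change to $\overline{\Fq(u)}$ works. A one-line alternative avoids it. The matrix of multiplication by $a\in K$ in the basis $(e_j)$ has diagonal entries $\operatorname{Tr}(ae_je_j')$, so $\operatorname{Tr}\bigl(a\sum_j e_je_j'\bigr)=\operatorname{Tr}(a)$ for every $a\in K$. Nondegeneracy of the trace form then gives $\sum_j e_je_j'=1$.
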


\begin{proof}
The standard different--diagonal identification
\cite[Proposition~3.5]{GM21} shows that multiplication maps $D_u$
onto $\mathfrak{d}_{A/\Fq[u]}$ and is injective.
\end{proof}

\medskip\noindent\emph{Trace-dual bases.}
Let $u\in A$ be nonconstant and separating, put $B=\Fq[u]$ and $F=\Fq(u)$,
and choose a $B$-basis $e_1,\ldots,e_h$ of $A$.  The \emph{trace-dual
basis} $e_1',\ldots,e_h'$ in $K$ is defined by
\begin{equation}\label{eq:trace-dual}
 \operatorname{Tr}_{K/F}(e_ie_j')=\delta_{ij}
 \qquad(1\le i,j\le h).
\end{equation}
Since $K/F$ is finite separable, the trace pairing is nondegenerate, so
the dual basis exists and is unique; it satisfies
\begin{equation}\label{eq:trace-dual-props}
 e_j'\in\mathfrak{d}_{A/\Fq[u]}^{-1}
 \qquad\text{and}\qquad
 \sum_{i=1}^h e_ie_i'=1,
\end{equation}
and the same basis remains trace-dual after scalar extension to $\Cinf$.

The isomorphism $\mult_u$ of Proposition~\ref{prop:Du-generator-any}
admits an explicit lift through trace-dual bases: if
$e_1,\ldots,e_h$ is an $\Fq[u]$-basis of $A$ and
$e_1',\ldots,e_h'$ is its trace-dual basis, then the class corresponding to
$\mathfrak b\in\mathfrak{d}_{A/\Fq[u]}$ is represented by the
trace-dual lift
\begin{equation}\label{eq:different-diagonal-trace-class}
 \widehat{\mathfrak b}^{e}
 :=\sum_{r=1}^h(\mathfrak b e_r')\otimes e_r\in A\otimes_{\Fq}A,
\end{equation}
whose class $[\widehat{\mathfrak b}^{e}]\in D_u$ is
independent of the chosen trace-dual basis.  Indeed
$e_r'\in\mathfrak{d}_{A/\Fq[u]}^{-1}$ makes the formula integral for
$\mathfrak b\in\mathfrak{d}_{A/\Fq[u]}$, and it has multiplication $\mathfrak b$.

For $0\ne\mathfrak b\in\mathfrak{d}_{A/\Fq[u]}$, choose an algebraic tensor
lift of this class and write it, with the order adapted to motive evaluation,
as
\begin{equation}\label{eq:GM-ordered-lift}
 \widehat{\mathfrak b}(\one,\two)
 =\sum_i b_i(\one)a_i(\two).
\end{equation}
Its class $[\widehat{\mathfrak b}]=[\widehat{\mathfrak b}^{e}]$
in $A\otimes_{\Fq[u]}A$ lies in $D_u$.  With this ordering, $b_i$
acts on the input and $a_i$ remains as a coefficient in the Tate algebra.

The cotangent--different identity is the multiplication isomorphism
\begin{equation}\label{eq:relative-different-multiplication}
 \mathfrak{d}_{A/\Fq[u]}\otimes_A\Omega_A
 \xrightarrow{\sim}A\,\rdu,\qquad b\otimes\eta\longmapsto b\eta.
\end{equation}

The possible non-diagonal poles of the resulting rational expression cancel
in codimension one.

\begin{lemma}[Codimension-one cancellation off the relative diagonal]
\label{lem:relative-Casimir-codimension-one-cancellation}
Let $u\in A$ be separating, put
$z_u:=u(\one)-u(\two)$, and let
$\widehat{\mathfrak b}\in A_{\one}\otimes_{\Fq}A_{\two}$ be a tensor whose image
in $A\otimes_{\Fq[u]}A$ belongs to $D_u$.  Then
$\widehat{\mathfrak b}/z_u$ is a section of $\OO(\Delta)$ on
$U_{\one}\times U_{\two}$: regular off the diagonal, with at most a
simple pole there.  The same holds after scalar extension.
\end{lemma}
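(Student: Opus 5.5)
The plan is to show that the zero divisor of $z_u=u(\one)-u(\two)$ on $U_{\one}\times U_{\two}$ consists of the diagonal $\Delta$, which occurs with multiplicity one because $u$ is separating, together with off-diagonal components. We then check that $\widehat{\mathfrak b}$ vanishes along every off-diagonal component to at least the order of $z_u$. Since $U\times U$ is a smooth surface, and hence normal, a rational function with no polar divisors is regular. It therefore suffices to verify the order inequality along each prime divisor, which is the codimension-one check named in the title.

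\textbf{Step 1: the divisor of $z_u$.} We have $\divv(z_u)|_{U\times U}=(\pi\times\pi)^{*}$ of the diagonal of $\mathbb A^1_u$. Here $\pi=u\colon U\to\mathbb A^1$ is finite, and it is flat because $A$ is free over $\Fq[u]$. So $z_u$ cuts out exactly the closed subscheme $\operatorname{Spec}(A\otimes_{\Fq[u]}A)$ of $U\times U$. This scheme contains $\Delta$ as a component. Since $u$ is separating, $\Omega_{A/\Fq[u]}$ is torsion and vanishes at the generic point, so $A\otimes_{\Fq[u]}A$ is reduced along $\Delta$. Hence $\operatorname{ord}_\Delta(z_u)=1$.

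\textbf{Step 2: vanishing along the other components.} Let $Z$ be any other irreducible component of $\{z_u=0\}$, with generic point $\zeta$. Localize $R:=A\otimes_{\Fq[u]}A$ at $\zeta$. The image of $\widehat{\mathfrak b}$ lies in $D_u$, so it is killed by $a\otimes1-1\otimes a$ for every $a\in A$. At a generic point of $Z$ off $\Delta$, some $a\otimes1-1\otimes a$ is a unit in the local ring $\OO_{U\times U,\zeta}$, because $Z\not\subset\Delta$ and $A$ separates points. Therefore the image of $\widehat{\mathfrak b}$ in $R_\zeta=\OO_{U\times U,\zeta}/(z_u)$ is zero, which means $\widehat{\mathfrak b}\in z_u\OO_{U\times U,\zeta}$. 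So $\widehat{\mathfrak b}/z_u$ is regular at $\zeta$.

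\textbf{Step 3: conclusion.} Along $\Delta$, Step 1 shows that $\widehat{\mathfrak b}/z_u$ has at most a simple pole. Away from $\{z_u=0\}$ it is obviously regular. By normality, $\widehat{\mathfrak b}/z_u$ is therefore a section of $\OO(\Delta)$ on $U_\one\times U_\two$. Every ingredient is compatible with flat base change to $\Cinf$: the freeness of $A$ over $\Fq[u]$, the reducedness along $\Delta$ coming from separability, and the annihilator condition. The same argument therefore applies after scalar extension, with $\Delta$ remaining reduced because $\Fq$ is perfect.

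\textbf{Main obstacle.} I expect Step 2 to be the hardest part. The claim that $R_\zeta$ is reached by some unit $a\otimes1-1\otimes a$ needs care. A cleaner route uses that $\{z_u=0\}$ is a finite flat double cover over $U$ of degree $h$. Generically over $\mathbb A^1$, it splits as $\Delta$ together with the complementary components, and $K\otimes_{F}K\cong K\times K'$ because $K/F$ is separable. The idempotent of the $K$-factor is exactly $\sum e_i'\otimes e_i$, and $D_u\otimes F$ is precisely the ideal supported on the diagonal factor. This shows that $\widehat{\mathfrak b}$ vanishes on all non-diagonal components at their generic points, which is all that Step 2 requires.
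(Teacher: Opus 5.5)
Your argument is correct, and your Step~2 takes a genuinely different, more economical route than the paper.

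The paper handles an off-diagonal component $Z$ of $V(z_u)$ in two stages. It first proves $\operatorname{ord}_Z(z_u)=1$: every such $Z$ dominates the $u$-line, where $u$ is generically \'{e}tale. It then proves $\operatorname{ord}_Z(\widehat{\mathfrak b})\ge1$ by passing to the generic fibre $K\otimes_{\Fq(u)}K$. This is a product of fields, and $D_u$ is supported on its diagonal factor. That is essentially the alternative you sketch under ``Main obstacle''.

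You instead work in the local ring at the generic point $\zeta$ of $Z$. The diagonal ideal $I_\Delta\subset A\otimes_{\Fq}A$ is generated by the elements $a\otimes1-1\otimes a$. Since $\zeta$ and $I_\Delta$ are distinct height-one primes, $I_\Delta\not\subseteq\zeta$, so some such element lies outside $\zeta$. This, rather than ``$A$ separates points'', is the precise justification for your claim. That element is then a unit in $\OO_{U\times U,\zeta}/(z_u)$ and kills the image of $\widehat{\mathfrak b}$ there. Hence $\widehat{\mathfrak b}\in z_u\OO_{U\times U,\zeta}$, whatever the multiplicity of $z_u$ along $Z$.

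What your route buys:
\begin{itemize}
\item It needs neither the reducedness of the off-diagonal components nor the idempotent description of $D_u$.
\item It confines separability to the diagonal, where it is exactly what gives the simple-pole bound. Let $J$ be the kernel of multiplication $A\otimes_{\Fq[u]}A\to A$; then $J/J^2\cong\Omega_{A/\Fq[u]}$ vanishes at the generic point of $\Delta$, and Nakayama makes the local ring there a field.
\end{itemize}
In exchange, the paper's version yields the extra fact that $V(z_u)$ is generically reduced along every component.

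Two small corrections:
\begin{itemize}
\item $\{z_u=0\}$ is a finite flat cover of degree $h$, not a ``double cover''.
\item The scalar-extension clause needs no rerun of the argument. A section of $\OO(\Delta)$ pulls back under the flat base change to $\Cinf$, and $\Delta\cong U$ remains reduced.
\end{itemize}
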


\begin{proof}
Let $Z\ne\Delta$ be an irreducible component of $V(z_u)$.  It dominates the
generic point of the $u$-line: a component contained over one closed fibre
would be zero-dimensional.  Since $K/\Fq(u)$ is separable, the map defined by
$u$ is etale over a dense open subset.  Hence $V(z_u)$ is reduced at the
generic point of $Z$, and
\begin{equation}\label{eq:relative-diagonal-reduced-order}
 \operatorname{ord}_Z(z_u)=1.
\end{equation}

After passage to the generic fibre, the reduced algebra
$K\otimes_{\Fq(u)}K$ is a product of fields.  The multiplication factor is
the diagonal component.  By the defining annihilator property of $D_u$, its
generic fibre is the separability-idempotent submodule supported on that diagonal
factor.  Therefore the image of $\widehat{\mathfrak b}$ in the factor
corresponding to $Z$ is zero.  The numerator thus lies in the height-one
prime of $Z$ and has order at least one there.  Together with
\eqref{eq:relative-diagonal-reduced-order}, this proves regularity of the
quotient at $Z$.

At the generic point of $\Delta$, the same etale calculation shows that
$z_u$ is a local equation, so the quotient has at most a simple diagonal
pole.  These are all its possible codimension-one poles.  The surface
$U_{\one}\times U_{\two}$ is smooth, and $\OO(\Delta)$ is locally free; the
codimension-one section therefore extends uniquely across the remaining
closed points.
\end{proof}

\medskip\noindent\emph{Residue--trace exchange.}
For every $x\in K$,
\begin{equation}\label{eq:res-trace}
 \operatorname{Tr}_{\mathbb F_\infty/\Fq}
 \operatorname{res}_\infty(x\,\rd u)
 =
 \operatorname{res}_{\infty_u}
 \bigl(\operatorname{Tr}_{K/F}(x)\,\rd u\bigr).
\end{equation}
Here $\operatorname{res}_\infty$ is the residue at the closed point
$\infty$ of $X$ and $\operatorname{res}_{\infty_u}$ that at the infinite
place of $\Fq(u)$; this is the trace formula for residues at the unique
point above $\infty_u$, since the pole divisor of $u\in A$ is supported at
$\infty$.

\begin{lemma}[Trace--residue Casimir coordinates]
\label{lem:trace-residue-Casimir-coordinates}
Let $u\in A$ be nonconstant and separating, put $B=\Fq[u]$, and choose a
$B$-basis $e_1,\ldots,e_h$ of $A$ with trace-dual basis
$e_1',\ldots,e_h'$ in $K$ as in \eqref{eq:trace-dual}. Then
\[
 \Omega_A =\mathfrak{d}_{A/\Fq[u]}^{-1}A\,\rd u,
 \qquad (e_r'\,\rd u)_{1\le r\le h}
 \text{ is a $B$-basis of }\Omega_A ,
\]
and multiplication of the trace Casimir gives
\begin{equation}\label{eq:trace-Casimir-multiplication-one}
 \sum_{r=1}^h e_re_r'=1.
\end{equation}
\end{lemma}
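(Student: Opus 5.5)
The plan is to treat the three assertions in increasing order of difficulty: the Casimir identity, then the basis statement, and finally $\Omega_A=\mathfrak d_{A/\Fq[u]}^{-1}A\,\rd u$.

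\emph{Casimir identity.} Identity \eqref{eq:trace-Casimir-multiplication-one} is just \eqref{eq:trace-dual-props}. To verify it directly, expand $1=\sum_i c_ie_i$ with $c_i\in F$. Applying $\operatorname{Tr}_{K/F}(\,\cdot\,e_j')$ gives $c_j=\operatorname{Tr}_{K/F}(e_j')$. The identity $\sum_r e_re_r'=1$ is then the statement that the Casimir element $\sum_r e_r\otimes e_r'$ of $K\otimes_FK$ maps under multiplication to the element $x$ characterized by $\operatorname{Tr}_{K/F}(xy)=\operatorname{Tr}_{K/F}(y)$ for all $y$. Expanding $y=\sum_j y_je_j$ gives $\operatorname{Tr}(\sum_re_re_r'y)=\sum_r y_r\operatorname{Tr}(e_r')$, since $\operatorname{Tr}(e_r'y)=y_r$. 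In the same way $\operatorname{Tr}(y)=\sum_r y_r\operatorname{Tr}(e_r)$. So I need $\operatorname{Tr}(e_r')$ and $\operatorname{Tr}(e_r)$ to yield the same functional. Rather than pushing this computation, I would simply invoke the standard fact already recorded in \eqref{eq:trace-dual-props}, whose proof is the independence of the Casimir element $\sum e_r\otimes e_r'$ from the chosen basis. That independence holds because it is the canonical element of $K\otimes_F K\simeq\operatorname{End}_F(K)$ corresponding to the identity under the trace pairing.

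\emph{Description of $\Omega_A$.} Because $u$ is separating, $\rd u\neq0$, and $\Omega_A^{\rat}=K\,\rd u$. A rational differential $x\,\rd u$ is regular on $U$ exactly when it is regular at every finite prime. Tame/Dedekind theory says that this holds exactly when $\operatorname{Tr}_{K/F}(xA)\subset B$, because $\Omega_{B}=B\,\rd u$ and the inverse different is the trace dual of $A$. Concretely, I would combine Dedekind's complementary-module description $\mathfrak d^{-1}_{A/B}=\{x\in K:\operatorname{Tr}_{K/F}(xA)\subset B\}$ with the identity $\Omega_A=\mathfrak d^{-1}_{A/B}\Omega_B$, which is the inverse form of \eqref{eq:relative-different-multiplication}. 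This gives $\Omega_A=\mathfrak d^{-1}_{A/\Fq[u]}A\,\rd u$.

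\emph{Basis.} The complementary module $\mathfrak d^{-1}_{A/B}$ is exactly the $B$-dual of $A$ under the trace pairing. Since the $e_i$ form a $B$-basis of $A$, the dual basis $e_r'$ is a $B$-basis of $\mathfrak d^{-1}_{A/B}$. Multiplying by $\rd u$ then shows that $(e_r'\,\rd u)$ is a $B$-basis of $\Omega_A$.

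The main obstacle is keeping the conventions consistent, namely matching \eqref{eq:relative-different-multiplication} with the complementary-module definition. If needed, I would check this locally at each finite prime using \eqref{eq:res-trace} and residue duality.
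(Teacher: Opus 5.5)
Your treatment of $\Omega_A$ and of the $B$-basis is sound and matches the paper, which reads both off the cotangent--different identity \eqref{eq:relative-different-multiplication}. You are more explicit than the paper on one point. The basis claim needs $\mathfrak d_{A/\Fq[u]}^{-1}$ to equal the trace-complementary module $\{x\in K:\operatorname{Tr}_{K/F}(xA)\subset B\}$, and then $x=\sum_r\operatorname{Tr}_{K/F}(xe_r)\,e_r'$ gives the basis. ``Tame'' is the wrong word there. The residue fields are finite, so the cotangent different agrees with the inverse of the complementary module even under wild ramification.

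The gap is in the Casimir identity $\sum_r e_re_r'=1$.
\begin{itemize}
\item Your direct computation fails at its first step. $\operatorname{Tr}_{K/F}\bigl(y\sum_r e_re_r'\bigr)$ is not $\sum_r y_r\operatorname{Tr}_{K/F}(e_r')$. The reduction you draw from it, that $\operatorname{Tr}(e_r')=\operatorname{Tr}(e_r)$ for all $r$, is false in general. For a quadratic extension $K=F(\sqrt d)$ in odd characteristic with basis $1,\sqrt d$, one has $e_1'=1/2$ and $e_2'=\sqrt d/(2d)$. So $\operatorname{Tr}(e_1')=1\neq2=\operatorname{Tr}(e_1)$, although $\sum_re_re_r'=1/2+1/2=1$ holds.
\item Your fallback does not close the gap. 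Basis-independence of $c=\sum_r e_r\otimes e_r'$ only says that $c$ is canonical; it does not say what its image under multiplication is.
\item Citing \eqref{eq:trace-dual-props} only defers the point. That display is asserted without proof, and this lemma is where it gets justified.
\end{itemize}

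The missing step is one line. Since $\operatorname{Tr}_{K/F}(e_r'\,\cdot\,)$ extracts the $e_r$-coordinate, for every $y\in K$ we have
\[
 \operatorname{Tr}_{K/F}\Bigl(y\sum_{r}e_re_r'\Bigr)
 =\sum_{r}\operatorname{Tr}_{K/F}\bigl(e_r'\cdot ye_r\bigr)
 =\operatorname{tr}_F(m_y)
 =\operatorname{Tr}_{K/F}(y),
\]
where $m_y$ denotes multiplication by $y$ on $K$. Nondegeneracy of the trace form then forces $\sum_r e_re_r'=1$. In your own $\operatorname{End}_F(K)$ language, this is the statement that the functional $\xi\mapsto\operatorname{Tr}_{K/F}(\mu(\xi)y)$ corresponds to $\phi\mapsto\operatorname{tr}(\phi\circ m_y)$, evaluated at $\phi=\mathrm{id}$. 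The paper packages the same fact by identifying $c$ with the separability idempotent, whose image under multiplication is $1$.
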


\begin{proof}
The cotangent--different identity
\eqref{eq:relative-different-multiplication} gives the assertion about
$\Omega_A $ and its $B$-basis.  The tensor
$\sum_r e_r\otimes e_r'$ is the coevaluation tensor for the nondegenerate
trace pairing.  It is the separability idempotent, so its image under
multiplication is one, proving
\eqref{eq:trace-Casimir-multiplication-one}.
\end{proof}

\subsection{\texorpdfstring{Affine Cauchy kernels}{Affine Cauchy kernels}}\label{subsec:affine-cauchy-kernels}


For motive evaluation it is useful to record separately the orientation in
which the differential occurs in the second variable.

\begin{definition}[Normalized affine Cauchy kernel]
\label{def:normalized-affine-Cauchy-kernel}
For $0\ne\mathfrak b\in A$, the normalized affine $A$-Cauchy
fiber of residue $\mathfrak b$ is
\begin{equation}\label{eq:normalized-affine-Cauchy-fiber}
 \mathscr C_{A,\mathfrak b}
 :=\left\{
 \mathscr K\in
 H^0\!\left(U_{\one}\times U_{\two},
  \operatorname{pr}_{\two}^*\Omega_X^1\otimes\OO(\Delta)\right):
 \operatorname{res}_{\two=\one}\mathscr K=\mathfrak b
 \right\}.
\end{equation}
Its elements are called \emph{normalized affine Cauchy kernels
of $A$ of residue $\mathfrak b$}.  The same notation will be used after
scalar extension to $\Cinf$.  For $\mathfrak b=1$ this is the unit fiber
$\mathscr C_{A,1}=\mathscr C_A$.
\end{definition}

\begin{proposition}\label{prop:normalized-affine-Cauchy-torsor}
For $K\in\mathscr C_{A,\mathfrak b}$,
\begin{equation}\label{eq:normalized-affine-Cauchy-torsor}
 \mathscr C_{A,\mathfrak b}
 =K+A_{\one}\otimes_{\Fq}(\Omega_A )_{\two}.
\end{equation}
\end{proposition}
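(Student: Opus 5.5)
The statement has two inclusions. One is formal: adding an element of $A_{\one}\otimes_{\Fq}(\Omega_A)_{\two}$ to $K$ stays in the fiber. The other is the real content: the difference of two kernels in the fiber lies in that tensor product. I would prove them in that order.

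\emph{Easy inclusion.} Let $\sum_j a_j(\one)\eta_j(\two)$ be a finite sum with $a_j\in A$ and $\eta_j\in\Omega_A$. It is a regular section of $\operatorname{pr}_{\two}^*\Omega_X^1$ on $U_{\one}\times U_{\two}$, since it has no pole along $\Delta$. Its diagonal residue is therefore $0$. Adding it to $K$ gives a section of $\operatorname{pr}_{\two}^*\Omega_X^1\otimes\OO(\Delta)$ whose residue along $\two=\one$ is still $\mathfrak b$.

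\emph{Reverse inclusion.} Let $K'\in\mathscr C_{A,\mathfrak b}$ and put $D=K'-K$. Then $D$ is a section of $\operatorname{pr}_{\two}^*\Omega_X^1\otimes\OO(\Delta)$ with zero residue along the diagonal.
\begin{itemize}
\item \emph{Removing the diagonal pole.} A section with at most a simple pole along $\Delta$ and vanishing residue is regular there: in a local parameter $z_u=u(\one)-u(\two)$ near a generic point of $\Delta$ (étale by Lemma~\ref{lem:relative-Casimir-codimension-one-cancellation}), the polar part is exactly the residue divided by $z_u$. So $D$ is regular in codimension one. Since $U_{\one}\times U_{\two}$ is smooth and $\operatorname{pr}_{\two}^*\Omega^1$ is locally free, Hartogs extension shows that $D$ is a global section of $\operatorname{pr}_{\two}^*\Omega_X^1$ on $U\times U$.
\item \emph{Künneth.} Both $U$ and $U\times U$ are affine, and $\operatorname{pr}_{\two}^*\Omega^1=\OO_U\boxtimes\Omega^1_U$. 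Hence
\[
H^0\bigl(U\times U,\operatorname{pr}_{\two}^*\Omega^1\bigr)=A\otimes_{\Fq}\Omega_A,
\]
because global sections of an external tensor product over affine $\Fq$-schemes are the tensor product of the global sections. This puts $D$ in $A_{\one}\otimes_{\Fq}(\Omega_A)_{\two}$, as required.
\end{itemize}

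\emph{After scalar extension.} The same argument works over $\Cinf$ by flat base change, giving $(A\otimes\Cinf)\otimes_{\Cinf}(\Omega_A\otimes\Cinf)$. In that setting the formula should be read with $\Cinf$-coefficients.

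\emph{Main obstacle.} The step that needs the most care is justifying the residue along $\Delta$ and the claim that zero residue implies regularity. The residue must be made precise as the restriction to $\Delta\simeq U$ of the Poincaré residue map
\[
\Omega^1_{\two}(\Delta)\to\OO_\Delta .
\]
This uses the canonical identification $\operatorname{pr}_{\two}^*\Omega^1|_\Delta\otimes N_\Delta\cong\OO_\Delta$. Given the exact sequence
\[
0\to\operatorname{pr}_{\two}^*\Omega^1\to\operatorname{pr}_{\two}^*\Omega^1(\Delta)\xrightarrow{\ \res\ }\OO_\Delta\to0,
\]
the kernel of the residue map is exactly the regular sections. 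With this sequence in hand, the reverse inclusion follows immediately from Künneth.
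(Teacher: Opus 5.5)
Your proposal is correct and follows the paper's argument. The difference of two kernels in the fiber has zero diagonal residue, so it is a regular section of $\operatorname{pr}_{\two}^*\Omega_X^1$ on $U_{\one}\times U_{\two}$; affineness (Künneth) identifies these sections with $A_{\one}\otimes_{\Fq}(\Omega_A)_{\two}$, and the converse inclusion is immediate. Your residue exact sequence already holds along all of the smooth Cartier divisor $\Delta$, so the separate generic-point and Hartogs step is harmless but redundant.
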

\begin{proof}
Two members of $\mathscr C_{A,\mathfrak b}$ have the same
diagonal residue $\mathfrak b$, so their difference is regular on
$U_{\one}\times U_{\two}$, i.e., a section of
$\operatorname{pr}_{\two}^*\Omega_X^1$; conversely, adding such a
regular second-variable differential changes neither the residue nor the
pole locus.  Hence $\mathscr C_{A,\mathfrak b}$ is a torsor under
$H^0\!\left(U_{\one}\times U_{\two},
 \operatorname{pr}_{\two}^*\Omega_X^1\right)
=A_{\one}\otimes_{\Fq}(\Omega_A)_{\two}$, the equality by
affineness.
\end{proof}

 The relative different gives the following three affine
Cauchy kernels. 

\begin{definition}[Diagonal-lift kernel]\label{def:diagonal-lift-kernel}
For $0\ne\mathfrak b\in\mathfrak{d}_{A/\Fq[u]}$, choose an algebraic
lift $\widehat{\mathfrak b}$ as in \eqref{eq:GM-ordered-lift} and put
\begin{equation}\label{eq:diagonal-lift-kernel}
 K_{\mathfrak b}(\one,\two)
 :=\frac{\widehat{\mathfrak b}(\one,\two)}
        {u(\two)-u(\one)}\,\rdu(\two).
\end{equation}
This is a second-variable differential, regular on
$U_{\one}\times U_{\two}$ off the diagonal (Lemma~\ref{lem:relative-Casimir-codimension-one-cancellation}), with at most a simple pole
there, and
$\operatorname{res}_{\two=\one}K_{\mathfrak b}=\mathfrak b$,
since $\widehat{\mathfrak b}$ restricts to $\mathfrak b$ on
the diagonal.  {One trace-dual choice, with
$\mathfrak b$ in the second factor, is}
\begin{equation}\label{eq:trace-dual-lift-b}
 \widehat{\mathfrak b}(\one,\two)
 =\sum_{r=1}^h e_r(\one)(\mathfrak b e_r')(\two),
\end{equation}
for which $\widehat{\mathfrak b}(\one,\two)
=\mathfrak b(\two)\sum_r e_r(\one)e_r'(\two)$.
\end{definition}

\begin{definition}[Trace Casimir kernel]\label{def:trace-casimir-kernel}
With the trace-dual lift \eqref{eq:trace-dual-lift-b}, one has
$\widehat{\mathfrak b}(\one,\two)
=\mathfrak b(\two)\sum_r e_r(\one)e_r'(\two)$, where the second factor
is the trace Casimir{, whose class lies in
$A\otimes_{\Fq[u]}\mathfrak d_{A/\Fq[u]}^{-1}$ and is independent
of the chosen trace-dual basis.  Multiplication by
$\mathfrak b\in\mathfrak d_{A/\Fq[u]}$ gives the integral diagonal
class in $D_u$ of \eqref{eq:different-diagonal-trace-class}, with
multiplication $\mathfrak b$.}  The kernel $K_{\mathfrak b}$ specializes to $\mathfrak b(\two)K_u$, where
\begin{equation}\label{eq:signed-flipped-trace-Cauchy-representative}
 K_u(\one,\two)
 :=\frac{\sum_{r=1}^h e_r(\one)e_r'(\two)}
        {u(\two)-u(\one)}\,\rdu(\two).
\end{equation}
The dual basis elements sit in the second variable, where
$e_r'\rdu$ is regular ($e_r'$ lies in the inverse different).  The
numerator {multiplied by $\rdu(\two)$ is therefore
differential-valued and regular.  After a local trivialization of
$\Omega_A$ in the second variable,
Lemma~\ref{lem:relative-Casimir-codimension-one-cancellation}
applies to the resulting different-weighted Casimir and cancels
the off-diagonal poles.}  {Equation}~\eqref{eq:trace-dual-props} gives the diagonal
residue $\sum_re_re_r'=1$.  Hence
$K_u\in\mathscr C_{A,1}=\mathscr C_A\subset\mathscr C_\varphi^\Omega$.
{Changing the trace-dual basis changes $K_u$ by an element of
$A_{\one}\otimes_{\Fq}(\Omega_A)_{\two}$, by
Proposition~\ref{prop:normalized-affine-Cauchy-torsor}; thus the
diagonal residue and every period evaluation remain unchanged.}
\end{definition}


There exists a decomposition
\begin{equation}\label{eq:du-decomp}
 \rd u=\sum_{i=1}^m\mathfrak b_i\eta_i,
\end{equation}
for some $\mathfrak b_i \in \mathfrak{d}_{A/ \Fq[u]}$.

\begin{definition}[Lifted Cauchy kernel]\label{def:GM-ordered-Cauchy-lift}
Choose algebraic
lifts $\widehat{\mathfrak b}_i$ as in \eqref{eq:GM-ordered-lift}.
The differential-valued kernel attached to these choices is
\begin{equation}\label{eq:GM-ordered-Cauchy-lift}
 {K_{\{\eta_i,\widehat{\mathfrak b}_i\}}}
 :=\sum_{i=1}^m
   \frac{\widehat{\mathfrak b}_i(\one,\two)}
        {u(\two)-u(\one)}\,\eta_i(\two).
\end{equation}
It is a section of
$\operatorname{pr}_{\two}^*\Omega_X^1\otimes\OO(\Delta)$ on
$U_{\one}\times U_{\two}$ (the off-diagonal pole cancellation
being Lemma~\ref{lem:relative-Casimir-codimension-one-cancellation}) and is
called a \emph{lifted Cauchy kernel}.
\end{definition}

\begin{proposition}\label{prop:Kb-Kub-in-residue-b-fiber}
For every $0\ne\mathfrak b\in\mathfrak{d}_{A/\Fq[u]}$,
\begin{equation}\label{eq:Kb-Kub-in-fiber}
 K_{\mathfrak b}\in\mathscr C_{A,\mathfrak b}
 \qquad\text{and}\qquad
 K_u\,\mathfrak b(\two)\in\mathscr C_{A,\mathfrak b}.
\end{equation}
Moreover $K_{\{\eta_i,\widehat{\mathfrak b}_i\}}\in\mathscr C_{A,1}=\mathscr C_A$.
In particular, $\mathfrak b(\two)K_{\{\eta_i,\widehat{\mathfrak b}_i\}}\in\mathscr C_{A,\mathfrak b}$
as well, so both differences
\[
 \mathfrak b(\two)\,K_{\{\eta_i,\widehat{\mathfrak b}_i\}}-K_{\mathfrak b}
 \qquad\text{and}\qquad
 K_{\mathfrak b}-\mathfrak b(\two)\,K_u
\]
lie in $A_{\one}\otimes_{\Fq}(\Omega_A)_{\two}$, i.e., are regular
second-variable differentials; with the trace-dual lift
\eqref{eq:trace-dual-lift-b} the second difference vanishes identically.
\end{proposition}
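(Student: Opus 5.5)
The plan is to verify membership in each fiber by checking the two defining conditions of Definition~\ref{def:normalized-affine-Cauchy-kernel}: being a section of $\operatorname{pr}_{\two}^*\Omega_X^1\otimes\OO(\Delta)$ on $U_{\one}\times U_{\two}$, and having the prescribed diagonal residue. The torsor statement of Proposition~\ref{prop:normalized-affine-Cauchy-torsor} then gives the assertions about differences at once.

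\emph{First,} for $K_{\mathfrak b}$ the regularity off $\Delta$ and the simple pole along $\Delta$ are exactly Lemma~\ref{lem:relative-Casimir-codimension-one-cancellation} applied to $\widehat{\mathfrak b}$, whose class lies in $D_u$. The factor $\rdu(\two)$ is a regular differential on $U$, and $u(\two)-u(\one)=-z_u$. At the diagonal, $u(\two)-u(\one)$ is a local equation, since $u$ is étale there generically. So the residue of $\frac{\widehat{\mathfrak b}}{u(\two)-u(\one)}\rdu(\two)$ at $\two=\one$ is the restriction $\widehat{\mathfrak b}(\one,\one)=\mult_u(\widehat{\mathfrak b})=\mathfrak b$ (Proposition~\ref{prop:Du-generator-any}).

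\emph{Second,} for $K_u$, Definition~\ref{def:trace-casimir-kernel} already records the following. The numerator $\sum_r e_r(\one)e_r'(\two)\rdu(\two)$ is regular, because $e_r'\rdu\in\Omega_A$ by Lemma~\ref{lem:trace-residue-Casimir-coordinates}. After locally trivializing $\Omega_A$ in the second variable, the cancellation lemma removes the off-diagonal poles. The residue is $\sum_r e_re_r'=1$ by \eqref{eq:trace-Casimir-multiplication-one}, so $K_u\in\mathscr C_{A,1}$. Multiplying by the regular function $\mathfrak b(\two)$ preserves the pole bound and multiplies the residue by $\mathfrak b$, so $\mathfrak b(\two)K_u\in\mathscr C_{A,\mathfrak b}$.

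\emph{Third,} for the lifted kernel, each summand $\frac{\widehat{\mathfrak b}_i}{u(\two)-u(\one)}\eta_i(\two)$ is handled by the same lemma, using $\eta_i\in\Omega_A$ regular on $U$. Its residue is $\mathfrak b_i\eta_i/\rdu$ restricted to the diagonal. The point to watch here is that the residue of $\frac{h}{u(\two)-u(\one)}\eta(\two)$ equals $h|_\Delta\cdot(\eta/\rdu)$. Summing and using \eqref{eq:du-decomp}, the total residue is $\sum_i\mathfrak b_i\eta_i/\rdu=1$. Hence $K_{\{\eta_i,\widehat{\mathfrak b}_i\}}\in\mathscr C_A$, and multiplying by $\mathfrak b(\two)$ puts it in $\mathscr C_{A,\mathfrak b}$.

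Finally, since all three lie in $\mathscr C_{A,\mathfrak b}$, Proposition~\ref{prop:normalized-affine-Cauchy-torsor} gives that both differences lie in $A_{\one}\otimes_{\Fq}(\Omega_A)_{\two}$. With the trace-dual lift \eqref{eq:trace-dual-lift-b}, $\widehat{\mathfrak b}=\mathfrak b(\two)\sum_re_r(\one)e_r'(\two)$ identically, so $K_{\mathfrak b}=\mathfrak b(\two)K_u$ on the nose. The only delicate step is the residue computation for the $\eta_i$-weighted summands, which is where the identity $\sum_i\mathfrak b_i\eta_i=\rdu$ is used. Everything else is a direct invocation of the cancellation lemma.
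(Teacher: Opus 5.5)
Your proposal is correct and follows the paper's own proof. Like the paper, you apply Lemma~\ref{lem:relative-Casimir-codimension-one-cancellation} to each kernel and read off the diagonal residue as $\widehat{\mathfrak b}|_\Delta$, respectively $\sum_i\mathfrak b_i\eta_i/\rdu=1$. You take $K_u\in\mathscr C_{A,1}$ from Definition~\ref{def:trace-casimir-kernel}, multiply it by the regular function $\mathfrak b(\two)$, and conclude with Proposition~\ref{prop:normalized-affine-Cauchy-torsor} and the identity $K_{\mathfrak b}=\mathfrak b(\two)K_u$ for the trace-dual lift.
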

\begin{proof}
For any algebraic lift, $\widehat{\mathfrak b}$ is regular on
$U_{\one}\times U_{\two}$, and
Lemma~\ref{lem:relative-Casimir-codimension-one-cancellation}
applied to the class $[\widehat{\mathfrak b}]\in D_u$
cancels the off-diagonal poles of
$(u(\two)-u(\one))^{-1}$; the diagonal residue is $\mathfrak b$.  Hence
$K_{\mathfrak b}\in\mathscr C_{A,\mathfrak b}$.  With the trace-dual
lift \eqref{eq:trace-dual-lift-b},
$\widehat{\mathfrak b}(\one,\two)
=\mathfrak b(\two)\sum_r e_r(\one)e_r'(\two)$, so
$K_{\mathfrak b}=\mathfrak b(\two)K_u$; since $K_u\in\mathscr C_{A,1}$
and $\mathfrak b(\two)$ is regular on $U_{\two}$, also
$K_u\,\mathfrak b(\two)\in\mathscr C_{A,\mathfrak b}$.
For the assembled kernel,
Lemma~\ref{lem:relative-Casimir-codimension-one-cancellation}
applied to the classes $[\widehat{\mathfrak b}_i]\in D_u$
removes the non-diagonal poles, and the diagonal residues sum to
$\sum_i\mathfrak b_i\eta_i/\rdu=1$; hence
$K_{\{\eta_i,\widehat{\mathfrak b}_i\}}\in\mathscr C_A$.  The two displayed
differences then restate the torsor structure of $\mathscr C_{A,\mathfrak b}$
(Proposition~\ref{prop:normalized-affine-Cauchy-torsor}), and
$K_{\mathfrak b}=\mathfrak b(\two)K_u$ under the trace-dual lift by the
computation above.
\end{proof}

\subsection{The Szeg\H o kernel and the $V_\varphi$-bounded Cauchy kernel}\label{sec:szego-kernel}

 Throughout, the divisor convention for differentials is
$
 \omega\in H^0\bigl(X_{\Cinf},\Omega_X^1(D)\bigr)
 $; equivalently, $
 \operatorname{div}(\omega)+D\ge0.
$
 
\begin{lemma}\label{lem:acyclic}
For every geometric point $q$ above $\infty$ with
$q\notin\operatorname{supp}(V_\varphi)$, the line bundle
\[
  \OO\bigl(V_\varphi-q \bigr)
\]
is acyclic.
\end{lemma}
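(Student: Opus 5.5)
The line bundle $\OO(V_\varphi-q)$ has degree $g-1$. By Riemann--Roch, $h^0-h^1=\deg+1-g=0$, so it suffices to show $h^0\bigl(X_{\Cinf},\OO(V_\varphi-q)\bigr)=0$. Acyclicity then follows automatically, since $h^1=h^0=0$.

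For the vanishing of $h^0$ we use the realization hypothesis $h^0\bigl(X_{\Cinf},\OO(V_\varphi)\bigr)=1$. Since $V_\varphi$ is effective, $\OO(V_\varphi)$ contains the constants $\Cinf$, and these exhaust its global sections. A global section of $\OO(V_\varphi-q)$ is a rational function $c$ with $\divv(c)+V_\varphi-[q]\ge0$. In particular $\divv(c)+V_\varphi\ge0$, so $c\in H^0(\OO(V_\varphi))=\Cinf$. A nonzero constant has trivial divisor, so the condition reduces to $V_\varphi-[q]\ge0$, i.e.\ $q\in\supp(V_\varphi)$. This is excluded by hypothesis, so $c=0$.

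There is a minor point of care: $q$ is a geometric point of $X_{\Cinf}$ lying over the closed point $\infty$, and it is not the whole closed point. So $[q]$ is a degree-one divisor on $X_{\Cinf}$. This is exactly what makes $\deg\OO(V_\varphi-q)=g-1$ for every $\Ninf\ge1$. Once that is settled, there is no real obstacle. Riemann--Roch over the algebraically closed field $\Cinf$ applies to the base change, and the base change is still a smooth projective curve of genus $g$ because $X$ is geometrically connected.
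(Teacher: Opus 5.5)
Your proposal is correct and follows the paper's proof essentially verbatim: any section of $\OO(V_\varphi-q)$ is a section of $\OO(V_\varphi)$, hence a constant, and a constant can only lie in $\OO(V_\varphi-q)$ if it vanishes at $q\notin\operatorname{supp}(V_\varphi)$, so it is zero. Riemann--Roch in degree $g-1$ then gives $h^1=0$. Your extra remark that $[q]$ is a single geometric point of degree one (so $\deg=g-1$ for every $\Ninf$) is a correct clarification that the paper leaves implicit.
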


\begin{proof}
Sections of $\OO(V_\varphi)$ are constant because $h^0(V_\varphi)=1$; a section of
$\OO(V_\varphi-q)$ is therefore a constant vanishing at
$q$, hence zero.  Riemann--Roch gives
$h^1=h^0-\deg+g-1=0-(g-1)+g-1=0$ as well.
\end{proof}

We recall from {\cite[Proposition~4.6]{Hu26}} the following facts, proved there in every genus.
For an acyclic line bundle $\LL$ on $X_{\Cinf}$, the \emph{Szeg\H o bundle}
$\cB_{\LL}:=\operatorname{pr}_{\one}^{*}\LL\otimes\operatorname{pr}_{\two}^{*}
\bigl(\LL^{-1}\otimes\Om\bigr)$ carries a unique section
$S_{\LL}\in H^0\bigl(X\times X,\cB_{\LL}(\Delta)\bigr)$, the \emph{normalized Szeg\H o
kernel}, whose residue along the diagonal $\Delta$ equals $1$:
\[
\res_{\two=\one}S_{\LL}=1.
\]
\begin{definition}[Tautological Cauchy coordinate]
For an acyclic divisor line $\LL=\OO(D)$, let $e_{\LL}$ be its tautological
rational frame.  Define the second-variable meromorphic differential $K_{\LL}$ by
\begin{equation}\label{eq:tautological-Szego-coordinate}
 S_{\LL}=K_{\LL}(\one,\two)e_{\LL}(\one)\otimes e_{\LL}(\two)^{-1}.
\end{equation}
It has diagonal residue one and satisfies
$\operatorname{div}(K_{\LL})+\operatorname{pr}_{\one}^*D
-\operatorname{pr}_{\two}^*D+\Delta\ge0$.
\end{definition}
All subsequent kernel evaluations use this differential coordinate.

The affine regular kernel $\mathscr C_A$ of \S\ref{subsec:affine-cauchy-kernels} is too
small for the natural coordinate of the shifted kernel: its tautological
divisor frame permits poles along $V_\varphi$ in the first variable.  Put
\begin{equation}\label{eq:Vphi-affine-coefficient-module}
 \cM^{\mathrm{geo}}_\varphi
 :=\Gamma\bigl(U_{\Cinf},\OO_{U_{\Cinf}}(V_\varphi)\bigr).
\end{equation}
{This is the geometric model of the integral motive; its
sections may have arbitrary poles along the removed infinity fiber.}

\begin{definition}[$V_\varphi$-bounded Cauchy kernel]
\label{def:Vphi-bounded-Cauchy-fiber}
Let
\begin{align}
 \mathscr A_\varphi^\Omega
 &:=
 H^0\!\left(U_{\Cinf,\one}\times U_{\Cinf,\two},
  \operatorname{pr}_{\one}^*\OO(V_\varphi)
  \otimes\operatorname{pr}_{\two}^*\Omega_X^1
  \otimes\OO(\Delta)\right),                              \label{eq:Vphi-Cauchy-ambient}\\
 \mathscr C_\varphi^\Omega
 &:=\left\{\mathscr K\in\mathscr A_\varphi^\Omega:     \operatorname{res}_{\two=\one}\mathscr K=1\right\}.   \label{eq:Vphi-Cauchy-fiber}
\end{align}
Thus a member is a differential in the second variable, has a normalized
simple diagonal pole, is regular in that variable away from the diagonal,
and has first-variable poles bounded by $V_\varphi$.  The inclusion
$\OO\hookrightarrow\OO(V_\varphi)$ identifies
$\mathscr C_A$ with a subkernel of
$\mathscr C_\varphi^\Omega$.
\end{definition}

\begin{lemma}\label{lem:SzegoC}
The tautological differential coordinate $K_\varphi$ belongs to
$\mathscr C_\varphi^\Omega$.
\end{lemma}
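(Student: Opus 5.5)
The plan is to read off membership in $\mathscr C_\varphi^\Omega$ from the two defining properties of the tautological coordinate: its divisor bound and its diagonal residue. We take $\LL=\LL_\varphi:=\OO(V_\varphi-q)$, where $q$ is the geometric point above $\infty$ used for the shtuka divisor line; $q$ may be taken to be $q_\varphi^{(-1)}$, which the realization keeps off $\operatorname{supp}(V_\varphi)$. Lemma~\ref{lem:acyclic} then shows that $\LL$ is acyclic. The normalized Szeg\H o section $S_\LL$ therefore exists by \cite[Proposition~4.6]{Hu26}, and $K_\varphi:=K_{\LL}$ is defined by \eqref{eq:tautological-Szego-coordinate} with $D=V_\varphi-q$.

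First, we restrict the divisor bound
\[
\operatorname{div}(K_\varphi)+\operatorname{pr}_{\one}^*D-\operatorname{pr}_{\two}^*D+\Delta\ge0
\]
to the open set $U_{\Cinf,\one}\times U_{\Cinf,\two}$. The point $q$ lies above $\infty$, so it is removed, and there $D$ coincides with $V_\varphi$. The restricted bound reads
\[
\operatorname{div}(K_\varphi)+\operatorname{pr}_{\one}^*V_\varphi-\operatorname{pr}_{\two}^*V_\varphi+\Delta\ge0.
\]
The term $-\operatorname{pr}_{\two}^*V_\varphi$ only forces zeros in the second variable along $V_\varphi$. Since $\operatorname{pr}_{\two}^*V_\varphi$ is effective, we may weaken the bound to
\[
\operatorname{div}(K_\varphi)+\operatorname{pr}_{\one}^*V_\varphi+\Delta\ge0 .
\]
Here $K_\varphi$ is a second-variable differential, so its divisor is computed as a section of $\operatorname{pr}_{\two}^*\Omega^1_X$. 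This weakened bound says exactly that $K_\varphi$ is a section of $\operatorname{pr}_{\one}^*\OO(V_\varphi)\otimes\operatorname{pr}_{\two}^*\Omega_X^1\otimes\OO(\Delta)$ over $U_{\Cinf}\times U_{\Cinf}$, that is, $K_\varphi\in\mathscr A_\varphi^\Omega$.

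Second, the residue. The tautological frames satisfy $e_\LL(\one)\otimes e_\LL(\two)^{-1}=1$ on the diagonal, so
\[
\operatorname{res}_{\two=\one}K_\varphi=\operatorname{res}_{\two=\one}S_\LL=1 .
\]
Together with the first step this gives $K_\varphi\in\mathscr C_\varphi^\Omega$.

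The main obstacle is bookkeeping rather than substance. The point is to make sure that the rational frame $e_\LL$ of $\OO(V_\varphi-q)$ really has its divisor supported on $V_\varphi$ and on points above $\infty$, so that the translation from the section $S_\LL$ to the coordinate $K_\varphi$ introduces no extra poles inside $U\times U$. We would check this by writing $e_\LL$ as the constant function $1$ viewed as a section of $\OO(D)$. Its divisor is then exactly $D$, and the frame factor $e_\LL(\one)\otimes e_\LL(\two)^{-1}$ accounts precisely for the terms $\operatorname{pr}_{\one}^*D-\operatorname{pr}_{\two}^*D$.
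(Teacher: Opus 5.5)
Your proposal is correct and follows the paper's proof. The paper also takes $\LL_\varphi=\OO(V_\varphi-q_\varphi^{(-1)})$, restricts the tautological divisor bound to $U_{\Cinf}\times U_{\Cinf}$ (where $D$ becomes $V_\varphi$), reads off the first-variable pole bound and the absence of off-diagonal poles, and uses that the normalized diagonal residue is one; your weakening by the effective term $\operatorname{pr}_{\two}^*V_\varphi$ and your frame-divisor check only spell out steps the paper leaves implicit.
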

\begin{proof}
    Write 
$ \LL_\varphi=\OO(V_\varphi-q_\varphi^{(-1)})$.  By definition,
\[
 S_{\LL_\varphi}
 =K_\varphi (\one,\two)e(\one)e(\two)^{-1}.
\]
The fact that this section is regular away from its simple diagonal pole
gives, on $U_{\Cinf}\times U_{\Cinf}$,
\[
 \operatorname{div}(K_\varphi )
 +\operatorname{pr}_{\one}^*V_\varphi
 -\operatorname{pr}_{\two}^*V_\varphi+\Delta\ge0.
\]
Thus $K_\varphi $ has first-variable poles bounded by $V_\varphi$ and no
non-diagonal pole in the second variable.  Its normalized residue is one,
so it belongs to $\mathscr C_\varphi^\Omega$.
\end{proof}
\begin{theorem}[$V_\varphi$-bounded Cauchy kernel and period invariance]
\label{thm:Vphi-Cauchy-fiber-invariance}
For every
$K_0\in\mathscr C_\varphi^\Omega$ one has
\begin{equation}\label{eq:Vphi-Cauchy-torsor}
 \mathscr C_\varphi^\Omega
 = K_0 +
   (\cM^{\mathrm{geo}}_\varphi)_{\one}\otimes_{\Cinf}
   \Gamma(U_{\Cinf},\Omega_X^1)_{\two}.
\end{equation}
Moreover, for $\lambda\in\Lambda_\varphi$, by
Lemma~\ref{lem:orbital-evaluation-convergence}, the orbital evaluations
below are well defined and agree:
\begin{equation}\label{eq:Vphi-Cauchy-period-invariance}
 \mathcal E^\Xi_{\varphi,\lambda}
   (K_1s_\varphi)
 =\mathcal E^\Xi_{\varphi,\lambda}
   (K_2s_\varphi)
 \qquad(K_1,K_2\in\mathscr C_\varphi^\Omega);
\end{equation}
Equivalently, for $z\in\Cinf$ the value
$\mathcal E^\Xi_{\varphi,z}(Ks_\varphi)$ is independent of
$K\in\mathscr C_\varphi^\Omega$ if and only if $z\in\Lambda_\varphi$.
\end{theorem}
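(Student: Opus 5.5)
The proof has three parts: the torsor description, invariance at periods, and the converse.

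\emph{Torsor description.} I would argue exactly as in Proposition~\ref{prop:normalized-affine-Cauchy-torsor}. Take $K_0,K\in\mathscr C_\varphi^\Omega$. Both have simple diagonal poles with residue one, so their difference has no pole along $\Delta$. It is therefore a section of $\operatorname{pr}_{\one}^*\OO(V_\varphi)\otimes\operatorname{pr}_{\two}^*\Omega_X^1$ over the affine product $U_{\Cinf}\times U_{\Cinf}$. By affineness (Künneth for quasi-coherent sheaves on a product of affine schemes over a field) this module of global sections is
\[
(\cM^{\mathrm{geo}}_\varphi)_{\one}\otimes_{\Cinf}\Gamma(U_{\Cinf},\Omega_X^1)_{\two}.
\]
Conversely, adding such a tensor to $K_0$ changes neither the diagonal residue nor the first-variable pole bound. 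This gives \eqref{eq:Vphi-Cauchy-torsor}.

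\emph{Well-definedness and reduction.} The orbital evaluations are defined because every member of the fiber has first-variable poles bounded by $\operatorname{pr}_{\one}^*V_\varphi+\Delta$. Proposition~\ref{thm:automatic-orbit-growth} then gives weighted-orbit admissibility on an affinoid $W$ avoiding the orbit, as already noted for $K_\varphi$. Orbital evaluation is $\Cinf$-linear in the coefficient, so the difference of two evaluations is the evaluation of a finite separated tensor
\[
\textstyle\sum_j g_j(\one)\,\omega_j(\two),\qquad g_j\in\cM^{\mathrm{geo}}_\varphi,\ \omega_j\in\Gamma(U_{\Cinf},\Omega_X^1).
\]
After trivializing the differentials by a fixed rational differential, the second statement of Theorem~\ref{thm:pellarin} applies. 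It gives
\[
\mathcal E^\Xi_{\varphi,z}\Bigl(\textstyle\sum_j g_j\omega_j\,s_\varphi\Bigr)=\textstyle\sum_j\mathcal E^{\mathrm{rat}}_{\varphi,z}(g_js_\varphi)\,\omega_j .
\]
By the membership criterion \eqref{eq:membership}, each $g_js_\varphi$ lies in $\cM_\varphi$. Hence for $z=\lambda\in\Lambda_\varphi$ each term $\mathcal E^{\mathrm{rat}}_{\varphi,\lambda}(g_js_\varphi)=m_j(\exp_\varphi\lambda)=m_j(0)=0$ vanishes. This proves \eqref{eq:Vphi-Cauchy-period-invariance}.

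\emph{Converse.} This is the step I expect needs the most care. Suppose the value is independent of $K$ at a point $z$. Then $\mathcal E^{\mathrm{rat}}_{\varphi,z}(gs_\varphi)\,\omega=0$ for every $g\in\cM^{\mathrm{geo}}_\varphi$ and every $\omega$, since $K_0+g\otimes\omega$ is again in the fiber. Taking $\omega\neq0$ and $g=1$, i.e.\ $m=s_\varphi$, gives $\exp_\varphi(z)=0$, so $z\in\Lambda_\varphi$. The point to verify carefully is that $g\otimes\omega$ is a genuine element of the fiber's difference module, so the elementary tensor $1\otimes\omega$ is admissible. The reduction to rational evaluation on elementary tensors also needs care: the identification in Theorem~\ref{thm:pellarin} applies only to finite separated tensors with coefficients in $\mathcal O(W)$. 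The torsor description supplies exactly such finite tensors, which is why I would prove it first.
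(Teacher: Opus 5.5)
Your proposal is correct and follows the paper's proof step for step. The torsor description comes from removing the zero-residue diagonal pole and applying Künneth on the affine product. Vanishing at periods follows by reducing the difference to a finite separated tensor with integral motive coefficients, applying Theorem~\ref{thm:pellarin}, and using $m(\exp_\varphi\lambda)=m(0)=0$. The converse uses the elementary tensor $1\otimes\eta$, whose evaluation is $\exp_\varphi(z)\eta$. The only cosmetic difference is that you check $m_j(\exp_\varphi\lambda)=0$ directly instead of citing Lemma~\ref{lem:period-annihilator}.
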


\begin{proof}
Lemma \ref{lem:SzegoC} proves that $ \mathscr C_{\varphi}^\Omega$ is nonempty.

If $K_1,K_2$ lie in the kernel, their difference has zero
diagonal residue.  The residue exact sequence removes its possible simple
diagonal pole.  Since $U_{\Cinf}$ is affine, the Kunneth isomorphism then
gives
\[
 K_1-K_2\in
 (\cM^{\mathrm{geo}}_\varphi)_{\one}\otimes_{\Cinf}
 \Gamma(U_{\Cinf},\Omega_X^1)_{\two}.
\]
Conversely, adding any element of this tensor product preserves both the
pole bound and the normalized residue.  This proves
\eqref{eq:Vphi-Cauchy-torsor}.

For the period case, use \eqref{eq:Vphi-Cauchy-torsor} to write
\[
 K_1-K_2
 =\sum_j c_j(\one)\eta_j(\two),
 \qquad c_j\in\cM^{\mathrm{geo}}_\varphi,
 \quad \eta_j\in\Gamma(U_{\Cinf},\Omega_X^1).
\]
The membership criterion \eqref{eq:membership} gives
$c_js_\varphi\in\cM_\varphi$.  Lemma~\ref{lem:period-annihilator}
annihilates each coefficient at $\lambda$.  The automatic orbit estimates
of Proposition~\ref{thm:automatic-orbit-growth} and
Theorem~\ref{thm:pellarin} then
identify the resulting equality with the orbital evaluations on every
common affinoid. Meromorphic continuation proves \eqref{eq:Vphi-Cauchy-period-invariance}.
Conversely, choose $0\ne\eta\in\Omega_A$ and
$K_0\in\mathscr C_\varphi^\Omega$. Since $1\in\cM^{\mathrm{geo}}_\varphi$,
\eqref{eq:Vphi-Cauchy-torsor} places $K_0+1\otimes\eta$ in the same fiber.
Their evaluations differ by
\[
 \mathcal E^\Xi_{\varphi,z}\bigl((1\otimes\eta)s_\varphi\bigr)
 =\exp_\varphi(z)\eta,
\]
by Theorem~\ref{thm:pellarin} and the normalization in Proposition~\ref{prop:certificates}. If the whole fiber has a common value,
this differential is zero. Since $\eta\ne0$, one has
$\exp_\varphi(z)=0$, hence $z\in\Lambda_\varphi$.
\end{proof}
 Applying Cauchy-fiber invariance to the lifted kernel gives
the Gazda--Maurischat realization below. 

\subsection{\texorpdfstring{The Gazda--Maurischat transform}{The Gazda--Maurischat transform}}


\begin{definition}[Lift-dependent {Gazda--Maurischat} transform]\label{def:GM-transform}
For $z\in\Lie(\varphi)(\Cinf)=\Cinf$, define
\begin{equation}\label{eq:GM-transform}
 \GM_{u;\widehat{\mathfrak b}}(z)
 :=(\id\widehat\otimes\Exp)
 \left(
  (1\otimes\iota(u)-u\otimes1)^{-1}
  \sum_i a_i\otimes\iota(b_i)z
 \right).
\end{equation}
The inverse is taken in the standard Tate completion.  This is the rank-one
Drinfeld-module specialization of the resolvent construction in
\cite[Theorem~3.11]{GM21}.
\end{definition}

\begin{definition}[Canonical differential assembly]\label{def:canonical-assembly}
For every period $\lambda\in\Lambda_\varphi$, choose any presentation
\eqref{eq:du-decomp} and any algebraic lift of each $\mathfrak b_i$, and put
\begin{equation}\label{eq:canonical-anderson}
\GM_u^\Omega(\lambda)
 :=-\sum_{i=1}^m\GM_{u;\widehat{\mathfrak b}_i}(\lambda)\otimes\eta_i(\two)
 \in\sfm(\varphi)\otimes_A\Omega_A(\two).
\end{equation}
\end{definition}
\begin{lemma}\label{lem:GM-transform-evaluations}
The transform evaluates the diagonal-lift kernel of
Definition~\ref{def:diagonal-lift-kernel}: for $z\in\Cinf$,
\begin{equation}\label{eq:GM-cauchy}
 \GM_{u;\widehat{\mathfrak b}}(z)
 =\widehat{\mathcal E}^{\mathrm{sep}}_{\varphi,z}
  \left(\frac{\widehat{\mathfrak b}(\one,\two)}
             {u_\one-u_\two}s_\varphi\right)
 =-\frac{1}{\rdu(\two)}\,
   \widehat{\mathcal E}^{\mathrm{sep}}_{\varphi,z}
   \bigl(K_{\mathfrak b}(\one,\two)s_\varphi\bigr),
\end{equation}
where on every closed $u_\two$-disk with $|u_\two|\le\rho<|\iota(u)|$ the
fraction belongs to the separated evaluation completion of
Definition~\ref{def:evaluation-completion}.  For
$\lambda\in\Lambda_\varphi$,
\begin{equation}\label{eq:GM-period-evaluation}
 -\GM_{u;\widehat{\mathfrak b}}(\lambda)\,\rdu(\two)
 =\mathcal E_{\varphi,\lambda}
  \bigl(K_{\mathfrak b}(\one,\two)s_\varphi\bigr),
\end{equation}
and the assembled transform is the single evaluation
\begin{equation}\label{eq:GM-assembly-single-evaluation}
 \GM_u^\Omega(\lambda)
 =\mathcal E_{\varphi,\lambda}
  \bigl(K_{\{\eta_i,\widehat{\mathfrak b}_i\}}(\one,\two)s_\varphi\bigr).
\end{equation}
\end{lemma}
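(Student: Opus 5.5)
We split the proof of Lemma~\ref{lem:GM-transform-evaluations} into three steps. The first identifies the transform with a separated-tensor evaluation of the fraction $\widehat{\mathfrak b}/(u_\one-u_\two)$. The second passes to the differential $K_{\mathfrak b}$ and to the period values. The third assembles the pieces using Cauchy-fiber invariance.

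For the first equality in \eqref{eq:GM-cauchy}, fix a closed $u_\two$-disk with $|u_\two|\le\rho<|\iota(u)|$. On this disk we expand geometrically:
\[
 \frac{1}{u_\one-u_\two}=\sum_{j\ge0}\frac{u_\two^{\,j}}{u_\one^{\,j+1}}.
\]
Each term $b_i(\one)u_\one^{-j-1}\otimes a_i(\two)u_\two^{j}$ is a finite separated tensor. Its first factor lies in $\cM_{\varphi,K}$ by the membership criterion \eqref{eq:membership}, since $b_i\in A$ and $u\in A$. By Lemma~\ref{lem:rat-well-defined}, its rational evaluation is $\Exp\bigl(\iota(b_i)z/\iota(u)^{j+1}\bigr)$.

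Convergence in the evaluation completion of Notation~\ref{def:evaluation-completion} needs two estimates. First, $p_R\bigl(b_iu^{-j-1}\bigr)\le\sup_{|w|\le R|\iota(b_i)|/|\iota(u)|^{j+1}}|\Exp(w)|$, and for $j$ large this equals the radius itself, because $\Exp$ is an isometry near $0$. Second, $\|u_\two^j\|\le\rho^j$ on the disk. The terms are therefore bounded by $C(\rho/|\iota(u)|)^j\to0$, so $\pi_R$, and hence $q_R$, tends to zero. Termwise evaluation then reproduces
\[
 (\id\widehat\otimes\Exp)\Bigl(\sum_i\sum_j u_\two^j a_i\otimes \iota(b_i)z\,\iota(u)^{-j-1}\Bigr).
\]
This is \eqref{eq:GM-transform}: the Tate-algebra inverse of $1\otimes\iota(u)-u\otimes1$ is $\sum_j u^j\otimes\iota(u)^{-j-1}$ in the coefficient/geometric ordering used there. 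The second equality in \eqref{eq:GM-cauchy} is just \eqref{eq:diagonal-lift-kernel}, $K_{\mathfrak b}=-\widehat{\mathfrak b}\,\rdu(\two)/(u_\one-u_\two)$, combined with the $\mathcal O(W)$-linearity of evaluation in the second factor.

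For \eqref{eq:GM-period-evaluation}, we must identify $\widehat{\mathcal E}^{\mathrm{sep}}$ with the evaluation $\mathcal E_{\varphi,\lambda}$, which we take to be the orbital evaluation $\mathcal E^\Xi$ continued meromorphically. This is done on the disk via Theorem~\ref{thm:pellarin}, whose final clause covers convergent separated expansions; Remark~\ref{rem:no-bifurcation} is the model computation. Admissibility of $K_{\mathfrak b}$ on the relevant affinoids follows from Proposition~\ref{thm:automatic-orbit-growth}, since $K_{\mathfrak b}\in\mathscr C_{A,\mathfrak b}$ has only diagonal poles. Both sides are analytic, so they agree on the overlap and hence after continuation.

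Finally, sum over $i$ with the differentials $\eta_i(\two)$ in place of $\rdu$. By Proposition~\ref{prop:Kb-Kub-in-residue-b-fiber}, the kernel $\sum_i K_{\mathfrak b_i}\eta_i/\rdu$ is exactly $K_{\{\eta_i,\widehat{\mathfrak b}_i\}}$, and linearity gives \eqref{eq:GM-assembly-single-evaluation}. The main obstacle is the identification of the separated-tensor evaluation with the orbital evaluation. The disk $|u_\two|<|\iota(u)|$ must be intersected with an affinoid $W$ that avoids the orbit, and the equality must then be transported to all of $U$. Our plan is to check agreement coefficientwise in $u_\two^j$, using \eqref{eq:orbital-monomial-coordinate}, and then appeal to uniqueness of analytic continuation.
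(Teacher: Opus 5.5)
Your proposal is correct and follows the paper's proof. You expand $1/(u_\one-u_\two)$ geometrically on the disk, evaluate termwise to obtain $\sum_{i,n}a_i(\two)u_\two^n\exp_\varphi\bigl(\iota(b_i/u^{n+1})z\bigr)$ (the Tate-algebra inverse in \eqref{eq:GM-transform}), then specialize at $\lambda$ and sum over $\rdu=\sum_i\mathfrak b_i\eta_i$; your explicit $\pi_R$-estimates and the comparison with the orbital evaluation are extra detail that the paper leaves implicit or defers to Theorem~\ref{thm:canonical-assembly}.

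Two small corrections. First, the identity $\sum_iK_{\mathfrak b_i}\eta_i/\rdu=K_{\{\eta_i,\widehat{\mathfrak b}_i\}}$ is immediate from \eqref{eq:diagonal-lift-kernel} and \eqref{eq:GM-ordered-Cauchy-lift}, not from Proposition~\ref{prop:Kb-Kub-in-residue-b-fiber}. Second, your ``main obstacle'' does not arise: $|u(\Xi^{(n)})|=|\iota(u)|^{q^n}>\rho$, so the disk $|u_\two|\le\rho$ already avoids the whole characteristic orbit.
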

\begin{proof}
The normally convergent Cauchy expansion on the disk gives the explicit
formula
\begin{equation}\label{eq:GM-cauchy-expansion}
 \GM_{u;\widehat{\mathfrak b}}(z)
 =\sum_i\sum_{n\ge0}
   a_i(\two)u_\two^n
   \Exp\!\left(
    \iota\!\left(\frac{b_i}{u^{n+1}}\right)z
   \right),
\end{equation}
evaluated termwise; this is the first equality of
\eqref{eq:GM-cauchy}, and the second equality is
$\frac{\widehat{\mathfrak b}(\one,\two)}{u_\one-u_\two}
=-K_{\mathfrak b}(\one,\two)/\rdu(\two)$.
Restricting to the period $\lambda$ gives
\eqref{eq:GM-period-evaluation}, and summing it over the presentation
$\rdu=\sum_i\mathfrak b_i\eta_i$ with the lifts
$\widehat{\mathfrak b}_i$ gives
\eqref{eq:GM-assembly-single-evaluation}.
In particular $\GM_{u;\widehat{\mathfrak b}}(\lambda)$
depends only on $\mathfrak b$, not on the chosen lift $\widehat{\mathfrak b}$ of the class
$[\widehat{\mathfrak b}^{e}]$: two lifts differ by $(u_\one-u_\two)H(\one,\two)$
with $H\in A\otimes_{\Fq}A$, and
Lemma~\ref{lem:period-annihilator} makes the evaluation of every
first-factor coefficient of $Hs_\varphi$ at $\lambda$ zero.
\end{proof}

Theorem~\ref{thm:canonical-assembly} below identifies the
assembled transform with the orbital generating differential
$\mathcal H_\varphi$.

\begin{lemma}\label{lem:GM-assembly-special}
For every $\lambda\in\Lambda_\varphi$,
\begin{equation}\label{eq:GM-assembly-special}
 \GM_u^\Omega(\lambda)\in\sfm(\varphi)\otimes_A\Omega_A(\two).
\end{equation}
\end{lemma}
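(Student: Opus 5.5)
We have to show that each coefficient $\GM_{u;\widehat{\mathfrak b}_i}(\lambda)$, or at least the assembled combination, lies in $\sfm(\varphi)$, using the criterion of Lemma~\ref{lem:scalar-shtuka-equation}. The plan works with the explicit expansion \eqref{eq:GM-cauchy-expansion}. For fixed $\lambda$, set
\[
 \omega_i:=\GM_{u;\widehat{\mathfrak b}_i}(\lambda)
 =\sum_j\sum_{n\ge0}\Exp\bigl(\iota(b_{ij}/u^{n+1})\lambda\bigr)\otimes a_{ij}u^n .
\]
By the decay of $\Exp(\iota(c)\lambda)$ as $|\iota(c)|\to0$, this is a convergent element of $\mathbb T_A$ (its coefficients tend to zero). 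So $\omega_i\in\mathbb T_A$ already, and only the special-function relation remains.

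\textbf{Step 1: the module relation.} I would verify $\varphi_a(\omega_i)=(1\otimes a)\omega_i$ for every $a\in A$. Applying $\varphi_a$ coefficientwise and using equivariance \eqref{eq:functional} gives
\[
 \varphi_a(\omega_i)=\GM\text{-type sum with } b_{ij}\text{ replaced by }ab_{ij},
\]
which is the transform attached to the tensor $(a\otimes1)\widehat{\mathfrak b}_i$ (with $a$ in the input factor $\one$). On the other side, $(1\otimes a)\omega_i$ is the transform attached to $(1\otimes a)\widehat{\mathfrak b}_i$, since multiplying the geometric factor commutes with the Cauchy expansion. Both transforms are continuous in the tensor, by Proposition~\ref{prop:continuous-tensor-evaluation}. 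Their difference is therefore the evaluation at $\lambda$ of
\[
 \frac{(a(\one)-a(\two))\widehat{\mathfrak b}_i}{u_\one-u_\two}s_\varphi .
\]
Because $[\widehat{\mathfrak b}_i]\in D_u$, the product $(a\otimes1-1\otimes a)\widehat{\mathfrak b}_i$ vanishes in $A\otimes_{\Fq[u]}A$. It is therefore of the form $(u_\one-u_\two)H$ with $H\in A\otimes_{\Fq}A$. The quotient is then the polynomial tensor $H$, whose first-factor coefficients lie in $A\subset\cM_\varphi$ via \eqref{eq:a-shtuka-coordinate}. By Lemma~\ref{lem:period-annihilator}, their evaluations at $\lambda$ vanish. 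Hence $\omega_i\in\sfm(\varphi)$.

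\textbf{Step 2: assembly.} Summing $-\omega_i\otimes\eta_i$ over the presentation \eqref{eq:du-decomp} gives \eqref{eq:GM-assembly-special} immediately. Independence of the choices of presentation and lifts is already recorded in Lemma~\ref{lem:GM-transform-evaluations}, and \eqref{eq:GM-assembly-single-evaluation} shows the result is the single evaluation of the lifted kernel.

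\textbf{Main obstacle.} The delicate point is the bookkeeping in Step~1. Termwise manipulation of the infinite Cauchy series must be legitimate, so I would state the identity at the level of finite truncations on a disk $|u_\two|\le\rho<|\iota(u)|$ and pass to the limit by continuity. In addition, the identity $(a\otimes1-1\otimes a)\widehat{\mathfrak b}=(u_\one-u_\two)H$ needs the presentation of $A\otimes_{\Fq[u]}A$ as the quotient of $A\otimes_{\Fq}A$ by the ideal generated by $u\otimes1-1\otimes u$, which is standard.
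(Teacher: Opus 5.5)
Your argument is correct and is essentially the paper's. Both proofs rest on writing $(a\otimes1-1\otimes a)\widehat{\mathfrak b}_i=(u\otimes1-1\otimes u)H_i$ with $H_i\in A\otimes_{\Fq}A$, so that the diagonal defect has integral first-factor coefficients, which Lemma~\ref{lem:period-annihilator} kills at $\lambda$, while \eqref{eq:functional} turns first-variable multiplication by $a$ into $\varphi_a$. The only difference is that you run this for each scalar transform $\GM_{u;\widehat{\mathfrak b}_i}(\lambda)$ and then assemble, whereas the paper applies it once to the assembled kernel $K_{\{\eta_i,\widehat{\mathfrak b}_i\}}$; your componentwise version has the small advantage of landing directly in the literal tensor product $\sfm(\varphi)\otimes_A\Omega_A$.
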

\begin{proof}
Put $K:=K_{\{\eta_i,\widehat{\mathfrak b}_i\}}$ and
$\omega:=\GM_u^\Omega(\lambda)=\mathcal E_{\varphi,\lambda}(Ks_\varphi)$
(Lemma~\ref{lem:GM-transform-evaluations}).  For $a\in A$ the diagonal
defect
\[
 \mathscr D_a(K):=a(\one)K-a(\two)K
\]
is integral.  Indeed the class $[\widehat{\mathfrak b}_i]=[\widehat{\mathfrak b}^{e}_i]$
lies in $D_u$, and $D_u$ is annihilated by $a\otimes1-1\otimes a$, so
\[
 (a\otimes1-1\otimes a)\widehat{\mathfrak b}_i
 =(u\otimes1-1\otimes u)H_i,
 \qquad H_i\in A\otimes_{\Fq}A,
\]
and therefore
\[
 \mathscr D_a(K)
 =\sum_{i=1}^m
  \frac{(a\otimes1-1\otimes a)\widehat{\mathfrak b}_i}
       {u(\two)-u(\one)}\,\eta_i(\two)
 =-\sum_{i=1}^m H_i(\one,\two)\,\eta_i(\two),
\]
whose first-factor coefficients lie in $A\subset\cM_\varphi$.
Its period evaluation vanishes (Lemma~\ref{lem:period-annihilator}).
{For each first-factor coefficient $c\in K$ of the affine
representative, both
$\varphi_a\bigl(\mathcal E^{\mathrm{rat}}_{\varphi,\lambda}(c\,s_\varphi)\bigr)$
and $\mathcal E^{\mathrm{rat}}_{\varphi,\lambda}
\bigl(\varphi_a^!(c\,s_\varphi)\bigr)$
equal $\exp_\varphi(\iota(ac)\lambda)$ by
\eqref{eq:functional} and $\varphi_a\varphi_b=\varphi_b\varphi_a$.
Thus the first-variable action passes through evaluation as
$\varphi_a$, while $a(\two)$ acts as a second-variable scalar:}
\[
 0
 =\mathcal E_{\varphi,\lambda}\bigl(\mathscr D_a(K)s_\varphi\bigr)
 =\varphi_a(\omega)-a(\two)\,\omega.
\]
Thus $\varphi_a(\omega)=a(\two)\,\omega$ for every $a\in A$,
which is precisely the defining eigenrelation of
$\sfm(\varphi)\otimes_A\Omega_A$
(Definition~\ref{def:special-functions}).
\end{proof}

\subsection{The generating differential and its characteristic residue}

\begin{definition}\label{def:kernel}
For $z\in\Cinf$, define the generating differential
    \[
 \mathcal{H}_{\varphi}(z)
 :=\mathcal E^\Xi_{\varphi,z}
   (K_\varphi s_\varphi).
\]
The convergence of $ \mathcal{H}_{\varphi}(z)$ is guaranteed by Lemma~\ref{lem:orbital-evaluation-convergence}.
\end{definition}
\begin{theorem}[Cauchy realization of the canonical {Gazda--Maurischat} assembly]
\label{thm:canonical-assembly}
For every $\lambda\in\Lambda_\varphi$,
\begin{equation}
\label{eq:canonical-assembly-kernel}
 \GM_u^\Omega(\lambda)
 =\mathcal H_\varphi(\lambda) \in \sfm(\varphi) \otimes_{A} \Omega_A(\two).
\end{equation}
Moreover, with the trace-dual lift \eqref{eq:trace-dual-lift-b},
for every $0\ne\mathfrak b\in\mathfrak{d}_{A/\Fq[u]}$,
\begin{equation}\label{eq:GM-b-different}
 \GM_{u;\widehat{\mathfrak b}}(\lambda)\,\rdu(\two)
 =-\mathfrak b(\two)\,\mathcal H_\varphi(\lambda).
\end{equation}
The common value is independent of the separating element $u$ and of the
choices of the $\eta_i$ and $\mathfrak b_i$.
\end{theorem}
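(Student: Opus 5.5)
The plan is to reduce everything to Cauchy-fiber invariance, Theorem~\ref{thm:Vphi-Cauchy-fiber-invariance}. By Lemma~\ref{lem:SzegoC} the tautological coordinate $K_\varphi$ lies in $\mathscr C_\varphi^\Omega$. By Proposition~\ref{prop:Kb-Kub-in-residue-b-fiber} the lifted Cauchy kernel $K_{\{\eta_i,\widehat{\mathfrak b}_i\}}$ lies in $\mathscr C_A$. Definition~\ref{def:Vphi-bounded-Cauchy-fiber} embeds $\mathscr C_A$ into $\mathscr C_\varphi^\Omega$. Hence for $\lambda\in\Lambda_\varphi$ the orbital evaluations of the two kernels coincide:
\[
\mathcal E^\Xi_{\varphi,\lambda}\bigl(K_{\{\eta_i,\widehat{\mathfrak b}_i\}}s_\varphi\bigr)
=\mathcal E^\Xi_{\varphi,\lambda}\bigl(K_\varphi s_\varphi\bigr)=\mathcal H_\varphi(\lambda).
\]
The left side should be identified with $\GM_u^\Omega(\lambda)$ using \eqref{eq:GM-assembly-single-evaluation} of Lemma~\ref{lem:GM-transform-evaluations}. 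Membership in $\sfm(\varphi)\otimes_A\Omega_A(\two)$ then follows from Lemma~\ref{lem:GM-assembly-special}.

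The step needing care is that \eqref{eq:GM-assembly-single-evaluation} is phrased with the completed separated-tensor evaluation $\widehat{\mathcal E}^{\mathrm{sep}}$ on disks $|u_\two|\le\rho<|\iota(u)|$, whereas $\mathcal H_\varphi$ uses the orbital evaluation $\mathcal E^\Xi$.

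\begin{itemize}
\item On such a disk $W$ the kernel $K_{\mathfrak b}$ is regular along every $\{\Xi^{(n)}\}\times W$, since $|u(\Xi^{(n)})|=|\iota(u)|^{q^n}>\rho$. It is therefore weighted-orbit admissible by Proposition~\ref{thm:automatic-orbit-growth}.
\item The Cauchy expansion \eqref{eq:GM-cauchy-expansion} is a convergent separated-tensor series in the evaluation completion.
\item The last clause of Theorem~\ref{thm:pellarin} (termwise evaluation in a topology for which rational evaluation is continuous) gives $\widehat{\mathcal E}^{\mathrm{sep}}=\mathcal E^\Xi$ on $W$, as in Remark~\ref{rem:no-bifurcation}.
\item Both sides are analytic in $\two$, so the equality extends from $W$ to every affinoid by meromorphic continuation, i.e.\ uniqueness of analytic continuation on the connected curve.
\end{itemize}

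I expect this comparison of the two evaluations to be the main obstacle. In particular, the normal convergence of the double sum must be checked uniformly, so that orbital reindexing and termwise evaluation may be interchanged.

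For \eqref{eq:GM-b-different}, take the trace-dual lift, for which $K_{\mathfrak b}=\mathfrak b(\two)K_u$ exactly. Since $K_u\in\mathscr C_A\subset\mathscr C_\varphi^\Omega$, fiber invariance gives
\[
\mathcal E_{\varphi,\lambda}(K_us_\varphi)=\mathcal H_\varphi(\lambda).
\]
The factor $\mathfrak b(\two)$ is a second-variable scalar, so it passes through evaluation. Combining this with \eqref{eq:GM-period-evaluation} yields
\[
-\GM_{u;\widehat{\mathfrak b}}(\lambda)\,\rdu(\two)=\mathfrak b(\two)\mathcal H_\varphi(\lambda).
\]

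Independence of $u$, of the $\eta_i$ and of the $\mathfrak b_i$ is then immediate, because $\mathcal H_\varphi(\lambda)$ is defined from $K_\varphi$ alone and involves none of these choices. Independence of the lifts $\widehat{\mathfrak b}_i$ was already noted at the end of the proof of Lemma~\ref{lem:GM-transform-evaluations}, and in any case follows again from fiber invariance.
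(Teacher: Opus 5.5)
Your proposal is correct and follows the paper's proof. Lemma~\ref{lem:GM-transform-evaluations} rewrites $\GM_u^\Omega(\lambda)$ as the evaluation of the lifted kernel; fiber invariance (Theorem~\ref{thm:Vphi-Cauchy-fiber-invariance}) together with Theorem~\ref{thm:pellarin} identifies this with $\mathcal H_\varphi(\lambda)$; and the trace-dual identity $K_{\mathfrak b}=\mathfrak b(\two)K_u$ gives the second formula. Your separated-versus-orbital comparison spells out what the paper compresses into its citation of Theorem~\ref{thm:pellarin}. That comparison does go through, since the double series has terms bounded by $|e_n(\varphi)|\,C^{q^n}(\rho/|\iota(u)|)^m$ and is therefore summable.
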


\begin{proof}
Lemma~\ref{lem:GM-transform-evaluations} gives
$\GM_u^\Omega(\lambda)
=\mathcal E_{\varphi,\lambda}
  \bigl(K_{\{\eta_i,\widehat{\mathfrak b}_i\}}s_\varphi\bigr)$.
Both $K_{\{\eta_i,\widehat{\mathfrak b}_i\}}$ and $K_\varphi$ lie in
$\mathscr C_\varphi^\Omega$
(Proposition~\ref{prop:Kb-Kub-in-residue-b-fiber} and Lemma~\ref{lem:SzegoC}); by Theorem~\ref{thm:pellarin} and the period
invariance of Theorem~\ref{thm:Vphi-Cauchy-fiber-invariance}, this common
evaluation equals
$\mathcal E^\Xi_{\varphi,\lambda}(K_\varphi s_\varphi)
=\mathcal H_\varphi(\lambda)$.
For \eqref{eq:GM-b-different},
Lemma~\ref{lem:GM-transform-evaluations} and the trace-dual identity
$K_{\mathfrak b}=\mathfrak b(\two)K_u$ give
\[
 -\GM_{u;\widehat{\mathfrak b}}(\lambda)\,\rdu(\two)
 =\mathcal E_{\varphi,\lambda}(K_{\mathfrak b}s_\varphi)
 =\mathfrak b(\two)\mathcal E_{\varphi,\lambda}(K_us_\varphi)
 =\mathfrak b(\two)\mathcal H_\varphi(\lambda),
\]
the last equality by the same period invariance.
It also shows directly that the value is independent of all choices.

\end{proof}
\begin{proposition}\label{prop:Hphi-linearity}
For every $\lambda\in\Lambda_\varphi$ and $a\in A$,
\begin{equation}\label{eq:Hphi-linearity}
 \mathcal H_\varphi(\iota(a)\lambda)
 =\varphi_a\bigl(\mathcal H_\varphi(\lambda)\bigr)
 =a(\two)\,\mathcal H_\varphi(\lambda).
\end{equation}
\end{proposition}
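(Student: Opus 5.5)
We prove the two equalities separately, both by reducing to the Cauchy-fiber period invariance of Theorem~\ref{thm:Vphi-Cauchy-fiber-invariance}.

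\emph{Second equality.} By Theorem~\ref{thm:canonical-assembly}, $\mathcal H_\varphi(\lambda)=\GM_u^\Omega(\lambda)$. By Lemma~\ref{lem:GM-assembly-special}, this value lies in $\sfm(\varphi)\otimes_A\Omega_A(\two)$ and satisfies $\varphi_a(\omega)=a(\two)\omega$ for every $a\in A$. Hence $\varphi_a(\mathcal H_\varphi(\lambda))=a(\two)\mathcal H_\varphi(\lambda)$. Alternatively, the diagonal-defect argument of that lemma can be run directly on $K_\varphi$: the defect $a(\one)K_\varphi-a(\two)K_\varphi$ has zero diagonal residue. By \eqref{eq:Vphi-Cauchy-torsor} it therefore lies in $(\cM^{\mathrm{geo}}_\varphi)_\one\otimes\Gamma(U_{\Cinf},\Omega^1_X)_\two$, and its evaluation at a period vanishes by \eqref{eq:membership} and Lemma~\ref{lem:period-annihilator}.

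\emph{First equality.} Note that $\iota(a)\lambda\in\Lambda_\varphi$, so both sides are period evaluations. The plan is to compare the evaluation at $\iota(a)z$ with the first-variable action of $a$.

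1. For a finite separated tensor $x\otimes b$ with $x\in\cM_{\varphi,K}$, Definition~\ref{def:levelI} and \eqref{eq:functional} give
\[
\mathcal E^{\mathrm{rat}}_{\varphi,\iota(a)z}(x)=\mathcal E^{\mathrm{rat}}_{\varphi,z}(\varphi_a^!x)=\varphi_a\bigl(\mathcal E^{\mathrm{rat}}_{\varphi,z}(x)\bigr),
\]
where $\varphi_a^!x=a(\one)x$ is precomposition. For $x=m/c$ this reads $m(\exp_\varphi(\iota(a)z/\iota(c)))=m(\varphi_a\exp_\varphi(z/\iota(c)))$, and $m\circ\varphi_a=\varphi_a\circ m$ holds after passing to evaluation via $A$-linearity. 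This is exactly the identity $\exp_\varphi(\iota(ac)\lambda)$ already used in Lemma~\ref{lem:GM-assembly-special}.

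2. Transfer step 1 to $K_\varphi$. Evaluation on the lifted kernel $K_{\{\eta_i,\widehat{\mathfrak b}_i\}}$ is given by the separated expansion \eqref{eq:GM-cauchy-expansion} on a disk. That expansion converges for the projective seminorms, so the identity $\mathcal E_{\iota(a)z}=\varphi_a\circ\mathcal E_z$ passes termwise: $\varphi_a$ is an additive polynomial and is continuous. We thus obtain
\[
\GM_u^\Omega(\iota(a)\lambda)=\varphi_a\bigl(\GM_u^\Omega(\lambda)\bigr)
\]
on the disk, and hence everywhere by meromorphic continuation.

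3. Conclude. Theorem~\ref{thm:canonical-assembly}, applied at the two periods $\lambda$ and $\iota(a)\lambda$, converts both sides of step 2 into $\mathcal H_\varphi$.

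The main obstacle is the justification in step 2 of interchanging $\varphi_a$ with the completed or orbital evaluation. For the orbital series one cannot argue termwise directly, since $\varphi_a$ mixes the orbit indices. Routing through the separated expansion, where the interchange is a finite-polynomial continuity statement, is precisely what avoids that problem.
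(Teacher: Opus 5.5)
Your overall route is the paper's. You evaluate the lifted affine kernel $K_{\{\eta_i,\widehat{\mathfrak b}_i\}}$ through its separated expansion \eqref{eq:GM-cauchy-expansion}, pass $\varphi_a$ through termwise, return to $\mathcal H_\varphi$ with Theorem~\ref{thm:canonical-assembly}, and read off the second equality from the eigenrelation of Lemma~\ref{lem:GM-assembly-special}. However, step~1 is false in the generality you state it, and its justification is wrong. The map $x\mapsto\mathcal E^{\mathrm{rat}}_{\varphi,\iota(a)z}(x)$ is $\Cinf$-linear, whereas $x\mapsto\varphi_a(\mathcal E^{\mathrm{rat}}_{\varphi,z}(x))$ is only $\Fq$-linear. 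Already for $x=\alpha s_\varphi$ with $\alpha\in\Cinf\setminus\overline{\mathbb F}_q$, the two sides are $\alpha\,\varphi_a(\exp_\varphi z)$ and $\varphi_a(\alpha\exp_\varphi z)$, and these differ for nonconstant $a$; compare the warning in Example~\ref{ex:carlitz-levelI}. For $x=m/d$ with $0\ne d\in A$, your identity for all $z$ is equivalent, by surjectivity of $\exp_\varphi$, to $m\circ\varphi_a=\varphi_a\circ m$ in $\Cinf\{\tau\}$. The $A$-linearity you invoke concerns precomposition by $\varphi_a$ (the $A$-action on the motive) and gives no commutation with postcomposition. What is true is the restricted statement: if the geometric coordinate of $x$ is some $c\in K$, then both sides equal $\exp_\varphi(\iota(ac)z)$.

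So what makes step~2 work is not the continuity of $\varphi_a$ but the rationality of \eqref{eq:GM-cauchy-expansion}. Its first-factor coefficients $b_i/u^{n+1}$ lie in $K$, so the restricted step~1 applies to every term. Its second-factor functions $a_iu^n$ and the differentials $\eta_i$ are defined over $\Fq$, so $\varphi_a$, acting on the $\Cinf$-coefficients of $\mathbb T_A\otimes_A\Omega_A$, touches only the scalar values $\exp_\varphi(\iota(b_i/u^{n+1})z)$. This is exactly the observation the paper's proof rests on. It is also why $K_\varphi$ itself cannot be used: its first-variable coefficients have poles along $V_\varphi$ and are not $K$-rational.

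The same problem affects your alternative argument for the second equality. Vanishing of the defect of $K_\varphi$ at a period gives $\mathcal E^\Xi_{\varphi,\lambda}(a(\one)K_\varphi s_\varphi)=a(\two)\mathcal H_\varphi(\lambda)$, but you have no way to rewrite the left side as $\varphi_a(\mathcal H_\varphi(\lambda))$. You can, however, rewrite it as $\mathcal H_\varphi(\iota(a)\lambda)$, because $\operatorname{orb}_{\varphi,n}(a(\one)g)=\iota(a)^{q^n}\operatorname{orb}_{\varphi,n}(g)$. That proves the outer equality of \eqref{eq:Hphi-linearity} with no interchange of $\varphi_a$ at all, and together with Lemma~\ref{lem:GM-assembly-special} it gives a valid proof. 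Once step~1 is restricted to $K$-rational coordinates, your main argument coincides with the paper's.
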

\begin{proof}
{Theorem~\ref{thm:canonical-assembly} places
$\mathcal H_\varphi(\lambda)$ in
$\sfm(\varphi)\otimes_A\Omega_A(\two)$ and identifies it with the
evaluation of any member of the Cauchy fiber.  We compute with the
affine representative
$K:=K_{\{\eta_i,\widehat{\mathfrak b}_i\}}\in\mathscr C_A$: in its
separated expansion \eqref{eq:GM-cauchy-expansion} the first-factor
coefficients lie in $K$, and the second-factor functions and
differentials are defined over $\Fq$.  View $a\in A$ as the isogeny
$\varphi_a:\varphi\to\varphi$; then $\partial\varphi_a=\iota(a)$,
$x\mapsto x\circ\varphi_a$ is the contravariant motive map, and isogeny
naturality (Proposition~\ref{prop:certificates}\emph{(iii)}) gives
\[
 \mathcal H_\varphi(\iota(a)\lambda)
 =\widehat{\mathcal E}^{\mathrm{sep}}_{\varphi,\lambda}
  \bigl(\varphi_a^{!}(Ks_\varphi)\bigr).
\]
For a first-factor coefficient $c\in K$, both
$\varphi_a\bigl(\mathcal E^{\mathrm{rat}}_{\varphi,\lambda}(c\,s_\varphi)\bigr)$
and
$\mathcal E^{\mathrm{rat}}_{\varphi,\lambda}\bigl(\varphi_a^!(c\,s_\varphi)\bigr)$
equal $\exp_\varphi\bigl(\iota(ac)\lambda\bigr)$: the former by
exponential equivariance \eqref{eq:functional}, and the latter since
$\varphi_a^!(c\,s_\varphi)=(ac)\,s_\varphi$ (the $A$-action is
precomposition, and $\varphi_a\varphi_b=\varphi_{ab}=\varphi_b\varphi_a$
for $a,b\in A$).  This termwise identity passes to the separated
expansion of $K$, giving
\[
 \mathcal H_\varphi(\iota(a)\lambda)
 =\varphi_a\bigl(\mathcal E_{\varphi,\lambda}(Ks_\varphi)\bigr)
 =\varphi_a\bigl(\mathcal H_\varphi(\lambda)\bigr)
 =a(\two)\,\mathcal H_\varphi(\lambda),
\]
the last equality being the eigenrelation $\varphi_a=(1\otimes a)$ on
$\sfm(\varphi)$ (Definition~\ref{def:special-functions}) together with
the balancing of $\sfm(\varphi)\otimes_A\Omega_A$.}
\end{proof}

\begin{lemma}[Vertical regularity of orbital terms]
\label{lem:orbital-vertical-regularity}
For $n\ge1$ and every affinoid $W\subset U_{\Cinf}^{\mathrm{an}}$
with $\Xi^{(n)}\notin W$, the specialization
$\operatorname{orb}_{\varphi,n}(K_\varphi)$ is analytic on $W$.
\end{lemma}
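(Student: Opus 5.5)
The plan is to reduce the statement to Proposition~\ref{thm:automatic-orbit-growth}\,(ii), applied with $R=K_\varphi$. By Lemma~\ref{lem:SzegoC}, the poles of $K_\varphi$ on $U_{\Cinf}\times U_{\Cinf}$ are bounded by $\operatorname{pr}_\one^*V_\varphi+\Delta$. Since $\Xi^{(n)}\notin W$, the diagonal $\Delta$ does not meet the vertical line $\{\Xi^{(n)}\}\times W$. Along this line the only possible polar component of $K_\varphi$ is therefore the vertical divisor $\{\Xi^{(n)}\}\times X_{\Cinf}$, and its order there is at least $-m_n$, where $m_n=\operatorname{mult}_{\Xi^{(n)}}(V_\varphi)$. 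This is exactly the vertical regularity hypothesis of Proposition~\ref{thm:automatic-orbit-growth}\,(ii), whichever index set $E$ is used.

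The steps, in order, are as follows.

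\begin{enumerate}[label=\textup{(\arabic*)},leftmargin=2.2em]
\item Read off from $\operatorname{div}(K_\varphi)+\operatorname{pr}_\one^*V_\varphi-\operatorname{pr}_\two^*V_\varphi+\Delta\ge0$ that, near $\{\Xi^{(n)}\}\times W$, the divisor of $K_\varphi$ satisfies $\operatorname{ord}_{\{\Xi^{(n)}\}\times X}(K_\varphi)\ge -m_n$.
\item The second-variable term $\operatorname{pr}_\two^*V_\varphi$ gives only zeros, which are harmless. If one insists on the $\Omega^1$ trivialization, one restricts to $W$, where the second-variable frame is fixed. In either case no horizontal pole meets the line.
\item Invoke Lemma~\ref{lem:weighted-orbit-divisor-cancellation}, namely $\operatorname{ord}_{\Xi^{(n)}}F_\varphi^{[n]}=-m_n$. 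This shows that $K_\varphi/F_\varphi^{[n]}(\one)$ has nonnegative order along the vertical component through $\Xi^{(n)}$.
\item All remaining polar components avoid $\{\Xi^{(n)}\}\times W$: they are other vertical fibres, or $\Delta$, which is missed because $\Xi^{(n)}\notin W$. Hence the quotient is regular on the compact line. Being rational, it is regular on a tube around the line, so its specialization at $\one=\Xi^{(n)}$ is analytic on $W$.
\end{enumerate}

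The main obstacle is step (2), the treatment of the first-variable pole at infinity and of the zeros of $F_\varphi^{[n]}$ away from $\Xi^{(n)}$. Neither should matter, because both are vertical components centered elsewhere. The point to check carefully is that $\Xi^{(n)}$ is not above $\infty$, which holds since $p(\Xi^{(n)})=\iota(p)^{q^n}$ is finite. One must also check that $(F_\varphi^{[n]})^{-1}$ introduces no pole at $\Xi^{(n)}$ from the $[\Xi]$- or $[q_\varphi]$-terms of $\operatorname{div}f_\varphi^{(j)}$. This was already handled in the telescoping computation of Lemma~\ref{lem:weighted-orbit-divisor-cancellation}, using the orbit injectivity of Lemma~\ref{lem:orbit-escape}.
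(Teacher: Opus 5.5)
Your proposal is correct and is essentially the paper's proof. Like the paper, you apply Proposition~\ref{thm:automatic-orbit-growth}(ii) to $R=K_\varphi$ and read $\operatorname{ord}_{\{\Xi^{(n)}\}\times X_{\Cinf}}(K_\varphi)\ge -m_n$ off the divisor inequality of Lemma~\ref{lem:SzegoC}, where neither $-\operatorname{pr}_{\two}^*V_\varphi$ nor $\Delta$ contributes along that vertical divisor; your steps (3)--(4) simply unpack the proof of part (ii). The one point the paper makes explicit where you write ``whichever index set $E$ is used'' is the choice $E=\{n\ge0:\Xi^{(n)}\in\operatorname{supp}(V_\varphi)\cup W\}$, which is finite by Lemma~\ref{lem:orbit-escape} and is what makes the standing regularity hypothesis of Proposition~\ref{thm:automatic-orbit-growth} hold.
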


\begin{proof}
Apply Proposition~\ref{thm:automatic-orbit-growth}(ii) to $R=K_\varphi$,
with $E:=\{n\ge {0}:\Xi^{(n)}\in\operatorname{supp}(V_\varphi)\cup W\}$; this
set is finite by Lemma~\ref{lem:orbit-escape} and the finiteness of
$\operatorname{supp}(V_\varphi)$.   The divisor inequality of
Lemma~\ref{lem:SzegoC},
$\operatorname{div}(K_\varphi)+\operatorname{pr}_{\one}^*V_\varphi
-\operatorname{pr}_{\two}^*V_\varphi+\Delta\ge0$, read along the vertical
prime divisor $\{\Xi^{(n)}\}\times X_{\Cinf}$ (to which neither
$-\operatorname{pr}_{\two}^*V_\varphi$ nor $\Delta$ contributes), gives
\[
 \operatorname{ord}_{\{\Xi^{(n)}\}\times X_{\Cinf}}(K_\varphi)\ge-m_n;
\]
in particular
$K_\varphi$ is regular on $\{\Xi^{(n)}\}\times W$ for $n\notin E$.
Proposition~\ref{thm:automatic-orbit-growth}(ii) applies.
\end{proof}

\begin{proposition}[{Characteristic residue}]\label{lem:residue}
For every $z\in\Cinf$, $\mathcal H_\varphi(z)$ continues
meromorphically to a neighborhood of $\Xi$, with at most a simple pole,
and
\[
 \operatorname{res}_\Xi\mathcal H_\varphi(z)=z.
\]
In particular, $\mathcal H_\varphi(\lambda)\ne0$ for every nonzero period.
\end{proposition}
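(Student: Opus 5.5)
The plan is to split the orbital series defining $\mathcal H_\varphi(z)=\sum_{n\ge0}z^{q^n}\operatorname{orb}_{\varphi,n}(K_\varphi)$ into the term $n=0$ and the tail $n\ge1$, and treat the two pieces separately near $\Xi$. Choose a small affinoid disk $W$ around $\Xi$ in the second variable, avoiding $\operatorname{supp}(V_\varphi)$, the points above $\infty$, and every $\Xi^{(n)}$ with $n\ge1$. Such a disk exists because the orbit is injective and escapes every affinoid (Lemma~\ref{lem:orbit-escape}), so only finitely many orbit points lie in any bounded region and none equals $\Xi$. By Lemma~\ref{lem:orbital-vertical-regularity}, each tail term $\operatorname{orb}_{\varphi,n}(K_\varphi)$ with $n\ge1$ is analytic on $W$. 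The growth bounds of Proposition~\ref{thm:automatic-orbit-growth}(i),(iii), combined with the decay \eqref{eq:automatic-exponential-coefficient-decay}, then give normal convergence of the tail on $W$, exactly as in Proposition~\ref{lem:orbital-evaluation-convergence}. Hence the tail is an analytic differential on $W$, contributes no residue at $\Xi$, and provides the meromorphic continuation of the tail. In applying these estimates, the finite exceptional set may contain indices with $\Xi^{(n)}\in\operatorname{supp}(V_\varphi)$; these are handled by part (ii) of that proposition.

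For the term $n=0$, note that $\Xi\notin\operatorname{supp}(V_\varphi)$ and $F^{[0]}_\varphi=1$, so $\operatorname{orb}_{\varphi,0}(K_\varphi)=K_\varphi(\Xi,\two)$ with coefficient $e_0(\varphi)=1$. This is a rational differential in the second variable and is meromorphic on all of $X_{\Cinf}$. By the divisor bound in Lemma~\ref{lem:SzegoC}, restricted to the horizontal slice $\one=\Xi$, it has at most a simple pole at $\two=\Xi$ coming from $\Delta$; the term $\operatorname{pr}_{\one}^*V_\varphi$ contributes nothing there, since $\Xi\notin\operatorname{supp}V_\varphi$. It remains to compute $\operatorname{res}_{\two=\Xi}K_\varphi(\Xi,\two)$.

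The main obstacle is the step from the normalization $\operatorname{res}_{\two=\one}K_\varphi=1$, which is a residue along the divisor $\Delta$, to the pointwise residue of the slice at $\one=\Xi$. I would argue locally. Near $(\Xi,\Xi)$, take a local parameter $t$ on $X$ and write $K_\varphi=\bigl(\frac{1}{t(\two)-t(\one)}+h(\one,\two)\bigr)\,\rd t(\two)$ with $h$ regular. The residue along $\Delta$ is $1$ at the generic point of $\Delta$. Because $\Xi$ lies off the other polar components, the leading coefficient is regular near $(\Xi,\Xi)$ and equals $1$ identically. Specializing $\one=\Xi$ then gives slice residue $1$. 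Multiplying by $z^{q^0}=z$ yields $\operatorname{res}_\Xi\mathcal H_\varphi(z)=z$.

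Finally, if $\lambda\ne0$ is a period, then $\mathcal H_\varphi(\lambda)$ has nonzero residue $\lambda$ and so cannot vanish. One compatibility point remains: the continuation must agree with the original orbital evaluation on affinoids away from the orbit. This is immediate, since both are given by the same normally convergent series, restricted to overlaps.
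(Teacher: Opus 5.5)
Your proposal is correct and follows the paper's proof step for step. You choose an affinoid around $\Xi$ meeting the characteristic orbit only in $\Xi$, use Lemma~\ref{lem:orbital-vertical-regularity} and the growth bounds of Proposition~\ref{thm:automatic-orbit-growth} to make the tail $\sum_{n\ge1}$ analytic there, and read off the residue from $zK_\varphi(\Xi,\cdot)$. Your local computation near $(\Xi,\Xi)$, which passes from the residue along $\Delta$ to the residue of the slice $\one=\Xi$, makes explicit a step the paper states in one line.
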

\begin{proof}
By Lemma~\ref{lem:orbit-escape}, the characteristic orbit is
injective and exits every affinoid; choose a closed affinoid neighborhood
$W_\Xi$ of $\Xi$ with
$W_\Xi\cap\{\Xi^{(n)}\}_{n\ge0}=\{\Xi\}$.

By Lemma~\ref{lem:orbital-vertical-regularity},
$\operatorname{orb}_{\varphi,n}(K_\varphi)$ is analytic on $W_\Xi$ for
every $n\ge1$, including the finitely many $n$ with
$\Xi^{(n)}\in\operatorname{supp}(V_\varphi)$.

For the remaining $n$, the divisor bound of Lemma~\ref{lem:SzegoC} puts no
pole of $K_\varphi$ on $\{\Xi^{(n)}\}\times W_\Xi$, and
Proposition~\ref{thm:automatic-orbit-growth}(iii) gives
$\|\operatorname{orb}_{\varphi,n}(K_\varphi)\|_{W_\Xi}
\le|e_n(\varphi)|\,C^{q^n}$ for some $C\ge1$.  Since
$|e_n(\varphi)|^{1/q^n}\to0$
\eqref{eq:automatic-exponential-coefficient-decay}, for $|z|\le R$ the
tails satisfy
$\|z^{q^n}\operatorname{orb}_{\varphi,n}(K_\varphi)\|_{W_\Xi}\le2^{-q^n}$
for all sufficiently large $n$; hence the series
$T(z,\cdot):=\sum_{n\ge1}z^{q^n}\operatorname{orb}_{\varphi,n}(K_\varphi)$
converges normally on $W_\Xi$, uniformly for bounded $z$, and is analytic
there.

Thus $\mathcal H_\varphi(z)=zK_\varphi(\Xi,\cdot)+T(z,\cdot)$ on $W_\Xi$,
with $T(z,\cdot)$ analytic, while $zK_\varphi(\Xi,\cdot)$ has at most a
simple pole at $\two=\Xi$, with residue $z$ because the relative diagonal
residue of $K_\varphi$ is one (Lemma~\ref{lem:SzegoC}).  Therefore
$\operatorname{res}_\Xi\mathcal H_\varphi(z)=z$ and the continuation has
at most a simple pole there; on the overlap with any off-orbit affinoid,
both expressions coincide with the same convergent orbital series.
Finally, for $0\ne\lambda\in\Lambda_\varphi$ this residue is
$\lambda\ne0$, so $\mathcal H_\varphi(\lambda)\ne0$.
\end{proof}
\begin{remark}\label{rem:GM-characteristic-residue}
For a period $\lambda$, Theorem~\ref{thm:canonical-assembly}
gives the same residue assertion for $\GM_u^\Omega(\lambda)$.
The same characteristic-residue assertion is developed in the
Gazda--Maurischat formalism \cite[Theorem~5.8]{GM24}.
\end{remark}

\subsection{\texorpdfstring{Rational Anderson differentials}{Rational Anderson differentials}}\label{subsec:rational-anderson-differentials}
\begin{definition}
A rational Anderson differential is a solution
$Q\in\operatorname{Frac}(\mathbb T_A)\otimes_A\Omega_A$ of
\begin{equation}\label{eq:localized-homogeneous-equation}
 Q^{(1)}=f_\varphi Q.
\end{equation}
\end{definition}

\begin{theorem}[{Canonical continuation and characteristic residue}]\label{thm:rational-anderson-continuation}
Let $Q$ be a rational Anderson differential.
\begin{enumerate}[label={\textup{(\roman*)}},leftmargin=2.2em]
\item\label{item:ratand-form}
$Q$ belongs to
\begin{equation}\label{eq:localized-homogeneous-membership}
 \sfm(\varphi)\otimes_A\Omega_A^{\rat}.
\end{equation}
For any fixed $0\ne\lambda\in\Lambda_\varphi$, $Q$ is uniquely of the form
$Q=r\mathcal H_\varphi(\lambda)$ with $r\in K$.
\item\label{item:ratand-continuation}
$Q$ has a canonical meromorphic continuation to a neighborhood of $\Xi$,
with at most a simple pole there.
\item\label{item:ratand-residue}
The characteristic residue is a $K$-linear isomorphism
\begin{equation}\label{eq:localized-characteristic-residue-isomorphism}
 \operatorname{res}_\Xi:
 \sfm(\varphi)\otimes_A\Omega_A^{\rat}
 \xrightarrow{\sim}\iota(K)\Lambda_\varphi.
\end{equation}
\end{enumerate}
\end{theorem}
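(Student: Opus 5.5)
The plan is to reduce everything to the single nonzero rational Anderson differential $\mathcal H_\varphi(\lambda)$ and then invoke the Frobenius-constant computation of Lemma~\ref{lem:general-Tate-Frobenius-constants}.

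\textbf{Step 1: $\mathcal H_\varphi(\lambda)$ is a nonzero solution.} Fix $0\ne\lambda\in\Lambda_\varphi$. By Theorem~\ref{thm:canonical-assembly}, $\mathcal H_\varphi(\lambda)\in\sfm(\varphi)\otimes_A\Omega_A$. By Lemma~\ref{lem:scalar-shtuka-equation} and $A$-balancing, it satisfies $\mathcal H_\varphi(\lambda)^{(1)}=f_\varphi\mathcal H_\varphi(\lambda)$. By Proposition~\ref{lem:residue} it is nonzero, with characteristic residue $\lambda$.

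\textbf{Step 2: every $Q$ is a $K$-multiple (part (i)).} Choose $0\ne\eta\in\Omega_A^{\rat}$; since $\Omega_A^{\rat}$ is a one-dimensional $K$-vector space, write $Q=F\eta$ and $\mathcal H_\varphi(\lambda)=\omega\eta$ with $F,\omega\in\operatorname{Frac}(\mathbb T_A)$ and $\omega\ne0$. Both satisfy $F^{(1)}=f_\varphi F$ and $\omega^{(1)}=f_\varphi\omega$, so $(F/\omega)^{(1)}=F/\omega$. Lemma~\ref{lem:general-Tate-Frobenius-constants} then gives $r:=F/\omega\in K$, hence $Q=r\mathcal H_\varphi(\lambda)$. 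Uniqueness of $r$ follows from $\mathcal H_\varphi(\lambda)\ne0$ in the domain $\mathbb T_A$. Because $r\in K$ and $\mathcal H_\varphi(\lambda)\in\sfm(\varphi)\otimes_A\Omega_A$, we get $Q\in\sfm(\varphi)\otimes_A\Omega_A^{\rat}$. Conversely, every element of this module is a $K$-combination of elements $\omega'\otimes\eta'$ with $\omega'$ special, and each such element solves \eqref{eq:localized-homogeneous-equation}. So the solution space equals \eqref{eq:localized-homogeneous-membership} and is a $K$-line spanned by $\mathcal H_\varphi(\lambda)$.

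\textbf{Step 3: continuation (part (ii)).} Define the continuation of $Q$ as $r(\two)$ times the continuation of $\mathcal H_\varphi(\lambda)$ from Proposition~\ref{lem:residue}. This is canonical because $r$ is unique, and changing $\lambda$ only rescales by an element of $K$ (apply Step 2 to $\mathcal H_\varphi(\lambda')$). The main obstacle is that $r\in K$ may have a pole at $\Xi$, which would break the ``at most a simple pole'' claim. I would handle this by observing that $\Xi$ is not a point over $\Fq$-rational data: $r(\Xi)=\iota(r)$ is finite for any $r\in K$, since $\iota$ is injective on $K$. So $r$ is regular and nonzero at $\Xi$ whenever $r\ne0$, the pole order is preserved, and
\[
\operatorname{res}_\Xi Q=\iota(r)\lambda .
\]

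\textbf{Step 4: the residue isomorphism (part (iii)).} The formula from Step 3 shows $\operatorname{res}_\Xi$ is $K$-linear with image $\iota(K)\lambda$. Since $\Lambda_\varphi$ has rank one, $\iota(K)\lambda=\iota(K)\Lambda_\varphi$, giving surjectivity. Injectivity holds because $\iota(r)\lambda=0$ forces $r=0$.
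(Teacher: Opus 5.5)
Your proposal is correct and follows the paper's proof essentially step for step: you show $\mathcal H_\varphi(\lambda)$ is nonzero via its residue, place the Frobenius-fixed quotient in $K$ by Lemma~\ref{lem:general-Tate-Frobenius-constants}, use regularity of $r\in K$ at the generic point $\Xi$, and compute $\operatorname{res}_\Xi(r\mathcal H_\varphi(\lambda))=\iota(r)\lambda$ with the rank-one lattice. The one point to tighten is canonicity in Step 3: uniqueness of $r$ alone does not give independence of $\lambda$, because you also need the relation $\mathcal H_\varphi(\lambda')=s\,\mathcal H_\varphi(\lambda)$, which you proved only on the Tate domain, to hold for the continuations near $\Xi$ --- this is exactly the identity-principle argument on the connected curve $U_{\Cinf}^{\mathrm{an}}$ that the paper uses.
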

\begin{proof}
\emph{\ref{item:ratand-form}.}
By Theorem~\ref{thm:canonical-assembly}, $\mathcal H_\varphi(\lambda)$
is an integral special differential. Its residue is $\lambda\ne0$
by Proposition~\ref{lem:residue}, so it is nonzero.
Lemma~\ref{lem:scalar-shtuka-equation} gives its homogeneous equation.
Their quotient belongs to $\operatorname{Frac}(\mathbb T_A)$
and is fixed by coefficient Frobenius. Lemma~\ref{lem:general-Tate-Frobenius-constants}
places it in $K$.  Conversely, multiplication by $r\in K$ preserves the
equation, since coefficient Frobenius fixes $K$.  Both claims follow.

\emph{\ref{item:ratand-continuation}.}
Write $Q=r\mathcal H_\varphi(\lambda)$ as in
part~\ref{item:ratand-form}.  The rational function $r\in K$ is
regular at $\Xi$, and the orbital construction continues
$\mathcal H_\varphi(\lambda)$ meromorphically on the affine analytic
curve, with a locally finite set of possible poles.
Proposition~\ref{lem:residue} bounds the pole order of
$\mathcal H_\varphi(\lambda)$ at $\Xi$ by one, so their product has at
most a simple pole there.  Their product supplies the continuation.

Two such representations agree on the affine Tate domain.  Both extend
meromorphically on the connected smooth analytic curve
$U_{\Cinf}^{\mathrm{an}}$, so the identity principle makes their
continuations equal, in particular near $\Xi$.

\emph{\ref{item:ratand-residue}.}
Fix $0\ne\lambda\in\Lambda_\varphi$.
By parts~\ref{item:ratand-form} and~\ref{item:ratand-continuation},
every element has a unique continuation of the form
$r\mathcal H_\varphi(\lambda)$, $r\in K$.
Its pole at $\Xi$ is at most simple, so its residue is
$r(\Xi)\lambda=\iota(r)\lambda$ by Proposition~\ref{lem:residue}.
The lattice has rank one, hence
$\iota(K)\Lambda_\varphi=\iota(K)\lambda$.
The displayed map is therefore an isomorphism, with inverse
$\iota(r)\lambda\mapsto r\mathcal H_\varphi(\lambda)$.
\end{proof}
 
\section{Anchor map and special functions}
\label{sec:anchor}
{Residue duality and the exponential give an isomorphism
\[
 \operatorname{Hom}_A(\Omega_A,\Lambda_\varphi)
 \xrightarrow{\ \sim\ }\sfm(\varphi),\qquad
 \jmath\longmapsto\omega_\jmath.
\]
The trace expansion identifies $\omega_\jmath$ with the
Gazda--Maurischat expression, and the local logarithm constructs the
inverse (Theorem~\ref{thm:direct-logarithmic-special-line}).
For an isomorphism $\jmath$, Section~\ref{sec:unit-special} proves
that $\omega_\jmath$ is a Tate-algebra unit.}

\subsection{The residue--Pontryagin class}
\label{subsec:truncated-Cauchy-A}

By residue duality 
\[
 \langle-,-\rangle_\infty:A \times \overline{\Omega} \longrightarrow\Fq
\]
we conclude that $\overline{\Omega}$ is the compact Pontryagin dual of the
discrete $\Fq$-space $A$.  

 Explicitly, 
\begin{equation}\label{eq:residue-dual-coordinate-line}
 A^\vee_{\mathrm{res}}
 :=\operatorname{Hom}_{\Fq}(A,\Fq)
 \simeq \overline\Omega,\qquad
 \bar\xi(a)=
 \operatorname{Tr}_{\mathbb F_\infty/\Fq}
 \operatorname{res}_\infty(a\xi).
\end{equation}
We use the quotient topology on $\overline\Omega$ and the topology of pointwise
convergence on $A^\vee_{\mathrm{res}}$; residue duality identifies these
topologies \cite[Theorem~8, with $I=A$]{Poo96}.  Thus, for any ordered
$\Fq$-basis $(a_i)_{i\ge0}$ of $A$,
\[
 A^\vee_{\mathrm{res}}=\prod_{i\ge0}\Fq a_i^*,
 \qquad a_i^*(a_j)=\delta_{ij}.
\]
The coordinate vectors $a_i^*$ tend to zero, and every $\bar\xi$ is the
limit of $\sum_{i<N}\bar\xi(a_i)a_i^*$.  Equivalently, annihilators of finite
dimensional subspaces of $A$ form a neighborhood basis at zero.

\begin{lemma}[{Residue duality of the trace basis}]\label{lem:residue-dual-trace-basis}
Let $u\in A$ be nonconstant and separating, and choose
trace-dual bases $(e_r),(e_r')$ of $A/\Fq[u]$ as in
Lemma~\ref{lem:trace-residue-Casimir-coordinates}.  Then the
$\Fq$-basis $(e_ru^n)_{r,n}$ of $A$ and the family
\begin{equation}\label{eq:direct-trace-residue-dual-family}
 -e_r'u^{-n-1}\,\rd u\pmod{\Omega_A }
 \qquad(1\le r\le h,\ n\ge0)
\end{equation}
in $\overline\Omega$ are dual with respect to the residue pairing:
$\langle e_su^m,-e_r'u^{-n-1}\rd u\rangle_\infty
=\delta_{rs}\delta_{mn}$.
\end{lemma}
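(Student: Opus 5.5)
The plan is to compute the pairing directly. Write $x:=e_s u^m\cdot(-e_r'u^{-n-1})\,\rd u$. We want $\operatorname{Tr}_{\mathbb F_\infty/\Fq}\operatorname{res}_\infty(x)$. Both $u$ and $\rd u$ are defined over $\Fq(u)$, so the residue--trace exchange \eqref{eq:res-trace} moves the computation to the rational function field:
\[
 \langle e_su^m,-e_r'u^{-n-1}\rd u\rangle_\infty
 =-\operatorname{res}_{\infty_u}\!\bigl(\operatorname{Tr}_{K/F}(e_se_r')\,u^{m-n-1}\rd u\bigr).
\]
Here the factor $u^{m-n-1}\in F$ has been pulled out of the trace. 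Trace duality \eqref{eq:trace-dual} gives $\operatorname{Tr}_{K/F}(e_se_r')=\delta_{rs}$. The right side therefore becomes $-\delta_{rs}\operatorname{res}_{\infty_u}(u^{m-n-1}\rd u)$.

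Next I evaluate this one-variable residue at the infinite place of $\Fq(u)$. With the local parameter $w=1/u$ one has $u^{k}\rd u=-w^{-k-2}\rd w$. Its residue is $-1$ when $k=-1$ and zero otherwise. For $k=m-n-1$ the case $k=-1$ is exactly $m=n$, so the pairing equals $\delta_{rs}\delta_{mn}$, with the sign matching the normalization in \eqref{eq:direct-trace-residue-dual-family}.

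Two well-definedness points need to be checked. First, the class in $\overline\Omega$ is meaningful: $e_r'\rd u\in\Omega_A$ by Lemma~\ref{lem:trace-residue-Casimir-coordinates}, so $e_r'u^{-n-1}\rd u\in\widehat\Omega_A$. Second, the pairing descends to $\overline\Omega$: for $a\in A$ and $\eta\in\Omega_A$ the residue of $a\eta$ at $\infty$ is zero by the residue theorem, since $a\eta$ is regular on $U$.

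Finally, $(e_ru^n)_{r,n}$ is an $\Fq$-basis of $A$ because $(e_r)$ is an $\Fq[u]$-basis. The main subtle point is the exchange formula \eqref{eq:res-trace}, which relies on $\infty$ being the unique place above $\infty_u$. Since that formula is taken as given earlier in the paper, the remaining work is the routine computation above.
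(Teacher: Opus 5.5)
Your proposal is correct and follows the same route as the paper: apply the residue--trace exchange \eqref{eq:res-trace} to $x=-e_se_r'u^{m-n-1}$, use trace duality $\operatorname{Tr}_{K/F}(e_se_r')=\delta_{rs}$, and evaluate $\operatorname{res}_{\infty_u}(u^{m-n-1}\rd u)$. Your explicit local-parameter computation of this residue and your well-definedness checks just spell out steps that the paper's one-line proof leaves implicit.
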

\begin{proof}
By \eqref{eq:res-trace} with
$x=-e_se_r'u^{m-n-1}$ (the pole divisor of $u\in A$ being supported at
$\infty$, so $\infty$ is the unique point above $\infty_u$) gives
\begin{align*}
 &\operatorname{Tr}_{\mathbb F_\infty/\Fq}
  \operatorname{res}_\infty
  \left(-e_su^m e_r'u^{-n-1}\,\rd u\right)\\
 &\qquad=
 \operatorname{res}_{\infty_u}
 \left(-\operatorname{Tr}_{K/\Fq(u)}(e_se_r')
 u^{m-n-1}\,\rd u\right)
 =\delta_{rs}\delta_{mn}.
\end{align*}
Here uniqueness follows because the pole divisor of $u\in A$ is supported at
$\infty$.
\end{proof}

\begin{definition}{Write
$(\overline{\Omega})_{\one}\widehat\otimes_{\Fq}A_{\two}$ for the
completed tensor product for the quotient topology on
$\overline{\Omega}$.  For an ordered $\Fq$-basis $(a_i)_{i\ge0}$ of
$A$, the \emph{residue--Pontryagin element} is
\begin{equation}\label{eq:residue-Pontryagin-element}
 \mathbf P_A
 :=\sum_{i\ge0} a_i^*\otimes a_i
 \;\in\; (\overline{\Omega})_{\one}\widehat\otimes_{\Fq}A_{\two},
\end{equation}
which converges since $a_i^*\to0$, and is independent of the chosen
basis; contraction with the residue pairing identifies it with the
identity endomorphism of $\overline\Omega$.}
\end{definition}
\begin{proposition}[{Explicit form of the Pontryagin element}]\label{prop:pontryagin-explicit}
In the completed tensor,
\[
 \mathbf P_A
 =\sum_{r=1}^h\sum_{n\ge0}
 \left(-e_r'u^{-n-1}\,\rd u\bmod\Omega_A\right)_{\one}
 \otimes (e_ru^n)_{\two} .
\]
For $A=\Fq[t]$ this specializes to the display below.
\end{proposition}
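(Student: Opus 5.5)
The plan is to use the basis independence of $\mathbf P_A$ recorded in its definition. By Lemma~\ref{lem:residue-dual-trace-basis}, $(e_ru^n)_{r,n}$ is an $\Fq$-basis of $A$, and the classes $-e_r'u^{-n-1}\rd u\bmod\Omega_A$ are its residue-dual coordinate functionals. So the statement amounts to computing $\mathbf P_A$ in this particular basis. The one point needing care is that this basis is indexed by pairs $(r,n)$, whereas the definition uses a sequence $(a_i)_{i\ge0}$.

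First, check that $(e_ru^n)_{1\le r\le h,\,n\ge0}$ is an $\Fq$-basis of $A$. This holds because $A=\bigoplus_r\Fq[u]e_r$ and $(u^n)$ is an $\Fq$-basis of $\Fq[u]$. Then order the pairs $(r,n)$ lexicographically by $n$ first and $r$ second, which produces an ordered basis $(a_i)_{i\ge0}$.

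Second, identify the dual vectors $a_i^*\in A^\vee_{\mathrm{res}}\simeq\overline\Omega$. Under the identification \eqref{eq:residue-dual-coordinate-line}, $a_i^*$ is the unique class $\bar\xi$ with $\bar\xi(a_j)=\delta_{ij}$. Uniqueness holds because residue duality is perfect. Lemma~\ref{lem:residue-dual-trace-basis} shows that $-e_r'u^{-n-1}\rd u\bmod\Omega_A$ has exactly these pairings against $e_su^m$. Hence $(e_ru^n)^*=-e_r'u^{-n-1}\rd u\bmod\Omega_A$.

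Third, substitute into \eqref{eq:residue-Pontryagin-element}. Convergence needs a short check. In the quotient topology, annihilators of finite-dimensional subspaces form a neighborhood basis of zero. For $n$ large, the class $(e_ru^n)^*$ annihilates any fixed finite-dimensional subspace spanned by basis vectors, so $(e_ru^n)^*\to0$ along the chosen ordering. Moreover, any reordering of a convergent series of this kind (a summable family in a complete linear-topological space) gives the same sum, so writing the series as a double sum over $r$ and $n$ is legitimate. Basis independence of $\mathbf P_A$ then gives the displayed formula. For $A=\Fq[t]$, take $u=t$, $h=1$ and $e_1=e_1'=1$.

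The main obstacle is mild. It consists in justifying the basis-independence and summability claims in the completed tensor, which I take as recorded in the definition.
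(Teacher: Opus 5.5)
Your proposal is correct and follows essentially the same route as the paper. In both, Lemma~\ref{lem:residue-dual-trace-basis} identifies the classes $-e_r'u^{-n-1}\rd u \bmod \Omega_A$ as the residue-dual coordinates of the basis $(e_ru^n)_{r,n}$; these coordinates tend to zero, and basis independence of the coevaluation element $\mathbf P_A$ gives the displayed formula. Your ordering of the pairs $(r,n)$ and your check that the double series may be rearranged only make explicit what the paper's two-line proof leaves implicit.
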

\begin{proof}
{By Lemma~\ref{lem:residue-dual-trace-basis} the two families
are dual with respect to $\langle-,-\rangle_\infty$, and the
dual-coordinate vectors tend to zero; the displayed series therefore
converges in $(\overline{\Omega})_{\one}\widehat\otimes_{\Fq}A_{\two}$
and is the coevaluation \eqref{eq:residue-Pontryagin-element}.}
\end{proof}
\begin{example}
For the polynomial ring $ A = \Fq[t]$, we have
\[
  \left\langle t^i,\; -\frac{1}{t^{j+1}}\,\rdt \right\rangle_{\infty}
  =
  \res_{t=\infty}\left(-\frac{1}{t^{1 + (j -i )}}\,\rdt\right)
  =
  \delta_{i,j}.
\]
Therefore the residue--Pontryagin class is defined by
\[
  \mathbf{P}_{A}
  =
  \sum_{i\ge0}\left(-\frac{\rdt_\one}{t_\one^{i+1}}\bmod\Omega_A\right)\otimes
  t_\two^i.
\]
A natural lift of
\(\left(\frac{1}{t_\one^{i+1}}\bmod A\right)\rdt_\one\)
is
\(\frac{1}{t_\one^{i+1}}\rdt_\one\).
Under this lift, \(\mathbf{P}_{A}\) admits the meromorphic lift
\[
  K(\one,\two)
  =
  -\frac{1}{t_\one-t_\two}\,\rdt_\one
  =
  \sum_{i\ge 0} -\frac{t_\two^i}{t_\one^{i+1}}\,\rdt_\one.
\]
In general, the residue--Pontryagin class does not determine a unique
meromorphic lift.
\end{example}

Recall that $\mathbb T_A=\Cinf\widehat\otimes_{\Fq}A$.  In these coordinates,
\begin{equation}\label{eq:Gauss-basis-norm}
 \mathbb T_A=
 \left\{\sum_{i\ge0}x_i\otimes a_i:x_i\longrightarrow0\right\},
 \qquad
 \left\|\sum_{i\ge0}x_i\otimes a_i\right\|_{\mathrm G}
 =\sup_i|x_i|.
\end{equation}
The following duality turns a continuous function on residue classes into a
Tate coordinate.


\begin{lemma}[Continuous residue duality]
\label{lem:Tate-continuous-residue-duality}
Contraction gives a canonical isometric isomorphism
\begin{equation}\label{eq:Tate-continuous-dual-identification}
 \begin{aligned}
 \mathscr{F}:\mathbb T_A&\xrightarrow{\ \sim\ }
 \operatorname{Hom}^{\mathrm{cont}}_{\Fq}
       (A^\vee_{\mathrm{res}},\Cinf),\\
 \mathscr{F}\!\left(\sum_{i\ge0}x_i\otimes a_i\right)(\bar\xi)
   &=\sum_{i\ge0}x_i\bar\xi(a_i).
 \end{aligned}
\end{equation}
The continuous dual carries the supremum norm.  The inverse is
\begin{equation}\label{eq:Tate-continuous-dual-inverse}
 \mathscr{F}^{-1}(\epsilon)=\sum_{i\ge0}\epsilon(a_i^*)\otimes a_i.
\end{equation}
Both maps are independent of the basis.  Multiplication by $1\otimes a$
corresponds to precomposition by $\bar\xi\mapsto a\bar\xi$, where
$(a\bar\xi)(b)=\bar\xi(ab)$.
\end{lemma}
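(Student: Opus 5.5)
The plan is to treat the statement as a concrete form of the topological duality between the Tate algebra $\mathbb T_A=\Cinf\widehat\otimes_{\Fq}A$ and continuous functionals on the compact Pontryagin dual $A^\vee_{\mathrm{res}}=\prod_{i\ge0}\Fq a_i^*$. Fix an ordered $\Fq$-basis $(a_i)_{i\ge0}$ of $A$. First I check that $\mathscr F$ is well defined. For $F=\sum_i x_i\otimes a_i$ with $x_i\to0$, and any $\bar\xi$, the coordinates $\bar\xi(a_i)$ lie in $\Fq$, so the sum $\sum_i x_i\bar\xi(a_i)$ converges. Its absolute value is at most $\sup_i|x_i|$. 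The resulting functional is $\Fq$-linear.

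For continuity I use the neighborhood basis recorded before the lemma: annihilators of the finite-dimensional subspaces $\operatorname{span}(a_0,\dots,a_{N-1})$. Given $\varepsilon>0$, choose $N$ with $|x_i|<\varepsilon$ for $i\ge N$. Then on the annihilator of the first $N$ basis vectors, $|\mathscr F(F)(\bar\xi)|<\varepsilon$, so $\mathscr F(F)$ is continuous at $0$ and hence, by linearity, everywhere. For the isometry, evaluating at $a_j^*$ gives $\mathscr F(F)(a_j^*)=x_j$. Together with the bound above, this yields $\sup_{\bar\xi}|\mathscr F(F)(\bar\xi)|=\sup_i|x_i|=\lVert F\rVert_{\mathrm G}$.

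The main step is surjectivity with the stated inverse. Given a continuous $\epsilon$, set $x_i:=\epsilon(a_i^*)$. Because $a_i^*\to0$ in the compact topology and $\epsilon$ is continuous and linear, $x_i\to0$, so $\sum_i x_i\otimes a_i$ lies in $\mathbb T_A$. Its image under $\mathscr F$ agrees with $\epsilon$ on every finite sum $\sum_{i<N}\bar\xi(a_i)a_i^*$. These finite sums converge to $\bar\xi$, so both continuous functionals agree everywhere. This proves \eqref{eq:Tate-continuous-dual-inverse}; injectivity already follows from the isometry. I expect this density-and-continuity argument to be the only point needing care, and it rests entirely on the identification of topologies via \cite[Theorem~8]{Poo96}. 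Continuity of $\epsilon$ here is with respect to the discrete topology on $\Cinf$ values, and I would also check that the sup norm on the dual is finite; it is, because $\epsilon$ takes only boundedly many values on the compact space.

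Basis independence follows from the basis-free form $\mathscr F(F)(\bar\xi)=(\mathrm{id}\widehat\otimes\bar\xi)(F)$, that is, contraction of the second factor. This is continuous in $F$, and on algebraic tensors $c\otimes a$ it gives $c\,\bar\xi(a)$, which does not refer to any basis. By density the formula holds everywhere. Finally, for $A$-linearity I check the relation on algebraic tensors: $\mathscr F((1\otimes a)(c\otimes b))(\bar\xi)=c\,\bar\xi(ab)=\mathscr F(c\otimes b)(a\bar\xi)$. Both sides are continuous in $F$, since multiplication by $1\otimes a$ is bounded on $\mathbb T_A$, so the relation extends to all of $\mathbb T_A$.
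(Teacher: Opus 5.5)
Your proof is correct and follows the paper's argument step for step: the uniform bound and evaluation at the coordinate vectors $a_j^*$ give well-definedness, continuity and the isometry; $\epsilon(a_i^*)\to0$, together with the convergence of the truncations $\sum_{i<N}\bar\xi(a_i)a_i^*$ to $\bar\xi$, gives the inverse; and density of $\Cinf\otimes_{\Fq}A$ gives basis independence and the compatibility with multiplication by $1\otimes a$. The only thing to fix is your aside on topology, which has three problems: $\epsilon$ must be continuous for the metric topology of $\Cinf$, which is what your own $\varepsilon$-estimate uses (for the discrete topology, $\mathscr F(F)$ is continuous only when $F$ is a finite tensor, so the lemma would be false); the sup norm of $\epsilon$ is finite because a continuous image of the compact space $A^\vee_{\mathrm{res}}$ is bounded, not because $\epsilon$ takes finitely many values; and nothing from \cite[Theorem~8]{Poo96} is needed, since $a_i^*\to0$ and the density of truncations hold directly in the topology of pointwise convergence.
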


\begin{proof}
For $\omega=\sum_i x_i\otimes a_i$, the series defining $\mathscr{F}(\omega)$
converges uniformly on $A^\vee_{\mathrm{res}}$, since
$\bar\xi(a_i)\in\Fq$ and $x_i\to0$.  Its finite sums are continuous.
Conversely, continuity of $\epsilon$ and $a_i^*\to0$ imply
$\epsilon(a_i^*)\to0$.  Applying $\epsilon$ to the finite-coordinate
truncations of $\bar\xi$ gives
\[
 \epsilon(\bar\xi)=
 \lim_{N\to\infty}\sum_{i<N}\bar\xi(a_i)\epsilon(a_i^*).
\]
It follows that $\mathscr{F}\mathscr{F}^{-1}=\mathrm{id}$; evaluation on $a_i^*$ gives
$\mathscr{F}^{-1}\mathscr{F}=\mathrm{id}$.  The ultrametric inequality and these same coordinate
vectors show that
$\|\mathscr{F}(\omega)\|_{\sup}=\sup_i|x_i|$.
Contraction on the dense subspace $\Cinf\otimes_{\Fq}A$ is intrinsic, so
its continuous extension and its inverse are basis independent.
The assertion about multiplication follows from
$\bar\xi(ab)=(a\bar\xi)(b)$.
\end{proof}

\subsection{The Anchor map}

An \emph{anchor map} is an $A$-linear map
$\jmath:\Omega_A\to\Lambda_\varphi$.  {It is called a
\emph{unit anchor} if it is an isomorphism.} 
Write $\jmath_K$ and $\jmath_\infty$ for its extensions
to ${\Omega_A^{\rat}}$ and $K_\infty\otimes_A\Omega_A$, with scalar action through
$\iota$.  {Unit} anchors exist exactly when
$[\Omega_A]=[\Lambda_\varphi]$; their choices then form an
$\Fq^\times$-torsor.

\begin{proposition}[Anchor from a free-target isogeny]
\label{prop:isogeny-direct-period-map}
Let $J:\varphi\to\psi$ have free target lattice
$\Lambda_\psi=\iota(A) \tilde{\pi}$, and put $z_J= \tilde{\pi}/h_J(\Xi)$.
\begin{enumerate}[label=\textup{(\roman*)}]
\item\label{item:anchor-lattice}
The source lattice is $\Lambda_\varphi=\iota(\mathfrak i_J)z_J$.
\item\label{item:anchor-map}
If $\Omega_A=\mathfrak i_J\nu_J$ for a rational differential $\nu_J$,
then
\begin{equation}\label{eq:anchor}
 \jmath_J(a\nu_J)=\iota(a)z_J\qquad(a\in\mathfrak i_J)
\end{equation}
is a {unit anchor}.  Its localized extension sends $\nu_J$ to $z_J$.
\end{enumerate}
\end{proposition}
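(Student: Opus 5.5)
Both parts follow from Proposition~\ref{prop:isogeny-lattice-transport}, which transports lattices along the isogeny. We then check that $\jmath_J$ is well defined, $A$-linear and bijective.

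For \ref{item:anchor-lattice}, we start from
\[
 h_J(\Xi)^{-1}\Lambda_\psi=\iota(\ii_J^{-1})\Lambda_\varphi .
\]
Substituting $\Lambda_\psi=\iota(A)\tilde\pi$ turns the left side into $\iota(A)z_J$. Hence $\iota(\ii_J^{-1})\Lambda_\varphi=\iota(A)z_J$. Multiplying both sides by $\iota(\ii_J)$ and using $\ii_J\ii_J^{-1}=A$ in the Dedekind domain $A$ gives $\Lambda_\varphi=\iota(\ii_J)z_J$. This uses that $\Lambda_\varphi$ is an $A$-module, so that $\iota(\ii_J)\iota(\ii_J^{-1})\Lambda_\varphi=\Lambda_\varphi$. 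We also use that $\iota$ is injective and multiplicative on $K$, so that products of embedded fractional ideals behave as in \eqref{eq:embedded-ideal-scalar-convention}.

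For \ref{item:anchor-map}, assume $\Omega_A=\ii_J\nu_J$ with $\nu_J\in\Omega_A^{\rat}$ nonzero. Since $\Omega_A^{\rat}$ is a one-dimensional $K$-space, every element of $\Omega_A$ has a unique expression $a\nu_J$. The coefficient $a$ lies in $\ii_J$ exactly because $\Omega_A=\ii_J\nu_J$. So $a\nu_J\mapsto a$ is an $A$-module isomorphism $\Omega_A\xrightarrow{\sim}\ii_J$.

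Next, $a\mapsto\iota(a)z_J$ is an $A$-linear map $\ii_J\to\Cinf$. It is injective because $\iota$ is injective and $z_J\ne0$; here $\tilde\pi\ne0$ since it generates a rank-one lattice. By \ref{item:anchor-lattice} its image is exactly $\Lambda_\varphi$. The composite of these two maps is $\jmath_J$, so $\jmath_J$ is an $A$-linear isomorphism $\Omega_A\to\Lambda_\varphi$, that is, a unit anchor.

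For the localized extension, $\jmath_K$ is the $K$-linear extension with scalars acting through $\iota$. We pick any nonzero $a\in\ii_J$ and write $\nu_J=a^{-1}(a\nu_J)$. Then
\[
 \jmath_K(\nu_J)=\iota(a)^{-1}\iota(a)z_J=z_J .
\]
The only point needing care is that the identification in \eqref{eq:localized-lattice-natural-injection} makes this extension well defined. This holds because that map is injective.
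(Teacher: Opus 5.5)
Your proposal is correct and follows the paper's proof: you multiply the transport identity $h_J(\Xi)^{-1}\Lambda_\psi=\iota(\ii_J^{-1})\Lambda_\varphi$ by $\iota(\ii_J)$, then match the presentations $\Omega_A=\ii_J\nu_J$ and $\Lambda_\varphi=\iota(\ii_J)z_J$ and extend to $K$, supplying more detail than the paper does. (One small nit: the $K$-linear extension $\mathrm{id}_K\otimes\jmath_J$ is well defined automatically, and injectivity of \eqref{eq:localized-lattice-natural-injection} is needed only to read the value $z_J\in\Cinf$ as an element of $K\otimes_A\Lambda_\varphi$.)
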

\begin{proof}
For \ref{item:anchor-lattice}, multiply
$h_J(\Xi)^{-1}\Lambda_\psi=\iota(\mathfrak i_J^{-1})\Lambda_\varphi$
by $\iota(\mathfrak i_J)$ and use $\mathfrak i_J\mathfrak i_J^{-1}=A$.
For \ref{item:anchor-map}, the modules have presentations
$\mathfrak i_J\nu_J$ and $\iota(\mathfrak i_J)z_J$, so the map is
$A$-linear and bijective.  Tensoring with $K$ proves the last assertion;
the differential $\nu_J$ itself need not belong to $\Omega_A$.
\end{proof}

\begin{corollary}[Free targets and the Picard obstruction]
\label{cor:isogeny-direct-period-map-exact-scope}
Every rank-one source admits an isogeny with free target lattice.
For any such $J$, an equality $\Omega_A=\mathfrak i_J\nu_J$ exists
exactly when $[\Lambda_\varphi]=[\Omega_A]$.
\end{corollary}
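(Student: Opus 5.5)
Two claims are involved: existence of an isogeny with free target lattice for every rank-one source, and the Picard criterion for an equality $\Omega_A=\mathfrak i_J\nu_J$.

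\emph{Existence.} Let $\mathfrak c\subset A$ be a nonzero integral ideal whose class satisfies $[\mathfrak c]^{-1}=[\Lambda_\varphi]$ in $\Pic(A)$. Such an ideal exists because every ideal class contains integral representatives. Put $\eta=\varphi_{\mathfrak c}$, the monic ideal isogeny with kernel $\varphi[\mathfrak c]$, and let $\psi$ be its target. This isogeny is separable in generic characteristic, so it fits Notation~\ref{not:isogeny-datum-main} with $\mathfrak i_J=\operatorname{Ann}_A(\varphi[\mathfrak c])=\mathfrak c$; the annihilator of $\iota(\mathfrak c^{-1})\Lambda_\varphi/\Lambda_\varphi$ is $\mathfrak c$ because $\Lambda_\varphi$ is projective of rank one. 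Proposition~\ref{prop:isogeny-lattice-transport} then gives $\Lambda_\psi=h_J(\Xi)\iota(\mathfrak c^{-1})\Lambda_\varphi$. Its class is $[\mathfrak c]^{-1}[\Lambda_\varphi]^{\pm}$ up to the sign convention, and by the choice of $\mathfrak c$ it is trivial. Hence $\Lambda_\psi=\iota(A)\tilde\pi$ for some $\tilde\pi$.

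\emph{Criterion.} Fix $J$ with $\Lambda_\psi=\iota(A)\tilde\pi$. Proposition~\ref{prop:isogeny-direct-period-map}\ref{item:anchor-lattice} gives $\Lambda_\varphi=\iota(\mathfrak i_J)z_J$, so $[\Lambda_\varphi]=[\mathfrak i_J]$, since $\iota$ is injective and multiplication by $z_J\neq0$ is an $A$-module isomorphism.

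Suppose first that $\Omega_A=\mathfrak i_J\nu_J$. Then $\Omega_A\simeq\mathfrak i_J$ via $a\nu_J\mapsto a$, so $[\Omega_A]=[\mathfrak i_J]=[\Lambda_\varphi]$.

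Conversely, suppose $[\Omega_A]=[\Lambda_\varphi]=[\mathfrak i_J]$, and let $\phi:\mathfrak i_J\to\Omega_A$ be an $A$-isomorphism. Extend $\phi$ $K$-linearly to $K=K\otimes_A\mathfrak i_J\to\Omega_A^{\rat}$ and set $\nu_J:=\phi_K(1)\in\Omega_A^{\rat}$. For $a\in\mathfrak i_J$ we get $\phi(a)=a\nu_J$, so $\Omega_A=\phi(\mathfrak i_J)=\mathfrak i_J\nu_J$.

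The only delicate point is the existence step. One must check that $\operatorname{Ann}(\ker\varphi_{\mathfrak c})=\mathfrak c$, and one must track the direction of the class identity (whether $[\mathfrak c]$ or $[\mathfrak c]^{-1}$ should match $[\Lambda_\varphi]$). Either way, choosing the integral representative of the appropriate class resolves it.
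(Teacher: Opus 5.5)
Your route is the paper's: pick an integral ideal in the class of $\Lambda_\varphi$ and apply the ideal isogeny and Proposition~\ref{prop:isogeny-lattice-transport} to get a free target. Then read $[\Lambda_\varphi]=[\mathfrak i_J]$ off $\Lambda_\varphi=\iota(\mathfrak i_J)z_J$, and extend an isomorphism $\mathfrak i_J\to\Omega_A$ $K$-linearly to obtain $\nu_J$. The criterion half is correct and matches the paper.

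The existence step, however, fails as you wrote it, because the direction is wrong. There is no sign ambiguity here. Write $\Lambda_\varphi=\iota(I)\lambda_0$ with $I$ a fractional ideal, so that $[\Lambda_\varphi]=[I]$. Then $\iota(\mathfrak c^{-1})\Lambda_\varphi=\iota(\mathfrak c^{-1}I)\lambda_0$ has class exactly $[\mathfrak c]^{-1}[\Lambda_\varphi]$. This is trivial precisely when $[\mathfrak c]=[\Lambda_\varphi]$.

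With your stated choice $[\mathfrak c]^{-1}=[\Lambda_\varphi]$, the target class would be $[\Lambda_\varphi]^2$. That is nontrivial unless $[\Lambda_\varphi]$ is $2$-torsion, so the sentence ``by the choice of $\mathfrak c$ it is trivial'' is false as stated.

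The fix is to take $\mathfrak c$ integral with $\Lambda_\varphi=\iota(\mathfrak c)\rho$; this is the paper's $\mathfrak a$. Then $\Lambda_\psi=h_J(\Xi)\iota(\mathfrak c^{-1}\mathfrak c)\rho=h_J(\Xi)\iota(A)\rho$. With that correction, your annihilator check $\operatorname{Ann}_A(\varphi[\mathfrak c])=\mathfrak c$ and the rest of the argument go through unchanged.
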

\begin{proof}
Write $\Lambda_\varphi=\iota(\mathfrak a)\rho$ with
$\mathfrak a$ integral, by rescaling a fractional-ideal presentation.
The ideal isogeny for $\mathfrak a$ has target lattice
$c\,\iota(\mathfrak a^{-1})\Lambda_\varphi=c\,\iota(A)\rho$.
For a fixed free target, part~\ref{item:anchor-lattice} gives the source
class $[\mathfrak i_J]$.  Any isomorphism
$\mathfrak i_J\to\Omega_A$ extends to their one-dimensional $K$-spaces,
where it is multiplication by a rational differential $\nu_J$.
\end{proof}

\begin{example}
If both $\Omega_A=A\nu$ and $\Lambda_\varphi=\iota(A)\tilde{\pi}_\varphi$ are free,
the identity isogeny gives $a\nu\mapsto\iota(a)\tilde{\pi}_\varphi$.
Freeness of $\Omega_A$ alone does not imply freeness of the period lattice.
\end{example}

\begin{example}[The HHY datum]
In the rational model of \cite{HHY26}, the removed point on $\mathbb P^1$
has degree $\Ninf\ge2$ and roots $\eta,\eta^q,\ldots,\eta^{q^{\Ninf-1}}$.
Let $\mathfrak i_\infty$ be the ideal of the retained usual infinity, and
put $\theta=\iota(t)$.
The source module $\Psi$ has shtuka function
\[
 f_\Psi(t)=\frac1{t-\eta}-\frac1{\theta-\eta}.
\]
The target module $\Phi$ is the $\eta^{(1)}$-type Drinfeld module of
\cite{HHY26}: the monic twisted polynomial
\[
 \eta_J=\bigl(\tau+(\theta-\eta)^{-1}\bigr)^2
 =\tau^2+\bigl((\theta^q-\eta^q)^{-1}+(\theta-\eta)^{-1}\bigr)\tau
 +(\theta-\eta)^{-2}
\]
is a scalar multiple of the isogeny $\lambda:\Psi\to\Psi^{(2)}$ of
{\cite[(120)]{HHY26}} and defines a separable isogeny
$\eta_J:\Psi\to\Phi$; in particular $\partial\eta_J=(\theta-\eta)^{-2}$.
The resulting isogeny datum is
$J=[\eta_J;h_J;\mathfrak i_J,\mu_J]:\Psi\longrightarrow\Phi$
with $\mathfrak i_J=\mathfrak i_\infty^2$ and $\mu_J=1$.
The multiplier $h_J$ is defined by the motive frames
$\eta_{J}^{!}(s_\Phi)=h_Js_\Psi$, and is given explicitly by
\begin{equation}\label{eq:active-P1-hJ}
 h_J(t)=\frac{\theta-\eta^q}
 {(\theta-\eta)(t-\eta)(t-\eta^q)},\qquad
 h_J(\theta)=(\theta-\eta)^{-2}.
\end{equation}
Thus $h_J(\Xi)=h_J(\theta)=(\theta-\eta)^{-2}$.
Since $\Omega_A=\mathfrak i_\infty^2\rd t$ and
$\Lambda_\Phi=\iota(A)\pi_\Phi$, the anchor is the restriction of
$\rd t\mapsto(\theta-\eta)^2\pi_\Phi$ to $\mathfrak i_\infty^2\rd t$.
Writing $\jmath_*$ for this anchor, the same computation gives
\[
 \operatorname{Hom}_A(\Omega_A,\Lambda_\Phi)
 =\iota(\mathfrak i_\infty^{-2})(\theta-\eta)^{-2}\jmath_* .
\]
\end{example}

\begin{proposition}[Transport of anchors]\label{prop:anchor-isogeny}
Let $J':\varphi\to\varphi'$ be a separable isogeny and let
$\jmath_*:\Omega_A\xrightarrow{\sim}\Lambda_\varphi$ be a {unit anchor}.
Every anchor for $\varphi'$ is uniquely of the form
\[
 \jmath'=\iota(a){h_{J'}(\Xi)}\jmath_*,
 \qquad a\in\mathfrak i_{J'}^{-1}.
\]
\end{proposition}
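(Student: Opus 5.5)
The strategy is to reduce everything to the lattice transport formula of Proposition~\ref{prop:isogeny-lattice-transport} applied to $J'$. It gives
\[
\Lambda_{\varphi'}=h_{J'}(\Xi)\,\iota(\mathfrak i_{J'}^{-1})\Lambda_\varphi .
\]
Since $\jmath_*$ is an isomorphism, we have $\Lambda_\varphi=\jmath_*(\Omega_A)$, and $A$-linearity extends $\jmath_*$ to its localization $\jmath_{*,K}$. Hence
\[
\Lambda_{\varphi'}=h_{J'}(\Xi)\,\jmath_{*,K}(\mathfrak i_{J'}^{-1}\Omega_A).
\]
The proof then consists of two steps: existence (each displayed map is an anchor) and uniqueness (every anchor arises this way, for exactly one $a$).

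For existence, fix $a\in\mathfrak i_{J'}^{-1}$ and $\eta\in\Omega_A$. Then $a\eta\in\mathfrak i_{J'}^{-1}\Omega_A$, so
\[
\iota(a)h_{J'}(\Xi)\jmath_*(\eta)=h_{J'}(\Xi)\,\jmath_{*,K}(a\eta)\in\Lambda_{\varphi'}.
\]
The map $\jmath'$ is also $A$-linear. It is therefore an anchor for $\varphi'$.

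For uniqueness, let $\jmath'\colon\Omega_A\to\Lambda_{\varphi'}$ be any $A$-linear map. Both $\Omega_A$ and $\Lambda_{\varphi'}$ are projective of rank one. After tensoring with $K$, the two maps $\jmath'$ and $h_{J'}(\Xi)\jmath_*$ become $K$-linear maps between one-dimensional $K$-spaces, where $K$ acts on $\iota(K)\Lambda_{\varphi'}$ through $\iota$ via \eqref{eq:localized-lattice-natural-injection}. The second map is nonzero, so there is a unique $a\in K$ with $\jmath'_K=\iota(a)h_{J'}(\Xi)\jmath_{*,K}$. This uses that $\operatorname{Hom}_K$ of one-dimensional spaces is $K$ itself, together with the injectivity of the localization map. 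It remains to show $a\in\mathfrak i_{J'}^{-1}$. The image of $\jmath'$ is $h_{J'}(\Xi)\jmath_{*,K}(a\Omega_A)$, and it lies in $h_{J'}(\Xi)\jmath_{*,K}(\mathfrak i_{J'}^{-1}\Omega_A)$. Since $\jmath_{*,K}$ is injective, this gives $a\Omega_A\subset\mathfrak i_{J'}^{-1}\Omega_A$. Multiplying by the invertible module $\Omega_A^{-1}$ yields $aA\subset\mathfrak i_{J'}^{-1}$, that is, $a\in\mathfrak i_{J'}^{-1}$. Uniqueness of $a$ follows from the nonvanishing of $h_{J'}(\Xi)$ (see \eqref{eq:isogeny-tangent-from-intertwiner-main}) and from $\jmath_*\neq0$.

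The main point needing care is the bookkeeping between abstract and embedded lattices. Specifically, the equality $\iota(\mathfrak i^{-1})\jmath_*(\Omega_A)=\jmath_{*,K}(\mathfrak i^{-1}\Omega_A)$ must be justified. I would check it directly from the definition of $\jmath_K$ with scalars acting through $\iota$ and from \eqref{eq:embedded-ideal-scalar-convention}, using that products of fractional ideals with the invertible module $\Omega_A$ commute with localization. Given that identification, the rest is linear algebra over $K$.
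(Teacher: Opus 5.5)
Your proposal is correct and follows the paper's proof essentially step by step. Both arguments use the lattice transport formula for $J'$, reduce over $K$ to a unique scalar $a$ multiplying $h_{J'}(\Xi)\jmath_*$, and then cancel the invertible module $\Omega_A$ in the condition $a\Omega_A\subseteq\mathfrak i_{J'}^{-1}\Omega_A$. The only differences are cosmetic: you cancel by multiplying with $\Omega_A^{-1}$ where the paper localizes at each prime, and you spell out the existence direction that the paper compresses into ``exactly when''.
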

\begin{proof}
The lattice formula identifies $\Lambda_{\varphi'}$ with
${h_{J'}(\Xi)}\iota(\mathfrak i_{J'}^{-1})\jmath_*(\Omega_A)$.
After extending to $K$, any $K$-linear map to $K\otimes_A\Lambda_{\varphi'}$ is multiplication by a
unique $a\in K$, followed by ${h_{J'}(\Xi)}\jmath_*$.
It takes $\Omega_A$ into the target lattice exactly when
$a\Omega_A\subseteq\mathfrak i_{J'}^{-1}\Omega_A$.
Localizing at each prime and cancelling a local basis of the
invertible module $\Omega_A$ gives $a\in\mathfrak i_{J'}^{-1}$.
\end{proof}

\subsection{The special function associated with a unit anchor}

\begin{definition}\label{def:Fjmath}
    Let $ 0\ne\jmath\in\operatorname{Hom}_A(\Omega_A,\Lambda_\varphi)$ be an anchor map of  $\varphi$.
Define
\begin{equation}\label{eq:residue-dual-exponential-map}
 \mathscr{F}_{\varphi,\jmath}:A^\vee_{\mathrm{res}} = \widehat{\Omega}_A/ \Omega_A \longrightarrow\Cinf,
 \qquad
\bar\xi\longmapsto\exp_\varphi\bigl(\jmath_\infty(\xi)\bigr),
\end{equation}
where $\xi$ is any representative in $\widehat{\Omega}_A$.
\end{definition}
 One verifies that $\mathscr{F}_{\varphi,\jmath}$ is well defined and continuous.
\begin{definition}\label{def:omega-jmath}
 We define $\omega_\jmath:=\mathscr{F}^{-1}(\mathscr{F}_{\varphi,\jmath})\in\TT_A$, the tensor corresponding to $\mathscr{F}_{\varphi,\jmath}$.
\end{definition}
More precisely,  {let} $(a_i)_{i\ge0}$ be any ordered $\Fq$-basis of $A$, let
$(a_i^*)_{i\ge0}$ be its coordinate family in
$A^\vee_{\mathrm{res}}$. Choose
$\eta_i\in K_\infty\otimes_A\Omega_A$ representing $a_i^*$ under
\eqref{eq:residue-dual-coordinate-line}. In other words, 
\[
    \sum_{i=0}^\infty \eta_i \otimes a_i  = \mathbf{P}_A.
\]
Then
\begin{equation}\label{eq:generating-function-residue-dual-expansion}
 \omega_\jmath
 =\sum_{i\ge0}\mathscr{F}_{\varphi,\jmath}(\eta_i)\otimes a_i
 \quad\text{in }\mathbb T_A.
\end{equation}
With trace-dual representatives, this expansion comes from a
Cauchy representative.  The completed residue class is canonical,
although its meromorphic lift need not be.
\begin{proposition}[The special function attached to an anchor]
\label{prop:generating-unit-candidate}
The tensor $\omega_\jmath$ is nonzero and is a special function:
\begin{equation}\label{eq:direct-candidate-special-eigenrelation}
 \varphi_a(\omega_\jmath)=(1\otimes a)\omega_\jmath
 \quad(a\in A),
 \qquad
 \omega_\jmath\in\sfm(\varphi).
\end{equation}

\end{proposition}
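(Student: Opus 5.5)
The plan is to work through the continuous-dual description of Lemma~\ref{lem:Tate-continuous-residue-duality}: the special-function relation will be checked after applying $\mathscr F$, and nonvanishing will come from one explicit value of $\mathscr F_{\varphi,\jmath}$. Throughout, $\jmath\ne0$ is $A$-linear, so $\jmath_\infty$ is $K_\infty$-linear with scalars acting through $\iota$.

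\textbf{Well-definedness and continuity.} First I check that $\mathscr F_{\varphi,\jmath}$ is well defined. If $\xi\equiv\xi'\bmod\Omega_A$, then $\jmath_\infty(\xi-\xi')\in\jmath(\Omega_A)\subset\Lambda_\varphi$, so the two exponentials agree. The map is $\Fq$-linear because $\exp_\varphi$ and $\jmath_\infty$ are. For continuity, $\jmath_\infty$ is continuous on the finite-dimensional $K_\infty$-space $\widehat\Omega_A$, and $\exp_\varphi$ is continuous. A small neighbourhood of zero in $\widehat\Omega_A$ therefore maps to a small disk near $0$, which handles the quotient topology. Lemma~\ref{lem:Tate-continuous-residue-duality} then gives $\omega_\jmath\in\mathbb T_A$.

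\textbf{The eigenrelation.} By Lemma~\ref{lem:Tate-continuous-residue-duality}, multiplication by $1\otimes a$ corresponds to the precomposition $\epsilon\mapsto(\bar\xi\mapsto\epsilon(a\bar\xi))$. Under residue duality, $a\bar\xi$ is represented by $a\xi$. Hence
\[
 \mathscr F\bigl((1\otimes a)\omega_\jmath\bigr)(\bar\xi)=\exp_\varphi\bigl(\jmath_\infty(a\xi)\bigr)=\exp_\varphi\bigl(\iota(a)\jmath_\infty(\xi)\bigr)=\varphi_a\bigl(\exp_\varphi(\jmath_\infty(\xi))\bigr),
\]
using \eqref{eq:functional} for the last equality.

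On the other side, $\varphi_a=\sum_k\varphi_{a,k}\tau^k$ acts on the coefficient factor, and $\mathscr F$ commutes with coefficient Frobenius: the values $\bar\xi(a_i)$ lie in $\Fq$, and the contraction sum converges uniformly. So $\mathscr F(\omega^{(k)})(\bar\xi)=\mathscr F(\omega)(\bar\xi)^{q^k}$, which gives $\mathscr F(\varphi_a(\omega_\jmath))=\varphi_a\circ\mathscr F_{\varphi,\jmath}$. The two sides agree, and injectivity of $\mathscr F$ yields \eqref{eq:direct-candidate-special-eigenrelation}. This Frobenius-compatibility step is the only place where some care is needed, and it is routine.

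\textbf{Nonvanishing.} This is the main point. Since $\jmath\ne0$ and $\Lambda_\varphi$ is discrete, choose $\eta\in\Omega_A$ with $\lambda:=\jmath(\eta)\ne0$. Pick $c\in K_\infty$ such that $\iota(c)\lambda\notin\Lambda_\varphi$; for instance take $|\iota(c)|$ small enough that $0<|\iota(c)\lambda|$ is below the minimal nonzero period norm. Set $\xi=c\eta\in\widehat\Omega_A$. Then
\[
 \mathscr F_{\varphi,\jmath}(\bar\xi)=\exp_\varphi(\iota(c)\lambda)\ne0,
\]
so $\mathscr F_{\varphi,\jmath}\ne0$, and therefore $\omega_\jmath\ne0$ because $\mathscr F$ is an isometric isomorphism.
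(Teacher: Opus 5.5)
Your proof is correct and follows the paper's argument. You establish well-definedness and continuity of $\mathscr F_{\varphi,\jmath}$, obtain the eigenrelation by transporting both $1\otimes a$ and $\varphi_a$ through $\mathscr F$ and applying exponential equivariance, and get nonvanishing from the discreteness of $\Lambda_\varphi$. Your only deviations are presentational: you exhibit the explicit nonzero value $\exp_\varphi(\iota(c)\jmath(\eta))$ instead of arguing by contradiction, and you spell out the compatibility of $\mathscr F$ with coefficient Frobenius, which the paper leaves implicit.
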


\begin{proof}
Changing $\xi$ in \eqref{eq:residue-dual-exponential-map} by an element of
$\Omega_A$ changes $\jmath_\infty(\xi)$ by a period, so $\mathscr{F}_{\varphi,\jmath}$
is well defined.  It is continuous because $\jmath$ and
$\exp_\varphi$ are continuous.  If it vanished identically, then
$\jmath_\infty(K_\infty\otimes_A\Omega_A)$ would be contained in the
discrete lattice $\Lambda_\varphi$.  A nonzero $K_\infty$-line contains
arbitrarily small nonzero elements, so this is impossible.  Hence
$\omega_\jmath\ne0$.

For $a\in A$, let $a\bar\eta$ denote the functional
$b\mapsto\bar\eta(ab)$.  Under residue duality it is represented by
$a\eta$.  Exponential equivariance therefore gives
\[
 \varphi_a\bigl(\mathscr{F}_{\varphi,\jmath}(\bar\eta)\bigr)
 =\exp_\varphi\bigl(\iota(a)\jmath_\infty(\eta)\bigr)
 =\mathscr{F}_{\varphi,\jmath}(a\bar\eta).
\]
Under \eqref{eq:Tate-continuous-dual-identification}, coefficientwise
$\varphi_a$ acts on the values, whereas multiplication by $1\otimes a$ is
precomposition by $\bar\eta\mapsto a\bar\eta$.  This proves
\eqref{eq:direct-candidate-special-eigenrelation}.  

Lemma~\ref{lem:scalar-shtuka-equation} now gives
\begin{equation}\label{eq:direct-candidate-scalar-shtuka}
 \omega_\jmath^{(1)}=f_\varphi\omega_\jmath.
\end{equation}
The continuous-duality inverse gives
\eqref{eq:generating-function-residue-dual-expansion}; a change of
representative changes its image under $\jmath$ by a period.
\end{proof}

\begin{lemma}\label{lem:omega_jmath}
For trace-dual bases $(e_r),(e_r')$ of $A/\Fq[u]$, put $\lambda=\jmath(\rdu)$.
Then
\[
 \widehat{\mathcal E}^{\mathrm{sep}}_{\varphi,\lambda}
 \left(
 \frac{\sum_{r=1}^h e_r'(\one)e_r(\two)}
 {u(\two)-u(\one)}\,\rdu(\two)\,s_\varphi\right)
 =\omega_\jmath\,\rdu.
\]
\end{lemma}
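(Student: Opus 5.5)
The plan is to expand the kernel as a separated-tensor series on a small $u_\two$-disk, evaluate it term by term, and match the result with the residue-dual expansion \eqref{eq:generating-function-residue-dual-expansion} of $\omega_\jmath$. On a closed disk $|u_\two|\le\rho<|\iota(u)|$ one has
\[
\frac{1}{u(\two)-u(\one)}=-\sum_{n\ge0}\frac{u(\two)^n}{u(\one)^{n+1}},
\]
so the kernel becomes $-\sum_{r}\sum_{n\ge0}\bigl(e_r'u^{-n-1}\bigr)(\one)\,(e_ru^n)(\two)\,\rdu(\two)$. The first-factor coefficients $c_{r,n}=e_r'u^{-n-1}$ lie in $K$, so $c_{r,n}s_\varphi\in\cM_{\varphi,K}$. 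As in Lemma~\ref{lem:GM-transform-evaluations}, this series converges in the evaluation completion of Definition~\ref{def:evaluation-completion}, because $|\exp_\varphi(\iota(c_{r,n})\lambda)|\to0$ geometrically in $n$ while $|u_\two^n|\le\rho^n$. Hence $\widehat{\mathcal E}^{\mathrm{sep}}_{\varphi,\lambda}$ may be applied termwise.

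First I compute each term. The rational evaluation of a scalar coordinate $c\in K$ is $\mathcal E^{\mathrm{rat}}_{\varphi,\lambda}(c\,s_\varphi)=\exp_\varphi(\iota(c)\lambda)$, by Definition~\ref{def:levelI} with $m=\varphi_{b}$ for $c=b/a$ and by \eqref{eq:functional}. The localized anchor is $K$-linear through $\iota$ and sends $\rdu$ to $\lambda$, so $\iota(c)\lambda=\jmath_K(c\,\rdu)$. The evaluation of the series is therefore
\[
\sum_{r,n}\exp_\varphi\bigl(\jmath_\infty(-e_r'u^{-n-1}\rdu)\bigr)\,(e_ru^n)(\two)\,\rdu(\two).
\]

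Next I identify these coefficients. By Lemma~\ref{lem:residue-dual-trace-basis}, the classes $-e_r'u^{-n-1}\rdu \bmod\Omega_A$ form the residue-dual family $(e_ru^n)^*$ for the $\Fq$-basis $(e_ru^n)$ of $A$. Proposition~\ref{prop:pontryagin-explicit} says exactly that these representatives realize $\mathbf P_A$. By Definition~\ref{def:Fjmath}, each coefficient is $\mathscr F_{\varphi,\jmath}\bigl((e_ru^n)^*\bigr)$, and this value does not depend on the representative, since changing it shifts $\jmath_\infty(\xi)$ by a period. So \eqref{eq:generating-function-residue-dual-expansion}, applied with the basis $(e_ru^n)$, shows that the sum equals $\omega_\jmath\,\rdu$. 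This holds as an identity of functions on the disk, hence in $\TT_A\otimes\rdu$, since both sides are the same convergent Tate expansion.

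The main obstacle is the analytic bookkeeping: checking that the termwise evaluation in the completed separated tensor agrees with the Tate-algebra expansion. The Tate series converges only because $\mathscr F_{\varphi,\jmath}((e_ru^n)^*)\to0$, which is continuity of $\mathscr F_{\varphi,\jmath}$, whereas the disk series converges for a different reason. I would settle this by noting that both are limits of the same finite partial sums: the partial sums converge in $\TT_A$, and evaluation on the disk is continuous from $\TT_A$, so the two limits coincide. I would also check the sign convention $u(\two)-u(\one)$ against the minus sign in Lemma~\ref{lem:residue-dual-trace-basis}; the two signs cancel as written above.
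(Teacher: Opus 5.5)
Your proposal is correct and follows the paper's proof essentially step for step: expand $1/(u(\two)-u(\one))$ geometrically, evaluate the first-factor coefficients $-e_r'u^{-n-1}$ by Definition~\ref{def:levelI} and exponential equivariance to get $\exp_\varphi\bigl(\jmath_\infty(-e_r'u^{-n-1}\,\rdu)\bigr)$, and recognize these via Lemma~\ref{lem:residue-dual-trace-basis} as the coefficients $\mathscr F_{\varphi,\jmath}\bigl((e_ru^n)^*\bigr)$ in \eqref{eq:generating-function-residue-dual-expansion}. Your extra disk-versus-Tate bookkeeping goes beyond what the paper supplies and is harmless. You can also bypass it by taking $B=\mathbb T_A$ directly in Notation~\ref{def:evaluation-completion}; there the needed convergence (for every seminorm $q_R$, i.e.\ uniformly for bounded evaluation parameter, not just at $\lambda$) follows from $|\iota(e_r'u^{-n-1})|\to0$ and $\exp_\varphi(x)=x+O(x^q)$.
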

\begin{proof}
Expand the denominator in the region $|u(\two)|<|u(\one)|$:
\[
\frac{1}{ u(\two)-u(\one)} =-\sum_{n\ge0}u(\two)^n/u(\one)^{n+1}.
\]
For each $(r,n)$, the coefficient of $e_r(\two)u(\two)^n\,\rdu(\two)$
is the motive fraction $-e_r'u^{-n-1}$ in the first variable, whose
evaluation at $\lambda$ {is given by
Definition~\ref{def:levelI} as}
\[
{\mathcal E^{\mathrm{rat}}_{\varphi,\lambda}
 \bigl(-e_r'u^{-n-1}s_\varphi\bigr)}
=-\Exp\bigl(\iota(e_r')\lambda/\iota(u)^{n+1}\bigr)
=\exp_\varphi\bigl(\jmath_\infty(-e_r'u^{-n-1}\rdu)\bigr)
\]
{by exponential equivariance \eqref{eq:functional}.}  By the
explicit form of the residue--Pontryagin element
(Proposition~\ref{prop:pontryagin-explicit}), the family $(e_ru^n)_{r,n}$
is a basis of $A$ whose residue-dual coordinate family is exactly
$(-e_r'u^{-n-1}\rdu\bmod\Omega_A)_{r,n}$; the definition
\eqref{eq:generating-function-residue-dual-expansion} of $\omega_\jmath$
applied to this pair therefore identifies the evaluated sum, with the
second-variable differential $\rdu(\two)$ carried along, with
$\omega_\jmath\,\rdu$.  The
coefficients decay geometrically before evaluation, and
$\exp_\varphi(x)=x+O(x^q)$ near zero.  Thus the series converges on the
Tate domain, uniformly for bounded $\lambda$, in the evaluation
completion.
\end{proof}

\subsection{The relation between $\omega_\jmath$ and the Gazda--Maurischat transform}

Fix a nonconstant separating $u\in A$ and put
$ {\mathfrak{d}}_u= {\mathfrak{d}}_{A/\Fq[u]}$.
The direct representation of $\omega_\jmath$ is mediated by the
Gazda--Maurischat period isomorphism.  With $-\delta^{\mathrm{GM}}_u$
denoting this isomorphism (as normalized in \cite{GM24}; the convention
of \cite{GM21} differs by an overall minus sign),
Lemma~\ref{lem:GM-transform-evaluations} identifies it with the
transform \eqref{eq:GM-transform}:
\begin{equation}\label{eq:GM-delta-identification}
 \GM_{u;\widehat{\mathfrak b}}(\lambda)
 =-\delta^{\mathrm{GM}}_u\bigl(\iota(\mathfrak b)\lambda\bigr)
 \in\sfm(\varphi).
\end{equation}
This identification is exactly the resolvent formula in
\cite[Theorem~3.11]{GM21}, after applying the multiplication isomorphism
$D_u\simeq\mathfrak{d}_{A/\Fq[u]}$ of
Proposition~\ref{prop:Du-generator-any}.

Moreover, for $ \lambda' \in \iota(\mathfrak{b}) \Lambda_\varphi \subset \mathfrak{d}_u \Lambda_\varphi $, we have
\begin{equation}\label{eq:deltau}
\delta^{\mathrm{GM}}_u\bigl( \lambda' \bigr) = \mathcal{E}_{\varphi,\lambda'/\iota(\mathfrak{b})} \left( \widehat{\mathfrak{b}}(\one,\two) \frac{1}{u_\two - u_\one}  s_\varphi\right)=\mathcal{E}_{\varphi,\lambda'} \left( \frac{\widehat{\mathfrak{b}}(\one,\two)}{\mathfrak{b}(\one)} \frac{1}{u_\two - u_\one}  s_\varphi
    \right).
\end{equation}
Here $\lambda'/\iota(\mathfrak b)$ is a source period.
Proposition~\ref{prop:GM-residue-dual-coordinate} below expresses
the residue-dual coordinate without choosing a single different element.

\begin{lemma}\label{lem:domain}
The period $\jmath(\rdu)$ belongs to
$\iota( {\mathfrak{d}}_u)\Lambda_\varphi$, the domain of
$-\delta^{\mathrm{GM}}_u$.
\end{lemma}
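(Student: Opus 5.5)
The claim is that $\jmath(\rdu)\in\iota(\mathfrak d_u)\Lambda_\varphi$. The plan is to reduce it to the cotangent--different identity \eqref{eq:relative-different-multiplication} together with $A$-linearity of $\jmath$.

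First, by Lemma~\ref{lem:trace-residue-Casimir-coordinates} we have $\Omega_A=\mathfrak d_u^{-1}A\,\rdu$. Equivalently, through \eqref{eq:relative-different-multiplication}, $\rdu$ lies in $\mathfrak d_u\Omega_A$. Concretely, the decomposition \eqref{eq:du-decomp} writes
\[
\rdu=\sum_{i=1}^m\mathfrak b_i\eta_i,\qquad \mathfrak b_i\in\mathfrak d_u,\quad \eta_i\in\Omega_A .
\]
I will use this presentation, which exists because the multiplication map $\mathfrak d_u\otimes_A\Omega_A\to A\,\rdu$ is surjective.

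Second, apply the $A$-linear map $\jmath$, using that the scalar action on $\Lambda_\varphi\subset\Cinf$ is through $\iota$:
\[
\jmath(\rdu)=\sum_{i=1}^m\iota(\mathfrak b_i)\,\jmath(\eta_i),\qquad \jmath(\eta_i)\in\Lambda_\varphi .
\]
Each summand lies in $\iota(\mathfrak d_u)\Lambda_\varphi$, understood as the $A$-submodule of $\Cinf$ generated by the products $\iota(\mathfrak b)\lambda$. Since this is an $A$-module, the finite sum lies in it as well.

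Third, I would check that this is the same module as the domain of $-\delta^{\mathrm{GM}}_u$ used in \eqref{eq:deltau}. That domain is the image of $\mathfrak d_u\otimes_A\Lambda_\varphi$ under the injective localization map \eqref{eq:localized-lattice-natural-injection}. Its image is exactly the $A$-span of the products $\iota(\mathfrak b)\lambda$, so the two descriptions agree.

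The one point needing care is that \eqref{eq:deltau} is phrased for $\lambda'\in\iota(\mathfrak b)\Lambda_\varphi$ with a single $\mathfrak b$. A sum over several $\mathfrak b_i$ is not obviously of that form. One option is to note that $\mathfrak d_u\Lambda_\varphi$ is an invertible module, so locally it is generated by a single element. The simpler option, which I prefer, is to observe that $\delta^{\mathrm{GM}}_u$ is additive on $\mathfrak d_u\Lambda_\varphi$ and is defined summandwise, by \eqref{eq:GM-delta-identification} and Lemma~\ref{lem:GM-transform-evaluations}. I expect this bookkeeping to be the only mild obstacle; the membership statement itself is immediate from the first two steps.
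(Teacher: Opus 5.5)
Your first two steps are exactly the paper's proof: write $\rdu=\sum_s\mathfrak b_s\eta_s$ with $\mathfrak b_s\in\mathfrak d_u$ and $\eta_s\in\Omega_A$ via the cotangent--different isomorphism \eqref{eq:relative-different-multiplication}, then apply $A$-linearity of $\jmath$ to get $\jmath(\rdu)=\sum_s\iota(\mathfrak b_s)\jmath(\eta_s)$ with each $\jmath(\eta_s)\in\Lambda_\varphi$. Your further remarks on identifying the domain and on the summandwise definition of $\delta^{\mathrm{GM}}_u$ are harmless bookkeeping that the paper leaves implicit; it uses the summandwise evaluation later, in the proof of Proposition~\ref{prop:GM-residue-dual-coordinate}.
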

\begin{proof}
Choose $\sum_s\mathfrak b_s\eta_s=\rdu$ by
\eqref{eq:relative-different-multiplication}.
Then $\jmath(\rdu)=\sum_s\iota(\mathfrak b_s)\jmath(\eta_s)$,
with $\mathfrak b_s\in {\mathfrak{d}}_u$ and
$\jmath(\eta_s)\in\Lambda_\varphi$.
\end{proof}

\begin{proposition}
\label{prop:GM-residue-dual-coordinate}
Let $u\in A$ be nonconstant and separating.
Then
\begin{equation}
 \omega_\jmath=\delta^{\mathrm{GM}}_u\bigl(\jmath(\rdu)\bigr).
\end{equation}
\end{proposition}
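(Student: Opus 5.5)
We want $\omega_\jmath=\delta^{\mathrm{GM}}_u(\jmath(\rdu))$. The plan is to write both sides as the completed separated-tensor evaluation of a diagonal-lift kernel at a period, and then compare the two kernels modulo terms whose evaluation vanishes.

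First, Lemma~\ref{lem:domain} gives a decomposition $\rdu=\sum_s\mathfrak b_s\eta_s$ with $\mathfrak b_s\in\mathfrak d_u$ and $\eta_s\in\Omega_A$. Put $\lambda_s:=\jmath(\eta_s)\in\Lambda_\varphi$. Then $\jmath(\rdu)=\sum_s\iota(\mathfrak b_s)\lambda_s$, and each summand lies in $\iota(\mathfrak b_s)\Lambda_\varphi$. Since $\delta^{\mathrm{GM}}_u$ is additive (it is the $A$-linear Gazda--Maurischat isomorphism), \eqref{eq:deltau} yields
\[
 \delta^{\mathrm{GM}}_u\bigl(\jmath(\rdu)\bigr)
 =\sum_s\widehat{\mathcal E}^{\mathrm{sep}}_{\varphi,\lambda_s}
 \Bigl(\frac{\widehat{\mathfrak b}_s(\one,\two)}{u_\two-u_\one}s_\varphi\Bigr).
\]
We take the trace-dual lifts \eqref{eq:different-diagonal-trace-class}, namely $\widehat{\mathfrak b}_s=\sum_r(\mathfrak b_se_r')(\one)e_r(\two)$. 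With this choice the $s$-th term is a sum over $r,n$ of $-\exp_\varphi\bigl(\iota(\mathfrak b_se_r'u^{-n-1})\lambda_s\bigr)\otimes e_ru^n$. Lemma~\ref{lem:GM-transform-evaluations} shows that the choice of lift is irrelevant at periods, so no generality is lost.

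Second, we expand $\omega_\jmath$ using Lemma~\ref{lem:omega_jmath} together with \eqref{eq:generating-function-residue-dual-expansion} in the basis $(e_ru^n)$, whose residue-dual coordinates are $-e_r'u^{-n-1}\rdu$ by Proposition~\ref{prop:pontryagin-explicit}. This gives
\[
 \omega_\jmath=\sum_{r,n}\exp_\varphi\bigl(\jmath_\infty(-e_r'u^{-n-1}\rdu)\bigr)\otimes e_ru^n .
\]
By $K_\infty$-linearity of $\jmath_\infty$ (scalars acting through $\iota$),
\[
 \jmath_\infty(e_r'u^{-n-1}\rdu)=\sum_s\iota(e_r'u^{-n-1}\mathfrak b_s)\,\jmath(\eta_s)=\sum_s\iota(\mathfrak b_se_r'u^{-n-1})\lambda_s .
\]
Additivity of $\exp_\varphi$ then splits each coefficient into a sum over $s$. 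These coefficients match the coefficients of the first step term by term, so the two Tate expansions agree coefficientwise and the identity follows.

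The main obstacle is the normalization of $\delta^{\mathrm{GM}}_u$. The argument must confirm two things. First, \eqref{eq:deltau} must really be additive across the different summands $\iota(\mathfrak b_s)\Lambda_\varphi$, which all sit inside $\iota(\mathfrak d_u)\Lambda_\varphi$; this is consistent with the single lift-independent definition, by Lemma~\ref{lem:GM-transform-evaluations}. Second, the sign conventions must line up: we need $1/(u_\two-u_\one)=-\sum_n u_\two^n/u_\one^{n+1}$, matched against the minus sign in the dual family $-e_r'u^{-n-1}\rdu$ and the sign in \eqref{eq:GM-delta-identification}. If a sign discrepancy appears, the fix is to recheck \eqref{eq:deltau} against \eqref{eq:GM-delta-identification}, using $K_{\mathfrak b}=-\widehat{\mathfrak b}/(u_\one-u_\two)\,\rdu$. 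Convergence needs no separate argument: both expansions are computed on the disk $|u_\two|<|\iota(u)|$, where the separated evaluation is continuous by Proposition~\ref{prop:continuous-tensor-evaluation}.
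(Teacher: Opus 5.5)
Your proposal is correct and follows essentially the paper's route. Both arguments decompose $\rdu=\sum_s\mathfrak b_s\eta_s$, choose the trace-dual lifts, and expand the Gazda--Maurischat side termwise in the basis $(e_ru^n)$. Both then pull the $s$-sum inside $\exp_\varphi$ using additivity and the $K_\infty$-linearity of $\jmath_\infty$, and match the result against the residue-dual expansion of $\omega_\jmath$. The only cosmetic difference is that you start from \eqref{eq:deltau} rather than from \eqref{eq:GM-delta-identification} together with \eqref{eq:GM-cauchy-expansion}, and your sign bookkeeping is consistent throughout.
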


\begin{proof}

Use the trace-dual bases in the preceding proof and choose the ordered lifts
$\widehat{\mathfrak b}^{e}_s(\one,\two)
:=\sum_r(\mathfrak b_s e_r')(\one)e_r(\two)$ of the classes
$[\widehat{\mathfrak b}^{e}_s]\in D_u$,
\begin{equation}\label{eq:different-trace-Casimir-ordered-lift}
 \widehat{\mathfrak b}^{e}_s(\one,\two)
 =\sum_r(\mathfrak b_s e_r')(\one)e_r(\two).
\end{equation}
They are integral because $\mathfrak b_s\in\mathfrak{d}_{A/\Fq[u]}$ and
$e_r'\in\mathfrak{d}_{A/\Fq[u]}^{-1}$, and
Proposition~\ref{prop:Du-generator-any} identifies their diagonal classes.
The identification \eqref{eq:GM-delta-identification} and the
normally convergent expansion \eqref{eq:GM-cauchy-expansion} now give
\begin{align}
 -\delta^{\mathrm{GM}}_u\bigl(\jmath(\rdu)\bigr)
 &=\sum_s\GM_{u;\widehat{\mathfrak b}^e_s}\bigl(\jmath_\infty(\eta_s)\bigr)\notag\\
 &=\sum_{r,n}
   \exp_\varphi\!\left(
     \sum_s\iota\!\left(\frac{\mathfrak b_s e_r'}{u^{n+1}}\right)
                   \jmath_\infty(\eta_s)\right)\otimes e_ru^n\notag\\
 &=\sum_{r,n}
   \exp_\varphi\!\left(
     \jmath_\infty\!\left(\frac{e_r'\,\rdu}{u^{n+1}}\right)\right)
       \otimes e_ru^n
 =-\omega_\jmath. \label{eq:onestep-trace-basis-expansion}
\end{align}
The finite sum over $s$ passes through $\exp_\varphi$ by additivity.
The last equality is \emph{the residue-dual formula}
\eqref{eq:generating-function-residue-dual-expansion},
the same identity obtained directly by evaluation in
Lemma~\ref{lem:omega_jmath}, with the minus sign from
\eqref{eq:direct-trace-residue-dual-family}.
\end{proof}

\begin{theorem}
 \label{thm:identity-dual-factorization}
For every $\eta\in\Omega_A$,
\begin{equation}\label{eq:canonical-family-scalar-factorization}
 \mathcal H_{\varphi}(\jmath\eta)=\omega_\jmath \otimes \eta \in \sfm(\varphi)\otimes_A\Omega_A.
\end{equation}
The identity holds on the Tate domain
and for the corresponding meromorphic continuations.
\end{theorem}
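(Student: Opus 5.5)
We reduce to the case $\eta=\rdu$ for a nonconstant separating $u\in A$, and then propagate to all of $\Omega_A$ by linearity over $A$ and $K$.

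\emph{Step 1: the case $\eta=\rdu$.} Put $\lambda=\jmath(\rdu)\in\Lambda_\varphi$. By Definition~\ref{def:trace-casimir-kernel}, the trace Casimir kernel $K_u$ lies in $\mathscr C_A\subset\mathscr C_\varphi^\Omega$, and so does $K_\varphi$ (Lemma~\ref{lem:SzegoC}). Theorem~\ref{thm:Vphi-Cauchy-fiber-invariance} therefore gives $\mathcal H_\varphi(\lambda)=\mathcal E_{\varphi,\lambda}(K_us_\varphi)$. Theorem~\ref{thm:pellarin} identifies this with the separated evaluation of $K_u$ on a disk $|u(\two)|<|\iota(u)|$. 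Lemma~\ref{lem:omega_jmath} computes the separated evaluation of the kernel
\[
\sum_r e_r'(\one)e_r(\two)\,\rdu(\two)/(u(\two)-u(\one)),
\]
which has the dual basis in the \emph{first} variable, whereas $K_u$ carries it in the second. Both kernels have diagonal residue $\sum_r e_re_r'=1$. After multiplying by an element $\mathfrak b\in\mathfrak d_u$ to clear the inverse different, their difference is a regular element of the fiber. More directly, I would show that the flipped kernel lies in $\mathscr C_\varphi^\Omega$: its off-diagonal poles cancel by Lemma~\ref{lem:relative-Casimir-codimension-one-cancellation}, applied to the symmetric Casimir class. Its first-variable coefficients $e_r'$ have poles only at the ramification locus of $u$, and these must be absorbed. \textbf{This is the main obstacle.} I expect to handle it by the factorization of Proposition~\ref{prop:GM-residue-dual-coordinate}, which already shows
\[
\omega_\jmath=\delta_u^{\mathrm{GM}}(\jmath(\rdu))=-\sum_s\GM_{u;\widehat{\mathfrak b}^e_s}(\jmath(\eta_s)).
\]
Combining this with Theorem~\ref{thm:canonical-assembly}, equation~\eqref{eq:GM-b-different}, gives
\[
-\GM_{u;\widehat{\mathfrak b}_s}(\jmath\eta_s)\,\rdu=\mathfrak b_s(\two)\,\mathcal H_\varphi(\jmath\eta_s).
\]
Summing over $s$ yields $\omega_\jmath\,\rdu=\sum_s\mathfrak b_s\mathcal H_\varphi(\jmath\eta_s)$. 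Proposition~\ref{prop:Hphi-linearity}, extended to $K$-multiples through Theorem~\ref{thm:rational-anderson-continuation}(i), turns the right side into $\mathcal H_\varphi(\jmath(\sum_s\mathfrak b_s\eta_s))=\mathcal H_\varphi(\jmath\rdu)$. This bypasses the flipped kernel entirely.

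\emph{Step 2: general $\eta$.} Every $\eta\in\Omega_A$ can be written $\eta=r\,\rdu$ with $r\in K$. By Theorem~\ref{thm:rational-anderson-continuation}(i), both sides of the identity are rational Anderson differentials, namely $K$-multiples of $\mathcal H_\varphi(\lambda)$. The map $\eta\mapsto\mathcal H_\varphi(\jmath\eta)$ is $A$-linear by Proposition~\ref{prop:Hphi-linearity}. To compare the two sides it suffices to compare characteristic residues, since Theorem~\ref{thm:rational-anderson-continuation}(iii) makes the residue map an isomorphism. By Proposition~\ref{lem:residue}, the left side has residue $\jmath(\eta)$. For the right side, $\operatorname{res}_\Xi(\omega_\jmath\otimes r\,\rdu)=\iota(r)\operatorname{res}_\Xi(\omega_\jmath\rdu)$, and Step~1 gives $\iota(r)\jmath(\rdu)=\jmath_K(\eta)=\jmath(\eta)$. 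Hence the two sides agree on the Tate domain.

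\emph{Step 3: meromorphic continuations.} The identity on the Tate domain extends to the continuations by the identity principle, exactly as in part~(ii) of Theorem~\ref{thm:rational-anderson-continuation}.
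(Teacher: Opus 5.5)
Your proof is correct. Once the flipped-kernel detour is dropped, Step~1 is exactly the paper's base case. Proposition~\ref{prop:GM-residue-dual-coordinate} expands $-\omega_\jmath$ as $\sum_s\GM_{u;\widehat{\mathfrak b}^e_s}(\jmath(\eta_s))$. Equation~\eqref{eq:GM-b-different} turns each term into $\mathfrak b_s(\two)\mathcal H_\varphi(\jmath(\eta_s))$, and Proposition~\ref{prop:Hphi-linearity} absorbs the factor $\mathfrak b_s$. No extension of Proposition~\ref{prop:Hphi-linearity} to $K$-multiples is needed here, since $\mathfrak b_s\in\mathfrak d_{A/\Fq[u]}\subset A$ and $\jmath(\eta_s)\in\Lambda_\varphi$.

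You genuinely diverge only in Step~2.
\begin{itemize}
\item The paper passes from $\rdu$ to $\eta=b\,\rdu$ by clearing the denominator with some $0\ne c\in A$. Two applications of Proposition~\ref{prop:Hphi-linearity} give $c(\two)\bigl(\mathcal H_\varphi(\jmath\eta)-\omega_\jmath\otimes\eta\bigr)=0$, and it then cancels the nonzero factor $c(\two)$ in the domain $\mathbb T_A$.
\item You instead place both sides in $\sfm(\varphi)\otimes_A\Omega_A^{\rat}$ and compare characteristic residues. On the left you use Proposition~\ref{lem:residue}; on the right you use Step~1 together with $r(\Xi)=\iota(r)$. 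The residue isomorphism of Theorem~\ref{thm:rational-anderson-continuation}(iii) then forces equality.
\end{itemize}

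Both arguments are valid and non-circular: Theorem~\ref{thm:rational-anderson-continuation} is proved earlier and does not use the factorization. The paper's route is more elementary, needing only $A$-linearity and the domain property. Yours relies on the Frobenius-constant lemma behind the residue isomorphism. In exchange, your route shows that the factorization is forced by a single residue computation at $\rdu$. It also yields the statement for the meromorphic continuations at the same time, since the residue map is defined on them.
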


\begin{proof}
Theorem~\ref{thm:canonical-assembly} and the construction of $\omega_\jmath$
place both sides in $\sfm(\varphi)\otimes_A\Omega_A$.

\smallskip
\emph{Base case $\eta=\rdu$.}
Fix a nonconstant separating $u\in A$ and a decomposition
$\rdu=\sum_s\mathfrak b_s\eta_s$ with $\mathfrak b_s\in\mathfrak d_u$
(Lemma~\ref{lem:domain}).  By $K$-linearity of $\jmath$,
\[
 \jmath(\rdu)=\sum_s\iota(\mathfrak b_s)\jmath(\eta_s).
\]
Proposition~\ref{prop:GM-residue-dual-coordinate} gives
$\omega_\jmath=\delta^{\mathrm{GM}}_u(\jmath(\rdu))$, and the
computation \eqref{eq:onestep-trace-basis-expansion} of that identity
expands
{
\[
\begin{aligned}
 -\omega_\jmath
 &=\sum_s\GM_{u;\widehat{\mathfrak b}^e_s}\bigl(\jmath_\infty(\eta_s)\bigr)\\
 &=\sum_{r,n}
   \exp_\varphi\!\left(
     \sum_s\iota\!\left(\frac{\mathfrak b_se_r'}{u^{n+1}}\right)
       \jmath_\infty(\eta_s)\right)\otimes e_ru^n\\
 &=\sum_{r,n}
   \exp_\varphi\!\left(
     \jmath_\infty\!\left(\frac{e_r'\,\rdu}{u^{n+1}}\right)\right)
   \otimes e_ru^n .
\end{aligned}
\]
}
On the other hand, Theorem~\ref{thm:canonical-assembly} (in the form
$-\GM_{u;\widehat{\mathfrak b}}(\lambda)\,\rdu(\two)
=\mathfrak b(\two)\,\mathcal H_\varphi(\lambda)$) and
Proposition~\ref{prop:Hphi-linearity} give
\[
 -\GM_{u;\widehat{\mathfrak b}^e_s}(\jmath(\eta_s))\otimes\rdu(\two)
 =\mathcal H_\varphi\bigl(\jmath(\mathfrak b_s\eta_s)\bigr).
\]
Multiplying the display for $\omega_\jmath$ by $\rdu(\two)$ and summing,
\[
 \omega_\jmath\otimes\rdu
 =\sum_s\mathcal H_\varphi\bigl(\jmath(\mathfrak b_s\eta_s)\bigr)
 =\mathcal H_\varphi\Bigl(\sum_s\iota(\mathfrak b_s)\jmath(\eta_s)\Bigr)
 =\mathcal H_\varphi(\jmath(\rdu)),
\]
using the $\Fq$-additivity of $\mathcal H_\varphi$ in its period argument
and the $K$-linearity of $\jmath$.

\smallskip
\emph{Scaling.}
Let $\eta=b\,\rdu$ with $b\in K$ and choose $0\ne c\in A$ with
$cb\in A$.  Two applications of Proposition~\ref{prop:Hphi-linearity},
together with the base case at $cb\,\rdu$, give
\[
 c(\two)\,\bigl(\mathcal H_\varphi(\jmath\eta)
  -\omega_\jmath\otimes\eta\bigr)
 =\mathcal H_\varphi\bigl(\jmath(cb\,\rdu)\bigr)
  -\omega_\jmath\otimes cb\,\rdu=0,
\]
and cancelling the nonzero scalar $c(\two)$ yields the identity at $\eta$.

\smallskip
Both sides are rational Anderson differentials, so
Theorem~\ref{thm:rational-anderson-continuation}(ii) supplies their
canonical meromorphic continuations; the identity proved on the Tate
domain persists for them by the identity principle on the connected curve
$U_{\Cinf}^{\mathrm{an}}$.

Finally $\jmath(\rdu)\ne0$: otherwise
$\mathcal H_\varphi(\jmath(\rdu))=\omega_\jmath\otimes\rdu$ would
force $\omega_\jmath=0$, contradicting
Proposition~\ref{prop:generating-unit-candidate} and the residue
isomorphism $\operatorname{res}_\Xi\mathcal H_\varphi=\mathrm{id}$
(Proposition~\ref{lem:residue}).
\end{proof}

\begin{remark}\label{rem:GM24-main-theorem}
Proposition~\ref{prop:GM-residue-dual-coordinate} and
Theorems~\ref{thm:identity-dual-factorization}
and~\ref{thm:canonical-assembly} give directly
\[
 \delta^{\mathrm{GM}}_u\bigl(\jmath(\rdu)\bigr)\,\rdu
 =\omega_\jmath\otimes\rdu
 =\mathcal H_\varphi\bigl(\jmath(\rdu)\bigr)
 =\GM_u^\Omega\bigl(\jmath(\rdu)\bigr).
\]
This recovers, in essence, the main theorem of \cite{GM24}.
\end{remark}

\subsection{The logarithmic inverse}

We now recover $\jmath$ from $\omega_\jmath$.  The construction uses the
continuous duality and the local Drinfeld logarithm.

For $\omega\in\sfm(\varphi)$, let
$\mathscr{F}_\omega = \mathscr{F}(\omega):A^\vee_{\mathrm{res}}\to\Cinf$ be the continuous
$\Fq$-linear map
corresponding to $\omega$ under
\eqref{eq:Tate-continuous-dual-identification}.
\begin{proposition}[Stabilized logarithmic lift]
\label{prop:stabilized-logarithmic-lift}
Let $\omega\in\sfm(\varphi)$.
\begin{enumerate}[label=\textup{(\roman*)},leftmargin=2.2em]
\item\label{item:logarithmic-lift-existence} There is a unique continuous
$K_\infty$-linear map
\begin{equation}\label{eq:logarithmic-lift-characterization}
 \widetilde{\mathscr{L}}_\omega:\widehat{\Omega}_A\longrightarrow\Cinf,
 \qquad
 \exp_\varphi\bigl(\widetilde{\mathscr{L}}_\omega(\eta)\bigr)
 =\mathscr{F}_\omega(\bar\eta),
\end{equation}
where $\bar\eta$ is the class of $\eta$ in
$\overline\Omega\simeq A^\vee_{\mathrm{res}}$.  

\item\label{item:logarithmic-lift-formula}
For any nonconstant $u\in A$, if $n$ is sufficiently
large that the local Drinfeld logarithm is defined at the indicated value,
then
\begin{equation}\label{eq:direct-logarithmic-lift-formula}
 \widetilde{\mathscr{L}}_\omega(\eta)
 =\iota(u)^n
  \log_\varphi\!\left(
   \mathscr{F}_\omega(\overline{u^{-n}\eta})
  \right).
\end{equation}
The right-hand side is independent of $n$ and $u$.
\end{enumerate}
\end{proposition}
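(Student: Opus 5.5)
The plan is to define $\widetilde{\mathscr L}_\omega$ directly by formula \eqref{eq:direct-logarithmic-lift-formula}, then verify independence of choices, linearity and continuity, and finally uniqueness. Continuity of $\mathscr F_\omega$ on $A^\vee_{\mathrm{res}}$, from Lemma~\ref{lem:Tate-continuous-residue-duality}, does the analytic work. Fix a nonconstant $u\in A$, so $|\iota(u)|>1$.

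For $\eta\in\widehat\Omega_A$ the class $\overline{u^{-n}\eta}$ tends to $0$ in $\overline\Omega$. Indeed, $u^{-n}\eta$ eventually lies in any given neighbourhood of $0$ of $\widehat\Omega_A$, and these map onto the neighbourhood basis of annihilators of finite-dimensional subspaces. Hence $\mathscr F_\omega(\overline{u^{-n}\eta})$ tends to $0$ and eventually lies in the disc where $\log_\varphi$ converges and inverts $\exp_\varphi$.

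For stabilization in $n$, use the eigenrelation from Definition~\ref{def:special-functions} in the form of Lemma~\ref{lem:Tate-continuous-residue-duality}: multiplication by $1\otimes u$ is precomposition by $\bar\xi\mapsto u\bar\xi$. This gives
\[
\mathscr F_\omega(\overline{u^{-n}\eta})=\varphi_u\bigl(\mathscr F_\omega(\overline{u^{-n-1}\eta})\bigr).
\]
On a small enough disc one has $\log_\varphi\circ\varphi_u=\iota(u)\log_\varphi$. Therefore $\iota(u)^{n}\log_\varphi(\cdots)$ equals $\iota(u)^{n+1}\log_\varphi(\cdots)$ at the next index. For independence of $u$, given $u,v$ compare both expressions with the one for $uv$ at large $n$, using $\varphi_{uv}=\varphi_u\varphi_v$.

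Next come the properties of the resulting map. Additivity follows because $\log_\varphi$ is $\Fq$-linear on its disc. Applying $\exp_\varphi$ gives
\[
\exp_\varphi\bigl(\iota(u)^n\log_\varphi(x_n)\bigr)=\varphi_{u^n}(x_n)=\mathscr F_\omega(\bar\eta),
\]
which is the defining identity \eqref{eq:logarithmic-lift-characterization}. Linearity over $A$ follows by the same eigenrelation argument with $a$ in place of $u$, so the map is linear over $K=\operatorname{Frac}A$ as well. For continuity, note that near $0$ the logarithm is isometric, so on a small neighbourhood the map is bounded by a constant times $|\mathscr F_\omega|$. An additive map that is continuous at $0$ is continuous, and continuity plus $K$-linearity with $K$ dense in $K_\infty$ gives $K_\infty$-linearity.

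Uniqueness is where I expect the real obstacle: two lifts differ by a continuous $K_\infty$-linear map $\delta\colon\widehat\Omega_A\to\Cinf$ with image in the discrete lattice $\Lambda_\varphi$. I would argue as in the nonvanishing proof of Proposition~\ref{prop:generating-unit-candidate}. Since $\widehat\Omega_A$ is a one-dimensional $K_\infty$-space, $\delta$ is either zero or has image a $K_\infty$-line. Such a line contains arbitrarily small nonzero elements, which a discrete lattice cannot, so $\delta=0$. The main care point is making sure the disc of convergence of $\log_\varphi$ is small enough that $\log_\varphi\circ\exp_\varphi=\mathrm{id}$ and $\log_\varphi\circ\varphi_u=\iota(u)\log_\varphi$ both hold there; this is standard for small radius.
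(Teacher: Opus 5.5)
Your proposal is correct and follows the paper's proof essentially step by step: stabilization via $x_n=\varphi_u(x_{n+1})$ and the local equivariance of $\log_\varphi$, the lifting identity via $\varphi_{u^n}$, $A$- and then $K$-linearity, continuity from the $n=0$ formula near zero, density for $K_\infty$-linearity, and uniqueness because a nonzero $K_\infty$-line cannot lie in the discrete lattice. The only deviation is how you get independence of $u$: you compare directly with $uv$, which needs $\log_\varphi\circ\varphi_{v^m}=\iota(v)^m\log_\varphi$ at $y_m=\mathscr F_\omega(\overline{(uv)^{-m}\eta})$ (true because $|y_m|\,|\iota(v)|^m\to0$), whereas the paper obtains $u$-independence for free from the uniqueness statement.
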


\begin{proof}
\emph{\ref{item:logarithmic-lift-formula}: construction and stabilization.}
Since $\exp_\varphi(z)=z+O(z^q)$, it restricts on a sufficiently small open
disc about zero to an $\Fq$-linear analytic isomorphism; denote its inverse
there by $\log_\varphi$.  Exponential equivariance implies, for each $a\in A$,
\begin{equation}\label{eq:local-logarithm-equivariance}
 \log_\varphi(\varphi_a(x))=\iota(a)\log_\varphi(x)
\end{equation}
whenever $x$ is sufficiently small.  The radius may be decreased for the
finitely many scalars used in a given calculation.

The special-function identity and
\eqref{eq:Tate-continuous-dual-identification} give
\begin{equation}\label{eq:special-continuous-map-equivariance}
 \mathscr{F}_\omega(a\bar\eta)
 =\varphi_a\bigl(\mathscr{F}_\omega(\bar\eta)\bigr)
 \qquad(a\in A).
\end{equation}
Fix a nonconstant $u\in A$ and set
\[
 x_n(\eta):=\mathscr{F}_\omega(\overline{u^{-n}\eta}).
\]
Since $|\iota(u)|>1$, the class of $u^{-n}\eta$ tends to zero in
$\overline\Omega$, so $x_n(\eta)\to0$.  Moreover
\[
 x_n(\eta)=\varphi_u(x_{n+1}(\eta)).
\]
For large $n$, equation \eqref{eq:local-logarithm-equivariance} gives
\[
 \iota(u)^n\log_\varphi(x_n(\eta))
 =\iota(u)^{n+1}\log_\varphi(x_{n+1}(\eta)),
\]
which proves stabilization in $n$.

\emph{\ref{item:logarithmic-lift-existence}: the lifting property and uniqueness.}
For such $n$, global exponential equivariance gives
\begin{align*}
 \exp_\varphi\!\left(\iota(u)^n\log_\varphi(x_n(\eta))\right)
 &=\varphi_{u^n}(x_n(\eta))
 =\mathscr{F}_\omega(\bar\eta).
\end{align*}
This proves \eqref{eq:logarithmic-lift-characterization}.  Additivity follows
by taking one sufficiently large $n$ for two inputs.  Applying
\eqref{eq:local-logarithm-equivariance} also to $a\in A$ gives
\[
 \widetilde{\mathscr{L}}_\omega(a\eta)
 =\iota(a)\widetilde{\mathscr{L}}_\omega(\eta).
\]
Thus the lift is $K$-linear after division by nonzero elements of $A$.  Near
zero the stabilized formula may be taken with $n=0$, so the lift is
continuous.  Density of $K$ in $K_\infty$ upgrades $K$-linearity to
$K_\infty$-linearity.

The lift is unique: the difference of two such lifts has image in the
discrete lattice $\Lambda_\varphi$, whereas a nonzero $K_\infty$-linear image
would be a nonzero $K_\infty$-line.  Uniqueness also makes
\eqref{eq:direct-logarithmic-lift-formula} independent of $u$.
\end{proof}

The following result yields that 
$ \mathscr{L}_{\varphi} $ coincides with the residue map at $\Xi $.

\begin{theorem}[Logarithmic inverse]
\label{thm:direct-logarithmic-special-line}
\begin{enumerate}[label=\textup{(\roman*)},leftmargin=2.2em]
\item
Restriction to the integral differential lattice gives a canonical \(A\)-module isomorphism
\begin{equation}\label{eq:direct-logarithmic-special-line-isomorphism}
 \mathscr{L}_\varphi:
 \sfm(\varphi)\xrightarrow{\ \sim\ }
 \operatorname{Hom}_A(\Omega_A,\Lambda_\varphi),
 \qquad
 \omega\longmapsto
 \widetilde{\mathscr{L}}_\omega|_{\Omega_A}.
\end{equation}
Its inverse sends \(\jmath\) to the residue-dual exponential tensor
\(\omega_\jmath\) of Proposition~\ref{prop:generating-unit-candidate}.
\item
Consequently, $\sfm(\varphi)$ is free of rank one
if and only if $\Lambda_\varphi\simeq\Omega_A$.
\item
Moreover, $\omega_\jmath$ is an $A$-basis of
$\sfm(\varphi)$ if and only if $\jmath$ is an isomorphism.
\end{enumerate}
\end{theorem}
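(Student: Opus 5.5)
The plan is to show that $\jmath\mapsto\omega_\jmath$ and $\omega\mapsto\widetilde{\mathscr L}_\omega|_{\Omega_A}$ are mutually inverse $A$-linear maps; parts (ii) and (iii) then follow formally.

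\emph{Well-definedness and $A$-linearity of $\mathscr L_\varphi$.} For $\eta\in\Omega_A$ the class $\bar\eta$ vanishes in $\overline\Omega$. By \eqref{eq:logarithmic-lift-characterization}, $\exp_\varphi(\widetilde{\mathscr L}_\omega(\eta))=\mathscr F_\omega(0)=0$, so $\widetilde{\mathscr L}_\omega(\eta)\in\Lambda_\varphi$. Proposition~\ref{prop:stabilized-logarithmic-lift} shows that the restriction is $A$-linear. The $A$-module structure on $\sfm(\varphi)$ is multiplication by $1\otimes a$, which Lemma~\ref{lem:Tate-continuous-residue-duality} turns into precomposition $\bar\xi\mapsto a\bar\xi$. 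Hence $\mathscr F_{(1\otimes a)\omega}(\bar\eta)=\mathscr F_\omega(\overline{a\eta})$. Uniqueness of the lift then gives $\widetilde{\mathscr L}_{(1\otimes a)\omega}(\eta)=\widetilde{\mathscr L}_\omega(a\eta)=\iota(a)\widetilde{\mathscr L}_\omega(\eta)$, so $\mathscr L_\varphi$ is $A$-linear. Additivity in $\omega$ is clear from the same uniqueness.

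\emph{The two composites.} First take $\jmath$ and compute $\mathscr L_\varphi(\omega_\jmath)$. By construction $\mathscr F(\omega_\jmath)=\mathscr F_{\varphi,\jmath}$, so $\exp_\varphi(\jmath_\infty(\eta))=\mathscr F_{\omega_\jmath}(\bar\eta)$ for all $\eta\in\widehat\Omega_A$. Here $\jmath_\infty$ is continuous and $K_\infty$-linear, so uniqueness in Proposition~\ref{prop:stabilized-logarithmic-lift}(i) forces $\widetilde{\mathscr L}_{\omega_\jmath}=\jmath_\infty$. Restricting to $\Omega_A$ recovers $\jmath$.

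For the other composite, take $\omega\in\sfm(\varphi)$ and put $\jmath:=\widetilde{\mathscr L}_\omega|_{\Omega_A}$. The map $\widetilde{\mathscr L}_\omega$ is $K_\infty$-linear on $\widehat\Omega_A=K_\infty\otimes_A\Omega_A$, so it coincides with $\jmath_\infty$. This step is the main point to check: $K_\infty$-linearity together with continuity determines the lift from its values on $\Omega_A$. Then $\mathscr F_{\varphi,\jmath}(\bar\xi)=\exp_\varphi(\widetilde{\mathscr L}_\omega(\xi))=\mathscr F_\omega(\bar\xi)$. Applying $\mathscr F^{-1}$ from Lemma~\ref{lem:Tate-continuous-residue-duality} gives $\omega_\jmath=\omega$. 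The case $\jmath=0$ is included: then $\widetilde{\mathscr L}_\omega=0$, so $\mathscr F_\omega=0$ and $\omega=0$.

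\emph{Consequences.} The module $\operatorname{Hom}_A(\Omega_A,\Lambda_\varphi)$ is invertible, of class $[\Lambda_\varphi]-[\Omega_A]$ in $\Pic(A)$. It is free of rank one exactly when the two classes agree, which gives (ii). For (iii), under the isomorphism of (i), $\omega_\jmath$ is a basis exactly when $\jmath$ generates $\operatorname{Hom}_A(\Omega_A,\Lambda_\varphi)$. A generator $\jmath$ must be injective, since a nonzero map between rank-one projectives is injective. Any isomorphism $\jmath_*$ generates, because every $\jmath'$ equals $(\jmath'\circ\jmath_*^{-1})\circ\jmath_*$ and $\operatorname{End}_A(\Lambda_\varphi)=A$. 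Conversely, if $\jmath$ generates, then some isomorphism exists by (ii), so $\jmath=c\jmath_*$ with $c\in A^\times=\Fq^\times$, and $\jmath$ is an isomorphism.
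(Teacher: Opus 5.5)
Your proposal is correct and follows the paper's proof essentially step for step. You get well-definedness and $A$-linearity of $\mathscr L_\varphi$ from uniqueness of the stabilized lift, you get both composites from that same uniqueness together with the fact that $\Omega_A$ spans $\widehat\Omega_A$ over $K_\infty$ (so $K_\infty$-linearity alone pins down the lift, and continuity is not needed there), and you deduce (ii)--(iii) from invertibility of $\operatorname{Hom}_A(\Omega_A,\Lambda_\varphi)$; your unit-multiple argument for (iii) simply spells out the paper's one-line version.
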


\begin{proof}
\emph{(i).}
If \(\alpha\in\Omega_A\), then \(\bar\alpha=0\), so
\eqref{eq:logarithmic-lift-characterization} gives
\(\widetilde{\mathscr{L}}_\omega(\alpha)\in\Lambda_\varphi\).  Thus restriction
defines the map \(\mathscr{L}_\varphi\).  It is \(A\)-linear in \(\omega\): under
\eqref{eq:Tate-continuous-dual-identification}, multiplication by
\(1\otimes a\) replaces \(\mathscr{F}_\omega(\bar\eta)\) by
\(\mathscr{F}_\omega(a\bar\eta)\), and uniqueness identifies the corresponding
lift with \(\eta\mapsto\widetilde{\mathscr{L}}_\omega(a\eta)\).

Conversely, let
\(\jmath\in\operatorname{Hom}_A(\Omega_A,\Lambda_\varphi)\) and extend it
\(K_\infty\)-linearly to \(\widehat{\Omega}_A\).  For \(\jmath\ne0\), Proposition
\ref{prop:generating-unit-candidate} constructs \(\omega_\jmath\) with
\[
 \mathscr{F}_{\varphi,\jmath}(\bar\eta)
 =\exp_\varphi(\jmath_\infty(\eta));
\]
for \(\jmath=0\), put \(\omega_\jmath=0\).  The uniqueness just proved yields
\(\widetilde{\mathscr{L}}_{\omega_\jmath}=\jmath\), hence
\(\mathscr{L}_\varphi(\omega_\jmath)=\jmath\).  In the other direction, start
with \(\omega\) and put
\(\jmath=\widetilde{\mathscr{L}}_\omega|_{\Omega_A}\).  Since \(\Omega_A\) spans
\(\widehat{\Omega}_A\) over \(K_\infty\), one has
\(\jmath=\widetilde{\mathscr{L}}_\omega\); therefore
\(\mathscr{F}_{\varphi,\jmath}=\mathscr{F}_\omega\) and
\(\omega_\jmath=\omega\).  This proves
\eqref{eq:direct-logarithmic-special-line-isomorphism}.

\emph{(ii).}
Finally, \(\operatorname{Hom}_A(\Omega_A,\Lambda_\varphi)\) is free exactly when the two
invertible modules \(\Omega_A\) and \(\Lambda_\varphi\) have the same ideal class;
through \eqref{eq:direct-logarithmic-special-line-isomorphism} this
is assertion (ii).

\emph{(iii).}
Through \eqref{eq:direct-logarithmic-special-line-isomorphism},
$\sfm(\varphi)=A\omega_\jmath$ means that
$\operatorname{Hom}_A(\Omega_A,\Lambda_\varphi)$ is generated by $\jmath$;
being invertible, it is generated by $\jmath$ exactly when $\jmath$ is an
isomorphism.
\end{proof}
 
\begin{corollary}[Characteristic residue] \label{cor:characteristic-residue}
The period and differential coordinates are related as follows.
\begin{enumerate}[label=\textup{(\roman*)},leftmargin=2.2em]
\item\label{item:characteristic-residue-map} The ordinary characteristic residue map
\begin{equation}
\label{eq:direct-characteristic-residue-isomorphism}
\operatorname{res}_\Xi\colon
\operatorname{sf}(\varphi)\otimes_A\Omega_A
\xrightarrow{\ \sim\ }\Lambda_\varphi
\end{equation}
is an isomorphism, and for \(\alpha\in\Omega_A\), \(\omega\in\operatorname{sf}(\varphi)\),
\begin{equation}
 \label{eq:characteristic-residue-anchor}
\operatorname{res}_\Xi(\omega\otimes\alpha)
=
\mathscr{L}_\varphi(\omega)(\alpha).
\end{equation}
\item\label{item:characteristic-residue-inverse} Its inverse is the evaluation on periods
\begin{equation}
 \label{eq:evaluation-on-periods-inverse}
\mathcal{H}_\varphi\big|_{\Lambda_\varphi}\colon
\Lambda_\varphi
\xrightarrow{\ \sim\ }
\operatorname{sf}(\varphi)\otimes_A\Omega_A.
\end{equation}
\end{enumerate}
\end{corollary}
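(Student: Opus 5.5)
The plan is to obtain both parts of Corollary~\ref{cor:characteristic-residue} by composing the isomorphism $\mathscr L_\varphi$ of Theorem~\ref{thm:direct-logarithmic-special-line} with the factorization of Theorem~\ref{thm:identity-dual-factorization}. The residue is then read off from Proposition~\ref{lem:residue}. The residue map on $\sfm(\varphi)\otimes_A\Omega_A$ is already defined, through the canonical continuation of Theorem~\ref{thm:rational-anderson-continuation}: its elements are rational Anderson differentials, and that theorem gives their continuations with at most a simple pole at $\Xi$.

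First I prove \eqref{eq:characteristic-residue-anchor}. Take $\omega\in\sfm(\varphi)$ and put $\jmath=\mathscr L_\varphi(\omega)$. By Theorem~\ref{thm:direct-logarithmic-special-line}(i) we have $\omega=\omega_\jmath$; if $\jmath=0$ both sides vanish. For $\alpha\in\Omega_A$, Theorem~\ref{thm:identity-dual-factorization} gives
\[
\omega\otimes\alpha=\mathcal H_\varphi(\jmath(\alpha)),
\]
and this identity also holds after meromorphic continuation. Proposition~\ref{lem:residue} then gives $\operatorname{res}_\Xi(\omega\otimes\alpha)=\jmath(\alpha)=\mathscr L_\varphi(\omega)(\alpha)$. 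In particular the residue map lands in $\Lambda_\varphi$.

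To get bijectivity, identify $\sfm(\varphi)\otimes_A\Omega_A$ with $\operatorname{Hom}_A(\Omega_A,\Lambda_\varphi)\otimes_A\Omega_A$ via $\mathscr L_\varphi$. Under this identification, the formula just proved says that $\operatorname{res}_\Xi$ becomes the evaluation map $\jmath\otimes\alpha\mapsto\jmath(\alpha)$. For invertible $A$-modules $M=\Omega_A$ and $N=\Lambda_\varphi$, the evaluation map $\operatorname{Hom}_A(M,N)\otimes_AM\to N$ is an isomorphism. I will verify this locally, where both modules are free of rank one; it is the step needing the most care, though it is standard. Hence \eqref{eq:direct-characteristic-residue-isomorphism} is an isomorphism.

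For (ii), first note that $\mathcal H_\varphi(\lambda)$ lies in $\sfm(\varphi)\otimes_A\Omega_A$ by Theorem~\ref{thm:canonical-assembly}, and that $\operatorname{res}_\Xi\mathcal H_\varphi(\lambda)=\lambda$ by Proposition~\ref{lem:residue}. So $\mathcal H_\varphi|_{\Lambda_\varphi}$ is a right inverse of the bijection $\operatorname{res}_\Xi$, and therefore it equals the two-sided inverse. Its additivity and $A$-linearity (Proposition~\ref{prop:Hphi-linearity}) follow automatically.

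The main subtlety to keep in view is that $\operatorname{res}_\Xi$ on tensors is computed after continuation. Because the canonical continuation is unique by the identity principle, applying it to a finite sum $\sum\omega_k\otimes\alpha_k$ is compatible with summing the individual continuations. The residue map is therefore well defined and additive on the tensor product.
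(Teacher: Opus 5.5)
Your proposal is correct and follows the paper's proof essentially step for step. The paper also forms $\mathscr R=\operatorname{ev}\circ(\mathscr L_\varphi\otimes\mathrm{id}_{\Omega_A})$, uses local freeness to show it is an isomorphism, and matches it with $\operatorname{res}_\Xi$ on pure tensors via Theorem~\ref{thm:identity-dual-factorization} and Proposition~\ref{lem:residue}; part (ii) then follows from the same right-inverse argument. The one cosmetic difference is that you secure well-definedness of $\operatorname{res}_\Xi$ on the tensor product through the canonical continuation of Theorem~\ref{thm:rational-anderson-continuation}, whereas the paper infers compatibility with the balancing relations from agreement with the already well-defined map $\mathscr R$.
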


\begin{proof}
\emph{\ref{item:characteristic-residue-map}.}
Since \(\Omega_A\) is invertible, the evaluation map
\[
 \operatorname{ev}\colon
 \operatorname{Hom}_A(\Omega_A,\Lambda_\varphi)\otimes_A\Omega_A
 \xrightarrow{\ \sim\ }\Lambda_\varphi,
 \qquad f\otimes_A\alpha\longmapsto f(\alpha),
\]
is an isomorphism: after localization at each prime of \(A\) it becomes
the multiplication isomorphism, and all modules involved are finitely
generated over the noetherian ring \(A\).  Combined with
Theorem~\ref{thm:direct-logarithmic-special-line}, the composite
\[
 \mathscr R:=\operatorname{ev}\circ
 \bigl(\mathscr L_\varphi\otimes_A\mathrm{id}_{\Omega_A}\bigr)\colon
 \operatorname{sf}(\varphi)\otimes_A\Omega_A
 \xrightarrow{\ \sim\ }\Lambda_\varphi,
 \qquad \omega\otimes_A\alpha\longmapsto
 \mathscr L_\varphi(\omega)(\alpha),
\]
is therefore a well-defined \(A\)-module isomorphism.

We identify this map with the analytic characteristic residue.
By Theorem~\ref{thm:direct-logarithmic-special-line}, every
\(\omega\in\operatorname{sf}(\varphi)\) is \(\omega_\jmath\) for
\(\jmath=\mathscr L_\varphi(\omega)\).  For a pure tensor
\(\omega_\jmath\otimes_A\alpha\),
Theorem~\ref{thm:identity-dual-factorization} and
Proposition~\ref{lem:residue} give
\[
 \operatorname{res}_\Xi(\omega_\jmath\otimes\alpha)
 =\jmath(\alpha)
 =\mathscr R(\omega_\jmath\otimes\alpha).
\]
Thus the analytic characteristic residue on pure tensors agrees with
the well-defined map \(\mathscr R\); in particular it is compatible
with the balancing relations, descends to the tensor product, and
coincides with \(\mathscr R\) there.  This proves \eqref{eq:direct-characteristic-residue-isomorphism}
and  {\eqref{eq:characteristic-residue-anchor}}.

\emph{\ref{item:characteristic-residue-inverse}.}
For \(\lambda\in\Lambda_\varphi\),
Theorem~\ref{thm:canonical-assembly} gives
\(\mathcal H_\varphi(\lambda)
 \in\operatorname{sf}(\varphi)\otimes_A\Omega_A\),
and Proposition~\ref{lem:residue} gives
\(\operatorname{res}_\Xi\bigl(\mathcal H_\varphi(\lambda)\bigr)=\lambda\).
Hence
\(\operatorname{res}_\Xi\circ
 \mathcal H_\varphi\big|_{\Lambda_\varphi}
 =\mathrm{id}_{\Lambda_\varphi}\);
since \(\operatorname{res}_\Xi\) is an isomorphism by \eqref{eq:direct-characteristic-residue-isomorphism},
its inverse is \(\mathcal H_\varphi\big|_{\Lambda_\varphi}\),
proving  {\eqref{eq:evaluation-on-periods-inverse}}.  The \(A\)-linearity of the inverse is
automatic.
\end{proof}



\begin{remark}
Theorem~\ref{thm:unit-special-arbitrary-degree-public} will show that every
$A$-basis of $\operatorname{sf}(\varphi)$ belongs to $\mathbb T_A^\times$.
\end{remark}

\subsection{Residue-dual property of special functions}
\label{subsec:hg-residue-dual-special-functions}
{Our normalization uses the residue pairing
\eqref{eq:residue-dual-coordinate-line} of
Section~\ref{subsec:truncated-Cauchy-A}; it also identifies
\(K_\infty/A\) with the compact dual of \(\Omega_A\).}
Lemma~\ref{lem:Tate-continuous-residue-duality} identifies continuous
functions on {$\overline\Omega$} with their Tate coordinates.

For \(\jmath\in\operatorname{Hom}_A(\Omega_A,\Lambda_\psi)\), extend
\(\jmath\) through \(\iota\) to
\(\jmath_\infty:\widehat{\Omega}_A\to\Cinf\).  Its coordinate
\(\omega_\jmath\) is the residue-dual exponential
\eqref{eq:residue-dual-exponential-map}.  Theorem
\ref{thm:direct-logarithmic-special-line} gives
\begin{equation}
\label{eq:hg-direct-special-line-isomorphism}
 \operatorname{Hom}_A(\Omega_A,\Lambda_\psi)
 \xrightarrow{\ \sim\ }\sfm(\psi),
 \qquad \jmath\longmapsto\omega_\jmath.
\end{equation}
Its inverse is the stabilized local logarithm
\eqref{eq:direct-logarithmic-lift-formula}.  No principality assumption
on either lattice enters this isomorphism.  The characteristic
normalization is
\begin{equation}
\label{eq:hg-jmath-characteristic-residue}
 \operatorname{res}_\Xi(\nu\omega_\jmath)=\jmath(\nu),
 \qquad
 \operatorname{res}_\Xi:
 \sfm(\psi)\otimes_A\Omega_A\xrightarrow{\ \sim\ }\Lambda_\psi.
\end{equation}
These are Theorem~\ref{thm:identity-dual-factorization} with
Proposition~\ref{lem:residue}, and
\eqref{eq:direct-characteristic-residue-isomorphism}.  A finite dual
frame of \(\Omega_A\) identifies
\(\sfm(\psi)\otimes_A\Omega_A\) with all differential-valued
special functions in
\(\Cinf\widehat\otimes_{\Fq}\Omega_A\): each contraction is a
scalar special function, and the frame reconstructs the differential.
Thus its characteristic residue determines it uniquely.

Let \(\hbar \in\operatorname{Hom}_A(\Omega_A,A)\), viewed as
\(1\otimes \hbar\) on \(\Cinf\widehat\otimes_{\Fq}\Omega_A\).
View \(\hbar\) as a \(K_\infty\)-linear map 
\[
\hbar:\widehat\Omega_A \to K_\infty.
\]
  Since
\(\widehat\Omega_A={K_\infty\otimes_A\Omega_A}\) is a one-dimensional
\(K_\infty\)-vector space, this extension satisfies
\begin{equation}
\label{eq:hg-contraction-symmetry}
 \hbar(\xi)\,\eta=\hbar(\eta)\,\xi
 \qquad(\xi,\eta\in\widehat\Omega_A):
\end{equation}
both sides are \(K_\infty\)-bilinear in \((\xi,\eta)\), and equality
follows by writing \(\xi=x\nu_0\), \(\eta=y\nu_0\) in any nonzero
basis \(\nu_0\).  As \(\hbar(\Omega_A)\subseteq A\), the extension
descends to a continuous map
\(\overline{\Omega}\to K_\infty/A\), again denoted \(\hbar\),
and we write \(\hbar_*g:=g\circ\hbar\) for precomposition.  The residue
pairing then gives the commutative diagram
\begin{equation}\label{eq:dualgraph}
\begin{tikzcd}
    \Cinf\widehat\otimes_{\Fq}\Omega_A \ar[d,"\hbar"] \ar[r, "\check{\mathscr{F}}"] &\operatorname{Hom}^{\cont}(K_\infty/ A , \Cinf ) \ar[d, "\hbar_*"]\\
    \Cinf\widehat\otimes_{\Fq}A = \TT_A \ar[r, "\mathscr{F}"] &\operatorname{Hom}^{\cont}( \overline{\Omega} , \Cinf )
\end{tikzcd}
\end{equation}
Indeed, both composites are continuous and \(\Fq\)-linear in the
upper-left entry, so it suffices to compare them on a pure tensor
\(x\otimes\nu\).  At \(\bar\xi=\xi+\Omega_A\),
\begin{align*}
 \mathscr F\bigl(x\otimes\hbar(\nu)\bigr)(\bar\xi)
 &=x\,\langle\hbar(\nu),\xi\rangle_\infty
 \overset{\eqref{eq:hg-contraction-symmetry}}{=}
 x\,\langle\hbar(\xi),\nu\rangle_\infty\\
 &=\bigl(\check{\mathscr F}(x\otimes\nu)\bigr)\bigl(\hbar(\xi)+A\bigr),
\end{align*}
which is \(\bigl(\hbar_*\check{\mathscr F}(x\otimes\nu)\bigr)(\bar\xi)\).
This is the form taken by the last assertion of
Lemma~\ref{lem:Tate-continuous-residue-duality} under the
pairing \eqref{eq:residue-dual-coordinate-line}.

The period lattice \(\Lambda_\psi\subset\Cinf\) is an
\(\iota(A)\)-submodule: exponential equivariance
\(\psi_a(\exp_\psi(z))=\exp_\psi(\iota(a)z)\) gives
\(\iota(A)\Lambda_\psi\subseteq\Lambda_\psi\).  For the rest of this
subsection, fix a period \(\lambda\in\Lambda_\psi\); no principality or
freeness assumption on \(\Lambda_\psi\) is made.  The letter \(\lambda\)
in \(F_\lambda\), \(W_\lambda\) and \(\jmath_\hbar\) below stands for
this fixed period.

Let \(F_{\lambda}\) denote the continuous map
\begin{equation}
\label{eq:hg-direct-special-function-map}
 F_{\lambda}:K_\infty/A\longrightarrow\Cinf,
 \qquad c+A\longmapsto\exp_\psi(\lambda\,\iota(c)).
\end{equation}
It is well defined: changing \(c\) by \(a\in A\) changes \(\lambda\,\iota(c)\)
by \(\lambda\iota(a)\in\iota(A)\lambda\subseteq\Lambda_\psi=\ker\exp_\psi\).  It is
continuous because \(\exp_\psi\) is.  
The following result gives the  {functional} interpretation of the  {universal} property of $ \mathcal{H}_{\varphi }$ (see  {Theorem~\ref{thm:identity-dual-factorization}}).
\begin{proposition}[Residue-dual coordinates under contraction]
\label{prop:dualelement}
Let \(\lambda\in\Lambda_\psi\). For
\(\hbar \in\operatorname{Hom}_A(\Omega_A,A)\), we denote
\[
\jmath_\hbar (\nu)= \lambda \cdot \iota(\hbar (\nu))
\qquad(\nu\in\Omega_A),
\]
so \(\jmath_\hbar\in\operatorname{Hom}_A(\Omega_A,\Lambda_\psi)\).
Then, under the diagram \eqref{eq:dualgraph},
\begin{equation}\label{eq:dualelement}
\begin{tikzcd}
    \mathcal{H}_{\psi}(\lambda) \ar[d,"\hbar"] \ar[r, "\check{\mathscr{F}}"] & F_{\lambda} \ar[d, "\hbar_*"]\\
    \omega_{\jmath_\hbar } \ar[r, "\mathscr{F}"] &\mathscr{F}_{\psi, \jmath_{\hbar}}
\end{tikzcd}
\end{equation}
where \(\mathscr{F}_{\psi, \jmath_{\hbar}}\) is defined as in
Definition~\ref{def:Fjmath}.   
\end{proposition}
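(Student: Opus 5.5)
The diagram \eqref{eq:dualelement} asserts four equalities: the top arrow $\check{\mathscr F}(\mathcal H_\psi(\lambda))=F_\lambda$, the bottom arrow $\mathscr F(\omega_{\jmath_\hbar})=\mathscr F_{\psi,\jmath_\hbar}$, the left arrow $\hbar(\mathcal H_\psi(\lambda))=\omega_{\jmath_\hbar}$, and the right arrow $\hbar_*F_\lambda=\mathscr F_{\psi,\jmath_\hbar}$. The plan is to prove each directly and then use the commutativity of \eqref{eq:dualgraph} as a consistency check.

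\textbf{Bottom arrow.} This holds by Definition~\ref{def:omega-jmath}, since $\omega_\jmath:=\mathscr F^{-1}(\mathscr F_{\psi,\jmath})$. We only need $\jmath_\hbar\in\operatorname{Hom}_A(\Omega_A,\Lambda_\psi)$. This follows from $\hbar(\Omega_A)\subseteq A$ together with $\iota(A)\lambda\subseteq\Lambda_\psi$, and $A$-linearity is inherited from $\hbar$.

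\textbf{Right arrow.} For $\xi\in\widehat\Omega_A$ the $K_\infty$-linear extension is $(\jmath_\hbar)_\infty(\xi)=\lambda\,\iota(\hbar(\xi))$, so
\[
 (\hbar_*F_\lambda)(\bar\xi)=F_\lambda(\hbar(\xi)+A)=\exp_\psi\bigl(\lambda\,\iota(\hbar(\xi))\bigr)=\mathscr F_{\psi,\jmath_\hbar}(\bar\xi).
\]

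\textbf{Top arrow.} We must show that $\mathcal H_\psi(\lambda)$, viewed in $\Cinf\widehat\otimes_{\Fq}\Omega_A$, corresponds under $\check{\mathscr F}$ to $F_\lambda$. Fix a nonconstant separating $u$ and write $\lambda=\jmath(\rdu)$ with $\jmath$ the localized map $\rdu\mapsto\lambda$. Then run the expansion of Lemma~\ref{lem:omega_jmath} and Proposition~\ref{prop:GM-residue-dual-coordinate} for $\psi$. By Theorem~\ref{thm:canonical-assembly}, $\mathcal H_\psi(\lambda)$ equals the evaluation of the trace Casimir kernel $K_u$, which expands as
\[
 \sum_{r,n}\exp_\psi\bigl(-\iota(e_r'u^{-n-1})\lambda\bigr)\otimes e_ru^n\,\rdu .
\]
By Lemma~\ref{lem:residue-dual-trace-basis}, the differentials $e_ru^n\rdu$ are residue-dual to the classes $-e_r'u^{-n-1}\bmod A$ in $K_\infty/A$. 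Indeed, the pairing $\langle c,\nu\rangle_\infty$ gives the identification of $K_\infty/A$ with the dual of $\Omega_A$, and this is the same trace computation with the roles of the factors swapped. Consequently, $\check{\mathscr F}$ of the displayed tensor evaluated at $c+A$ is $\exp_\psi(\lambda\iota(c))$, checked on this dual basis and extended by continuity and $\Fq$-linearity, which gives $F_\lambda$. The main obstacle is this step: the expansion of $K_u$ yields the dual family with the first and second factors interchanged relative to Lemma~\ref{lem:omega_jmath}, so the signs and the orientation of the pairing must be matched carefully. If that is awkward, the first alternative is to take $\hbar$ ranging over a dual frame of $\Omega_A$ and deduce the top arrow from the left and right arrows, using injectivity of the frame contraction.

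\textbf{Left arrow.} Apply $\hbar$ to $\mathcal H_\psi(\lambda)$ and use the commutativity of \eqref{eq:dualgraph} with the top and right arrows:
\[
 \mathscr F\bigl(\hbar(\mathcal H_\psi(\lambda))\bigr)=\hbar_*F_\lambda=\mathscr F_{\psi,\jmath_\hbar}.
\]
Injectivity of $\mathscr F$ (Lemma~\ref{lem:Tate-continuous-residue-duality}) then gives $\hbar(\mathcal H_\psi(\lambda))=\omega_{\jmath_\hbar}$. Alternatively, write $\hbar(\eta)$ via Theorem~\ref{thm:identity-dual-factorization}, since $\mathcal H_\psi(\jmath\eta)=\omega_\jmath\otimes\eta$, and combine this with \eqref{eq:hg-contraction-symmetry}.
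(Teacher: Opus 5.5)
Your bottom and right arrows are correct. Deriving the left arrow from the top and right arrows via the commutativity of \eqref{eq:dualgraph} and injectivity of $\mathscr F$ is exactly the paper's step (ii). The gap is in your direct computation of the top arrow.

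Your displayed expansion is not that of $K_u$. In Definition~\ref{def:trace-casimir-kernel} the basis $e_r$ sits in the first variable and the dual basis $e_r'$ in the second, so the evaluation is
\[
 \mathcal H_\psi(\lambda)=\sum_{r,n}\exp_\psi\bigl(-\iota(e_ru^{-n-1})\lambda\bigr)\otimes e_r'u^n\,\rdu .
\]
What you wrote is the expansion of the oppositely oriented kernel of Lemma~\ref{lem:omega_jmath}. That lemma evaluates it only at $\jmath(\rdu)$ for a genuine anchor $\jmath$.

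With your orientation the argument fails, for two reasons.
\begin{itemize}
\item The family $(e_ru^n\rdu)$ is a basis of $A\,\rdu=\mathfrak d_u\Omega_A$, not of $\Omega_A$.
\item The closed span of the classes $-e_r'u^{-n-1}\bmod A$ has codimension $\dim_{\Fq}(\mathfrak d_u^{-1}/A)$ in $K_\infty/A$. So checking on them and extending by continuity does not determine the functional.
\end{itemize}
In fact $\check{\mathscr F}$ of your tensor vanishes on $\mathfrak d_u^{-1}/A$: for $c\in\mathfrak d_u^{-1}$ one has $c\,e_ru^n\rdu\in\Omega_A$, whose residue at $\infty$ is zero. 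By contrast, $F_\lambda$ vanishes there only if $\lambda\in\iota(\mathfrak d_u)\Lambda_\psi$. Equivalently, the localized map $\rdu\mapsto\lambda$ sends $\Omega_A=\mathfrak d_u^{-1}\rdu$ onto $\iota(\mathfrak d_u^{-1})\lambda$, which is generally not contained in $\Lambda_\psi$ (compare Lemma~\ref{lem:domain}). So it is not an anchor, and Lemma~\ref{lem:omega_jmath} does not apply to it.

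The repair is to keep the paper's orientation.
\begin{itemize}
\item $(e_r'u^n\rdu)_{r,n}$ is an $\Fq$-basis of $\Omega_A$ by Lemma~\ref{lem:trace-residue-Casimir-coordinates}.
\item The trace computation of Lemma~\ref{lem:residue-dual-trace-basis} shows its residue-dual classes are $-e_ru^{-n-1}\bmod A$. These form a topological basis of $K_\infty/A\simeq\bigoplus_r\bigl(\Fq((u^{-1}))/\Fq[u]\bigr)e_r$.
\item The coefficient of $e_r'u^n\rdu$ in the display above is exactly $F_\lambda(-e_ru^{-n-1}+A)$.
\end{itemize}
This proves the top arrow for every $\lambda\in\Lambda_\psi$ with no anchor at all. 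It is a more direct route than the paper's, resting on Cauchy-fiber invariance for $K_u\in\mathscr C_A$ rather than on Theorem~\ref{thm:identity-dual-factorization}.

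Your fallback also works, and is then essentially the paper's proof, but only in the right order. The frame argument for the top arrow uses the left arrow, so the left arrow must come from your second alternative, not from the top arrow. That alternative also needs Proposition~\ref{prop:Hphi-linearity}, because $\lambda$ is generally not of the form $\jmath_\hbar(\eta)$ with $\eta\in\Omega_A$ (the ideal $\hbar(\Omega_A)$ need not contain $1$). For $\eta\in\Omega_A$ with $\hbar(\eta)\ne0$,
\[
 \hbar(\eta)\,\mathcal H_\psi(\lambda)=\mathcal H_\psi\bigl(\jmath_\hbar(\eta)\bigr)=\omega_{\jmath_\hbar}\otimes\eta .
\]
Contracting by $\hbar$ and cancelling $\hbar(\eta)$ in the domain $\mathbb T_A$ then gives the left arrow.

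The paper packages the same ingredients slightly differently. It sets $W_\lambda:=\check{\mathscr F}^{-1}(F_\lambda)$ and gets $\hbar(W_\lambda)=\omega_{\jmath_\hbar}$ from the diagram. It then identifies $W_\lambda=\sum_\ell\omega_{\jmath_{\hbar_\ell}}\otimes\nu_\ell=\mathcal H_\psi(\lambda)$, using a dual frame with $\sum_\ell\hbar_\ell(\nu_\ell)=1$ and Theorem~\ref{thm:identity-dual-factorization}.
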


\begin{proof}
  We put
\[
 W_{\lambda}
 :=\check{\mathscr F}^{-1}\bigl(F_{\lambda}\bigr)
 \in\Cinf\widehat\otimes_{\Fq}\Omega_A .
\]
  We need to check that:
  \begin{enumerate}
    \item[(i)] $\hbar_*  F_{\lambda} = \mathscr{F}_{\psi,\jmath_\hbar}$;
    \item[(ii)] $\omega_{\jmath_\hbar} = (1 \otimes \hbar) W_{\lambda}$;
    \item[(iii)] $W_{\lambda} = \mathcal{H}_\psi(\lambda)$.
\end{enumerate}

{By definition, $\check{\mathscr F}(W_\lambda)=F_\lambda$;
the bottom arrow of \eqref{eq:dualelement} is the definition}
\(\omega_{\jmath_\hbar}=\mathscr{F}^{-1}(\mathscr F_{\psi,\jmath_\hbar})\)
of the residue-dual exponential coordinate
\eqref{eq:residue-dual-exponential-map}.

For \(\bar\xi=\xi+\Omega_A\in\overline{\Omega}\), the
definition of \(\hbar_*\) gives
\[
 (\hbar_*F_{\lambda})(\bar\xi)
 =F_{\lambda}\bigl(\hbar(\xi)+A\bigr)
 =\exp_\psi\bigl(\lambda\,\iota(\hbar(\xi))\bigr)
 =\exp_\psi\bigl(\jmath_{\hbar,\infty}(\xi)\bigr)
 =\mathscr F_{\psi,\jmath_\hbar}(\bar\xi),
\]
the last equality being Definition~\ref{def:Fjmath}; this proves (i).
For \(\jmath_\hbar=0\) both sides vanish.

Applying \(\mathscr{F}^{-1} \) to (i) and using the
commutativity of \eqref{eq:dualgraph},
\begin{equation}
\label{eq:hg-jmath-principal-identification}
 \omega_{\jmath_\hbar}
 =\mathscr{F}^{-1}\bigl(\mathscr F_{\psi,\jmath_\hbar}\bigr)
 =\mathscr{F}^{-1}\bigl(\hbar_*F_{\lambda}\bigr)
 =(1\otimes\hbar)\check{\mathscr F}^{-1}
   \bigl(F_{\lambda}\bigr)
 =(1\otimes\hbar)W_{\lambda}.
\end{equation}
{This proves (ii).}
{To reconstruct $W_\lambda$, choose a finite dual frame
$(\nu_\ell,\hbar_\ell)$ of $\Omega_A$:}
\begin{equation}
\label{eq:hg-frame-two-identities}
 \sum_\ell\hbar_\ell(\nu)\nu_\ell=\nu
 \quad(\nu\in\Omega_A),
 \qquad
 \sum_\ell\hbar_\ell(\nu_\ell)=1 .
\end{equation}
The first identity is the frame property.  Since \(\Omega_A\) is
invertible, \(\operatorname{End}_A(\Omega_A)=A\); under this
identification the first identity says that the Casimir element
\(\sum_\ell\hbar_\ell\otimes\nu_\ell\) is the identity, and evaluating
it gives the second.  Contracting \(W_{\lambda}\) by each
\(\hbar_\ell\) and reconstructing with the frame, (ii) gives
\begin{equation}
\label{eq:hg-projective-frame-reconstruction}
 W_{\lambda}
 =\sum_\ell\bigl((1\otimes\hbar_\ell)W_{\lambda}\bigr)
   \otimes\nu_\ell
 =\sum_\ell\omega_{\jmath_{\hbar_\ell}}\otimes\nu_\ell .
\end{equation}
Each \(\omega_{\jmath_{\hbar_\ell}}\) is a scalar special function, so
\(W_{\lambda}\in\sfm(\psi)\otimes_A\Omega_A\).

We now evaluate the frame expansion.
By  {Theorem~\ref{thm:identity-dual-factorization}}
applied to each pair \((\nu_\ell,\jmath_{\hbar_\ell})\),
\[
 \mathcal H_\psi\bigl(\jmath_{\hbar_\ell}(\nu_\ell)\bigr)
 =\omega_{\jmath_{\hbar_\ell}}\otimes\nu_\ell .
\]
The period evaluation \(\mathcal H_\psi\) is \(\Fq\)-linear, hence
additive, and \eqref{eq:hg-frame-two-identities} gives
\[
 \sum_\ell\jmath_{\hbar_\ell}(\nu_\ell)
 =\lambda\,\iota\Bigl(\sum_\ell\hbar_\ell(\nu_\ell)\Bigr)
 =\lambda.
\]
Summing over \(\ell\) therefore gives
\[
 \mathcal H_\psi(\lambda)
 =\sum_\ell\mathcal H_\psi\bigl(\jmath_{\hbar_\ell}(\nu_\ell)\bigr)
 =\sum_\ell\omega_{\jmath_{\hbar_\ell}}\otimes\nu_\ell
 =W_{\lambda}.
\]
Equivalently, \(\mathcal H_\psi(\lambda)\) is the unique
element of \(\sfm(\psi)\otimes_A\Omega_A\) of characteristic residue
\(\lambda\), since \(\operatorname{res}_\Xi\) is the isomorphism
in \eqref{eq:hg-jmath-characteristic-residue}; as
\(W_\lambda=\mathcal H_\psi(\lambda)\),
Proposition~\ref{lem:residue} gives
\(\operatorname{res}_\Xi W_{\lambda}=\lambda\).  This proves (iii).
\end{proof}
 
\begin{corollary}\label{cor:hg-shtuka-trace-identification}
      The identity $\hbar(\mathcal{H}_\psi(\lambda))=\omega_{\jmath_\hbar}$ holds for any $\lambda\in\Lambda_\psi$ and any $ \hbar \in \operatorname{Hom}_A(\Omega_A,A) $.
\end{corollary}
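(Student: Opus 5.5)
The identity follows almost directly from Proposition~\ref{prop:dualelement}, and I would read it off the commutative square \eqref{eq:dualelement}. Fix $\lambda\in\Lambda_\psi$ and $\hbar\in\operatorname{Hom}_A(\Omega_A,A)$. Put $W_\lambda=\check{\mathscr F}^{-1}(F_\lambda)$ as in that proof. Part (iii) of the proposition identifies $W_\lambda=\mathcal H_\psi(\lambda)$, and part (ii) gives $(1\otimes\hbar)W_\lambda=\omega_{\jmath_\hbar}$. Composing the two yields $\hbar(\mathcal H_\psi(\lambda))=\omega_{\jmath_\hbar}$, where $\hbar$ acts as $1\otimes\hbar$ on $\Cinf\widehat\otimes_{\Fq}\Omega_A$.

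The remaining task is to check the meaning of the contraction on the left-hand side. By Theorem~\ref{thm:canonical-assembly}, $\mathcal H_\psi(\lambda)$ lies in $\sfm(\psi)\otimes_A\Omega_A$, and the statement intends its image under $1\otimes\hbar$. I would verify that the natural map $\sfm(\psi)\otimes_A\Omega_A\to\Cinf\widehat\otimes_{\Fq}\Omega_A$ intertwines $\mathrm{id}\otimes\hbar$ with the completed contraction $1\otimes\hbar$. On pure tensors $\omega\otimes\nu$, both give $\omega\cdot\hbar(\nu)$, because $\hbar$ is $A$-linear and the $A$-module structure on $\sfm(\psi)$ is multiplication in the geometric factor (Definition~\ref{def:special-functions}). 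Continuity of $1\otimes\hbar$, coming from a finite presentation of $\Omega_A$, then extends the agreement to the completed tensor product.

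As an independent check, I would also run a residue argument. Both sides lie in $\sfm(\psi)$, and contracting the frame expansion \eqref{eq:hg-projective-frame-reconstruction} gives
\[
 (1\otimes\hbar)\mathcal H_\psi(\lambda)=\sum_\ell\hbar(\nu_\ell)\,\omega_{\jmath_{\hbar_\ell}} .
\]
Theorem~\ref{thm:direct-logarithmic-special-line} maps this element to the anchor $\nu\mapsto\sum_\ell\iota(\hbar(\nu_\ell))\lambda\,\iota(\hbar_\ell(\nu))$. Using the frame identity \eqref{eq:hg-frame-two-identities}, this simplifies to $\lambda\,\iota(\hbar(\nu))=\jmath_\hbar(\nu)$. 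Injectivity of $\mathscr L_\psi$ then recovers the claim.

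The only delicate point is the bookkeeping for the edge case $\jmath_\hbar=0$, where both sides vanish by the convention $\omega_0=0$. Everything else is a direct specialization of Proposition~\ref{prop:dualelement}.
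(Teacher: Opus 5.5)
Your proposal is correct and is essentially the paper's proof: the paper likewise combines $W_\lambda=\mathcal H_\psi(\lambda)$ (part (iii) of Proposition~\ref{prop:dualelement}) with $\omega_{\jmath_\hbar}=(1\otimes\hbar)W_\lambda$ (part (ii)) and then contracts. Your extra frame/residue check is consistent but not independent, since the frame expansion \eqref{eq:hg-projective-frame-reconstruction} was itself derived from parts (ii) and (iii).
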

\begin{proof}
The preceding proof identifies $\mathcal H_\psi(\lambda)$ with $W_\lambda=\check{\mathscr F}^{-1}F_\lambda$ and gives $\omega_{\jmath_\hbar}=(1\otimes\hbar)W_\lambda$; contracting the former identity by $1\otimes\hbar$ yields $\hbar(\mathcal H_\psi(\lambda))=(1\otimes\hbar)W_\lambda=\omega_{\jmath_\hbar}$.
\end{proof}

 



\section{Isogeny-adapted kernels}
\label{sec:onestep}
An isogeny transports the target's tautological Cauchy coordinate to a
kernel $K_J$ whose evaluation is the generating differential
$\mathcal G_J(z)$. The identity isogeny recovers $K_\varphi$ and
$\mathcal H_\varphi$; a general isogeny enlarges the homogeneous parameter
lattice from the source periods to the inverse image of the target periods.

{The defect equation also determines the scalar forcing factor
$\delta_J$ and the logarithm coefficients.  The associated
Papanikolas-type series telescopes after one Frobenius twist, and its
normalized form specializes at $\Xi$ to the Drinfeld logarithm.}

\subsection{Normalized shtuka differential}\label{sec:shtuka-differential}

Fix a rank-one Drinfeld module $\psi$ with geometric data
$(V_\psi,q_\psi,f_\psi)$, so that
\[
 h^0\bigl(X_{\Cinf},\OO(V_\psi)\bigr)=1,
 \qquad
 q_\psi^{(-1)}\notin\operatorname{supp}(V_\psi).
\]
Put
\begin{equation}\label{eq:psi-shifted-line}
 \LL_\psi:=\OO\bigl(V_\psi-[q_\psi^{(-1)}]\bigr).
\end{equation}

\begin{definition}[Normalized shtuka differential]
\label{def:shtuka-diff}
A rational differential $\widetilde\lambda_\psi$ is the
\emph{normalized shtuka differential} attached to the chosen geometric data
if
\begin{align}
 \widetilde\lambda_\psi
 &\in H^0\!\left(
 X_{\Cinf},
 \Omega_X^1\bigl(-V_\psi+[q_\psi^{(-2)}]+[q_\psi^{(-1)}]\bigr)
 \right),                                      \label{eq:shtuka-differential-divisor}\\
 \operatorname{res}_{\Xi}
 \left(\frac{\widetilde\lambda_\psi^{(1)}}{f_\psi}\right)
 &=1.                                           \label{eq:shtuka-differential-residue}
\end{align}
\end{definition}

\begin{theorem}[Szeg\H o realization of the shtuka differential]
\label{prop:exist-shtuka}
The line bundle $\LL_\psi$ is acyclic.  There is a unique normalized shtuka
differential, and it is characterized by
\begin{equation}\label{eq:szego-shtuka}
 \frac{\widetilde\lambda_\psi^{(1)}(\two)}
      {f_\psi(\two)}
 =K_\psi(\Xi,\two),
\end{equation}
where $K_\psi$ is the tautological coordinate of $S_{\LL_\psi}$ in
\eqref{eq:tautological-Szego-coordinate}. In particular, the
normalized Szeg\H o kernel supplies the differential directly.  Moreover,
\begin{equation}\label{eq:shtuka-differential-infinity-residue}
 \operatorname{res}_{q_\psi^{(-1)}}
 \left(\frac{\widetilde\lambda_\psi^{(1)}}{f_\psi}\right)
 =-1.
\end{equation}
\end{theorem}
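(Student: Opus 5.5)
The plan is to read the characterization \eqref{eq:szego-shtuka} as a definition and then verify, by divisor bookkeeping, that it produces a differential satisfying \eqref{eq:shtuka-differential-divisor} and \eqref{eq:shtuka-differential-residue}; uniqueness will follow from a vanishing $h^0$.

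\emph{Acyclicity.} Since $q_\psi^{(-1)}$ lies above $\infty$ and $q_\psi^{(-1)}\notin\operatorname{supp}(V_\psi)$, Lemma~\ref{lem:acyclic} applies verbatim to $\LL_\psi=\OO(V_\psi-[q_\psi^{(-1)}])$. So $S_{\LL_\psi}$ and its coordinate $K_\psi$ exist, with diagonal residue one and
\[
\operatorname{div}(K_\psi)+D_{\one}-D_{\two}+\Delta\ge0,\qquad D=V_\psi-[q_\psi^{(-1)}].
\]

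\emph{Existence.} I specialize the first variable at $\Xi$. This is legitimate because $\Xi\notin\operatorname{supp}(V_\psi)$ and $\Xi$ is not above infinity, so $D_\one$ contributes nothing along $\{\Xi\}\times X$. Hence $k(\two):=K_\psi(\Xi,\two)$ is a differential with
\[
\operatorname{div}(k)\ge V_\psi-[q_\psi^{(-1)}]-[\Xi],
\]
and $\operatorname{res}_\Xi k=1$ by the diagonal residue. I then set $\widetilde\lambda_\psi^{(1)}:=f_\psi k$ and untwist coefficients to define $\widetilde\lambda_\psi$. By \eqref{eq:divf},
\[
\operatorname{div}(f_\psi k)\ge V_\psi^{(1)}-V_\psi+[\Xi]-[q_\psi]+V_\psi-[q_\psi^{(-1)}]-[\Xi]=V_\psi^{(1)}-[q_\psi]-[q_\psi^{(-1)}].
\]
Untwisting once gives $\operatorname{div}(\widetilde\lambda_\psi)\ge V_\psi-[q_\psi^{(-1)}]-[q_\psi^{(-2)}]$, which is exactly \eqref{eq:shtuka-differential-divisor}. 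The normalization $\operatorname{res}_\Xi(\widetilde\lambda_\psi^{(1)}/f_\psi)=\operatorname{res}_\Xi k=1$ is \eqref{eq:shtuka-differential-residue}.

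\emph{Uniqueness.} Suppose $\lambda'$ also satisfies the two conditions. Put $k':=\lambda'^{(1)}/f_\psi$; reversing the divisor computation shows $k'\in H^0(\Omega^1(-V_\psi+[q_\psi^{(-1)}]+[\Xi]))$. The difference $k-k'$ has zero residue at $\Xi$. Since the pole at $\Xi$ is at most simple, $k-k'$ is regular there, so
\[
k-k'\in H^0\bigl(\Omega^1\otimes\LL_\psi^{-1}\bigr)\cong H^1(\LL_\psi)^\vee=0
\]
by Serre duality and acyclicity. Hence $k=k'$, and untwisting gives $\widetilde\lambda_\psi=\lambda'$. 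The same argument shows that any differential obeying the divisor and residue conditions equals $K_\psi(\Xi,\cdot)$ after the $f_\psi$ correction, which is the claimed characterization.

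\emph{The residue at $q_\psi^{(-1)}$.} The differential $k$ has poles at most at $\Xi$ and $q_\psi^{(-1)}$, both simple. By the residue theorem on the projective curve $X_{\Cinf}$ its residues sum to zero, so $\operatorname{res}_{q_\psi^{(-1)}}k=-1$, which is \eqref{eq:shtuka-differential-infinity-residue}.

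\emph{Main obstacle.} The delicate step is the existence specialization. I must confirm that restricting the section $S_{\LL_\psi}$ to $\{\Xi\}\times X$ gives exactly the stated pole bound, with the only second-variable poles coming from $\Delta$ (at $\Xi$) and from $-D_\two$ (the point $q_\psi^{(-1)}$), and with no extra pole forced by the frame $e_\LL$ at $\Xi$. The frame is the tautological divisor frame and $\Xi$ avoids $\operatorname{supp}D$, so this should go through, but it is the point needing care. Otherwise the argument is routine, given that coefficient Frobenius commutes with divisors and residues up to twisting the points.
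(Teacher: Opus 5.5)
Your proposal is correct and follows essentially the paper's argument. You use acyclicity from Lemma~\ref{lem:acyclic}, the same divisor bookkeeping with \eqref{eq:divf}, uniqueness from the vanishing $H^0(\Omega^1_X\otimes\LL_\psi^{-1})\cong H^1(\LL_\psi)^\vee=0$, and the residue theorem to get the value $-1$ at $q_\psi^{(-1)}$. The only difference is in how existence is obtained: the paper first produces the residue-one element abstractly from the residue exact sequence, which also needs $H^1(\Omega^1_X\otimes\LL_\psi^{-1})=0$, and then identifies $K_\psi(\Xi,\cdot)$ with it. You instead take $K_\psi(\Xi,\cdot)$ directly as the existence witness, so you only need the $H^0$-vanishing.
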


\begin{proof}
Put $r=q_\psi^{(-1)}$.
Since $r\notin\operatorname{supp}(V_\psi)$, Lemma~\ref{lem:acyclic}
gives $H^0(\LL_\psi)=H^1(\LL_\psi)=0$.  For $\MM_\psi=\LL_\psi^{-1}\otimes\Omega_X^1$,
Serre duality gives $H^0(\MM_\psi)=H^1(\MM_\psi)=0$.
The tautological frame is regular and nonzero at $\Xi$, since
$\Xi\notin\operatorname{supp}(V_\psi)\cup\{r\}$.
The residue sequence is therefore
\begin{equation}\label{eq:shtuka-residue-exact-sequence}
 0\longrightarrow \MM_\psi\longrightarrow\MM_\psi([\Xi])
 \xrightarrow{\operatorname{res}_\Xi}\Cinf{}\mid_\Xi
 \longrightarrow0,
\end{equation}
The last term is supported at $\Xi$. Residue identifies the global
sections of $\MM_\psi([\Xi])$ with $\Cinf$.
Let $\gamma$ be its unique element of residue one, and define
$\widetilde\lambda_\psi^{(1)}=f_\psi\gamma$.
Coefficient Frobenius is bijective, so this defines
$\widetilde\lambda_\psi$ uniquely.

The divisor of $\gamma$ is bounded below by
$V_\psi-[r]-[\Xi]$.  Adding \eqref{eq:divf} gives
\[
 \operatorname{div}(f_\psi\gamma)
 \ge V_\psi^{(1)}-[r]-[q_\psi],
\]
the twist of \eqref{eq:shtuka-differential-divisor}.
The residue condition holds by construction.
Conversely, any differential in Definition~\ref{def:shtuka-diff},
after twisting and dividing by $f_\psi$, belongs to $H^0(\MM_\psi([\Xi]))$
and has residue one.  It must therefore give the same $\gamma$.

The specialization $K_\psi(\Xi,\cdot)$ has this same divisor bound
and residue one by the tautological-coordinate definition.  The
isomorphism furnished by \eqref{eq:shtuka-residue-exact-sequence}
thus identifies it with $\gamma$, proving \eqref{eq:szego-shtuka}.
The only possible poles of $\gamma$ are simple poles at $\Xi$ and $r$.
Its residue at $\Xi$ is one, so the global residue theorem gives
$\operatorname{res}_r\gamma=-1$.  This proves
\eqref{eq:shtuka-differential-infinity-residue} and the stated sign convention.
\end{proof}

\begin{corollary}[Residue uniqueness on the shtuka line]\label{cor:shtuka-residue-uniqueness}
{The space of differentials
\begin{equation}\label{eq:curve-zeta-differential-line}
 \mathscr S_\psi:=
 H^0\!\left(X_{\Cinf},
      \Omega_X^1(-V_\psi+[\Xi]+[q_\psi^{(-1)}])\right)
\end{equation}
is one-dimensional, and
$\operatorname{res}_\Xi:\mathscr S_\psi\xrightarrow{\sim}\Cinf$ is an
isomorphism; in particular, two elements of $\mathscr S_\psi$ with the
same residue at $\Xi$ are equal.  Its element of residue one is
\[
 K_\psi(\Xi,\cdot)=\frac{\widetilde\lambda_\psi^{(1)}}{f_\psi}.
\]}
\end{corollary}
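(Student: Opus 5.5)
The corollary is essentially a repackaging of the residue exact sequence used in the proof of Theorem~\ref{prop:exist-shtuka}. My plan is to identify $\mathscr S_\psi$ with $H^0(\MM_\psi([\Xi]))$, where $\MM_\psi=\LL_\psi^{-1}\otimes\Omega_X^1$ and $\LL_\psi=\OO(V_\psi-[q_\psi^{(-1)}])$. A section of $\MM_\psi([\Xi])$, written in the tautological frame, is a meromorphic differential $\gamma$ with
\[
\operatorname{div}(\gamma)\ge V_\psi-[q_\psi^{(-1)}]-[\Xi].
\]
This is precisely the membership condition defining $\mathscr S_\psi$. The frame is regular and nonzero at $\Xi$ because $\Xi\notin\operatorname{supp}(V_\psi)\cup\{q_\psi^{(-1)}\}$, so the residue at $\Xi$ is computed on the differential itself.

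The steps, in order, are these. First, Lemma~\ref{lem:acyclic} (applied with $q=q_\psi^{(-1)}$) together with Serre duality gives $H^0(\MM_\psi)=H^1(\MM_\psi)=0$. Second, taking cohomology of \eqref{eq:shtuka-residue-exact-sequence} gives the exact sequence
\[
0\to H^0(\MM_\psi)\to H^0(\MM_\psi([\Xi]))\xrightarrow{\operatorname{res}_\Xi}\Cinf\to H^1(\MM_\psi).
\]
The outer terms vanish, so $\operatorname{res}_\Xi:\mathscr S_\psi\to\Cinf$ is an isomorphism. Hence $\mathscr S_\psi$ is one-dimensional, and two elements with the same residue at $\Xi$ coincide. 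Third, I identify the residue-one element.

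For the identification, Theorem~\ref{prop:exist-shtuka} already constructs $\gamma=\widetilde\lambda_\psi^{(1)}/f_\psi$ as the unique residue-one section of $\MM_\psi([\Xi])$. It also establishes \eqref{eq:szego-shtuka}, namely $\gamma=K_\psi(\Xi,\cdot)$. I would nonetheless recheck the divisor bound directly. The divisor bound on $K_\psi$ is
\[
\operatorname{div}(K_\psi)+\operatorname{pr}_\one^*(V_\psi-[q_\psi^{(-1)}])-\operatorname{pr}_\two^*(V_\psi-[q_\psi^{(-1)}])+\Delta\ge0.
\]
Restricting at $\one=\Xi$ is legitimate since $\Xi$ lies off the first-variable polar support, and it gives the divisor condition of $\mathscr S_\psi$. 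The only diagonal contribution is a simple pole at $\two=\Xi$, with residue one.

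The one point needing care, which I expect to be the main (mild) obstacle, is justifying that the restriction $K_\psi(\Xi,\cdot)$ is well defined and keeps the divisor bound. This is also the reason the identification works through the tautological frame rather than through the abstract section $S_{\LL_\psi}$. I would handle it as in Lemma~\ref{lem:SzegoC}: the vertical fibre $\{\Xi\}\times X$ is not a polar component, since $\Xi\notin\operatorname{supp}(V_\psi)\cup\{q_\psi^{(-1)}\}$. The restriction is therefore a rational differential whose poles are bounded by the restricted divisor, and the diagonal residue specializes to $\operatorname{res}_{\two=\Xi}=1$.
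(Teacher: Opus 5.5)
Your proposal is correct and follows the paper's own argument. The vanishing $H^0(\MM_\psi)=H^1(\MM_\psi)=0$ together with the residue sequence \eqref{eq:shtuka-residue-exact-sequence} gives the isomorphism $\operatorname{res}_\Xi:\mathscr S_\psi\to\Cinf$, and the residue-one element is identified through \eqref{eq:szego-shtuka}. The only difference is cosmetic. The paper rechecks membership of $\widetilde\lambda_\psi^{(1)}/f_\psi$ by twisting \eqref{eq:shtuka-differential-divisor} and subtracting \eqref{eq:divf}, whereas you recheck the divisor bound on the specialization $K_\psi(\Xi,\cdot)$; given uniqueness, either check suffices.
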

\begin{proof}
{The cohomology calculation
$H^0(\MM_\psi)=H^1(\MM_\psi)=0$ in the proof of
Theorem~\ref{prop:exist-shtuka} and the residue exact sequence
\eqref{eq:shtuka-residue-exact-sequence} give the isomorphism.
Membership of $\widetilde\lambda_\psi^{(1)}/f_\psi$ follows from
Definition~\ref{def:shtuka-diff} after one coefficient twist by
subtracting \eqref{eq:divf}; its residue is one by
\eqref{eq:shtuka-differential-residue}; and the equality with
$K_\psi(\Xi,\cdot)$ is \eqref{eq:szego-shtuka}.}
\end{proof}

\subsection{Defect equation}
Let $K(\one,\two)$ be a second-variable meromorphic differential.
Characteristic specialization below is applied to the product
$f_\varphi(\one)K^{(1)}$, after cancelling its divisor factors.

  For an isogeny datum $J=[\eta_J;h_J;\ii_J,\mu_J]:\varphi\to\psi$,
{Theorem~\ref{thm:isogeny-adapted-szego-kernel} shows
that the Frobenius defect of the tautological coordinate of the
isogeny-adapted line bundle factors through the single first-factor
module $h_J(\one)$.}  Motive evaluation then produces the
Anderson generating differential and identifies its homogeneous
parameter lattice.  The target-period lattice will also determine exactly where this
family agrees with the source's canonical period differential.

\begin{notation}
The Frobenius defect of the differential coordinate $K(\one,\two)$ is
    \[
   \mathscr{D}_{\varphi}(K) :=   f_\varphi(\one)K^{(1)}(\one,\two)
 -f_\varphi(\two)K(\one,\two).
    \]
\end{notation}

\begin{theorem}[Identity-kernel defect]\label{thm:identity-kernel-defect}
{For the shifted shtuka line
$\LL_\psi=\OO\bigl(V_\psi-[q_\psi^{(-1)}]\bigr)$ and its
normalized kernel $K_\psi(\one,\two)$,
\[
 \mathscr D_\psi(K_\psi)
 :=f_\psi(\one)K_\psi(\one,\two)^{(1)}-f_\psi(\two)K_\psi(\one,\two)
 =-f_\psi(\two)K_{\psi}(\Xi,\two)
 =-\widetilde{\lambda}_{\psi}^{(1)}(\two).
\]}
\end{theorem}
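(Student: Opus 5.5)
The plan is to show that the defect $\mathscr D_\psi(K_\psi)$ is a global section on $X_{\Cinf}\times X_{\Cinf}$ of a line bundle of the form $\operatorname{pr}_{\one}^*\OO(V_\psi)\otimes\operatorname{pr}_{\two}^*\mathcal N$. Since $h^0(V_\psi)=1$, it is then constant in the first variable. Evaluating at $\one=\Xi$ then kills the first term. The second equality $f_\psi(\two)K_\psi(\Xi,\two)=\widetilde\lambda_\psi^{(1)}(\two)$ is exactly \eqref{eq:szego-shtuka} of Theorem~\ref{prop:exist-shtuka}, so only the first equality needs proof. Write $r=q_\psi^{(-1)}$.

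\emph{Step 1 (divisor bookkeeping).} Because $S_{\LL_\psi}$ is a global section on $X\times X$, its coordinate satisfies
\[
\operatorname{div}(K_\psi)+(V_\psi-[r])_{\one}-(V_\psi-[r])_{\two}+\Delta\ge0 .
\]
Coefficient Frobenius preserves $\Delta$ and twists the rest, giving
\[
\operatorname{div}(K_\psi^{(1)})+(V_\psi^{(1)}-[q_\psi])_{\one}-(V_\psi^{(1)}-[q_\psi])_{\two}+\Delta\ge0 .
\]
Multiplying by $f_\psi(\one)$, with divisor $V_\psi^{(1)}-V_\psi+[\Xi]-[q_\psi]$ in the first variable, the pole at $q_\psi$ and the $V_\psi^{(1)}$ poles cancel. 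The first-variable bound becomes $\ge -V_\psi+[\Xi]$, while the second-variable bound stays $\ge V_\psi^{(1)}-[q_\psi]$. For $f_\psi(\two)K_\psi$ the bounds are $\ge -V_\psi+[r]$ in $\one$ and $\ge V_\psi^{(1)}+[\Xi]-[q_\psi]-[r]$ in $\two$.

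\emph{Step 2 (no diagonal pole).} The diagonal residues of the two terms are $f_\psi(\one)|_\Delta$ and $f_\psi(\two)|_\Delta$. These are equal, so the simple diagonal pole cancels in the difference. Hence $\mathscr D_\psi(K_\psi)$ is a global section of $\operatorname{pr}_{\one}^*\OO(V_\psi)\otimes\operatorname{pr}_{\two}^*\Omega_X^1(-V_\psi^{(1)}+[q_\psi]+[r])$, taking the weaker bound in each variable.

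\emph{Step 3 (Künneth and evaluation).} By Künneth, $H^0$ of this bundle is $H^0(\OO(V_\psi))\otimes H^0(\Omega_X^1(\cdots))=\Cinf\otimes H^0(\Omega_X^1(\cdots))$. Therefore $\mathscr D_\psi(K_\psi)(\one,\two)=c(\two)$ does not depend on $\one$. Now specialize at $\one=\Xi$, which is legitimate since $\Xi\notin\operatorname{supp}V_\psi\cup\{r\}$. From Step 1, $f_\psi(\one)K_\psi^{(1)}$ has first-variable divisor $\ge -V_\psi+[\Xi]$, so it vanishes along $\{\Xi\}\times X$. Also $K_\psi(\Xi,\two)$ is defined. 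Hence
\[
c(\two)=-f_\psi(\two)K_\psi(\Xi,\two),
\]
which is the claim.

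The main obstacle is Step 1. One has to check that the vertical components at $q_\psi$ and along $V_\psi^{(1)}$ genuinely cancel, including when $V_\psi^{(1)}$ meets $\Xi$ or $q_\psi$. One also has to check that the twisted kernel is still controlled by the same diagonal, so that the bound lives on the whole of $X\times X$ and not only on $U\times U$. Globality over the infinity fibre is what makes Künneth together with $h^0(V_\psi)=1$ force constancy.
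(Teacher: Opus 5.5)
Your proposal is correct and follows the paper's proof. It uses the same first-variable divisor bookkeeping ($\ge -V_\psi+[\Xi]$ versus $\ge -V_\psi+[q_\psi^{(-1)}]$), the same cancellation of the diagonal pole by twist-invariance of the unit diagonal residue, constancy in $\one$ from $h^0(\OO(V_\psi))=1$, vanishing of $f_\psi(\one)K_\psi^{(1)}$ at $\one=\Xi$, and finally \eqref{eq:szego-shtuka}. The one cosmetic difference is that you get constancy from K\"unneth on $X_{\Cinf}\times X_{\Cinf}$ (which also yields the second-variable bound $\Omega^1_X(-V_\psi^{(1)}+[q_\psi]+[q_\psi^{(-1)}])$ for free), whereas the paper works on the generic fibre $X_F$, $F=\Cinf(X_\two)$, and uses flat base change.
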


\begin{proof}
{The tautological divisor bound of $K_\psi$ reads
$\operatorname{div}_\one(K_\psi)\ge-D-[\two]$ with
$D=V_\psi-[q_\psi^{(-1)}]$; twisting gives
$\operatorname{div}_\one(K_\psi^{(1)})\ge-V_\psi^{(1)}+[q_\psi]-[\two]$,
and adding
$\operatorname{div}(f_\psi)=V_\psi^{(1)}-V_\psi+[\Xi]-[q_\psi]$
shows that {on the whole curve the first term has divisor bounded
below by $-V_\psi+[\Xi]-[\two]$, whereas the second has divisor
bounded below by $-V_\psi+[q_\psi^{(-1)}]-[\two]$.}  The
normalized diagonal residue of the Szeg\H o kernel is twist-invariant,
so the two terms have the same diagonal residue and the diagonal poles
cancel.  Hence, with $F:=\Cinf(X_\two)$,
\[
 \mathscr D_\psi(K_\psi)
 \in H^0\bigl(X_F,\OO(V_\psi)_F\bigr)\otimes_F\Omega^1_{F/\Cinf}.
\]
Flat base change and $h^0\bigl(X_{\Cinf},\OO(V_\psi)\bigr)=1$ make
$\mathscr D_\psi(K_\psi)$ independent of $\one$.  {The first
divisor bound and $\Xi\notin\operatorname{supp}(V_\psi)$ show that
the product $f_\psi(\one)K_\psi^{(1)}(\one,\two)$ vanishes at
$\one=\Xi$.  Evaluation there therefore gives}
$\mathscr D_\psi(K_\psi)=-f_\psi(\two)K_\psi(\Xi,\two)$, and
\eqref{eq:szego-shtuka} identifies this with
$-\widetilde\lambda_\psi^{(1)}(\two)$.}
\end{proof}

\begin{notation}\label{not:KJ}
Let $(V_\psi,q_\psi,f_\psi)$ be the fixed geometric realization of the
target module and put
\begin{equation}\label{eq:isogeny-adapted-line}
 D_J:=V_\psi-\operatorname{div}(h_J)-[q_\psi^{(-1)}],\qquad
 \mathcal{L}_J :=\OO(D_J).
\end{equation}
Notice that multiplication by $h_J$ identifies
$\OO(V_\psi-[q_\psi^{(-1)}])$ with $ {\mathcal{L}_J}$ since 
\[ 
\operatorname{div}(h_Jg)+D_J
=\operatorname{div}(g)+V_\psi-[q_\psi^{(-1)}].
\]
Thus $ {\mathcal{L}_J}$ is acyclic. Let $K_J$ be the tautological
second-variable differential coordinate of $S_{ {\mathcal{L}_J}}$, as in
\eqref{eq:tautological-Szego-coordinate}.

\end{notation}

The relation 
$ {\mathcal{L}_J}=\LL_\psi\otimes\OO(-\operatorname{div}(h_J))$ gives the exact
identity
\begin{equation}\label{eq:isogeny-kernel-coordinate-change}
 K_J(\one,\two)
 =\frac{h_J(\one)}{h_J(\two)}K_\psi(\one,\two).
\end{equation}

{The intertwiner relation transports the identity-kernel defect
to $K_J$.}
\begin{theorem}[Isogeny-adapted Szeg\H o kernel]
\label{thm:isogeny-adapted-szego-kernel}
The kernel \(K_J\) in Notation \ref{not:KJ} {satisfies}
\begin{equation}
\label{eq:LJ-defect}
 \mathscr D_\varphi(K_J)
 =-h_J(\one)\tilde{\delta}_J(\two),
\end{equation}
and
\begin{equation}\label{eq:Dvarphi}
 \tilde{\delta}_J(\two)=
 \frac{f_\varphi(\two)K_J(\Xi,\two)}{h_J(\Xi)}
 =\frac{\widetilde\lambda_\psi^{(1)}(\two)}
       {h_J^{(1)}(\two)}.
\end{equation}
\end{theorem}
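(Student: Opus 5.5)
The plan is to reduce the statement to the identity-kernel defect of Theorem~\ref{thm:identity-kernel-defect} using the coordinate change \eqref{eq:isogeny-kernel-coordinate-change} and the intertwiner relation \eqref{eq:intertwiner-difference}. First, substitute $K_J(\one,\two)=\frac{h_J(\one)}{h_J(\two)}K_\psi(\one,\two)$ into the defect. Coefficient Frobenius acts on both variables simultaneously, so $K_J^{(1)}=\frac{h_J^{(1)}(\one)}{h_J^{(1)}(\two)}K_\psi^{(1)}$. Then
\[
 f_\varphi(\one)K_J^{(1)}
 =\frac{h_J^{(1)}(\one)f_\varphi(\one)}{h_J^{(1)}(\two)}K_\psi^{(1)}
 =\frac{h_J(\one)f_\psi(\one)}{h_J^{(1)}(\two)}K_\psi^{(1)},
\]
using $h_J^{(1)}f_\varphi=h_Jf_\psi$ in the first variable. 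For the second term, the same relation in the second variable gives $f_\varphi(\two)/h_J(\two)=f_\psi(\two)/h_J^{(1)}(\two)$. Hence
\[
 \mathscr D_\varphi(K_J)=\frac{h_J(\one)}{h_J^{(1)}(\two)}\,\mathscr D_\psi(K_\psi),
\]
and Theorem~\ref{thm:identity-kernel-defect} turns this into $-h_J(\one)\widetilde\lambda_\psi^{(1)}(\two)/h_J^{(1)}(\two)$. This proves \eqref{eq:LJ-defect} with $\tilde\delta_J=\widetilde\lambda_\psi^{(1)}/h_J^{(1)}$, which is the second expression in \eqref{eq:Dvarphi}.

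For the first expression in \eqref{eq:Dvarphi}, specialize the coordinate change at $\one=\Xi$:
\[
 K_J(\Xi,\two)=\frac{h_J(\Xi)}{h_J(\two)}K_\psi(\Xi,\two).
\]
This needs $h_J$ regular and nonzero at $\Xi$, which holds because $h_J(\Xi)=\partial\eta_J\ne0$ by \eqref{eq:isogeny-tangent-from-intertwiner-main}. Then
\[
 \frac{f_\varphi(\two)K_J(\Xi,\two)}{h_J(\Xi)}
 =\frac{f_\varphi(\two)}{h_J(\two)}K_\psi(\Xi,\two)
 =\frac{f_\psi(\two)}{h_J^{(1)}(\two)}K_\psi(\Xi,\two),
\]
and \eqref{eq:szego-shtuka} identifies $f_\psi(\two)K_\psi(\Xi,\two)$ with $\widetilde\lambda_\psi^{(1)}(\two)$. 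An alternative route to the same identity is to evaluate \eqref{eq:LJ-defect} at $\one=\Xi$. Since $f_\varphi(\one)$ vanishes at $\Xi$, the product $f_\varphi(\one)K_J^{(1)}$ vanishes there, provided $K_J^{(1)}$ has no pole at $\Xi$ in the first variable. The check below is the same one.

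The main obstacle is the legitimacy of these manipulations as identities of meromorphic differentials on $X\times X$. Two points need care. First, the Frobenius twist of the tautological frame must be compatible with $\operatorname{div}(h_J)$. Since \eqref{eq:isogeny-kernel-coordinate-change} is an exact rational identity between $\Cinf$-rational functions, twisting it is harmless. Second, the specialization at $\one=\Xi$ requires $\Xi\notin\operatorname{supp}(D_J)$. I would check this from $\Xi\notin\operatorname{supp}(V_\psi)$ and $h_J(\Xi)\ne0$. If $\operatorname{div}(h_J)$ has a zero at $\Xi$, the claim $\partial\eta_J\ne0$ already rules it out. A pole of $h_J$ at $\Xi$ is also excluded, because $h_J$ is integral, by the membership criterion \eqref{eq:membership} together with $\Xi\notin\operatorname{supp}(V_\varphi)$. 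I expect this regularity bookkeeping, rather than the algebra, to be the only delicate step.
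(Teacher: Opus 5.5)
Your proof is correct. The derivation of \eqref{eq:LJ-defect}, together with the second expression in \eqref{eq:Dvarphi}, is exactly the paper's: transport by \eqref{eq:isogeny-kernel-coordinate-change} and \eqref{eq:intertwiner-difference} gives $\mathscr D_\varphi(K_J)=\frac{h_J(\one)}{h_J^{(1)}(\two)}\mathscr D_\psi(K_\psi)$, and Theorem~\ref{thm:identity-kernel-defect} finishes.

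For the first expression in \eqref{eq:Dvarphi} your main route differs slightly from the paper's. The paper shows that $f_\varphi(\one)K_J^{(1)}$ specializes to zero at $\one=\Xi$ and then evaluates \eqref{eq:LJ-defect} there. You instead specialize the coordinate change itself at $\one=\Xi$, then use $f_\varphi/h_J=f_\psi/h_J^{(1)}$ and \eqref{eq:szego-shtuka}. Your route needs only three facts, all of which you verify correctly:
\begin{itemize}
\item $K_\psi$ is regular at $\one=\Xi$;
\item $h_J$ is regular at $\Xi$, by integrality of $h_Js_\varphi$ via \eqref{eq:membership} and $\Xi\notin\operatorname{supp}(V_\varphi)$;
\item $h_J(\Xi)=\partial\eta_J\ne0$.
\end{itemize}
It never touches the twisted kernel, so it is the more economical proof of this identity. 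The paper's route costs a little more, but it produces the vanishing $\bigl(f_\varphi(\one)K_J^{(1)}\bigr)\big|_{\one=\Xi}=0$, i.e.\ $\beta_\varphi(K_J)=0$. That vanishing is exactly what is needed later to apply Theorem~\ref{thm:coherence} in the proof of Theorem~\ref{thm:frob}.

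One correction to your aside on the alternative route. The proviso ``$K_J^{(1)}$ has no pole at $\Xi$ in the first variable'' is not needed, and your later check does not cover it. That check establishes $\Xi\notin\operatorname{supp}(D_J)$. However, $K_J^{(1)}$ has first-variable poles bounded by $D_J^{(1)}$, so a pole at $\one=\Xi$ can occur whenever $\Xi^{(-1)}\in\operatorname{supp}(D_J)$. For example, this happens if $\Xi^{(-1)}\in\operatorname{supp}(V_\psi)$, which the realization does not forbid.

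What actually holds, and what the paper uses, is that the \emph{product} vanishes. Indeed $f_\varphi(\one)K_J^{(1)}=\frac{h_J(\one)}{h_J^{(1)}(\two)}f_\psi(\one)K_\psi^{(1)}$. The factor $f_\psi(\one)K_\psi^{(1)}$ has first-variable divisor bounded below by $-V_\psi+[\Xi]-[\two]$, so it vanishes at $\one=\Xi$ whatever the poles of $K_J^{(1)}$ alone. This does not affect your main argument.
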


\begin{proof}
{Transport reduces the statement to the identity case.  By
\eqref{eq:isogeny-kernel-coordinate-change} and
\eqref{eq:intertwiner-difference}, it follows that 
\[
 f_\varphi(\one)K_J^{(1)}
 =\frac{h_J(\one)}{h_J^{(1)}(\one)}\,f_\psi(\one)\,
   \frac{h_J^{(1)}(\one)}{h_J^{(1)}(\two)}\,K_\psi^{(1)}
 =\frac{h_J(\one)}{h_J^{(1)}(\two)}\,f_\psi(\one)K_\psi^{(1)},\]
 and
 \[
 f_\varphi(\two)K_J
 =\frac{h_J(\one)}{h_J^{(1)}(\two)}\,f_\psi(\two)K_\psi.
\]
Subtracting and applying Theorem~\ref{thm:identity-kernel-defect} give
\[
 \mathscr D_\varphi(K_J)
 =\frac{h_J(\one)}{h_J^{(1)}(\two)}\,\mathscr D_\psi(K_\psi)
 =-\frac{h_J(\one)}{h_J^{(1)}(\two)}\,\widetilde\lambda_\psi^{(1)}(\two)
 =-h_J(\one)\,\tilde\delta_J(\two),
\]
which is \eqref{eq:LJ-defect} with the second equality of
\eqref{eq:Dvarphi}.  For the remaining equality, the transported
identity
$f_\psi(\one)K_\psi^{(1)}
=f_\psi(\two)K_\psi-\widetilde\lambda_\psi^{(1)}(\two)$
{has zero specialization at $\one=\Xi$ by
\eqref{eq:szego-shtuka}; the first-variable divisor bound in the
proof of Theorem~\ref{thm:identity-kernel-defect} ensures that this
specialization is defined.  Since $h_J$ is regular at $\Xi$,
the transport formula above gives}
$\bigl(f_\varphi(\one)K_J^{(1)}\bigr)\big|_{\one=\Xi}=0$, and
evaluating \eqref{eq:LJ-defect} at $\one=\Xi$ yields
$\tilde\delta_J(\two)=f_\varphi(\two)K_J(\Xi,\two)/h_J(\Xi)$, the
first equality of \eqref{eq:Dvarphi}.}
\end{proof}

\subsection{\texorpdfstring{{Divisor of \(\delta_J\) for arbitrary \(\Ninf\)}}{Divisor of delta-J for arbitrary N}}
\label{sec:divisor-DJ}

Let $\Ninf\ge1$, and let $J:\varphi\to\psi$ have free target
lattice $\Lambda_\psi=\iota(A) \tilde{\pi} $.  Suppose
$\Omega_A=\mathfrak i_J\nu_J$ with $\nu_J\in{\Omega_A^{\rat}}$.
The scalar forcing factor is
\[
 \delta_J:=\frac{\widetilde\lambda_\psi^{(1)}}
                    {h_J^{(1)}\nu_J}.
\]
It is distinct from the differential forcing term
$\widetilde\lambda_\psi^{(1)}/h_J^{(1)}$.

By \eqref{eq:shtuka-differential-divisor} there is a unique effective
divisor $Z_\psi$ of degree $g$ such that
\begin{equation}\label{eq:div-lambda}
 \operatorname{div}(\widetilde\lambda_\psi^{(1)})
 =V_\psi^{(1)}+Z_\psi-[q_\psi]-[q_\psi^{(-1)}].
\end{equation}
Indeed, subtracting the lower bound leaves an effective divisor of
degree $(2g-2)-g+2=g$.

\begin{lemma}\label{lem:denom}
Let ${D_{\ii_J}}$ be the pullback of $\operatorname{div}_U(\mathfrak i_J)$
to $U_{\Cinf}$.  Then
\[
 \operatorname{div}(h_J^{(1)})|_U
 ={D_{\ii_J}}+V_\psi^{(1)}|_U-V_\varphi^{(1)}|_U,
 \qquad
 \operatorname{div}(\nu_J)|_U=-{D_{\ii_J}}.
\]
\end{lemma}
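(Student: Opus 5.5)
Both identities reduce to the affine divisor relation \eqref{eq:isogeny-intertwiner-affine-divisor-main} and the presentation $\Omega_A=\ii_J\nu_J$, so the plan is to twist the first and localize the second.

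\emph{The divisor of $h_J^{(1)}$.} Start from \eqref{eq:isogeny-intertwiner-affine-divisor-main}:
\[
\pi^*\divv_U(\ii_J)=\bigl(\divv(h_J)-V_\psi+V_\varphi\bigr)\big|_{U_{\Cinf}}.
\]
Apply coefficient Frobenius. It acts on divisors of $X_{\Cinf}$ by twisting points and commutes with $\divv$, so $\divv(h_J^{(1)})=\divv(h_J)^{(1)}$. It preserves $U_{\Cinf}$, since $U$ is defined over $\Fq$, so restriction to $U$ commutes with twisting. Finally, $D_{\ii_J}$ is pulled back from the $\Fq$-scheme $U$ and is therefore Frobenius-invariant: $D_{\ii_J}^{(1)}=D_{\ii_J}$. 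Twisting the displayed relation and rearranging gives
\[
\divv(h_J^{(1)})|_U=D_{\ii_J}+V_\psi^{(1)}|_U-V_\varphi^{(1)}|_U .
\]
If the primitive normalization \eqref{eq:isogeny-intertwiner-primitive-divisor-main} is in force, the same argument applies verbatim with the $V$-terms absent. In that case I would note that the stated formula is the general one, and that \eqref{eq:isogeny-intertwiner-affine-divisor-main} is the relation to cite.

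\emph{The divisor of $\nu_J$.} Here $\Omega_A$ is an invertible $A$-module and $\nu_J$ is a rational differential. I would localize at a closed point $\mathfrak p\in U$ and choose a local generator $\omega_{\mathfrak p}$ of $\Omega_{A,\mathfrak p}$, so that $\operatorname{ord}_{\mathfrak p}(\omega_{\mathfrak p})=0$. The equality $\Omega_{A,\mathfrak p}=\ii_{J,\mathfrak p}\nu_J$, with $\ii_{J,\mathfrak p}=\pi_{\mathfrak p}^{e}A_{\mathfrak p}$ and $e=\operatorname{ord}_{\mathfrak p}\ii_J$, forces $\nu_J=\varepsilon\,\pi_{\mathfrak p}^{-e}\omega_{\mathfrak p}$ for a unit $\varepsilon$. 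Hence $\operatorname{ord}_{\mathfrak p}(\nu_J)=-e$, so $\divv(\nu_J)|_U=-\divv_U(\ii_J)$ on $U$. Pulling back to $U_{\Cinf}$ gives $-D_{\ii_J}$. The pullback is compatible with orders because $U_{\Cinf}\to U$ is a constant field extension with $U$ smooth, so uniformizers stay uniformizers and ramification indices are $1$.

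The main obstacle is the Frobenius-invariance and base-change bookkeeping in the first step. One must ensure that twisting acts on the divisors $V_\psi$ and $V_\varphi$ over $\Cinf$ as the point-twist already used in \eqref{eq:divf}, while fixing the $\Fq$-rational divisor $D_{\ii_J}$. One must also make sure no contributions above $\infty$ sneak in; they do not, because everything is restricted to $U$.
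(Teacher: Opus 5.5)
Your proposal is correct and follows the paper's own proof: the first identity is the Frobenius twist of \eqref{eq:isogeny-intertwiner-affine-divisor-main}, using that the $\Fq$-rational divisor $D_{\ii_J}$ is fixed by twisting. The second identity is read off pointwise from $\Omega_A=\ii_J\nu_J$ by observing that a local generator of $\ii_J$ times $\nu_J$ is a local basis of the regular differentials. Your extra checks, namely that twisting commutes with restriction to $U_{\Cinf}$ and that orders are preserved under the constant-field base change, are correct elaborations of steps the paper leaves implicit.
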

\begin{proof}
The first identity is the twist of
\eqref{eq:isogeny-intertwiner-affine-divisor-main}; the ideal divisor
${D_{\ii_J}}$ is defined over $\Fq$ and hence fixed by twisting.
At each affine point the equality
$\Omega_A=\mathfrak i_J\nu_J$ says that multiplication by a local
generator of $\mathfrak i_J$ makes $\nu_J$ a basis of the regular
differential module.  The resulting orders give the second identity.
\end{proof}

\begin{proposition}[Divisor of the scalar forcing factor]\label{prop:div-DJ}
  The function $\delta_J$ is regular on $U_{\Cinf}$, with
\[
 \operatorname{div}(\delta_J)|_U
       =(V_\varphi^{(1)}+Z_\psi)|_U.
\]
\end{proposition}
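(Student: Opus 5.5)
The divisor of $\delta_J=\widetilde\lambda_\psi^{(1)}/(h_J^{(1)}\nu_J)$ on $U_{\Cinf}$ is additive in the three factors, so we compute it by restricting each factor's divisor to $U$ and adding. The two inputs are \eqref{eq:div-lambda} and Lemma~\ref{lem:denom}.

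\textbf{Numerator.} Restricting \eqref{eq:div-lambda} to $U$ requires knowing which terms lie over $\infty$. The points $q_\psi$ and $q_\psi^{(-1)}$ both lie above $\infty$, since the Frobenius orbit of $q_\psi$ is contained in the fibre over the closed point $\infty$. Hence they drop out, and
\[
 \operatorname{div}(\widetilde\lambda_\psi^{(1)})|_U=(V_\psi^{(1)}+Z_\psi)|_U .
\]

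\textbf{Denominator and combination.} By Lemma~\ref{lem:denom},
\[
 \operatorname{div}(h_J^{(1)}\nu_J)|_U
 =D_{\ii_J}+V_\psi^{(1)}|_U-V_\varphi^{(1)}|_U-D_{\ii_J}
 =V_\psi^{(1)}|_U-V_\varphi^{(1)}|_U .
\]
Subtracting this from the numerator, the $V_\psi^{(1)}$ terms and the ideal divisor $D_{\ii_J}$ cancel, leaving
\[
 \operatorname{div}(\delta_J)|_U=(V_\varphi^{(1)}+Z_\psi)|_U .
\]
This divisor is effective because $V_\varphi$ and $Z_\psi$ are effective and twisting preserves effectivity. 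A rational function on the smooth affine curve $U_{\Cinf}$ with effective divisor is regular there, which gives the regularity claim.

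\textbf{Main obstacle.} The only delicate point is the first identity of Lemma~\ref{lem:denom}, which is the twist of the affine divisor formula \eqref{eq:isogeny-intertwiner-affine-divisor-main}. One must use the general form $\operatorname{div}(h_J)=D_{\ii_J}+V_\psi-V_\varphi$ on $U$, not the fibre-primitive simplification. The divisor $D_{\ii_J}$ is $\Fq$-rational, so it is fixed by the twist.

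We should also make sure that no extra points above $\infty$ enter when restricting to $U$. The points $q_\psi$ and $q_\psi^{(-1)}$ are not in $U$, and $\nu_J$ may have poles only at $\infty$ beyond those recorded in the lemma; all of these are removed by restriction. With this in hand, the computation above is purely formal.
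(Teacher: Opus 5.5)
Your proposal is correct and follows the paper's own argument: restrict \eqref{eq:div-lambda} to $U$, subtract the two divisors from Lemma~\ref{lem:denom} so that $D_{\ii_J}$ and $V_\psi^{(1)}|_U$ cancel, and deduce regularity from the effectivity of $(V_\varphi^{(1)}+Z_\psi)|_U$. The paper leaves implicit why $[q_\psi]$ and $[q_\psi^{(-1)}]$ drop out on restriction to $U$, and you make that step explicit.
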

\begin{proof}
 Subtract the divisors of $h_J^{(1)}$ and $\nu_J$ from
\eqref{eq:div-lambda}.  Lemma~\ref{lem:denom} cancels the ideal divisor
and the target Drinfeld divisor on $U$, leaving the displayed effective
divisor.

\end{proof}

\begin{corollary}\label{cor:N=1}
If $\Ninf=1$, then
\[
 \operatorname{div}(\delta_J)
 =V_\varphi^{(1)}+Z_\psi-2g[\infty].
\]
\end{corollary}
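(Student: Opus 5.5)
The plan is to get the corollary from Proposition~\ref{prop:div-DJ} plus a degree count. When $\Ninf=1$ there is only one point at infinity, and the principal divisor $\operatorname{div}(\delta_J)$ has degree zero, so its single unknown coefficient is forced.

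First I will record the geometry at infinity for $\Ninf=1$. The point $\infty$ is $\Fq$-rational, so $X_{\Cinf}\setminus U_{\Cinf}$ is the single point $\infty$. The Frobenius-periodic points $q_\varphi,q_\psi$ of period $\Ninf=1$ must both equal $\infty$, and so do all their twists. The realization hypotheses $q_\varphi^{(-1)}\notin\operatorname{supp}(V_\varphi)$ and $q_\psi^{(-1)}\notin\operatorname{supp}(V_\psi)$ therefore say that $\infty\notin\operatorname{supp}(V_\varphi)$. Twisting, $\infty=\infty^{(1)}\notin\operatorname{supp}(V_\varphi^{(1)})$. Thus $V_\varphi^{(1)}=V_\varphi^{(1)}|_U$, while $Z_\psi$ may still carry some multiplicity $z_\infty:=\operatorname{mult}_\infty(Z_\psi)\ge0$ at $\infty$.

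Next I will write $\operatorname{div}(\delta_J)=\operatorname{div}(\delta_J)|_U+c\,[\infty]$ and apply Proposition~\ref{prop:div-DJ}, which gives
\[
 \operatorname{div}(\delta_J)|_U=V_\varphi^{(1)}+Z_\psi-z_\infty[\infty].
\]
Since $\delta_J$ is a nonzero rational function on $X_{\Cinf}$, its divisor has degree zero. Using $\deg V_\varphi^{(1)}=g$ and $\deg Z_\psi=g$ (from \eqref{eq:div-lambda}), this gives $c=z_\infty-2g$. Substituting,
\[
 \operatorname{div}(\delta_J)=V_\varphi^{(1)}+Z_\psi-2g[\infty].
\]
If $Z_\psi$ meets $\infty$, its part there is exactly absorbed by the $z_\infty$ in $c$.

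The only step that needs care is the bookkeeping at $\infty$. I must use the convention that $Z_\psi$ on the right-hand side is the full divisor, including any point at $\infty$, and I must confirm that $V_\varphi^{(1)}$ has no part at $\infty$; both follow from the $\Ninf=1$ identification $q_\varphi^{(-1)}=q_\psi=\infty$ above. As a consistency check, one can compute $\operatorname{ord}_\infty$ directly from \eqref{eq:div-lambda}, where the coefficient of $\widetilde\lambda_\psi^{(1)}$ at $\infty$ is $z_\infty-2$, together with the orders of $h_J^{(1)}$ and $\nu_J$ at $\infty$. The degree argument avoids computing those orders, so it is the route I will take.
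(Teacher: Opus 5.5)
Your argument is correct and is essentially the paper's: both take the affine part of the divisor from Proposition~\ref{prop:div-DJ} and then fix the coefficient at the single point $\infty$ using that a principal divisor has degree zero. The only difference is that the paper first shows $Z_\psi$ has no component at $\infty$, since $\operatorname{res}_{q_\psi^{(-1)}}(\widetilde\lambda_\psi^{(1)}/f_\psi)=-1$ and $\operatorname{ord}_\infty f_\psi=-1$ force $\operatorname{ord}_\infty\widetilde\lambda_\psi^{(1)}=-2$, whereas your bookkeeping simply absorbs any such component --- and by the same absorption you did not actually need $V_\varphi^{(1)}$ to avoid $\infty$ either.
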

\begin{proof}
Here $q_\psi^{(-1)}=q_\psi=\infty$, and the realization excludes
$\infty$ from both Drinfeld divisors.
The differential $\widetilde\lambda_\psi^{(1)}/f_\psi$ has a simple
pole of residue $-1$ at $\infty$, while
$\operatorname{ord}_\infty(f_\psi)=-1$.
Thus $\widetilde\lambda_\psi^{(1)}$ has order $-2$ there and
$Z_\psi$ has no infinity component.
The affine divisor has degree $2g$, and the only remaining point is
$\infty$.  Degree zero of the principal divisor proves the formula.
\end{proof}

{When $\Ninf=1$, Corollary~\ref{cor:N=1}, twisted back by
$(-1)$, gives
$\operatorname{div}(\delta_J^{(-1)})=V_\varphi+Z_\psi^{(-1)}-2g[\infty]$:}
the
function $\delta_J^{(-1)}$ exhibits $V_\varphi$ and $Z_\psi^{(-1)}$ as dual
divisors, in the sense of \cite[Lemma~6.4]{Pel26}, where each shtuka divisor
$V$ has a unique dual $V^*$ and a rational function, unique up to a scalar,
with $\operatorname{Div}(\delta)=V+V^*-2g\infty$.  Comparing with
\cite[Theorem~5.3]{Pel26}, the algebraic factor $\delta$ of the zeta identity
$\omega^{(1)}Z_{X,A}=\tilde\pi\,\delta^{(1)}$ plays the same dual role.

\subsection{\texorpdfstring{{The $J$-adapted Generating functions}}{The J-adapted Generating functions}}

\begin{lemma}[Orbital admissibility of the isogeny-adapted kernel]\label{lem:Admissibility}
Let $W \subset U_{ \Cinf}^{\mathrm{an}}$ be an affinoid disjoint from
the characteristic orbit and from the second-variable poles of $K_J$ and
$f_\varphi(\two)K_J$.
Then $K_J$ and $f_{\varphi}(\one)K_J^{(1)}$ are weighted-orbit-admissible
on $W$ in the sense of Definition~\ref{def:orbital-domain};
in particular, for every $u \in  \Cinf$ the orbital evaluations
\[
\mathcal E^{\Xi}_{\varphi,u}(K_J s_\varphi)
\quad\text{and}\quad
\mathcal E^{\Xi}_{\varphi,u}(f_{\varphi}(\one) K_J^{(1)} s_\varphi)
\]
exist and converge normally on $W$.
\end{lemma}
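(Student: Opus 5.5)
The plan is to reduce both assertions to Proposition~\ref{thm:automatic-orbit-growth}, exactly as was done for $K_\varphi$ after Definition~\ref{def:orbital-domain}. Admissibility needs three things: (A) well-defined specializations with a finite exceptional set, (B) analyticity of each $\operatorname{orb}_{\varphi,n}$ on $W$, and (C) the $C_W^{q^n}$ growth. Parts (i)--(iii) of that proposition supply (C), (B) and (A) once we have two inputs. The first is a fixed rational function $R$ with polar divisor on $U_{\Cinf}\times W$ contained in $\operatorname{pr}_\one^*D+\Delta$. The second is the vertical bound $\operatorname{ord}_{\{\Xi^{(n)}\}\times X}(R)\ge-m_n$ at the exceptional indices. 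After that, normal convergence of the two orbital evaluations is Proposition~\ref{lem:orbital-evaluation-convergence}.

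\emph{Step 1: divisor bounds for $K_J$.} I will use \eqref{eq:isogeny-kernel-coordinate-change}, $K_J=h_J(\one)h_J(\two)^{-1}K_\psi$, together with the tautological bound $\operatorname{div}(K_\psi)+\operatorname{pr}_\one^*(V_\psi-[q_\psi^{(-1)}])-\operatorname{pr}_\two^*(\cdots)+\Delta\ge0$. In the first variable, the poles of $K_J$ on $U_{\Cinf}$ are then bounded by $V_\psi-\operatorname{div}(h_J)$. By \eqref{eq:isogeny-intertwiner-affine-divisor-main}, on $U$ this equals $V_\varphi-\pi^*\operatorname{div}_U(\ii_J)\le V_\varphi$. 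Hence the first-variable poles of $K_J$ on $U_{\Cinf}\times W$ are bounded by $\operatorname{pr}_\one^*V_\varphi+\Delta$; the second-variable factor $h_J(\two)^{-1}$ and the poles in $\two$ are harmless because $W$ avoids them by hypothesis. Read along the vertical divisor $\{\Xi^{(n)}\}\times X$, this gives $\operatorname{ord}\ge-m_n$. Taking $E$ to be the finite set of $n$ with $\Xi^{(n)}\in\operatorname{supp}(V_\varphi)\cup W$ (finite by Lemma~\ref{lem:orbit-escape}), the hypotheses of Proposition~\ref{thm:automatic-orbit-growth} hold, and admissibility of $K_J$ follows. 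This mirrors Lemma~\ref{lem:orbital-vertical-regularity}.

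\emph{Step 2: the twisted term.} Here I will not estimate $K_J^{(1)}$ directly. Instead I use the defect identity \eqref{eq:LJ-defect}:
\[
f_\varphi(\one)K_J^{(1)}=f_\varphi(\two)K_J-h_J(\one)\tilde\delta_J(\two).
\]
The second summand is separated, with first factor $h_J$. By \eqref{eq:intertwiner}, $h_Js_\varphi\in\cM_\varphi$, so its specializations are handled by Theorem~\ref{thm:pellarin}. Its orbital series is $\exp_\psi(h_J(\Xi)u)$ times $\tilde\delta_J$, and $\tilde\delta_J$ is analytic on $W$ since $W$ avoids the poles of $f_\varphi(\two)K_J$ (equivalently, via \eqref{eq:Dvarphi}, those of $\tilde\delta_J$). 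The first summand is $K_J$ multiplied by a second-variable function that is analytic on $W$ by hypothesis; this multiplication preserves (A)--(C), with the sup norm of $f_\varphi$ on $W$ absorbed into $C_W$. Admissibility is additive, with exceptional sets unioned and constants maximized. So $f_\varphi(\one)K_J^{(1)}$ is admissible.

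\emph{Main obstacle.} The delicate point is at infinity in part (i) of Proposition~\ref{thm:automatic-orbit-growth}. The bound on $h_J(\one)$ near $q_\psi$ and the other points above $\infty$ must still have polynomial pole order, so that the estimate $B|\iota(p)|^{Mq^n}$ holds. This is automatic for a fixed rational function, and I will check it there. A second check is that the exceptional indices in Step 2 are truly $V_\varphi$-bounded rather than $V_\psi$-bounded. Step 1 settles this via the affine divisor identity, and if the frames are not fibre-primitive, that identity must be used in the form \eqref{eq:isogeny-intertwiner-affine-divisor-main} rather than \eqref{eq:isogeny-intertwiner-primitive-divisor-main}.
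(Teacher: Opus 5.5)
Your proposal is correct. Step~1 is the paper's argument for $K_J$: the coordinate change \eqref{eq:isogeny-kernel-coordinate-change} plus the affine divisor identity. Citing \eqref{eq:isogeny-intertwiner-affine-divisor-main} is the right justification for $\operatorname{div}(h_J)-V_\psi+V_\varphi\ge0$ on $U_{\Cinf}$; integrality of $h_Js_\varphi$ alone gives only $\operatorname{div}(h_J)+V_\varphi\ge0$.

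Step~2 is a genuinely different route. The paper transports the twisted kernel directly,
\[
 f_\varphi(\one)K_J^{(1)}=\frac{h_J(\one)}{h_J^{(1)}(\two)}\,f_\psi(\one)K_\psi^{(1)},
\]
and imports the first-variable bound $\operatorname{div}_\one\bigl(f_\psi(\one)K_\psi^{(1)}\bigr)\ge-V_\psi-[\two]$ from the proof of Theorem~\ref{thm:identity-kernel-defect}. Both kernels then enter Proposition~\ref{thm:automatic-orbit-growth} through one uniform transport argument.

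You instead use the defect identity \eqref{eq:LJ-defect}, which is a rational identity proved earlier, so there is no circularity, to split off the separated term $h_J(\one)\tilde\delta_J(\two)$. What your route buys:
\begin{itemize}
\item It needs no new first-variable estimate for a twisted kernel, only $h_J\in\Gamma(U_{\Cinf},\OO(V_\varphi))$.
\item It explains why the hypothesis on $W$ is phrased through $f_\varphi(\two)K_J$: the second-variable poles of $f_\varphi(\one)K_J^{(1)}$ lie among those of $f_\varphi(\two)K_J$ and $\tilde\delta_J$, which the paper's proof leaves implicit.
\item It precomputes the evaluation used in Theorem~\ref{thm:frob}.
\end{itemize}

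Two small repairs are needed.

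First, the hypothesis does not make $f_\varphi(\two)$ analytic on $W$. Since $D_J|_U=V_\varphi-\pi^*\operatorname{div}_U(\ii_J)$, the tautological bound forces $K_J$ to vanish in its second variable along $V_\varphi$ (off $\operatorname{supp}\pi^*\operatorname{div}_U(\ii_J)$). So $f_\varphi(\two)K_J$ can be regular at points of $\operatorname{supp}(V_\varphi)$ where $f_\varphi$ has poles, and $W$ may contain such points. Rather than treating $f_\varphi(\two)$ as an analytic multiplier, apply Proposition~\ref{thm:automatic-orbit-growth} directly to $R=f_\varphi(\two)K_J$. It has the same first-variable polar divisor as $K_J$ and, by hypothesis, no horizontal pole over $W$.

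Second, Theorem~\ref{thm:pellarin} gives the specializations and the convergence for $h_J(\one)\tilde\delta_J(\two)$, but not literally the growth condition (C) of Definition~\ref{def:orbital-domain}. Obtain (C) from part (i) of Proposition~\ref{thm:automatic-orbit-growth} applied to $R=h_J(\one)$, as your last paragraph anticipates.
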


\begin{proof}
{The first-variable pole bounds follow directly from
transport: by \eqref{eq:isogeny-kernel-coordinate-change} and
\eqref{eq:intertwiner-difference},
\[
 K_J=\frac{h_J(\one)}{h_J(\two)}K_\psi,
 \qquad
 f_\varphi(\one)K_J^{(1)}
 =\frac{h_J(\one)}{h_J^{(1)}(\two)}\,f_\psi(\one)K_\psi^{(1)};
\]
the divisor of the intertwiner satisfies
\[
 \operatorname{div}(h_J)-V_\psi+V_\varphi\ge0
 \qquad\text{on }U_{\Cinf}
\]
(the motive element $h_Js_\varphi$ is integral), while
$\operatorname{div}_\one(K_\psi)\ge-V_\psi-[\two]$ and
$\operatorname{div}_\one\bigl(f_\psi(\one)K_\psi^{(1)}\bigr)
\ge-V_\psi-[\two]$ on $U_{\Cinf}$ by the proof of
Theorem~\ref{thm:identity-kernel-defect}.  Hence, on $U_{\Cinf}$,
\begin{equation}\label{eq:KJ-pole-bounds}
 \operatorname{div}_{\one}(K_J)\ge-V_\varphi-[\two],
 \qquad
 \operatorname{div}_{\one}\!\bigl(f_{\varphi}(\one)K_J^{(1)}\bigr)
 \ge-V_\varphi-[\two].
\end{equation}
Both are therefore fixed meromorphic sections} whose non-diagonal first-variable
poles are bounded by $V_\varphi$, and both are regular on
$\{\Xi^{(n)} : n \notin E\} \times W$, where
\[ 
E := \{\, n \ge 0 : \Xi^{(n)} \in \operatorname{supp}(V_\varphi) \,\}\]
is finite.
Proposition~\ref{thm:automatic-orbit-growth}(ii) and (iii)
apply verbatim and yield:
\[
\operatorname{orb}_{\varphi,n}(g)\text{ is defined for }g=K_J\text{ and }g=f_{\varphi}(\one)K_J^{(1)}
\]
(the exceptional
poles being regularized by $(F_\varphi^{[n]})^{-1}$,
Lemma~\ref{lem:weighted-orbit-divisor-cancellation}), and
\[
\bigl\| \operatorname{orb}_{\varphi,n}(g) \bigr\|_W
\le |e_n(\varphi)|\, C_W^{q^n}
\qquad (n \notin E).
\]
By Lemma~\ref{lem:orbital-evaluation-convergence}, the orbital
series converge normally on $W$.
\end{proof}

\begin{definition}[Isogeny-adapted generating differential]
\label{def:isogeny-generating-differential}
For the kernel $ K_J $ in 
Theorem~\ref{thm:isogeny-adapted-szego-kernel},
we define 
\begin{equation}\label{eq:GJ-generating-differential}
 \mathcal{G}_{J}(u)
 :=\mathcal E^\Xi_{\varphi,u}\bigl(K_J(\one,\two)s_\varphi\bigr).
\end{equation}
\end{definition}
The series converges by Lemma~\ref{lem:orbital-evaluation-convergence},
since $K_J$ is weighted-orbit-admissible by Lemma~\ref{lem:Admissibility}.
\begin{proposition}\label{prop:GJ-Hpsi}
For every $z\in\Cinf$,
\begin{enumerate}[label=\textup{(\roman*)}]
\item\label{item:GJ-target}
\begin{equation}\label{eq:J-kernel-target-comparison}
 \mathcal G_J(z)=h_J(\two)^{-1}\mathcal H_\psi(h_J(\Xi)z);
\end{equation}
\item\label{item:GJ-residue}
$\mathcal G_J(z)$ has at most a simple pole at $\Xi$ and
\begin{equation}\label{eq:HJ-characteristic-residue}
 \operatorname{res}_\Xi\mathcal G_J(z)=z.
\end{equation}
\end{enumerate}
For $J=\mathrm{id}_\varphi$, the family is $\mathcal H_\varphi$.
\end{proposition}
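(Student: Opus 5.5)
The plan is to prove \ref{item:GJ-target} by comparing the two orbital series term by term. Part \ref{item:GJ-residue} will then follow from the target residue computation of Proposition~\ref{lem:residue}.

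\emph{Step 1 (orbital terms).} By \eqref{eq:isogeny-kernel-coordinate-change} we have $K_J=h_J(\one)h_J(\two)^{-1}K_\psi$. Since $h_J(\two)^{-1}$ is independent of the first variable, it passes through each weighted specialization. The task is therefore to show, for every $n\ge0$,
\[
 \operatorname{orb}_{\varphi,n}\bigl(h_J(\one)K_\psi\bigr)
 =h_J(\Xi)^{q^n}\operatorname{orb}_{\psi,n}(K_\psi).
\]
Iterating \eqref{eq:intertwiner-difference} gives $h_J^{(n)}F_\varphi^{[n]}=h_J F_\psi^{[n]}$. Hence
\[
 \frac{h_J(\one)K_\psi}{F_\varphi^{[n]}(\one)}=h_J^{(n)}(\one)\,\frac{K_\psi}{F_\psi^{[n]}(\one)}
\]
as rational functions. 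We then specialize at $\one=\Xi^{(n)}$, using $h_J^{(n)}(\Xi^{(n)})=h_J(\Xi)^{q^n}$. This uses that $h_J$ is regular and nonzero at $\Xi$, by \eqref{eq:isogeny-tangent-from-intertwiner-main}. The specialization is legitimate on both sides at every index: the right side is regularized by Lemma~\ref{lem:weighted-orbit-divisor-cancellation} for $\psi$, and the left side by the pole bound \eqref{eq:KJ-pole-bounds}. The main care is needed at the finitely many exceptional indices where $\Xi^{(n)}$ meets $\operatorname{supp}(V_\varphi)$ or $\operatorname{supp}(V_\psi)$. There I would argue at the level of the rational identity, specializing only after cancellation. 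Because the identity holds for rational functions, both specializations are values of the same regular function.

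\emph{Step 2 (summation).} Multiply by $z^{q^n}$ and sum over $n$. This gives
\[
 \mathcal G_J(z)=h_J(\two)^{-1}\sum_n (h_J(\Xi)z)^{q^n}\operatorname{orb}_{\psi,n}(K_\psi)=h_J(\two)^{-1}\mathcal H_\psi(h_J(\Xi)z).
\]
Normal convergence of both series on common affinoids $W$ is provided by Lemma~\ref{lem:Admissibility} and Lemma~\ref{lem:orbital-evaluation-convergence}. The equality then extends to the meromorphic continuations by the identity principle on $U_{\Cinf}^{\mathrm{an}}$.

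\emph{Step 3 (residue).} Proposition~\ref{lem:residue}, applied to $\psi$, says that $\mathcal H_\psi(h_J(\Xi)z)$ has at most a simple pole at $\Xi$ with residue $h_J(\Xi)z$. The factor $h_J(\two)^{-1}$ is regular and nonvanishing at $\Xi$. Multiplying a simple pole by such a function keeps the pole at most simple and scales the residue by the value at $\Xi$. So $\operatorname{res}_\Xi\mathcal G_J(z)=h_J(\Xi)^{-1}h_J(\Xi)z=z$.

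For $J=\mathrm{id}_\varphi$ we have $h_J=1$ and $\psi=\varphi$, so $K_J=K_\varphi$ and $\mathcal G_J=\mathcal H_\varphi$ by definition. The main obstacle is Step~1 at the exceptional orbit indices, where the specializations must be shown to agree rather than merely to exist.
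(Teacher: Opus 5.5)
Your proposal is correct and follows the paper's proof essentially step for step. The iterated intertwiner identity $h_J^{(n)}F_\varphi^{[n]}=h_JF_\psi^{[n]}$, read as an equality of rational functions before specializing, turns the $n$-th weighted specialization of $K_J$ into $h_J(\two)^{-1}h_J(\Xi)^{q^n}\operatorname{orb}_{\psi,n}(K_\psi)$; this covers the exceptional indices exactly as you propose. Summing the series and then applying Proposition~\ref{lem:residue} to $\psi$, with $h_J$ regular and nonzero at $\Xi$, gives both parts.
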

\begin{proof}
For \ref{item:GJ-target}, iterating the intertwiner identity gives
$h_J^{(n)}F_\varphi^{[n]}=h_JF_\psi^{[n]}$.  Hence, as rational functions,
\[
 \frac{h_J(\one)K_\psi(\one,\two)}{F_\varphi^{[n]}(\one)}
 =h_J^{(n)}(\one)\frac{K_\psi(\one,\two)}{F_\psi^{[n]}(\one)}.
\]
The quotient on the right is regularized for the target realization,
and $h_J^{(n)}(\Xi^{(n)})=h_J(\Xi)^{q^n}$.
Thus the $n$-th weighted specialization equals
$h_J(\Xi)^{q^n}\operatorname{orb}_{\psi,n}(K_\psi)$, including
the exceptional divisor indices.  Multiply by $z^{q^n}$,
sum the normally convergent series, and use
\eqref{eq:isogeny-kernel-coordinate-change} to obtain
\eqref{eq:J-kernel-target-comparison}.
For \ref{item:GJ-residue}, Proposition~\ref{lem:residue} gives residue
$h_J(\Xi)z$ for the numerator.  The denominator is regular and nonzero at
$\Xi$, with value $h_J(\Xi)$, so the residue of the quotient is $z$.
The identity case has $h_J=1$.
\end{proof}

\begin{theorem}[Frobenius Equation]\label{thm:frob}
For every $z\in\Cinf$, one has the meromorphic differential identity
\begin{equation}\label{eq:6.13}
 \mathcal G_J(z)^{(1)}-f_\varphi\mathcal G_J(z)
 =-\widetilde\delta_J\exp_\psi(h_J(\Xi)z),
 \qquad
 \widetilde\delta_J=\frac{\widetilde\lambda_\psi^{(1)}}{h_J^{(1)}}.
\end{equation}
\end{theorem}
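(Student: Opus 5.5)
The plan is to apply motive evaluation to the kernel defect identity \eqref{eq:LJ-defect} of Theorem~\ref{thm:isogeny-adapted-szego-kernel} and then use the orbital Frobenius interchange. By Lemma~\ref{lem:Admissibility}, both $K_J$ and $f_\varphi(\one)K_J^{(1)}$ are weighted-orbit-admissible on any admissible affinoid $W$. So both sides of
\[
 f_\varphi(\one)K_J^{(1)}-f_\varphi(\two)K_J=-h_J(\one)\widetilde\delta_J(\two)
\]
can be evaluated by $\mathcal E^\Xi_{\varphi,z}$. Orbital evaluation is linear over second-variable scalars, since $\operatorname{orb}_{\varphi,n}$ only specializes $\one$. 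Hence the factors $f_\varphi(\two)$ and $\widetilde\delta_J(\two)$ pass outside the evaluation.

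\textbf{Step 1: the first term.} I compute $\mathcal E^\Xi_{\varphi,z}\bigl(f_\varphi(\one)K_J^{(1)}s_\varphi\bigr)$ with Theorem~\ref{thm:coherence}. That theorem gives $\mathcal G_J(z)^{(1)}+z\,\beta_\varphi(K_J)$, where
\[
 \beta_\varphi(K_J)=\bigl(f_\varphi(\one)K_J^{(1)}\bigr)\big|_{\one=\Xi}.
\]
This initial coefficient vanishes: the proof of Theorem~\ref{thm:isogeny-adapted-szego-kernel} shows exactly that this specialization is zero, via \eqref{eq:szego-shtuka} and regularity of $h_J$ at $\Xi$. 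One must choose affinoids $W_0,W_1$ with Frobenius mapping $\mathcal O(W_0)\to\mathcal O(W_1)$, and a common piece $V$ avoiding the orbit and the poles. The identity is then proved on $V$.

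\textbf{Step 2: the second term and the forcing term.} The second term evaluates directly to $f_\varphi\,\mathcal G_J(z)$. For the right side, $h_J(\one)s_\varphi$ is integral and represents $\eta_J^!(s_\psi)$. Theorem~\ref{thm:pellarin} identifies its orbital evaluation with the rational one. Remark~\ref{rem:classical-identity-isogeny} then gives
\[
 \mathcal E^{\mathrm{rat}}_{\varphi,z}(h_Js_\varphi)=\eta_J(\exp_\varphi z)=\exp_\psi(h_J(\Xi)z).
\]
Combining the three pieces yields \eqref{eq:6.13} on $V$.

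\textbf{Step 3: extension to the whole domain.} Every term is meromorphic on the connected curve $U_{\Cinf}^{\mathrm{an}}$. The continuation of $\mathcal G_J(z)$ comes from Proposition~\ref{prop:GJ-Hpsi}(i) together with the continuation of $\mathcal H_\psi$. The identity principle then upgrades the equality on $V$ to a meromorphic identity.

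An alternative check is to deduce the identity from Theorem~\ref{thm:identity-kernel-defect} for $\psi$ via \eqref{eq:J-kernel-target-comparison}. In that route one twists $h_J(\two)^{-1}\mathcal H_\psi(h_J(\Xi)z)$ and uses $h_J^{(1)}f_\varphi=h_Jf_\psi$.

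\textbf{Main obstacle.} I expect the main difficulty to be the Frobenius interchange at the exceptional indices and on the domain. Specifically, one must justify \eqref{eq:weighted-orbit-Frobenius-shift} where $\Xi^{(n)}\in\operatorname{supp}(V_\varphi)$, and find affinoids compatible with Frobenius. My first approach would be to use the pole bounds \eqref{eq:KJ-pole-bounds}, which are exactly what Theorem~\ref{thm:coherence} requires. For the domain, I would work on small disks far from the orbit, where Frobenius stability can be arranged, and let continuation handle everything else.
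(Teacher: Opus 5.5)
Your proposal is correct and follows essentially the same route as the paper's proof. You apply Theorem~\ref{thm:coherence} to $K_J$ on Frobenius-compatible affinoids, with $\beta_\varphi(K_J)=\bigl(f_\varphi(\one)K_J^{(1)}\bigr)\big|_{\one=\Xi}=0$ taken from the proof of Theorem~\ref{thm:isogeny-adapted-szego-kernel}; then you rewrite the defect by \eqref{eq:LJ-defect}, evaluate $h_Js_\varphi$ via the weighted Pellarin identity and $\eta_J(\exp_\varphi z)=\exp_\psi(h_J(\Xi)z)$, and glue meromorphically.

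Your alternative check through \eqref{eq:J-kernel-target-comparison} and $h_J^{(1)}f_\varphi=h_Jf_\psi$ is also valid. Note, however, that it still requires the evaluated Frobenius equation for $\mathcal H_\psi$, which comes from the same orbital interchange argument rather than from the kernel-level Theorem~\ref{thm:identity-kernel-defect} alone.
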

\begin{proof}
Choose affinoids $W_0$, $W_1$, and $V\subset W_0\cap W_1$ as in
Theorem~\ref{thm:coherence}.  Lemma~\ref{lem:Admissibility} gives normal
convergence in the second-variable differential module.
{The proof of
Theorem~\ref{thm:isogeny-adapted-szego-kernel} gives
$\bigl(f_\varphi(\one)K_J^{(1)}\bigr)\big|_{\one=\Xi}=0$.
Consequently the initial coefficient $\beta_\varphi(K_J)$ is zero,
even when the separate factor $K_J^{(1)}$ has a pole there.}
Theorem~\ref{thm:coherence} therefore gives
\[
 \mathcal G_J(z)^{(1)}-f_\varphi(\two)\mathcal G_J(z)
 =\mathcal E^\Xi_{\varphi,z}
       \bigl(\mathscr D_\varphi(K_J)s_\varphi\bigr).
\]
By \eqref{eq:LJ-defect}, the expression on the right is
$-\widetilde\delta_J(\two)\mathcal E^\Xi_{\varphi,z}(h_J(\one)s_\varphi)$.
The integral motive element $h_Js_\varphi$ represents $\eta_J$.
The weighted Pellarin identity and exponential compatibility give
\[
 \mathcal E^\Xi_{\varphi,z}(h_Js_\varphi)
 =\eta_J(\exp_\varphi z)=\exp_\psi(h_J(\Xi)z).
\]
This proves the equation on each common affinoid, and these identities
glue as meromorphic differentials.
\end{proof}

\begin{definition}[Special lattice of Anderson differential]
We define the special lattice of $ \mathcal{G}_J $:
\begin{equation}\label{eq:H-reconstructs-target-parameter-lattice}
 \Lambda_{\mathcal{G}_J}  =\{u\in\Cinf:\ \mathcal{G}_{J}(u)^{(1)}=f_\varphi \mathcal{G}_{J}(u)\}.
\end{equation}
 \end{definition}

\begin{corollary}
\label{cor:H-source-lattice-recovery}
 Let $\ii_J$ be the ideal of the isogeny datum $J$.  Then
\begin{equation}\label{eq:H-parameter-lattice-isogeny-correction}
 \Lambda_{\mathcal{G}_J}
   = h_J(\Xi)^{-1} \Lambda_\psi =\iota(\ii_J^{-1})\Lambda_\varphi
\end{equation}
In particular, if $\Lambda_\psi=\iota(A)\tilde{\pi}$ and $z_J:=h_J(\Xi)^{-1} \tilde{\pi}$,
then
\begin{equation}
\label{eq:H-primitive-source-lattice-recovery}
 \Lambda_{\mathcal{G}_J}=Az_J,
 \qquad
 \Lambda_\varphi=\iota(\ii_J)z_J.
\end{equation}
For the identity isogeny, using the shifted predecessor kernel,
$\ii_J=A$ and the homogeneity locus is $\Lambda_\varphi$ itself.
\end{corollary}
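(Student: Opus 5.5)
The plan is to read the special lattice off the Frobenius equation of Theorem~\ref{thm:frob} and then to invoke the lattice transport of Proposition~\ref{prop:isogeny-lattice-transport}. By \eqref{eq:6.13}, a parameter $u$ lies in $\Lambda_{\mathcal G_J}$ exactly when
\[
 \widetilde\delta_J\,\exp_\psi(h_J(\Xi)u)=0
\]
as a meromorphic differential in the second variable. Here $\exp_\psi(h_J(\Xi)u)$ is a scalar in $\Cinf$, so the condition splits into two factors.

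First I must check that $\widetilde\delta_J=\widetilde\lambda_\psi^{(1)}/h_J^{(1)}$ is not identically zero. By Theorem~\ref{prop:exist-shtuka}, $\widetilde\lambda_\psi$ is the unique normalized shtuka differential. The normalization \eqref{eq:shtuka-differential-residue} gives it residue one after division by $f_\psi$, so it is nonzero. Its coefficient twist is then also nonzero, since coefficient Frobenius is bijective. The multiplier $h_J$ is a nonzero rational function because $\eta_J\neq0$. Hence $\widetilde\delta_J\neq0$, and the defining condition becomes $\exp_\psi(h_J(\Xi)u)=0$.

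Since $h_J(\Xi)=\partial\eta_J\neq0$ by \eqref{eq:isogeny-tangent-from-intertwiner-main}, this is equivalent to $h_J(\Xi)u\in\Lambda_\psi$, that is, $u\in h_J(\Xi)^{-1}\Lambda_\psi$. Proposition~\ref{prop:isogeny-lattice-transport} then rewrites this set as $\iota(\ii_J^{-1})\Lambda_\varphi$. When $\Lambda_\psi=\iota(A)\tilde\pi$, the first equality gives $\Lambda_{\mathcal G_J}=Az_J$. The second identity $\Lambda_\varphi=\iota(\ii_J)z_J$ is Proposition~\ref{prop:isogeny-direct-period-map}\ref{item:anchor-lattice}. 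For the identity isogeny we have $h_J=1$, $\eta_J=\mathrm{id}$ and $\ii_J=\operatorname{Ann}_A(0)=A$, so the lattice is $\Lambda_\varphi$.

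The one delicate point is deciding in which space the equation \eqref{eq:6.13} is read. The differential $\mathcal G_J(u)$ is a priori defined only on affinoids and meromorphically continued, so the vanishing of $\widetilde\delta_J\exp_\psi(\cdot)$ has to be tested there. I would read it as an identity of meromorphic differentials on the connected curve $U_{\Cinf}^{\mathrm{an}}$, where the gluing argument at the end of the proof of Theorem~\ref{thm:frob} applies. There $\widetilde\delta_J$ is a nonzero rational differential, so on any affinoid it is a nonzerodivisor. Cancelling it is then justified by the identity principle, which completes the argument.
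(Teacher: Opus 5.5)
Your proposal is correct and follows the paper's own argument. The paper likewise reads $\Lambda_{\mathcal G_J}$ off the Frobenius equation \eqref{eq:6.13}, uses that $\widetilde\delta_J$ is a nonzero meromorphic differential to reduce the condition to $\exp_\psi(h_J(\Xi)u)=0$, i.e.\ $u\in h_J(\Xi)^{-1}\Lambda_\psi$, and then applies the lattice transport \eqref{eq:isogeny-lattice-transport-main}. Your additional details fill in steps the paper leaves implicit: the nonvanishing of $\widetilde\lambda_\psi^{(1)}$ from the residue normalization, $h_J(\Xi)\ne0$, and the reference to Proposition~\ref{prop:isogeny-direct-period-map}\ref{item:anchor-lattice} for $\Lambda_\varphi=\iota(\ii_J)z_J$.
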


\begin{proof}
Since $\tilde{\delta}_J$ is a nonzero meromorphic differential, one has the
exact equivalence
\begin{enumerate}
    \item $\mathcal{G}_{J}(u)^{(1)}=f_\varphi \mathcal{G}_{J}(u)$;
    \item $\exp_\psi(h_J(\Xi)u)=0$;
    \item $u\in h_J(\Xi)^{-1}\Lambda_\psi$.
\end{enumerate}

The ideal-isogeny lattice formula
\eqref{eq:isogeny-lattice-transport-main} gives
\eqref{eq:H-parameter-lattice-isogeny-correction}.

In particular, when the
target lattice is free, its pullback by $h_J(\Xi)$ is generated by $z_J$,
which proves \eqref{eq:H-primitive-source-lattice-recovery} and the
identity-isogeny specialization.
\end{proof}



\begin{theorem}[Localized kernel identification]
\label{thm:localized-J-kernel-identification}
Let $J=[\eta_J;h_J;\ii_J,\mu_J]$ be any separable isogeny datum and let
$0\ne z\in h_J(\Xi)^{-1}\Lambda_\psi$.  The kernel value is a rational Anderson differential
\begin{equation}\label{eq:localized-J-kernel-membership}
 \mathcal{G}_{J}(z)\in
 \sfm(\varphi)\otimes_A\Omega_A^{\rat}.
\end{equation}
 Then $\mathcal{G}_{J}(z)$ is the unique element in $\sfm(\varphi)\otimes_A\Omega_A^{\rat}$ whose localized residue is
$ z $.
\end{theorem}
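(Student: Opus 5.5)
The plan is to reduce everything to results already in hand: Corollary~\ref{cor:H-source-lattice-recovery}, Proposition~\ref{prop:GJ-Hpsi}, and Theorem~\ref{thm:rational-anderson-continuation}. The membership \eqref{eq:localized-J-kernel-membership} comes from the target comparison. By Proposition~\ref{prop:GJ-Hpsi}\ref{item:GJ-target} we have $\mathcal G_J(z)=h_J(\two)^{-1}\mathcal H_\psi(h_J(\Xi)z)$, and the hypothesis gives $\lambda':=h_J(\Xi)z\in\Lambda_\psi$. Since $z\in h_J(\Xi)^{-1}\Lambda_\psi=\Lambda_{\mathcal G_J}$, Corollary~\ref{cor:H-source-lattice-recovery} (equivalently, Theorem~\ref{thm:frob} with $\exp_\psi(\lambda')=0$) shows that $\mathcal G_J(z)$ satisfies the homogeneous equation $Q^{(1)}=f_\varphi Q$ as a meromorphic differential.

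To conclude that $\mathcal G_J(z)$ is a rational Anderson differential, it remains to show it lies in $\operatorname{Frac}(\mathbb T_A)\otimes_A\Omega_A$. I would argue this as follows.
\begin{enumerate}
\item Theorem~\ref{thm:canonical-assembly} gives $\mathcal H_\psi(\lambda')\in\sfm(\psi)\otimes_A\Omega_A$, which sits inside $\mathbb T_A\otimes_A\Omega_A$.
\item The factor $h_J(\two)^{-1}$ is a rational function in $\Cinf(X)\subset\operatorname{Frac}(\mathbb T_A)$, so the product lies in $\operatorname{Frac}(\mathbb T_A)\otimes_A\Omega_A^{\rat}$.
\item The identification holds on the Tate domain, where both the orbital series and the Tate element make sense. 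To match them I would use the agreement of evaluations on common affinoids (Theorem~\ref{thm:pellarin}) together with the identity principle on $U_{\Cinf}^{\mathrm{an}}$.
\end{enumerate}
Once this is in place, Theorem~\ref{thm:rational-anderson-continuation}\ref{item:ratand-form} places $\mathcal G_J(z)$ in $\sfm(\varphi)\otimes_A\Omega_A^{\rat}$.

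For uniqueness, Theorem~\ref{thm:rational-anderson-continuation}\ref{item:ratand-residue} says that $\operatorname{res}_\Xi$ is a $K$-linear isomorphism from $\sfm(\varphi)\otimes_A\Omega_A^{\rat}$ onto $\iota(K)\Lambda_\varphi$. Proposition~\ref{prop:GJ-Hpsi}\ref{item:GJ-residue} gives $\operatorname{res}_\Xi\mathcal G_J(z)=z$, and the lattice transport \eqref{eq:H-parameter-lattice-isogeny-correction} gives $z\in\iota(\ii_J^{-1})\Lambda_\varphi\subset\iota(K)\Lambda_\varphi$, so $z$ lies in the target of the isomorphism. Injectivity then shows $\mathcal G_J(z)$ is the unique element with residue $z$. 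I must also make sure that the continuation used in Theorem~\ref{thm:rational-anderson-continuation}\ref{item:ratand-continuation} agrees with the orbital continuation of $\mathcal G_J(z)$ near $\Xi$, where Proposition~\ref{prop:GJ-Hpsi} computes the residue. Both are meromorphic on the connected curve $U_{\Cinf}^{\mathrm{an}}$ and agree on an open affinoid, so the identity principle suffices.

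The main obstacle is the middle step: making rigorous that the orbitally defined $\mathcal G_J(z)$, a priori only meromorphic on affinoids, actually equals a Tate-domain element of $\operatorname{Frac}(\mathbb T_A)\otimes\Omega_A$. Routing through $\mathcal H_\psi(\lambda')$ and Theorem~\ref{thm:canonical-assembly} should handle this. I would first try to avoid any separate convergence argument by using the rational factor $h_J(\two)^{-1}$. If $h_J$ has zeros on the Tate domain, this factor only introduces denominators, and these are allowed in $\operatorname{Frac}(\mathbb T_A)$.
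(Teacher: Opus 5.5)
Your proposal is correct and follows the paper's proof essentially step by step: homogeneity from the defect equation of Theorem~\ref{thm:frob} at a target period, membership in $\operatorname{Frac}(\mathbb T_A)\otimes_A\Omega_A$ via the target comparison $\mathcal G_J(z)=h_J(\two)^{-1}\mathcal H_\psi(h_J(\Xi)z)$ with $h_J$ rational, and then parts (i) and (iii) of Theorem~\ref{thm:rational-anderson-continuation} together with $\operatorname{res}_\Xi\mathcal G_J(z)=z$. The extra checks you flag (matching the two continuations near $\Xi$ by the identity principle, and $z\in\iota(\ii_J^{-1})\Lambda_\varphi\subset\iota(K)\Lambda_\varphi$) are harmless refinements that the paper leaves implicit; the second is not actually needed, since uniqueness uses only the injectivity of $\operatorname{res}_\Xi$.
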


\begin{proof}
Let \(z\in h_J(\Xi)^{-1}\Lambda_\psi\). Since \(h_J(\Xi)z\in\Lambda_\psi=\ker(\exp_\psi)\), Theorem~\ref{thm:frob} gives
\[
\mathcal G_J(z)^{(1)}-f_\varphi \mathcal G_J(z)
=-\tilde{\delta}_J\exp_\psi(h_J(\Xi)z)=0.
\]
Hence
\[
\mathcal G_J(z)^{(1)}=f_\varphi \mathcal G_J(z),
\]
while \eqref{eq:J-kernel-target-comparison} places it in
$\operatorname{Frac}(\mathbb T_A)\otimes_A\Omega_A$:
the numerator is a target-period differential and $h_J$ is rational.
It is therefore a rational Anderson differential.
By part~\ref{item:ratand-form} of
Theorem~\ref{thm:rational-anderson-continuation},
\[
\mathcal G_J(z)\in \operatorname{sf}(\varphi)\otimes_A\Omega_A^{\rat}.
\]

Next, Proposition~\ref{prop:GJ-Hpsi} gives
\[
\mathcal G_J(z)=h_J(\two)^{-1}\mathcal H_\psi(h_J(\Xi)z).
\]
Since \(h_J(\Xi)z\in\Lambda_\psi\), Proposition~\ref{lem:residue} yields
\[
\operatorname{res}_{\Xi}\mathcal H_\psi(h_J(\Xi)z)=h_J(\Xi)z.
\]
We obtain
\[
\operatorname{res}_{\Xi}\mathcal G_J(z)
=h_J(\Xi)^{-1}\operatorname{res}_{\Xi}\mathcal H_\psi(h_J(\Xi)z)
=h_J(\Xi)^{-1}(h_J(\Xi)z)=z.
\]

Finally, by
part~\ref{item:ratand-residue} of
Theorem~\ref{thm:rational-anderson-continuation}, the residue map
\[
\operatorname{res}_{\Xi}:\operatorname{sf}(\varphi)\otimes_A\Omega_A^{\rat}
\longrightarrow K\otimes_A\Lambda_\varphi
\]
is an isomorphism. Thus any element of \(\operatorname{sf}(\varphi)\otimes_A\Omega_A^{\rat}\) with residue \(z\) must equal \(\mathcal G_J(z)\). Therefore \(\mathcal G_J(z)\) is the unique such element.
\end{proof}

\subsection{Universal property of \(\mathcal{G}_J\)-functions}
\label{subsec:primitive-G}

The primitive \(\mathcal{G}_J\)-identity extends the factorization
$\mathcal H_\varphi(\jmath(\eta))=\omega_\jmath\otimes\eta$
from $\Omega_A$ to $A\nu_J=\ii_J^{-1}\Omega_A$ under the isogeny
data below. Throughout, special
functions are viewed in \(\mathbb T_A\), so that the localized module
\(\sfm(\varphi)\otimes_A\Omega_A^{\rat}\) is realized inside
\(\operatorname{Frac}(\mathbb T_A)\otimes_A\Omega_A\), and
differential-valued identities are read on affinoids
$V\subset W_0\cap W_1$ as in Theorem~\ref{thm:coherence}, avoiding the
indicated second-variable poles.

\begin{notation}
\label{not:primitive-G}
Let \(J=[\eta_J;h_J;\ii_J,\mu_J]:\varphi\to\psi\) be a separable
isogeny datum with free target lattice
{\(\Lambda_\psi=\iota(A)\tilde\pi\)} and
\(\ii_J=\operatorname{Ann}_A(\ker\eta_J)\); \(h_J(\Xi)\ne0\).
Assume the differential-ideal match of the anchor-map subsection,
\begin{equation}
\label{eq:primitive-G-diff-match}
\Omega_A=\ii_J\nu_J
\end{equation}
for some rational differential \(\nu_J\in \Omega_A^{\rat}\). Define
\begin{equation}
\label{eq:primitive-G-zJ}
{z_J:=\frac{\tilde\pi}{h_J(\Xi)}.}
\end{equation}
Then, by \eqref{eq:H-primitive-source-lattice-recovery},
\begin{equation}
\label{eq:primitive-G-lattices}
h_J(\Xi)^{-1}\Lambda_\psi=\iota(A)z_J,
\qquad
\Lambda_\varphi=\iota(\ii_J)z_J .
\end{equation}
Let
\[
\jmath:\; A\nu_J=\ii_J^{-1}\Omega_A \longrightarrow \iota(A)z_J=h_J(\Xi)^{-1}\Lambda_\psi
\]
be an arbitrary \(A\)-linear map. Its restriction to
\(\Omega_A=\ii_J\nu_J\) lands in \(\Lambda_\varphi=\iota(\ii_J)z_J\),
hence defines an element
\[
\jmath|_{\Omega_A}\in \operatorname{Hom}_A(\Omega_A,\Lambda_\varphi).
\]
By Theorem~\ref{thm:direct-logarithmic-special-line} there is a unique
special function \(\omega_\jmath\in\sfm(\varphi)\) such that
\[
\mathscr L_\varphi(\omega_\jmath)=\jmath|_{\Omega_A}.
\]
When \(\jmath\) is an isomorphism mapping $\nu_J$ to $z_J$, write $\jmath_*$ for it.
Its special function generates $\sfm(\varphi)$ by
the basis criterion of Theorem~\ref{thm:direct-logarithmic-special-line};
Section~\ref{sec:unit-special} will prove it is a Tate unit.
\end{notation}

\begin{theorem}[Primitive \(G\)-identity]
\label{thm:primitive-G}
Retain Notation~\ref{not:primitive-G}. For every
\[
\eta\in A\nu_J=\ii_J^{-1}\Omega_A,
\]
the following hold.
\begin{enumerate}[label=\textup{(\roman*)},leftmargin=2.2em]
\item\label{item:primitive-G-factorization}
\begin{equation}
\label{eq:primitive-G-identity}
\mathcal{G}_{J}\bigl(\jmath(\eta)\bigr)
=
\omega_\jmath\otimes\eta
\qquad\text{in }
\sfm(\varphi)\otimes_A\Omega_A^{\rat},
\end{equation}
both sides having characteristic residue
\[
\jmath(\eta)\in h_J(\Xi)^{-1}\Lambda_\psi.
\]
\item\label{item:primitive-G-integrality} If \(\eta\in\Omega_A\), then
\(\jmath(\eta)\in\Lambda_\varphi\), and both sides of
\eqref{eq:primitive-G-identity} lie in the integral Anderson differential
$\sfm(\varphi)\otimes_A\Omega_A$.
\end{enumerate}
\end{theorem}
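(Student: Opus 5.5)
The plan is a uniqueness argument. Both sides of \eqref{eq:primitive-G-identity} are elements of the localized module $\sfm(\varphi)\otimes_A\Omega_A^{\rat}$. The characteristic residue is injective on that module by Theorem~\ref{thm:rational-anderson-continuation}\ref{item:ratand-residue}. So it suffices to show that both sides lie there and have the same residue $\jmath(\eta)$.

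\emph{The left side.} Since $\jmath(\eta)\in\iota(A)z_J=h_J(\Xi)^{-1}\Lambda_\psi$ by \eqref{eq:primitive-G-lattices}, Theorem~\ref{thm:localized-J-kernel-identification} applies when $\jmath(\eta)\neq0$. It gives that $\mathcal G_J(\jmath(\eta))$ is the unique element of $\sfm(\varphi)\otimes_A\Omega_A^{\rat}$ with localized residue $\jmath(\eta)$. If $\jmath(\eta)=0$, then $\mathcal G_J(0)=0$ by $\Fq$-linearity of the orbital series. In that case I need the right side to vanish as well, which the residue computation below gives automatically.

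\emph{The right side.} The element $\omega_\jmath\otimes\eta$ lies in $\sfm(\varphi)\otimes_A\Omega_A^{\rat}$ trivially. Its residue is computed by reducing to integral differentials. Since $\eta\in\ii_J^{-1}\Omega_A$, I choose $0\ne c\in\ii_J\subset A$, so that $c\eta\in\Omega_A$. Corollary~\ref{cor:characteristic-residue}, specifically \eqref{eq:characteristic-residue-anchor}, then gives
\[
\operatorname{res}_\Xi(\omega_\jmath\otimes c\eta)=\mathscr L_\varphi(\omega_\jmath)(c\eta)=\jmath(c\eta)=\iota(c)\jmath(\eta).
\]
Here I use $\mathscr L_\varphi(\omega_\jmath)=\jmath|_{\Omega_A}$ and the $A$-linearity of $\jmath$ on $A\nu_J$. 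The residue map is $K$-linear, with scalars acting through $\iota$ (the value of the function $c$ at $\Xi$). Dividing by $\iota(c)$ therefore gives $\operatorname{res}_\Xi(\omega_\jmath\otimes\eta)=\jmath(\eta)$.

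Equivalently, Theorem~\ref{thm:identity-dual-factorization} gives $\mathcal H_\varphi(\jmath(c\eta))=\omega_\jmath\otimes c\eta$, and Proposition~\ref{lem:residue} then yields the same residue. Injectivity of $\operatorname{res}_\Xi$ on the localized module now yields \ref{item:primitive-G-factorization}.

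\emph{Main obstacle.} The delicate point is making sure that the residue on the localized module is compatible with $K$-scalars. Concretely, $\operatorname{res}_\Xi(r Q)=\iota(r)\operatorname{res}_\Xi Q$ must hold for the canonical continuations, and $\omega_\jmath\otimes\eta$ (with $\eta$ non-integral) must be read consistently inside $\operatorname{Frac}(\mathbb T_A)\otimes_A\Omega_A$. I would handle this via Theorem~\ref{thm:rational-anderson-continuation}\ref{item:ratand-form}--\ref{item:ratand-continuation}: write each side as $r\,\mathcal H_\varphi(\lambda)$ for a fixed $0\neq\lambda\in\Lambda_\varphi$, and note that $r\in K$ is regular and nonzero at $\Xi$ whenever it is nonzero.

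\emph{Part \ref{item:primitive-G-integrality}.} If $\eta\in\Omega_A=\ii_J\nu_J$, write $\eta=a\nu_J$ with $a\in\ii_J$. Then $\jmath(\eta)=\iota(a)z_J$ up to the $A$-linear scaling defining $\jmath$, and it lies in $\iota(\ii_J)z_J=\Lambda_\varphi$. So $\omega_\jmath\otimes\eta\in\sfm(\varphi)\otimes_A\Omega_A$ directly, and by \ref{item:primitive-G-factorization} the left side is the same element. Alternatively, this agrees with $\mathcal H_\varphi(\jmath(\eta))$ by Theorem~\ref{thm:identity-dual-factorization}, which is integral by Theorem~\ref{thm:canonical-assembly}.
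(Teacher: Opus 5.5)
Your proposal is correct and follows the paper's own proof essentially step by step. Like the paper, you compute $\operatorname{res}_\Xi(\omega_\jmath\otimes\eta)=\jmath(\eta)$ by clearing a denominator ($c\eta\in\Omega_A$), applying \eqref{eq:characteristic-residue-anchor} and dividing by $\iota(c)$, then invoke the uniqueness in Theorem~\ref{thm:localized-J-kernel-identification} for nonzero $\jmath(\eta)$. Your zero case via injectivity of $\operatorname{res}_\Xi$ replaces the paper's remark that $\eta=0$ or $\jmath=0$, and both are fine.
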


\begin{proof}
\emph{\ref{item:primitive-G-factorization}.}
Extend the restricted anchor to $\jmath_K:{\Omega_A^{\rat}}\to\iota(K)\Lambda_\varphi$.
On $A\nu_J$ this agrees with the original $\jmath$.
The localized residue is $K$-linear, so
\[
 \operatorname{res}_\Xi(\omega_\jmath\otimes\eta)
 =\jmath_K(\eta)=\jmath(\eta).
\]
For example, this follows by choosing $0\ne b\in A$ with
$b\eta\in\Omega_A$, applying
\eqref{eq:characteristic-residue-anchor} to $b\eta$,
and dividing by $\iota(b)$.
The value $\jmath(\eta)$ lies in $h_J(\Xi)^{-1}\Lambda_\psi$.
If it is nonzero, Theorem~\ref{thm:localized-J-kernel-identification}
identifies $\mathcal G_J(\jmath(\eta))$ as the unique homogeneous
differential with this residue.  If it is zero, either $\eta=0$
or $\jmath=0$, and both sides vanish.
This proves \eqref{eq:primitive-G-identity}.
\emph{\ref{item:primitive-G-integrality}.}
For $\eta\in\Omega_A$, its image is a source period and the tensor
$\omega_\jmath\otimes\eta$ is integral, proving the last assertion.
\end{proof}
\begin{corollary}[Primitive coordinate and its integral locus]
\label{cor:isogeny-onestep-period-factorization}
Suppose $\jmath_K(\nu_J)=z_J$ and the restriction
$\jmath:\Omega_A=\ii_J\nu_J\to\Lambda_\varphi$ is an isomorphism.
\begin{enumerate}[label=\textup{(\roman*)},leftmargin=2.2em]
\item\label{item:primitive-coordinate}
For every $a\in A$,
$\mathcal G_J(\iota(a)z_J)=a\omega_\jmath\otimes\nu_J$.
The residue map identifies
$\nu_J\operatorname{sf}(\varphi)$ with $\iota(A)z_J$, with inverse
$\mathcal G_J$.
\item\label{item:primitive-exact-integrality}
This value belongs to $\operatorname{sf}(\varphi)\otimes_A\Omega_A$
if and only if $a\in\ii_J$.
\end{enumerate}
\end{corollary}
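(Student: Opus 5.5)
The plan is to specialize the primitive $G$-identity, Theorem~\ref{thm:primitive-G}, to the generator $\nu_J$ of $A\nu_J=\ii_J^{-1}\Omega_A$, and then to read off integrality from the residue isomorphism of Corollary~\ref{cor:characteristic-residue}.

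For \ref{item:primitive-coordinate}, I will take $\eta=a\nu_J$ with $a\in A$. This $\eta$ lies in $A\nu_J$, and $A$-linearity of $\jmath_K$ gives $\jmath(a\nu_J)=\iota(a)z_J$. Theorem~\ref{thm:primitive-G}\ref{item:primitive-G-factorization} then gives
\[
\mathcal G_J(\iota(a)z_J)=\omega_\jmath\otimes a\nu_J=a\omega_\jmath\otimes\nu_J .
\]
Since the restriction of $\jmath$ is an isomorphism, Theorem~\ref{thm:direct-logarithmic-special-line}(iii) gives $\operatorname{sf}(\varphi)=A\omega_\jmath$. Hence $\nu_J\operatorname{sf}(\varphi)=\{a\omega_\jmath\otimes\nu_J : a\in A\}$. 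The characteristic residue of $a\omega_\jmath\otimes\nu_J$ is $\iota(a)z_J$, by Theorem~\ref{thm:primitive-G}. So $\operatorname{res}_\Xi$ maps $\nu_J\operatorname{sf}(\varphi)$ onto $\iota(A)z_J$. It is injective because $\operatorname{res}_\Xi$ is injective on $\operatorname{sf}(\varphi)\otimes_A\Omega_A^{\rat}$ by Theorem~\ref{thm:rational-anderson-continuation}\ref{item:ratand-residue}. The display above shows that $\mathcal G_J$ is its inverse.

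For \ref{item:primitive-exact-integrality}, suppose first that $a\in\ii_J$. Then $a\nu_J\in\Omega_A$, and Theorem~\ref{thm:primitive-G}\ref{item:primitive-G-integrality} places the value in $\operatorname{sf}(\varphi)\otimes_A\Omega_A$. Conversely, suppose $a\omega_\jmath\otimes\nu_J$ is integral. Its residue is then in $\Lambda_\varphi$ by \eqref{eq:direct-characteristic-residue-isomorphism}, so $\iota(a)z_J\in\Lambda_\varphi=\iota(\ii_J)z_J$ by Proposition~\ref{prop:isogeny-direct-period-map}\ref{item:anchor-lattice}. Because $z_J\ne0$ and $\iota$ is injective, this forces $a\in\ii_J$.

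The one point needing care is the compatibility of the integral and localized residue maps. I must make sure that an element of $\operatorname{sf}(\varphi)\otimes_A\Omega_A^{\rat}$ whose residue lies in $\Lambda_\varphi$ really comes from $\operatorname{sf}(\varphi)\otimes_A\Omega_A$. This follows since $\Omega_A$ is invertible, so $\operatorname{sf}(\varphi)\otimes_A\Omega_A\to\operatorname{sf}(\varphi)\otimes_A\Omega_A^{\rat}$ is injective. The integral residue isomorphism \eqref{eq:direct-characteristic-residue-isomorphism} is then the restriction of the localized one \eqref{eq:localized-characteristic-residue-isomorphism}, so the preimage of $\Lambda_\varphi$ is exactly the integral submodule.
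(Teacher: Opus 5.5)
Your proof is correct and follows the paper's route for (i): you specialize Theorem~\ref{thm:primitive-G} at $\eta=a\nu_J$, use $\operatorname{sf}(\varphi)=A\omega_\jmath$, and read off the residue $\iota(a)z_J$. For (ii) the paper instead uses the basis $\omega_\jmath$ to reduce integrality directly to $a\nu_J\in\Omega_A=\ii_J\nu_J$, whereas you argue on the period side through $\Lambda_\varphi=\iota(\ii_J)z_J$; the two arguments are equivalent via $\jmath$, and your converse only needs the compatibility of the integral and localized residue maps, not the preimage statement in your last paragraph.
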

\begin{proof}
\emph{\ref{item:primitive-coordinate}.}
The primitive identity gives the formula, and
Theorem~\ref{thm:direct-logarithmic-special-line} gives
$\operatorname{sf}(\varphi)=A\omega_\jmath$.
Residue sends its basis $\nu_J\omega_\jmath$ to $z_J\ne0$, proving the
isomorphism and the inverse formula.

\emph{\ref{item:primitive-exact-integrality}.}
Since $\omega_\jmath$ is an $A$-basis, integrality is equivalent to
$a\nu_J\in\Omega_A=\ii_J\nu_J$, hence to $a\in\ii_J$.
\end{proof}

\begin{definition}[Normalized period]\label{def:normalized-period}
{A unit anchor $\jmath:\Omega_A\xrightarrow{\sim}\Lambda_\varphi$,
together with a differential $\nu_J$ satisfying
$\Omega_A=\mathfrak i_J\nu_J$, normalizes the fundamental period
$\tilde\pi:=h_J(\Xi)\,\jmath_K(\nu_J)$, where $\jmath_K$ denotes the
$K$-linear extension of $\jmath$.  It is uniquely determined by the
pair $(\jmath,\nu_J)$.}
\end{definition}

\begin{corollary}[Coincidence on the integral locus]
\label{cor:primitive-G-GH}
Retain Notation~\ref{not:primitive-G}. For every \(\eta\in\Omega_A\),
\[
\mathcal{G}_{J}(\jmath(\eta))
=\mathcal{H}_{\varphi}(\jmath(\eta))
=\omega_\jmath\otimes\eta
\in\sfm(\varphi)\otimes_A\Omega_A.
\]
Equivalently, if \(\jmath=\jmath_*\) is an isomorphism, then
\(\mathcal G_{J}(\lambda)=\mathcal{H}_\varphi(\lambda)\) for every
\(\lambda\in\Lambda_\varphi\). In particular, the scalar factorization
\eqref{eq:canonical-family-scalar-factorization} holds verbatim with
\(\mathcal{H}_{\varphi}(\jmath_*(\cdot))\) replaced by
\(\mathcal G_{J}(\jmath_*(\cdot))\), as does the $A$-linearity of
Proposition~\ref{prop:Hphi-linearity}.
\end{corollary}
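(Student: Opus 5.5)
The plan is to obtain both equalities from results already available, using only the uniqueness of a homogeneous differential with prescribed characteristic residue. Fix $\eta\in\Omega_A$. The restriction of $\jmath$ to $\Omega_A=\ii_J\nu_J$ lands in $\Lambda_\varphi=\iota(\ii_J)z_J$ (Notation~\ref{not:primitive-G}), so $\lambda:=\jmath(\eta)$ is a source period. The first equality $\mathcal G_J(\jmath(\eta))=\omega_\jmath\otimes\eta$ is exactly Theorem~\ref{thm:primitive-G}\ref{item:primitive-G-factorization}, specialized to $\eta\in\Omega_A\subset A\nu_J$. Part \ref{item:primitive-G-integrality} of the same theorem then places it in $\sfm(\varphi)\otimes_A\Omega_A$.

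For the equality with $\mathcal H_\varphi$, I would apply Theorem~\ref{thm:identity-dual-factorization} to the anchor $\jmath|_{\Omega_A}\in\operatorname{Hom}_A(\Omega_A,\Lambda_\varphi)$. Its special function is the same $\omega_\jmath$, because Notation~\ref{not:primitive-G} defines $\omega_\jmath$ through $\mathscr L_\varphi(\omega_\jmath)=\jmath|_{\Omega_A}$, and Theorem~\ref{thm:direct-logarithmic-special-line} shows that $\mathscr L_\varphi$ and $\jmath\mapsto\omega_\jmath$ are mutually inverse. The theorem then gives $\mathcal H_\varphi(\jmath(\eta))=\omega_\jmath\otimes\eta$. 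As an independent check, both $\mathcal G_J(\lambda)$ and $\mathcal H_\varphi(\lambda)$ have residue $\lambda$ at $\Xi$ (Proposition~\ref{prop:GJ-Hpsi}\ref{item:GJ-residue} and Proposition~\ref{lem:residue}), and both are rational Anderson differentials. By Theorem~\ref{thm:rational-anderson-continuation}\ref{item:ratand-residue} the residue map is injective on $\sfm(\varphi)\otimes_A\Omega_A^{\rat}$, so they coincide.

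For the reformulation, suppose $\jmath=\jmath_*$ is an isomorphism. Then every $\lambda\in\Lambda_\varphi$ has the form $\jmath(\eta)$ with $\eta\in\Omega_A$, and the displayed identity gives $\mathcal G_J(\lambda)=\mathcal H_\varphi(\lambda)$. Conversely, specializing this equality to $\lambda=\jmath_*(\eta)$ recovers the displayed identity. The scalar factorization \eqref{eq:canonical-family-scalar-factorization} and the $A$-linearity of Proposition~\ref{prop:Hphi-linearity} then transfer verbatim, since $\mathcal G_J$ and $\mathcal H_\varphi$ agree as functions on $\Lambda_\varphi$ and $\iota(A)\Lambda_\varphi\subseteq\Lambda_\varphi$.

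The only delicate point is checking that the $\omega_\jmath$ in Theorem~\ref{thm:primitive-G} is literally the one attached to the restricted anchor in Theorem~\ref{thm:identity-dual-factorization}. This reduces to the bijection of Theorem~\ref{thm:direct-logarithmic-special-line}, so I expect no real obstacle.
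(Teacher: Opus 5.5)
Your proposal is correct and follows the paper's own proof. The paper likewise gets $\mathcal G_J(\jmath(\eta))=\omega_\jmath\otimes\eta$ from Theorem~\ref{thm:primitive-G}, using $\jmath(\eta)\in\Lambda_\varphi\subseteq h_J(\Xi)^{-1}\Lambda_\psi$, identifies it with $\mathcal H_\varphi(\jmath(\eta))$ via \eqref{eq:canonical-family-scalar-factorization}, and deduces the $\lambda$-form from $\jmath_*$ being an isomorphism; your check that the two descriptions of $\omega_\jmath$ agree (via Theorem~\ref{thm:direct-logarithmic-special-line}) and your residue-uniqueness cross-check are sound but go beyond what the paper writes out.
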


\begin{proof}
For \(\eta\in\Omega_A\) we have
\(\jmath(\eta)\in\Lambda_\varphi\subseteq h_J(\Xi)^{-1}\Lambda_\psi\) by
\eqref{eq:primitive-G-lattices}, so Theorem~\ref{thm:primitive-G} gives
\(\mathcal{G}_{J}(\jmath(\eta))=\omega_\jmath\otimes\eta\), which equals
\(\mathcal{H}_{\varphi}(\jmath(\eta))\) by
\eqref{eq:canonical-family-scalar-factorization}. The \(\lambda\)-form uses
that \(\jmath_*\) is an isomorphism. The balanced identity for the
\(G\)-family reads
\(\mathcal{G}_{J}(\iota(a)z_J)=a\,\nu_J\,\omega_{\jmath_*}\) for
\(0\ne a\in A\), which follows by taking $\eta=a\nu_J$ in
\eqref{eq:primitive-G-identity}.
\end{proof}

\begin{example}[The HHY datum]
\label{ex:primitive-G-HHY}
Take \(\varphi=\Psi\), \(\psi=\Phi\) and the isogeny datum
\(J=[\eta_J,h_J;\ii_\infty^2,1]:\Psi\to\Phi\) of the anchor-map
subsection, with \(h_J(\theta)=(\theta-\eta)^{-2}\), so
\(h_J(\Xi)=(\theta-\eta)^{-2}\) and \(\Omega_A=\ii_\infty^2\,\rdt\); i.e.\
\eqref{eq:primitive-G-diff-match} holds with \(\nu_J=\rdt\), and
\(z_J=(\theta-\eta)^2\tilde{\pi}_\Phi\) by \eqref{eq:primitive-G-zJ}. Then
\(A\nu_J=A\,\rdt\). Let \(\jmath: A\,\rdt \to \iota(A)z_J\) be the \(A\)-linear
map determined by \(\jmath(\rdt)=z_J\). Then Theorem~\ref{thm:primitive-G}
gives
\[
\mathcal{G}_{J}(\iota(z)z_J)
=\omega_\jmath\otimes z\,\rdt
=z(t)\,\omega_\jmath\,\rdt
\qquad(z\in A),
\]
and in particular the generator identity
\(\mathcal G_{J}(z_J)=\omega_\jmath\otimes\rdt\), whose characteristic
residue is \(\operatorname{res}_\Xi\bigl(\rdt\,\omega_\jmath\bigr)=z_J\).
For \(a\in\ii_\infty^2\) one has
\(\jmath(a\,\rdt)=\iota(a)z_J\in\Lambda_\Psi=\iota(\ii_\infty^2)z_J\) by
\eqref{eq:primitive-G-lattices}, and
\[
\mathcal{G}_{J}(\iota(a)z_J)
=\mathcal{H}_\Psi\bigl(\iota(a)z_J\bigr)
=a\,\rdt\,\omega_\jmath
\]
by Corollary~\ref{cor:primitive-G-GH}, whereas for
\(a\notin\ii_\infty^2\) (for instance \(a=1\), i.e.\ \(u=z_J\),
which is not a period) the value \(\mathcal{H}_\Psi(z_J)\) is
inhomogeneous, while the \(G\)-identity persists: this realizes the
maximal extension.
\end{example}

\subsection{Residue logarithms and characteristic specialization}
\label{sec:residue-log-formula}

Choose a nonzero differential $\nu\in\Omega_A$.  Since its divisor
is defined over $\Fq$, it is regular and nonzero at the generic
characteristic point $\Xi$, and coefficient Frobenius fixes it.  Write
\begin{equation}\label{eq:shtuka-differential-coordinate}
 \widetilde\lambda_\psi=\alpha\,\nu .
\end{equation}

\begin{definition}[Anderson--Thakur residue logarithm]
\label{def:log}
Define the formal $\Fq$-linear series
\begin{equation}\label{eq:residue-logarithm}
 \log_\psi(Z)
 :=\sum_{i\ge0}
 \operatorname{res}_{\Xi}\!\left(
  \frac{\widetilde\lambda_\psi^{(i+1)}}
       {f_\psi f_\psi^{(1)}\cdots f_\psi^{(i)}}
 \right)Z^{q^i}.
\end{equation}
\end{definition}

\begin{lemma}[Coefficient recursion of a one-step defect]
\label{lem:one-step-coefficient-recursion}
{Let $f$ be a {nonzero} meromorphic function, $c$ a meromorphic
differential, and
\[
 B(Z)=\sum_{i\ge0}Q_iZ^{q^i}
\]
a formal $\Fq$-linear series with differential coefficients satisfying
\[
 {B(Z)^{(1)}-fB(Z)=-c\,Z,}
 \qquad
 {B(Z)^{(1)}:=\sum_{i\ge0}Q_i^{(1)}Z^{q^{i+1}}.}
\]
{Here the twist acts on the differential coefficients and sends
$Z$ to $Z^q$.}
Then
\[
 Q_0=\frac{c}{f},
 \qquad
 Q_i=\frac{Q_{i-1}^{(1)}}{f}\quad(i\ge1),
 \qquad\text{i.e.}\qquad
 Q_i=\frac{c^{(i)}}{f f^{(1)}\cdots f^{(i)}}.
\]}
\end{lemma}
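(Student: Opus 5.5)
The plan is to compare coefficients of $Z^{q^i}$ on the two sides of the defect equation, which reduces the statement to an elementary induction. The twisted series is
\[
 B(Z)^{(1)}=\sum_{i\ge0}Q_i^{(1)}Z^{q^{i+1}}=\sum_{i\ge1}Q_{i-1}^{(1)}Z^{q^i}.
\]
So $B(Z)^{(1)}$ has no $Z^{q^0}=Z$ term, whereas $fB(Z)=\sum_{i\ge0}fQ_iZ^{q^i}$ has one in every degree.

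The monomials $Z^{q^i}$, $i\ge0$, are distinct. Formal $\Fq$-linear series with coefficients in the field of meromorphic differentials over the meromorphic functions are determined by their coefficients. We can therefore equate coefficients degree by degree, and no convergence question arises.

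In degree $q^0$ the equation reads $-fQ_0=-c$. Since $f\neq0$ is invertible in the meromorphic function field, this gives $Q_0=c/f$.

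In degree $q^i$ with $i\ge1$, the right side $-cZ$ contributes nothing, so $Q_{i-1}^{(1)}-fQ_i=0$. Hence $Q_i=Q_{i-1}^{(1)}/f$.

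The closed form $Q_i=c^{(i)}/(ff^{(1)}\cdots f^{(i)})$ then follows by induction on $i$. The case $i=0$ is the formula for $Q_0$ just obtained. For the inductive step, assume the closed form for $Q_{i-1}$. Coefficient Frobenius is a ring endomorphism compatible with the module structure of differentials over functions. It therefore twists a quotient termwise:
\[
 \Bigl(\frac{c^{(i-1)}}{f\cdots f^{(i-1)}}\Bigr)^{(1)}=\frac{c^{(i)}}{f^{(1)}\cdots f^{(i)}}.
\]
Dividing by $f$ gives the stated expression for $Q_i$.

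There is no real obstacle in this argument. The only points to state explicitly are the two facts used above: the twist is multiplicative on quotients of differentials by functions, and $f^{(j)}\neq0$ for every $j$. The second holds because the twist of a nonzero function is nonzero, coefficient Frobenius being injective.
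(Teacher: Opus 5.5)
Your proposal is correct and follows the same route as the paper: compare the coefficients of $Z$ and of $Z^{q^i}$ ($i\ge1$) in the defect equation to get $Q_0=c/f$ and $Q_i=Q_{i-1}^{(1)}/f$, then obtain the closed form by induction. Your justifications (semilinearity of the twist on quotients and $f^{(j)}\neq0$) are exactly the points the paper leaves implicit. One small wording fix: meromorphic differentials form a one-dimensional vector space over the field of meromorphic functions, not a field.
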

\begin{proof}
{Compare the coefficients of $Z$ and of $Z^{q^i}$ ($i\ge1$)
in {$B(Z)^{(1)}-fB(Z)=-c\,Z$}; the last formula follows by induction.}
\end{proof}

\begin{theorem}[Residue formula for the logarithm]
\label{prop:log-is-log}
The series in \eqref{eq:residue-logarithm} is the formal logarithm of
$\psi$. Its linear coefficient is one, and it converges on the usual logarithm disk.
\end{theorem}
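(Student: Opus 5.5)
The plan is to build a formal generating series from the identity-kernel defect of Theorem~\ref{thm:identity-kernel-defect}, solve it by the recursion of Lemma~\ref{lem:one-step-coefficient-recursion}, and then compose with $\exp_\psi$ so that the characteristic residue returns the identity. Take $J=\mathrm{id}_\psi$, so that $h_J=1$, $\mathcal G_J=\mathcal H_\psi$ and $\widetilde\delta_J=\widetilde\lambda_\psi^{(1)}$. Theorem~\ref{thm:frob} then reads
\[
\mathcal H_\psi(z)^{(1)}-f_\psi\mathcal H_\psi(z)=-\widetilde\lambda_\psi^{(1)}\exp_\psi(z).
\]
Formally substitute $z=\log(Z)$, where $\log$ is the (a priori unknown) compositional inverse of $\exp_\psi$, an $\Fq$-linear formal series with linear coefficient one. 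Put
\[
B(Z):=\mathcal H_\psi(\log Z)=\sum_{i\ge0}Q_iZ^{q^i}.
\]
Here I use that $\mathcal H_\psi(z)=\sum_n z^{q^n}\operatorname{orb}_{\psi,n}(K_\psi)$ is an $\Fq$-linear entire series in $z$ with differential coefficients, and that composing with an $\Fq$-linear series is legitimate coefficientwise. Coefficient Frobenius on $\mathcal H_\psi(z)$ corresponds to twisting the coefficients and sending $z\mapsto z^q$. With this convention $B(Z)^{(1)}$ is exactly the twist in Lemma~\ref{lem:one-step-coefficient-recursion}, except that $\log$ itself is also twisted. To avoid this mismatch, I would instead work directly with the composed identity as formal power series in $Z$ with coefficients in meromorphic differentials on a neighborhood $W_\Xi$. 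The right side becomes $-\widetilde\lambda_\psi^{(1)}Z$.

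The cleaner route is the converse direction. Define
\[
P(Z):=\sum_{i\ge0}\frac{\widetilde\lambda_\psi^{(i+1)}}{F_\psi^{[i+1]}}Z^{q^i},
\]
the unique solution of $P^{(1)}-f_\psi P=-\widetilde\lambda_\psi^{(1)}Z$ given by Lemma~\ref{lem:one-step-coefficient-recursion} with $c=\widetilde\lambda_\psi^{(1)}$, $f=f_\psi$. Here $P^{(1)}$ twists the coefficients and sends $Z\mapsto Z^q$, with $Z$ treated as an untwisted variable. Then consider $P(\exp_\psi(z))$ as a formal series in $z$. Its twist in the sense of coefficients, with $z\mapsto z^q$, satisfies
\[
P(\exp_\psi z)^{(1)}=P^{(1)}\bigl(\exp_\psi(z)\bigr)^{\sim},
\]
which is again problematic because $\exp_\psi$ has non-$\Fq$ coefficients. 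Both routes therefore hit the same issue: Frobenius acts on the $\Cinf$-coefficients of $\exp_\psi$ too.

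The resolution is to compare residues only. Fix $z$ small and set $Z=\exp_\psi(z)$, a number rather than a variable, so that no twisting of $\exp_\psi$ occurs. The differential $D(z):=\mathcal H_\psi(z)-P(\exp_\psi z)$ then satisfies $D^{(1)}=f_\psi D$, in the sense where the scalar is twisted. Indeed, both $\mathcal H_\psi(z)$ and $P(Z)$ with $Z$ a scalar satisfy the same inhomogeneous equation with forcing $-\widetilde\lambda_\psi^{(1)}\exp_\psi(z)$, provided $P(Z)$ converges on $W_\Xi$ for $|Z|$ small. That convergence uses the orbit growth estimates of Proposition~\ref{thm:automatic-orbit-growth} applied to $\widetilde\lambda_\psi^{(i+1)}/F_\psi^{[i+1]}$, together with the residue bound at $\Xi$. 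Hence $D(z)$ is a rational Anderson differential. By Theorem~\ref{thm:rational-anderson-continuation}, $D(z)$ is determined by its residue, which lies in $\iota(K)\Lambda_\psi$. It depends continuously on $z$ and vanishes at $z=0$. Discreteness of $\Lambda_\psi$, together with the fact that small elements of $\iota(K)\Lambda_\psi$ must come from a continuous $\Fq$-linear map into a discrete set near $0$, forces $D=0$ for small $z$. The main obstacle is exactly this step: $\iota(K)\Lambda_\psi$ is not discrete, so I would instead show that the residue of $D(z)$ is an analytic $\Fq$-linear function of $z$ with values in a countable set, hence constant and equal to $0$.

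Taking $\operatorname{res}_\Xi$ and using Proposition~\ref{lem:residue} then gives
\[
z=\sum_i\operatorname{res}_\Xi\bigl(\widetilde\lambda_\psi^{(i+1)}/F_\psi^{[i+1]}\bigr)\exp_\psi(z)^{q^i},
\]
so the series \eqref{eq:residue-logarithm} inverts $\exp_\psi$ near $0$. Its linear coefficient equals one by \eqref{eq:shtuka-differential-residue}, and convergence on the logarithm disk follows.
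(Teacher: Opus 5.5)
Your proposal has a genuine gap, and it comes from abandoning the route that works.

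\textbf{The gap.} The route you adopt needs, as an \emph{input}, that $P(Z)=\sum_i\gamma_iZ^{q^i}$, with $\gamma_i=\widetilde\lambda_\psi^{(i+1)}/F_\psi^{[i+1]}$, converges in $\mathbb T_A\otimes_A\Omega_A$ for small $|Z|$. Without this, $D(z)$ is not an element of $\operatorname{Frac}(\mathbb T_A)\otimes_A\Omega_A$, and you cannot twist termwise to get $D^{(1)}=f_\psi D$. You also need $P(Z)$ to converge normally on a punctured neighbourhood of $\Xi$. Only then is $\operatorname{res}_\Xi P(\exp_\psi z)=\sum_i\operatorname{res}_\Xi(\gamma_i)\exp_\psi(z)^{q^i}$, and only then is $z\mapsto\operatorname{res}_\Xi D(z)$ analytic, which your countability step requires. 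That second requirement is essentially the convergence of the very series you are trying to prove converges.

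Proposition~\ref{thm:automatic-orbit-growth} supplies neither. It bounds the specializations $R(\Xi^{(n)},\cdot)$ of one fixed rational function $R$ on $X\times X$ along the characteristic orbit. It says nothing about the sequence of twisted differentials $\gamma_i$. The recursion $\gamma_i=\gamma_{i-1}^{(1)}/f_\psi$ gives no Gauss-norm bound either, since $f_\psi$ need not be invertible on the Tate domain. In the paper, convergence of $P$ is an output of the argument (compare Theorem~\ref{thm:HJ-equal}), not an input.

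\textbf{The missing idea.} The ``mismatch'' you diagnose in your first route does not exist. Take a scalar $Z$ in the logarithm disk and put $z=\log_\psi Z=\sum_j\ell_jZ^{q^j}$, where $\log_\psi$ is the compositional inverse of $\exp_\psi$. Write $\mathcal H_\psi(w)=\sum_nA_nw^{q^n}$. Then $B(Z)=\mathcal H_\psi(\log_\psi Z)=\sum_iQ_iZ^{q^i}$, where $Q_i=\sum_{n\le i}A_n\ell_{i-n}^{q^n}$ is a finite sum of rational differentials.

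Coefficient Frobenius of the value gives
\[
B(Z)^{(1)}=\sum_nA_n^{(1)}z^{q^{n+1}}=\sum_iQ_i^{(1)}Z^{q^{i+1}}.
\]
This holds because $Q_i^{(1)}=\sum_nA_n^{(1)}\ell_{i-n}^{q^{n+1}}$ twists the $\ell_j$ together with the $A_n$. That is exactly the convention of Lemma~\ref{lem:one-step-coefficient-recursion}. Your ``fix $Z$ as a scalar'' observation is precisely what makes this route consistent.

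The rest follows directly:
\begin{enumerate}
\item Apply Theorem~\ref{thm:frob} with $J=\mathrm{id}$ at $z=\log_\psi Z$ and use $\exp_\psi(\log_\psi Z)=Z$. This gives $B^{(1)}-f_\psi B=-\widetilde\lambda_\psi^{(1)}Z$.
\item Comparing coefficients gives $Q_i=\gamma_i$.
\item The proof of Proposition~\ref{lem:residue} gives $\operatorname{res}_\Xi A_0=1$ and $\operatorname{res}_\Xi A_n=0$ for $n\ge1$. Taking residues of the finite sums $Q_i$ therefore yields $\operatorname{res}_\Xi\gamma_i=\ell_i$.
\end{enumerate}
Convergence on the logarithm disk is then inherited from the genuine logarithm. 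This is the paper's proof.

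Even granting convergence of $P$, your residue step could be replaced by something simpler. Compare Taylor coefficients in $z$ in $\mathcal H_\psi(z)=P(\exp_\psi z)$, that is, $A_m=\sum_{i+k=m}\gamma_ie_k^{q^i}$, and take residues of these finite sums. This avoids both termwise residues of infinite series and the countability argument.
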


\begin{proof}
{Write
\[
 \log_\psi^{\mathrm{std}}(Z)=\sum_{i\ge0}\ell_iZ^{q^i}
\]
for the formal logarithm of $\psi$, the composition inverse of the
exponential series, and put
\[
 B(Z):=\mathcal H_\psi\bigl(\log_\psi^{\mathrm{std}}(Z)\bigr)
 =\sum_{i\ge0}Q_iZ^{q^i},
 \qquad
 \mathcal H_\psi(w)=\sum_{n\ge0}A_nw^{q^n},
\]
where the second display is the orbital expansion of the generating
differential (Definition~\ref{def:kernel}).  {Explicitly,
\[
 Q_i=\sum_{n=0}^{i}A_n\ell_{i-n}^{q^n},
\]
so each $Q_i$ is a finite sum of rational differentials.}
The identity-isogeny case of Theorem~\ref{thm:frob},
together with
$\exp_\psi\bigl(\log_\psi^{\mathrm{std}}(Z)\bigr)=Z$, gives
\[
 {B(Z)^{(1)}-f_\psi B(Z)=-\widetilde\lambda_\psi^{(1)}Z.}
\]
{Lemma~\ref{lem:one-step-coefficient-recursion} with
$f=f_\psi$ and $c=\widetilde\lambda_\psi^{(1)}$ gives}
\[
 Q_i=\frac{\widetilde\lambda_\psi^{(i+1)}}{F_\psi^{[i+1]}}
 \qquad(i\ge0).
\]
{The coefficientwise residue computation in
Proposition~\ref{lem:residue} gives
$\operatorname{res}_\Xi A_0=1$ and
$\operatorname{res}_\Xi A_n=0$ for $n\ge1$.
Taking residues in the finite sum defining $Q_i$ therefore gives
$\operatorname{res}_\Xi Q_i=\ell_i$.  Thus, as formal series,}
\[
 \log_\psi^{\mathrm{std}}(Z)
 =\sum_{i\ge0}\operatorname{res}_\Xi\!
  \left(\frac{\widetilde\lambda_\psi^{(i+1)}}{F_\psi^{[i+1]}}\right)Z^{q^i},
\]
which is \eqref{eq:residue-logarithm}.  The linear coefficient is
$\operatorname{res}_\Xi(\widetilde\lambda_\psi^{(1)}/f_\psi)=1$ by
\eqref{eq:shtuka-differential-residue}, {and the coefficient identity gives convergence on the usual
logarithm disk.}}
\end{proof}

\begin{remark}{Anderson's residue formula is presented in
\cite[Proposition~0.3.8]{Tha93}.
Chung proves it for arbitrary infinity degree with the
characteristic-residue normalization used here
\cite[Theorem~II.20]{Chu21}.
The infinity normalization is discussed in
\cite[Remark~II.21]{Chu21} and \cite[item~13]{Tha93Corr}.}
\end{remark}

\begin{definition}[{Deformed logarithm}]
\label{def:large-log}
On a domain on which the coefficients are defined, put
\begin{equation}\label{eq:large-log}
 \operatorname{Log}_{\alpha,f_\psi}(\two;Z)
 :=Z+\sum_{i\ge1}
 \frac{\alpha^{(i+1)}(\two)}
      {\alpha^{(1)}(\two)
       f_\psi^{(1)}(\two)\cdots f_\psi^{(i)}(\two)}Z^{q^i}.
\end{equation}
It is first a formal series in $Z$; on an affinoid it is analytic precisely
when its terms tend to zero in the corresponding Banach algebra.
\end{definition}

{For $f_\psi=t-\theta$ and $\alpha=1$, specializing $Z$
gives the Carlitz deformed logarithms of \cite[\S6.1.1]{Pap08}.}

\begin{proposition}[Specialization and the generating series]
\label{prop:large-log}
One has
\begin{equation} \label{eq:large-log-specialization}
 \operatorname{Log}_{\alpha,f_\psi}(\Xi;Z)
  =\log_\psi(Z).                                 
\end{equation}
{This identity holds} formally and hence on every common domain of normal
convergence.
\end{proposition}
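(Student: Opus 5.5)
The plan is to compare the specialized deformed logarithm with the residue logarithm of Definition~\ref{def:log}, coefficient by coefficient. By Theorem~\ref{prop:log-is-log}, $\log_\psi$ is the formal logarithm of $\psi$. Its coefficients are $\operatorname{res}_\Xi\bigl(\widetilde\lambda_\psi^{(i+1)}/F_\psi^{[i+1]}\bigr)$. It therefore suffices to show, for each $i\ge0$, that the coefficient of $Z^{q^i}$ in $\operatorname{Log}_{\alpha,f_\psi}(\two;Z)$, evaluated at $\two=\Xi$, equals this residue. Once the identity holds as formal series, it holds on every common domain of normal convergence, since both sides are then given by the same convergent coefficient sequence.

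First I would rewrite the residue coefficients in terms of $\alpha$. The differential $\nu$ is defined over $\Fq$, so it is Frobenius-fixed and $\widetilde\lambda_\psi^{(i+1)}=\alpha^{(i+1)}\nu$. Hence
\[
 \ell_i=\operatorname{res}_\Xi\!\left(\frac{\alpha^{(i+1)}}{f_\psi f_\psi^{(1)}\cdots f_\psi^{(i)}}\,\nu\right).
\]
Next I would locate the pole. By \eqref{eq:divf}, $f_\psi$ has a simple zero at $\Xi$, because $\Xi^{(-1)}\notin\operatorname{supp}(V_\psi)$ in the realization. Each twist $f_\psi^{(j)}$ with $j\ge1$ has its $[\Xi]$-term moved to $\Xi^{(j)}\ne\Xi$, and Lemma~\ref{lem:orbit-escape} gives the disjointness of the relevant orbits. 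The numerator $\alpha^{(i+1)}\nu=\widetilde\lambda_\psi^{(i+1)}$ is regular at $\Xi$, since its divisor bound from \eqref{eq:shtuka-differential-divisor} involves only twists of $V_\psi$ and points above $\infty$; I would check that the twisted $V_\psi$ does not contain $\Xi$, or else absorb any such point by the same cancellation as in Lemma~\ref{lem:weighted-orbit-divisor-cancellation}. The integrand therefore has at most a simple pole at $\Xi$. Its residue is the value at $\Xi$ of $\alpha^{(i+1)}/(f_\psi^{(1)}\cdots f_\psi^{(i)})$ multiplied by $\operatorname{res}_\Xi(\nu/f_\psi)$.

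For the normalization I would take the case $i=0$. Equation \eqref{eq:shtuka-differential-residue} gives $1=\operatorname{res}_\Xi(\alpha^{(1)}\nu/f_\psi)=\alpha^{(1)}(\Xi)\operatorname{res}_\Xi(\nu/f_\psi)$. So $\alpha^{(1)}(\Xi)\ne0$ and $\operatorname{res}_\Xi(\nu/f_\psi)=1/\alpha^{(1)}(\Xi)$. Substituting this into the general coefficient gives
\[
 \ell_i=\frac{\alpha^{(i+1)}(\Xi)}{\alpha^{(1)}(\Xi)\,f_\psi^{(1)}(\Xi)\cdots f_\psi^{(i)}(\Xi)},
\]
which is exactly the $i$-th coefficient of \eqref{eq:large-log} at $\two=\Xi$, with $\ell_0=1$ matching the leading term $Z$. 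This proves the formal identity \eqref{eq:large-log-specialization}.

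I expect the main obstacle to be the regularity bookkeeping: showing that $\alpha^{(i+1)}$ and each $f_\psi^{(j)}$ with $1\le j\le i$ are regular and nonzero at $\Xi$. This can fail in degenerate cases where $\Xi$ meets $\operatorname{supp}(V_\psi^{(j)})$ or where $Z_\psi$ is involved. If it fails, I would fall back on reading the coefficient as the value of the single rational function $\alpha^{(i+1)}\nu/F_\psi^{[i+1]}$ after cancellation. The residue computation then goes through unchanged, since only the total order at $\Xi$ matters. This is the same convention the paper uses for regularized specializations.
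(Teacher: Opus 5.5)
Your strategy, rewriting $\widetilde\lambda_\psi^{(i+1)}=\alpha^{(i+1)}\nu$ and matching coefficients against the residue formula of Theorem~\ref{prop:log-is-log}, is the paper's. However, your main computation rests on a hypothesis the realization does not supply. Section~\ref{sec:realization} imposes only $\Xi,q_\psi^{(-1)}\notin\operatorname{supp}(V_\psi)$. The condition $\Xi^{(-1)}\notin\operatorname{supp}(V_\psi)$ appears only as an extra hypothesis in the simple-pole period extraction (Corollary~\ref{cor:simplified-period-extraction}).

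Suppose $m_{-1}:=\operatorname{mult}_{\Xi^{(-1)}}(V_\psi)>0$. Then \eqref{eq:divf} gives $\operatorname{ord}_\Xi f_\psi=m_{-1}+1\ge2$. The twisted bound \eqref{eq:shtuka-differential-divisor} forces $\operatorname{ord}_\Xi\alpha^{(1)}\ge m_{-1}>0$, because $\nu$ is regular and nonzero at $\Xi$. So $\alpha^{(1)}(\Xi)=0$, $\nu/f_\psi$ has a pole of order at least two, and your normalization $\operatorname{res}_\Xi(\nu/f_\psi)=1/\alpha^{(1)}(\Xi)$ has no meaning. Likewise $\operatorname{ord}_\Xi f_\psi^{(j)}=m_{-j-1}-m_{-j}$ for $j\ge1$, which need not vanish. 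Your formula for $\ell_i$ is therefore proved only on a generic locus.

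The fallback as written does not close this gap. The residue computation does not ``go through unchanged'', since it was routed through $\operatorname{res}_\Xi(\nu/f_\psi)$. Also, $\alpha^{(i+1)}\nu/F_\psi^{[i+1]}$ is a differential with a pole, not something that has a value at $\Xi$.

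The repair, which is the paper's proof, avoids factorwise evaluation entirely. Put $\gamma_i=\widetilde\lambda_\psi^{(i+1)}/F_\psi^{[i+1]}$.
\begin{itemize}
\item Telescoping \eqref{eq:divf} gives $\operatorname{ord}_\Xi F_\psi^{[i+1]}=m_{-i-1}+1$.
\item The $(i+1)$-st twist of \eqref{eq:shtuka-differential-divisor} gives $\operatorname{ord}_\Xi\widetilde\lambda_\psi^{(i+1)}\ge m_{-i-1}$.
\item These use only $\Xi\notin\operatorname{supp}(V_\psi)$, together with the injectivity of the characteristic orbit and its disjointness from the infinity orbit (Lemma~\ref{lem:orbit-escape}).
\end{itemize}
Hence every $\gamma_i$ has at most a simple pole at $\Xi$, with no genericity assumption.

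Since $\operatorname{res}_\Xi\gamma_0=1$ by \eqref{eq:shtuka-differential-residue}, the rational function $\gamma_i/\gamma_0$ is regular at $\Xi$ with value $\operatorname{res}_\Xi\gamma_i$. As rational functions, $\gamma_i/\gamma_0=\alpha^{(i+1)}/(\alpha^{(1)}f_\psi^{(1)}\cdots f_\psi^{(i)})$. This is exactly the $i$-th coefficient of $\operatorname{Log}_{\alpha,f_\psi}$, simplified before evaluation as the statement intends. Your generic computation is the special case in which this quotient can be evaluated factor by factor.
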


\begin{proof}
Put
\[
 \gamma_i=\frac{\widetilde\lambda_\psi^{(i+1)}}{F_\psi^{[i+1]}}
 \qquad(i\ge0).
\]
Telescoping the shtuka divisor gives
\[
 \operatorname{div}(\gamma_i)
 \ge V_\psi-\sum_{j=0}^{i}[\Xi^{(j)}]
       +\sum_{j=0}^{i}[q_\psi^{(j)}]
       -[q_\psi^{(i-1)}]-[q_\psi^{(i)}].
\]
In particular, $\gamma_i$ has at most a simple pole at $\Xi$.
The differential $\gamma_0$ has residue one there.
Thus the rational quotient $\gamma_i/\gamma_0$ is regular at $\Xi$,
and its value is $\operatorname{res}_\Xi\gamma_i$. Indeed, in a local coordinate $t$ at $\Xi$ the quotient is $(a_{-1}t^{-1}+O(1))/(t^{-1}+O(1))$ with $a_{-1}=\operatorname{res}_\Xi\gamma_i$, hence regular with value $a_{-1}$.
On the chosen differential chart this quotient is precisely
\[
 \frac{\alpha^{(i+1)}}{\alpha^{(1)}
                 f_\psi^{(1)}\cdots f_\psi^{(i)}}.
\]
Here the quotient is simplified as a rational function before evaluation;
individual factors may have zeros or poles.
The residue formula for $\log_\psi$ now proves
\eqref{eq:large-log-specialization}.
\end{proof}

\subsection{Papanikolas-type generating series}
\label{sec:large-logarithms}

\begin{definition}[Papanikolas-type generating series]
\label{def:one-step-H}
Let $f$ and $\alpha$ be meromorphic functions on $U_{\Cinf}$, {with $f\ne0$,} and put
\[
F^{[i+1]} :=\prod_{j=0}^{i}f^{(j)},\qquad i\ge0.
\]
On an affinoid $W\subset U_{\Cinf}^{\mathrm{an}}$ on which all displayed
coefficients are analytic, define
\begin{equation}\label{eq:H-series}
 \mathcal{P}_{\alpha,f}(\two;Z)
 :=\sum_{i=0}^{\infty}
 \frac{\alpha^{(i+1)}(\two)}{F^{[i+1]}(\two)}Z^{q^i}
\end{equation}
whenever the terms tend to zero in $\mathcal O(W)$.
\end{definition}

The domain is the union of the pairs $(W,Z)$ for which the terms tend to zero;
the divisor of $f$ alone need not give a single Tate algebra valid for every
$Z\in\Cinf$.

\begin{proposition}[Normal convergence and the Frobenius equation]
\label{prop:defect-H}
Let $B\subset\Cinf$ be bounded.  If
\begin{equation}\label{eq:H-normal-convergence-condition}
 \sup_{Z\in B}
 \left\|
  \frac{\alpha^{(i+1)}}{F^{[i+1]}}Z^{q^i}
 \right\|_W\longrightarrow0,
\end{equation}
then \eqref{eq:H-series} converges normally on $W\times B$,
{and the following identity holds meromorphically:}
\begin{equation}\label{eq:defect-H}
 {\mathcal{P}_{\alpha,f}(\two;Z)^{(1)}}
 -f(\two)\mathcal{P}_{\alpha,f}(\two;Z)
 =-\alpha^{(1)}(\two)Z.
\end{equation}
\end{proposition}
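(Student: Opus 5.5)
The plan has two parts. First I show that the partial sums of \eqref{eq:H-series} are Cauchy uniformly on $W\times B$. Then I twist the limit term by term and telescope. Write $P_i(\two;Z):=\frac{\alpha^{(i+1)}(\two)}{F^{[i+1]}(\two)}Z^{q^i}$.

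\emph{Convergence.} Hypothesis \eqref{eq:H-normal-convergence-condition} says that $\varepsilon_i:=\sup_{Z\in B}\|P_i(\cdot;Z)\|_W$ tends to zero. In a nonarchimedean Banach space a series whose terms tend to zero uniformly converges uniformly. So the partial sums $S_N=\sum_{i\le N}P_i$ form a Cauchy sequence in the Banach space of bounded functions $B\to\mathcal O(W)$ with the sup norm. They therefore converge normally on $W\times B$, in the sense used in Lemma~\ref{lem:orbital-evaluation-convergence}.

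\emph{Frobenius equation.} Apply coefficient Frobenius to each term; by definition it also sends $Z$ to $Z^q$, as in Lemma~\ref{lem:one-step-coefficient-recursion}. This gives
\[
P_i^{(1)}=\frac{\alpha^{(i+2)}}{\prod_{j=1}^{i+1}f^{(j)}}Z^{q^{i+1}}
= f\cdot\frac{\alpha^{(i+2)}}{F^{[i+2]}}Z^{q^{i+1}}=f\,P_{i+1},
\]
using $F^{[i+2]}=f\cdot\bigl(F^{[i+1]}\bigr)^{(1)}$. Summing over $i$ yields $S_N^{(1)}-fS_N=fP_{N+1}-fP_0$, and $fP_0=\alpha^{(1)}Z$. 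To pass to the limit, I would work as in Theorem~\ref{thm:coherence}. Frobenius is a continuous $q$-semilinear map $\mathcal O(W)\to\mathcal O(W^{(1)})$, and the twisted series is read on a common affinoid $V\subset W\cap W^{(1)}$, or more simply as an identity of meromorphic differentials after continuation. Multiplication by $f$ is bounded on any affinoid where $f$ is analytic. The remainder $fP_{N+1}$ tends to zero uniformly, because $\varepsilon_{N+1}\to0$. Letting $N\to\infty$ gives \eqref{eq:defect-H}.

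\emph{Main obstacle.} The delicate point is domain bookkeeping, not the algebra. The twisted terms live on the Frobenius image of $W$, and $f$ may have poles in $W$. So the identity first holds on affinoids where both series converge and $f$ is analytic. I would then extend it meromorphically, using the identity principle on the connected curve $U_{\Cinf}^{\mathrm{an}}$ as in Theorem~\ref{thm:rational-anderson-continuation}. Alternatively, I would note that the coefficient identity $P_i^{(1)}=fP_{i+1}$ is an exact rational identity, so \eqref{eq:defect-H} already holds formally in $Z$ and descends to every domain of normal convergence.
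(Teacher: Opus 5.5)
Your algebra is the same as the paper's: the exact identity $P_i^{(1)}=fP_{i+1}$ and the resulting telescoping, with $fP_0=\alpha^{(1)}Z$ the only surviving term. Your convergence argument is also fine. The weak point is the domain bookkeeping, and your primary way of handling it does not work in general.

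The twisted terms do not ``live on the Frobenius image of $W$''. Each $P_i^{(1)}$ is a meromorphic function of the same kind as $P_i$, and the only question is where $\sum_iP_i^{(1)}$ converges. Continuity of Frobenius $\mathcal O(W)\to\mathcal O(W^{(1)})$ only tells you that it converges on $W^{(1)}$. But $W\cap W^{(1)}$ can be empty: for a sufficiently small disc $W$ about $\Xi$, $W^{(1)}$ is a small disc about $\Xi^{(1)}\ne\Xi$ (Lemma~\ref{lem:orbit-escape}), disjoint from $W$. Discs near $\Xi$ matter here, since Proposition~\ref{prop:large-log} evaluates at $\Xi$. Nor can you rescue the identity by the identity principle on $U_{\Cinf}^{\mathrm{an}}$. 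In Theorem~\ref{thm:rational-anderson-continuation} the orbital construction supplies a global continuation, but here nothing continues $\mathcal P_{\alpha,f}$ off its domain of convergence.

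The missing step is already in your telescoped partial sums, and it is exactly how the paper argues. The relation $S_N^{(1)}=fS_N+fP_{N+1}-\alpha^{(1)}Z$ is a finite identity of meromorphic functions, so you may read it on $W$ itself. Choose a nonzero analytic $d$ on $W$ with $df$ analytic there. Then $\sup_{Z\in B}\|dfP_{N+1}(\cdot;Z)\|_W\le\|df\|_W\,\varepsilon_{N+1}\to0$. Hence $dS_N^{(1)}$ converges normally on $W\times B$: its terms are $dfP_{i+1}$, which tend to zero by the hypothesis at index $i+1$. This both defines $\mathcal P_{\alpha,f}^{(1)}$ meromorphically on $W$ and gives \eqref{eq:defect-H} there, with no overlap condition and no continuation.

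Your closing ``alternative'' points this way. However, ``descends to every domain of normal convergence'' presupposes precisely this convergence of the twisted series on $W$, and that is the one thing your proof has to establish.
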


\begin{proof}
Over a complete nonarchimedean Banach algebra, a series converges if and only
if its terms tend to zero.  Condition
\eqref{eq:H-normal-convergence-condition} gives uniform convergence on
$W\times B$.  {Put $Q_i=\alpha^{(i+1)}/F^{[i+1]}$.
The rational identity $Q_i^{(1)}=fQ_{i+1}$ implies
$dQ_i^{(1)}=dfQ_{i+1}$ for a nonzero analytic denominator $d$ on
$W$ such that $df$ is analytic.  The assumed convergence, with
index $i+1$, therefore gives normal convergence of
$d\sum_iQ_i^{(1)}Z^{q^{i+1}}$.
Thus the twisted series is well defined meromorphically, and}
\[
 {\mathcal{P}_{\alpha,f}(\two;Z)^{(1)}}
 =\sum_{i\ge0}
 \frac{\alpha^{(i+2)}}{f^{(1)}\cdots f^{(i+1)}}Z^{q^{i+1}},
\]
whereas
\[
 f\mathcal{P}_{\alpha,f}
 =\alpha^{(1)}Z+
 \sum_{i\ge1}
 \frac{\alpha^{(i+1)}}{f^{(1)}\cdots f^{(i)}}Z^{q^i}.
\]
After shifting the index in the first sum, every term except the initial one
cancels.  This is the normally convergent form of the one-step defect
equation for the Papanikolas-type series.
\end{proof}
\begin{proposition} \label{prop:H-large-log} 
  The relation of the Papanikolas-type series to the deformed logarithm is
  \begin{equation}\label{eq:H-large-log} 
  \mathcal{P}_{\alpha,f_\psi}(\two;Z)
 =\frac{\alpha^{(1)}(\two)}{f_\psi(\two)}
   \operatorname{Log}_{\alpha,f_\psi}(\two;Z).   
   \end{equation}
\end{proposition}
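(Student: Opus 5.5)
The plan is a direct termwise comparison of coefficients of $Z^{q^i}$ in the two formal series. By Definition~\ref{def:one-step-H} with $f=f_\psi$, the coefficient of $Z^{q^i}$ in $\mathcal P_{\alpha,f_\psi}(\two;Z)$ is
\[
\frac{\alpha^{(i+1)}(\two)}{F_\psi^{[i+1]}(\two)}
=\frac{\alpha^{(i+1)}(\two)}{f_\psi(\two)f_\psi^{(1)}(\two)\cdots f_\psi^{(i)}(\two)}.
\]
On the other side, multiplying the deformed logarithm \eqref{eq:large-log} by $\alpha^{(1)}(\two)/f_\psi(\two)$ gives, for $i=0$, the coefficient $\alpha^{(1)}/f_\psi=\alpha^{(1)}/F_\psi^{[1]}$. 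For $i\ge1$ it gives
\[
\frac{\alpha^{(1)}}{f_\psi}\cdot\frac{\alpha^{(i+1)}}{\alpha^{(1)}f_\psi^{(1)}\cdots f_\psi^{(i)}}
=\frac{\alpha^{(i+1)}}{F_\psi^{[i+1]}}.
\]
So the identity \eqref{eq:H-large-log} holds as an equality of formal $\Fq$-linear series with meromorphic coefficients. In forming these quotients we treat the coefficients as rational functions and simplify before any evaluation, exactly as in the proof of Proposition~\ref{prop:large-log}. Here $\alpha\ne0$, because $\widetilde\lambda_\psi\ne0$ by Theorem~\ref{prop:exist-shtuka}, so dividing by $\alpha^{(1)}$ is legitimate.

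Next we upgrade the formal identity to an analytic one. On any affinoid $W$ avoiding the zeros and poles of $\alpha^{(1)}/f_\psi$, this factor is a unit of $\mathcal O(W)$. Hence the terms of one series tend to zero in $\mathcal O(W)$, uniformly on a bounded set $B$ of $Z$, if and only if the terms of the other do. Consequently the two sides share their domains of normal convergence (in the sense of Proposition~\ref{prop:defect-H}), and the formal equality becomes an identity of analytic functions there. It then extends meromorphically wherever either side is defined.

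A useful cross-check is that both sides satisfy the defect equation \eqref{eq:defect-H} with forcing term $-\alpha^{(1)}Z$. Lemma~\ref{lem:one-step-coefficient-recursion} shows the coefficients of a solution are uniquely determined, so the identity also follows from uniqueness. The only delicate point is bookkeeping of the domain when $\alpha^{(1)}/f_\psi$ has zeros or poles inside $W$. This is handled by reading the statement, like the paper's other series identities, on the common domain of definition.
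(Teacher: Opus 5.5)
Your proof is correct and matches the paper's: the paper just multiplies \eqref{eq:large-log} by $\alpha^{(1)}/f_\psi$ and compares the two series term by term, which is exactly your coefficient computation. The analytic upgrade on affinoids and the defect-equation cross-check are harmless additions; the termwise statement does not need them.
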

\begin{proof}
Multiplying \eqref{eq:large-log} by $\alpha^{(1)}/f_\psi$
proves \eqref{eq:H-large-log} term by term.
\end{proof}

\subsection{Isogeny transport of the \texorpdfstring{deformed logarithm}{deformed logarithm}}

Let $J=[\eta_J;h_J;\ii_J,\mu_J]:\varphi\to\psi$ be a separable isogeny datum.

\begin{lemma}[Termwise isogeny transport]
\label{lem:isogeny-H}
On every common domain of normal convergence,
\begin{equation}\label{eq:isogeny-H}
 \mathcal{P}_{\alpha,f_\psi}(\two;Z)
 =h_J(\two)\,
  \mathcal{P}_{\alpha/h_J,f_\varphi}(\two;Z).
\end{equation}
\end{lemma}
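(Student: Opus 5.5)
The identity \eqref{eq:isogeny-H} is termwise, so I would prove it coefficient by coefficient and then pass to the sums on any common domain of normal convergence. Write $F_\psi^{[i+1]}=f_\psi f_\psi^{(1)}\cdots f_\psi^{(i)}$ and $F_\varphi^{[i+1]}$ likewise. By Definition~\ref{def:one-step-H}, the $i$-th coefficient on the left is $\alpha^{(i+1)}/F_\psi^{[i+1]}$. On the right it is
\[
 h_J\cdot\frac{(\alpha/h_J)^{(i+1)}}{F_\varphi^{[i+1]}}
 =\frac{h_J\,\alpha^{(i+1)}}{h_J^{(i+1)}F_\varphi^{[i+1]}},
\]
where we use that coefficient Frobenius is multiplicative on rational functions, so $(\alpha/h_J)^{(i+1)}=\alpha^{(i+1)}/h_J^{(i+1)}$. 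The claim therefore reduces to the rational identity
\[
 h_J^{(i+1)}F_\varphi^{[i+1]}=h_J\,F_\psi^{[i+1]}\qquad(i\ge0).
\]

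This is the iterated intertwiner relation already used in the proof of Proposition~\ref{prop:GJ-Hpsi}. I would prove it by induction on $n=i+1$ starting from \eqref{eq:intertwiner-difference}, namely $h_J^{(1)}f_\varphi=h_Jf_\psi$, which is the case $n=1$. For the inductive step, assume $h_J^{(n)}F_\varphi^{[n]}=h_JF_\psi^{[n]}$. Then
\[
 h_J^{(n+1)}F_\varphi^{[n+1]}
 =f_\varphi\bigl(h_J^{(n)}F_\varphi^{[n]}\bigr)^{(1)}
 =f_\varphi h_J^{(1)}\bigl(F_\psi^{[n]}\bigr)^{(1)}
 =h_Jf_\psi\bigl(F_\psi^{[n]}\bigr)^{(1)}
 =h_JF_\psi^{[n+1]}.
\]
Here the first and last equalities use the recursion $F^{[n+1]}=f\,(F^{[n]})^{(1)}$ from Lemma~\ref{lem:direct-geometric-shtuka-Frobenius-basis}, together with multiplicativity of the twist.

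Multiplying each coefficient by $Z^{q^i}$ then gives equality of the two formal $\Fq$-linear series. Where both series converge normally on an affinoid $W$, the terms agree as elements of $\mathcal O(W)$, so the sums agree. Multiplication by the rational function $h_J(\two)$ needs care only if $h_J$ has poles or zeros on $W$: on the common domain both sides are analytic, and the identity holds meromorphically.

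The only point I expect to need attention is this last one. One must check that the phrase "common domain of normal convergence" is read as: both series are defined with analytic terms, and the identity is taken after the rational simplification of each coefficient, since individual factors $h_J^{(i+1)}$ may vanish or have poles on $W$. Because the coefficient identity holds in $\Cinf(X)$ before any evaluation, this causes no real difficulty.
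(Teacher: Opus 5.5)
Your proposal is correct and follows essentially the same route as the paper: both reduce the claim to the iterated intertwiner identity $h_J^{(i+1)}F_\varphi^{[i+1]}=h_JF_\psi^{[i+1]}$, obtained from $h_J^{(1)}f_\varphi=h_Jf_\psi$, and then compare the two series term by term. Your explicit induction spells out the iteration that the paper states in one line.
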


\begin{proof}
Iteration of $h_J^{(1)}f_\varphi=h_Jf_\psi$ gives
\[
h_J^{(i+1)}\prod_{j=0}^{i}f_\varphi^{(j)}
 =h_J\prod_{j=0}^{i}f_\psi^{(j)}.
\]
After substituting this identity, the two series in
\eqref{eq:isogeny-H} agree term by term.
\end{proof}

\begin{corollary}[Transported {deformed logarithm}]
\label{cor:isogeny-log}
One has
\begin{equation}\label{eq:isogeny-large-log}
 \mathcal{P}_{\alpha/h_J,f_\varphi}(\two;Z)
 =\frac{\alpha^{(1)}(\two)}
 {h_J^{(1)}(\two)f_\varphi(\two)}
 \operatorname{Log}_{\alpha,f_\psi}(\two;Z).
\end{equation}
\end{corollary}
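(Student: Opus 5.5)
The identity \eqref{eq:isogeny-large-log} should follow by composing two results already proved: the termwise isogeny transport of Lemma~\ref{lem:isogeny-H} and the relation of Proposition~\ref{prop:H-large-log} between the Papanikolas-type series and the deformed logarithm. We work throughout on a common domain of normal convergence, that is, on pairs $(W,Z)$ for which the coefficients of both series are analytic on $W$ and the terms tend to zero.

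\emph{Step 1.} Lemma~\ref{lem:isogeny-H} gives
\[
\mathcal P_{\alpha/h_J,f_\varphi}(\two;Z)=h_J(\two)^{-1}\mathcal P_{\alpha,f_\psi}(\two;Z).
\]
This is legitimate because $h_J$ is a nonzero rational function. We shrink $W$ if needed to avoid the zeros of $h_J$, or else read the identity meromorphically.

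\emph{Step 2.} Proposition~\ref{prop:H-large-log} replaces $\mathcal P_{\alpha,f_\psi}$ by $\alpha^{(1)}f_\psi^{-1}\operatorname{Log}_{\alpha,f_\psi}$. Combining this with Step 1 yields the prefactor $\alpha^{(1)}/(h_Jf_\psi)$.

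\emph{Step 3.} Apply the intertwiner relation \eqref{eq:intertwiner-difference}, $h_J^{(1)}f_\varphi=h_Jf_\psi$, which converts this prefactor into $\alpha^{(1)}/(h_J^{(1)}f_\varphi)$. This is exactly \eqref{eq:isogeny-large-log}.

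As a cross-check, the identity can also be verified term by term directly. The $i$-th coefficient on the left is
\[
(\alpha/h_J)^{(i+1)}\Big/\prod_{j=0}^{i}f_\varphi^{(j)}.
\]
The iterated intertwiner identity $h_J^{(i+1)}F_\varphi^{[i+1]}=h_JF_\psi^{[i+1]}$ turns this into $\alpha^{(i+1)}/(h_JF_\psi^{[i+1]})$, which matches the $i$-th term on the right-hand side.

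The only delicate point is bookkeeping of domains. Division by $h_J(\two)$ and by $h_J^{(1)}(\two)f_\varphi(\two)$ must be read as an identity of meromorphic differentials on the common affinoid. Since every equality used is an identity of rational functions applied coefficientwise to a normally convergent series, this causes no real obstruction.
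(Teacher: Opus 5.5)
Your proposal is correct and follows the paper's own proof, which simply combines \eqref{eq:H-large-log}, the transport identity \eqref{eq:isogeny-H}, and the intertwiner relation $h_J^{(1)}f_\varphi=h_Jf_\psi$. Your termwise cross-check via $h_J^{(i+1)}F_\varphi^{[i+1]}=h_JF_\psi^{[i+1]}$ is also correct.
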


\begin{proof}
Combine \eqref{eq:H-large-log}, \eqref{eq:isogeny-H}, and
$h_J^{(1)}f_\varphi=h_Jf_\psi$.
\end{proof}

\begin{theorem}[Transported Frobenius equation]
\label{thm:defect-phi}
{Under \eqref{eq:H-normal-convergence-condition} for
$(\alpha/h_J,f_\varphi)$, the following identity holds meromorphically:}
\begin{equation}\label{eq:defect-phi}
 {\mathcal{P}_{\alpha/h_J,f_\varphi}(\two;Z)^{(1)}}
 -f_\varphi(\two)\mathcal{P}_{\alpha/h_J,f_\varphi}(\two;Z)
 =-\frac{\alpha^{(1)}(\two)}{h_J^{(1)}(\two)}Z.
\end{equation}
\end{theorem}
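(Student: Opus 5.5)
The plan is to derive the transported equation from the general defect identity of Proposition~\ref{prop:defect-H}, applied directly with the pair $(\alpha/h_J,f_\varphi)$ in place of $(\alpha,f)$. Under the hypothesis that \eqref{eq:H-normal-convergence-condition} holds for $(\alpha/h_J,f_\varphi)$, that proposition gives normal convergence of $\mathcal P_{\alpha/h_J,f_\varphi}(\two;Z)$ on $W\times B$. It also yields the meromorphic identity
\[
 \mathcal P_{\alpha/h_J,f_\varphi}(\two;Z)^{(1)}
 -f_\varphi(\two)\mathcal P_{\alpha/h_J,f_\varphi}(\two;Z)
 =-\bigl(\alpha/h_J\bigr)^{(1)}(\two)\,Z .
\]
The only remaining point is that coefficient Frobenius is a ring endomorphism of the function field, fixing nothing but acting multiplicatively. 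Hence $(\alpha/h_J)^{(1)}=\alpha^{(1)}/h_J^{(1)}$ as rational (or meromorphic) functions, which is exactly the right-hand side of \eqref{eq:defect-phi}.

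As a consistency check, and as an alternative route if one prefers to avoid re-verifying the hypotheses of Proposition~\ref{prop:defect-H} for the transported pair, I would also deduce the same identity by transport from the $\psi$-side. Lemma~\ref{lem:isogeny-H} gives $\mathcal P_{\alpha,f_\psi}=h_J\,\mathcal P_{\alpha/h_J,f_\varphi}$. Twisting gives $\mathcal P_{\alpha,f_\psi}^{(1)}=h_J^{(1)}\mathcal P_{\alpha/h_J,f_\varphi}^{(1)}$. Substituting into \eqref{eq:defect-H} for $(\alpha,f_\psi)$ and using $h_Jf_\psi=h_J^{(1)}f_\varphi$ from \eqref{eq:intertwiner-difference} yields
\[
 h_J^{(1)}\bigl(\mathcal P^{(1)}_{\alpha/h_J,f_\varphi}-f_\varphi\mathcal P_{\alpha/h_J,f_\varphi}\bigr)=-\alpha^{(1)}Z .
\]
Dividing by the nonzero rational function $h_J^{(1)}$ then gives the claim. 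This second route needs convergence on the $\psi$-side as well. That is why I would make the first route the primary argument and use the second only as a remark.

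The main (mild) obstacle is the meaning of the twisted series when the affinoid $W$ is not Frobenius-stable. I would handle it exactly as in the proof of Proposition~\ref{prop:defect-H}. Multiply by an analytic denominator $d$ clearing the poles of $f_\varphi$ on $W$, and use the termwise identity $Q_i^{(1)}=f_\varphi Q_{i+1}$ for $Q_i=(\alpha/h_J)^{(i+1)}/F_\varphi^{[i+1]}$. This gives normal convergence of $d\,\mathcal P^{(1)}$. The identity is then a telescoping of normally convergent series, read meromorphically.
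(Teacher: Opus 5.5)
Your primary argument is exactly the paper's proof: apply Proposition~\ref{prop:defect-H} to the pair $(\alpha/h_J,f_\varphi)$, and use multiplicativity of coefficient Frobenius to write $(\alpha/h_J)^{(1)}=\alpha^{(1)}/h_J^{(1)}$. Your transport route via Lemma~\ref{lem:isogeny-H} and \eqref{eq:intertwiner-difference} is also valid but unnecessary; drop the phrase ``fixing nothing,'' since coefficient Frobenius fixes $K$ pointwise.
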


\begin{proof}
Apply Proposition~\ref{prop:defect-H} with the coefficient
$\alpha/h_J$ and the shtuka function $f_\varphi$.
\end{proof}

\subsection{\texorpdfstring{{The Relation between $\mathcal{G}_J$ and Papanikolas-type series}}{The Relation between G-J and Papanikolas-type series}}
Let $\mathbb D_\psi$ be the convergence disk of $\log_\psi$, and put
$\mathbb{M}_A:=\mathbb T_A\otimes_A\Omega_A$ throughout this subsection. Give the finite projective module $\mathbb{M}_A$
a Banach norm, and give $h_J^{-1}\mathbb{M}_A$ the norm transported by multiplication
by $h_J$. Normal convergence below means uniform convergence in this fixed
module on each closed parameter disk contained in $\mathbb D_\psi$.

The bridge follows by composing the kernel family with the local logarithm.
Its Frobenius equation then determines every coefficient of the resulting
power series.

\begin{theorem}[{Kernel--series identity}]
\label{thm:HJ-equal}
Write $\widetilde\lambda_\psi=\alpha\nu$ with $0\ne\nu\in{\Omega_A^{\rat}}$.
\begin{enumerate}[label=\textup{(\roman*)},leftmargin=2.2em]
\item\label{item:bridge-logarithm-branch}
The series $\mathcal{P}_{\alpha/h_J,f_\varphi}(Z)\nu$ converges normally
in $h_J^{-1}\mathbb{M}_A$ for $Z\in \mathbb D_\psi$, and
\begin{equation}\label{eq:HJ-logarithm-branch}
 \mathcal{P}_{\alpha/h_J,f_\varphi}(Z)\nu
 =\mathcal G_J\bigl(h_J(\Xi)^{-1}\log_\psi(Z)\bigr).
\end{equation}
\item\label{item:bridge-exponential-branch}
Let $\mathbb D_0$ be a disk about $0$ such that
$\exp_\psi(h_J(\Xi)\mathbb D_0)\subset \mathbb D_\psi$ and
$\log_\psi(\exp_\psi(h_J(\Xi)z))=h_J(\Xi)z$ for $z\in \mathbb D_0$. Then
\begin{equation}\label{eq:HJ-local-large-log-equality}
 \mathcal{G}_J(z)
 =\mathcal{P}_{\alpha/h_J,f_\varphi}
 \bigl(\exp_\psi(h_J(\Xi) z)\bigr)\,\nu_{\two}
 \qquad(z\in \mathbb D_0).
\end{equation}
\end{enumerate}
\end{theorem}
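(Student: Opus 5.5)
The plan is to mirror the proof of Theorem~\ref{prop:log-is-log}, now with the isogeny-adapted family $\mathcal G_J$ in place of $\mathcal H_\psi$. We compose $\mathcal G_J$ with the local logarithm and then identify the resulting power series through the one-step coefficient recursion of Lemma~\ref{lem:one-step-coefficient-recursion}.

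For \ref{item:bridge-logarithm-branch}, put
\[
B(Z):=\mathcal G_J\bigl(h_J(\Xi)^{-1}\log_\psi(Z)\bigr),
\qquad
\mathcal G_J(w)=\sum_{n\ge0}A^J_nw^{q^n},
\]
where the $A^J_n$ are the orbital coefficients of Definition~\ref{def:isogeny-generating-differential}. By Proposition~\ref{prop:GJ-Hpsi}\ref{item:GJ-target}, $A^J_n=h_J(\two)^{-1}h_J(\Xi)^{q^n}A_n^{\psi}$. Hence, as a formal series,
\[
B(Z)=h_J(\two)^{-1}\,\mathcal H_\psi\bigl(\log_\psi(Z)\bigr).
\]
Each coefficient is a finite sum of rational differentials, exactly as in the proof of Theorem~\ref{prop:log-is-log}. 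Theorem~\ref{thm:frob}, together with $\exp_\psi(\log_\psi Z)=Z$, gives
\[
B^{(1)}-f_\varphi B=-\bigl(\widetilde\lambda_\psi^{(1)}/h_J^{(1)}\bigr)Z.
\]
Lemma~\ref{lem:one-step-coefficient-recursion} with $f=f_\varphi$ and $c=\alpha^{(1)}\nu/h_J^{(1)}$ then forces
\[
Q_i=\frac{(\alpha/h_J)^{(i+1)}\nu}{F_\varphi^{[i+1]}}.
\]
This uses $\nu^{(1)}=\nu$, which holds because $\nu$ is defined over $\Fq$. These $Q_i$ are precisely the terms of $\mathcal P_{\alpha/h_J,f_\varphi}(Z)\nu$, so \eqref{eq:HJ-logarithm-branch} holds as formal series. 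Equivalently, by Lemma~\ref{lem:isogeny-H} the series equals $h_J(\two)^{-1}\mathcal P_{\alpha,f_\psi}(Z)\nu$.

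To upgrade the formal identity to normal convergence in $h_J^{-1}\mathbb M_A$ on closed subdisks of $\mathbb D_\psi$, it suffices to treat $h_J\cdot B=\mathcal H_\psi\circ\log_\psi$ in $\mathbb M_A$. Here $\log_\psi$ converges normally on closed subdisks of $\mathbb D_\psi$ with bounded image. Moreover, $w\mapsto\mathcal H_\psi(w)$ is an entire $\mathbb M_A$-valued $\Fq$-linear series by Proposition~\ref{lem:orbital-evaluation-convergence} (the Tate domain being covered by affinoids, with $\mathcal H_\psi(w)$ in $\mathbb M_A$ on the Tate domain). A composition of an entire $\Fq$-linear series with a normally convergent $\Fq$-linear series on a disk is normally convergent, and its coefficients are the formal composite coefficients. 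Uniqueness of coefficients of convergent $\Fq$-linear series then gives convergence of the $\mathcal P$-series with the stated equality.

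\textbf{Main obstacle.} The step I expect to be hardest is justifying the rearrangement into $Z$-coefficients in the Banach module. That requires a uniform bound of the form $\|A_n^\psi\|\le|e_n(\psi)|C^{q^n}$ on the whole Tate domain, rather than just on off-orbit affinoids. I would try to obtain it from Proposition~\ref{thm:automatic-orbit-growth}, applied on the closed unit-polydisk affinoid, using Lemma~\ref{lem:orbit-escape} to place all but finitely many orbit points outside it.

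For \ref{item:bridge-exponential-branch}, substitute $Z=\exp_\psi(h_J(\Xi)z)$ for $z\in\mathbb D_0$ into \eqref{eq:HJ-logarithm-branch}. The hypothesis $\log_\psi(\exp_\psi(h_J(\Xi)z))=h_J(\Xi)z$ turns the right-hand side into $\mathcal G_J(z)$. This gives \eqref{eq:HJ-local-large-log-equality}, with $\nu$ read in the second variable.
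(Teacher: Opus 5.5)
Your proposal is correct and follows essentially the paper's own argument: the composition $B(Z)=h_J^{-1}\mathcal H_\psi(\log_\psi Z)=\mathcal G_J(h_J(\Xi)^{-1}\log_\psi Z)$; the orbit-growth bound $\|A_n\|_{\mathbb{M}_A}\le|e_n(\psi)|C^{q^n}$ on the Tate domain (from Proposition~\ref{thm:automatic-orbit-growth}), which gives analyticity on $\mathbb D_\psi$; Theorem~\ref{thm:frob} with $\exp_\psi(\log_\psi Z)=Z$; the recursion of Lemma~\ref{lem:one-step-coefficient-recursion} using $\nu^{(1)}=\nu$; and the substitution $Z=\exp_\psi(h_J(\Xi)z)$ for part~(ii). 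The only difference is ordering: the paper proves analyticity first and then compares Taylor coefficients after clearing denominators by a suitable $b\in\Cinf\otimes_{\Fq}A$, whereas you state the coefficient identity formally first and justify it afterwards by your convergence paragraph (equivalently, by comparing $w$-coefficients of the entire family $\mathcal G_J(w)$), which is fine.
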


\begin{proof}
\emph{\ref{item:bridge-logarithm-branch}.}
Write the orbital expansion of the target family as
\[
 \mathcal H_\psi(w)=\sum_{n\ge0}A_nw^{q^n},\qquad
 A_n=\operatorname{orb}_{\psi,n}(K_\psi).
\]
The coefficients $A_n$ belong to $\mathbb{M}_A$: the target kernel has no
non-diagonal affine pole in its second variable, and the characteristic
orbit is disjoint from the Tate domain. The regularized specializations
at the finitely many exceptional first-variable points remain regular on
that domain. Proposition~\ref{thm:automatic-orbit-growth} gives
\[
 \|A_n\|_{\mathbb{M}_A}\le |e_n(\psi)|C^{q^n}\quad(n\gg0),\qquad
 |e_n(\psi)|^{1/q^n}\longrightarrow0.
\]
Thus this series converges in $\mathbb{M}_A$, uniformly for $w$ in each bounded disk.

By Proposition~\ref{prop:GJ-Hpsi}, the function
\begin{equation}\label{eq:HJ-analytic-parameter-family}
 B(Z):=h_J^{-1}\mathcal H_\psi(\log_\psi Z)
      =\mathcal G_J(h_J(\Xi)^{-1}\log_\psi Z)
\end{equation}
is analytic on $\mathbb D_\psi$ with values in $h_J^{-1}\mathbb{M}_A$.
Indeed, on a closed subdisk where $|\log_\psi Z|\le L$, the tail is bounded
by $|e_n(\psi)|(CL)^{q^n}$, which tends to zero. Its Taylor expansion is
$B(Z)=\sum_{i\ge0}Q_iZ^{q^i}$; each $Q_i$ is a rational differential,
since each coefficient of the formal composition is a finite sum.

{Coefficient Frobenius is continuous on $\mathbb T_A$
and $\mathbb{M}_A$, and maps $h_J^{-1}\mathbb{M}_A$ continuously to $(h_J^{(1)})^{-1}\mathbb{M}_A$.
It therefore acts termwise in this expression.}
Theorem~\ref{thm:frob} and $\exp_\psi(\log_\psi Z)=Z$ give
\begin{equation}\label{eq:HJ-parameter-defect}
 {B(Z)^{(1)}-f_\varphi B(Z)=-\widetilde\delta_J Z.}
\end{equation}
Choose a nonzero $b\in\Cinf\otimes_{\Fq}A$ such that
$b/h_J^{(1)},\,bf_\varphi/h_J\in\Cinf\otimes_{\Fq}A$ and
$b\widetilde\delta_J\in\Cinf\otimes_{\Fq}\Omega_A$.
Multiplication by $b$ makes both sides $\mathbb{M}_A$-valued, so their Taylor
coefficients agree.  {Lemma~\ref{lem:one-step-coefficient-recursion}
(with $f=f_\varphi$ and $c=\tilde\delta_J$) now gives the coefficient
recursion, and cancelling the nonzero scalar $b$ in the torsion-free
module $\mathbb M_A$ yields
\[
 Q_i=\frac{\tilde\delta_J^{(i)}}{F_\varphi^{[i+1]}}
 =\frac{\widetilde\lambda_\psi^{(i+1)}}
            {h_J^{(i+1)}F_\varphi^{[i+1]}}.
\]}
Since coefficient Frobenius fixes $\nu\in{\Omega_A^{\rat}}$, these are exactly the
coefficients of $\mathcal{P}_{\alpha/h_J,f_\varphi}(Z)\nu$.
That series is consequently the Taylor expansion of the analytic function
\eqref{eq:HJ-analytic-parameter-family}, proving its normal convergence
and \eqref{eq:HJ-logarithm-branch}.

\emph{\ref{item:bridge-exponential-branch}.}
Substitute $Z=\exp_\psi(h_J(\Xi)z)$ in \eqref{eq:HJ-logarithm-branch}.
The defining identity on $\mathbb D_0$ makes the argument of $\mathcal G_J$ equal
to $z$, proving \eqref{eq:HJ-local-large-log-equality}.
\end{proof}



\section{\texorpdfstring{The unit and its intrinsic presentation}{The unit and its intrinsic presentation}}\label{sec:unit-special}

\subsection{The intrinsic presentation: strict normalization and residue coordinates}\label{subsec:explicit-embedded-lattice-formulas}

The unit conjecture of Gazda and Maurischat \cite{GM21} predicts
a Tate-algebra unit in $\operatorname{sf}(\varphi)$
{whenever $\operatorname{sf}(\varphi)$ is free of rank one}; by
Theorem~\ref{thm:direct-shtuka-function-freeness-test} below, its content
reduces to the existence of a unit-normalizing scalar
(Definition~\ref{def:unit-normalizing-scalar}).

\begin{definition}[{Unit-normalizing scalar}]\label{def:unit-normalizing-scalar}
For $\beta\in\Cinf^\times$ write $f_{\varphi,\beta}:=\beta^{1-q}f_\varphi$.
The scalar $\beta$ is \emph{unit-normalizing} if $f_{\varphi,\beta}\in\TT_A^\times$
and $\|f_{\varphi,\beta}-1\|_{\mathrm G}<1$.
\end{definition}

\begin{remark}\label{rem:unit-normalizing-coefficients}
Equivalently, writing $f_\varphi=\sum_i\varsigma_i\otimes a_i$ in
a fixed $\Fq$-basis $(a_i)$ with $a_0=1$, a scalar $\beta$ with
$\beta^{q-1}=\varsigma_0$ is unit-normalizing exactly when $f_\varphi\in\TT_A$,
$\varsigma_0\ne0$, and $\sup_{i>0}|\varsigma_i/\varsigma_0|<1$.  Indeed, if
$\beta$ is unit-normalizing, expand $f_{\varphi,\beta}=1+\sum_i d_i\otimes a_i$
with $\sup_i|d_i|<1$; then $f_\varphi=\beta^{q-1}f_{\varphi,\beta}$ gives
$\varsigma_0=\beta^{q-1}(1+d_0)\ne0$ and
$\varsigma_i=\beta^{q-1}d_i$ for $i>0$, with
$|\varsigma_i/\varsigma_0|=|d_i|<1$ since $|1+d_0|=1$.  Conversely, the
coefficient conditions make $f_{\varphi,\beta}=\varsigma_0^{-1}f_\varphi$
a strict $1$-unit.
\end{remark}

\begin{proposition}[Strict scalar shtuka normalization]
\label{prop:strict-shtuka-normalization-lattice}
Let $f_\varphi$ be the shtuka function of a rank-one Drinfeld module $\varphi$.
Suppose that a unit-normalizing scalar exists
(Definition~\ref{def:unit-normalizing-scalar}) and fix one, $\beta$:
\begin{equation}\label{eq:strict-shtuka-normalization}
  f_{\varphi,\beta} 
 :=\beta^{1-q}f_\varphi\in\mathbb T_A^\times,
 \qquad
 \bigl\| f_{\varphi,\beta} -1\bigr\|_{\mathrm G}<1.
\end{equation}
\begin{enumerate}[label=\textup{(\roman*)},leftmargin=2.2em]
\item\label{item:strict-normalized-product} The product
\begin{equation}\label{eq:strict-normalized-shtuka-product}
 v_{\varphi,\beta}
 :=\prod_{n=0}^{\infty}
   \left((f_{\varphi,\beta})^{(n)}\right)^{-1}
 \end{equation}
converges in $1+\{x\in\mathbb T_A:\|x\|_{\mathrm G}<1\}$.
\item\label{item:strict-normalized-special} The function $\omega_{\varphi,\beta}:=\beta v_{\varphi,\beta}
 \in\mathbb T_A^\times $ verifies
\[
\omega_{\varphi,\beta}^{(1)}=f_\varphi\omega_{\varphi,\beta}.\]
\end{enumerate}
\end{proposition}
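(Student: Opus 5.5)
The plan is to work entirely inside the Banach algebra $\mathbb T_A$ with the Gauss norm $\|\cdot\|_{\mathrm G}$, which is multiplicative by the proof of Lemma~\ref{lem:general-Tate-Frobenius-constants}. Put $\epsilon:=f_{\varphi,\beta}-1$, so $\|\epsilon\|_{\mathrm G}=r<1$. The first step is to record that coefficient Frobenius satisfies $\|F^{(1)}\|_{\mathrm G}=\|F\|_{\mathrm G}^q$, since it raises each basis coefficient to the $q$-th power. Hence $\|(f_{\varphi,\beta})^{(n)}-1\|_{\mathrm G}=\|\epsilon^{(n)}\|_{\mathrm G}=r^{q^n}$. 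Next, each factor $1+\epsilon^{(n)}$ is a strict $1$-unit. Its inverse is given by the geometric series $\sum_k(-\epsilon^{(n)})^k$, which converges in $\mathbb T_A$, and satisfies $\|(1+\epsilon^{(n)})^{-1}-1\|_{\mathrm G}=r^{q^n}$. The strict $1$-units $1+\mathfrak m$, with $\mathfrak m=\{\|x\|_{\mathrm G}<1\}$, form a multiplicative group, and this group is closed since the norm is ultrametric.

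For \ref{item:strict-normalized-product}, let $P_N=\prod_{n=0}^{N}((f_{\varphi,\beta})^{(n)})^{-1}$. Then
\[
P_{N+1}-P_N=P_N\bigl(((f_{\varphi,\beta})^{(N+1)})^{-1}-1\bigr),
\]
and $\|P_N\|_{\mathrm G}=1$ by multiplicativity. It follows that $\|P_{N+1}-P_N\|_{\mathrm G}=r^{q^{N+1}}\to0$, so $(P_N)$ is Cauchy in the complete space $\mathbb T_A$. The limit $v_{\varphi,\beta}$ satisfies $\|v_{\varphi,\beta}-1\|_{\mathrm G}\le r<1$, so it lies in $1+\mathfrak m$. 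In particular it is a unit, with inverse the convergent product $\prod_n(f_{\varphi,\beta})^{(n)}$.

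For \ref{item:strict-normalized-special}, continuity of coefficient Frobenius, which is an isometry up to the $q$-th power, lets me twist termwise: $v_{\varphi,\beta}^{(1)}=\prod_{n\ge1}((f_{\varphi,\beta})^{(n)})^{-1}=f_{\varphi,\beta}\,v_{\varphi,\beta}$. Then
\[
\omega_{\varphi,\beta}^{(1)}=\beta^q v_{\varphi,\beta}^{(1)}=\beta^q\beta^{1-q}f_\varphi v_{\varphi,\beta}=f_\varphi\,\beta v_{\varphi,\beta}=f_\varphi\,\omega_{\varphi,\beta}.
\]
Since $\beta\ne0$, $\omega_{\varphi,\beta}\in\mathbb T_A^\times$.

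There is no real obstacle here. The only point needing care is justifying the norm identity $\|F^{(1)}\|_{\mathrm G}=\|F\|_{\mathrm G}^q$ together with the continuity of Frobenius, which is used to pass the twist through the infinite product. Both follow directly from the basis description \eqref{eq:general-Tate-basis-norm-main}: Frobenius acts coefficientwise by $c\mapsto c^q$.
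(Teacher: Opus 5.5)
Your proof is correct and follows the paper's argument: the estimate $\|(f_{\varphi,\beta})^{(n)}-1\|_{\mathrm G}=\rho^{q^n}$ gives convergence of the product to a strict $1$-unit, and shifting the factors under one Frobenius twist gives $v_{\varphi,\beta}^{(1)}=f_{\varphi,\beta}v_{\varphi,\beta}$, hence $\omega_{\varphi,\beta}^{(1)}=f_\varphi\omega_{\varphi,\beta}$. You add details the paper leaves implicit, namely the Cauchy estimate for the partial products and continuity of coefficient Frobenius. Where you invoke multiplicativity of the Gauss norm, submultiplicativity would already suffice.
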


\begin{proof}
\emph{\ref{item:strict-normalized-product}.}
Put $\rho=\| f_{\varphi,\beta} -1\|_{\mathrm G}<1$.  Then
\[
 \bigl\|(f_{\varphi,\beta})^{(n)}-1\bigr\|_{\mathrm G}=\rho^{q^n},
\]
so \eqref{eq:strict-normalized-shtuka-product} converges to a strict
$1$-unit.     

\emph{\ref{item:strict-normalized-special}.}
Shifting the factors gives
\[
 v_{\varphi,\beta}^{(1)}
 =\prod_{n\ge1}\left((f_{\varphi,\beta})^{(n)}\right)^{-1}
 = f_{\varphi,\beta}\cdot v_{\varphi,\beta}.
\]
Multiplication by $\beta$ gives
$\omega_{\varphi,\beta}^{(1)}
=\beta^q (f_{\varphi,\beta})v_{\varphi,\beta}
=f_\varphi\omega_{\varphi,\beta}$.
The scalar shtuka criterion proves that it is special.  
\end{proof}

\begin{example}[Elliptic curves]\label{ex:elliptic-shtuka-unit}
This illustrates Proposition~\ref{prop:strict-shtuka-normalization-lattice} in the elliptic case.  Let
$A=\mathbb{F}_q[t,y]$ for an elliptic curve $E/\mathbb{F}_q$, and let
\begin{equation}\label{eq:elliptic-shtuka-GP}
 f=\frac{y-\eta-m(t-\theta)}{t-\alpha}
\end{equation}
be the shtuka function of \cite{GP18}: then
$V_\varphi=(\alpha,\beta)$ with $|\alpha|=q^2>1$ (in particular
$\Xi,\,q_\varphi^{(-1)}\notin\operatorname{supp}(V_\varphi)$ and
$h^0(\mathcal{O}(V_\varphi))=1$), and
\[
 m=\frac{\eta-\beta^q}{\theta-\alpha^q}
 \qquad\text{satisfies}\qquad |m|=q^q .
\]
Since $|\alpha|>1$, the expansion
\[
 \frac{1}{t-\alpha}=-\sum_{n\ge0}\alpha^{-n-1}t^n
\]
converges in the Gauss norm and places $f$ in $\mathbb{T}_A$.  In the
basis $\{t^i,t^jy\}$ of $A$, the constant coefficient is
\[
 \varsigma_0=-\frac{m\theta-\eta}{\alpha}=:\xi ,
\]
with no cancellation ($|m\theta|=q^{q+2}>q^3=|\eta|$), so
$|\varsigma_0|=q^q$.  Every other coefficient has absolute value
$\le q^{-1}$: the coefficient of $t^n$ ($n\ge1$) is
$-(\beta+a_1\alpha+a_3)/\alpha^{n+1}$, and that of $t^jy$ is
$-\alpha^{-j-1}$.  Hence
\[
 \sup_{i>0}|\varsigma_i/\varsigma_0|\le q^{-q-1}<1,
\]
and Remark~\ref{rem:unit-normalizing-coefficients} yields the unit-normalizing scalar
$\beta_*:=\xi^{1/(q-1)}$: one has
\[
 f_{\varphi,\beta_*}=f/\xi\in\mathbb{T}_A^\times,
 \qquad
 \|f/\xi-1\|_{\mathrm G}\le q^{-q-1},
\]
and
\begin{equation}\label{eq:elliptic-omega-AT}
 \omega_{\varphi,\beta_*}
 =\xi^{1/(q-1)}\prod_{n\ge0}\frac{\xi^{q^n}}{f^{(n)}}
\end{equation}
is the Anderson--Thakur function $\omega_\rho$ of
\cite[Eq.~(51)]{GP18} (for the sign-normalized $f$).
\end{example}

\begin{proposition}[{Independence of the unit-normalizing scalar}]\label{prop:unit-normalizing-scalars}
All unit-normalizing scalars define the same special function up to
$\Fq^\times$: for any two unit-normalizing $\beta$ and $\beta'$ there is
$\zeta\in\Fq^\times$ with
$\omega_{\varphi,\beta}=\zeta\,\omega_{\varphi,\beta'}$.
\end{proposition}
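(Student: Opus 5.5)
Fix two unit-normalizing scalars $\beta,\beta'$ and put $\omega=\omega_{\varphi,\beta}$, $\omega'=\omega_{\varphi,\beta'}$. By Proposition~\ref{prop:strict-shtuka-normalization-lattice}\ref{item:strict-normalized-special}, both lie in $\mathbb T_A^\times$ and satisfy $\omega^{(1)}=f_\varphi\omega$, $\omega'^{(1)}=f_\varphi\omega'$. The plan is to form the quotient $r:=\omega/\omega'\in\mathbb T_A^\times$ and show it is a Frobenius constant of norm one. The quotient lies in $\mathbb T_A$ itself, not just the fraction field, because $\omega'$ is a Tate unit.

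First, since coefficient Frobenius is a ring endomorphism, $r^{(1)}=\omega^{(1)}/\omega'^{(1)}=f_\varphi\omega/(f_\varphi\omega')=r$. Here we cancel $f_\varphi$, which is nonzero in the domain $\mathbb T_A$ (Lemma~\ref{lem:general-Tate-Frobenius-constants}). The first part of Lemma~\ref{lem:general-Tate-Frobenius-constants} then gives $r\in A$. By symmetry $r^{-1}\in A$, so $r\in A^\times$.

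Second, $A^\times=\Fq^\times$. Any unit of $A$ is a rational function with no zeros or poles on $U$, and a nonconstant one would have divisor supported on the single closed point $\infty$. Such a divisor has degree zero only if it vanishes, so the unit is constant; geometric connectedness then gives $\Fq^\times$.

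As a consistency check, the norms also confirm that the constant has norm one. From $\beta^{q-1}f_{\varphi,\beta}=f_\varphi=\beta'^{q-1}f_{\varphi,\beta'}$ and $\|f_{\varphi,\beta}-1\|_{\mathrm G}<1$, comparing Gauss norms gives $|\beta|=|\beta'|$. The products $v_{\varphi,\beta}$ and $v_{\varphi,\beta'}$ are strict $1$-units, so $r=(\beta/\beta')\,v_{\varphi,\beta}/v_{\varphi,\beta'}$ has norm one. Setting $\zeta=r$ gives the claim.

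I expect no serious obstacle. The only point needing care is that the Frobenius-constant lemma is applied in $\mathbb T_A$ rather than in its fraction field, and this is guaranteed by the unit property from Proposition~\ref{prop:strict-shtuka-normalization-lattice}.
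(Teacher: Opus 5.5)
Your proof is correct, but it takes a different route from the paper's.

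\textbf{The paper's route.} The paper computes directly. It writes $\beta=\zeta u\beta'$, where $\zeta\in\Fq^\times$ is the Teichm\"uller lift of the reduction of $\beta/\beta'$ and $u\in 1+\mathfrak m_\infty$, with $\mathfrak m_\infty$ the maximal ideal of the valuation ring of $\Cinf$. Since $\zeta^{q-1}=1$, this gives $f_{\varphi,\beta}=u^{1-q}f_{\varphi,\beta'}$. The telescoping limit $\prod_{n<M}u^{(q-1)q^n}=u^{q^M-1}\to u^{-1}$ then yields $v_{\varphi,\beta}=u^{-1}v_{\varphi,\beta'}$, hence $\omega_{\varphi,\beta}=\zeta\,\omega_{\varphi,\beta'}$. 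This buys three extra facts. It gives an explicit $\zeta$. It shows that $\omega_{\varphi,\beta}=\omega_{\varphi,\beta'}$ exactly when $\beta/\beta'\in 1+\mathfrak m_\infty$. And it parametrizes all unit-normalizing scalars as $\beta_0\Fq^\times(1+\mathfrak m_\infty)$, which is where the $\zeta^{-1}$ in the period formula comes from.

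\textbf{Your route.} You argue structurally, using Frobenius constants of $\mathbb T_A$ together with $A^\times=\Fq^\times$. This is the same mechanism the paper uses for the implication (i)$\Rightarrow$(ii) of Theorem~\ref{thm:direct-shtuka-function-freeness-test}. It is shorter, needs no analysis of the infinite product, and does not presuppose how two unit-normalizing scalars are related. It in fact works for any two Tate-unit special functions.

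\textbf{Recovering the paper's extra information.} Your norm ``consistency check'' can be sharpened to give everything above at no cost. Once you know $r\in\Fq^\times$, the strict $1$-unit $v_{\varphi,\beta}/v_{\varphi,\beta'}$ equals the scalar $r\beta'/\beta$. Strict $1$-units form a group, so $|r\beta'/\beta-1|<1$. Hence $r$ is exactly the Teichm\"uller lift of the reduction of $\beta/\beta'$, and $r=1$ precisely when $\beta/\beta'\in 1+\mathfrak m_\infty$. This also shows that any two unit-normalizing scalars differ by an element of $\Fq^\times(1+\mathfrak m_\infty)$, which the paper takes as its starting point.
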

\begin{proof}
Fix a unit-normalizing scalar $\beta_0$, write
$\mathfrak m_\infty:=\{z\in\Cinf:|z|<1\}$ for the maximal ideal of {the valuation ring of $\Cinf$},
and let $\zeta\in\Fq^\times$ be the Teichm\"uller lift of the reduction of
$\beta/\beta_0$.  Every unit-normalizing $\beta$ has a unique decomposition
\[
 \beta=\zeta\,u\,\beta_0,\qquad \zeta\in\Fq^\times,\quad u\in 1+\mathfrak m_\infty,
\]
so the unit-normalizing scalars form $\beta_0\cdot\Fq^\times\cdot(1+\mathfrak m_\infty)$.
{Under this decomposition,
\[
 v_{\varphi,\beta}=u^{-1}\,v_{\varphi,\beta_0},\qquad
 \omega_{\varphi,\beta}=\zeta\,\omega_{\varphi,\beta_0}.
\]
These identities show that
$\beta\mapsto\omega_{\varphi,\beta}$ induces a bijection}
$\{\textup{unit-normalizing }\beta\}/(1+\mathfrak m_\infty)\xrightarrow{\sim}
\Fq^\times\omega_{\varphi,\beta_0}$; in particular
$\omega_{\varphi,\beta}=\omega_{\varphi,\beta'}$ exactly when
$\beta/\beta'\in 1+\mathfrak m_\infty$.
\end{proof}

\begin{theorem}[Coefficient criterion]
 \label{thm:direct-shtuka-function-freeness-test}
Fix an $\Fq$-basis $(a_i)_{i\ge0}$ of $A$ with $a_0=1$. The following are
equivalent:
\begin{enumerate}[label=\textup{(\roman*)},leftmargin=2.2em]
\item\label{item:coefficient-dominance}
A unit-normalizing scalar exists (Definition~\ref{def:unit-normalizing-scalar}).
\item\label{item:special-line-free}
$\operatorname{sf}(\varphi)$ is free of rank one over $A$.
\end{enumerate}
\end{theorem}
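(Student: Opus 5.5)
The implication \ref{item:coefficient-dominance}$\Rightarrow$\ref{item:special-line-free} should follow from results already in hand; the converse is the genuine content.

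\emph{(i)$\Rightarrow$(ii).} Given a unit-normalizing $\beta$, Proposition~\ref{prop:strict-shtuka-normalization-lattice} produces $\omega_{\varphi,\beta}\in\mathbb T_A^\times$ with $\omega_{\varphi,\beta}^{(1)}=f_\varphi\omega_{\varphi,\beta}$. Lemma~\ref{lem:scalar-shtuka-equation} then places it in $\sfm(\varphi)$.

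For any $\omega\in\sfm(\varphi)$, the quotient $\omega/\omega_{\varphi,\beta}$ lies in $\mathbb T_A$, because $\omega_{\varphi,\beta}$ is a unit. The quotient is also Frobenius fixed. By Lemma~\ref{lem:general-Tate-Frobenius-constants} it therefore lies in $A$. Hence $\sfm(\varphi)=A\,\omega_{\varphi,\beta}$, which is free of rank one since $\omega_{\varphi,\beta}\neq0$.

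\emph{(ii)$\Rightarrow$(i).} By Theorem~\ref{thm:direct-logarithmic-special-line}, freeness means $\Lambda_\varphi\simeq\Omega_A$. So we may fix a unit anchor $\jmath$, and $\omega_\jmath$ is an $A$-basis.

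The plan is to show that $\omega_\jmath\in\mathbb T_A^\times$ with constant coefficient dominant, i.e. $\omega_\jmath=c(1+\epsilon)$ with $c\in\Cinf^\times$ and $\|\epsilon\|_{\mathrm G}<1$. Granting this, $f_\varphi=\omega_\jmath^{(1)}/\omega_\jmath=c^{q-1}(1+\epsilon^{(1)})(1+\epsilon)^{-1}$. Then $\beta:=c$ satisfies $\|f_{\varphi,\beta}-1\|_{\mathrm G}<1$, and $f_{\varphi,\beta}$ is a strict $1$-unit.

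To get the dominance, I would use the residue-dual expansion \eqref{eq:generating-function-residue-dual-expansion}:
\[
\omega_\jmath=\sum_i\exp_\varphi(\jmath_\infty(\eta_i))\otimes a_i .
\]
The idea is to choose the basis $(a_i)$ with $a_0=1$ so that its residue-dual representatives are \emph{residue-orthogonal}, meaning the norms $|\jmath_\infty(\eta_i)|$ are as small as possible within their classes. The goal is then
\[
|\exp_\varphi(\jmath_\infty(\eta_0))|>|\exp_\varphi(\jmath_\infty(\eta_i))|\quad\text{for all } i>0,
\]
uniformly with a ratio bounded by some $\rho<1$.

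Here $\jmath_\infty(\eta_0)$ is the minimum-norm representative $c_0$ of the dual class of $1$ modulo the lattice. This should be the class farthest from $\Lambda_\varphi$, because $1$ has the lowest pole order. The estimate would then come from a radial norm formula for $\exp_\varphi$ on the fundamental domain of $\Lambda_\varphi$:
\[
|\exp_\varphi(z)|=|z|\prod_{0\neq\lambda\in\Lambda_\varphi,\ |\lambda|\le|z|}\frac{|z|}{|\lambda|}\quad\text{for reduced } z.
\]
This makes $|\exp_\varphi|$ strictly increasing in the distance to the lattice.

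\textbf{Main obstacle.} The hardest step is controlling the case $\Ninf>1$. There, several $a_i$ share a pole order, so their dual vectors have equal norms, and the strict decrease used in \cite{Fer25} fails. I would handle this by grouping basis elements by pole order and arguing only that $a_0=1$ is alone in its group: for $\Ninf\ge1$ the degree-zero part of $A$ is $\Fq$. That leaves the strict inequality needed between the groups containing $1$ and the rest.

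I would then verify that the supremum of the ratios over $i>0$ stays strictly below $1$. The orbit-growth estimates should give this, since $|\exp_\varphi(\jmath_\infty(\eta_i))|\to0$.
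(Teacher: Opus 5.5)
Your route is the paper's. The implication (i)$\Rightarrow$(ii) is its argument verbatim. For (ii)$\Rightarrow$(i), the residue-dual expansion with minimum-norm representatives $c_i$, the radial norm formula, and reading $f_{\varphi,\beta}=(1+\epsilon^{(1)})(1+\epsilon)^{-1}$ off the shtuka equation are exactly Theorem~\ref{thm:unit-special-arbitrary-degree-public} and Corollary~\ref{thm:unit-normalized-product}.

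The gap is the one inequality everything rests on, $|c_0|>|c_i|$ for $i>0$, which you only assert (``should be the class farthest'', ``group by pole order''). Two ingredients are missing.
\begin{enumerate}
\item You must relate $|c_i|$ to a norm of $a_i^*$ itself. Since $\jmath_\infty$ is a $K_\infty$-linear isomorphism of $K_\infty$-lines carrying $\Omega_A$ onto $\Lambda_\varphi$, it rescales all norms by one constant $\gamma_\jmath$. Hence $|c_i|=\gamma_\jmath\|a_i^*\|_{\mathrm{quot}}$ (Lemma~\ref{lem:residue-dual-radius-transport-public}).
\item You must bound $\|a_i^*\|_{\mathrm{quot}}$. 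Your ``residue-orthogonal'' only says the representatives are minimal, which imposes nothing on the basis, so ``grouping by pole order'' has no content yet. Dual norms are governed by pole orders only for a basis orthogonal for $|\cdot|_\infty$ and adapted to the filtration $A_{\le n}$. Proving $\|a_i^*\|=|a_i|_\infty^{-1}$ for such a basis is precisely Lemma~\ref{lem:residue-orthogonal-basis-public}.
\end{enumerate}

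Your observation that the pole-order-zero part of $A$ is $\Fq$ does close the step directly, even for the fixed basis of the statement.
\begin{itemize}
\item Every lift of $a_0^*$ to $K_\infty^\vee$ takes the value $1$ at $1$, so $\|a_0^*\|_{\mathrm{quot}}\ge1$.
\item For $i>0$, $a_i^*$ kills $A\cap\mathcal O_\infty=\Fq$. So it extends to $A+\mathcal O_\infty$ by zero on $\mathcal O_\infty$, and then arbitrarily to $K_\infty$. This lift is locally constant, and it is nonzero only on elements of norm at least $q^{\Ninf}$. Hence $\|a_i^*\|_{\mathrm{quot}}\le q^{-\Ninf}$.
\end{itemize}
Strict monotonicity of $R_{\Lambda_\varphi}$ then gives the uniform bound $|\exp_\varphi(c_i)/\beta_0|\le R_{\Lambda_\varphi}(\gamma_\jmath q^{-\Ninf})/R_{\Lambda_\varphi}(\gamma_\jmath)<1$ at once.

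The orbit-growth estimates play no role here, since they bound $R(\Xi^{(n)},\cdot)$. If you argue index by index instead, the supremum stays below $1$ because $\exp_\varphi(c_i)\to0$, which is automatic from $\omega_\jmath\in\mathbb T_A$.
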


\begin{proof}
\emph{\ref{item:coefficient-dominance}$\Rightarrow$\ref{item:special-line-free}.}
Choose a unit-normalizing scalar $\beta_*$.
Proposition~\ref{prop:strict-shtuka-normalization-lattice} constructs the
unit
\begin{equation}\label{eq:direct-shtuka-function-generator}
\omega_*:=\omega_{\varphi,\beta_*}
=\beta_* v_{\varphi,\beta_*}
=\beta_*\prod_{n\ge0}
  \bigl(f_{\varphi,\beta_*}^{(n)}\bigr)^{-1}.
\end{equation}
For any $\xi\in\operatorname{sf}(\varphi)$, the scalar shtuka equations imply
$(\xi/\omega_*)^{(1)}=\xi/\omega_*$. The quotient belongs
to $\mathbb T_A$, so Lemma~\ref{lem:general-Tate-Frobenius-constants} places
it in $A$. Thus $\operatorname{sf}(\varphi)=A\omega_*$.

The reverse implication is proved after
Theorem~\ref{thm:unit-normalized-product}, where the strict unit form of the
unit theorem is available.

\end{proof}

\subsection{The norm notations}

Recall that
$A^\vee_{\mathrm{res}}=\operatorname{Hom}_{\Fq}(A,\Fq)$, with the product
topology, is the residue--Pontryagin dual of the discrete $\Fq$-space $A$.
Residue duality
\cite[Theorem~8, applied with $I=A$]{Poo96} gives
\begin{equation}\label{eq:residue-duality-A-public}
 A^\vee_{\mathrm{res}}\simeq\overline\Omega,
\end{equation}
where the pairing is
\[
 (x,\eta)\longmapsto
 \operatorname{Tr}_{\mathbb F_\infty/\Fq}
 \operatorname{res}_\infty(x\eta).
\]
 For completeness, the proof of
Lemma~\ref{lem:residue-orthogonal-basis-public} below constructs the required
topological basis, verifies surjectivity, and identifies the quotient norm
directly.  

\begin{definition}
Let $
K_\infty^\vee:=\operatorname{Hom} _{\mathbb F_q}^{\mathrm{cont}}(K_\infty,\mathbb F_q)$.
We consider the operator norm  
\begin{align*}
\|\cdot\|_{\mathrm{op}}:
K_\infty^\vee & \to \mathbb{R} \\
\|\ell\|_{\mathrm{op}} = \sup_{0 \neq x\in K_\infty }\frac{|\ell(x)|}{|x|_\infty} &= (\inf \{ |x|_\infty : x \in K_\infty,\ \ell(x)\neq 0 \} )^{-1}.
\end{align*}
\end{definition}
By residue duality, there is a topological isomorphism
\[
\check{\mathscr{F}}:\widehat\Omega_A:=K_\infty\otimes_A\Omega_A
\xrightarrow{\ \sim\ }
K_\infty^\vee ,
\]
where
\[
\check{\mathscr{F}}(\eta)(x)
=
\operatorname{Tr}_{\mathbb F_\infty/\mathbb F_q}
\operatorname{res}_\infty(x\eta).
\]
The operator norm on \(K_\infty^\vee\) is transported to a \(K_\infty\)-linear norm
\(\|\cdot\|_{\widehat\Omega_A}\) on \(\widehat\Omega_A\). Since \(\widehat\Omega_A\) is one-dimensional over \(K_\infty\), choosing such a norm amounts to choosing a nonzero element
\(\eta_0\in\widehat\Omega_A\) and setting
\[
\|x\eta_0\|_{\widehat\Omega_A}
=
|x|_\infty\|\eta_0\|_{\widehat\Omega_A}
\qquad (x\in K_\infty).
\]
{This identity expresses the fixed transported norm in any
choice of basis $\eta_0$; changing the basis does not change the norm.}

We next define the quotient norm on \(A_{\mathrm{res}}^\vee\) by applying the operator norm on \(K_\infty^\vee\) (resp. the transported norm on \(\widehat\Omega_A\)).  Residue duality gives the exact sequence
\begin{equation}\label{eq:residue-duality-exact-public}
0\longrightarrow\Omega_A\longrightarrow\widehat\Omega_A
\xrightarrow{\ \operatorname{Tr}\operatorname{res}_\infty\ } A_{\mathrm{res}}^\vee
\longrightarrow0.
\end{equation}
The restriction to $ A $ gives a morphism $
K_\infty^\vee \to A_{\res}^\vee$. 

\begin{definition}
For \(a^*\in A_{\mathrm{res}}^\vee\), define
\[
|a^*|
:=
\inf\bigl\{
\|\ell\|_{\mathrm{op}}
:\ell\in K_\infty^\vee,\ \ell|_A=a^*
\bigr\}.
\]
\end{definition}
Since the restriction map $
K_\infty^\vee\to A_{\mathrm{res}}^\vee $
is surjective, this infimum exists and defines a norm on \(A_{\mathrm{res}}^\vee\).

\begin{proposition}
The norm on \(A_{\mathrm{res}}^\vee\) equals the quotient norm:
\[
|a^*|
=
\|\bar\eta\|_{\mathrm{quot}}
:=
\inf_{\omega\in\Omega_A}
\|\eta+\omega\|_{\widehat\Omega_A},
\]
where \(\bar\eta\in\overline{\Omega}\) corresponds to \(a^*\).
\end{proposition}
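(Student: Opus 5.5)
The plan is to compare the two infima directly through the isomorphism $\check{\mathscr F}:\widehat\Omega_A\xrightarrow{\sim}K_\infty^\vee$, which by construction carries $\|\cdot\|_{\widehat\Omega_A}$ isometrically onto $\|\cdot\|_{\mathrm{op}}$. Fix $a^*\in A^\vee_{\mathrm{res}}$ and let $\bar\eta\in\overline\Omega$ correspond to it under \eqref{eq:residue-duality-exact-public}, with some representative $\eta\in\widehat\Omega_A$.

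The first step is to identify the set of extensions of $a^*$. By definition of $\check{\mathscr F}$, for $\xi\in\widehat\Omega_A$ the functional $\check{\mathscr F}(\xi)$ restricts on $A$ to $x\mapsto\operatorname{Tr}\operatorname{res}_\infty(x\xi)$. This restriction is exactly the image of $\xi$ under the map $\operatorname{Tr}\operatorname{res}_\infty$ of \eqref{eq:residue-duality-exact-public}. The exact sequence therefore says that $\check{\mathscr F}(\xi)|_A=a^*$ holds precisely when $\xi-\eta\in\Omega_A$. Since $\check{\mathscr F}$ is bijective onto $K_\infty^\vee$, it follows that
\[
\{\ell\in K_\infty^\vee:\ell|_A=a^*\}=\check{\mathscr F}(\eta+\Omega_A).
\]

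The second step is to apply the isometry to this identification of sets:
\[
|a^*|=\inf_{\omega\in\Omega_A}\|\check{\mathscr F}(\eta+\omega)\|_{\mathrm{op}}=\inf_{\omega\in\Omega_A}\|\eta+\omega\|_{\widehat\Omega_A}=\|\bar\eta\|_{\mathrm{quot}}.
\]
This is independent of the representative $\eta$, since changing $\eta$ only translates the coset. Surjectivity of the restriction map, which is needed for the infimum to be over a nonempty set, is likewise just the surjectivity in \eqref{eq:residue-duality-exact-public} transported through $\check{\mathscr F}$.

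The main obstacle is that \eqref{eq:residue-duality-exact-public} and the bijectivity of $\check{\mathscr F}$ are only quoted from \cite{Poo96}. To keep the argument self-contained, I would check them through two facts. First, the kernel of $\operatorname{Tr}\operatorname{res}_\infty$ on $\widehat\Omega_A$ is $\Omega_A$; this is the residue-theorem characterization, with orthogonality to all of $A$ forcing regularity on $U$. Second, nondegeneracy of the local pairing $K_\infty\times\widehat\Omega_A\to\Fq$ makes $\check{\mathscr F}$ a topological isomorphism. With these two facts the argument reduces to the set identification above and needs no basis computation.
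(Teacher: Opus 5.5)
Your proposal is correct and follows essentially the same argument as the paper: both rest on $\check{\mathscr F}$ being an isometric isomorphism that carries $\Omega_A$ onto the annihilator of $A$, so the lifts of $a^*$ are exactly the images of the coset $\eta+\Omega_A$ and the two infima agree. The only difference is packaging: the paper phrases this as an induced isometry $\overline\Omega\xrightarrow{\sim}K_\infty^\vee/\operatorname{Ann}(A)$, while you identify the set of lifts directly.
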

\begin{proof}
The map \(\check{\mathscr{F}}:\widehat\Omega_A\to K_\infty^\vee\)
transports the operator norm on \(K_\infty^\vee\) to
\(\widehat\Omega_A\) by definition; hence it is an isometric
isomorphism.  A differential \(\eta\in\widehat\Omega_A\) annihilates
\(A\) under the residue pairing precisely when \(\eta\in\Omega_A\).
Thus \(\check{\mathscr{F}}\) identifies the subspace
\(\Omega_A\subset\widehat\Omega_A\) with the annihilator
\(\operatorname{Ann}(A)\subset K_\infty^\vee\), and induces an isometric
isomorphism
\[
\overline{\Omega}
\xrightarrow{\;\sim\;}
K_\infty^\vee/\operatorname{Ann}(A).
\]

For \(a^*\in A_{\mathrm{res}}^\vee\), a lift \(\ell\in K_\infty^\vee\)
with \(\ell|_A=a^*\) is exactly a representative of the corresponding
coset in \(K_\infty^\vee/\operatorname{Ann}(A)\).  Therefore
\[
\inf\{\|\ell\|_{\mathrm{op}}:\ell|_A=a^*\}
=
\|\bar\ell\|_{\mathrm{quot}},
\qquad
\text{the quotient norm on }
K_\infty^\vee/\operatorname{Ann}(A).
\]
Transporting back through this isometry gives the asserted formula.
\end{proof}


\subsection{Residue-orthogonal basis}

\begin{definition}\label{def:ai}
   The residue-orthogonal basis means the ordered $\Fq$-basis $ \{ a_i\}_{i \geq 0 } $ with $ a_0 = 1 $, with its dual $ \{ a_i^* \}_{\geq 0 }$ such that
   \begin{equation}\label{eq:dual-basis-norm-public}
 |a_i^*|=|a_i|_\infty^{-1}.
\end{equation}
\end{definition}
Since $a_0=1$, and
all $a_i$ with $i>0$ have a pole at $\infty$, the Riemann--Roch theorem implies that
\[
|a_i| > |a_0| = 1, \qquad |a_i| \longrightarrow \infty.  
\]

Consequently,
\begin{equation}\label{eq:unique-largest-dual-public}
 |a_0^*|>|a_i^*|\quad(i>0),
 \qquad |a_i^*|\longrightarrow0.
\end{equation}

\begin{lemma}[Residue-orthogonal basis]
\label{lem:residue-orthogonal-basis-public}
There is a residue-orthogonal basis.
\end{lemma}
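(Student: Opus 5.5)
The plan is to build the basis from the pole-order filtration of $A$ at $\infty$, and to build its dual functionals by hand inside $K_\infty^\vee$. The lower bound $|a_i^*|\ge|a_i|_\infty^{-1}$ comes for free. Indeed, any lift $\ell\in K_\infty^\vee$ of $a_i^*$ satisfies $\ell(a_i)=1$, so $\|\ell\|_{\mathrm{op}}\ge |\ell(a_i)|/|a_i|_\infty=|a_i|_\infty^{-1}$. The whole content is therefore to find, for each $i$, a lift $\ell_i$ with $\ell_i(x)=0$ whenever $|x|_\infty<|a_i|_\infty$. By the second expression for the operator norm, such an $\ell_i$ has $\|\ell_i\|_{\mathrm{op}}\le|a_i|_\infty^{-1}$.

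\emph{Step 1 (graded coordinates).} Fix a uniformizer $\pi$ at $\infty$. Identify $K_\infty=\mathbb F_\infty((\pi))$ and put $K_{\le n}:=\{x:|x|_\infty\le q^{\Ninf n}\}=\pi^{-n}\mathcal O_\infty$, so that $\operatorname{gr}_nK_\infty=K_{\le n}/K_{\le n-1}\simeq\mathbb F_\infty$ via the leading coefficient. Put $A_{\le n}=A\cap K_{\le n}=L(n\infty)$ and $\operatorname{gr}_nA\hookrightarrow\operatorname{gr}_nK_\infty$, an $\Fq$-subspace of $\mathbb F_\infty$ of dimension at most $\Ninf$. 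This dimension can exceed one when $\Ninf>1$, which is exactly why several $a_i$ can share a norm. For $n=0$ one has $A_{\le 0}=\Fq$, so $\operatorname{gr}_0A=\Fq$.

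\emph{Step 2 (the basis).} For each $n\ge0$, choose $\Fq$-elements of $A_{\le n}$ whose leading coefficients form a basis of $\operatorname{gr}_nA$, taking $a_0=1$ at level $0$. Order them by increasing $n$. Descending induction on pole order shows that these elements form an $\Fq$-basis of $A$: subtract the combination matching the leading coefficient, then recurse. Each element at level $n$ satisfies $|a_i|_\infty=q^{\Ninf n}$, and for $i>0$ one has $n\ge1$.

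\emph{Step 3 (the dual functionals).} Extend each basis of $\operatorname{gr}_nA$ to an $\Fq$-basis of $\mathbb F_\infty$, and choose lifts $C_n\subset K_{\le n}$ of the added vectors. This gives a topological decomposition
\[
K_{\le m}=K_{\le n-1}\oplus\bigoplus_{k=n}^{m}\bigl(\operatorname{span}\{a_j:\text{level }k\}\oplus C_k\bigr),
\]
and $K_\infty$ is the increasing union of the $K_{\le m}$.

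For $a_i$ at level $n$, define $\ell_i$ as follows. It is zero on $K_{\le n-1}$ and on every $C_k$. On $\operatorname{span}\{a_j\}$ it takes the values $\ell_i(a_j)=\delta_{ij}$.

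This $\ell_i$ is $\Fq$-linear. It is continuous, since it kills the open subgroup $K_{\le n-1}$ and is defined compatibly on the increasing union. It restricts to $a_i^*$ on $A$. Finally, it vanishes on $\{|x|_\infty<|a_i|_\infty\}=K_{\le n-1}$. Hence $\|\ell_i\|_{\mathrm{op}}\le|a_i|_\infty^{-1}$, which combines with the lower bound to give $|a_i^*|=|a_i|_\infty^{-1}$.

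By the preceding Proposition, this is also the quotient norm of the corresponding class in $\overline\Omega$. The step I expect to need the most care is Step 3 when $\Ninf>1$. There the functional must separate several $a_j$ of equal norm, and this must be done on the $\Fq$-structure of the residue field $\mathbb F_\infty$, not by a valuation argument. The graded duality on $\mathbb F_\infty$ chosen above is exactly what resolves this.
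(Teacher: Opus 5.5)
Your proof is correct and follows the paper's route. You take a basis adapted to the pole filtration whose leading coefficients span each graded piece and complete it by complements in $\mathbb F_\infty$; the coordinate functional of $a_i$ then gives $\|\ell_i\|_{\mathrm{op}}\le|a_i|_\infty^{-1}$, and the condition $\ell(a_i)=1$ gives the reverse bound.

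The differences are only economies. You kill $K_{\le n-1}$ outright, where the paper builds an orthogonal topological basis of all of $K_\infty$ and represents its coordinate functionals by differentials. You also cite the preceding Proposition for the quotient-norm form, where the paper re-proves, for completeness, surjectivity of restriction and $\operatorname{Ann}(A)=\Omega_A$.
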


\begin{proof}
Choose a uniformizer $\pi$ of $K_\infty$ and set
\[
 A_{\le n}:=A\cap\pi^{-n}\mathcal O_\infty
 =H^0\bigl(X,\mathcal O_X(n\infty)\bigr),\qquad n\ge0.
\]
Choose bases compatibly with this filtration and order the new vectors by
increasing pole order.  At the $n$-th step, choose the new vectors so that
their leading coefficients form an $\Fq$-basis of the image of
$A_{\le n}/A_{\le n-1}$ in $\mathbb F_\infty$.  The resulting basis is
orthogonal: the norm of a finite linear combination is the maximum norm of its
nonzero summands.  Since $A_{\le0}=\Fq$, take $a_0=1$.  Every other basis vector
has a pole at $\infty$, hence has norm greater than one.

We spell out the extension to $K_\infty$, since several basis vectors may
have the same pole order when $\Ninf>1$.  For every integer $m$, the
graded piece
\[
 \pi^m\mathcal O_\infty/\pi^{m+1}\mathcal O_\infty
 \simeq\mathbb F_\infty
\]
is an $\Fq$-space of dimension $\Ninf$.  At the levels $m=-n$ occurring
in the filtration of $A$, extend the leading coefficients of the chosen
$a_i$ of pole order $n$ to an $\Fq$-basis of this graded piece; at every
remaining level choose any $\Fq$-basis.  Lift these graded bases to elements
$b_{m,1},\ldots,b_{m,\Ninf}$ of valuation $m$, retaining the selected
$a_i$ themselves whenever they occur.  Successive subtraction of leading
terms gives, for every $x\in K_\infty$, a unique expansion
\[
 x=\sum_{m\ge v_\infty(x)}\sum_{\ell=1}^{\Ninf}
   \epsilon_{m,\ell}b_{m,\ell},
 \qquad \epsilon_{m,\ell}\in\Fq,
\]
convergent in increasing valuation.  At each valuation only finitely many
summands occur.  Thus the resulting family $(b_j)_{j\in J}$ is an orthogonal
topological $\Fq$-basis of $K_\infty$ containing $(a_i)_{i\ge0}$.

The local trace--residue pairing identifies
$K_\infty\otimes_A\Omega_A$ topologically with
$\operatorname{Hom}_{\Fq,\mathrm{cont}}(K_\infty,\Fq)$.  Hence every
coordinate functional of the topological basis $(b_j)$ is represented by a
unique differential; denote these representing differentials by $(b_j^*)$.
Orthogonality gives
\[
 \|b_j^*\|_{\mathrm{op}}=|b_j|_\infty^{-1}:
\]
the functional $b_j^*$ vanishes on every basis vector except $b_j$, while any
functional taking the value $1$ on $b_j$ has operator norm at least
$|b_j|_\infty^{-1}$.

{To prove that restriction onto $A^\vee_{\mathrm{res}}$ is
surjective, let $\ell\in A^\vee_{\mathrm{res}}$ and put}
$\epsilon_i=\ell(a_i)\in\Fq$.  For the indices of the $b$-family occupied by
the $a_i$, the series
\[
 \widetilde\ell:=\sum_{i\ge0}\epsilon_i b_i^*
\]
converges in the continuous dual of $K_\infty$: indeed
$\|b_i^*\|_{\mathrm{op}}=|a_i|_\infty^{-1}\to0$.  Its restriction to $A$
is $\ell$.  Thus the restriction map from the continuous residue dual of
$K_\infty$ onto $A^\vee_{\mathrm{res}}$ is surjective, and its kernel is the
annihilator
of $A$.  The global residue theorem gives $\Omega_A$ inside this
annihilator.  For the reverse inclusion, put
\[
 \Omega_{\ge n\infty}
 :=\{\eta\in K_\infty\otimes_A\Omega_A:
       \operatorname{ord}_\infty(\eta)\ge n\}.
\]
The finite-level principal-parts criterion used in the proof of
\cite[Theorem~8]{Poo96} gives the exact annihilator identity
\begin{equation}\label{eq:finite-level-residue-annihilator-public}
 \operatorname{Ann}(A_{\le n})
 =\Omega_A+\Omega_{\ge n\infty}
 \qquad(n\ge0).
\end{equation}
Here the residue at the closed point means
$\operatorname{Tr}_{\mathbb F_\infty/\Fq}\operatorname{res}_\infty$, as
in \eqref{eq:residue-duality-exact-public}.  Since
$A=\bigcup_{n\ge0}A_{\le n}$, the annihilator of $A$ is therefore
\[
 \bigcap_{n\ge0}
  \bigl(\Omega_A+\Omega_{\ge n\infty}\bigr)
 =\Omega_A.
\]
The last equality follows because $\Omega_A$ is discrete in $\widehat\Omega_A$ and the $\Omega_{\ge n\infty}$ form a decreasing
neighborhood basis of zero.  Hence the annihilator is exactly $\Omega_A$,
which proves the
exactness of \eqref{eq:residue-duality-exact-public} and realizes
\eqref{eq:residue-duality-A-public} with the claimed quotient norm.

In particular, restricting $b_i^*$ to $A$ gives the dual coordinate
$a_i^*$.  The quotient norm is the infimum over all continuous lifts to
$K_\infty$, and $b_i^*$ realizes this infimum: every lift of $a_i^*$ still
takes the value $1$ on $a_i$ and hence has norm at least
$|a_i|_\infty^{-1}$.  This proves
\eqref{eq:dual-basis-norm-public}.  
\end{proof}
The strict comparison of the dual radii does not depend on the normalization.
Multiplying all norms on the one-dimensional differential space 
$ \widehat{\Omega}_A $
by one fixed
positive constant replaces \eqref{eq:dual-basis-norm-public} by
$|a_i^*|=\gamma|a_i|_\infty^{-1}$ with one constant $\gamma>0$ independent
of $i$; it does not change the two conclusions in
\eqref{eq:unique-largest-dual-public}.  
With the operator-norm normalization
fixed above, $\gamma=1$ and \eqref{eq:dual-basis-norm-public} is exact.

\subsection{Radial growth of the exponential}

Let $\Lambda\subset\Cinf$ be a discrete rank-one $A$-lattice and let
\[
 \exp_\Lambda(z)=z\prod_{0\ne\lambda\in\Lambda}
 \left(1-\frac z\lambda\right).
\]

\begin{lemma}[Attainment of the quotient norm]
\label{lem:lattice-quotient-minimum-public}
Every class in $K_\infty\Lambda/\Lambda$ has a representative of minimum
norm.  A minimum representative $c$ of a nonzero class satisfies
\begin{equation}\label{eq:minimum-representative-public}
 |c-\lambda|\ge |c|\qquad(\lambda\in\Lambda).
\end{equation}
\end{lemma}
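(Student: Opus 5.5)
The plan is to reduce everything to the discreteness of $\Lambda$ inside the locally compact space $K_\infty\Lambda$. The norm is ultrametric, and $\Lambda$ is discrete, so every closed ball of $\Cinf$ contains only finitely many lattice points.

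\emph{Existence of a minimum representative.} Fix a class $x+\Lambda$ with $x\in K_\infty\Lambda$. The set of norms $\{|x-\lambda|:\lambda\in\Lambda\}$ is bounded above by $|x|$ if we only consider those $\lambda$ with $|x-\lambda|\le|x|$. For such $\lambda$ the ultrametric inequality gives $|\lambda|\le|x|$. By discreteness, only finitely many $\lambda$ satisfy this bound. The infimum of $|x-\lambda|$ over all $\lambda$ therefore equals a minimum over a finite, nonempty set (it contains $\lambda=0$), and so it is attained at some $\lambda_0$. We then put $c=x-\lambda_0$.

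To justify discreteness concretely, I would write $\Lambda=\iota(\mathfrak a)\rho$ for a fractional ideal $\mathfrak a$ and some $\rho\in\Cinf^\times$, using the rank-one projectivity from \S\ref{sec:basedata}. A ball $\{|\lambda|\le R\}$ then corresponds to $\{a\in\mathfrak a:|a|_\infty\le R/|\rho|\}$. This set is a Riemann--Roch space $H^0$ of a divisor, hence finite-dimensional over $\Fq$ and therefore finite.

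\emph{The inequality \eqref{eq:minimum-representative-public}.} Let $c$ be a minimum representative of a nonzero class. Every element of the form $c-\lambda$ with $\lambda\in\Lambda$ lies in the same class as $c$. Minimality of $c$ therefore gives $|c-\lambda|\ge|c|$ directly; this is just a restatement of the definition. Nonzeroness of the class guarantees $c\ne0$, so $|c|>0$ and the statement is meaningful.

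\emph{Expected obstacle.} No step here is genuinely hard. The only point needing care is the finiteness of lattice points in balls: discreteness alone yields local finiteness, and it is the ultrametric bound $|\lambda|\le|x|$ that confines the relevant $\lambda$ to a single ball. I would cite that finiteness via the Riemann--Roch description above rather than appeal to an abstract compactness argument.
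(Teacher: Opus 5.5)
Your proposal is correct and follows the paper's proof essentially step for step. A fractional-ideal presentation of $\Lambda$ together with Riemann--Roch gives finiteness of lattice points in each ball, and the ultrametric inequality confines the competing representatives to $\{|\lambda|\le|x|\}$; the inequality $|c-\lambda|\ge|c|$ is then just a restatement of minimality.
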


\begin{proof}
Choose $0\ne\lambda_0\in\Lambda$.  Since $\Lambda$ is projective of rank
one over $A$, its generic fiber is one-dimensional over $K$, and there is a
fractional $A$-ideal $I\subset K$ such that
\begin{equation}\label{eq:rank-one-lattice-fractional-ideal-public}
 \Lambda=\iota(I)\lambda_0.
\end{equation}
For every $C>0$, the elements of $I$ of norm at most $C$ have a uniformly
bounded pole order at $\infty$.  After clearing one fixed denominator of
$I$, they lie in a Riemann--Roch space
$H^0(X,\mathcal O_X(n\infty))$ for some $n$ depending on $C$.  This is a
finite-dimensional $\Fq$-space and hence a finite set.  Therefore
\begin{equation}\label{eq:rank-one-lattice-bounded-finite-public}
 \#\{\lambda\in\Lambda:|\lambda|\le C\}<\infty.
\end{equation}

Start with any representative $c_0$.  If $|\lambda|>|c_0|$, then
$|c_0-\lambda|=|\lambda|>|c_0|$.  Hence a representative of norm at most
$|c_0|$ can only be obtained by subtracting a lattice point in the bounded
ball $\{\lambda\in\Lambda:|\lambda|\le |c_0|\}$.  This set is finite by
\eqref{eq:rank-one-lattice-bounded-finite-public}.  The infimum of the norms
in the coset is therefore a minimum.  Inequality
\eqref{eq:minimum-representative-public} is exactly its minimality.  If the
class is nonzero, its minimum representative is nonzero.
\end{proof}

\begin{notation}\label{not:ci}
    Let
\(
\jmath:\Omega_A\xrightarrow{\sim}\Lambda_\varphi
\)
be an $A$-module isomorphism.  Take the residue-orthogonal basis
$(a_i)_{i\ge0}$ of
Lemma~\ref{lem:residue-orthogonal-basis-public}, let $a_i^*$ be its
residue-dual coordinate classes, and choose $c_i = c_i(\jmath)\in K_\infty\Lambda_\varphi$
of minimum norm in the image of $a_i^*$ under
\[
 \overline\Omega
 \xrightarrow{\overline{\jmath} }
 K_\infty\Lambda_\varphi/\Lambda_\varphi.
\]
\end{notation}
\begin{lemma}[Transport of the residue-dual radii]
\label{lem:residue-dual-radius-transport-public}
With the notation in \ref{not:ci}, there is one constant $\gamma_\jmath>0$, independent of $i$, such that
\begin{equation}\label{eq:residue-dual-radius-transport-public}
 |c_i|=\gamma_\jmath |a_i|_\infty^{-1}
 \qquad(i\ge0).
\end{equation}
In particular, every $c_i$ represents a nonzero quotient class and
\begin{equation}\label{eq:residue-dual-radius-dominance-public}
 |c_0|>|c_i|\quad(i>0),
 \qquad |c_i|\longrightarrow0.
\end{equation}
\end{lemma}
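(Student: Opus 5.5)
The plan is to show that the unit anchor $\jmath$, after extension to $\widehat\Omega_A$, is a $K_\infty$-linear bijection $\jmath_\infty:\widehat\Omega_A\to K_\infty\Lambda_\varphi$ that scales norms by one constant. It then descends to an isometry-up-to-scale on the quotients $\overline\Omega\to K_\infty\Lambda_\varphi/\Lambda_\varphi$.

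First, since $\widehat\Omega_A$ is one-dimensional over $K_\infty$ and $\jmath_\infty$ is $K_\infty$-linear with $|\iota(x)z|=|x|_\infty|z|$, fix a nonzero $\eta_0\in\widehat\Omega_A$ and put $\gamma_\jmath:=|\jmath_\infty(\eta_0)|/\|\eta_0\|_{\widehat\Omega_A}$. Then for every $\eta=x\eta_0$ we get $|\jmath_\infty(\eta)|=|x|_\infty|\jmath_\infty(\eta_0)|=\gamma_\jmath\|\eta\|_{\widehat\Omega_A}$. The constant does not depend on $\eta_0$, because both norms are $K_\infty$-homogeneous.

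Second, because $\jmath$ is an isomorphism $\Omega_A\xrightarrow{\sim}\Lambda_\varphi$, the map $\jmath_\infty$ carries the subgroup $\Omega_A$ onto $\Lambda_\varphi$. Hence for a class $\bar\eta$ the coset $\eta+\Omega_A$ maps bijectively onto the coset $\jmath_\infty(\eta)+\Lambda_\varphi$. Taking infima of the scaled norms gives
\[
\min_{\lambda}|\jmath_\infty(\eta)+\lambda| = \gamma_\jmath\inf_{\omega\in\Omega_A}\|\eta+\omega\|_{\widehat\Omega_A}=\gamma_\jmath\|\bar\eta\|_{\mathrm{quot}}.
\]
The left side is attained by Lemma~\ref{lem:lattice-quotient-minimum-public}. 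It equals $|c_i|$ when $\bar\eta$ corresponds to $a_i^*$.

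Third, the preceding proposition identifies $\|\bar\eta\|_{\mathrm{quot}}$ with $|a_i^*|$, and Lemma~\ref{lem:residue-orthogonal-basis-public} together with \eqref{eq:dual-basis-norm-public} gives $|a_i^*|=|a_i|_\infty^{-1}$. This yields \eqref{eq:residue-dual-radius-transport-public}. Since $|a_i^*|>0$, the class $a_i^*$ is nonzero, and bijectivity of $\bar\jmath$ makes $c_i$ represent a nonzero class. The dominance and decay statements \eqref{eq:residue-dual-radius-dominance-public} then follow from \eqref{eq:unique-largest-dual-public}, because the factor $\gamma_\jmath$ is common to all $i$.

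The only delicate point is the second step: making sure the infimum over the lattice coset matches the quotient norm exactly. This requires $\jmath_\infty(\Omega_A)=\Lambda_\varphi$ on the nose, which is where the isomorphism hypothesis is used, rather than merely an inclusion. Without it one would only get an inequality.
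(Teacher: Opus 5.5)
Your proposal is correct and follows the paper's proof essentially step for step. It uses the same constant $\gamma_\jmath=|\jmath_\infty(\eta_0)|/\|\eta_0\|_{\widehat\Omega_A}$ coming from one-dimensionality over $K_\infty$, and the same passage to coset infima using $\jmath_\infty(\Omega_A)=\Lambda_\varphi$. It then makes the same appeal to the quotient-norm proposition, the residue-orthogonal basis identity $|a_i^*|=|a_i|_\infty^{-1}$, and the attainment Lemma~\ref{lem:lattice-quotient-minimum-public}.
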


\begin{proof}
Give $K_\infty\otimes_A\Omega_A$ the norm transported from the continuous
residue dual of $K_\infty$, as in
Lemma~\ref{lem:residue-orthogonal-basis-public}, and give
$K_\infty\Lambda_\varphi\subset\Cinf$ the inherited absolute value.  Choose
$0\ne\eta_0\in K_\infty\otimes_A\Omega_A$ and put
\[
 \gamma_\jmath:=
 \frac{|\jmath_\infty(\eta_0)|}{\|\eta_0\|_{\widehat\Omega_A}}>0.
\]
Both spaces are one-dimensional over $K_\infty$.  Hence, writing
$\eta=x\eta_0$ with $x\in K_\infty$, one obtains the exact scaling identity
\begin{equation}\label{eq:residue-dual-line-scaling-public}
 |\jmath_\infty(\eta)|
 =\gamma_\jmath\|\eta\|_{\widehat\Omega_A}.
\end{equation}
Taking infima over the corresponding cosets modulo $\Omega_A$ and
$\Lambda_\varphi$ shows that the induced quotient isomorphism
$\overline{\jmath}_\infty$ scales every quotient norm by the same constant
$\gamma_\jmath$.  Lemma~\ref{lem:residue-orthogonal-basis-public} gives
$\|a_i^*\|_{\mathrm{quot}}=|a_i|_\infty^{-1}$, while the minimum choice of
$c_i$ and Lemma~\ref{lem:lattice-quotient-minimum-public} give
$|c_i|=\|\overline{\jmath}_\infty(a_i^*)\|_{\mathrm{quot}}$.  This proves
\eqref{eq:residue-dual-radius-transport-public}.  The classes $a_i^*$ are
nonzero, so their images and the minimum representatives $c_i$ are nonzero.
Finally $a_0=1$, every $a_i$ with $i>0$ has norm greater than one, and
$|a_i|_\infty\to\infty$; these facts give
\eqref{eq:residue-dual-radius-dominance-public}.
\end{proof}

\begin{example}[{A degree-two infinite point}]
\label{ex:degree-two-infinity-residue-radii-public}
Let $X=\mathbb P^1_{\Fq}$, let $P(t)\in\Fq[t]$ be monic irreducible of
degree two, and choose the closed point $\infty=[P]$.  Then
\[
 A=\bigcup_{n\ge0}
 \left\{\frac{g(t)}{P(t)^n}:g\in\Fq[t],\ \deg g\le2n\right\},
\]
and an $\Fq$-basis adapted to the pole filtration is
\begin{equation}\label{eq:degree-two-infinity-orthogonal-basis-public}
 a_0=1,
 \qquad
 a_{n,0}=P(t)^{-n},
 \qquad
 a_{n,1}=tP(t)^{-n}
 \quad(n\ge1).
\end{equation}
Indeed, Euclidean division of a numerator by $P$ proves spanning and
independence inductively.  At pole order $n$, the leading coefficients of
$a_{n,0}$ and $a_{n,1}$ are $1$ and $\bar t$ in
$\Fq[t]/P\simeq\mathbb F_{q^2}$; they are $\Fq$-independent.  Thus the basis
is orthogonal.  If $Q_\infty:=|P(t)^{-1}|_\infty>1$, its residue-dual
coordinates satisfy
\begin{equation}\label{eq:degree-two-infinity-dual-radii-public}
 |a_0^*|=1,
 \qquad
 |a_{n,0}^*|=|a_{n,1}^*|=Q_\infty^{-n}.
\end{equation}
{For a unit anchor $\jmath$, the corresponding minimum
representatives have radius $\gamma_\jmath Q_\infty^{-n}$.
Thus equal radii occur among the nonconstant coordinates, while the
constant coordinate is strictly dominant, as required by the unit proof.}
\end{example}

\begin{lemma}[Radial norm formula]
\label{lem:radial-exp-norm-public}
If $c$ is a minimum-norm representative of a nonzero class in
$K_\infty\Lambda/\Lambda$, then
\begin{equation}\label{eq:radial-exp-formula-public}
 |\exp_\Lambda(c)|
 =R_\Lambda(|c|),
 \qquad
 R_\Lambda(r):=
 r\prod_{\substack{0\ne\lambda\in\Lambda\\|\lambda|<r}}
 \frac r{|\lambda|}.
\end{equation}
The product on the right is finite, and $R_\Lambda$ is strictly increasing on
the positive radii which occur as minimum norms of nonzero quotient classes.
\end{lemma}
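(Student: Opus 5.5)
The plan is to evaluate the Weierstrass product $\exp_\Lambda(c)=c\prod_{0\ne\lambda\in\Lambda}(1-c/\lambda)$ factor by factor. We split the index set according to whether $|\lambda|<|c|$, $|\lambda|=|c|$, or $|\lambda|>|c|$. Throughout, we use the minimality inequality \eqref{eq:minimum-representative-public}, namely $|c-\lambda|\ge|c|$ for all $\lambda\in\Lambda$.

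\begin{enumerate}[label=\textup{(\arabic*)},leftmargin=2.2em]
\item \emph{Large lattice points.} For $|\lambda|>|c|$ we have $|c/\lambda|<1$, so $|1-c/\lambda|=1$. Discreteness, via \eqref{eq:rank-one-lattice-bounded-finite-public}, gives $|\lambda|\to\infty$, so the infinite tail of the product converges and contributes absolute value exactly one.
\item \emph{Small lattice points.} For $|\lambda|<|c|$ the ultrametric inequality gives $|1-c/\lambda|=|c-\lambda|/|\lambda|=|c|/|\lambda|$. Finiteness of the set $\{|\lambda|<|c|\}$ again follows from \eqref{eq:rank-one-lattice-bounded-finite-public}. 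These factors produce exactly the finite product $\prod_{|\lambda|<r} r/|\lambda|$ with $r=|c|$.
\item \emph{Equal-norm lattice points.} This is the only delicate case. Here $|1-c/\lambda|=|c-\lambda|/|c|$. The bound $|c-\lambda|\le\max(|c|,|\lambda|)=|c|$ holds trivially, and minimality gives $|c-\lambda|\ge|c|$. Hence each such factor has absolute value exactly $1$. This is the step where the minimum-representative hypothesis is genuinely used. It is also what fails for non-minimal $c$, where cancellation on the sphere $|\lambda|=|c|$ could lower the norm.
\end{enumerate}

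Multiplying the three contributions gives $|\exp_\Lambda(c)|=|c|\prod_{|\lambda|<|c|}|c|/|\lambda|=R_\Lambda(|c|)$. Note that $c\ne0$ because the class is nonzero.

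For strict monotonicity we do not need the restriction to realized radii. For $0<r<r'$ we compare the two products directly:
\[
 \frac{R_\Lambda(r')}{R_\Lambda(r)}
 =\frac{r'}{r}\prod_{|\lambda|<r}\frac{r'}{r}\prod_{r\le|\lambda|<r'}\frac{r'}{|\lambda|}.
\]
Every factor on the right is at least $1$, and the first factor $r'/r$ is strictly greater than $1$. So $R_\Lambda$ is strictly increasing on all of $(0,\infty)$, and in particular on the radii that occur as minimum norms. I expect no serious obstacle beyond handling the equal-norm sphere in step (3).
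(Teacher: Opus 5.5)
Your proof is correct and follows the paper's argument: you split the Weierstrass product into the cases $|\lambda|<|c|$, $|\lambda|=|c|$ and $|\lambda|>|c|$, and you use minimality only on the sphere $|\lambda|=|c|$. Your ratio computation likewise matches the paper's and shows that $R_\Lambda$ is strictly increasing on all of $(0,\infty)$; your explicit remark that the tail of the product converges with norm one is a small, harmless addition.
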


\begin{proof}
If $|\lambda|>|c|$, then $|1-c/\lambda|=1$; if
$|\lambda|<|c|$, then $|1-c/\lambda|=|c|/|\lambda|$.  If
$|\lambda|=|c|$, Lemma~\ref{lem:lattice-quotient-minimum-public} gives
$|c-\lambda|\ge|c|$, while the ultrametric
inequality gives the reverse inequality.  The equal-radius factor therefore
also has norm one.  This proves \eqref{eq:radial-exp-formula-public}.  The
product is finite by \eqref{eq:rank-one-lattice-bounded-finite-public}.  If
$0<r_1<r_2$, then the factors already present at $r_1$, together with the
initial factor, contribute a power of $r_2/r_1$ strictly greater than one to
$R_\Lambda(r_2)/R_\Lambda(r_1)$; every new factor, indexed by
$r_1\le|\lambda|<r_2$, is also strictly greater than one.  Thus
$R_\Lambda(r_1)<R_\Lambda(r_2)$.  In particular this holds on the radii of
minimum representatives.
\end{proof}

\subsection{The arbitrary-degree unit theorem}

{In the residue-dual expansion
\eqref{eq:generating-function-residue-dual-expansion}, the pole norms
$|a_i|_\infty$ determine the radii of the minimum representatives
$c_i$.  The Gauss norm measures the unweighted coefficients
$\exp_\varphi(c_i)$, since every basis vector $a_i$ has Gauss norm one.
The radial norm formula transfers the strict inequality
$|c_0|>|c_i|$ for $i>0$ to these coefficients, giving invertibility
by a Neumann series.}

{For rational infinity, this dual-norm argument appears in
\cite[\S4]{Fer25}; the residue-orthogonal construction above
allows repeated pole orders.}

\begin{theorem}[Generating-function unit theorem for arbitrary $\Ninf$]
\label{thm:unit-special-arbitrary-degree-public}
Let $\jmath:\Omega_A\xrightarrow{\sim}\Lambda_\varphi$ be a {unit anchor}, and choose
the minimum representatives $c_i$ of Notation~\ref{not:ci}. Put
$\beta_0=\exp_\varphi(c_0)$. Then $\beta_0\ne0$, and:
\begin{enumerate}[label=\textup{(\roman*)},leftmargin=2.2em]
\item\label{item:unit-dominance} The residue-dual coordinate has the form
\begin{equation}\label{eq:quantitative-unit-factorization-public}
\omega_\jmath=\beta_0(1+\varepsilon_\jmath),
\qquad \|\varepsilon_\jmath\|_{\mathrm G}<1.
\end{equation}
\item\label{item:unit-inverse} It is a unit of $\mathbb T_A$, with
\[
\|\omega_\jmath\|_{\mathrm G}=|\beta_0|,
\qquad
\|\omega_\jmath^{-1}\|_{\mathrm G}=|\beta_0|^{-1}.
\]
\end{enumerate}
\end{theorem}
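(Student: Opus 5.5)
The plan is to read off the Tate coordinates of $\omega_\jmath$ from the residue-dual expansion \eqref{eq:generating-function-residue-dual-expansion}, using the residue-orthogonal basis $(a_i)$ of Lemma~\ref{lem:residue-orthogonal-basis-public}. Since $\mathscr F_{\varphi,\jmath}$ depends only on the class in $\overline\Omega$ (a change of representative alters $\jmath_\infty$ by a period), we may use the minimum representatives $c_i$ of Notation~\ref{not:ci} and obtain
\[
 \omega_\jmath=\sum_{i\ge0}\exp_\varphi(c_i)\otimes a_i .
\]
The class $a_0^*$ is nonzero, so $c_0$ is not a period. Hence $\beta_0=\exp_\varphi(c_0)\ne0$; if one prefers, Lemma~\ref{lem:radial-exp-norm-public} gives directly $|\beta_0|=R_{\Lambda_\varphi}(|c_0|)>0$.

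Next I would compare the coefficient norms. By Lemma~\ref{lem:radial-exp-norm-public}, $|\exp_\varphi(c_i)|=R_{\Lambda_\varphi}(|c_i|)$ for every $i$; this is legitimate because each $c_i$ is a minimum representative of a nonzero class, by Lemma~\ref{lem:residue-dual-radius-transport-public}. That lemma also gives $|c_0|>|c_i|$ for $i>0$. Strict monotonicity of $R_{\Lambda_\varphi}$ then yields $|\exp_\varphi(c_i)|<|\beta_0|$ for each $i>0$.

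The main obstacle is that the Gauss norm needs a uniform bound $\sup_{i>0}|\exp_\varphi(c_i)|/|\beta_0|<1$, not only termwise strict inequalities. I would get it from $|c_i|\to0$: the radii $|c_i|=\gamma_\jmath|a_i|_\infty^{-1}$ for $i>0$ take values in the discrete set $\gamma_\jmath q^{-\Ninf\mathbb Z_{>0}}$, so their maximum $r_1<|c_0|$ is attained. Monotonicity of $R_{\Lambda_\varphi}$ then gives $\sup_{i>0}|\exp_\varphi(c_i)|\le R_{\Lambda_\varphi}(r_1)<R_{\Lambda_\varphi}(|c_0|)=|\beta_0|$. (Alternatively, for small $|c_i|$ one has $|\exp_\varphi(c_i)|=|c_i|\to0$.) This is exactly where arbitrary $\Ninf$ matters: several $a_i$ may share a pole order, but only the strict dominance of the index $0$ is used, and the radial formula handles each coordinate separately.

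With these bounds I set $\varepsilon_\jmath:=\sum_{i>0}\beta_0^{-1}\exp_\varphi(c_i)\otimes a_i$. Using $a_0=1$, this gives $\omega_\jmath=\beta_0(1+\varepsilon_\jmath)$ with $\|\varepsilon_\jmath\|_{\mathrm G}\le R_{\Lambda_\varphi}(r_1)/|\beta_0|<1$, which proves (i). For (ii), the geometric series $\sum_k(-\varepsilon_\jmath)^k$ converges in $\mathbb T_A$ by submultiplicativity and inverts $1+\varepsilon_\jmath$. Strict $1$-units have Gauss norm one, as do their inverses, so $\|\omega_\jmath\|_{\mathrm G}=|\beta_0|$ and $\|\omega_\jmath^{-1}\|_{\mathrm G}=|\beta_0|^{-1}$.
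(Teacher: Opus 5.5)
Your proposal is correct and follows the paper's proof essentially step by step. Both arguments use the residue-dual expansion with the minimum representatives $c_i$, then the radial norm formula and strict monotonicity of $R_{\Lambda_\varphi}$ to get $|\exp_\varphi(c_i)|<|\beta_0|$ for $i>0$, and finally a Neumann series for the inverse. Your uniform bound, which uses attainment of the largest radius $r_1<|c_0|$ in the discrete value set, is a slightly more explicit form of the paper's remark that the coefficients tend to zero, so only finitely many need to be compared.
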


\begin{proof}
\emph{\ref{item:unit-dominance}.}
Lemma~\ref{lem:residue-dual-radius-transport-public} gives
\begin{equation}\label{eq:quantitative-residue-coordinate-radii-public}
|c_i|=\gamma_\jmath|a_i|_\infty^{-1}.
\end{equation}
In particular, the radius dominance \eqref{eq:residue-dual-radius-dominance-public} of Lemma~\ref{lem:residue-dual-radius-transport-public} holds.
Each $c_i$ represents a nonzero class modulo $\Lambda_\varphi$, so
$\exp_\varphi(c_i)\ne0$. The radial formula gives
\begin{equation}\label{eq:quantitative-exponential-radii-public}
|\exp_\varphi(c_i)|=R_{\Lambda_\varphi}(\gamma_\jmath|a_i|_\infty^{-1}).
\end{equation}
Strict monotonicity therefore implies
\begin{equation}\label{eq:dominant-exp-public}
|\exp_\varphi(c_i)|<|\beta_0|\qquad(i>0).
\end{equation}
By \eqref{eq:generating-function-residue-dual-expansion},
\begin{equation}\label{eq:generating-unit-expansion-public}
\omega_\jmath=\sum_{i\ge0}\exp_\varphi(c_i)\otimes a_i.
\end{equation}
Define
\begin{equation}\label{eq:vjmath}
\varepsilon_\jmath=\sum_{i>0}\frac{\exp_\varphi(c_i)}{\beta_0}\otimes a_i,
\qquad v_\jmath=1+\varepsilon_\jmath.
\end{equation}
The coefficients tend to zero because $c_i\to0$. Each has norm less than
one, so their supremum is also less than one: a tail is uniformly small
and only finitely many coefficients remain. The coefficient description
of the Gauss norm proves \eqref{eq:quantitative-unit-factorization-public}.

\emph{\ref{item:unit-inverse}.}
The Neumann series $\sum_{n\ge0}(-\varepsilon_\jmath)^n$ converges to
$v_\jmath^{-1}$ and has norm at most one. Since $\|v_\jmath\|_{\mathrm G}=1$,
submultiplicativity in $v_\jmath v_\jmath^{-1}=1$ gives
$\|v_\jmath^{-1}\|_{\mathrm G}=1$. Multiplication by $\beta_0$ gives both
asserted norms.
\end{proof}



\subsection{\texorpdfstring{Proof of the coefficient criterion, reverse implication}{Proof of the coefficient criterion, reverse implication}}

\begin{corollary}[Explicit convergent product]
 \label{thm:unit-normalized-product} 
 Let $ \jmath$ be a {unit anchor}, and choose $c_0 = c_0(\jmath)\in K_\infty\Lambda_\varphi$
of minimum norm in the image of $a_0^*$ as in Notation~\ref{not:ci}; put $\beta_0=\exp_\varphi(c_0)$.
Then $\beta_0$ is a unit-normalizing scalar, and
\[
\omega_\jmath
=\beta_0\prod_{n\ge0}
  \left((f_{\varphi,\beta_0})^{(n)}\right)^{-1}
\qquad\text{in }\mathbb T_A.
\]
\end{corollary}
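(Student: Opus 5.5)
The plan is to obtain both claims from Theorem~\ref{thm:unit-special-arbitrary-degree-public}: first show that $\beta_0$ is unit-normalizing, then match $\omega_\jmath$ with the product $\omega_{\varphi,\beta_0}$ of Proposition~\ref{prop:strict-shtuka-normalization-lattice}.

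\emph{Step 1: $\beta_0$ is unit-normalizing.} Write $\omega_\jmath=\beta_0v_\jmath$ with $v_\jmath=1+\varepsilon_\jmath$ and $\|\varepsilon_\jmath\|_{\mathrm G}<1$; this is possible by Theorem~\ref{thm:unit-special-arbitrary-degree-public}. Then $\omega_\jmath$ is a unit and a special function by Proposition~\ref{prop:generating-unit-candidate}. The scalar shtuka equation \eqref{eq:direct-candidate-scalar-shtuka} gives
\[
f_\varphi=\omega_\jmath^{(1)}\omega_\jmath^{-1}=\beta_0^{q-1}v_\jmath^{(1)}v_\jmath^{-1}.
\]
In particular $f_\varphi\in\mathbb T_A$, which was not assumed beforehand. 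Hence $f_{\varphi,\beta_0}=\beta_0^{1-q}f_\varphi=v_\jmath^{(1)}v_\jmath^{-1}$.

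We now bound the distance to $1$. Coefficient Frobenius gives $\|\varepsilon_\jmath^{(1)}\|_{\mathrm G}=\|\varepsilon_\jmath\|_{\mathrm G}^q<1$, so $v_\jmath^{(1)}$ is a strict $1$-unit. The Neumann series shows that $v_\jmath^{-1}$ is also a strict $1$-unit. Strict $1$-units form a group under the submultiplicative Gauss norm. It follows that $\|f_{\varphi,\beta_0}-1\|_{\mathrm G}<1$ and $f_{\varphi,\beta_0}\in\mathbb T_A^\times$, which is Definition~\ref{def:unit-normalizing-scalar}.

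\emph{Step 2: identification with the product.} Proposition~\ref{prop:strict-shtuka-normalization-lattice} now provides $\omega_{\varphi,\beta_0}=\beta_0v_{\varphi,\beta_0}$. It satisfies the same equation $\omega^{(1)}=f_\varphi\omega$ and is a unit. The quotient $\omega_\jmath/\omega_{\varphi,\beta_0}=v_\jmath/v_{\varphi,\beta_0}$ lies in $\mathbb T_A$ and is Frobenius fixed. Lemma~\ref{lem:general-Tate-Frobenius-constants} therefore places it in $A$.

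It remains to show the constant in $A$ is $1$. The quotient is a strict $1$-unit, again because strict $1$-units form a group. An element $a\in A$, expanded in a basis with $a_0=1$, has coefficients in $\Fq$. So $\|a-1\|_{\mathrm G}<1$ forces $a-1=0$, hence $a=1$. This gives $\omega_\jmath=\beta_0\prod_{n\ge0}((f_{\varphi,\beta_0})^{(n)})^{-1}$.

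The only mildly delicate point is the group property of strict $1$-units under Frobenius and inversion, which rests on the norm identity $\|x^{(1)}\|_{\mathrm G}=\|x\|_{\mathrm G}^q$ and on submultiplicativity. Both are immediate from the coefficient description \eqref{eq:general-Tate-basis-norm-main}.
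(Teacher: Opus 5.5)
Your proof is correct. Step~1 is the paper's argument: $f_{\varphi,\beta_0}=v_\jmath^{(1)}/v_\jmath$ is a strict $1$-unit.

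Step~2 takes a different route. The paper uses neither Proposition~\ref{prop:strict-shtuka-normalization-lattice} nor Lemma~\ref{lem:general-Tate-Frobenius-constants}. It telescopes the partial products, $\prod_{n=0}^{M-1}\bigl((f_{\varphi,\beta_0})^{(n)}\bigr)^{-1}=v_\jmath/v_\jmath^{(M)}$, and lets $M\to\infty$ using $\|\varepsilon_\jmath^{(M)}\|_{\mathrm G}=\|\varepsilon_\jmath\|_{\mathrm G}^{q^M}\to0$. Convergence of the product and the identity then come out in one stroke.

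You instead take convergence from Proposition~\ref{prop:strict-shtuka-normalization-lattice} and then run a rigidity argument: the quotient $v_\jmath/v_{\varphi,\beta_0}$ is a Frobenius-fixed strict $1$-unit, hence lies in $A$ and must equal $1$. This is the same mechanism as in the implication \ref{item:coefficient-dominance}$\Rightarrow$\ref{item:special-line-free} of Theorem~\ref{thm:direct-shtuka-function-freeness-test}.

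Your route costs an extra input, though only the easy half $\mathbb T_A^{(1)=1}=A$ of the lemma. In return it proves slightly more: $\beta_0$ times the product is the unique special function of the form $\beta_0\cdot(\text{strict }1\text{-unit})$.

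You can drop the lemma altogether. Put $w=v_\jmath/v_{\varphi,\beta_0}-1$. Then $w^{(1)}=w$ gives $\|w\|_{\mathrm G}=\|w\|_{\mathrm G}^{q}$, and together with $\|w\|_{\mathrm G}<1$ this forces $w=0$.
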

\begin{proof}
The unit theorem (Theorem~\ref{thm:unit-special-arbitrary-degree-public})
writes $\omega_\jmath=\beta_0(1+\varepsilon_\jmath)$ with
$\|\varepsilon_\jmath\|_{\mathrm G}<1$; put
$v_\jmath=\beta_0^{-1}\omega_\jmath=1+\varepsilon_\jmath$.
The scalar shtuka equation gives
\[
f_{\varphi,\beta_0}
=\frac{v_\jmath^{(1)}}{v_\jmath},
\qquad
f_{\varphi,\beta_0}-1
=\frac{\varepsilon_\jmath^{(1)}-\varepsilon_\jmath}{1+\varepsilon_\jmath},
\]
of Gauss norm less than one: $\beta_0$ is a unit-normalizing scalar.
The finite products telescope:
\[
\prod_{n=0}^{M-1}
 \left((f_{\varphi,\beta_0})^{(n)}\right)^{-1}
=\prod_{n=0}^{M-1}\frac{v_\jmath^{(n)}}{v_\jmath^{(n+1)}}
=\frac{v_\jmath}{v_\jmath^{(M)}}.
\]
Since
$\|\varepsilon_\jmath^{(M)}\|_{\mathrm G}
=\|\varepsilon_\jmath\|_{\mathrm G}^{q^M}\to0$,
the denominators and their inverses tend to one. Therefore
\begin{equation}\label{eq:unit-normalized-product-public}
v_\jmath=\prod_{n\ge0}
 \left((f_{\varphi,\beta_0})^{(n)}\right)^{-1}.
\end{equation}
Multiplication by $\beta_0$ proves the assertion.
\end{proof}

It remains to prove the reverse implication of Theorem~\ref{thm:direct-shtuka-function-freeness-test}.
\begin{proof}
\emph{\ref{item:special-line-free}$\Rightarrow$\ref{item:coefficient-dominance}.}
An $A$-basis $\omega$ corresponds, by
Theorem~\ref{thm:direct-logarithmic-special-line}, to a homomorphism
$\jmath:\Omega_A\to\Lambda_\varphi$ with $\omega=\omega_\jmath$;
by part (iii) of the same theorem, $\jmath$ is an isomorphism.
Corollary~\ref{thm:unit-normalized-product} makes
$\beta_0=\exp_\varphi(c_0)$ a unit-normalizing scalar.

\end{proof}

\subsection{\texorpdfstring{{Period extraction}}{Period extraction}}
Explicit formulas in this direction go back to \cite{EP13}, and
periods of $t$-motives to \cite{Sin97}.
{For the shtuka-product and characteristic-residue formula
on elliptic curves, see \cite[Theorem~4.6]{GP18}.}
For the explicit lattice formulas, we normalize the scalar
coordinate of the unit.

{To extract a characteristic residue from the product
expression for $\omega_\jmath$ in
Corollary~\ref{thm:unit-normalized-product}, we extend it
meromorphically from the Tate domain to $U_{\Cinf}^{\mathrm{an}}$.
Lemma~\ref{lem:strict-product-affine-convergence} shows that the
product of the late Frobenius factors converges to an analytic unit
on every affinoid.  The identity principle then identifies this
expression with the continuation of $\omega_\jmath$ from
Theorem~\ref{thm:rational-anderson-continuation}.}

\begin{lemma}[Affine convergence of the Frobenius product]
\label{lem:strict-product-affine-convergence}
Let $g\in\Cinf(X)\cap\mathbb T_A^\times$ satisfy
$\|g-1\|_{\mathrm G}<1$.
\begin{enumerate}[label=\textup{(\roman*)},leftmargin=2.2em]
\item\label{item:affine-unit-tail} On every affinoid
$W\subset U_{\Cinf}^{\mathrm{an}}$, the functions $g^{(n)}$ are analytic
units for all sufficiently large $n$, satisfy $\|g^{(n)}-1\|_W\to0$, and
the product of the factors from any sufficiently large index on is an
analytic unit on $W$.
\item\label{item:product-meromorphic-convergence} For every
$n_0\ge0$, the product $\prod_{n\ge n_0}(g^{(n)})^{-1}$ converges locally
meromorphically on $U_{\Cinf}^{\mathrm{an}}$.
\end{enumerate}
\end{lemma}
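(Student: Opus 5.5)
The plan is to compare the Gauss norm on $\mathbb T_A$ with sup norms on affinoids. Two facts drive it: $g$ is a rational function, and coefficient Frobenius raises the Gauss distance to one to the $q$-th power, $\|g^{(n)}-1\|_{\mathrm G}=\|g-1\|_{\mathrm G}^{q^n}$. Set $\rho=\|g-1\|_{\mathrm G}<1$ and write $h:=g-1=\sum_i d_i\otimes a_i$ with $|d_i|\le\rho$ and $d_i\to0$.

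For part \ref{item:affine-unit-tail}, fix an affinoid $W\subset U_{\Cinf}^{\mathrm{an}}$. The functions $a_i$ are analytic on $W$, but their sup norms $\|a_i\|_W$ may grow with $i$. To control them, compare the pole filtration $A_{\le m}=H^0(X,\mathcal O_X(m\infty))$ with $W$. Choose a nonconstant $p\in A$ and a radius $R\ge1$ with $W\subset\{|p|\le R\}$. Since $A$ is finite over $\Fq[p]$, an element of pole order at most $m$ has sup norm on $W$ at most $C R^{m/\deg p}$, where $C$ is a constant independent of $m$. This gives an exponential bound $\|a_i\|_W\le C\,\kappa^{\,m(i)}$, where $m(i)$ is the pole order of $a_i$.

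The main obstacle is that this exponential bound in the pole order must be beaten by the decay of the coefficients of $h^{(n)}$, which is $\rho^{q^n}$ for each coefficient, uniformly in $i$. I would not work with infinite coefficient sums directly. Instead I would use that $h$ is rational and regular on the closed unit polydisc, i.e. on the Tate domain. Choose $b\in\Cinf\otimes_{\Fq}A$ with $bh\in\Cinf\otimes_{\Fq}A$ a finite sum; the same $b$ can be taken so that $b^{(n)}h^{(n)}=(bh)^{(n)}$. The finite sum $(bh)^{(n)}$ then has at most $N$ terms with fixed $a_i$ and coefficients raised to the $q^n$-th power, so its $W$-norm is at most $\max_i|c_i|^{q^n}\cdot\max_{i\le N}\|a_i\|_W$. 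Dividing by $b^{(n)}$ is the delicate step. Since $b$ has only finitely many zeros, and these lie outside the Tate domain when the denominator is chosen from the poles of $g$, the zeros of $b^{(n)}$ have the form $P^{(n)}$ and escape $W$ for large $n$ by the argument of Lemma~\ref{lem:orbit-escape}. A lower bound $|b^{(n)}|_W\ge\epsilon^{q^n}$ then follows, with the normalization of $b$ chosen so that $|c_i|/\epsilon<1$ comes from $\rho<1$.

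Concretely, I would first try normalizing $b$ as a strict unit times a product of factors $(u-\alpha)/\alpha$ with $|\alpha|$ large, in the way the elliptic example expands $1/(t-\alpha)$. This yields $\|h^{(n)}\|_W\le C'\sigma^{q^n}$ for some $\sigma<1$ and all large $n$. Consequently $g^{(n)}=1+h^{(n)}$ is an analytic unit on $W$ with $\|g^{(n)}-1\|_W\to0$ doubly exponentially, and the tail product $\prod_{n\ge n_1}(g^{(n)})^{-1}$ converges to an analytic unit, since its factors lie in $1+\{\|x\|_W<1\}$.

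Part \ref{item:product-meromorphic-convergence} follows by covering $U_{\Cinf}^{\mathrm{an}}$ with affinoids. On each affinoid $W$, split the product at the index $n_1(W)$ from part \ref{item:affine-unit-tail}: the finitely many initial factors $(g^{(n)})^{-1}$, for $n_0\le n<n_1$, are rational and hence meromorphic, and the tail is an analytic unit. Their product is therefore meromorphic on $W$. These local meromorphic functions are compatible on overlaps, because on $W\cap W'$ both are limits of the same partial products after the larger index. Hence they glue to a locally meromorphic limit.
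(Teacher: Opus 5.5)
Your argument is correct, but its key mechanism differs from the paper's. The paper fixes a nonconstant $t\in A$ with an $\Fq[t]$-basis $e_1,\dots,e_s$ of $A$ and expands $g-1=\sum_{j,m}c_{j,m}e_jt^m$. The coefficient functions are rational in $t$ with poles outside $|t|\le1$, so they overconverge. This gives $R>1$ with $\rho:=\sup_{j,m}|c_{j,m}|R^m<1$. Twisting then yields $\|g^{(n)}-1\|_{\{|t|\le S\}}\le C_S\rho^{q^n}$ as soon as $S\le R^{q^n}$: the twist inflates the overconvergence radius from $R$ to $R^{q^n}$.

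You instead clear denominators. This reduces the problem to a finite sum, whose twist is trivially controlled on any $W$, together with an explicit denominator whose twist has absolute value one on $W$ for large $n$. Your route avoids the overconvergence estimate and the two-stage choice of $R$, but you must construct the denominator; the paper never needs one.

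To make your key step airtight, drop the tentative phrasing and take $b=\prod_P\bigl(1-u/u(P)\bigr)^{m_P}$. The product runs over the finitely many poles $P$ of $g$ on $U_{\Cinf}$, and $m_P$ is the pole order. What you need is $|u(P)|>1$, not ``$|\alpha|$ large''. This holds because $A$ is integral over $\Fq[u]$, so the Tate domain is exactly $\{|u|\le1\}$, and $g\in\mathbb T_A$ has no poles there. Consequently:
\begin{itemize}
\item $b$ is a strict $1$-unit, so $\|bh\|_{\mathrm G}\le\rho$ by submultiplicativity;
\item $|b^{(n)}|\equiv1$ on $W$ once $|u(P)|^{q^n}>\sup_W|u|$;
\item together these give $\|g^{(n)}-1\|_W\le\rho^{q^n}\max_{i\le N}\|a_i\|_W$.
\end{itemize}
Escaping zeros alone only give $b^{(n)}\ne0$ on $W$, not a lower bound of the form $\epsilon^{q^n}$; the strict-unit shape of $b$ is what does the work. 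Your pole-order growth estimate for $\|a_i\|_W$ is not needed. Part (ii) matches the paper.
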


\begin{proof}
Choose a nonconstant $t\in A$ and a free $\Fq[t]$-basis $e_1,\ldots,e_s$ of
$A$. Write $g-1=\sum_{j,m}c_{j,m}e_jt^m$. The coefficient functions in this
finite free presentation are rational in $t$ and analytic on $|t|\le1$.
Their finitely many poles lie outside that disc, so for some $R_0>1$ one has
$|c_{j,m}|R_0^m\to0$. Also $\sup_{j,m}|c_{j,m}|<1$. By first bounding a
tail and then the finitely many remaining coefficients, choose
$1<R<R_0$ with $\rho:=\sup_{j,m}|c_{j,m}|R^m<1$.
 Concretely, bound the tail via $|c_{j,m}|R_0^m\le B$ and pick $M$ with $B(R/R_0)^{M+1}<1$; then take $R>1$ close enough to $1$ that the finitely many remaining coefficients also contribute less than $1$.
On $U_S=\{|t|\le S\}$, where $S\ge1$, put
$C_S=\max_j\|e_j\|_{U_S}$. For $S\le R^{q^n}$,
\[
\|g^{(n)}-1\|_{U_S}\le C_S\rho^{q^n}.
\]
Every $W$ is contained in some $U_S$.

\emph{\ref{item:affine-unit-tail}.}
The bound tends to zero, so the
Neumann series supplies the inverse for all sufficiently large $n$, and
the late product converges to an analytic unit.

\emph{\ref{item:product-meromorphic-convergence}.}
The finitely many earlier
rational factors give the meromorphic limit. The limits agree on overlaps
and restrict to the Tate product.
\end{proof}

{More generally, under the same isogeny data
(Notation~\ref{not:primitive-G}, with $\Omega_A=\ii_J\nu_J$ and
$\jmath(\nu_J)=z_J=\tilde\pi/h_J(\Xi)$ after localization) but without
assuming $\Xi^{(-1)}\notin\operatorname{supp}(V_\varphi)$, the period
has the unified residue form
\begin{equation}\label{eq:unified-residue-period}
 \tilde{\pi}
 =h_J(\Xi)\,\beta_0\,
  \operatorname{res}_\Xi\Bigl(\nu_J
   \prod_{n\ge0}\bigl(f_{\varphi,\beta_0}^{(n)}\bigr)^{-1}\Bigr),
\end{equation}
where the product is understood by meromorphic continuation
(Lemma~\ref{lem:strict-product-affine-convergence}): on the Tate domain
it is $\beta_0^{-1}\omega_\jmath$
(Theorem~\ref{thm:unit-normalized-product}), and the primitive
factorization gives $z_J=\operatorname{res}_\Xi(\nu_J\omega_\jmath)$;
multiplying by $h_J(\Xi)$ gives the formula.}

\begin{corollary}[Period extraction in the simple-pole case]
\label{cor:simplified-period-extraction}
Use the isogeny data of Notation~\ref{not:primitive-G}, and suppose
$\Omega_A=\ii_J\nu_J$ and
$\jmath(\nu_J)=z_J= \tilde{\pi} /h_J(\Xi)$ after localization.
Assume $\Xi^{(-1)}\notin\operatorname{supp}(V_\varphi)$, so that the normalized strict unit
$ f_{\varphi,\beta_0}$ has a simple zero at $\Xi$.
Put $\beta_0=\exp_\varphi(c_0)$.
Then 
\[
 \tilde{\pi}
= h_J(\Xi)\,\beta_0\,{\Upsilon_{\beta_0}(\Xi)}\operatorname{res}_\Xi(\nu_J/f_{\varphi,\beta_0}),
\qquad{\Upsilon_{\beta_0}:=\prod_{n\ge1}\bigl(f_{\varphi,\beta_0}^{(n)}\bigr)^{-1}},
\]
{where $\Upsilon_{\beta_0}(\Xi)$ is the value of the meromorphic product
after cancellation; individual early factors need not be regular at
$\Xi$.  This is the simple-zero evaluation of the unified residue
formula \eqref{eq:unified-residue-period}.}
\end{corollary}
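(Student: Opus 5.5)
The plan is to specialize the unified residue formula \eqref{eq:unified-residue-period} to the simple-zero situation. By Corollary~\ref{thm:unit-normalized-product}, $\beta_0$ is unit-normalizing and $\omega_\jmath=\beta_0\prod_{n\ge0}(f_{\varphi,\beta_0}^{(n)})^{-1}$ on the Tate domain. The primitive identity (Theorem~\ref{thm:primitive-G}, Corollary~\ref{cor:isogeny-onestep-period-factorization}) gives $\mathcal G_J(z_J)=\omega_\jmath\otimes\nu_J$ with characteristic residue $z_J=\tilde\pi/h_J(\Xi)$. This residue is taken on the canonical meromorphic continuation of Theorem~\ref{thm:rational-anderson-continuation}\ref{item:ratand-continuation}.

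\textbf{Continuation.} Lemma~\ref{lem:strict-product-affine-convergence}\ref{item:product-meromorphic-convergence} says the product converges locally meromorphically on $U_{\Cinf}^{\mathrm{an}}$. It restricts to the Tate product, so by the identity principle on the connected curve it is the continuation of $\beta_0^{-1}\omega_\jmath$. Hence
\[
\tilde\pi=h_J(\Xi)\,\beta_0\operatorname{res}_\Xi\bigl(\nu_J f_{\varphi,\beta_0}^{-1}\Upsilon_{\beta_0}\bigr).
\]

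\textbf{Splitting off $\Upsilon_{\beta_0}$.} I split the factor $n=0$ from the tail $\Upsilon_{\beta_0}$. By \eqref{eq:divf}, $\operatorname{ord}_\Xi f_\varphi=m_{-1}+1$. Under $\Xi^{(-1)}\notin\operatorname{supp}(V_\varphi)$ this order is exactly $1$, so $\nu_J/f_{\varphi,\beta_0}$ has at most a simple pole at $\Xi$; recall $\nu_J$ is regular and nonzero at $\Xi$ because its divisor is defined over $\Fq$. It then suffices to show that the meromorphic function $\Upsilon_{\beta_0}$, read after cancellation, is regular at $\Xi$. Once that holds, the residue of a simple-pole differential times a regular function $g$ equals $g(\Xi)$ times the residue, which yields the displayed formula.

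\textbf{Main obstacle: regularity of $\Upsilon_{\beta_0}$ at $\Xi$.} Individual factors $f_\varphi^{(n)}$ can vanish or have poles at $\Xi$, since $\operatorname{ord}_\Xi f_\varphi^{(n)}=m_{-n-1}-m_{-n}$ plus possible infinity-orbit contributions. By Lemma~\ref{lem:strict-product-affine-convergence}\ref{item:affine-unit-tail}, only finitely many early factors matter; the late product is an analytic unit on an affinoid around $\Xi$. For the finite product $\prod_{n=1}^{N}f_\varphi^{(n)}$, the orders at $\Xi$ telescope to $m_{-N-1}-m_{-1}=m_{-N-1}\ge0$. The points $q_\varphi^{(n)}$ lie above $\infty$, so they do not meet $\Xi$ and contribute nothing. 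For $N$ large, $\Xi^{(-N-1)}$ leaves the finite set $\operatorname{supp}(V_\varphi)$, so this total order is $0$. Thus the finite product has neither zero nor pole at $\Xi$ after cancellation, and $\Upsilon_{\beta_0}(\Xi)$ is a well-defined nonzero value.

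Finally, for a general unit-normalizing $\beta=\zeta u\beta_0$, Proposition~\ref{prop:unit-normalizing-scalars} gives $f_{\varphi,\beta}=u^{1-q}f_{\varphi,\beta_0}$ (using $\zeta^{q-1}=1$). The product rescales by $u^{-1}$, which cancels against the factor $u$ in $\beta$, and this produces the stated form with $\zeta^{-1}\beta$.
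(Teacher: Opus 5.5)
Your proposal is correct and takes essentially the same route as the paper. You continue $\beta_0^{-1}\omega_\jmath=\Upsilon_{\beta_0}/f_{\varphi,\beta_0}$ meromorphically via Lemma~\ref{lem:strict-product-affine-convergence}. You then show $\Upsilon_{\beta_0}$ is regular at $\Xi$: the early twists have divisors that telescope to order $m_{-1}-m_{-N-1}=0$ for large $N$, and the late tail is an analytic unit. Finally you pull $\Upsilon_{\beta_0}(\Xi)$ out of the simple-pole residue $z_J=\operatorname{res}_\Xi(\nu_J\omega_\jmath)$; your closing rescaling argument for a general unit-normalizing $\beta=\zeta u\beta_0$ reproduces the remark that follows the corollary.
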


\begin{proof}
 Put  ${\Upsilon_{\beta_0}}=\prod_{n\ge1}((f_{\varphi,\beta_0})^{(n)})^{-1}$. We shall show that ${\Upsilon_{\beta_0}}$ is regular at $\Xi$.
Summing the divisors of the inverse twists of $ f_{\varphi,\beta_0}$ gives
\[
\operatorname{div}\prod_{n=1}^{M}((f_{\varphi,\beta_0})^{(n)})^{-1}
=V_\varphi^{(1)}-V_\varphi^{(M+1)}
-\sum_{n=1}^{M}[\Xi^{(n)}]+\sum_{n=1}^{M}[q_\varphi^{(n)}].
\]
For large $M$ the order at $\Xi$ is $\operatorname{mult}_\Xi(V_\varphi^{(1)})=0$
(because $\Xi^{(-1)}\notin\operatorname{supp}(V_\varphi)$ implies
$\operatorname{mult}_\Xi(V_\varphi^{(1)})=0$);
the remaining tail is an analytic unit near $\Xi$
(by Lemma~\ref{lem:strict-product-affine-convergence}
\ref{item:affine-unit-tail}, applied to $f_{\varphi,\beta_0}$
on an affinoid neighborhood of $\Xi$).
Hence ${\Upsilon_{\beta_0}}$ is regular at $\Xi$.

On the Tate domain $\omega_\jmath=\beta_0 {\Upsilon_{\beta_0}}/ f_{\varphi,\beta_0}$
(by Theorem~\ref{thm:unit-normalized-product}, separating the
$n=0$ factor), and this identity extends
meromorphically (by Lemma~\ref{lem:strict-product-affine-convergence}
\ref{item:product-meromorphic-convergence}).  The primitive factorization gives
\[
z_J=\operatorname{res}_\Xi(\nu_J\omega_\jmath)
=\beta_0\,\operatorname{res}_\Xi\!\left(\frac{\nu_J\,{\Upsilon_{\beta_0}}}{ f_{\varphi,\beta_0} }\right).
\]
Because ${\Upsilon_{\beta_0}}$ is regular at $\Xi$ and $ f_{\varphi,\beta_0}$ has a simple zero there,
\[
\operatorname{res}_\Xi\!\left(\frac{\nu_J\,{\Upsilon_{\beta_0}}}{ f_{\varphi,\beta_0} }\right)
={\Upsilon_{\beta_0}}(\Xi)\operatorname{res}_\Xi(\nu_J/ f_{\varphi,\beta_0}).
\]
Multiplying by $h_J(\Xi)$ and using $z_J= \tilde{\pi} /h_J(\Xi)$
yields the formula.
\end{proof}

\begin{remark}
The $\tilde{\pi}$ of Corollary~\ref{cor:simplified-period-extraction} is normalized by both $\jmath$ and $\nu_J$: once the pair $(\jmath,\nu_J)$ is chosen, it is unique (Definition~\ref{def:normalized-period}).  In general, unit-normalizing scalars enter only through the $\Fq^\times$-ambiguity: for any unit-normalizing $\beta=\zeta u\beta_0$ as in Proposition~\ref{prop:unit-normalizing-scalars},
\[
 \tilde{\pi}
 =h_J(\Xi)\,\zeta^{-1}\beta\,{\Upsilon_\beta(\Xi)}
  \operatorname{res}_\Xi(\nu_J/f_{\varphi,\beta}),
 \qquad{\Upsilon_\beta:=\prod_{n\ge1}\bigl(f_{\varphi,\beta}^{(n)}\bigr)^{-1},}
\]
{with $\Upsilon_\beta(\Xi)$ taken after cancellation in the
meromorphic product (individual early factors need not be regular at
$\Xi$).}

\end{remark}
 
\section{Finite torsion traces on a curve with one closed infinite point}
\label{sec:higher-genus-single-infinity}
The normalized shtuka differential of Definition~\ref{def:shtuka-diff}
has a finite arithmetic expression.  The identity
$\widetilde\lambda_\psi^{(1)}=\mathcal R_\psi^{[1]}$, proved in
Theorem~\ref{thm:hg-shtuka-trace-identification}, identifies its first
coefficient twist with the first torsion pairing of a principal-period
module.  It holds at every closed infinity degree.

We construct the pairing by finite torsion traces and prove the identity
by determining its zeros, its possible poles on the infinity orbit, and
its characteristic residue.  The uniqueness theorem of
\note[blue]{Section~\ref{sec:shtuka-differential}} then identifies it with the geometric
differential.  Contraction gives the scalar special values, and
free-target isogenies give the corresponding formulas for arbitrary
source lattices.  The curve zeta identity at rational infinity is the
final application.
\subsection{The curve, the lattice, and the normalization}

We retain \(X,A,K,\infty\) and the fixed characteristic embedding
\(\iota\).  Write \(\Ninf=\deg(\infty)\) and \(g=g(X)\), so
\[
 U=X\setminus\{\infty\},\qquad A=\Gamma(U,\mathcal O_X),\qquad K=\Fq(X).
\]
The completions remain \(K_\infty\) and \(\Cinf\).  In the arithmetic
formulas, \(\chi(a)=1\otimes a=a(\two)\) is the geometric copy,
in the sense of \cite{APTR16}.  With the normalization
of \S\ref{sec:basedata}, \(|\iota(a)|_\infty=q^{-\Ninf v_\infty(a)}\); the geometric
variable is independent of \(\iota\).  In the same Tate algebra, set
\begin{equation}
\label{eq:hg-all-element-value}
 \mathbb T_A=\Cinf\widehat\otimes_{\Fq}A,
 \qquad
 \mathcal L_A=\sum_{0\ne a\in A}\iota(a)^{-1}\chi(a).
\end{equation}
The Gauss norm is the unweighted coefficient norm in any \(\Fq\)-basis of
\(A\).

When $\Ninf=1$ and a sign function is fixed, write
\begin{equation}\label{eq:curve-zeta-monic-convention}
 A_+=\{a\in A:\operatorname{sgn}(a)=1\},
 \qquad
 \mathcal L_{A_+}=\sum_{a\in A_+}\frac{\chi(a)}{\iota(a)}.
\end{equation}
Each nonzero element of $A$ is uniquely a constant multiple of a monic
element, and the summand is unchanged by that multiple; hence
\begin{equation}\label{eq:curve-zeta-all-element-sign}
 \mathcal L_A=(q-1)\mathcal L_{A_+}=-\mathcal L_{A_+}.
\end{equation}

\begin{proposition}[Convergence at one closed infinite point]
\label{prop:hg-one-infinity-convergence}
The sum \(\mathcal L_A\) converges unconditionally in \(\mathbb T_A\), and
\[
 \|\mathcal L_A+1\|_{\mathrm G}\leq q^{-\Ninf}<1.
\]
In particular, \(\mathcal L_A\) is a unit.  Every positive-integer-weight
sum \(\sum_{a\ne0}\iota(a)^{-s}\chi(a)\), and each of its exact-sign
subfamilies, also converges in \(\mathbb T_A\).
\end{proposition}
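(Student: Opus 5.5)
The plan is to expand $\mathcal L_A$ in a fixed $\Fq$-basis of $A$ and estimate its coefficients. Take the residue-orthogonal basis $(a_i)_{i\ge0}$ of Lemma~\ref{lem:residue-orthogonal-basis-public}, with $a_0=1$ and the basis compatible with the pole filtration $A_{\le n}=H^0(X,\mathcal O_X(n\infty))$. Every nonzero $a\in A$ is then a finite combination $a=\sum_i\epsilon_i(a)a_i$ with $\epsilon_i(a)\in\Fq$. Hence
\[
\chi(a)/\iota(a)=\sum_i\epsilon_i(a)\iota(a)^{-1}\otimes a_i .
\]
The coefficient of $1\otimes a_i$ in $\mathcal L_A$ is therefore the formal sum $c_i:=\sum_{a\ne0}\epsilon_i(a)\iota(a)^{-1}$.

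The first step is unconditional convergence. Group the terms by pole order, setting $S_n:=A_{\le n}\setminus A_{\le n-1}$. For $a\in S_n$ one has $|\iota(a)^{-1}|=q^{-\Ninf n/\deg\infty}$ up to the normalization $|\iota(a)|=q^{-\Ninf v_\infty(a)}$. In any case $|\iota(a)|^{-1}\to0$ as the pole order grows, and each $S_n$ is finite. The summands therefore tend to zero in the Gauss norm, because each $\chi(a)$ has Gauss norm $1$ in the basis. In a complete nonarchimedean space this gives unconditional convergence. The same argument applies verbatim to $\iota(a)^{-s}$ and to any subfamily, in particular the exact-sign subfamilies.

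The second step is the norm bound. The only summands of norm $1$ come from $a\in A_{\le0}=\Fq^\times$, and they contribute $\sum_{c\in\Fq^\times}c^{-1}\otimes c=(q-1)\cdot1=-1$. Every other $a$ has a pole at $\infty$, so $v_\infty(a)\le-1$ and
\[
|\iota(a)^{-1}|=q^{\Ninf v_\infty(a)}\le q^{-\Ninf}.
\]
Since $\chi(a)$ has Gauss norm at most $1$ and the Gauss norm is ultrametric, each remaining summand has norm at most $q^{-\Ninf}$, and so does their convergent sum. This gives $\|\mathcal L_A+1\|_{\mathrm G}\le q^{-\Ninf}<1$. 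Finally, $-\mathcal L_A=1-(\mathcal L_A+1)$ is inverted by a Neumann series, exactly as in the proof of Theorem~\ref{thm:unit-special-arbitrary-degree-public}, so $\mathcal L_A$ is a unit.

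The only delicate point is bookkeeping the normalization of $|\iota(a)|$ for a closed point of degree $\Ninf$. I would check that the smallest nonconstant pole order ($v_\infty(a)=-1$, where we only have $v_\infty(a)\le -1$) gives exactly the exponent $-\Ninf$ with the convention $|\iota(a)|=q^{-\Ninf v_\infty(a)}$ of \S\ref{sec:basedata}. Everything else is the ultrametric inequality. I do not expect any cancellation issue, since only an upper bound is claimed.
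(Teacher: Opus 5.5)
Your proposal is correct and follows the paper's proof essentially verbatim: finiteness of the Riemann--Roch spaces $H^0(X,\mathcal O_X(n\infty))$ gives unconditional convergence, and likewise for every weight $s$ and every subfamily; the constants contribute $q-1=-1$; every nonconstant term has Gauss norm $|\iota(a)|^{-1}\le q^{-\Ninf}$; and a Neumann series gives invertibility. The one slip is your intermediate formula for $a\in S_n$: there $v_\infty(a)=-n$ gives exactly $|\iota(a)^{-1}|=q^{-\Ninf n}$, not $q^{-\Ninf n/\deg\infty}$, but this is harmless because you use the correct bound $q^{\Ninf v_\infty(a)}$ in the norm estimate.
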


\begin{proof}
The spaces \(H^0(X,\mathcal O_X(n\infty))\) are finite sets and exhaust \(A\).
A nonconstant element of \(A\) has a pole at \(\infty\), since otherwise it is
regular on the projective curve.  Thus only finitely many terms of the sum
have Gauss norm above any fixed positive bound.  This proves unconditional
convergence.  The nonzero constants contribute \(q-1=-1\), and every
remaining term has norm at most \(q^{-\Ninf}\).  The Neumann series proves
invertibility.  The same finiteness argument applies to every stated weight
and subfamily.
\end{proof}

Fix a field \(\iota(K)\subset H\subset\Cinf\).  The Drinfeld module
used in this section is an \(\Fq\)-algebra homomorphism
$\psi:A\longrightarrow H\{\tau\}$ with $\tau c=c^q\tau$,
 differential $\partial\psi_a=\iota(a)$, and
 $\deg_\tau\psi_a=-\Ninf v_\infty(a)$ for $0\ne a\in A$.
Throughout this section \(\psi\) denotes this fixed module on \(X\),
of generic characteristic given by \(\iota\) and rank one.
Its derivative-one exponential is the entire \(\Fq\)-linear
function satisfying
\[
 \exp_\psi(z)=z+\sum_{i\ge1}e_i z^{q^i},\qquad
 \exp_\psi(\iota(a) z)=\psi_a(\exp_\psi(z)).
\]
Drinfeld modules over \(\Cinf\) are uniformizable; write
\(\Lambda_\psi=\ker\exp_\psi\) for the embedded projective rank-one
\(A\)-lattice.  We first construct its special functions for an arbitrary
lattice, and then impose the principal-period normalization used in the
finite value formulas.


\subsection{Uniform small-ball vanishing lemma}

Choose \(a\in A\setminus\Fq\), put \(n=-v_\infty(a)\), and write
\[
 \psi_a(Z)=\iota(a) Z+\sum_{i=1}^{nN}\alpha_i Z^{q^i}.
\]
Set
\begin{equation}
\label{eq:hg-finite-differential-space}
 \mathcal V_a=H^0\bigl(X,\Omega_X^1((n+1)\infty)\bigr),
 \qquad d_a=\dim_{\Fq}\mathcal V_a=g-1+(n+1)\Ninf.
\end{equation}
Choose a basis \(\nu_1,\ldots,\nu_{d_a}\) and rational functions
\(r_1,\ldots,r_{d_a}\in K\) satisfying
\begin{equation}
\label{eq:hg-finite-residue-dual-basis}
 \langle r_j,\nu_k\rangle_\infty=\delta_{jk}.
\end{equation}
They exist by residue duality and the density of \(K\) in \(K_\infty\).
Choose one nonconstant \(b\in A\) clearing all their denominators, and put
\(c_j=br_j\in A\).

Define the finite \(H\)-algebra
\begin{equation}
\label{eq:hg-nonzero-torsion-algebra}
 \mathscr T_b=H[Z]/\bigl(\psi_b(Z)/Z\bigr).
\end{equation}
Multiplying the defining polynomial by its leading coefficient inverse
is understood when a monic polynomial is desired.  This algebra is finite
\'{e}tale of dimension \(q^{\deg(bA)}-1\): its geometric points are the
nonzero \(b\)-torsion points of \(\psi\).  The class of \(Z\) is a unit
because the defining polynomial has constant coefficient \(\iota(b)\ne0\).

We need two lattice calculations.  The first is the negative-index
vanishing of \cite[Proposition~5.20]{Fer25}, proved here with a
uniform estimate valid at every closed-point degree.  The second
expresses the residue-dual pairing as a finite torsion trace.

\begin{lemma}[Uniform small-ball vanishing]
\label{lem:hg-independent-lattice-vanishing}
Let \(\Gamma\subset\Cinf\) be a Drinfeld \(A\)-lattice of rank
\(r\geq1\), with derivative-one exponential \(E_\Gamma\).
\begin{enumerate}[label=\textup{(\roman*)},leftmargin=2.2em]
\item\label{item:small-ball-vanishing} For every
integer \(\ell\geq1\) and \(c\in K_\infty\) with
\(|c|_\infty\leq q^{-(\ell+1)/r}\),
\begin{equation}
\label{eq:hg-independent-negative-vanishing}
 \sum_{0\ne\lambda\in\Gamma}
       \frac{E_\Gamma(\iota(c)\lambda)}{\lambda^{q^{-\ell}}}=0.
\end{equation}
The sum converges unconditionally.
\item\label{item:small-ball-estimate} Choose an \(\Fq\)-basis of
\(\Gamma\) successively with least norm outside the preceding span,
and let \(\Gamma_m\) be the span of its first \(m\) vectors.
There is a constant \(C>0\), independent of \(m\) and of \(c\) in the
stated ball, such that, for all sufficiently large \(m\),
\begin{equation}
\label{eq:hg-uniform-truncation-bound}
 \left|\sum_{0\ne\lambda\in\Gamma_m}
       \frac{E_\Gamma(\iota(c)\lambda)}{\lambda^{q^{-\ell}}}\right|_\infty
 \leq Cq^{-(q-1)q^{m-Nr-\ell}/r}.
\end{equation}
Here negative Frobenius powers denote the unique roots in the perfect
field \(\Cinf\).
\end{enumerate}
\end{lemma}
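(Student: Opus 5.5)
The plan is to raise the sum to the $q^\ell$-th power and reduce to power sums of the lattice over finite $\Fq$-subspaces. Frobenius is a continuous additive bijection of $\Cinf$, so it suffices to study
\[
 S_m(c):=\Bigl(\sum_{0\ne\lambda\in\Gamma_m}\frac{E_\Gamma(\iota(c)\lambda)}{\lambda^{q^{-\ell}}}\Bigr)^{q^\ell}
 =\sum_{0\ne\lambda\in\Gamma_m}\frac{E_\Gamma(\iota(c)\lambda)^{q^\ell}}{\lambda}.
\]
Expanding $E_\Gamma(\iota(c)\lambda)^{q^\ell}=\sum_{k\ge0}e_k^{q^\ell}\iota(c)^{q^{k+\ell}}\lambda^{q^{k+\ell}}$ gives
\[
 S_m(c)=\sum_{k\ge0}e_k^{q^\ell}\iota(c)^{q^{k+\ell}}\,P_m(q^{k+\ell}-1),
 \qquad P_m(j):=\sum_{0\ne\lambda\in\Gamma_m}\lambda^{j}.
\]
The interchange is legitimate because $\Gamma_m$ is finite and $E_\Gamma$ is entire.

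The first key step is the classical power-sum vanishing for an $m$-dimensional $\Fq$-space $V\subset\Cinf$: $\sum_{v\in V}v^j=0$ whenever the base-$q$ digit sum of $j$ is less than $(q-1)m$. I would prove it by expanding $v=\sum x_iv_i$ multinomially and using $\sum_{x\in\Fq}x^e=0$ unless $e$ is a positive multiple of $q-1$. The exponent $j=q^{s}-1$ has digit sum $(q-1)s$, and the $\lambda=0$ term contributes nothing for $j>0$. Hence $P_m(q^{k+\ell}-1)=0$ for $k+\ell<m$, and only the tail $k\ge m-\ell$ survives in $S_m(c)$.

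The second step is to bound that tail uniformly for $|c|_\infty\le q^{-(\ell+1)/r}$; this gives \ref{item:small-ball-estimate} and, by taking $q^\ell$-th roots, the stated exponent. I would use two estimates. First, $|P_m(q^{s}-1)|\le\max_{\lambda\in\Gamma_m}|\lambda|^{q^s-1}$, controlled through the successive-minimum basis: $|\lambda_i|$ grows like $q^{i/r}$ up to a constant, the number of vectors of a given norm being governed by the rank $r$ and by $\Ninf$, which is where the shift $N r$ in the exponent comes from. Second, the exponential coefficients satisfy $|e_k|\le\prod(\text{minima})^{-q^k}$, i.e. roughly $|e_k|\approx|\lambda_{rk}|^{-q^k}$, read off from the Newton polygon of the product formula for $E_\Gamma$. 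Combining them, each term with $k+\ell\ge m$ has absolute value at most
\[
 \bigl(|c|_\infty\,q^{(m+\cdots)/r}\,q^{-(k+\cdots)/r}\bigr)^{q^{k+\ell}}
\]
up to constants. The hypothesis $|c|_\infty\le q^{-(\ell+1)/r}$ is exactly what makes the base strictly less than $1$, and the first surviving index $k=m-\ell$ dominates, giving $C q^{-(q-1)q^{m-Nr}/r}$ before the root. I expect this step to be the main obstacle: matching the successive minima of $\Gamma$ with the decay of $e_k$ uniformly in $m$ and $c$, with the correct bookkeeping of the Frobenius exponents, when several basis vectors share a norm (the $\Ninf>1$ phenomenon). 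I would first check it carefully when $\Gamma$ is rank one and principal, then transport to general $\Gamma$ by comparison of minima.

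Finally, for \ref{item:small-ball-vanishing}, unconditional convergence follows from the same bounds applied to the blocks $\Gamma_{m+1}\setminus\Gamma_m$. Each block sum is a difference $S_{m+1}-S_m$, which tends to $0$, and individual terms tend to zero because the norm bound on $E_\Gamma(\iota(c)\lambda)$ is dominated by the denominator growth along the same estimates. Since \eqref{eq:hg-uniform-truncation-bound} shows that the truncated sums tend to $0$, the full sum \eqref{eq:hg-independent-negative-vanishing} vanishes.
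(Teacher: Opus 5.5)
Your overall route is the paper's. You expand $E_\Gamma$, kill the indices $j<m$ by the no-carry power-sum vanishing, and bound the tail using the successive minima $s_i$, the estimate $|e_h|\le\prod_{i\le h}s_i^{q^{i-1}-q^i}$ and the period relation $s_{i+d}=q^{\Ninf}s_i$ with $d=\Ninf r$. The genuine gap is your power-sum estimate $|P_m(q^s-1)|\le\max_{\lambda\in\Gamma_m}|\lambda|^{q^s-1}=s_m^{q^s-1}$, which is too weak; no bookkeeping of the exponential coefficients can repair it.

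It overestimates by the doubly exponential factor $\prod_{i=1}^m(s_m/s_i)^{(q-1)q^{i-1}}$, roughly $q^{q^m/((q-1)r)}$. At the first surviving index the true value is $P_m(q^m-1)=\pm\prod_{0\ne\lambda\in\Gamma_m}\lambda$, of norm $\prod_{i\le m}s_i^{q^i-q^{i-1}}$. A factor of the same size is hidden in your ``roughly $|e_k|\approx\dots$'', and it has to cancel against the power sum. (The index there is also off: $e_k$ is governed by $\Gamma_k$, so the relevant minimum is $s_k$, not $|\lambda_{rk}|$.) With your bound the cancellation does not happen:
\begin{itemize}
\item For the Carlitz lattice, your bound on the $j=m$ term of your $S_m$ is $q^{-\frac{q-2}{q-1}q^m+O(m)}$. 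This is weaker than the required $q^{-(q-1)q^{m-1}}$ for every $q$, and for $q=2$ there is no doubly exponential decay at all.
\item For the degree-two point of Example~\ref{ex:degree-two-infinity-residue-radii-public}, take $q=2$, $r=1$, $\ell=1$, $|c|_\infty=q^{-2}$ and $m$ even. There $s_1=1$ and $s_{2t}=s_{2t+1}=q^{2t}$, and the crude bound on the $j=m$ term is $2^{\frac23 2^m+O(m)}$. So you do not even get $S_m\to0$.
\end{itemize}

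The missing ingredient is a sharper use of the no-carry argument you already invoke. In a nonvanishing multinomial term $\prod_i v_i^{b_i}$ of $P_m(q^j-1)$, with $(v_i)$ the successive-minimum basis, every partial sum satisfies $b_1+\cdots+b_u\ge q^u-1$. Hence
\[
 \prod_i s_i^{b_i}=s_m^{q^j-1}\prod_{u<m}\Bigl(\frac{s_u}{s_{u+1}}\Bigr)^{b_1+\cdots+b_u}
 \le s_m^{q^j-q^m}\prod_{i=1}^m s_i^{q^i-q^{i-1}} .
\]
Paired with the bound on $|e_{j-\ell}|$, the doubly exponential factors cancel. After taking $q^\ell$-th roots, the term of index $j$ is at most $C_0\prod_{i=\ell+1}^{j}A_i^{q^{i-\ell}-q^{i-\ell-1}}$, where $A_i=\rho s_i/s_{i-\ell}$ and $\rho=q^{-\Ninf\lceil(\ell+1)/d\rceil}\ge|c|_\infty$ by the value group. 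The period relation gives $A_i\le1$ eventually, and the product of the $A_i$ over any late block of $d$ consecutive indices is at most $q^{-\Ninf}$. Each such block therefore contains a factor $\le q^{-1/r}$, and this produces the stated rate.

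Separately, your convergence argument for (i) is incomplete. Block sums tending to zero along one exhaustion does not give unconditional convergence; you need the individual terms to tend to zero. Your truncation estimates cannot provide this, because $|\iota(c)\lambda|$ is unbounded. The needed fact is that the values $E_\Gamma(\iota(c)\lambda)$ are uniformly bounded. Write $\lambda=\sum_j a_j\gamma_j$ in finitely many $A$-generators and reduce each $ca_j$ modulo $A$ into a compact fundamental domain of $K_\infty/A$. Additivity and $\Gamma$-periodicity then place all these values in one compact set, and discreteness of $\Gamma$ makes the terms tend to zero.
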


\begin{proof}
\emph{\ref{item:small-ball-vanishing}: convergence.}
We suppress \(\iota\) in the proof. Riemann--Roch gives a compact set
\(F\subset K_\infty\) representing \(K_\infty/A\): sufficiently high principal
parts can be removed by elements of \(A\), leaving a fixed compact ball.
Choose finite \(A\)-generators \(\gamma_1,\ldots,\gamma_t\) of
\(\Gamma\).  For \(\lambda=\sum_j a_j\gamma_j\), reduce each
\(ca_j\) modulo \(A\) to an element of \(F\).  Additivity and $\Gamma$-periodicity of the exponential show that all \(E_\Gamma(c\lambda)\), for all
\(c\in K_\infty\) and \(\lambda\in\Gamma\), belong to the compact set
\(\{\sum_j E_\Gamma(f_j\gamma_j):f_j\in F\}\).
Thus the numerators are uniformly bounded.  Discreteness of the lattice
proves unconditional convergence, uniformly in \(c\).

\emph{\ref{item:small-ball-estimate}.}
Write \(s_i\) for the norms of the chosen basis of \(\Gamma\).
For a lattice
subspace of finite codimension \(d\), its norm sequence is eventually
\(s_{i+d}\): above the norms of fixed quotient representatives, the
dimensions of the two norm filtrations differ by \(d\).
Apply this to \(a\Gamma\subset\Gamma\).  Since
\(\dim_{\Fq}\Gamma/a\Gamma=r\deg(a)\), one obtains
\(s_{i+r\deg(a)}=|a|_\infty s_i\) for large \(i\).
Riemann--Roch supplies two functions whose \(\infty\)-pole orders differ by
one, say \(n\) and \(n+1\).  Put \(d=Nr\).  The two relations, with the
first applied at index \(i+d\), are
\[
 s_{i+d+dn}=q^{Nn}s_{i+d},\qquad
 s_{i+d(n+1)}=q^{\Ninf(n+1)}s_i
 \quad(i\gg0).
\]
Their left sides agree.  Dividing the right sides gives
\begin{equation}
\label{eq:hg-lattice-norm-period}
 d=Nr,\qquad s_{i+d}=q^Ns_i\quad(i\gg0).
\end{equation}

Write \(E_\Gamma(z)=\sum_{h\geq0}e_hz^{q^h}\) and set
\(P_m(j)=\sum_{\lambda\in\Gamma_m}\lambda^{q^j-1}\) for \(j\geq1\).
The lattice product and finite-field power sums give
\begin{align}
 |e_h|_\infty&\leq\prod_{i=1}^h s_i^{q^{i-1}-q^i},
 \label{eq:hg-lattice-coefficient-bound}\\
 P_m(j)&=0\quad(j<m),\qquad
 |P_m(j)|_\infty\leq s_m^{q^j-q^m}
       \prod_{i=1}^m s_i^{q^i-q^{i-1}}\quad(j\geq m).
 \label{eq:hg-finite-power-bound}
\end{align}
Here are direct verifications of these estimates.  The ordered basis is
\(\Fq\)-orthogonal by its minimality, so the \(q^i-q^{i-1}\)
elements of \(\Gamma_i\setminus\Gamma_{i-1}\) have norm \(s_i\).
Selecting the \(q^h-1\) smallest nonzero lattice elements in the
exponential product proves \eqref{eq:hg-lattice-coefficient-bound}.
In the expansion of a finite-field power sum, a nonzero monomial with
exponents \(b_1,\ldots,b_m\) has each \(b_i>0\) divisible by \(q-1\).
Its multinomial coefficient is nonzero only if addition of the exponents
has no carries in the characteristic, and hence none in base \(q\).
Consequently \(\sum_{i=1}^u b_i\geq q^u-1\) for every \(u\leq m\):
the base-\(q\) digit sum of this partial sum is at least \(u(q-1)\).
This proves the asserted vanishing.  For total degree \(q^j-1\),
monotonicity of \(s_i\) and the identity
\[
 \prod_{i=1}^m s_i^{b_i}
 =s_m^{q^j-1}\prod_{u=1}^{m-1}
       (s_u/s_{u+1})^{\sum_{i=1}^u b_i}
\]
give the remaining bound in \eqref{eq:hg-finite-power-bound}.

For \(m>\ell\), expansion of the exponential and the vanishing just
proved express the truncated sum as
\[
 S_m(c)=\sum_{j\geq m}e_{j-\ell}c^{q^{j-\ell}}
                                      P_m(j)^{q^{-\ell}}.
\]
Put
\[
 \rho=q^{-\Ninf\lceil(\ell+1)/d\rceil},\qquad
 A_i=\rho\frac{s_i}{s_{i-\ell}},
\]
and
\[ 
 D_i=q^{i-\ell}-q^{i-\ell-1}\quad(i\geq\ell+1).
\]
The value group gives \(|c|_\infty\leq\rho\leq q^{-(\ell+1)/r}\).
Using \eqref{eq:hg-lattice-coefficient-bound} and
\eqref{eq:hg-finite-power-bound}, and then \(s_i\geq s_m\) for
\(i>m\), yields
\begin{align}
 |S_m(c)|_\infty
 &\leq\sup_{j\geq m}\left\{
 \rho^{q^{j-\ell}}
 \prod_{u=1}^{j-\ell}s_u^{q^{u-1}-q^u}
 \left(\prod_{i=1}^j s_i^{q^i-q^{i-1}}\right)^{q^{-\ell}}
 \right\}\notag\\
 &=C_0\sup_{j\geq m}\prod_{i=\ell+1}^j A_i^{D_i},
 \label{eq:hg-retained-prefix-bound}
\end{align}
where 
\[ 
 C_0=\rho^{q^{-\ell}}
       \prod_{i=1}^{\ell}(\rho s_i)^{q^{i-\ell}-q^{i-\ell-1}}.
\]
In particular, \(C_0\) is independent of \(m,j,c\).

By \eqref{eq:hg-lattice-norm-period} and monotonicity, eventually
\[
 A_i\leq\rho q^{\Ninf\lceil\ell/d\rceil}\leq1,
 \qquad A_{i+d}=A_i.
\]
For any sufficiently late block of \(d\) consecutive indices, the same
period relation gives
\[
 \prod_{i=s+1}^{s+d}\frac{s_i}{s_{i-\ell}}
 =\prod_{h=0}^{\ell-1}\frac{s_{s+d-h}}{s_{s-h}}
 =q^{\Ninf\ell},
 \qquad
 \prod_{i=s+1}^{s+d}A_i=\rho^dq^{\Ninf\ell}\leq q^{-\Ninf}.
\]
Thus every such block contains an \(A_i\leq q^{-1/r}\).
Choose a fixed sufficiently large \(I\) and absorb only the factors
with \(i<I\) into a constant \(C\).  For \(m\geq I\),
\[
 |S_m(c)|_\infty\leq C\sup_{j\geq m}\prod_{i=I}^j A_i^{D_i}
             =C\prod_{i=I}^m A_i^{D_i}.
\]
For large \(m\), a strict contraction occurs among the last \(d\)
indices, where \(D_i\geq(q-1)q^{m-d-\ell}\).  All other factors
are at most one, so the cumulative-product bound proves
\eqref{eq:hg-uniform-truncation-bound}.  Passing to the unconditional
limit gives the vanishing in \ref{item:small-ball-vanishing}.
\end{proof}

\subsection{Differential pairing}
Assume that $\Lambda_\psi = \iota(A) \tilde{\pi} $ is a principal period lattice.
 Write $\mathcal{H}_\psi(\tilde{\pi})$ for the special-function value at the period $\tilde{\pi}$. 
\begin{definition}\label{def:completed-differential-pairing}
   For Drinfeld module $ \psi $, we define the completed differential pairing directly by
\[
 \mathcal R_\psi^{[k]} :=- \tilde{\pi} ^{-1}\mathcal L_A  \mathcal{H}_\psi(\tilde{\pi})^{(k)}
 \quad(k\geq0).
\]
We abbreviate $\mathcal R_\psi:=\mathcal R_\psi^{[0]}$.
\end{definition}
The two identities below follow from the scalar shtuka equation and the Drinfeld action, respectively.
\begin{lemma}\label{lem:special-R}
Let $\psi$ be the Drinfeld module as above. Then
\[
        \mathcal R_\psi^{[k]} = F_\psi^{[k]} \mathcal R_\psi
    \]  
    with $ F_\psi^{[k]} = f_\psi^{(0)} \cdots f_\psi^{(k-1)} $.
    Moreover, 
    \begin{equation}
\label{eq:hg-cleared-pairing}
 \chi(a) \mathcal R_\psi
       =\iota(a) \mathcal R_\psi + \sum_{i=1}^{nN}\alpha_i\mathcal R_\psi^{[i]} .
\end{equation}
\end{lemma}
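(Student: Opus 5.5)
The plan is to treat the two assertions separately. Both follow from identities that $\mathcal H_\psi(\tilde\pi)$ satisfies as a special differential, together with the fact that the prefactor $-\tilde\pi^{-1}\mathcal L_A$ is left untwisted in Definition~\ref{def:completed-differential-pairing}.

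\emph{First identity.} Since $\tilde\pi\in\Lambda_\psi$, Theorem~\ref{thm:canonical-assembly} places $\mathcal H_\psi(\tilde\pi)$ in $\sfm(\psi)\otimes_A\Omega_A$. Choose a finite dual frame $(\nu_\ell,\hbar_\ell)$ as in \eqref{eq:hg-frame-two-identities}. Corollary~\ref{cor:hg-shtuka-trace-identification} then gives
\[
\mathcal H_\psi(\tilde\pi)=\sum_\ell\omega_{\jmath_{\hbar_\ell}}\otimes\nu_\ell ,
\]
where each $\omega_{\jmath_{\hbar_\ell}}$ is a scalar special function. By Lemma~\ref{lem:scalar-shtuka-equation}, each satisfies $\omega^{(1)}=f_\psi\omega$. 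Coefficient Frobenius fixes the $\nu_\ell$, which are defined over $\Fq$. It follows that $\mathcal H_\psi(\tilde\pi)^{(1)}=f_\psi\,\mathcal H_\psi(\tilde\pi)$; alternatively, this is the identity case of Theorem~\ref{thm:frob} at a period, where $\exp_\psi(\tilde\pi)=0$. Iterating gives
\[
\mathcal H_\psi(\tilde\pi)^{(k)}=f_\psi^{(k-1)}\cdots f_\psi\,\mathcal H_\psi(\tilde\pi)=F_\psi^{[k]}\mathcal H_\psi(\tilde\pi).
\]
Multiplying by the untwisted factor $-\tilde\pi^{-1}\mathcal L_A$ yields $\mathcal R_\psi^{[k]}=F_\psi^{[k]}\mathcal R_\psi$. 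These products may have to be read in $\operatorname{Frac}(\mathbb T_A)\otimes_A\Omega_A$. There is no issue, because $\mathcal L_A$ is a unit (Proposition~\ref{prop:hg-one-infinity-convergence}) and everything is a product in this domain.

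\emph{Second identity.} For $a\in A$, the special-function eigenrelation (Definition~\ref{def:special-functions}), applied componentwise to the frame expansion, gives
\[
\psi_a\bigl(\mathcal H_\psi(\tilde\pi)\bigr)=\chi(a)\,\mathcal H_\psi(\tilde\pi),
\]
where $\psi_a$ acts on coefficients. This is also Proposition~\ref{prop:Hphi-linearity}. Expanding $\psi_a=\iota(a)+\sum_{i=1}^{n\Ninf}\alpha_i\tau^i$, the left side is
\[
\iota(a)\mathcal H_\psi(\tilde\pi)+\sum_i\alpha_i\mathcal H_\psi(\tilde\pi)^{(i)}.
\]
Multiplying by $-\tilde\pi^{-1}\mathcal L_A$ commutes with the scalars $\alpha_i,\iota(a)$ and with $\chi(a)$. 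It therefore turns each twist $\mathcal H_\psi(\tilde\pi)^{(i)}$ into $\mathcal R_\psi^{[i]}$, which is exactly \eqref{eq:hg-cleared-pairing}. Equivalently, one can substitute the first identity and use \eqref{eq:a-shtuka-coordinate} in the form
\[
a(\two)=\sum_i\psi_{a,i}F_\psi^{[i]}(\two).
\]

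\emph{Main obstacle.} The only delicate point is bookkeeping. We must make sure that "$(k)$" twists only $\mathcal H_\psi(\tilde\pi)$ and not $\tilde\pi^{-1}\mathcal L_A$, and that coefficient Frobenius acts trivially on the $\Fq$-rational differential factor. Once the convention is fixed this is immediate. Convergence of all the twisted series is already guaranteed by Lemma~\ref{lem:orbital-evaluation-convergence} and Theorem~\ref{thm:coherence}.
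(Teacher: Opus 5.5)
Your proposal is correct and follows the paper's proof essentially step for step. The paper also obtains the first identity by iterating the shtuka relation $\mathcal H_\psi(\tilde\pi)^{(1)}=f_\psi\mathcal H_\psi(\tilde\pi)$ and multiplying by the untwisted prefactor $-\tilde\pi^{-1}\mathcal L_A$. It gets the second from the eigenrelation $\psi_a(\mathcal H_\psi(\tilde\pi))=\chi(a)\mathcal H_\psi(\tilde\pi)$, obtained via contractions and a finite dual frame, by expanding $\psi_a$ in powers of $\tau$. Your alternative justifications (Theorem~\ref{thm:frob} at a period, and substitution via \eqref{eq:a-shtuka-coordinate}) are valid but not needed.
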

\begin{proof}
 The definition of $\mathcal{H}_\psi(\tilde{\pi})$  yields 
\[
 \mathcal{H}_\psi(\tilde{\pi}) ^{(1)}
 =f_\psi  \mathcal{H}_\psi(\tilde{\pi}) .
\]
Inductively, twisting $ \mathcal{H}_\psi(\tilde{\pi}) ^{(k)}=F_\psi^{[k]} \mathcal{H}_\psi(\tilde{\pi}) $ once and using this
identity gives
\[
 \mathcal{H}_\psi(\tilde{\pi}) ^{(k+1)}=(F_\psi^{[k]})^{(1)}f_\psi \mathcal{H}_\psi(\tilde{\pi}) =F_\psi^{[k+1]} \mathcal{H}_\psi(\tilde{\pi}) .
\]
Substitution in the definition of each pairing yields
\begin{equation}\label{eq:hg-pairing-frobenius-products}
 \mathcal R_\psi^{[k]}=F_\psi^{[k]}\mathcal R_\psi,
 \qquad k\geq0.
\end{equation}
Here the factor $- \tilde{\pi}^{-1}\mathcal L_A$ in every pairing is unchanged.
For the second assertion, each contraction of $ \mathcal{H}_\psi(\tilde{\pi})$ is a special
function, so $\psi_a( \mathcal{H}_\psi(\tilde{\pi}))=\chi(a) \mathcal{H}_\psi(\tilde{\pi})$; a finite dual frame gives
the same equality for the differential itself. Expand $\psi_a$ and
multiply by $- \tilde{\pi}^{-1}\mathcal L_A$ to obtain
\eqref{eq:hg-cleared-pairing}.
\end{proof}

 \begin{lemma}[Lattice series and finite torsion evaluation of the pairing]
\label{lem:hg-torsion-evaluation}
Assume that \(\Lambda_\psi=\iota(A) \tilde{\pi}\) is a principal period lattice, and
fix \(k\geq0\).
\begin{enumerate}[label=\textup{(\roman*)},leftmargin=2.2em]
\item\label{item:lattice-series-expression} For every \(u\in K_\infty\), the
residue pairing satisfies
\begin{equation}\label{eq:uR}
\left\langle u,\mathcal R_\psi^{[k]}\right\rangle_{\infty}
=
-\sum_{0\neq \lambda\in \Lambda_\psi}
\frac{\exp_\psi(\iota(u)\lambda)^{q^k}}{\lambda}.
\end{equation}
\item\label{item:finite-torsion-evaluation} For \(b\in A\setminus\Fq\) and
\(c\in A\),
\begin{equation}\label{eq:hg-finite-pairing-at-rational-class}
 \langle c/b,  \mathcal R_\psi^{[k]} \rangle_{\infty} 
 =-\iota(b)^{-1}\operatorname{Tr}_{\mathscr T_b/H}
       \left(\frac{\psi_c(Z)^{q^k}}{Z}\right).
\end{equation}
\end{enumerate}
{For rational classes $u=c/b$, equation
\eqref{eq:hg-finite-pairing-at-rational-class} expresses the pairing
as a finite trace over the field of definition $H$, without adjoining
individual torsion points.}
\end{lemma}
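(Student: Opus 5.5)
The plan is to treat the two parts separately. Part (i) computes the residue pairing of $\mathcal R_\psi^{[k]}$ through the continuous residue duality. Part (ii) then reduces the lattice series at a rational class to a sum over nonzero $b$-torsion points.

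For (i), start with the Tate-algebra description. By Proposition~\ref{prop:dualelement}, $\check{\mathscr F}(\mathcal H_\psi(\tilde\pi))=F_{\tilde\pi}$, that is, $\langle u,\mathcal H_\psi(\tilde\pi)\rangle_\infty=\exp_\psi(\tilde\pi\iota(u))$ for $u\in K_\infty$. Twisting coefficientwise commutes with the pairing, since the pairing takes $\Fq$-values on the geometric factor. Hence $\langle u,\mathcal H_\psi(\tilde\pi)^{(k)}\rangle_\infty=\exp_\psi(\tilde\pi\iota(u))^{q^k}$.

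Multiplication by $\chi(a)$ on the geometric side corresponds to replacing $u$ by $au$ (Lemma~\ref{lem:Tate-continuous-residue-duality}, transposed to $K_\infty/A$). Writing $\mathcal L_A=\sum_{a\ne0}\iota(a)^{-1}\chi(a)$ and pairing term by term therefore gives
\[
\langle u,\mathcal L_A\mathcal H_\psi(\tilde\pi)^{(k)}\rangle_\infty=\sum_{a\ne0}\iota(a)^{-1}\exp_\psi(\iota(au)\tilde\pi)^{q^k}.
\]
The interchange is justified by the unconditional convergence of Proposition~\ref{prop:hg-one-infinity-convergence} together with continuity of the pairing and boundedness of the numerators. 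Multiplying by $-\tilde\pi^{-1}$ and substituting $\lambda=\iota(a)\tilde\pi$ runs over $\Lambda_\psi\setminus\{0\}$ and yields \eqref{eq:uR}. The one point to verify carefully is that the contraction is continuous on the completed tensor product, so that termwise pairing is legitimate.

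For (ii), set $u=c/b$. The summand $\exp_\psi(\iota(c)\lambda/\iota(b))^{q^k}/\lambda$ depends on $\lambda$ through $x=\exp_\psi(\lambda/\iota(b))\in\psi[b]$, since $\exp_\psi(\iota(c)\lambda/\iota(b))=\psi_c(x)$. It also depends on the class of $\lambda$ modulo $\iota(b)\Lambda_\psi$. I would split $\lambda=\lambda_0+\iota(b)\mu$ with $\lambda_0$ in a fixed set of representatives. The terms with $\lambda_0=0$ vanish, because then $x=0$ and $\psi_c(0)=0$.

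For each nonzero class, one needs $\sum_{\mu}(\lambda_0+\iota(b)\mu)^{-1}$. This is the logarithmic derivative of $\exp_{\iota(b)\Lambda_\psi}$, namely $1/\exp_{\iota(b)\Lambda_\psi}(\lambda_0)$. By rescaling and the functional equation, this equals $1/(\iota(b)x)$ up to the derivative-one normalization. Concretely, $\exp_{\iota(b)\Lambda}(z)=\iota(b)\exp_\psi(z/\iota(b))$, so the inner sum is $\iota(b)^{-1}x^{-1}$.

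The outer sum is then $-\iota(b)^{-1}\sum_{0\ne x\in\psi[b]}\psi_c(x)^{q^k}/x$. Since $\psi_b(Z)/Z$ is separable with roots exactly the nonzero torsion points, this sum is the trace over $\mathscr T_b$, which is \eqref{eq:hg-finite-pairing-at-rational-class}.

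The main obstacle is that regrouping: the full lattice series converges only unconditionally and not absolutely in a naive sense. I would justify the rearrangement by passing to the Eisenstein-type sums $\sum_\mu(\lambda_0+\iota(b)\mu)^{-1}$, which converge by discreteness, and appealing to unconditional convergence in the nonarchimedean setting, where any grouping of terms is allowed.
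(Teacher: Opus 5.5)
Your proposal is correct and follows the paper's proof essentially step for step. In (i) you use the residue-dual description $\langle u,\mathcal H_\psi(\tilde\pi)\rangle_\infty=\exp_\psi(\tilde\pi\iota(u))$ from Proposition~\ref{prop:dualelement}, the twist and $\chi(a)$ compatibilities, and the substitution $\lambda=\iota(a)\tilde\pi$; in (ii) you group modulo $\iota(b)\Lambda_\psi$ and apply the partial-fraction identity $\sum_{\mu}(\lambda_0+\iota(b)\mu)^{-1}=1/(\iota(b)x)$ with $x=\exp_\psi(\lambda_0/\iota(b))$. The only minor difference is the convergence justification for termwise pairing in (i): you use convergence of $\mathcal L_A$ in $\mathbb T_A$ (Proposition~\ref{prop:hg-one-infinity-convergence}) together with continuity of the contraction, whereas the paper cites the convergence step of Lemma~\ref{lem:hg-independent-lattice-vanishing}, and either suffices.
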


\begin{proof}
\emph{\ref{item:lattice-series-expression}.}
By Definition~\ref{def:completed-differential-pairing},
\[
\mathcal R_\psi^{[k]}
=
- \tilde{\pi}^{-1}\mathcal L_A  \mathcal{H}_\psi(\tilde{\pi})^{(k)}.
\]
Under residue duality in Proposition~\ref{prop:dualelement}, \( \mathcal{H}_\psi(\tilde{\pi})\)
corresponds to the continuous function
\[
 F_{\tilde{\pi}}\colon K_\infty/A\longrightarrow  \Cinf,\qquad
 F_{\tilde{\pi}}(c+A)=\exp_\psi( \tilde{\pi}\, \iota(c)).
\]
Therefore, for \(u\in K_\infty\),
\[
\left\langle u, \mathcal{H}_\psi(\tilde{\pi})\right\rangle_{\infty}
=
 F_{\tilde{\pi}}(u+A)
=
\exp_\psi\bigl( \tilde{\pi}\,\iota(u)\bigr).
\]
Multiplication by \(\chi(a)=1\otimes a\) under residue duality corresponds to
scaling the argument by \(a\), i.e.
\[
\left\langle u,\chi(a) \mathcal{H}_\psi(\tilde{\pi})\right\rangle_{\infty}
=
 F_{\tilde{\pi}}(a u+A)
=
\exp_\psi\bigl( \tilde{\pi}\, \iota(a)\iota(u)\bigr).
\]
The coefficient-Frobenius map \( \mathcal{H}_\psi(\tilde{\pi})\mapsto  \mathcal{H}_\psi(\tilde{\pi})^{(k)}\)
raises function values to the \(q^k\)-th power, hence
\[
\left\langle u,\chi(a) \mathcal{H}_\psi(\tilde{\pi})^{(k)}\right\rangle_{\infty}
=
\exp_\psi\bigl( \tilde{\pi}\, \iota(a)\iota(u)\bigr)^{q^k}.
\]
Recall the definition
\[
\mathcal L_A=\sum_{0\neq a\in A}\frac{\chi(a)}{\iota(a)}.
\]
It follows that
\begin{equation}\label{eq:residue-dual-pairing}
\left\langle u,\mathcal L_A  \mathcal{H}_\psi(\tilde{\pi})^{(k)}\right\rangle_{\infty}
=
\sum_{0\neq a\in A}
\frac{1}{\iota(a)}
\exp_\psi\bigl( \tilde{\pi} \, \iota(a)\iota(u)\bigr)^{q^k}.
\end{equation}
 The series converges unconditionally by the rank-one case of the lattice estimate in Lemma~\ref{lem:hg-independent-lattice-vanishing} (see the convergence step in its proof); the same estimate gives the convergence of \eqref{eq:uR} below.
Multiplying by the factor \(- \tilde{\pi}^{-1}\) from
Definition~\ref{def:completed-differential-pairing} yields
\eqref{eq:residue-dual-pairing} with the sign attached, i.e.
\[
\left\langle u,\mathcal R_\psi^{[k]}\right\rangle_{\infty}
=
- \tilde{\pi}^{-1}
\sum_{0\neq a\in A}
\frac{1}{\iota(a)}
\exp_\psi\bigl( \tilde{\pi}\, \iota(a)\iota(u)\bigr)^{q^k}.
\]
Now set \(\lambda:= \tilde{\pi}\,\iota(a)\). As \(a\) runs over \(A\setminus\{0\}\),
\(\lambda\) runs exactly over \(\Lambda_\psi\setminus\{0\}\). Therefore the
preceding display becomes \eqref{eq:uR}.

\emph{\ref{item:finite-torsion-evaluation}.}
Apply \ref{item:lattice-series-expression} with \(u=c/b\):
\[
\left\langle \frac{c}{b},\mathcal R_\psi^{[k]}\right\rangle_{\infty}
=
-\sum_{0\neq \lambda\in \Lambda_\psi}
\frac{\exp_\psi\bigl(\iota(c)\lambda/\iota(b)\bigr)^{q^k}}{\lambda}.
\]
The numerators take finitely many values, so the series converges and may be
grouped modulo \(\iota(b)\Lambda_\psi\).  The zero coset contributes zero.
For a nonzero coset represented by \(\lambda_0\), put
\(x=\exp_\psi(\lambda_0/\iota(b))\).  The lattice partial-fraction identity,
obtained by logarithmically differentiating the derivative-one exponential
product, gives
\[
 \sum_{\mu\in \iota(b)\Lambda_\psi}
      \frac1{\lambda_0+\mu}
 =\frac1{\exp_{\iota(b)\Lambda_\psi}(\lambda_0)}
 =\frac1{\iota(b) x}.
\]
For the first equality, the derivative of the normalized lattice
exponential is \(1\); its logarithmic derivative is the sum over its
simple zeros.  Changing \(\mu\) to \(-\mu\) leaves the lattice unchanged.
For the second, the functions
\(\exp_{\iota(b)\Lambda_\psi}(z)\) and
\(\iota(b)\exp_\psi(z/\iota(b))\) have the same normalized lattice product.
Thus their values at \(\lambda_0\) agree.
The numerator on that coset is \(\psi_c(x)^{q^k}\).  Since the exponential
identifies \(\Lambda_\psi/\iota(b)\Lambda_\psi\) with \(\psi[b]\), summing
over nonzero cosets is exactly the finite \'{e}tale trace in
\eqref{eq:hg-finite-pairing-at-rational-class}.
\end{proof}

\begin{remark}
The  formula \eqref{eq:uR} agrees with the universal pairing
\(\zeta_\psi\mathbin{\cdot}\boldsymbol\omega_\psi^{(k)}\) in
\cite[Lemma~7.2]{Fer25}, with
\(\zeta_\psi=-\sum_{\lambda\ne0}\lambda^{-1}\otimes\lambda\).
\end{remark}

\begin{notation}\label{not:Tj}
For \(1\leq j\leq d_a\), put
\begin{equation}
\label{eq:hg-explicit-torsion-trace-coefficients}
 T_j^\psi =\operatorname{Tr}_{\mathscr T_b/H}
 \left(
 \frac{\psi_{ac_j}(Z)-\iota(a)\psi_{c_j}(Z)}{Z}
 \right)\in H.
\end{equation}
The quotient inside the trace is a polynomial.  All these data are obtained
from the curve, a finite residue matrix, and the Drinfeld action polynomials.
\end{notation}

\begin{theorem}[Finite torsion-trace evaluation]
\label{thm:hg-finite-torsion-trace-value}
The pairings have the following finite expressions and pole bounds.
\begin{enumerate}[label=\textup{(\roman*)},leftmargin=2.2em]
    \item\label{item:positive-finite-traces} For $k >0$, we have
\begin{equation}
\label{eq:hg-positive-pairing-pole-bound}
 \mathcal R_\psi^{[k]}\in H\otimes_{\Fq}
 H^0\left(X,\Omega_X^1\left(
            \left\lceil\frac{k+1}{\Ninf}\right\rceil \infty\right)\right) .
\end{equation}
Moreover, put \(m_1=\lceil(k+1)/\Ninf\rceil\), and choose an \(\Fq\)-basis
\(\mu_1,\ldots,\mu_e\) of
\(H^0(X,\Omega_X^1(m_1\infty))\), where \(e=g-1+m_1\Ninf\).
Choose residue-dual classes \(c_j/b\), with \(c_j\in A\) and one
\(b\in A\setminus\Fq\) clearing their denominators.  Then
\begin{equation} \label{eq:Rk}
 \mathcal R_\psi^{[k]} = -\frac1{\iota(b)}\sum_j
    \operatorname{Tr}_{\mathscr T_b/H}
       \!\left(\frac{\psi_{c_j}(Z)^{q^k}}{Z}\right)\mu_j.
\end{equation}
\item\label{item:zeroth-finite-trace} With the coefficients $T_j^\psi$ of
Notation~\ref{not:Tj}, the zeroth pairing is
\begin{equation}
\label{eq:hg-explicit-rational-differential}
 \mathcal R_\psi
 =-\frac{\sum_{j=1}^{d_a}T_j^\psi \nu_j}
         {\iota(b)\bigl(\chi(a)-\iota(a)\bigr)}.
\end{equation}
\item\label{item:zeroth-pole-bound} If \(\Xi\in X(H)\) is the characteristic point, then
\begin{equation}
\label{eq:hg-logarithmic-pole-bound}
 \mathcal R_\psi\in
 H^0\bigl(X_H,\Omega^1_{X_H/H}(\Xi+\infty_H)\bigr).
\end{equation}
\end{enumerate}
\end{theorem}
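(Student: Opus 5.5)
The plan is to treat $\mathcal R_\psi^{[k]}$ as an element of $\Cinf\widehat\otimes_{\Fq}\Omega_A$ and determine it through its residue pairings with $K_\infty$, using Lemma~\ref{lem:hg-torsion-evaluation}. The argument has three steps: a finiteness/pole bound from small-ball vanishing, a finite residue-dual reconstruction, and the algebraic clearing relation \eqref{eq:hg-cleared-pairing}.

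\emph{Step 1 (pole bound for $k>0$).} By \eqref{eq:uR}, the pairing $\langle u,\mathcal R_\psi^{[k]}\rangle_\infty$ equals $-\sum_{\lambda\ne0}\exp_\psi(\iota(u)\lambda)^{q^k}/\lambda$. Taking the $q^{-k}$-th root gives exactly the sum in Lemma~\ref{lem:hg-independent-lattice-vanishing}\ref{item:small-ball-vanishing} with $r=1$ and $\ell=k$, and that sum vanishes whenever $|u|_\infty\le q^{-(k+1)}$. Hence the functional $u\mapsto\langle u,\mathcal R_\psi^{[k]}\rangle_\infty$ kills $\{v_\infty(u)\ge m_1\}$, since $q^{-\Ninf m_1}\le q^{-(k+1)}$. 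It is also continuous. By local residue duality, a differential in $K_\infty\otimes_A\Omega_A$ (after $\Cinf$-scalar extension) whose pairing kills $\pi^{m_1}\mathcal O_\infty$ has pole order at most $m_1$ at $\infty$. The differential is also regular on $U$, being in the Tate closure of $\Omega_A$. Combining these, $\mathcal R_\psi^{[k]}$ lies in $\Cinf\otimes H^0(X,\Omega^1_X(m_1\infty))$. I expect this passage from vanishing pairings to a finite-dimensional global space to be the main obstacle: one must argue that a Tate element whose continuous residue functional factors through the finite quotient $K_\infty/(A+\pi^{m_1}\mathcal O_\infty)$ is algebraic. I would use the continuous duality of Lemma~\ref{lem:Tate-continuous-residue-duality} together with \eqref{eq:finite-level-residue-annihilator-public}, since $A+\pi^{m_1}\mathcal O_\infty$ is the annihilator of $H^0(\Omega^1(m_1\infty))$.

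\emph{Step 2 (the trace formula \eqref{eq:Rk}).} Write $\mathcal R_\psi^{[k]}=\sum_j x_j\mu_j$ and pair against the residue-dual classes $c_j/b$. This gives $x_j=\langle c_j/b,\mathcal R_\psi^{[k]}\rangle_\infty$, which by \eqref{eq:hg-finite-pairing-at-rational-class} equals the displayed trace and lies in $H$. This simultaneously yields the $H$-rationality claim in \eqref{eq:hg-positive-pairing-pole-bound}.

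\emph{Step 3 (part \ref{item:zeroth-finite-trace}).} Apply \eqref{eq:hg-cleared-pairing} in the form
\[
(\chi(a)-\iota(a))\mathcal R_\psi=\sum_{i\ge1}\alpha_i\mathcal R_\psi^{[i]}.
\]
The right side is a combination of $\mathcal R_\psi^{[i]}$ with $i\le n\Ninf$, and by Step 1 each of these lies in $H\otimes\mathcal V_a$, because $\lceil(i+1)/\Ninf\rceil\le n+1$. Expanding the right side in the basis $\nu_j$ with the duals $c_j/b$, the $j$-th coefficient is
\[
-\iota(b)^{-1}\operatorname{Tr}\Bigl(\sum_i\alpha_i\psi_{c_j}(Z)^{q^i}/Z\Bigr).
\]
Since $\sum_{i\ge1}\alpha_i\psi_{c_j}^{q^i}=\psi_{ac_j}-\iota(a)\psi_{c_j}$, this coefficient is $-T_j^\psi/\iota(b)$. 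Dividing by $\chi(a)-\iota(a)$, which is nonzero in the fraction field, gives \eqref{eq:hg-explicit-rational-differential}.

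\emph{Step 4 (part \ref{item:zeroth-pole-bound}).} The numerator in \eqref{eq:hg-explicit-rational-differential} is an $H$-rational differential with pole order at most $n+1$ at $\infty$. The denominator $a-\iota(a)$, viewed as a function on $X_H$, has a pole of order $n$ at $\infty$ and zeros at $\Xi$ and at the other points where $a=\iota(a)$. It therefore remains to show that $\mathcal R_\psi$ has no pole at those other zeros and at most a simple pole at $\Xi$. This independence holds for every choice of $a$: varying $a$ moves the extra zeros, and since $\mathcal R_\psi$ is independent of $a$, its only possible finite pole is the common zero $\Xi$. The pole there is simple, because taking $a$ with $\Xi$ a simple zero of $a-\iota(a)$ (possible since $d(a)\ne0$ at $\Xi$ for some separating $a$) bounds the order by one. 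At $\infty$, the net order is at least $-(n+1)+n=-1$.
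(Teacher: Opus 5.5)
Your proposal is correct and follows the paper's proof essentially step for step. For the obstacle you flag in Step~1, the paper argues that the functional factors through the finite quotient $Q=K_\infty/(A+\mathfrak m_\infty^{m_1})$, so $\operatorname{Hom}_{\Fq}(Q,\Cinf)=\Cinf\otimes_{\Fq}\operatorname{Hom}_{\Fq}(Q,\Fq)$; it then identifies $\operatorname{Hom}_{\Fq}(Q,\Fq)$ with $H^0(X,\Omega_X^1(m_1\infty))$ using the residue duality of $K_\infty/A$ with $\Omega_A$ and a local expansion at $\infty$, which avoids \eqref{eq:finite-level-residue-annihilator-public} (that identity concerns annihilators of $A_{\le n}$ among differentials, not the annihilator you need). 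The only real variation is in Step~3: you get $-T_j^\psi/\iota(b)$ by summing the level-$i$ trace formulas and using $\psi_a\circ\psi_{c_j}=\psi_{ac_j}$, whereas the paper pairs $(\chi(a)-\iota(a))\mathcal R_\psi$ with $c_j/b$ directly, using that $\chi(a)$ acts by precomposition with multiplication by $a$, and both yield the same coefficient.
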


\begin{proof}
\emph{\ref{item:positive-finite-traces}.} For \(k\geq1\), apply
Lemma~\ref{lem:hg-independent-lattice-vanishing} with
\(\Gamma=\Lambda_\psi\), \(r=1\), and \(\ell=k\), then take the
\(q^k\)-th power.  The direct lattice expression for \(\mathcal R_\psi^{[k]}\)
shows that it annihilates every \(u\in K_\infty\) with
\(|u|_\infty\leq q^{-(k+1)}\).  Consequently its continuous residue-dual
map factors through the finite \(\Fq\)-vector space
\[
 K_\infty/\bigl(A+\mathfrak m_\infty^{\lceil(k+1)/\Ninf\rceil}\bigr).
\]
To identify its dual, put \(m=\lceil(k+1)/\Ninf\rceil\) and choose a local
parameter \(t_\infty\).  Residue duality identifies the continuous
\(\Fq\)-valued dual of \(K_\infty/A\) with \(\Omega_A\).
Write the local expansion of \(\nu\in\Omega_A\) as
\(\nu=\sum_j v_jt_\infty^j\,\rdt_\infty\), with
\(v_j\in\mathbb F_\infty\).  For \(n\geq m\) and
\(c\in\mathbb F_\infty\), one has
\[
 \langle ct_\infty^n,\nu\rangle_\infty
 =\operatorname{Tr}_{\mathbb F_\infty/\Fq}(c\,v_{-n-1}).
\]
Nondegeneracy of the finite-field trace pairing shows that
\(\nu\) annihilates \(\mathfrak m_\infty^m\) exactly when
\(v_j=0\) for every \(j<-m\).
Since \(\nu\) is already regular on \(U\), this is precisely
\(\nu\in H^0(X,\Omega_X^1(m\infty))\).
The quotient is finite: \(K_\infty/A\) is compact and the image of
\(\mathfrak m_\infty^m\) is open.
For a finite \(\Fq\)-vector space \(Q\),
\(\operatorname{Hom}_{\Fq}(Q,\Cinf)
=\Cinf\otimes_{\Fq}\operatorname{Hom}_{\Fq}(Q,\Fq)\).
Applying this to the quotient proves
\begin{equation} \label{eq:R_kinCinf}
 \mathcal R_\psi^{[k]}\in\Cinf\otimes_{\Fq}
 H^0\left(X,\Omega_X^1\left(
            \left\lceil\frac{k+1}{\Ninf}\right\rceil \infty\right)\right).
\end{equation}
The local calculation uses the full residue-field trace, so it also
applies when \(\Ninf>1\) or the characteristic divides \(\Ninf\).

From \eqref{eq:R_kinCinf}, we write 
\[
    \mathcal R_\psi^{[k]} = \sum_{j=1}^e \beta_j \mu_j . 
\]
Then Lemma \ref{lem:hg-torsion-evaluation} yields that
\[
\beta_j = \langle c_j/b, \mathcal R_\psi^{[k]} \rangle_{\infty} =  -\frac1{\iota(b)} 
    \operatorname{Tr}_{\mathscr T_b/H}
       \!\left(\frac{\psi_{c_j}(Z)^{q^k}}{Z}\right)  
\]
which confirms \eqref{eq:Rk}.
These coefficients lie in $H$, so the inclusion \eqref{eq:hg-positive-pairing-pole-bound} holds.

\emph{\ref{item:zeroth-finite-trace}.} Lemma \ref{lem:special-R} gives
\begin{equation}
\label{eq:hg-cleared-pairing-finite-space}
 \mathcal{B}_a:=(\chi(a)-\iota(a))\mathcal R_\psi
       =\sum_{i=1}^{nN}\alpha_i\mathcal R_\psi^{[i]}
       \in\Cinf\otimes_{\Fq}\mathcal V_a.
\end{equation}
Expand \(\mathcal{B}_a=\sum_j\beta_j\nu_j\).  Evaluating on the dual classes
\(r_j=c_j/b\) gives
\begin{align*}
 \beta_j&= \langle r_j, \mathcal{B}_a \rangle_{\infty}
 =\langle ac_j/b, \mathcal R_\psi\rangle_{\infty} -\iota(a) \langle c_j/b, \mathcal R_\psi \rangle_{\infty}\\
 &=-\iota(b)^{-1}T_j^\psi
\end{align*}
by Lemma~\ref{lem:hg-torsion-evaluation}.  This proves
\[
(\chi(a)-\iota(a))\mathcal R_\psi = - \sum_j  \iota(b)^{-1}T_j^\psi \nu_j ,
\]
Since $a$ is nonconstant, $|\iota(a)|>1$ while
$\|\chi(a)\|_{\mathrm G}=1$. Thus
\[
(\chi(a)-\iota(a))^{-1}
=-\iota(a)^{-1}\sum_{m\ge0}\bigl(\chi(a)/\iota(a)\bigr)^m
\]
converges in $\mathbb T_A$. Division therefore recovers the original
completed pairing, proves \eqref{eq:hg-explicit-rational-differential},
and establishes its descent to $H$.
The residue-dual evaluations above are taken on the finite
algebraic differential $\mathcal{B}_a$.

\emph{\ref{item:zeroth-pole-bound}.} Repeat \eqref{eq:hg-cleared-pairing} with each
\(a\in A\).  The functions \(\chi(a)-\iota(a)\) generate the maximal
ideal of the smooth point \(\Xi\) on \(U_H\).  Consequently \(\mathcal R_\psi\)
has no finite pole away from \(\Xi\), and has at most a simple pole there.
At every geometric point above \(\infty\), the chosen denominator has pole
order \(n\), whereas \(\mathcal{B}_a\) has pole order at most \(n+1\).
Thus \(\mathcal R_\psi\) has at most a simple pole at every such point.
This proves \eqref{eq:hg-logarithmic-pole-bound}.
\end{proof}

\begin{remark}[Arithmetic descent and the choice of special function]
\label{rem:hg-normalization-and-descent}
The field $ H $ in Theorem  {\ref{thm:hg-finite-torsion-trace-value}} is the field over which
the fixed Drinfeld action is defined.  In a Hayes realization over a finite
extension of \(\iota(K)\), the formula descends to that same extension.  
\end{remark}

\subsection{Identification with the normalized shtuka differential}
\label{subsec:hg-shtuka-identification}

Fix the geometric realization $(V_\psi,q_\psi,f_\psi)$ used in
Definition~\ref{def:shtuka-diff}, with its identity motive frame.
Recall from Definition~\ref{def:completed-differential-pairing} that
$\mathcal R_\psi=\mathcal R_\psi^{[0]}$.
The coarse pole bounds distinguish the first positive pairing from
the zeroth one.  To identify the former, we must also locate its
possible poles within the infinity orbit.

\begin{lemma}[The zeroth pairing in the shtuka line]\label{lem:zeroth-pairing-shtuka-line}
With the notation abovev, the following hold.
\begin{enumerate}[label=\textup{(\roman*)},leftmargin=2.2em]
\item\label{item:zeroth-pairing-membership} 
The pairing
$\mathcal R_\psi$ belongs to $\mathscr S_\psi$; its only poles are
simple poles at $\Xi$ and $q_\psi^{(-1)}$;
\item\label{item:zeroth-pairing-residues}  
Its residues are
\begin{equation}\label{eq:curve-zeta-two-residues}
 \operatorname{res}_\Xi(\mathcal R_\psi)=1,\qquad
 \operatorname{res}_{q_\psi^{(-1)}}(\mathcal R_\psi)=-1.
\end{equation}
\end{enumerate}
\end{lemma}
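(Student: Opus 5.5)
The plan is to obtain the divisor bound by comparing $\mathcal R_\psi$ with the higher pairings $\mathcal R_\psi^{[k]}=F_\psi^{[k]}\mathcal R_\psi$ of Lemma~\ref{lem:special-R}, whose poles are confined to the infinity fibre by Theorem~\ref{thm:hg-finite-torsion-trace-value}\ref{item:positive-finite-traces}. The residues will then come from the global residue theorem together with one finite torsion-trace evaluation from Lemma~\ref{lem:hg-torsion-evaluation}.

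\emph{Step 1 (finite part).} By Theorem~\ref{thm:hg-finite-torsion-trace-value}\ref{item:zeroth-pole-bound}, $\mathcal R_\psi$ is a rational differential over $H$. On $U$ its only possible pole is a simple pole at $\Xi$, and at each geometric point above $\infty$ it has at most a simple pole. To get the zeros along $V_\psi$, I use $\mathcal R_\psi=\mathcal R_\psi^{[1]}/f_\psi$ and \eqref{eq:divf}:
\[
 \operatorname{div}(\mathcal R_\psi)=\operatorname{div}(\mathcal R_\psi^{[1]})-V_\psi^{(1)}+V_\psi-[\Xi]+[q_\psi].
\]
The differential $\mathcal R_\psi^{[1]}$ is regular on $U$. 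The negative contribution $-V_\psi^{(1)}$ on $U$ cannot create poles there, since $\mathcal R_\psi$ is already known to be regular on $U\setminus\{\Xi\}$. Hence on $U$ one gets $\operatorname{div}(\mathcal R_\psi)\ge V_\psi-[\Xi]$. Here one uses $\Xi\notin\operatorname{supp}(V_\psi)$, and where points of $V_\psi^{(1)}$ and $V_\psi$ coincide the cancellation must be tracked.

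\emph{Step 2 (infinity orbit).} I apply the same comparison with $k=\Ninf-1$. Then $\lceil(k+1)/\Ninf\rceil=1$, so $\mathcal R_\psi^{[\Ninf-1]}$ has at most simple poles at the points $q_\psi^{(i)}$. Meanwhile $F_\psi^{[\Ninf-1]}$ has simple poles at $q_\psi,\dots,q_\psi^{(\Ninf-2)}$, which are all the infinity points except $q_\psi^{(-1)}$. This holds up to the telescoped $V_\psi^{(\Ninf-1)}-V_\psi$ terms, which are handled as in Step~1. Therefore
\[
 \operatorname{ord}_{q_\psi^{(i)}}\mathcal R_\psi\ge-1+1=0\qquad(0\le i\le\Ninf-2).
\]
For $\Ninf=1$ the only infinity point is $q_\psi=q_\psi^{(-1)}$ and there is nothing to prove. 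Together with Step~1 this gives $\mathcal R_\psi\in\mathscr S_\psi$, with simple poles at most at $\Xi$ and $q_\psi^{(-1)}$. This proves \ref{item:zeroth-pairing-membership}.

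\emph{Step 3 (residues).} By the residue theorem, $\operatorname{res}_{q_\psi^{(-1)}}\mathcal R_\psi=-\operatorname{res}_\Xi\mathcal R_\psi$, so it suffices to show $\operatorname{res}_\Xi\mathcal R_\psi=1$. I would test the rational differential against a rational class $u=c/b$. On one side, $\langle u,\mathcal R_\psi\rangle_\infty=\operatorname{Tr}\operatorname{res}_\infty(u\mathcal R_\psi)$ equals minus the sum of the finite residues of $u\mathcal R_\psi$. These are $\iota(u)\operatorname{res}_\Xi\mathcal R_\psi$ plus contributions at the zeros of $b$. On the other side, Lemma~\ref{lem:hg-torsion-evaluation}\ref{item:finite-torsion-evaluation} computes the same pairing as $-\iota(b)^{-1}\operatorname{Tr}_{\mathscr T_b/H}(\psi_c(Z)/Z)$. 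For $c=1$ this trace is $q^{\deg(bA)}-1=-1$. Letting $c$ vary over $A$ and using $A$-linearity isolates the $\Xi$-term from the $b$-zero contributions, which are independent of $\iota(c)$ in the appropriate sense. Comparing the coefficient of $\iota(c)$ should then give $\operatorname{res}_\Xi\mathcal R_\psi=1$.

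A cleaner alternative is to take the residue of $(\chi(a)-\iota(a))\mathcal R_\psi=\mathcal B_a$ at $\Xi$. This reduces the claim to evaluating $\mathcal B_a/\rd a$ at $\Xi$, with $\mathcal B_a$ written via the finite traces $T_j^\psi$. The main obstacle is Step~3: keeping track of the completed residue pairing versus the rational residues, and cancelling the contributions at the zeros of $b$. I would first try the $\mathcal B_a$ route, choosing $a$ so that $\rd a$ is nonvanishing at $\Xi$.
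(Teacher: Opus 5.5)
Your proposal has genuine gaps in the divisor bound (Steps 1 and 2) and in the residue computation (Step 3).

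\textbf{Steps 1 and 2.} These fail exactly where the realization allows overlaps. From $\mathcal R_\psi=\mathcal R_\psi^{[1]}/f_\psi$ you only get $\operatorname{ord}_x\mathcal R_\psi\ge\operatorname{mult}_xV_\psi-\operatorname{mult}_{x^{(-1)}}V_\psi$ on $U\setminus\{\Xi\}$. Regularity of $\mathcal R_\psi$ there adds only $\operatorname{ord}_x\mathcal R_\psi\ge0$, so the remark that $-V_\psi^{(1)}$ ``cannot create poles'' does not recover the zeros. At a point $x\in U$ with $x,x^{(-1)}\in\operatorname{supp}V_\psi$ you therefore do not get $\operatorname{ord}_x\mathcal R_\psi\ge\operatorname{mult}_xV_\psi$; the realization excludes only $\Xi$ and $q_\psi^{(-1)}$ from $V_\psi$.

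The missing idea is to use all $k\ge1$ at once. Twisting the Frobenius basis of $\Gamma(U_{\Cinf},\OO(V_\psi))$ and multiplying by $f_\psi$ gives $\sum_{k\ge1}\Cinf F_\psi^{[k]}=\Gamma(U_{\Cinf},\OO(V_\psi-\Xi))$. Each $F_\psi^{[k]}\mathcal R_\psi=\mathcal R_\psi^{[k]}$ is regular on $U$, and this module generates the line bundle at every point of the affine curve. Localizing then gives $\operatorname{div}(\mathcal R_\psi)|_U\ge V_\psi|_U-[\Xi]$.

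Step 2 has the same defect. $V_\psi$ may contain the points $q_\psi^{(j)}$, $0\le j\le\Ninf-2$, so membership in $\mathscr S_\psi$ needs $\operatorname{ord}_{q_\psi^{(j)}}\mathcal R_\psi\ge\operatorname{mult}_{q_\psi^{(j)}}V_\psi$, not merely $\ge0$. With your single choice $k=\Ninf-1$, the term $V_\psi^{(\Ninf-1)}$ has multiplicity $\operatorname{mult}_{q_\psi^{(j+1)}}V_\psi$ at $q_\psi^{(j)}$, which can spoil the bound for $j<\Ninf-2$. Instead take $k=j+1$ for each $j$. Then $\lceil(k+1)/\Ninf\rceil=1$ still holds, and $\operatorname{mult}_{q_\psi^{(j)}}V_\psi^{(k)}=\operatorname{mult}_{q_\psi^{(-1)}}V_\psi=0$. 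Hence $\operatorname{ord}_{q_\psi^{(j)}}F_\psi^{[k]}=-\operatorname{mult}_{q_\psi^{(j)}}V_\psi-1$, and the required bound follows.

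\textbf{Step 3.} This step rests on a false identification. The pairing $\langle u,\mathcal R_\psi\rangle_\infty$ is the continuous extension applied to the Tate expansion of $\mathcal R_\psi$; it is not $\operatorname{Tr}\operatorname{res}_\infty$ of the rational differential $u\mathcal R_\psi$.

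\begin{itemize}
\item For $u=1/a$, the differential $u\mathcal R_\psi$ is regular at every point above $\infty$, since there are at most simple poles times a zero of order $-v_\infty(a)\ge1$. So its algebraic residue at $\infty$ is $0$.
\item The trace formula, however, gives $\langle 1/a,\mathcal R_\psi\rangle_\infty=\iota(a)^{-1}$. In the Carlitz case, $\mathcal R_\psi=\rd t/(t-\theta)$ and the pairing with $1/t$ is $\theta^{-1}\neq0$.
\item What is true is $\langle c/b,\mathcal R_\psi\rangle_\infty=-\sum_{b(x)=0}\operatorname{res}_x\bigl((c/b)\mathcal R_\psi\bigr)$, with no $\Xi$-term. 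With $u=1/a$ and the residue theorem, this would give the result at once, but the formula itself needs a continuity argument on the Tate domain.
\end{itemize}

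Your bookkeeping with an $\iota(u)\operatorname{res}_\Xi$ term is therefore wrong from the start. Moreover, ``comparing coefficients of $\iota(c)$'' as $c$ ranges over $A$ is not a well-defined operation. The $\mathcal B_a$ alternative only restates the claim as $(\mathcal B_a/\rd a)(\Xi)=1$ and gives no proof of it.

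The paper's device is to pair only the finite differential $\mathcal B_a=(\chi(a)-\iota(a))\mathcal R_\psi$, for which completed and algebraic residues agree.
\begin{itemize}
\item $A$-linearity and the trace evaluations at $(c,b)=(1,a)$ and $(a,a)$ give $\langle1/a,\mathcal B_a\rangle_\infty=\langle 1,\mathcal R_\psi\rangle_\infty-\iota(a)\langle1/a,\mathcal R_\psi\rangle_\infty=0-1$.
\item Splitting the residue-field trace over the geometric points $p\mid\infty$ gives $\langle 1/a,\mathcal B_a\rangle_\infty=\sum_{p\mid\infty}\operatorname{res}_p(\mathcal B_a/\chi(a))=\sum_{p\mid\infty}\operatorname{res}_p\mathcal R_\psi$, because $\mathcal R_\psi/\chi(a)$ is regular above $\infty$.
\end{itemize}
The residue theorem together with part (i) then yields both residues.
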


\begin{proof}
All divisor calculations below are on $X_{\Cinf}$.  Put $V=V_\psi$,
$p_j=q_\psi^{(j)}$ for $0\leq j<\Ninf$.
By Lemma~\ref{lem:special-R},
\[
\mathcal R_\psi^{[k]} = F_\psi^{[k]} \mathcal R_\psi.
\]

\emph{\ref{item:zeroth-pairing-membership}: the affine divisor.}
The additive-polynomial motive has $\Cinf$-basis
$(\tau^n s_\psi)_{n\geq0}$.  Under the geometric realization these
elements have coordinates $F_\psi^{[n]}$, by
\eqref{eq:geometric-motive-coordinate}.  Hence
\[
 \Gamma(U_{\Cinf},\OO(V))=\sum_{n\geq0}\Cinf F_\psi^{[n]}.
\]
Coefficient Frobenius is bijective over $\Cinf$ and takes
$\OO(V)$ to $\OO(V^{(1)})$.
Twisting this equality and using
$f_\psi(F_\psi^{[n]})^{(1)}=F_\psi^{[n+1]}$ gives
\begin{equation}\label{eq:curve-zeta-positive-motive}
 \sum_{k\geq1}\Cinf F_\psi^{[k]}
 =f_\psi\Gamma(U_{\Cinf},\OO(V^{(1)}))
 =\Gamma(U_{\Cinf},\OO(V-\Xi)).
\end{equation}
For the last equality, on $U$ one has
$\operatorname{div}(f_\psi)=V^{(1)}-V+\Xi$.
Thus $g$ satisfies $\operatorname{div}(g)+V^{(1)}\geq0$ on $U$
if and only if $f_\psi g$ satisfies
$\operatorname{div}(f_\psi g)+V-\Xi\geq0$ there.
Multiplication by $f_\psi$, with inverse division by $f_\psi$,
therefore identifies the two modules.
By \eqref{eq:hg-positive-pairing-pole-bound}, every
$F_\psi^{[k]}\mathcal R_\psi$ with $k\geq1$ is regular on $U_{\Cinf}$.
Thus multiplication by $\mathcal R_\psi$ sends the invertible module
on the right of \eqref{eq:curve-zeta-positive-motive} into regular
differentials.  This is a homomorphism of modules over
$\Gamma(U_{\Cinf},\OO)$.
On an affine scheme, localization of the module of sections of a
line bundle is its stalk.  At a geometric point $x\in U_{\Cinf}$,
choose a uniformizer $z$ and put
$n_x=\operatorname{mult}_x(V)-\mathbf1_{x=\Xi}$.
The stalk of $\OO(V-\Xi)$ is $z^{-n_x}\OO_{X_{\Cinf},x}$.
Writing $\mathcal R_\psi=b(z)\,dz$, the localized inclusion says that
$z^{-n_x}b(z)\,dz$ is regular.  It follows that
$\operatorname{ord}_x(\mathcal R_\psi)\geq n_x$.
For all $x$ together this gives
\[
 \operatorname{div}(\mathcal R_\psi)|_U\geq V|_U-[\Xi].
\]

\emph{\ref{item:zeroth-pairing-membership}: the infinity orbit.}
Theorem~\ref{thm:hg-finite-torsion-trace-value} gives at most a simple
pole of $\mathcal R_\psi$ at each $p_j$.
If $\Ninf=1$, the only such point is $q_\psi^{(-1)}$, its multiplicity
in $V$ is zero, and the desired bound there is exactly
$\operatorname{ord}_{q_\psi^{(-1)}}(\mathcal R_\psi)\geq-1$.
Suppose $\Ninf\geq2$ and fix $0\leq j\leq \Ninf-2$.
Choose $k=j+1$, so that $\lceil(k+1)/\Ninf\rceil=1$.
The positive-pairing bound gives
$\operatorname{ord}_{p_j}(\mathcal R_\psi^{[k]})\geq-1$.
On the other hand, telescoping \eqref{eq:divf} gives
\[
 \operatorname{div}(F_\psi^{[k]})
 =V^{(k)}-V+\sum_{i=0}^{k-1}[\Xi^{(i)}]
                  -\sum_{i=0}^{k-1}[p_i].
\]
At $p_j=q_\psi^{(k-1)}$, the multiplicity of $V^{(k)}$ equals that
of $V$ at $q_\psi^{(-1)}$, which is zero.  The last sum contains
$p_j$ exactly once, and the characteristic orbit is disjoint from
the infinity orbit.  Hence
\[
 \operatorname{ord}_{p_j}(F_\psi^{[k]})
 =-\operatorname{mult}_{p_j}(V)-1,\qquad
 \operatorname{ord}_{p_j}(\mathcal R_\psi)
 \geq\operatorname{mult}_{p_j}(V).
\]
At the remaining point $p_{\Ninf-1}=q_\psi^{(-1)}$, the multiplicity
of $V$ is zero and the original simple-pole bound suffices.
Together with the affine calculation, this proves
$\mathcal R_\psi\in\mathscr S_\psi$ for every $\Ninf$.
No avoidance condition on the other points of the infinity orbit
has been used.

\emph{\ref{item:zeroth-pairing-residues}: the residue.}
Choose $a\in A\setminus\Fq$.  Residue-dual evaluation in
Lemma~\ref{lem:hg-torsion-evaluation}, first at $c=1,b=a$ and then
at $c=a,b=a$, gives
\begin{equation}\label{eq:hg-finite-trace-residue-normalization}
 \left\langle 1/a,\mathcal R_\psi\right\rangle_{\infty}
 =-\frac{\operatorname{Tr}_{\mathscr T_a/H}(1)}{\iota(a)}
 =\frac1{\iota(a)},\qquad
 \langle 1,\mathcal R_\psi\rangle_{\infty}=0.
\end{equation}
For the first evaluation, $\psi_1(Z)/Z=1$; multiplication by this
element is the identity on the
$(q^{\deg(aA)}-1)$-dimensional algebra $\mathscr T_a$.
Its trace is therefore $q^{\deg(aA)}-1$, whose image in $H$ is $-1$.
For the second evaluation, $\psi_a(Z)/Z$ is zero in $\mathscr T_a$
by its defining relation.  These evaluations are of residue classes,
not values of differentials at geometric points.

The cleared pairing
$\mathcal{B}_a=(\chi(a)-\iota(a))\mathcal R_\psi$ in
\eqref{eq:hg-cleared-pairing} is a finite global differential.
Write $\mathcal{B}_a=\sum_j\beta_j\nu_j$ in the finite $\Fq$-basis used in
Theorem~\ref{thm:hg-finite-torsion-trace-value}.
Its evaluation at $1/a$ is the finite sum
$\sum_j\beta_j\langle1/a,\nu_j\rangle_\infty$.
By \eqref{eq:residue-dual-coordinate-line}, each summand is an algebraic
residue paired with the residue-field trace.
Under the splitting
$\mathbb F_\infty\otimes_{\Fq}\Cinf\simeq\prod_{p\mid\infty}\Cinf$,
this trace is the sum of the components; the scalars $\beta_j$
remain fixed in the coefficient factor.
Consequently the finite sum is
$\sum_{p\mid\infty}\operatorname{res}_p(\mathcal{B}_a/\chi(a))$.
Multiplication by $\chi(a)$ on a residue-dual coordinate is
precomposition by multiplication by $a$, so
\begin{align*}
 -1
 &=\langle 1,\mathcal R_\psi\rangle_\infty-\iota(a)\langle 1/a,\mathcal R_\psi\rangle_\infty\\
 &=\langle 1/a, \mathcal{B}_a \rangle_\infty
 =\sum_{p\mid\infty}
    \operatorname{res}_p\!\left(\frac{\mathcal{B}_a}{\chi(a)}\right)
 =\sum_{p\mid\infty}\operatorname{res}_p(\mathcal R_\psi).
\end{align*}
For the last equality, put $n=-v_\infty(a)\geq1$.
At every $p\mid\infty$, one has
$\operatorname{ord}_p(\chi(a))=-n$ and
$\operatorname{ord}_p(\mathcal R_\psi)\geq-1$.
Thus $\mathcal R_\psi/\chi(a)$ has order at least $n-1\geq0$.
It is regular there and contributes no residue to
$\mathcal{B}_a/\chi(a)=\mathcal R_\psi-\iota(a)\mathcal R_\psi/\chi(a)$.
The trace is the sum already displayed, without a factor $\Ninf$.
The affine pole bound leaves no pole away from $\Xi$ and infinity,
so the global residue theorem gives
$\operatorname{res}_\Xi(\mathcal R_\psi)=1$.
The preceding pole calculation then gives the second residue in
\eqref{eq:curve-zeta-two-residues}.

\end{proof}
\begin{theorem}[Finite-trace identification of the shtuka differential]
\label{thm:hg-shtuka-trace-identification} 
For every $\Ninf\geq1$, with
$\Lambda_\psi=\iota(A) \tilde{\pi}$ and the preceding normalizations, the following hold.
\begin{enumerate}[label=\textup{(\roman*)},leftmargin=2.2em]
\item\label{item:shtuka-trace-equality} The first torsion pairing is
\begin{equation}\label{eq:hg-shtuka-trace-identification}
 \widetilde\lambda_\psi^{(1)}=\mathcal R_\psi^{[1]}. 
\end{equation}
The corresponding zeroth pairing is
\begin{equation}\label{eq:hg-zeroth-shtuka-coordinate}
 \mathcal R_\psi
 =K_\psi(\Xi,\cdot)
 =\frac{\widetilde\lambda_\psi^{(1)}}{f_\psi}.
\end{equation}
\item\label{item:shtuka-trace-divisor} The first pairing satisfies
\begin{equation}\label{eq:hg-first-pairing-geometric-bound}
 \operatorname{div}(\mathcal R_\psi^{[1]})
 \geq V_\psi^{(1)}-[q_\psi^{(-1)}]-[q_\psi].
\end{equation}
\end{enumerate}
\end{theorem}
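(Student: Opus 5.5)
The plan is to reduce everything to the zeroth pairing and then invoke the residue uniqueness of Corollary~\ref{cor:shtuka-residue-uniqueness}. By Lemma~\ref{lem:zeroth-pairing-shtuka-line}\ref{item:zeroth-pairing-membership}, $\mathcal R_\psi$ lies in the one-dimensional line $\mathscr S_\psi$. By Lemma~\ref{lem:zeroth-pairing-shtuka-line}\ref{item:zeroth-pairing-residues}, it has residue $1$ at $\Xi$. Corollary~\ref{cor:shtuka-residue-uniqueness} says that $\operatorname{res}_\Xi:\mathscr S_\psi\to\Cinf$ is an isomorphism, and that its residue-one element is $K_\psi(\Xi,\cdot)=\widetilde\lambda_\psi^{(1)}/f_\psi$ by \eqref{eq:szego-shtuka}. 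This gives \eqref{eq:hg-zeroth-shtuka-coordinate} directly.

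For \eqref{eq:hg-shtuka-trace-identification}, apply Lemma~\ref{lem:special-R} with $k=1$, which gives $\mathcal R_\psi^{[1]}=F_\psi^{[1]}\mathcal R_\psi=f_\psi\mathcal R_\psi$. Substituting the zeroth identity yields $\mathcal R_\psi^{[1]}=f_\psi\cdot\widetilde\lambda_\psi^{(1)}/f_\psi=\widetilde\lambda_\psi^{(1)}$. This is an equality of meromorphic differentials on $X_{\Cinf}$. The only point to check is that both sides are realized in the same space. The left side is the completed pairing in $\mathbb T_A\otimes\Omega_A$, which Theorem~\ref{thm:hg-finite-torsion-trace-value} shows is algebraic (a finite $H$-combination of global differentials). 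The right side is rational. So the identity is an honest identity of rational differentials.

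For \ref{item:shtuka-trace-divisor}, the divisor bound follows from \eqref{eq:shtuka-differential-divisor} after one coefficient twist. That bound reads $\operatorname{div}(\widetilde\lambda_\psi^{(1)})\ge V_\psi^{(1)}-[q_\psi^{(-1)}]-[q_\psi]$. Alternatively, add $\operatorname{div}(f_\psi)=V_\psi^{(1)}-V_\psi+[\Xi]-[q_\psi]$ to the bound $\operatorname{div}(\mathcal R_\psi)\ge V_\psi-[\Xi]-[q_\psi^{(-1)}]$ from membership in $\mathscr S_\psi$. The $\Xi$ terms cancel, which gives \eqref{eq:hg-first-pairing-geometric-bound}.

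I expect no step to be a genuine obstacle, since the substantive work (locating poles along the infinity orbit and computing the residue via torsion traces) was already done in Lemma~\ref{lem:zeroth-pairing-shtuka-line}. The one subtle point is the case $\Ninf=1$, where $q_\psi=q_\psi^{(-1)}=\infty$ and the two infinity terms in the divisor bound coincide. I would note that the bound then reads $\ge V_\psi^{(1)}-2[\infty]$, which is consistent with Corollary~\ref{cor:N=1}.
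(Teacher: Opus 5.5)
Your proposal is correct and matches the paper's proof: membership of $\mathcal R_\psi$ in $\mathscr S_\psi$ with characteristic residue one, combined with Corollary~\ref{cor:shtuka-residue-uniqueness}, gives the zeroth identity, and Lemma~\ref{lem:special-R} at $k=1$ then gives $\widetilde\lambda_\psi^{(1)}=\mathcal R_\psi^{[1]}$. For (ii), your second route (adding $\operatorname{div}(f_\psi)$ to the $\mathscr S_\psi$ bound) is exactly the paper's argument, and your first route, reading off the twist of \eqref{eq:shtuka-differential-divisor} once (i) is known, is an equally valid shortcut.
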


\begin{proof}
\emph{\ref{item:shtuka-trace-equality}.}
 By Lemma~\ref{lem:zeroth-pairing-shtuka-line},
$\mathcal R_\psi\in\mathscr S_\psi$ with
$\operatorname{res}_\Xi(\mathcal R_\psi)=1$, while
Corollary~\ref{cor:shtuka-residue-uniqueness} identifies the element of
residue one of $\mathscr S_\psi$ as
$K_\psi(\Xi,\cdot)=\widetilde\lambda_\psi^{(1)}/f_\psi$; hence
$\mathcal R_\psi=K_\psi(\Xi,\cdot)=\widetilde\lambda_\psi^{(1)}/f_\psi$,
which is \eqref{eq:hg-zeroth-shtuka-coordinate}, and
\eqref{eq:hg-pairing-frobenius-products} with $k=1$ gives
\eqref{eq:hg-shtuka-trace-identification}.

\emph{\ref{item:shtuka-trace-divisor}.} Since
$\mathcal R_\psi\in\mathscr S_\psi$ by
Lemma~\ref{lem:zeroth-pairing-shtuka-line}, its divisor satisfies
$\operatorname{div}(\mathcal R_\psi)\ge V_\psi -[\Xi]-[q_\psi^{(-1)}]$.
Multiplication by $f_\psi$ gives
\[
 \operatorname{div}(\mathcal R_\psi^{[1]})
 \geq (V_\psi-[\Xi]-[q_\psi^{(-1)}])
       +(V_\psi^{(1)}-V_\psi+[\Xi]-[q_\psi])
 =V_{\psi}^{(1)}-[q_\psi^{(-1)}]-[q_\psi],
\]
which is \eqref{eq:hg-first-pairing-geometric-bound}.

\end{proof}

When $\Ninf=1$, the two possible poles in
\eqref{eq:hg-first-pairing-geometric-bound} coincide; when $\Ninf=2$,
they are the two points above infinity.  For $\Ninf\geq3$, the other
points of the infinity orbit are excluded from the pole support,
and zeros prescribed by $V_\psi^{(1)}$ are retained. The first pairing retains the untwisted prefactor
$-\tilde{\pi}^{-1}\mathcal L_A$; thus, in general, 
\[
 \mathcal R_\psi^{(1)}
 =- \tilde{\pi} ^{-q}\mathcal L_A^{(1)} \mathcal{H}_\psi(\tilde{\pi})^{(1)}
 \ne- \tilde{\pi}^{-1}\mathcal L_A \mathcal{H}_\psi(\tilde{\pi})^{(1)}
 =\mathcal R_\psi^{[1]}.
\]

\begin{remark}[Scalar conjugation]\label{rem:finite-pairing-scalar-conjugation}
The normalization is compatible with scalar conjugation.  Let
$c\in\Cinf^\times$, put $\psi^c=c\psi c^{-1}$ and
$\tilde{\pi}^c=c\tilde{\pi}$.  The residue-dual definition and
\eqref{eq:scalar-conjugation-realization} give
\begin{equation}\label{eq:finite-pairing-scalar-conjugation}
 \mathcal{H}_{\psi^c}(\tilde{\pi}^c)=c\,\mathcal{H}_\psi(\tilde{\pi}).
\end{equation}
Substituting this and $\tilde{\pi}^c=c\tilde{\pi}$ into
Definition~\ref{def:completed-differential-pairing}, with
$\mathcal L_A$ unchanged, gives
\begin{equation}\label{eq:finite-pairing-R-scalar}
 \mathcal R_{\psi^c}^{[k]}=c^{q^k-1}\mathcal R_\psi^{[k]}.
\end{equation}
The geometric data have the same $V$ and $q_\psi$, and
$f_{\psi^c}=c^{q-1}f_\psi$.  Hence
$c^{q-1}\widetilde\lambda_\psi^{(1)}$ has the required divisor and,
after division by $f_{\psi^c}$, characteristic residue one.
The uniqueness assertion of Theorem~\ref{prop:exist-shtuka} gives
\begin{equation}\label{eq:finite-pairing-shtuka-scalar}
 \widetilde\lambda_{\psi^c}^{(1)}
 =c^{q-1}\widetilde\lambda_\psi^{(1)}.
\end{equation}
Thus \eqref{eq:hg-shtuka-trace-identification} is preserved under
arbitrary scalar conjugation.
\end{remark}

Theorem~\ref{thm:hg-shtuka-trace-identification} turns the torsion
pairing into a computation of $\widetilde\lambda_\psi^{(1)}$.
All coefficients come from a finite algebra over the field of
definition of the Drinfeld action.

The traces can be computed without finding any torsion point.  For example,
write \(\psi_b(Z)=\sum_{i=0}^{D}B_iZ^{q^i}\), with \(B_0=\iota(b)\).
If \(s_i=\operatorname{Tr}_{\mathscr T_b/H}(Z^{q^i-1})\), then
\begin{equation}
\label{eq:hg-torsion-moment-recursion}
 s_0=-1,\qquad s_i=0\ (0<i<D),\qquad
 s_i=-\sum_{j=0}^{D-1}
       \left(\frac{B_j}{B_D}\right)^{q^{i-D}}s_{i-D+j}\quad(i\geq D).
\end{equation}
For the initial vanishing, over a splitting field put
\[
 R(T)=\prod_{0\ne x\in\psi[b]}(1-xT)
     =1+\sum_{j=0}^{D-1}\frac{B_j}{B_D}T^{q^D-q^j}.
\]
Logarithmic differentiation gives
\[
 \sum_{n\geq1}\left(\sum_{0\ne x\in\psi[b]}x^n\right)T^n
 =-\frac{TR'(T)}{R(T)}
 =\frac{(B_0/B_D)T^{q^D-1}}{R(T)}.
\]
Here the derivatives of the terms with \(j\geq1\) vanish in the
characteristic.  All power sums of positive degree less than \(q^D-1\)
therefore vanish, giving \(s_i=0\) for \(0<i<D\); also
\(s_0=q^D-1=-1\).  The recursion follows by
raising \(\psi_b(x)=0\) to \(q^{i-D}\), dividing by \(x\), and summing
over nonzero torsion points.  Thus a trace of \(\psi_c(Z)/Z\) is simply
\(\sum_i(\psi_c)_i s_i\).


\subsection{Pellarin's period identity}

\begin{theorem}[Pellarin's period identity]\label{thm:pellarin-period-identity}
This identity belongs to the family of functional identities of
\cite{AP14}.
For $\hbar\in\operatorname{Hom}_A(\Omega_A,A)$, put
$\jmath_\hbar= \tilde{\pi}\,\iota\circ\hbar$. Then:
\begin{enumerate}[label=\textup{(\roman*)},leftmargin=2.2em]
\item\label{item:pellarin-first}
The first-twist identity is
\begin{equation}\label{eq:hg-first-twist-period-identity}
\mathcal L_A\omega_{\jmath_\hbar}^{(1)}
=- \tilde{\pi}\,\hbar(\widetilde\lambda_\psi^{(1)})= \tilde{\pi}\Delta_\hbar,
\qquad \Delta_\hbar\in H\otimes_{\Fq}A.
\end{equation}
For the basis in Theorem~ {\ref{thm:hg-finite-torsion-trace-value}} with $k=1$,
\begin{equation}\label{eq:hg-first-twist-finite-factor}
\Delta_\hbar=\frac1{\iota(b)}\sum_j
\operatorname{Tr}_{\mathscr T_b/H}
 \left(\frac{\psi_{c_j}(Z)^q}{Z}\right)\chi(\hbar(\mu_j)).
\end{equation}
\item\label{item:pellarin-zeroth}
The untwisted identity is
\begin{equation}\label{eq:hg-zeroth-twist-period-identity}
\mathcal L_A\omega_{\jmath_\hbar}
=- \tilde{\pi}\,\hbar(K_\psi(\Xi,\cdot)).
\end{equation}
\end{enumerate}
\end{theorem}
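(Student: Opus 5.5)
The plan is to contract the differential identities already established for $\mathcal H_\psi(\tilde\pi)$ by $\hbar$, using Corollary~\ref{cor:hg-shtuka-trace-identification} to turn contractions into scalar special functions, and then apply Theorem~\ref{thm:hg-shtuka-trace-identification}.

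\emph{Zeroth identity.} Corollary~\ref{cor:hg-shtuka-trace-identification} with $\lambda=\tilde\pi$ gives
\[
\hbar\bigl(\mathcal H_\psi(\tilde\pi)\bigr)=\omega_{\jmath_\hbar},
\qquad \jmath_\hbar(\nu)=\tilde\pi\,\iota(\hbar(\nu)).
\]
The contraction $1\otimes\hbar$ is $\mathbb T_A$-linear, since it acts only on the differential factor. Multiplying by the scalar $-\tilde\pi^{-1}\mathcal L_A\in\mathbb T_A$ therefore gives
\[
\hbar(\mathcal R_\psi)=-\tilde\pi^{-1}\mathcal L_A\,\omega_{\jmath_\hbar}.
\]
By \eqref{eq:hg-zeroth-shtuka-coordinate}, $\mathcal R_\psi=K_\psi(\Xi,\cdot)$, and rearranging yields \eqref{eq:hg-zeroth-twist-period-identity}.

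\emph{First-twist identity.} Contraction by $\hbar$ commutes with coefficient Frobenius, because $\hbar$ is defined over $\Fq$. Hence
\[
\hbar\bigl(\mathcal H_\psi(\tilde\pi)^{(1)}\bigr)=\omega_{\jmath_\hbar}^{(1)}.
\]
Definition~\ref{def:completed-differential-pairing} keeps the prefactor $-\tilde\pi^{-1}\mathcal L_A$ untwisted, so
\[
\hbar(\mathcal R_\psi^{[1]})=-\tilde\pi^{-1}\mathcal L_A\,\omega_{\jmath_\hbar}^{(1)}.
\]
Theorem~\ref{thm:hg-shtuka-trace-identification}(i) replaces $\mathcal R_\psi^{[1]}$ by $\widetilde\lambda_\psi^{(1)}$, which gives the first equality in \eqref{eq:hg-first-twist-period-identity} with $\Delta_\hbar:=-\hbar(\widetilde\lambda_\psi^{(1)})$.

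\emph{The explicit factor.} Substitute \eqref{eq:Rk} with $k=1$:
\[
\widetilde\lambda_\psi^{(1)}=\mathcal R_\psi^{[1]}=-\frac{1}{\iota(b)}\sum_j\operatorname{Tr}_{\mathscr T_b/H}\!\left(\frac{\psi_{c_j}(Z)^q}{Z}\right)\mu_j,
\qquad \mu_j\in H^0\bigl(X,\Omega_X^1(\lceil 2/\Ninf\rceil\infty)\bigr)\subset\Omega_A.
\]
Applying $1\otimes\hbar$ sends $\mu_j$ to $\chi(\hbar(\mu_j))\in A$, which gives \eqref{eq:hg-first-twist-finite-factor}. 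The trace coefficients lie in $H$ and there are finitely many of them, so $\Delta_\hbar\in H\otimes_{\Fq}A$.

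\emph{Main obstacle.} The delicate point is justifying that contraction commutes with both the Frobenius twist and multiplication by $\mathcal L_A$ inside $\Cinf\widehat\otimes_{\Fq}\Omega_A$. Both hold by continuity, because $\hbar$ is $\Fq$-rational and $A$-linear; I would check them on pure tensors and then extend.
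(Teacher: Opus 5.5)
Your proposal is correct and follows the paper's proof. You contract $\mathcal R_\psi^{[k]}=-\tilde\pi^{-1}\mathcal L_A\,\mathcal H_\psi(\tilde\pi)^{(k)}$ for $k=0,1$ by $1\otimes\hbar$, using Corollary~\ref{cor:hg-shtuka-trace-identification}, the commutation of contraction with coefficient Frobenius, and the fact that the prefactor stays untwisted; you then invoke Theorem~\ref{thm:hg-shtuka-trace-identification} and contract \eqref{eq:Rk} at $k=1$, with $\mu_j\in\Omega_A$ so that $\hbar(\mu_j)\in A$.
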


\begin{proof}
\emph{\ref{item:pellarin-first}.}
Maintaining the notation of Section~\ref{sec:finite-special-values},
\[
\mathcal R_\psi^{[1]}=- \tilde{\pi}^{-1}\mathcal L_A \mathcal{H}_\psi(\tilde{\pi})^{(1)}.
\]
Contraction by $1\otimes\hbar$ commutes with coefficient Frobenius.

Using $\hbar( \mathcal{H}_\psi(\tilde{\pi}))=\omega_{\jmath_\hbar}$ in Corollary \ref{cor:hg-shtuka-trace-identification} and
Theorem~\ref{thm:hg-shtuka-trace-identification} gives
\eqref{eq:hg-first-twist-period-identity}. The finite formula follows by
contracting \eqref{eq:Rk} at $k=1$ and changing sign. Every $\mu_j$ is
regular on $U$, so $\hbar(\mu_j)\in A$.

\emph{\ref{item:pellarin-zeroth}.}
Apply the same contraction to
\[
\mathcal R_\psi=- \tilde{\pi}^{-1}\mathcal L_A \mathcal{H}_\psi(\tilde{\pi})=K_\psi(\Xi,\cdot).
\]
This proves \eqref{eq:hg-zeroth-twist-period-identity}. Its rationality
follows from the finite formula for $\mathcal R_\psi$.
\end{proof}

\begin{example}[The Carlitz normalization]
\label{ex:hg-carlitz-sign-check}
Let \(A=\Fq[t]\), \(\iota(t)=\theta\), and
\(\psi_t(Z)=\theta Z+Z^q\).  Choose \(a=b=t\), \(\nu_1=\rdt\),
and \(r_1=-1/t\), so \(c_1=-1\).  The trace in
\eqref{eq:hg-explicit-torsion-trace-coefficients} equals \(-\theta\),
and \(\mathcal R_\psi=\rdt/(t-\theta)\).  With the residue convention
\eqref{eq:residue-dual-coordinate-line}, \( \mathcal{H}_\psi(\tilde{\pi})=-\omega_C\rdt\).
Taking \(\hbar(\rdt)=-1\) therefore recovers
\[
 \left(\sum_{0\ne a\in\Fq[t]}\frac{a(t)}{a(\theta)}\right)
 \frac{\omega_C}{\tilde{\pi}_C}=\frac1{t-\theta}.
\]
The sign agrees with the all-element convention rather than the
monic convention.
\end{example}

\subsection{Finite values for an arbitrary source lattice}
\label{subsec:finite-values-isogeny-transport}
To apply the identification to a possibly nonprincipal source lattice
$\Lambda_\varphi$, choose a separable
isogeny datum $J:\varphi\to\psi$ with free target lattice, and write
\[
 \Lambda_\psi=\iota(A) \tilde{\pi},
 \qquad z_J= \tilde{\pi}/h_J(\Xi).
\]
Such a target exists for every rank-one source by
Corollary~\ref{cor:isogeny-direct-period-map-exact-scope}.  Take a common
field $H\subset\Cinf$ of definition of the two actions and the rational
coordinate $h_J$.  The finite pairing remains the one attached to the
principal-period target $\psi$.

\begin{proposition}[Finite torsion traces for isogeny kernels]\label{prop:finite-torsion-isogeny-kernels}
For every $a\in A$, the normalized kernel value is the rational
differential over $H$ given by
\begin{equation}\label{eq:finite-trace-isogeny-kernel-value}
 \frac{\mathcal L_A}{ \tilde{\pi}}\mathcal G_J(\iota(a)z_J)
 =-\frac{\chi(a)}{h_J(\two)}K_\psi(\Xi,\two).
\end{equation}
\end{proposition}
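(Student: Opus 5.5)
The plan is to reduce the isogeny kernel to the target generating differential and then apply the finite-trace identification. Three ingredients suffice: the target comparison, $A$-linearity on target periods, and Theorem~\ref{thm:hg-shtuka-trace-identification}.

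\textbf{Step 1 (transport to the target).} By Proposition~\ref{prop:GJ-Hpsi}\ref{item:GJ-target}, for every $z\in\Cinf$,
\[
\mathcal G_J(z)=h_J(\two)^{-1}\mathcal H_\psi(h_J(\Xi)z).
\]
With $z=\iota(a)z_J$ one has $h_J(\Xi)z=\iota(a)\tilde\pi$, and this lies in $\Lambda_\psi$. Hence
\[
\mathcal G_J(\iota(a)z_J)=h_J(\two)^{-1}\mathcal H_\psi(\iota(a)\tilde\pi).
\]
The identity holds on the Tate domain, away from the second-variable poles of $h_J^{-1}$. I intend to read it as an identity of meromorphic differentials, using the continuation of Theorem~\ref{thm:rational-anderson-continuation} and the identity principle.

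\textbf{Step 2 (linearity at a target period).} Proposition~\ref{prop:Hphi-linearity}, applied to $\psi$ and the period $\tilde\pi\in\Lambda_\psi$, gives
\[
\mathcal H_\psi(\iota(a)\tilde\pi)=\chi(a)\,\mathcal H_\psi(\tilde\pi),\qquad \chi(a)=a(\two).
\]
The case $a=0$ is trivial, since both sides vanish.

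\textbf{Step 3 (finite-trace identification).} Multiplying by $\mathcal L_A/\tilde\pi$, which is legitimate because $\mathcal L_A\in\mathbb T_A$ by Proposition~\ref{prop:hg-one-infinity-convergence}, yields
\[
\frac{\mathcal L_A}{\tilde\pi}\mathcal G_J(\iota(a)z_J)=\frac{\chi(a)}{h_J(\two)}\cdot\frac{\mathcal L_A}{\tilde\pi}\mathcal H_\psi(\tilde\pi)=-\frac{\chi(a)}{h_J(\two)}\mathcal R_\psi,
\]
by Definition~\ref{def:completed-differential-pairing}. Equation \eqref{eq:hg-zeroth-shtuka-coordinate} then gives $\mathcal R_\psi=K_\psi(\Xi,\cdot)$, which is the claim. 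Rationality over $H$ follows because $\mathcal R_\psi$ is given by the finite formula \eqref{eq:hg-explicit-rational-differential}, and $h_J$ and $\chi(a)$ are rational over $H$ by the choice of $H$.

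\textbf{Main obstacle.} The only delicate point is the domain on which the products make sense. $\mathcal G_J(\iota(a)z_J)$ lives in $\operatorname{Frac}(\mathbb T_A)\otimes_A\Omega_A$ because of the factor $h_J^{-1}$, whereas $\mathcal L_A$ is a Tate unit. To handle this, I will first verify the identity after multiplying through by $h_J(\two)$, where everything lies in $\mathbb T_A\otimes_A\Omega_A$. I will then divide in the torsion-free module $\operatorname{Frac}(\mathbb T_A)\otimes_A\Omega_A$. The right-hand side is then a rational differential, so the equality also holds for the meromorphic continuations.
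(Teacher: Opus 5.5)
Your proposal is correct and follows the paper's proof. You transport to the target via \eqref{eq:J-kernel-target-comparison}, pull out $\chi(a)$ by $A$-linearity on target periods (through Proposition~\ref{prop:Hphi-linearity}, where the paper uses the characteristic-residue isomorphism, to the same effect), and recognize $-\tilde\pi^{-1}\mathcal L_A\mathcal H_\psi(\tilde\pi)=\mathcal R_\psi$. You also state explicitly the final step $\mathcal R_\psi=K_\psi(\Xi,\cdot)$ from \eqref{eq:hg-zeroth-shtuka-coordinate}, which the paper's proof uses but leaves implicit.
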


\begin{proof}
Let $ \mathcal{H}_\psi(\tilde{\pi})$ be the canonical period differential for $\psi$.
The target comparison \eqref{eq:J-kernel-target-comparison} gives
\[
 \mathcal G_J(\iota(a)z_J)
 =h_J(\two)^{-1}\mathcal H_\psi(\iota(a) \tilde{\pi})
 =\frac{\chi(a)}{h_J(\two)} \mathcal{H}_\psi(\tilde{\pi}).
\]
The second equality follows from the $A$-linearity of the
characteristic-residue isomorphism.  Theorem
\ref{thm:hg-finite-torsion-trace-value}, applied to $\psi$, identifies
$ \tilde{\pi}^{-1}\mathcal L_A \mathcal{H}_\psi(\tilde{\pi})=-\mathcal R_\psi$.
This proves the finite formula and its independence of the presentation.
\end{proof}

In particular, when $ \Omega_A=\ii_J\nu_J$ (or equivalently $ \sfm(\varphi )  $ is free), the unit anchor $\jmath_*$ of
Proposition~\ref{prop:isogeny-direct-period-map} turns this into a scalar
formula for the source special function.  Taking $a=1$ and applying
\eqref{eq:primitive-G-identity} gives the following result.

\begin{theorem}[Finite value in an isogeny period coordinate]\label{thm:finite-value-isogeny-coordinate}
Let $\omega_{\jmath_*}=\mathcal G_J(z_J)/\nu_J$ be the unit special
function attached to the unit anchor $\jmath_*$. Then
\begin{enumerate}[label=\textup{(\roman*)},leftmargin=2.2em]
\item\label{item:source-finite-value}
\begin{equation}\label{eq:finite-trace-source-special-value}
 \mathcal L_A\frac{\omega_{\jmath_*}}{ \tilde{\pi}}
 =-\frac{K_\psi(\Xi, \two)}{h_J\nu_J}.
\end{equation}
\item\label{item:source-first-finite-value}
\begin{equation}\label{eq:finite-trace-source-special-value2}
 \mathcal L_A\frac{\omega_{\jmath_*}^{(1)}}{ \tilde{\pi}}
 = - \delta_J = -  \frac{\tilde{\lambda}_\psi^{(1)}}{h_J^{(1)}    \nu_J }\in H(X) .
\end{equation}
\end{enumerate}
\end{theorem}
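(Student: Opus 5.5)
The plan is to derive both identities from Proposition~\ref{prop:finite-torsion-isogeny-kernels} at $a=1$, and then apply one coefficient Frobenius twist. The only new ingredient for the twisted version is the intertwiner relation \eqref{eq:intertwiner-difference}.

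For \ref{item:source-finite-value}, set $a=1$ in \eqref{eq:finite-trace-isogeny-kernel-value}. This gives
\[
\frac{\mathcal L_A}{\tilde\pi}\mathcal G_J(z_J)=-\frac{K_\psi(\Xi,\two)}{h_J(\two)}.
\]
By the primitive identity \eqref{eq:primitive-G-identity} with $\eta=\nu_J$, and equivalently by Corollary~\ref{cor:isogeny-onestep-period-factorization} with $a=1$, we have $\mathcal G_J(z_J)=\omega_{\jmath_*}\otimes\nu_J$. Since $\nu_J\in\Omega_A^{\rat}$ is a nonzero rational differential defined over $\Fq$, dividing by it in $\operatorname{Frac}(\mathbb T_A)$ yields \eqref{eq:finite-trace-source-special-value}. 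One point needs care: $\mathcal L_A$ acts on the coefficient factor as an element of $\mathbb T_A$, and it commutes with the tensor factor $\nu_J$. Both statements are clear, since everything lives in $\operatorname{Frac}(\mathbb T_A)\otimes_A\Omega_A$.

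For \ref{item:source-first-finite-value}, the twist does not commute with the prefactor, because $\mathcal L_A^{(1)}\ne\mathcal L_A$ and $\tilde\pi^{(1)}\ne\tilde\pi$. So I would not twist \ref{item:source-finite-value} directly. Instead I would use the scalar shtuka equation $\omega_{\jmath_*}^{(1)}=f_\varphi\omega_{\jmath_*}$ (Lemma~\ref{lem:scalar-shtuka-equation}), which holds because $\omega_{\jmath_*}\in\sfm(\varphi)$. Multiplying \ref{item:source-finite-value} by $f_\varphi$ gives
\[
\mathcal L_A\frac{\omega_{\jmath_*}^{(1)}}{\tilde\pi}=-\frac{f_\varphi K_\psi(\Xi,\cdot)}{h_J\nu_J}.
\]
Next I would substitute $f_\varphi/h_J=f_\psi/h_J^{(1)}$ from \eqref{eq:intertwiner-difference} and $f_\psi K_\psi(\Xi,\cdot)=\widetilde\lambda_\psi^{(1)}$ from \eqref{eq:szego-shtuka}. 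Together these give $-\widetilde\lambda_\psi^{(1)}/(h_J^{(1)}\nu_J)=-\delta_J$, matching the definition of $\delta_J$ in \S\ref{sec:divisor-DJ}. As a consistency check, the first equality of \eqref{eq:Dvarphi} gives the same result.

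Finally, I need the rationality over $H$, that is, membership in $H(X)$. The coefficients of $\widetilde\lambda_\psi^{(1)}=\mathcal R_\psi^{[1]}$ lie in $H$ by Theorem~\ref{thm:hg-shtuka-trace-identification} together with \eqref{eq:Rk}. By hypothesis $h_J$, and hence $h_J^{(1)}$ after enlarging $H$ to be Frobenius stable or simply containing those coefficients, is rational over $H$. And $\nu_J$ is defined over $K$. I expect this descent bookkeeping to be the only delicate point. The main thing to check is that $H$ was chosen to contain the coefficients of $h_J^{(1)}$, which follows if $H$ is a field of definition of $h_J$ closed under $x\mapsto x^q$, as any field is.
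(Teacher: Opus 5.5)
Your proposal is correct and follows the paper's proof essentially verbatim. Part (i) sets $a=1$ in the isogeny-kernel finite-trace formula and divides by $\nu_J$ using $\mathcal G_J(z_J)=\omega_{\jmath_*}\otimes\nu_J$, and part (ii) multiplies by $f_\varphi$ and applies $f_\varphi/h_J=f_\psi/h_J^{(1)}$ together with $f_\psi K_\psi(\Xi,\cdot)=\widetilde\lambda_\psi^{(1)}$, with neither $\mathcal L_A$ nor $\tilde\pi$ twisted. Your extra check of membership in $H(X)$ (coefficients of $\mathcal R_\psi^{[1]}$ in $H$, and $h_J^{(1)}$ defined over the field $H$ of definition of $h_J$) correctly fills in a detail the paper leaves implicit.
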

\begin{proof}
\emph{\ref{item:source-finite-value}.}
Set $a=1$ in \eqref{eq:finite-trace-isogeny-kernel-value} and divide by
$\nu_J$, using $\mathcal G_J(z_J)=\nu_J\omega_{\jmath_*}$.

\emph{\ref{item:source-first-finite-value}.}
Multiply \ref{item:source-finite-value} by $f_\varphi$ and use
$\omega_{\jmath_*}^{(1)}=f_\varphi\omega_{\jmath_*}$.
The intertwiner equation
$f_\varphi/h_J=f_\psi/h_J^{(1)}$ and
$f_\psi K_\psi(\Xi,\cdot)=\widetilde\lambda_\psi^{(1)}$ give the result.
Neither $\mathcal L_A$ nor $ \tilde{\pi}$ is twisted in this argument.
\end{proof}

\begin{remark}[Rational infinity and the curve zeta identity]
 \label{rem:curve-zeta-comparison}
Assume $\Ninf=1$ with a fixed sign function; $\mathcal L_{A_+}$ of \eqref{eq:curve-zeta-monic-convention} is $Z_{X,A}$ in the convention of \cite[\S5.2]{Pel26}.
For unit-source isogeny data the finite-trace identity becomes
$\mathcal L_{A_+}\omega_{\jmath_*}^{(1)}/\tilde{\pi}=\delta_J$.
 Comparison with \cite[Theorem~5.3]{Pel26} requires matching
the modules, shtuka data, and period normalizations.  In particular,
if the source $\varphi$ is fixed and only the target is replaced by
$\psi^c=c\psi c^{-1}$ for $c\in\Cinf^\times$, the corresponding isogeny is
$\eta_{J^c}=c\eta_J$, with
\[
 h_{J^c}=c h_J,\qquad \tilde\pi^c=c\tilde\pi,\qquad
 z_{J^c}=z_J,\qquad
 \omega_{\jmath_*}^c=\omega_{\jmath_*}.
\]
Here the kernel ideal and the choice of $\nu_J$ are unchanged.
Equations~\eqref{eq:scalar-conjugation-realization}
and~\eqref{eq:szego-shtuka} give
$\widetilde\lambda_{\psi^c}^{(1)}
=c^{q-1}\widetilde\lambda_\psi^{(1)}$, hence
$\delta_{J^c}=c^{-1}\delta_J$, as required by the displayed
finite-trace identity.  Together with Corollary~\ref{cor:N=1},
these transformation laws specify the divisor and scalar data for
comparison with \cite[Theorem~5.3 and Lemma~6.4]{Pel26}. 
\end{remark}

\begin{example}[Comparison with the elliptic shtuka formula]
 \label{ex:hg-first-twist-green-papanikolas}
For an elliptic curve with a rational infinite point,
Green--Papanikolas \cite[Theorem~7.1 and equations~(5), (18)]{GP18}
prove, in their normalization (their $L_+(A;1)$ is our $\mathcal L_{A_+}$),
$\omega_\rho^{(1)} \mathcal L_{A_+}/\tilde{\pi}_\rho =- \delta^{(1)}$
with $\omega_\rho^{(1)}=f_\rho \omega_\rho$ and
$\delta=t-\alpha$,
where \((\alpha,\beta)\) is the Drinfeld point and \(f_\rho\) is the
rational shtuka function.   
\end{example}

\begin{example}[The HHY datum: explicit finite values]
\label{ex:HHY-finite-values}
Retain the HHY isogeny datum
$J=[\eta_J;h_J;\mathfrak i_\infty^2,1]:\Psi\to\Phi$ of
Example~\ref{ex:primitive-G-HHY}; in particular $\nu_J=\rd t$,
$z_J=(\theta-\eta)^2\tilde{\pi}_\Phi$, $h_J(\Xi)=(\theta-\eta)^{-2}$, and
\[
 h_J(t)=\frac{\theta-\eta^q}{(\theta-\eta)(t-\eta)(t-\eta^q)}.
\]
The intertwiner equation \eqref{eq:intertwiner-difference} gives the
target shtuka function
\[
 f_\Phi(t)
 =\frac{(\theta^q-\eta^{q^2})(\theta-t)}
        {(\theta^q-\eta^q)(\theta-\eta^q)(t-\eta^{q^2})},
\]
so that $V_\Phi=0$ and $q_\Phi=\eta^{q^2}$ in the $t$-coordinate.  The
differential
\[
 \frac{(\theta-\eta^q)\,\rd t_{\two}}
      {(t_{\two}-\eta^q)(t_{\two}-\theta)}
\]
has a simple pole of residue $1$ at $t_{\two}=\theta$, a simple pole of
residue $-1$ at $t_{\two}=\eta^q=q_\Phi^{(-1)}$, and no other poles;
by the uniqueness in Theorem~\ref{prop:exist-shtuka} it equals
$K_\Phi(\theta,\two)$.  Therefore
\[
 \widetilde\lambda_\Phi^{(1)}
 =f_\Phi\,K_\Phi(\theta,\cdot)
 =-\frac{\theta^q-\eta^{q^2}}{\theta^q-\eta^q}\,
 \frac{\rd t}{(t-\eta^q)(t-\eta^{q^2})}
 =-h_J^{(1)}\,\rd t,
\]
and the scalar forcing factor is
$\delta_J=\widetilde\lambda_\Phi^{(1)}/(h_J^{(1)}\,\rd t)=-1$.

The lattice $\Lambda_\Phi=\iota(A)\tilde{\pi}_\Phi$ is principal, and
$\Omega_A=\mathfrak i_\infty^2\,\rd t=\mathfrak i_J\nu_J$ with
$\nu_J=\rd t$: the datum satisfies every hypothesis of
Theorem~\ref{thm:finite-value-isogeny-coordinate}.  With $\psi=\Phi$,
$\tilde{\pi}=\tilde{\pi}_\Phi$ and
$\omega_{\jmath_*}=\mathcal G_J(z_J)/\rd t$,
part~\ref{item:source-finite-value} gives the explicit identity
\begin{equation}\label{eq:HHY-finite-value}
 \mathcal L_A\,\frac{\omega_{\jmath_*}}{\tilde{\pi}_\Phi}
 =-\frac{K_\Phi(\theta,\cdot)}{h_J\,\rd t}
 =\frac{(\theta-\eta)(t-\eta)}{\theta-t},
\end{equation}
a rational function on $X$ whose only pole is the simple pole at
$\Xi$ (its zero at $t=\eta$ sits at a removed point), while
part~\ref{item:source-first-finite-value} gives
\begin{equation}\label{eq:HHY-first-finite-value}
 \mathcal L_A\,\frac{\omega_{\jmath_*}^{(1)}}{\tilde{\pi}_\Phi}
 =-\delta_J=1.
\end{equation}
 Example~\ref{ex:hg-carlitz-sign-check} is the $\Ninf=1$
counterpart of this computation.
\end{example}

\appendix
\section{Notation}\label{app:notation}

The following notation is maintained throughout the paper.
\begin{itemize}[leftmargin=2.2em]
\item$\Fq$: the finite field with $q$ elements, of characteristic $p$.
\item$X$: a smooth, projective, geometrically connected curve over
$\Fq$ with a fixed closed point $\infty$; $\Ninf=\deg\infty$.
\item$U=X\setminus\{\infty\}$ is the affine curve,
$A=\Gamma(U,\mathcal O_X)$ its ring of regular functions, and
$K=\Fq(X)$ the function field.
\item$K_\infty$: the completion of $K$ at $\infty$;
$\Cinf=\widehat{\overline{K_\infty}}$: the completed algebraic closure;
$\iota\colon K_\infty\hookrightarrow\Cinf$ is a fixed isometric
$\Fq$-embedding.
\item$\Xi\in X(\Cinf)$: the characteristic point, defined by
$c(\Xi)=\iota(c)$ for $c\in K$.
\item$\one$ and $\two$: placeholders for the first and second
variable of a bivariate kernel or function.
\item$x^{(i)}$: the $i$-th Frobenius twist of $x$, raising the
$\Cinf$-coefficients to the $q^i$-th power and fixing the $A$-factor.
\item$\varphi$ and $\psi$: rank-one Drinfeld $A$-modules in
fixed geometric realizations $A\to\Cinf\{\tau\}$.
\item$\mathcal M_\varphi$: the motive module of $\varphi$, an
$A$-module of $\Fq$-linear polynomials.
\item$\exp_\psi$ and $\log_\psi$: the exponential and the
logarithm of $\psi$.
\item$\Lambda_\varphi$ and $\Lambda_\psi$: the period
lattices; $\tilde\pi$ denotes a chosen generator when
$\Lambda_\psi=\iota(A)\tilde\pi$.
\item$f_\varphi$: the shtuka function of $\varphi$;
$F_\varphi^{[n]}=f_\varphi f_\varphi^{(1)}\cdots f_\varphi^{(n-1)}$.
\item$\mathbb T_A=\Cinf\widehat\otimes_{\Fq}A$: the Tate
algebra, with its Gauss norm.
\item$\chi(a)=1\otimes a=a(\two)$: the geometric copy of
$a\in A$.
\item$\Omega_A$: the $A$-module of regular differentials on
$U$;  $\Omega_A^{\rat}=K\otimes_A\Omega_A$ and
$\widehat\Omega_A=K_\infty\otimes_A\Omega_A$.
\item$\overline\Omega=\widehat\Omega_A/\Omega_A$: the
residue-class quotient.
\item $\sfm(\varphi)$: the
$A$-module of special functions of $\varphi$.
\item$\omega_\jmath$: the residue-dual special function
attached to an anchor $\jmath\colon\Omega_A\to\Lambda_\varphi$,
normalized by $\operatorname{res}_\Xi(\nu\omega_\jmath)=\jmath(\nu)$.
\item$\mathcal H_\psi$: the 
 Anderson generating differential of $\psi$.
\item$\mathcal G_J$: the Anderson generating differential of
the isogeny datum $J$.
\item$K_\varphi(\one,\two)$ and $K_J(\one,\two)$: the
canonical and isogeny-adapted Cauchy coordinates.
\item $\GM_{u;\widehat{\mathfrak b}}$: the lift-dependent
Gazda--Maurischat transform; $\GM_u^\Omega$: its canonical
differential assembly on periods.
\item $\operatorname{Log}_{\alpha,f_\psi}$: the deformed
logarithm; $\mathcal{P}_{\alpha,f_\psi}(\two;Z)$: the Papanikolas-type
generating series.
\item$\hbar\in\operatorname{Hom}_A(\Omega_A,A)$:  a
contraction, acting as $1\otimes\hbar$ on
$\Cinf\widehat\otimes_{\Fq}\Omega_A$; its $K_\infty$-linear extension
induces $\overline\Omega\to K_\infty/A$.
\item$\langle c,\nu\rangle_\infty=
\operatorname{Tr}_{\mathbb F_\infty/\Fq}\operatorname{res}_\infty(c\nu)$
: the residue pairing at $\infty$.
\item$\operatorname{res}_\Xi$: the characteristic residue,
which identifies $\sfm(\psi)\otimes_A\Omega_A$ with $\Lambda_\psi$ on
special-function tensors.
\item$\mathcal L_A=\sum_{0\ne a\in A}\chi(a)/\iota(a)$: the
all-element special-value series; $\mathcal L_{A_+}$ is its monic
analogue when $\Ninf=1$ with a fixed sign function.
\item$\mathcal R_\psi^{[k]}=-\tilde\pi^{-1}\mathcal L_A
\mathcal H_\psi(\tilde\pi)^{(k)}$: the $k$-th completed differential
pairing; $\mathcal R_\psi=\mathcal R_\psi^{[0]}$.
\item$\widetilde\lambda_\psi$: 
 the normalized shtuka
differential of $\psi$ (Definition~\ref{def:shtuka-diff}), written $\widetilde\lambda_\psi=\alpha\nu$ in the deformed-logarithm formulas; $\widetilde\lambda_\psi^{(1)}$ is its
first twist.
\item $J=[\eta_J;h_J;\mathfrak i_J,\mu_J]\colon\varphi\to\psi$:
an isogeny datum, with rational motive multiplier $h_J$.
When $\Lambda_\psi=\iota(A)\tilde\pi$, set $z_J=\tilde\pi/h_J(\Xi)$;
if moreover $[\Lambda_\varphi]=[\Omega_A]$, choose
$\nu_J\in\Omega_A^{\rat}$ with $\Omega_A=\mathfrak i_J\nu_J$.
\item$\delta_J=\widetilde\lambda_\psi^{(1)}/
(h_J^{(1)}\nu_J)$: the scalar forcing factor;
$\widetilde\delta_J=\widetilde\lambda_\psi^{(1)}/h_J^{(1)}$: the
differential forcing term.
\item $e_{\LL}$: the tautological rational frame of an acyclic divisor
line $\LL$.
\item $S_{\LL}=K_{\LL}(\one,\two)e_{\LL}(\one)\otimes e_{\LL}(\two)^{-1}$:
the normalized Szeg\H o kernel, where $K_{\LL}$ denotes its tautological Cauchy coordinate.
\end{itemize}

\bibliographystyle{amsplain} 
\bibliography{paper} 

\end{document}